\newcommand\reallywidehat[1]{%
\savestack{\tmpbox}{\stretchto{%
  \scaleto{%
    \scalerel*[\widthof{\ensuremath{#1}}]{\kern-.6pt\bigwedge\kern-.6pt}%
    {\rule[-\textheight/2]{1ex}{\textheight}}
  }{\textheight}%
}{0.5ex}}%
\stackon[1pt]{#1}{\tmpbox}%
}
\tikzset{
  symbol/.style={
    draw=none,
    every to/.append style={
      edge node={node [sloped, allow upside down, auto=false]{$#1$}}}
  }
} 
\newcommand{\N}{\mathbb{N}}
\newcommand{\Z}{\mathbb{Z}}
\newcommand{\R}{\mathbb{R}}
\newcommand{\BC}{\mathbb{C}}
\newcommand{\inv}{^{-1}}
\newcommand{\SL}{\mathrm{SL}}
\newcommand{\GL}{\mathrm{GL}}
\newcommand{\SO}{\mathrm{SO}}
\newcommand{\Sp}{\mathrm{Sp}}
\newcommand{\St}{\mathrm{St}}
\newcommand{\ord}{\mathrm{ord}}
\newcommand{\Rep}{\underline{\mathrm{Rep}}}
\newcommand{\Jord}{\mathrm{Jord}}
\newcommand{\half}[1]{\frac{#1}{2}}
\newcommand{\comment}[1]{}
\newcommand{\EE}{\mathcal{E}}
\newcommand{\FF}{\mathcal{F}}
\newtheorem{thm}{Theorem}[section]
\newtheorem{cor}[thm]{Corollary}
\newtheorem{lemma}[thm]{Lemma}
\newtheorem{prop}[thm]{Proposition}
\newtheorem {conj}[thm]{Conjecture}
\newtheorem {ques/conj}[thm]{Question/Conjecture}
\newtheorem {ques}[thm]{Question}
\newtheorem{defn}[thm]{Definition}
\newtheorem{remark}[thm]{Remark}
\newtheorem{exmp}[thm]{Example}
\newtheorem{algo}[thm]{Algorithm}
\newtheorem{problems}[thm]{Problem}
\newtheorem{goal}[thm]{Goal}
\DeclareMathOperator{\supp}{supp}
\DeclareMathOperator{\Hom}{Hom}
\DeclareMathOperator{\abs}{abs}
\DeclareMathOperator{\exsupp}{ex.supp}
\numberwithin{equation}{section}
\let\oldbullet\bullet
\renewcommand{\bullet}{{\vcenter{\hbox{\tiny$\oldbullet$}}}}
\begin{document}
\renewcommand{\theequation}{\arabic{equation}}
\numberwithin{equation}{section}

\title[Intersection of local Arthur packets]
{On the intersection of local Arthur packets for classical groups and applications}

\author{Alexander Hazeltine
}
\address{Department of Mathematics\\
University of Michigan\\
Ann Arbor, MI 48109, USA}
\email{ahazelti@umich.edu}

\author{Baiying Liu}
\address{Department of Mathematics\\
Purdue University\\
West Lafayette, IN, 47907, USA}
\email{liu2053@purdue.edu}

\author{Chi-Heng Lo}
\address{Department of Mathematics\\
Purdue University\\
West Lafayette, IN, 47907, USA}
\email{lo93@purdue.edu}

\subjclass[2000]{Primary 11F70, 22E50; Secondary 11F85}

\date{\today}


\keywords{Endoscopic Classification, Local Arthur Packets, Local Arthur Parameters, Intersection of Local Arthur Packets, Enhanced Shahidi Conjecture, Clozel Conjecture}

\thanks{The research of the first-named and the third-named authors are partially supported by the NSF Grants DMS-1848058. 
The research of the second-named author is partially supported by the NSF Grants DMS-1702218, DMS-1848058, and by start-up funds from the Department of Mathematics at Purdue University}

\begin{abstract}
In this paper, for symplectic and split odd special orthogonal groups, we develop an account of theory on the intersection problem of local Arthur packets. Specifically, following Atobe's reformulation on M{\oe}glin's construction of local Arthur packets, we give a complete set of operators on the construction data, based on which, we provide algorithms and Sage codes to determine whether a given representation is of Arthur type. Furthermore, for any representation $\pi$ of Arthur type, we give a precise formula for the set
$$ \Psi(\pi)=\{ \text{local Arthur parameter }\psi \ | \ \text{the local Arthur packet } \Pi_{\psi} \text{ contains } \pi\}.$$
 
Our results have many applications, including the precise counting of tempered representations in any local 
Arthur packet, specifying and characterizing ``the" local Arthur parameter in $\Psi(\pi)$ for $\pi$, especially when $\pi$ belongs to several local Arthur packets but 
does not belong to any local $L$-packet of Arthur type. 
\end{abstract}

\maketitle

\tableofcontents

\section{Introduction}

Let $F$ be a non-Archimedean local field of characteristic zero. Let $\mathrm{G}_n$ denote the split groups $\Sp_{2n}, \SO_{2n+1}$ and let $G_n=\mathrm{G}_n(F)$. The Langlands dual groups are given by
$\widehat{\mathrm{G}}_n(\BC) = \SO_{2n+1}(\BC), \Sp_{2n}(\BC)$, respectively. 
In the fundamental work \cite{Art13},
to characterize the local components of discrete automorphic representations, Arthur introduced  local Arthur packets which are finite sets of representations of $G_n$, parametrized by local Arthur parameters. 
Local Arthur parameters are defined as $\widehat{\mathrm{G}}_n(\BC)$-conjugacy classes of
direct sum of irreducible representations
$$\psi: W_F \times \SL_2(\mathbb{C}) \times \SL_2(\mathbb{C}) \rightarrow \widehat{\mathrm{G}}_n(\BC),$$
\begin{equation}\label{lap}
  \psi = \bigoplus_{i=1}^r \phi_i|\cdot|^{x_i} \otimes S_{a_i} \otimes S_{b_i},  
\end{equation}
satisfying the following conditions: 
\begin{enumerate}
    \item [(1)]$\phi_i(W_F)$ is bounded and consists of semi-simple elements, and $\dim(\phi_i)=d_i$;
    \item [(2)] $x_i \in \R$ and $|x_i|<\half{1}$;
    \item [(3)]the restrictions of $\psi$ to the two copies of $\SL_2(\mathbb{C})$ are analytic, $S_k$ is the $k$-dimensional irreducible representation of $\SL_2(\mathbb{C})$, and 
    $$\sum_{i=1}^r d_ia_ib_i = N:= 
\begin{cases}
2n+1 & \text{ when } G_n=\Sp_{2n}(F),\\
2n & \text{ when } G_n=\SO_{2n+1}(F).
\end{cases}
$$ 
\end{enumerate}
The first copy of $\SL_2(\mathbb{C})$ is called the Deligne-$\SL_2(\mathbb{C})$, denoted by $\SL_2^D(\mathbb{C})$. The second copy of $\SL_2(\mathbb{C})$ is called the Arthur-$\SL_2(\mathbb{C})$, denoted by $\SL_2^A(\mathbb{C})$. 
A local Arthur parameter $\psi$ given in \eqref{lap} is called {\it generic} if $b_i=1$ for $i=1, \ldots, r$; and is called {\it tempered} if additionally $x_i=0$ for $i=1, \ldots, r$. 
A local Arthur packet $\Pi_{\psi}$ is called generic (resp. tempered) if the corresponding local Arthur parameter $\psi$ is so. 
We let $\Psi^{+}(G_n)$ be the set of local Arthur parameters of $G_n$ and $\Psi(G_n)$ be the subset of $\Psi^+(G_n)$ consisting of local Arthur parameters $\psi$ whose restriction to $W_F$ is bounded. In other words, $\psi$ is in $\Psi(G_n)$ if and only if $x_i=0$ for $i=1,\dots, r$ in the decomposition \eqref{lap}. 

For each local Arthur parameter $\psi$, Arthur associated a local $L$-parameter $\phi_{\psi}$ as follows
\begin{equation}\label{apequ1}
\phi_{\psi}(w, x) = \psi\left(w, x, \begin{pmatrix}
        |w|^{\frac{1}{2}} & 0 \\
        0 & |w|^{-\frac{1}{2}}\\
\end{pmatrix}\right),
\end{equation}
and showed that the map $\psi \mapsto \phi_{\psi}$ is injective. The {\it diagonal restriction} of $\psi$ is another $L$-parameter associated to $\psi$ which is defined as 
\begin{align}\label{def diag rest}
\begin{split}
    \psi^{\Delta}:  W_F \times \SL_2(\BC) & \rightarrow \widehat{\mathrm{G}}_n(\BC)\\
    (w,x) & \mapsto \psi(w,x,x).
\end{split}
\end{align}

Given a local Arthur parameter $\psi$ as in \eqref{lap}, the local Arthur packet $\Pi_{\psi}$ defined in \cite[Theorem 2.2.1 and formula (1.5.1)]{Art13} is a finite multi-set of irreducible representations of $G_n$, satisfying certain (twisted) endoscopic character identities, and the local $L$-packet $\Pi_{\phi_{\psi}}$ is contained in $\Pi_{\psi}$ (\cite[Proposition 7.4.1]{Art13}). 
Arthur showed that 
$\Pi_{\psi}$ consists of unitary representations when $\psi \in \Psi(G_n)$ (\cite[Theorem 1.5.1]{Art13}) and conjectured that $\Pi_{\psi}$ also consists of unitary representations when $\psi \in \Psi^+(G_n)$ (\cite[Conjecture 8.3.1]{Art13}). 
We say that a local $L$-parameter $\phi$ is {\it of Arthur type} if $\phi=\phi_\psi$ for some local Arthur parameter $\psi.$
We also say that a representation $\pi$ is {\it of Arthur type} if $\pi\in\Pi_\psi$ for some local Arthur parameter $\psi$ and let $\Pi_A(G_n)$ denote the set of representations of $G_n$ of Arthur type.

In a series of papers (\cite{Moe06a, Moe06b, Moe09a, Moe10, Moe11}), M{\oe}glin explicitly constructed each local Arthur packet $\Pi_{\psi}$ and showed that it is multiplicity free.
In particular, M{\oe}glin (\cite[Corollaire 4.2]{Moe09a}) showed $\Pi_{\psi_1} \cap \Pi_{\psi_2} \neq \emptyset$ only if $\psi_1^{\Delta}=\psi_2^{\Delta}$. 
Then, Xu (\cite{Xu21b}) gave an algorithm to determine whether the representations in M{\oe}glin's construction are nonzero. In recent work (\cite{Ato20b}), 
Atobe gave a reformulation on  M{\oe}glin's construction, using the derivatives (see \S \ref{sec: der and socle}) considered in \cite{Jan14, AM20}, which provides a way to compute the $L$-data. 

While local $L$-packets are disjoint, local Arthur packets may have nontrivial intersections. Hence, the following is a very fundamental question.

\begin{ques}\label{que intro}
When do two given local Arthur packets have a nontrivial intersection?
\end{ques}

The intersection problem of local Arthur packets is considered as a key step towards the local non-tempered Gan-Gross-Prasad problem, see \cite[Conjecture 7.1, Remark 7.3]{GGP20}. For certain results on the intersection problem of local Arthur packets over archimedean local fields, see \cite{MR17} and \cite{MR21}. 

Fix any two local Arthur packets, one can certainly check the intersection by M{\oe}glin's construction or by Atobe's reformulation. The difficulty is to find a systematic way to determine all the local Arthur packets which have nontrivial intersections with a given one.

\begin{ques}\label{que intro 2}
Fix a local Arthur parameter $ \psi$, how do we systematically construct the set 
$$\{\psi' \ | \ \Pi_{\psi} \cap \Pi_{\psi'} \neq \emptyset\}?$$
\end{ques}

By the work of M{\oe}glin (see Theorem \ref{thm red from nu to gp}), to consider Question \ref{que intro 2}, 
we can restrict ourselves to the case of the local Arthur parameters of {\it good parity}, i.e., every summand $\rho \otimes S_{a_i} \otimes S_{b_i}$ is self-dual and of the same type as $\psi.$ 
In \cite{Ato20b}, Atobe showed that for any such local Arthur parameter, the corresponding local Arthur packet is exactly parametrized by  certain extended multi-segments (see Definition \ref{def multi-segment}). More explicitly, given an extended multi-segment $\EE$, there is an associated local Arthur parameter $\psi_{\EE}$ and a representation $\pi(\EE)$ which is irreducible or zero. Then, \cite[Theorem 3.4]{Ato20b} states that
\[ \Pi_{\psi}= \{\pi(\EE)\ | \ \psi_{\EE}=\psi,\ \pi(\EE) \neq 0\}.\]

In this paper, following Atobe's reformulation on M{\oe}glin's construction  (\cite{Ato20b}), we develop an account of theory towards Question \ref{que intro 2}. More precisely, for any irreducible representation $\pi$ of good parity (see Definition \ref{def good parity reps}), we give algorithms to achieve the following goals. 

\begin{goal}\label{goal}\ 
    \begin{enumerate}
        \item [(1)]To determine whether there exists an extended multi-segment $\EE$ such that $\pi(\EE)=\pi$ (see Algorithm \ref{alg Arthur type}). 
        \item [(2)] In the affirmative case, to determine all extended multi-segments $\EE$ such that $\pi(\EE)=\pi$ (see Theorem \ref{main thm intro} below).
    \end{enumerate}
\end{goal}
Moreover, given an extended multi-segment $\EE$ such that $\pi(\EE)\neq 0$, we give a precise formula to compute the set $ \{ \psi' \ | \ \pi(\EE) \in \Pi_{\psi'}\}$ (see Theorem \ref{thm exhaustion of symbol}). Thus, running through $\pi(\EE) \in \Pi_{\psi}$, we obtain
$$\{\psi' \ | \ \Pi_{\psi} \cap \Pi_{\psi'} \neq \emptyset\}.$$

Our algorithms depend on the following four operators, their inverses and compositions:
\begin{enumerate}
    \item[(i)] $R_k$, row exchange of extended segments (see Definition \ref{def row exchange}); 
    \item[(ii)] $ui_{i,j}$, union-intersection of extended segments (see Definition \ref{def ui});
    \item[(iii)] $dual \circ ui_{i,j} \circ dual$, where $dual$ is the Aubert-Zelevinsky dual on extended multi-segments (see Definition \ref{dual segment});
 \item[(iv)] $dual_k$ (partial dual, only applies in the case of $a_i+b_i$ being odd) (see Definition \ref{def partial dual}).
\end{enumerate}

Here are our main results.

\begin{thm}[{Propositions \ref{prop basic operators}, \ref{prop partial dual}, Theorems \ref{thm integer}, \ref{thm half integer}, and \ref{thm exhaustion of symbol}}]\label{main thm intro}
Let $\EE_1$ and $\EE_2$ be two extended multi-segments for $G_n$. Suppose that $\pi(\EE_1) \in \Pi_{\psi}$. 
Then the followings hold. 
\begin{enumerate}
    \item Let $T$ be any of the four operators $R_k$, $ui_{i,j}$, $dual \circ ui_{i,j} \circ dual$, $dual_k$ or their inverses. We have 
    $T(\EE_1)$ is also an extended multi-segment for $G_n$, and 
    $$\pi(T(\EE_1))=\pi(\EE_1).$$
    \item  We have $\pi(\EE_1) \cong \pi(\EE_2)$ if and only if 
$\EE_2$ can be obtained from $\EE_1$
by a finite chain of the four operators and their inverses in Part (1). 
\item There is a precise formula/algorithm to compute the set $\{ \psi' \ | \ \pi(\EE_1) \in \Pi_{\psi'}\}$.
\end{enumerate}
\end{thm}

Independently in  \cite{Ato22}, Atobe  gave different algorithms towards Goals \ref{goal}. 
Atobe's algorithms depend on following three operators, their inverses and compositions:
\begin{enumerate}
    \item[(i)] $R_k$, row exchange of extended segments;
    \item[(ii)] $ui_{i,j}$, union-intersection of extended segments;
    \item[(iii)] $P$, phantom (dis)appearing (formally adding (deleting) an extended segment).
\end{enumerate}
He proved the following.

\begin{thm}[{\cite[Theorem 1.4]{Ato22}}]\label{thm Atobe's main thm}
Let $\EE_1$ and $\EE_2$ be two extended multi-segments for $G_n$. 
Suppose that $\pi(\EE_1) \not= 0$. 
Then $\pi(\EE_1) \cong \pi(\EE_2)$ if and only if 
$\EE_2$ can be obtained from $\EE_1$
by  a finite chain of three operators $R_k$, $ui_{i,j}$, P, and their inverses. 
\end{thm}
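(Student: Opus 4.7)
The plan is to split the theorem into its two directions. The forward (``if'') direction requires showing that each of the three operations $R_k$, $ui_{i,j}$, $P$ (and their inverses) preserves the isomorphism class of $\pi(\EE)$ whenever it is nonzero. The backward (``only if'') direction is more delicate: assuming $\pi(\EE_1) \cong \pi(\EE_2) \neq 0$, one must produce a finite chain of the listed operations transforming $\EE_1$ into $\EE_2$.

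For the ``if'' direction, I would proceed operation by operation, using Atobe's explicit formulas relating $\pi(\EE)$ to iterated derivatives in the sense of Atobe--M\'inguez. The row exchange $R_k$ corresponds to permuting two adjacent summands $\phi_i\otimes S_{a_i}\otimes S_{b_i}$ inside $\psi_{\EE}$; since M{\oe}glin's construction is insensitive to this reordering once the attached characters of the component group are matched, one verifies $\pi(R_k(\EE)) = \pi(\EE)$ by a direct induction-in-stages computation. The phantom operation $P$ adjoins a ``trivial'' extended segment that contributes nothing to the induction chain, which one checks by unwinding the Jacquet module recipe. The union-intersection $ui_{i,j}$ is the subtlest: here one computes both $\pi(\EE)$ and $\pi(ui_{i,j}(\EE))$ by applying a common sequence of $\rho|\cdot|^x$-derivatives and matching the resulting socles, invoking the irreducibility results from Atobe's refined construction.

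For the ``only if'' direction, my plan is to construct a normal form $\EE^{\mathrm{norm}}$ reachable from every nonzero $\EE$ by a finite sequence of the three operations and their inverses, and then argue that two normal forms producing isomorphic representations must coincide. Concretely, using $ui_{i,j}$ (and its inverse) one can saturate $\EE$ so that any two segments lying on the same cuspidal line are nested or disjoint, use $R_k$ to place them in a canonical lexicographic order, and use $P$ to strip off phantom rows; call the result $\EE^{\mathrm{norm}}$. One then shows $\EE^{\mathrm{norm}}$ is determined by $\pi(\EE)$: the underlying multi-segment is read off from the supercuspidal support and leading exponents of the $L$-data, and the attached character data is forced by tracking how the highest derivative of $\pi(\EE)$ detects the top row of $\EE^{\mathrm{norm}}$.

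The main obstacle will be this rigidity claim --- that two distinct normal forms cannot give isomorphic representations. I expect to prove it by induction on an invariant measuring the ``complexity'' of $\EE$ (for instance, the total size of the segments or the depth of nesting), coupled with a peeling lemma of the shape: a single Atobe--M\'inguez derivative of $\pi(\EE)$ remembers exactly one row of $\EE^{\mathrm{norm}}$, while the remaining part is $\pi(\EE')$ for a smaller $\EE'$ again in normal form. Ensuring that this peeling step can always be realized by a bounded chain of the three basic operations --- so that the inductive hypothesis applies cleanly to $\EE'$ --- is the technical heart of the argument.
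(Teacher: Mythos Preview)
Your high-level architecture --- verify each operation preserves $\pi(\EE)$, then build a normal form uniquely determined by $\pi(\EE)$ --- matches the paper's, but your specific normal-form recipe has a genuine gap. The paper's canonical form $\EE_{can}$ (Definition~\ref{def can form}) is \emph{not} obtained by saturating with $ui_{i,j}$ and sorting: one first takes the $ui$-minimal element $\EE^{\min}$, then applies the Aubert--Zelevinsky dual on extended multi-segments, minimizes the $\geq -\tfrac{1}{2}$ part again on the dual side, and duals back. The passage to the dual is essential because $ui_{i,j}$ and derivatives only control the part of $\EE$ with $B_i>\tfrac{1}{2}$ (Lemma~\ref{lem minimal shift}, Corollary~\ref{cor minimal}); normalizing the non-positive part requires working on the dual. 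Your ``nested or disjoint'' saturation does not achieve this, and the single-row peeling lemma you propose is too strong to hold.

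More seriously, in the half-integer case ($A_i\in\Z+\tfrac{1}{2}$) the operators $R_k$ and $ui_{i,j}$ together with $dual\circ ui_{i,j}\circ dual$ do \emph{not} suffice to connect all $\EE_1,\EE_2$ with $\pi(\EE_1)\cong\pi(\EE_2)$: see the explicit counterexample opening \S\ref{sec main thm} (the two extended multi-segments $\EE_1,\EE_2$ with $\pi(\EE_1)\cong\pi(\EE_2)$ supercuspidal). An extra operator $dual_k$ (partial dual, Definition~\ref{def partial dual}) is required, and it is precisely this operator that, via Remark~\ref{rmk partial dual}, is realized as a composition involving Atobe's phantom $P$. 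So $P$ is doing substantive work in the ``only if'' direction, not merely adjoining inert rows; your plan gives no mechanism to detect when such a move is forced. The paper's route to Atobe's theorem is therefore indirect: it proves the analogue for its own four operators (Theorems~\ref{thm integer} and \ref{thm half integer}), and then observes (Remarks~\ref{rmk partial dual} and \ref{rmk dual ui dual}) that each of those is a composition of Atobe's $R_k$, $ui_{i,j}$, $P$ and their inverses.
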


Our operators form a proper subset of those used by Atobe, see Remarks \ref{rmk partial dual}, \ref{rmk dual ui dual}, and Example \ref{exmp phantom} for comparisons between the two sets of operators. 
On the other hand, we remark that Theorem \ref{main thm intro}(2) is logically equivalent to Theorem \ref{thm Atobe's main thm}.


For any extended multi-segments $\EE$ such that $\pi(\EE)\neq 0$, in the proof of Theorem \ref{main thm intro}, we construct a particular extended multi-segment $\EE_{can}$ such that $\pi(\EE_{can})=\pi(\EE)$, which we call the {\it canonical form} of $\EE$ (see Definition \ref{def can form}). The canonical form $\EE_{can}$ has its own interest of study and representation theoretic importance. First, we have 
$\pi(\EE) \cong \pi(\EE')$ if and only if $\EE_{can}=\EE_{can}'$. Second, based on the definition of $\EE_{can}$, we give an algorithm to systematically compute the set 
\[\Psi(\pi):=\{\psi \in \Psi^+(G_n) \ | \ \pi(\EE) \in \Pi_{\psi}\}\]
in Theorem \ref{thm exhaustion of symbol}. We explicate this algorithm in Examples \ref{ex exhaustion} and \ref{ex Atobe}, where Example \ref{ex Atobe} is a reconsideration of Atobe's example in \cite[Section 3.4]{Ato22}. Third, we show that $\EE_{can}$ is uniquely characterized by the derivative information of $\pi(\EE)$ (see Theorem \ref{thm canonical form and derivatives}), which leads to our Algorithm \ref{alg Arthur type} towards the Goal \ref{goal}(1) above.  

Here we remark the difference between our Algorithm \ref{alg Arthur type} and 
Atobe's algorithm (\cite[Algorithm 3.3]{Ato22}), which are both towards the Goal \ref{goal}(1) above. 
Atobe's algorithm constructs several representations $\pi_i$ of smaller rank, and uses the information from the derivatives of $\pi$ and $\pi_i$ together with the construction of $\Psi(\pi_i)$. On the other hand, our Algorithm \ref{alg Arthur type} only uses the derivatives information of $\pi$ to construct a local Arthur parameter $\psi$ such that $\pi$ is of Arthur type if and only if $\pi \in \Pi_{\psi}$. We remark that both algorithms above result in constructing a possibly non-tempered local Arthur packet $\Pi_{\psi}$, and checking whether $\pi$ or $\pi_i$ is in this packet or not. In \cite{HJLLZ22}, we give new algorithms towards the Goal \ref{goal}(1) above, which do not require computing derivatives of representations, but use the construction of $\Psi(\pi_i)$ only, where $\pi_i$'s are several representations of smaller rank constructed from $\pi$.
Moreover, there is no construction of a non-tempered local Arthur packet involved. In \cite{HJLLZ22}, these new algorithms have been used in an inductive approach to classify certain families of unitary representations which are in local Arthur packets of symplectic or split odd special orthogonal groups.


In the following subsections, we give several applications of our main results, which are expected to play important roles in many problems related to local Arthur packets, for example, the local  non-tempered Gan-Gross-Prasad problem as in \cite{GGP20}.

\subsection{Counting of tempered representations in local Arthur packets}

It is known that tempered representations may occur in non-tempered local Arthur packets. It is a very interesting question to determine whether a given non-tempered local Arthur packet contains a tempered representation.
Notice that a local Arthur packet $\Pi_{\psi}$ contains a tempered representation only when $\psi \in \Psi(G_n)$. 


Let $\psi$ be any local Arthur parameter in $\Psi(G_n)$. M{\oe}glin's work (\cite[Corollary 4.2]{Moe09a}) implies the following inclusion
\begin{equation}\label{set inclusion}
    \{ \pi\in \Pi_{\psi} \ | \ \pi \text{ is tempered}\} \subseteq \Pi_{\psi^{\Delta}}.
\end{equation}
Here, $\psi^{\Delta}$ is the diagonal restriction of $\psi$ as in \eqref{def diag rest}. Note that for $\psi \in \Psi(G_n)$, $\psi^{\Delta}$ is always a tempered $L$-parameter.

As an application of our main results, we give a precise counting of tempered representations in each local Arthur packet $\Pi_{\psi}$ and a description of their $L$-data (see Theorems \ref{thm cuspidal count}, \ref{thm temp non-negative}, \ref{thm tempered rep general}). 
The idea is to apply the precise formula in Theorem \ref{thm exhaustion of symbol} on an extended multi-segment whose associated local Arthur parameter is tempered.
Based on the explicit counting, we provide a family of examples for the local non-tempered Gan-Gross-Prasad conjecture in \cite{GGP20}, by considering all local Arthur packets containing tempered representations, 
see Example \ref{exmp GGP}. We expect these precise countings to play important roles towards the local non-tempered Gan-Gross-Prasad problem.

\subsection{\texorpdfstring{The local $L$-packets of Arthur type}{}}
Given any local Arthur parameter $\psi$ as in \eqref{lap}, one has a local $L$-parameter $\phi_{\psi}$ as in \eqref{apequ1}, and Arthur showed that $\Pi_{\phi_{\psi}} \subseteq \Pi_{\psi}$. Considering the importance of this subset $\Pi_{\phi_{\psi}}$ in the theories of local Arthur packets and Langlands program, we give the following second application. For a local Arthur parameter $\psi$ of good parity, we completely describe the extended multi-segments $\EE$ such that $\pi(\EE) \in \Pi_{\phi_{\psi}}$ as follows. 

\begin{thm}[Theorem \ref{thm in L-packet}]\label{thm in L-packet intro}
Let $\EE$ be an extended multi-segment with associated local Arthur parameter $\psi$ such that $\pi(\EE)\neq 0$. 
Apply a sequence of row exchanges to $\EE$ to obtain another extended multi-segment
$$\EE'= \cup_{\rho}\{ ([A_i,B_i]_{\rho},l_i,\eta_i) \}_{i \in (I_{\rho},>)}$$
such that for all $\rho$ and $i < j \in I_{\rho}$, $A_i +B_i \leq A_j +B_j.$
Then $\pi(\EE)$ is in the local $L$-packet $\Pi_{\phi_{\psi}}$ if and only if the following conditions hold for all $\rho$ and $i<j \in I_{\rho}$,
    \begin{enumerate}
        \item [$\oldbullet$] $(A_i-B_i+1)-2l_i \leq 1$,
        \item [$\oldbullet$] if $A_i+B_i= A_{j}+B_{j}$ and $(A_i-B_i+1)-2l_i=1$, then $\eta_i=\eta_{j}$.
    \end{enumerate}
\end{thm}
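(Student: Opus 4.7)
The plan is to compute the Langlands data of $\pi(\EE')$ explicitly from $\EE'$ using Atobe's refinement of M{\oe}glin's construction, and to compare it directly against the Langlands data prescribed by the $L$-parameter $\phi_\psi$. By Theorem \ref{main thm intro}(1), row exchanges preserve the representation, so $\pi(\EE) \cong \pi(\EE')$, and it suffices to characterize when $\pi(\EE') \in \Pi_{\phi_\psi}$.

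Under the normalization $A_i + B_i \leq A_j + B_j$ for $i < j$, I expect Atobe's formula to realize $\pi(\EE')$ as the Langlands quotient of a standard module of the form
\[
\mathrm{Ind}(\delta_1 \times \cdots \times \delta_r \rtimes \sigma_0),
\]
where each $\delta_k$ is an essentially Steinberg representation of a $\GL$-factor attached to the truncated segment $[B_i + l_i, A_i]_\rho$ (with exponent governed by the sign $\eta_i$), and $\sigma_0$ is a tempered representation built from the residual segments of length $(A_i - B_i + 1) - 2l_i$. The chosen ordering is precisely what forces this inducing datum into Langlands position, so that the Langlands data can be read off without further rearrangement.

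Next, I would identify $\Pi_{\phi_\psi}$ explicitly. The formula \eqref{apequ1} together with the decomposition displayed just after it expresses $\phi_\psi$ as a direct sum of blocks $|\cdot|^j \phi_i \otimes S_{a_i}$ for $j = -(b_i-1)/2, \ldots, (b_i-1)/2$. A representation $\pi$ lies in $\Pi_{\phi_\psi}$ precisely when its Langlands data consists of the $j > 0$ blocks as Steinberg inducing data and the $j = 0$ blocks as a tempered core governed by the diagonal restriction $\psi^\Delta$. Matching this with the previous step, the inequality $(A_i - B_i + 1) - 2l_i \leq 1$ is exactly the requirement that the $i$-th extended segment contributes only Steinberg blocks whose $\GL$-dimension is a multiple of $a_i$ (a residual length $\geq 2$ would produce a Steinberg block of a length not appearing in $\phi_\psi$). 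The boundary case $(A_i - B_i + 1) - 2l_i = 1$ is where the segment still contributes one unit to the tempered core, and the constraint $\eta_i = \eta_j$ whenever $A_i + B_i = A_j + B_j$ ensures that the characters on the shared $\GL$-block agree, so that the resulting tempered core is the one parameterized by $\Pi_{\phi_\psi}$ rather than an endoscopic twist of it.

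The main technical obstacle will be tracking the signs $\eta_i$ through Atobe's construction, particularly in the boundary case $(A_i - B_i + 1) - 2l_i = 1$, where $\eta_i$ simultaneously determines whether a $\GL$-factor appears as a Steinberg representation or its dual and contributes to the character of the tempered core. A secondary difficulty is verifying that ordering by $A_i + B_i$ (as opposed to, e.g., by $A_i$) is the correct normalization to preclude non-Langlands position from appearing; here the inequality $(A_i - B_i + 1) - 2l_i \leq 1$ is essential, since it prevents the truncated exponents from overshooting and producing inducing data outside $\phi_\psi$.
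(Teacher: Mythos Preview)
Your high-level plan---compute the Langlands data of $\pi(\EE')$ and compare with the Langlands data prescribed by $\phi_\psi$---is indeed the paper's strategy, but your description of the Langlands data is not correct, and you are missing the key reduction step.

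First, the shape of the Langlands data: the $i$-th row of $\EE'$ does \emph{not} contribute a single Steinberg block attached to a ``truncated segment $[B_i+l_i,A_i]_\rho$.'' Rather, under the stated conditions it contributes $l_i$ distinct Steinberg segments $\Delta_\rho[B_i+k,\,-(A_i-k)]$ for $k=0,\dots,l_i-1$, each of length $A_i+B_i+1=a_i$ and central exponent $\frac{B_i-A_i}{2}+k$; see the paper's Proposition \ref{prop $L$-data L-packet}(a). The tempered core picks up exactly one summand $\rho\otimes S_{A_i+B_i+1}$ with sign $\eta_i$ when $(A_i-B_i+1)-2l_i=1$; this is (b) of that proposition. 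Once one has this explicit description, matching against the decomposition of $\phi_\psi$ in \eqref{eq associated L-parameter} is straightforward, and the two bullet conditions drop out: the first guarantees each row's contribution has the right segment lengths $a_i$, the second guarantees the character on the tempered part is well-defined (rows with the same $a_i$ and odd residual length must agree on $\eta$).

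Second, and more seriously, Atobe's construction (Definition \ref{def rep of segment}) does not directly produce Langlands data when some $B_i<0$: it is a socle followed by a composition of derivatives, and there is no immediate way to read off $L$-data from that. The paper handles this by first proving Proposition \ref{prop $L$-data L-packet} only in the non-negative case (by an induction on the number of rows, shifting the last row far away and tracking derivatives via the explicit formulae of \cite{AM20}), and then reducing the general case to the non-negative one via Proposition \ref{prop L-packet shift}, which shows that membership in $\Pi_{\phi_\psi}$ is preserved under the uniform shift $\EE\mapsto sh^d(\EE)$. Your proposal does not address either of these steps, and the sentence ``the chosen ordering is precisely what forces this inducing datum into Langlands position'' is not a substitute: the ordering by $A_i+B_i$ is what makes the segments $\Delta_\rho[B_i+k,-(A_i-k)]$ come out in increasing order of $x+y$, but establishing that these are in fact the segments requires the inductive computation.
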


The result is also generalized to $\psi \in \Psi^+(G_n)$ (see \S \ref{section L-packet}).

\subsection{``The" local Arthur parameters of representations of Arthur type}

Since local Arthur packets can have nontrivial intersection, given an irreducible representation $\pi$ of Arthur type, it may lie in several local Arthur packets. Namely, the set
\[ \Psi(\pi):= \{ \psi \in \Psi^+(G_n) \ | \ \pi \in \Pi_{\psi}  \},\]
may not be a singleton. It is a very mysterious question to determine which of these local Arthur parameters in $\Psi(\pi)$ could be called ``the" local Arthur parameter for $\pi$. We define a candidate towards this question in the third application. 

\begin{thm}[Corollary \ref{cor max}, Theorem \ref{thm can max}]\label{thm can max intro}
Suppose $\pi$ is a representation of Arthur type and of good parity. There is a unique absolute maximal (see Definition \ref{def abs max} and Corollary \ref{cor max}) extended multi-segment  $\EE^{|max|}$ such that
\begin{enumerate}
    \item [(a)]$\pi(\EE^{|max|})= \pi$, and
    \item [(b)]  $\pi \in \Pi_{\phi_{\psi}}$ if and only if $\EE^{|max|}$ satisfies the conditions in Theorem \ref{thm in L-packet intro}. In this case, $\psi_{\EE^{|max|}}=\psi$.
\end{enumerate}
\end{thm}

We call $\EE^{|max|}$ the {\it max form} of $\EE$ and let
$$\psi^{max}(\pi):=\psi_{\EE^{|max|}}.$$
The definition of $\psi^{max}(\pi)$ is extended to general representations of Arthur type but not of good parity by Theorem \ref{thm red from nu to gp}.

The distinguished member $\psi^{max}(\pi)$ is of great 
significance in the theories of local Arthur packets and automorphic forms. 
Among all the parameters in the set $\Psi(\pi)$,
we show that $\psi^{max}(\pi)$ deserves the most to be called ``the" local Arthur parameter for $\pi$. This will be elaborated in \S \ref{sec: 5 orderings}. As an opposite of $\EE^{|max|}$, we also 
specify a unique absolute minimal extended multi-segment $\EE^{|min|}$ (see Definition \ref{def abs min} and Corollary \ref{cor abs min}), called the {\it minimal form} of $\EE$, such that $\pi(\EE^{|min|})= \pi$.
This allows us to define another local Arthur parameter for $\pi$
$$\psi^{min}(\pi):=\psi_{\EE^{|min|}}.$$

We remark that both the local Arthur parameters $\psi^{max}(\pi)$ and $\psi^{min}(\pi)$ have their own interests of study.  
Note that $\psi^{max}(\pi)$ provides a disjoint partition of local Arthur packets by grouping together the representations with the same $\psi^{max}(\pi)$ (Corollary \ref{cor psi max partition}). Hence, a feasible way to overcome the difficulty of a problem related to the nontrivial intersections of local Arthur packets is to reduce it to $\psi^{max}(\pi)$. 
In \cite{HLLZ22}, jointly with Zhang, via a key reduction to $\psi^{max}(\pi)$, 
we proved the closure ordering conjecture on local $L$-parameters for representations in local Arthur packets of $\mathrm{G}_n$, namely, given any representation $\pi$ in a local Arthur packet $\Pi_{\psi}$, the closure of the local $L$-parameter of $\pi$ in the Vogan variety must contain the local $L$-parameter corresponding to $\psi$. 
The ABV analogue of the closure ordering conjecture has been proved in \cite[Proposition 7.10]{CFMMX22}, using geometric methods.
Hence, the result in \cite{HLLZ22} provides evidence for the Vogan conjecture on the coincidence of the local Arthur packets with the ABV-packets as in \cite[Section 8.3, Conjecture 1]{CFMMX22}.

\subsection{\texorpdfstring{Characterizations of $\psi^{max}(\pi)$ and $\psi^{min}(\pi)$}{}}
\label{sec: 5 orderings}

It is expected that the reduction to the unique maximal element $\psi^{max}(\pi)$ plays a key role in many problems related to local Arthur packets. Hence, it is desirable to explore the representation theoretic characterizations of $\psi^{max}(\pi)$ and its counterpart $\psi^{min}(\pi)$. More generally, it is an interesting question to study $\psi^{max}(\pi)$ and $\psi^{min}(\pi)$ in terms of the structure of the set $\Psi(\pi)$.

Notice that the actions of the operators on extended multi-segments naturally induce actions on local Arthur parameters (see Definition \ref{def operators on parameters}). 
Among all the operators (including the inverses), $ ui_{i,j}^{-1}$, $dual \circ ui_{j,i} \circ dual $ and $dual_k^{-}$ raise the ``temperedness" of local Arthur parameters under a certain measurement of temperedness (see Theorem \ref{thm Jiang's partition}(1)). These are called  \emph{raising} operators (see Definition \ref{def raising operator}), which induces a partial ordering $\geq_{O}$ on $\Psi(\pi)$. We realize that this partial ordering dominates 4 other orderings arising from problems related to local Arthur packets (see Theorem \ref{thm 5 orderings}(1)). We recall the definitions of these orderings now.

\begin{enumerate}
    \item $O$ (stands for Operator) ordering (see Definition \ref{def operator ordering}): 
$\psi_1 \geq_{O} \psi_2$ if $\psi_1=\psi_2$ or there exists a sequence of raising operators $\{T_l\}_{l=1}^m$ such that
\[ \psi_1= T_1 \circ \cdots \circ T_m(\psi_2).\]

\item $D$ (stands for Deligne) ordering (see \cite[Definition 1.8]{HLLZ22}): $\psi_1 \geq_D \psi_2$ if $ \underline{p}^D(\psi_1) \geq \underline{p}^D(\psi_2)$ under the dominance order. Here $\underline{p}^D(\psi_i)$ is the partition obtained via restricting $\psi_i$ to the Deligne-$\SL_2(\mathbb{C})$. 

\item $A$ (stands for Arthur) ordering (see Definition \ref{A ordering}): $\psi_1 \geq_A \psi_2$ if $ \underline{p}^A(\psi_1) \leq \underline{p}^A(\psi_2)$ under the dominance order. Here $\underline{p}^A(\psi_i)$ is the partition obtained via restricting $\psi_i$ to the Arthur-$\SL_2(\mathbb{C})$. 

\item $N$ (stands for Normalized) ordering (see Definition \ref{M ordering}): $\psi_1 \geq_{N} \psi_2$ if for any $\sigma= \St(\rho',a_0)$ and $s_0 \in \R_{\geq_{\half{1}}}$, the following inequality holds
$$
 \ord_{s=s_0} N_{\psi_1}(s,\pi, \sigma)
 \leq \ord_{s=s_0} N_{\psi_2}(s,\pi, \sigma).
$$
Here $\sigma= \St(\rho',a_0)$ is a Steinberg representation,  $N_{\psi_i}(s,\pi,\sigma)$ is the Arthur normalized intertwining operator considered in \cite{Moe10} and \cite{Art13} (see \S \ref{sec: M ordering}), and $\ord_{s=s_0} N_{\psi_i}(s,\pi, \sigma)$ is the order of zeros of $N_{\psi_i}(s,\pi,\sigma)$ at $s=s_0$.

\item $C$ (stands for Closure) ordering (see \cite[Definition 1.11]{HLLZ22}): $\psi_1 \geq_C \psi_2$ if 
$\overline{C_{\phi_{\psi_1}}} \supseteq C_{\phi_{\psi_2}}$. Here $C_{\phi_{\psi_i}}$ is the orbit corresponding to $\psi_i$ in the Vogan variety (see \cite{Vog93}, \cite[Section 4]{CFMMX22}, or, \cite[Introduction]{HLLZ22}).
\end{enumerate}

Then, we have the following characterizations of $\psi^{max}(\pi)$ and $\psi^{min}(\pi)$ applying the 5 orderings above. 

\begin{thm}[Theorems \ref{thm structure of Psi(pi) intro}, \ref{thm Jiang's partition}, \ref{thm Moeglin}, and  {\cite[Theorems 1.9, 1.12]{HLLZ22}}]
\label{thm 5 orderings}
Let $\mathrm{G}_n$ be $\Sp_{2n}$ or split $\SO_{2n+1}$.
\begin{enumerate}
    \item Let $\geq$ be any of the 4 orderings $\geq_D, \geq_A, \geq_{N}, \geq_C$. 
    If $T$ is a raising operator, then 
    \[T(\psi) \geq \psi,\]
    for any $\psi \in \Psi^+(G_n).$
    In other words, if $\psi \geq_{O} \psi'$, then $\psi \geq \psi'$.
    \item  
    Let $\pi\in \Pi_A(G_n)$ and let $\geq$ be any of the 5 orderings $\geq_O, \geq_D, \geq_A, \geq_{N}, \geq_C$. The local Arthur parameters $\psi^{max}(\pi)$ and $ \psi^{min}(\pi)$ are the unique elements in $\Psi(\pi)$ satisfying the following inequality
    \[ \psi^{max}(\pi) \geq \psi \geq \psi^{min}(\pi),\]
    for any $\psi \in \Psi(\pi).$
\end{enumerate}
\end{thm}

 The $A$ ordering is related to Jiang's Conjecture on the upper bound of wavefront sets (\cite[Conjecture 4.2]{Jia14}). The $D$ ordering is a natural analogue of the $A$ ordering. The $C$ ordering is an evidence of Vogan Conjecture (\cite[Conjecture 8.3.1]{CFMMX22}) as remarked at the end of the previous subsection. We give further remarks on the $N$ ordering as follows.

Arthur's normalized intertwining operators 
$N_{\psi}(s,\pi,\sigma)$ play important roles in theories of endoscopy and trace formula (see \cite[\S 2.3 and \S 2.4]{Art13}). 
M{\oe}glin (\cite{Moe10}) showed that $N_\psi(s,\pi,\sigma)$ is holomorphic for any real $s\geq 0.$ Given $\pi \in \Pi_A(G_n)$, if 
$\pi\in\Pi_{\phi_\psi}$, then $N_\psi(s,\pi,\sigma)$ is the same as the Langlands-Shahidi normalized intertwining operator and
$$
 \ord_{s=s_0} N_\psi(s,\pi, \sigma)
 \leq \ord_{s=s_0} N_{\psi'}(s,\pi, \sigma)
$$
for any $s_0\in\mathbb{R}_{\geq\frac{1}{2}} $ and any $\psi' \in \Psi(\pi)$ (\cite[Proposition 4.1]{Moe12}). In this case, it is expected that $N_\psi(s,\pi,\sigma)$ is nonzero on a certain right half plane (e.g., \cite[Lemma 4.2]{Kim05}).
However, if $\psi'\neq \psi$, then $N_{\psi'}(s,\pi, \sigma)$ may vanish at some point $s_0 \geq \frac{1}{2}$, that is, the normalizing factor may overkill the poles of the intertwining operator (see Theorem \ref{thm M vanishing}, Corollary \ref{cor van N ordering}). Therefore, an interesting natural question is as follows.

\begin{ques}\label{ques zeros of intertwining operators}
Given $\pi \in \Pi_A(G_n)$, if 
$\pi\notin\Pi_{\phi_\psi}$, for any $\psi \in \Psi(\pi)$, then which local Arthur parameter in $\Psi(\pi)$ gives the least order of zeros for $N_\psi(s,\pi,\sigma)$?
\end{ques}

In Theorem \ref{thm 5 orderings} above (see Theorem \ref{thm Moeglin}), we show that the answer for Question \ref{ques zeros of intertwining operators} is exactly that $\psi=\psi^{max}(\pi)$. This result shows the significance of the unique maximal element $\psi^{max}(\pi)$ in the theories of local Arthur packets and automorphic forms.

From the discussions above, we can see that given a representation $\pi$ of Arthur type, $\psi^{max}(\pi)$ is entitled to be ``the" local Arthur parameter for $\pi$, no matter if $\pi$ belongs to any local $L$-packet of Arthur type or not.


\subsection{Further applications}\label{sec: enhanced shahidi's conjecture intro}

In this subsection, we list further applications of the results in this paper, which will appear in our subsequent work.\\

\noindent\textbf{Non-containment of local Arthur packets:}

As an application of the Theorem \ref{thm 5 orderings}(2) in the case of
ordering $\geq_O$, jointly with Zhang (\cite{HLLZ22}), we showed the non-containment of local Arthur packets for $\mathrm{G}_n$ being $\Sp_{2n}$ or split $\SO_{2n+1}$. That is, local Arthur packets of $G_n$ can not be fully contained in other ones, which is in contrast to the situation over Archimedean local fields. \\

\noindent\textbf{Enhanced Shahidi Conjecture:}

As an application of Theorem \ref{main thm intro}(3), in \cite{HLL24}, we proved the enhanced Shahidi conjecture for $G_n$, which strengthens the well-known Shahidi conjecture. More precisely, the enhanced Shahidi conjecture states that for any quasi-split reductive group $G$, assuming that there is a local Arthur packets theory for $G$ as conjectured in \cite[\S 6]{Art89}, then 
\begin{enumerate}
    \item For any local Arthur parameter $\psi \in \Psi(G)$, the local Arthur packet $\Pi_{\psi}$ is tempered if and only if it has a generic member.
    \item For any local Arthur parameter $\psi \in \Psi^+(G)$, the local Arthur packet $\Pi_{\psi}$ is generic if and only if it has a generic member.
\end{enumerate}
This result was then used to prove the following conjecture on local components of automorphic forms for $G_n$. 

\begin{conj}[{\cite[Conjecture 1.5]{HLL24}}]
\label{conj generic almost all intro}
    Let $\pi$ be an automorphic representation in the discrete spectrum of a reductive group $\mathrm{G}$. Suppose there exists a finite place $v_0$ of $k$ such that $\pi_{v_0}$ is generic, then $\pi_v$ is generic for almost all places.
\end{conj}

\noindent\textbf{Unramified representations of Arthur type:}

Considering the importance of unramified representations in the theory of automorphic forms and automorphic representations, it is desirable to study more closely the unramified representations of Arthur type. As further application of the results in this paper, in \cite{HLL24}, we gave a characterization of unramified representations of $G_n$ of Arthur type in terms of their $L$-data, making use of Algorithm \ref{alg Arthur type}. This result was then used to prove the following conjectures for $G_n$.

\begin{conj}\label{conj unram intro}
Let $\mathrm{G}$ be a connected reductive group defined over a non-Archimedean local field $F$. Assume that there is a local Arthur packets theory for $G$ as conjectured in \cite[\S 6]{Art89}.  
\begin{enumerate}
    \item $(${\cite[Conjecture 1.9]{HLL24}}$)$. Any unramified representation of $G(F)$ of Arthur type lies in exactly one local Arthur packet. Moreover, it lies in the $L$-packet associated to an anti-tempered local Arthur packet.
    \item $(${\cite[Conjecture 2A]{Clo07}}$)$. Suppose that $\psi$ is a local Arthur parameter of $G(F)$ such that $\phi_\psi$ is unramified. Then the local Arthur packet $\Pi_\psi$ contains a unique unramified representation. More specifically, the unramified representation is the one associated to $\phi_\psi$ via the Satake isomorphism $($\cite{Sat63}$)$.
    \item $($Arthur, Clozel, \cite[Conjecture 6.1]{Sha11}$)$.
Let $\mathrm{G}$ be a connected reductive group defined over $k$ and $\pi=\otimes_v \pi_v$ be a cuspidal automorphic representation of $\mathrm{G}(\mathbb{A}_k).$
Then, for almost all finite places $v$, we have $\pi_v\in\Pi_{\phi_{\psi_v}}.$ 
\end{enumerate}
\end{conj}

Based on the Sage code shared by Atobe on his reformulation of local Arthur packets, we provide the detailed code for all the algorithms in this paper, which has been posted publicly at Github:\\
\url{https://github.com/ChiHengLo/Intersection-of-local-Arthur-packets}

Following is the structure of this paper. In \S \ref{sec nota and prel}, we recall necessary notation and preliminaries. In \S \ref{sec atob refor}, we recall Atobe's reformulation on M{\oe}glin's construction of local Arthur packets, the non-vanishing criterion; define the operators of row exchange and union-intersection; and introduce the Aubert-Zelevinsky dual formula of Atobe on extended multi-segments. In \S \ref{sec shift and add}, first, we prove several technical results related to the operator of ``shift". Then we show the relation between 
the $L$-data of $\pi(\EE)$ and $\EE$, and determine certain invariants from the $L$-data of  $\pi(\EE)$ which narrow down the set of all possible extended multi-segments $\EE'$ such that  $\pi(\EE')\cong\pi(\EE).$ In \S \ref{sec union-intersection}, we define a generalized version of the union-intersection operator, and show that for each
extended multi-segment $\EE$ such that $\pi(\EE)\neq 0$, among the set of all extended multi-segments obtained from $\EE$ by successively applying union-intersections and row exchanges, there exists a unique (up to row exchanges) minimal element which carries rich derivative information of $\pi(\EE)$. In \S \ref{sec main thm}, we prove our main results, Theorem \ref{main thm intro}. In \S \ref{sec can
form}, we define and construct the canonical form $\EE_{can}$ of $\EE$, and study its properties. Then we give an algorithm to determine whether a representation $\pi$ of good parity is of Arthur type or not. In the affirmative case, we give a formula to exhaust the set $\{ \EE\ | \ \pi(\EE)=\pi\}$.
In \S \ref{section L-packet}, we give a criterion on $\EE$ such that $\pi(\EE)$ is in the local $L$-packet of Arthur type and prove Theorem \ref{thm in L-packet intro}. In \S \ref{section the Arthur parameter}, we construct the max form $\EE^{|max|}$ of $\EE$ and prove Theorem \ref{thm can max intro}. Then, we define $\psi^{max}(\pi)=\psi_{\EE^{|max|}}$ to be ``the" local Arthur parameter for any irreducible representation $\pi$ of Arthur type.   
In \S \ref{characterizations}, we show that $\psi^{max}(\pi)$ and $\psi^{min}(\pi)$ are exactly the unique maximal and minimal elements in $\Psi(\pi)$ under the two orderings $\geq_A$ and $\geq_N$. In particular, 
 $\psi^{max}(\pi) \in \Psi(\pi)$ gives the least order of zeros for $N_{\psi}(s,\pi,\sigma)$.

\subsection*{Acknowledgements} 
The authors would like to thank Dihua Jiang and Freydoon Shahidi for their interests, constant support, and helpful discussions. 
The authors also would like to thank Hiraku Atobe for helpful communications on certain statements in his paper \cite{Ato20b} (Theorems \ref{prop uniform} and \ref{thm shift left}), helpful comments and suggestions, and for generously sharing his sage code on the reformulation of M{\oe}glin construction of local Arthur packets. Finally, the authors would like to thank Wee Teck Gan, Tasho Kaletha, Anantharam Raghuram, David Renard, Bin Xu, and Lei Zhang for helpful comments and suggestions.

\section{Notation and preliminaries}\label{sec nota and prel}

Let $F$ be a local non-Archimedean field of characteristic $0$ with normalized absolute value $|\cdot|.$ We also regard $|\cdot|$ as a character of $\GL_n(F)$ via composition with the determinant. Set $G_n$ to be split $\SO_{2n+1}(F)$ or $\Sp_{2n}(F).$ We write $\Pi(G)$ for the set of equivalence classes of irreducible smooth representations of a group $G.$ We assume that every representation is smooth. 

Suppose $\Pi_1, \Pi_2$ are representations of finite length. We denote $[\Pi_1]$ the image of $\Pi_1$ in the Grothendieck group. We write $\Pi_1 \geq \Pi_2$ if $[\Pi_1]-[\Pi_2]$ is a non-negative linear combination of irreducible representations.

For a multi-set $X$ and $a\in X$, we let $m_X(a)$ denote the multiplicity of $a$ in $X.$ Let $Y$ be another multi-set. We define the following multi-sets $Z$ by specifying the multiplicity $m_Z(a)$ for each $a$ such that $m_X(a)+m_{Y}(a)>0$.
\begin{enumerate}
    \item [$\oldbullet$] The sum of multi-sets $X+Y$: $m_{X+Y}(a):=m_X(a)+m_Y(a) $.
    \item [$\oldbullet$] The union of multi-sets $X\cup Y$: $m_{X\cup Y}(a):=\max(m_X(a),m_Y(a)).$
    \item [$\oldbullet$] The difference of multi-sets $X\setminus Y$: $m_{X\setminus Y}(a):=\max(m_X(a)-m_Y(a),0).$
    \item [$\oldbullet$] The intersection of multi-sets $X \cap Y$: $m_{X\cap Y}(a):=\min(m_X(a),m_Y(a)).$
\end{enumerate}
Finally, we define the symmetric difference of multi-sets by $X\Delta Y:= (X\cup Y)\setminus(X\cap Y).$

\subsection{Langlands classification}
In this subsection, we recall the Langlands classification for $\GL_n(F)$ and $G_n$. See \cite{Kon03} for the general setup for general reductive $p$-adic groups. 

Let $n$ be a positive integer and fix a Borel subgroup of $\GL_n(F)$. Let $P$ be a standard parabolic subgroup of $\GL_n(F)$ with Levi subgroup $M\cong \GL_{n_1}(F)\times\cdots\times\GL_{n_r}(F).$ Let $\tau_i\in \Pi(\GL_{n_i}(F))$ for $i=1,2,\dots,r.$ We set
$$
\tau_1\times\cdots\times\tau_r := \mathrm{Ind}_{P}^{\GL_n(F)}(\tau_1\otimes\cdots\otimes\tau_r)
$$
to be the normalized parabolic induction. We define a \emph{segment}, denoted by $[x,y]_\rho$, to be a set of supercuspidal representations of the form
$$
[x,y]_\rho=\{\rho|\cdot|^x, \rho|\cdot|^{x-1},\dots,\rho|\cdot|^y\}
$$
where $\rho$ is an irreducible unitary supercuspidal representation of $\GL_n(F)$ and $x,y\in\mathbb{R}$ such that $x-y$ is a non-negative integer. We denote a \emph{Steinberg representation} attached to the segment $[x,y]_\rho$ by $\Delta_\rho[x,y].$ This is the unique irreducible subrepresentation of $\rho|\cdot|^x\times\cdots\times\rho|\cdot|^y.$ It is an essentially discrete series representation of $\GL_{n(x-y+1)}(F).$ When it is clear in context, we refer to both $[x,y]_\rho$ and $\Delta_\rho[x,y]$ as segments. We also set $Z_\rho[y,x]$ to be the unique irreducible quotient of $\rho|\cdot|^x\times\cdots\times\rho|\cdot|^y.$ In the case $y=x+1,$ we set $\Delta_\rho[x,x+1]=Z_\rho[x+1,x]$ to be the trivial representation of $\GL_0(F).$

We say two segments $[x_1,y_1]_{\rho_1}$ and $ [x_2,y_2]_{\rho_2}$ are \emph{linked} if $\rho_1\cong\rho_2,$ the set $[x_1,y_1]_{\rho_1} \cup [x_2,y_2]_{\rho_2}$ is again a segment, and neither $ [x_1,y_1]_{\rho_1} \supseteq [x_2,y_2]_{\rho_2}$ nor $[x_1,y_1]_{\rho_1} \subseteq [x_2,y_2]_{\rho_2}$. The following lemma describes when the product of two segments commute.
\begin{lemma}[{\cite[Theorem 1.1]{Tad14}, \cite[Corollary 6.10]{LM16}}]\label{lem commutativity}
Suppose the segments $[x_1,y_1]_{\rho_1}$ and $[x_2,y_2]_{\rho_2}$ are not linked. Then $\Delta_{\rho_1}[x_1,y_1] \times \Delta_{\rho_2}[x_2,y_2]$ and $    Z_{\rho_1}[y_1,x_1] \times Z_{\rho_1}[y_2,x_2] $ are both irreducible, and 
\begin{align*}
        \Delta_{\rho_1}[x_1,y_1] \times \Delta_{\rho_2}[x_2,y_2] &=        \Delta_{\rho_2}[x_2,y_2] \times \Delta_{\rho_1}[x_1,y_1],\\
                Z_{\rho_1}[y_1,x_1] \times Z_{\rho_2}[y_2,x_2] &=        Z_{\rho_2}[y_2,x_2] \times Z_{\rho_1}[y_1,x_1].
\end{align*}
\end{lemma}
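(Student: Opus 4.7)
The plan is to split the statement into the irreducibility claim and the commutativity claim, and to handle the case $\rho_1 \not\cong \rho_2$ (as supercuspidal representations, allowing for unramified twists coming from the segment exponents) separately from the case $\rho_1 \cong \rho_2$. In the first case, the cuspidal supports of the two factors lie in disjoint Bernstein blocks, so the parabolic induction $\tau_1 \times \tau_2$ is automatically irreducible for arbitrary $\tau_i$ built from them, and commutativity is a standard consequence (e.g., via the Bernstein center or the fact that the induced representation is determined by its unordered cuspidal support in this setting).

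The substantive case is $\rho_1 \cong \rho_2 = \rho$. Here I would invoke Zelevinsky's irreducibility criterion: for a product of essentially square-integrable representations $\Delta_\rho[x_1,y_1] \times \cdots \times \Delta_\rho[x_r,y_r]$, the induction is irreducible if and only if no two of the segments $[x_i,y_i]_\rho$ are linked. In our setting this reduces to the single non-linkage assumption in the hypothesis, yielding irreducibility of $\Delta_{\rho_1}[x_1,y_1] \times \Delta_{\rho_2}[x_2,y_2]$. For the $Z$-version I would apply the Aubert--Zelevinsky involution, which sends $\Delta_\rho[x,y]$ to $Z_\rho[y,x]$ (up to sign on the Grothendieck group), preserves irreducibility of a given induced representation, and commutes with parabolic induction on the level of semisimplification; alternatively one can apply the entirely parallel Zelevinsky criterion directly to products of $Z_\rho[y,x]$'s, which also states that irreducibility is equivalent to pairwise non-linkage.

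For commutativity, once irreducibility is in hand, I would argue via Jordan--Hölder constituents. By the Bernstein--Zelevinsky geometric lemma, the semisimplification $[\tau_1 \times \tau_2]$ in the Grothendieck group depends only on the unordered pair $\{\tau_1,\tau_2\}$, so
\[
[\Delta_{\rho_1}[x_1,y_1] \times \Delta_{\rho_2}[x_2,y_2]] = [\Delta_{\rho_2}[x_2,y_2] \times \Delta_{\rho_1}[x_1,y_1]]
\]
in the Grothendieck group, and similarly for the $Z$-version. Since both sides are now known to be irreducible, the equality of semisimplifications forces the actual isomorphism of representations claimed in the lemma.

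The main obstacle, and the reason the authors cite \cite{Tad14} and \cite{LM16} rather than reproduce a proof, is Zelevinsky's irreducibility criterion itself, whose proof requires a careful Jacquet module analysis (computing $r_{M,G}$ of a product of segment representations and tracking leading exponents) and is nontrivial even in the $\GL_n$ setting; once that is granted, both the irreducibility and commutativity assertions follow formally as sketched above.
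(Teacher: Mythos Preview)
The paper does not supply its own proof of this lemma; it simply records the statement and cites \cite{Tad14} and \cite{LM16}. So there is nothing to compare your argument against line by line.

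That said, your sketch is correct and essentially the standard route to this classical $\GL_n$ fact. A few minor remarks: in the case $\rho_1 \not\cong \rho_2$ you do not really need a Bernstein-block argument---since both $\rho_i$ are unitary, they cannot be nontrivial unramified twists of one another, and Zelevinsky's criterion already covers this case uniformly (segments with distinct $\rho$ are never linked). For commutativity, you can bypass the geometric lemma entirely by quoting the general fact that inductions from associate parabolics have equal semisimplifications in the Grothendieck group; combined with irreducibility this gives the isomorphism. Finally, the $Z$-case follows immediately from the $\Delta$-case by applying the Zelevinsky involution, exactly as you indicate.
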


The Langlands classification for $\GL_n(F)$ states that any irreducible representation $\tau$ of $\GL_n(F)$ can be realized as a unique irreducible subrepresentation of a parabolic induction of the form
\[\Delta_{\rho_1}[x_1,y_1]\times\cdots\times\Delta_{\rho_r}[x_r,y_r],\]
where $\rho_i$ is an irreducible unitary supercuspidal representation of $\GL_{n_i}(F),$ $[x_i,y_i]_{\rho_i}$ is a segment, and $x_1+y_1\leq\cdots\leq x_r+y_r.$ In this setting, we write
$$
\tau=L(\Delta_{\rho_1}[x_1,y_1],\dots,\Delta_{\rho_r}[x_r,y_r]).
$$

Let $(x_{i,j})_{1\leq i\leq s, 1\leq j \leq t}$ be real numbers such that $x_{i,j}=x_{1,1}-i+j.$ We define a \emph{(shifted) Speh representation} to be the irreducible representation given by
$$
\begin{pmatrix}
x_{1,1} & \cdots & x_{1,t} \\
\vdots & \ddots & \vdots \\
x_{s,1} & \cdots & x_{s,t}
\end{pmatrix}_{\rho}:=L(\Delta_{\rho}[x_{1,1},x_{s,1}],\dots,\Delta_{\rho}[x_{1,t}, x_{s,t}]).
$$

Fix a Borel subgroup of $G_n$ and let $P$ be a standard parabolic subgroup of $G_n$ with Levi subgroup $M\cong\GL_{n_1}(F)\times\cdots\times\GL_{n_r}(F)\times G_{m}.$ Let $\tau_i$ be a representation of $\GL_{n_i}(F)$ for $i=1,2,\dots,r$ and $\sigma$ be a representation of $G_{m}$. We set
$$
\tau_1\times\cdots\times\tau_r\rtimes\sigma := \mathrm{Ind}_{P}^{G_n}(\tau_1\otimes\cdots\otimes\tau_r\otimes\sigma)
$$
to be the normalized parabolic induction. 

The Langlands classification for $G_n$ states that every irreducible representation $\pi$ of $G_n$ is a unique irreducible subrepresentation of 
\[\Delta_{\rho_1}[x_1,y_1]\times\cdots\times\Delta_{\rho_r}[x_r,y_r]\rtimes\pi_{temp},\]
where $\rho_i$ is an irreducible unitary supercuspidal representation of $\GL_{n_i}(F),$ $x_1+y_1\leq\cdots\leq x_r+y_r<0,$ and $\pi_{temp}$ is an irreducible tempered representation of $G_{m}.$ In this case, we write
$$
\pi=L(\Delta_{\rho_1}[x_1,y_1],\dots,\Delta_{\rho_r}[x_r,y_r];\pi_{temp})
$$
and call $(\Delta_{\rho_1}[x_1,y_1],\dots,\Delta_{\rho_r}[x_r,y_r];\pi_{temp})$ the Langlands data, or $L$-data, of $\pi.$ In Section \ref{sec tempered parametrization}, We give more detailed parametrization of the tempered representation $\pi_{temp}$ using Arthur's theory 

\subsection{Derivative and socle}
\label{sec: der and socle}

Let $\pi$ be a smooth representation of $G_n$ of finite length. We let $Jac_{P}(\pi)$ be the Jacquet module of $\pi$ with respect to a parabolic subgroup $P$ of $G_n.$  We  denote the semisimplification of $Jac_{P}(\pi)$ by $[Jac_{P}(\pi)].$

\begin{defn}
Let $P_d$ be a standard parabolic subgroup of $G_n$ with Levi subgroup isomorphic to $\GL_{d}(F)\times G_{n-d},$ $x\in\mathbb{R},$ and $\rho$ be an irreducible unitary self-dual supercuspidal representation of $\GL_d(F).$ We define the $\rho|\cdot|^x$-derivative of $\pi$, denoted $D_{\rho|\cdot|^x}(\pi),$ to be a semisimple representation satisfying
$$
[Jac_{P_d}(\pi)]=\rho|\cdot|^x\otimes D_{\rho|\cdot|^x}(\pi) + \sum_i \tau_i\otimes\pi_i,
$$
where the sum is over all irreducible representations $\tau_i$ of $\GL_d(F)$ such that $\tau_i\not\cong\rho|\cdot|^x.$
\end{defn}

While we call these derivatives, one should be careful not to confuse these with the Bernstein-Zelevinsky derivatives defined in \cite{BZ76}. We set $D_{\rho|\cdot|^{x}}^{(0)}(\pi)=\pi$ and for $k$ a non-negative integer, we define recursively
$$
D_{\rho|\cdot|^{x}}^{(k)}(\pi)=\frac{1}{k}D_{\rho|\cdot|^{x}}\circ D_{\rho|\cdot|^{x}}^{(k-1)}(\pi).
$$
If $D_{\rho|\cdot|^{x}}^{(k)}(\pi)\neq 0$, but $D_{\rho|\cdot|^{x}}^{(k+1)}(\pi)=0,$ then we say that $D_{\rho|\cdot|^{x}}^{(k)}(\pi)$ is the \emph{highest $\rho|\cdot|^{x}$-derivative} of $\pi$. If $D_{\rho|\cdot|^{x}}(\pi)=0,$ then say that $\pi$ is \emph{$\rho|\cdot|^{x}$-reduced}.

We also need to define derivatives for $\GL_n(F).$ However, in this situation, we must distinguish between left and right derivatives. We follow \cite[\S5]{Xu17a}.

\begin{defn}
Let $P_d$ (resp. $Q_d$) be a standard parabolic subgroup of $\GL_n(F)$ with Levi subgroup isomorphic to $\GL_{d}(F)\times \GL_{n-d}(F)$ (resp. $\GL_{n-d}(F)\times \GL_{d}(F)$), $x\in\mathbb{R},$ $\sigma$ be a smooth representation of $\GL_n(F)$, and $\rho$ be an irreducible unitary self-dual supercuspidal representation of $\GL_d(F).$ We define the left (resp. right) $\rho|\cdot|^x$-derivative of $\sigma$, denoted $D_{\rho|\cdot|^x}(\sigma)$ (resp. $D_{\rho|\cdot|^x}^{op}(\sigma)$), to be a semisimple representation satisfying
$$
[Jac_{P_d}(\sigma)]=\rho|\cdot|^x\otimes D_{\rho|\cdot|^x}(\sigma) + \sum_i \tau_i\otimes\sigma_i,
$$

$$
\left( \text{resp.  } [Jac_{Q_d}(\sigma)]= D_{\rho|\cdot|^x}^{op}(\sigma)\otimes\rho|\cdot|^x + \sum_i \sigma_i\otimes\tau_i, \right)
$$
where the sum is over all irreducible representations $\tau_i$ of $\GL_d(F)$ such that $\tau_i\not\cong\rho|\cdot|^x.$
\end{defn}

Note that the right derivative defined in \cite[\S5]{Xu17a} uses the contragredient; however, for our purposes, we are only concerned with derivatives for self-dual $\rho.$ 

Atobe and M{\' i}nguez gave explicit formulas for computing the highest nonzero derivative in the good parity case which we will use frequently (\cite[Proposition 6.1, Theorem 7.1]{AM20}). We also note that these derivatives satisfy the Leibniz rules below.

\begin{lemma}[{\cite[\S5]{Xu17a}}] \label{lem Leibniz rule}
Let $\rho$ be an irreducible unitary self-dual supercuspidal representation of $\GL_d(F)$ and $x\in\mathbb{R}.$
\begin{enumerate}
    \item [1.] For  $\sigma,\sigma_1,\sigma_2 \in \Pi(\GL_n(F)), \ \tau \in \Pi(G_m)$, we have
    \[ D_{\rho|\cdot|^x}( \sigma \rtimes \tau)= D_{\rho|\cdot|^x}(\sigma) \times \tau + D_{\rho|\cdot|^{-x}}^{op}(\sigma) \times \tau + \sigma \rtimes D_{\rho|\cdot|^x}(\tau)\]
    \begin{align*}
    D_{\rho|\cdot|^x}(\sigma_1 \times \sigma_2)&= D_{\rho|\cdot|^x}(\sigma_1) \times \sigma_2+ \sigma_1 \times D_{\rho|\cdot|^x}(\sigma_2)\\
    D_{\rho|\cdot|^{x}}^{op}(\sigma_1 \times \sigma_2)&= D_{\rho|\cdot|^{x}}^{op}(\sigma_1) \times \sigma_2+ \sigma_1 \times D_{\rho|\cdot|^{x}}^{op}(\sigma_2)
\end{align*} 
\item [2.] For $a\geq b$,
\begin{align*}
      D_{\rho|\cdot|^x}(\Delta_{\rho}[a,b])&= \begin{cases} 0 & \text{ if } x \neq a,\\
\Delta_{\rho}[a-1,b] & \text{ if }x=a.\end{cases} \\
      D_{\rho|\cdot|^x}^{op}(\Delta_{\rho}[a,b])&= \begin{cases} 0 & \text{ if } x \neq b,\\
\Delta_{\rho}[a,b+1] & \text{ if }x=b.\end{cases} \\
D_{\rho|\cdot|^x}(Z_{\rho}[b,a])&= \begin{cases} 0 & \text{ if } x \neq b,\\
Z_{\rho}[b+1,a] & \text{ if }x=b.\end{cases} \\
      D_{\rho|\cdot|^x}^{op}(Z_{\rho}[b,a])&= \begin{cases} 0 & \text{ if } x \neq a,\\
Z_{\rho}[b,a-1] & \text{ if }x=a.\end{cases} 
\end{align*}
\item [3.] $D_{\rho|\cdot|^{x}}$ commutes with $D_{\rho|\cdot|^{y}}$ if $|x-y|>1$. 
\end{enumerate}
\end{lemma}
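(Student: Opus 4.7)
The plan is to derive all three parts from two standard inputs: the Bernstein--Zelevinsky geometric lemma for $\GL_n$, and Tadi\'c's structure formula for classical groups. Both give explicit decompositions of Jacquet modules of parabolically induced representations, and since $D_{\rho|\cdot|^x}$ and $D^{op}_{\rho|\cdot|^x}$ are defined by reading off certain $\rho|\cdot|^x$-isotypic components of $[Jac_{P_d}(\pi)]$ and $[Jac_{Q_d}(\pi)]$, the whole lemma reduces to careful bookkeeping inside these decompositions.

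For Part 1, I would handle the $\GL$-Leibniz rules first. Every summand of $[Jac_{P_d}(\sigma_1 \times \sigma_2)]$ produced by the geometric lemma has the shape $(\mu_1 \times \mu_2) \otimes (\nu_1 \times \nu_2)$ with $\dim \mu_1 + \dim \mu_2 = d$; since $\rho|\cdot|^x$ is irreducible supercuspidal, only the splittings $(d,0)$ and $(0,d)$ can contribute to the $\rho|\cdot|^x$-isotypic part in the first tensor slot, giving exactly the two stated terms. The right-derivative identity is obtained by the same argument with $Q_d$. For the mixed induction $\sigma \rtimes \tau$, Tadi\'c's formula expresses $[Jac_{P_d}(\sigma \rtimes \tau)]$ as a sum of three flavors, indexed by where the $\GL_d$-factor sits inside the Levi: it comes from a ``left'' piece of $\sigma$, from a ``right'' piece of $\sigma$ (which after applying the MVW involution becomes a right-derivative twisted by the contragredient), or from inside $\tau$. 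Because $\rho$ is self-dual, the middle flavor collapses to $D_{\rho|\cdot|^{-x}}^{op}(\sigma) \times \tau$, the sign flip on $x$ being exactly the effect of the MVW twist that sends $\rho|\cdot|^{x}$ to $\rho^\vee|\cdot|^{-x} = \rho|\cdot|^{-x}$.

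For Part 2, I would compute directly from the definition. Since $\Delta_\rho[a,b]$ embeds into $\rho|\cdot|^a \times \cdots \times \rho|\cdot|^b$, its Jacquet module with respect to the maximal parabolic $P_d$ is $\rho|\cdot|^a \otimes \Delta_\rho[a-1,b]$, and the left $\rho|\cdot|^x$-derivative formula follows immediately. The right-derivative formula for $\Delta_\rho[a,b]$ comes from the analogous computation with $Q_d$, while the two formulas for $Z_\rho[b,a]$ follow by applying the Aubert--Zelevinsky involution, which interchanges $\Delta_\rho$ with $Z_\rho$ and swaps left- and right-derivatives.

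Part 3 will follow from the fact that when $|x-y|>1$, the segments $[x,x]_\rho$ and $[y,y]_\rho$ are unlinked, so by Lemma~\ref{lem commutativity} we have $\rho|\cdot|^x \times \rho|\cdot|^y \cong \rho|\cdot|^y \times \rho|\cdot|^x$. Unwinding definitions, both compositions $D_{\rho|\cdot|^x} D_{\rho|\cdot|^y}(\pi)$ and $D_{\rho|\cdot|^y} D_{\rho|\cdot|^x}(\pi)$ read off components of the double Jacquet module $[Jac_{P_{d,d}}(\pi)]$ indexed by ordered pairs, and the above irreducibility identifies them. The main obstacle I anticipate is the MVW bookkeeping in the middle flavor of Tadi\'c's formula for Part 1: self-duality of $\rho$ is precisely what forces the a priori distinct twisted contributions to collapse into the single term $D_{\rho|\cdot|^{-x}}^{op}(\sigma) \times \tau$ stated in the lemma, and keeping the sign conventions straight there is the delicate point.
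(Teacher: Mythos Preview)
Your approach is correct and is essentially the standard argument: the paper does not give its own proof of this lemma but simply cites it from \cite[\S5]{Xu17a}, where the three parts are obtained exactly as you describe, via the geometric lemma and Tadi\'c's structure formula for classical groups (the MVW/contragredient twist being the source of the $-x$ in the middle term), together with the well-known Jacquet module computations for $\Delta_\rho$ and $Z_\rho$.
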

For a multi-set of real numbers $\{x_1,\dots,x_r\}$, we denote the composition of derivatives by
\[ D_{\rho|\cdot|^{x_1,\dots ,x_r}}(\pi):=D_{\rho|\cdot|^{x_r}} \circ \cdots \circ D_{\rho|\cdot|^{x_1}}(\pi).\]
For example, if $\{x,\dots,x\}$ contains $k$ copies of $x$, then
\[ D_{\rho|\cdot|^{x,\dots,x} }(\pi)= (k!)\cdot D_{\rho|\cdot|^{x}}^{(k)}(\pi). \]
If the $D_{\rho|\cdot|^{x}}^{(k)}(\pi)$ in the right hand side is the highest derivative, then we say the left hand side is a highest derivative up to a scalar.
\begin{lemma}\label{lem Frobenius}
Suppose $\rho$ is a self-dual supercuspidal representation of $\GL_d$, and $\pi$ is an irreducible representation of $G_{(n+dt)}$. Then there exists a representation $\sigma$ of $G_n$ such that
\[ \pi \hookrightarrow \rho|\cdot|^{x_1} \times \cdots \times \rho|\cdot|^{x_t} \rtimes \sigma \]
if and only if
\[ D_{\rho|\cdot|^{x_1,\dots,x_t}}( \pi) \neq 0. \]
In this case, if $\sigma$ has a unique irreducible subrepresentation $\sigma'$, then 
\[   D_{\rho|\cdot|^{x_1,\dots,x_t}}( \pi) \geq \sigma'.\]
\end{lemma}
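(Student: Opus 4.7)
The plan is to interpret both sides of the equivalence in terms of a single Jacquet module. Let $P$ denote the standard parabolic of $G_{n+dt}$ with Levi $M = \GL_d^t \times G_n$. By induction on $t$, using transitivity of Jacquet functors together with the defining property of $D_{\rho|\cdot|^x}$ as the $\rho|\cdot|^x$-isotypic component of $Jac_{P_d}(-)$, one verifies that the $\rho|\cdot|^{x_1}\otimes\cdots\otimes\rho|\cdot|^{x_t}$-isotypic part of $[Jac_P(\pi)]$ (along the first $t$ factors of the Levi) equals $\rho|\cdot|^{x_1}\otimes\cdots\otimes\rho|\cdot|^{x_t}\otimes D_{\rho|\cdot|^{x_1,\dots,x_t}}(\pi)$. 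This identity serves as the bridge between the two formulations.

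For the backward direction, suppose $D_{\rho|\cdot|^{x_1,\dots,x_t}}(\pi) \neq 0$ and pick any irreducible subquotient $\sigma$. The identity above exhibits $\rho|\cdot|^{x_1}\otimes\cdots\otimes\rho|\cdot|^{x_t}\otimes\sigma$ as a subquotient of $[Jac_P(\pi)]$; since its cuspidal support along the $\GL_d^t$-factor is distinguished from the remaining summands, one may extract it from the corresponding Bernstein block of $Jac_{\bar P}(\pi)$. Bernstein's second adjointness then produces the desired embedding $\pi \hookrightarrow \rho|\cdot|^{x_1}\times\cdots\times\rho|\cdot|^{x_t}\rtimes\sigma$.

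For the forward direction, given an embedding $\pi \hookrightarrow \mathrm{Ind}_P^{G_{n+dt}}(\rho|\cdot|^{x_1}\otimes\cdots\otimes\rho|\cdot|^{x_t}\otimes\sigma)$, second adjointness yields a nonzero $M$-morphism
\[
f \colon Jac_{\bar P}(\pi) \longrightarrow \rho|\cdot|^{x_1}\otimes\cdots\otimes\rho|\cdot|^{x_t}\otimes\sigma.
\]
Any irreducible subrepresentation $\sigma''$ of the image of $f$ gives $\rho|\cdot|^{x_1}\otimes\cdots\otimes\rho|\cdot|^{x_t}\otimes\sigma''$ as a subquotient of $Jac_{\bar P}(\pi)$, which (via the identification of $[Jac_P]$ with $[Jac_{\bar P}]$ for self-dual $\rho$) forces $\sigma'' \leq D_{\rho|\cdot|^{x_1,\dots,x_t}}(\pi)$; in particular the derivative is nonzero. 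When $\sigma$ has a unique irreducible subrepresentation $\sigma'$, then the unique irreducible subrepresentation of the target of $f$ is $\rho|\cdot|^{x_1}\otimes\cdots\otimes\rho|\cdot|^{x_t}\otimes\sigma'$; since every nonzero subrepresentation contains it, the image of $f$ does, so the argument above applies with $\sigma'' = \sigma'$, yielding the final inequality $\sigma' \leq D_{\rho|\cdot|^{x_1,\dots,x_t}}(\pi)$ in the Grothendieck group.

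The main obstacle is reconciling the derivative (defined via the standard parabolic $P$) with second adjointness (which naturally produces maps out of $Jac_{\bar P}$). For classical groups with self-dual $\rho$, the Weyl-element twist relating these two Jacquet modules acts on $\GL_d$-cuspidal supports by reordering and negating exponents, leaving the $\GL_d^t$-cuspidal support $\{\rho|\cdot|^{x_1},\dots,\rho|\cdot|^{x_t}\}$ unchanged as a multi-set; the combinatorial bookkeeping required to match this Bernstein-component extraction against the successive-derivative identity at each inductive step is the only delicate point.
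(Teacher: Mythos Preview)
Your identification of $\rho|\cdot|^{x_1}\otimes\cdots\otimes\rho|\cdot|^{x_t}\otimes D_{\rho|\cdot|^{x_1,\dots,x_t}}(\pi)$ with the relevant isotypic piece of $[Jac_P(\pi)]$ is fine; the gap is in the adjunctions. For the forward direction, an embedding $\pi \hookrightarrow \mathrm{Ind}_P^{G}(\rho|\cdot|^{x_1}\otimes\cdots\otimes\sigma)$ is an element of $\Hom_G(\pi, \mathrm{Ind}_P(\tau))$, and \emph{first} adjointness (ordinary Frobenius reciprocity, $Jac_P$ left adjoint to $\mathrm{Ind}_P$) directly yields a nonzero map $Jac_P(\pi) \to \rho|\cdot|^{x_1}\otimes\cdots\otimes\sigma$. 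Since $Jac_P(\pi)$ has finite length, some irreducible subquotient has nonzero image; as the $\GL_d^t$-part of the target is cuspidal, that subquotient is $\rho|\cdot|^{x_1}\otimes\cdots\otimes\sigma'$ for an irreducible $\sigma' \hookrightarrow \sigma$, whence $\sigma' \le D_{\rho|\cdot|^{x_1,\dots,x_t}}(\pi)$. This is exactly the paper's argument, and it never leaves the standard parabolic $P$.

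You instead invoke second adjointness, but that identifies $\Hom_G(\mathrm{Ind}_P(\tau),\pi)$ with $\Hom_M(\tau, Jac_{\bar P}(\pi))$; it does not produce a map \emph{out of} $Jac_{\bar P}(\pi)$ from an embedding $\pi \hookrightarrow \mathrm{Ind}_P(\tau)$. The ``delicate point'' you flag is therefore an obstacle created by choosing the wrong adjunction, and your proposed resolution is also incorrect: the long Weyl element sends $\rho|\cdot|^{x_i}$ to $\rho^\vee|\cdot|^{-x_i}=\rho|\cdot|^{-x_i}$, so the multi-set $\{\rho|\cdot|^{x_1},\dots,\rho|\cdot|^{x_t}\}$ is not preserved unless it happens to be symmetric about $0$. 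The same confusion affects your backward direction: from $\tau \hookrightarrow Jac_{\bar P}(\pi)$, second adjointness gives a surjection $\mathrm{Ind}_P(\tau)\twoheadrightarrow\pi$, not the embedding you claim. The paper handles that direction by citing \cite[Lemma~5.3]{Xu17a}, whose argument (in the spirit of your Bernstein-block remark) uses that the cuspidal-isotypic piece is a genuine direct summand of $Jac_P(\pi)$ and then applies first adjointness.
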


\begin{proof}
Suppose $ D_{\rho|\cdot|^{x_1,\dots,x_t}}( \pi)\neq 0$. By \cite[Lemma 5.3]{Xu17a}, there exists an irreducible representation $\sigma$ of $G_n$ which satisfies the claim. Conversely, let $P$ be the standard parabolic subgroup of $G$ whose Levi subgroup is isomorphic to $\GL_d^t(F) \times G_n$. By  Frobenius reciprocity,
\begin{align*}
    0 &\neq \mathrm{Hom}( \pi, \rho|\cdot|^{x_1} \times \cdots \times \rho|\cdot|^{x_t} \rtimes \sigma )\\
    &= \mathrm{Hom}( Jac_P(\pi), \rho|\cdot|^{x_1} \otimes \cdots \otimes \rho|\cdot|^{x_t} \otimes \sigma).
\end{align*}
From left exactness of the Hom functor and the fact that $Jac_{P}(\pi)$ is of finite length, there exists an irreducible subquotient $\tau$ of $Jac_{P}(\pi)$ such that
\[ 0 \neq \mathrm{Hom}(\tau,\rho|\cdot|^{x_1} \otimes \cdots\otimes \rho|\cdot|^{x_t} \otimes \sigma). \]
Then $\tau$ is isomorphic to $\rho|\cdot|^{x_1} \otimes \cdots \otimes \rho|\cdot|^{x_t} \otimes \sigma'$, where $\sigma'$ is an irreducible subrepresentation of $\sigma$. This shows $ D_{\rho|\cdot|^{x_1,\dots ,x_t}}( \pi) \geq \sigma'$ is nonzero.
\end{proof}

Let $\pi$ be a representation of finite length. We define the \emph{socle} of $\pi$, denoted by $soc(\pi)$, to be the maximal semisimple subrepresentation of $\pi.$

\begin{defn}
Let $\pi$ be a representation of finite length, $x\in\mathbb{R},$ and $\rho$ be an irreducible unitary self-dual supercuspidal representation of $\GL_d(F).$ We define
$$
S_{\rho|\cdot|^{x}}^{(r)}(\pi):=soc((\rho|\cdot|^{x})^r\rtimes\pi).
$$
\end{defn}
Similar as derivatives, for a sequence of real number $\{x_1,\dots,x_r\}$, we define the composition of socles by
\[ S_{\rho|\cdot|^{x_1,\dots ,x_r}}(\pi):=S_{\rho|\cdot|^{x_r}} \circ \cdots \circ S_{\rho|\cdot|^{x_1}}(\pi).\]
\begin{thm}[{\cite[Lemma 3.1.3]{Jan14}, \cite[Propositions 3.3, 6.1, Theorem 7.1]{AM20}}]\label{thm derivative-socle}
Let $\rho$ be an irreducible unitary self-dual supercuspidal representation of $\GL_d(F),$ $\pi\in \Pi(G_n),$ and $x\in\mathbb{R}\setminus\{0\}.$ For any non-negative integers $k$ and $r$, we have the following. 
\begin{enumerate}
    \item The highest $\rho|\cdot|^{x}$-derivative of $\pi$, say $D_{\rho|\cdot|^{x}}^{(k)}(\pi),$ is irreducible.
    \item $S_{\rho|\cdot|^{x}}^{(r)}(\pi)$ is irreducible for any $r\geq 0.$
    \item 
    $$
    S_{\rho|\cdot|^{x}}^{(k)}(D_{\rho|\cdot|^{x}}^{(k)}(\pi))=\pi,
    $$
    and, 
    $$
    D_{\rho|\cdot|^{x}}^{(k+r)}(S_{\rho|\cdot|^{x}}^{(r)}(\pi))=D_{\rho|\cdot|^{x}}^{(k)}(\pi).
    $$
    \item The $L$-data of $D_{\rho|\cdot|^{x}}^{(k)}(\pi)$ and $S_{\rho|\cdot|^{x}}^{(k)}(\pi)$ can be explicitly described in terms of those of $\pi.$
\end{enumerate}
\end{thm}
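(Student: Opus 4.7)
The plan is to assemble this theorem by combining the socle-derivative duality of Jantzen \cite{Jan14} with the explicit highest-derivative formulas of Atobe-M\'inguez \cite{AM20}. Parts (1)--(3) express a duality that forces irreducibility and the compositional identities, while part (4) is a combinatorial recipe at the level of $L$-data; I would treat the two groups separately.

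For parts (1) and (2), I would first establish (2) by induction on $r$. The base case $r = 1$ asserts that $\rho|\cdot|^x \rtimes \pi$ has an irreducible socle, which is essentially \cite[Lemma 3.1.3]{Jan14} and follows from Frobenius reciprocity together with the structure of Jacquet modules of segment inductions. The inductive step uses associativity of parabolic induction and commutes neighboring copies of $\rho|\cdot|^x$ via Lemma \ref{lem commutativity}. For (1), suppose for contradiction that $D_{\rho|\cdot|^x}^{(k)}(\pi) = \bigoplus_i m_i \sigma_i$ has multiple irreducible components. By maximality of $k$, each $\sigma_i$ is $\rho|\cdot|^x$-reduced, and Lemma \ref{lem Frobenius} yields embeddings $\pi \hookrightarrow (\rho|\cdot|^x)^k \rtimes \sigma_i$ for every $i$. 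By (2), the socle of each such induced representation is irreducible, so it must coincide with $\pi$, forcing a single $\sigma_i$. The multiplicity $m_i = 1$ then comes from applying the Leibniz rule (Lemma \ref{lem Leibniz rule}) to compute $D_{\rho|\cdot|^x}^{(k)}((\rho|\cdot|^x)^k \rtimes \sigma_i)$ and invoking the sharp formulas of \cite[Theorem 7.1]{AM20}.

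For part (3), the identity $S_{\rho|\cdot|^x}^{(k)}(D_{\rho|\cdot|^x}^{(k)}(\pi)) = \pi$ is immediate from (1), (2), and Lemma \ref{lem Frobenius}: the embedding $\pi \hookrightarrow (\rho|\cdot|^x)^k \rtimes D_{\rho|\cdot|^x}^{(k)}(\pi)$ lands in the socle, which is irreducible by (2), hence equals $\pi$. The second identity $D_{\rho|\cdot|^x}^{(k+r)}(S_{\rho|\cdot|^x}^{(r)}(\pi)) = D_{\rho|\cdot|^x}^{(k)}(\pi)$ follows by applying the highest derivative to $S_{\rho|\cdot|^x}^{(r)}(\pi) \hookrightarrow (\rho|\cdot|^x)^r \rtimes \pi$, expanding via the Leibniz rule, and matching leading terms. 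For part (4), the $L$-data is read off the Langlands data of $\pi$ by the combinatorial procedure of \cite[Propositions 3.3, 6.1, Theorem 7.1]{AM20}: sort the segments by $x_i + y_i$, identify those compatible with $\rho|\cdot|^x$ by a precise matching rule, and shorten (respectively extend) them to obtain the derivative (respectively socle). Verifying these formulas reduces to a Jacquet-module calculation using Lemma \ref{lem commutativity} and the explicit derivative rules for Steinberg and Speh representations recorded in Lemma \ref{lem Leibniz rule}.

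The main obstacle I anticipate is the multiplicity-one argument in part (1). Frobenius reciprocity alone is enough to force that all components $\sigma_i$ of $D_{\rho|\cdot|^x}^{(k)}(\pi)$ are isomorphic, but ruling out higher multiplicity in the highest derivative requires the sharp combinatorial control provided by \cite[Theorem 7.1]{AM20}: one must track exactly how many copies of $\sigma_1$ appear in $D_{\rho|\cdot|^x}^{(k)}((\rho|\cdot|^x)^k \rtimes \sigma_1)$, and this degenerate leading-term computation is where the bulk of the technical work sits. Once (1) is in hand, parts (2)--(4) follow from formal manipulations and the known combinatorial recipes.
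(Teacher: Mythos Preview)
The paper does not prove this theorem at all: it is stated with explicit attributions in the header (\cite[Lemma 3.1.3]{Jan14} and \cite[Propositions 3.3, 6.1, Theorem 7.1]{AM20}) and no proof is given in the body. Your proposal is therefore a reconstruction of those cited results rather than a comparison target, and in that capacity it is broadly reasonable. One remark: your inductive step for part (2) appeals to Lemma \ref{lem commutativity} to commute copies of $\rho|\cdot|^x$, but commutativity alone does not imply that the socle of $(\rho|\cdot|^x)^{r+1}\rtimes\pi$ is irreducible; the actual argument in \cite{AM20} reduces to the case where $\pi$ is $\rho|\cdot|^x$-reduced and then controls the full Jacquet module $[Jac_P((\rho|\cdot|^x)^r\rtimes\pi)]$ directly, which simultaneously yields irreducibility of the socle and the multiplicity-one statement you flag as the main obstacle in part (1).
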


When $x=0,$ computing the $\rho$-derivative is generally a problematic endeavor. As a remedy, Atobe and M{\' i}nguez considered the $\Delta_\rho[0,-1]$-derivative and $Z_\rho[0,1]$-derivative, denoted by $D_{\Delta_\rho[0,-1]}^{(k)}(\pi)$ and $D_{Z_\rho[0,1]}^{(k)}(\pi)$, respectively. These are semisimple representations of $G_{n-2dk}$  defined by
$$
[Jac_{P_{2dk}}(\pi)]=\Delta_\rho[0,-1]^k\otimes D_{\Delta_\rho[0,-1]}^{(k)}(\pi)+Z_\rho[0,1]^k\otimes D_{Z_\rho[0,1]}^{(k)}(\pi) + \sum_i \tau_i\otimes\pi_i,
$$
where the sum is over all irreducible representations $\tau_i$ of $\GL_{2dk}(F)$ such that $\tau_i$ is neither isomorphic to $\Delta_\rho[0,-1]^k$ nor $Z_\rho[0,1]^k.$ We also define
$$
S_{\Delta_\rho[0,-1]}^{(r)}(\pi):=soc(\Delta_\rho[0,-1]^r\rtimes\pi), \, \, S_{Z_\rho[0,1]}^{(r)}(\pi):=soc(Z_\rho[0,1]^r\rtimes\pi).
$$
These derivatives and socles satisfy similar results as in Theorem \ref{thm derivative-socle}.

\begin{thm}[{\cite[Proposition 3.7]{AM20}}]\label{thm self-dual derivative}
Let $\rho$ be an irreducible unitary self-dual supercuspidal representation of $\GL_d(F)$ and $\pi\in \Pi(G_n).$ Assume that $\pi$ is $\rho|\cdot|\inv$-reduced (respectively $\rho|\cdot|$-reduced). Then the results of Theorem \ref{thm derivative-socle}(1), (2), and (3) hold with $\rho|\cdot|^x$ replaced by $\Delta_\rho[0,-1]$ (respectively $Z_\rho[0,1]$).
\end{thm}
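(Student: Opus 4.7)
The plan is to reduce the $\Delta_\rho[0,-1]$-case (the $Z_\rho[0,1]$-case being parallel, via the embedding $Z_\rho[0,1] \hookrightarrow \rho \times \rho|\cdot|$) to Theorem \ref{thm derivative-socle} for non-zero exponents, by exploiting the short exact sequence
\begin{equation*}
0 \to \Delta_\rho[0,-1] \to \rho \times \rho|\cdot|^{-1} \to Z_\rho[-1,0] \to 0
\end{equation*}
coming from the Langlands classification of $\rho \times \rho|\cdot|^{-1}$. Parabolically inducing with $\pi$ gives an embedding $\Delta_\rho[0,-1]^r \rtimes \pi \hookrightarrow (\rho \times \rho|\cdot|^{-1})^r \rtimes \pi$, which serves as the workhorse for comparing $\Delta_\rho[0,-1]$-socles and derivatives to their $\rho|\cdot|^{-1}$-analogues, exactly where the hypothesis ``$\pi$ is $\rho|\cdot|^{-1}$-reduced'' bites.

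The first step is a stability lemma: if $\pi$ is $\rho|\cdot|^{-1}$-reduced, then so is $\Delta_\rho[0,-1]^r \rtimes \pi$ for every $r \geq 0$. By Lemma \ref{lem Leibniz rule}(1),
\begin{equation*}
D_{\rho|\cdot|^{-1}}(\Delta_\rho[0,-1] \rtimes \pi) = D_{\rho|\cdot|^{-1}}(\Delta_\rho[0,-1]) \rtimes \pi + D_{\rho|\cdot|^{1}}^{op}(\Delta_\rho[0,-1]) \rtimes \pi + \Delta_\rho[0,-1] \rtimes D_{\rho|\cdot|^{-1}}(\pi),
\end{equation*}
and all three summands vanish (the first two by the segment formulas in Lemma \ref{lem Leibniz rule}(2), the third by hypothesis); an induction on $r$ handles the general case. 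With stability in hand, I would prove the irreducibility of $S_{\Delta_\rho[0,-1]}^{(r)}(\pi) = \mathrm{soc}(\Delta_\rho[0,-1]^r \rtimes \pi)$ via Frobenius reciprocity: any irreducible $\sigma \hookrightarrow \Delta_\rho[0,-1]^r \rtimes \pi$ produces a surjection $\mathrm{Jac}_P(\sigma) \twoheadrightarrow \Delta_\rho[0,-1]^r \otimes \pi$ for the appropriate parabolic $P$, and this should match, through the embedding above, the unique such datum coming from $\mathrm{soc}((\rho|\cdot|^{-1})^r \rtimes \pi)$, which is irreducible by Theorem \ref{thm derivative-socle}(2) applied with $x=-1$. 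The inverse relations in part (3) would then follow by a standard adjunction argument from uniqueness of the socle, and part (1) (irreducibility of the highest derivative) drops out from the identity $D^{(k+r)} \circ S^{(r)} = D^{(k)}$ applied when $k$ is maximal.

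The main obstacle is that $\rho$ and $\rho|\cdot|^{-1}$ are linked, so Lemma \ref{lem commutativity} cannot rearrange $(\rho \times \rho|\cdot|^{-1})^r$ into a cleaner form like $\rho^r \times (\rho|\cdot|^{-1})^r$. Moreover, $\rho \times \rho|\cdot|^{-1}$ is itself reducible of length two, so a priori socle contributions to $(\rho \times \rho|\cdot|^{-1})^r \rtimes \pi$ could arise from the other constituent $Z_\rho[-1,0]$. The crux of the argument is to leverage the $\rho|\cdot|^{-1}$-reduced hypothesis, together with the stability lemma, to show that these parasitic $Z_\rho[-1,0]$-contributions cannot occur, thereby cleanly isolating the $\Delta_\rho[0,-1]^r$-socle. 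The bookkeeping of Jacquet modules through these linked inductions is where I expect the proof to demand the most care.
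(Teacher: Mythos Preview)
The paper does not prove this statement; it is quoted verbatim from \cite[Proposition 3.7]{AM20} and used as a black box, so there is no in-paper argument to compare against.

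Your stability lemma is correct and is indeed one of the key ingredients. However, your main step---reducing irreducibility of $S_{\Delta_\rho[0,-1]}^{(r)}(\pi)$ to that of $S_{\rho|\cdot|^{-1}}^{(r)}(\pi)$ via the embedding into $(\rho\times\rho|\cdot|^{-1})^r\rtimes\pi$---cannot work as you sketch it. The stability lemma itself kills the comparison: since $\Delta_\rho[0,-1]^r\rtimes\pi$ is $\rho|\cdot|^{-1}$-reduced and the derivative $D_{\rho|\cdot|^{-1}}$ is additive on the Grothendieck group, \emph{every} irreducible subquotient $\sigma$ of $\Delta_\rho[0,-1]^r\rtimes\pi$ is $\rho|\cdot|^{-1}$-reduced. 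By Frobenius reciprocity this forces $\Hom(\sigma,(\rho|\cdot|^{-1})^s\rtimes\tau)=0$ for every $\tau$ and every $s\geq 1$, so there is no embedding of $\sigma$ into $\mathrm{soc}((\rho|\cdot|^{-1})^r\rtimes\pi)$ or anything of that shape to ``match.'' The very hypothesis you need to invoke Theorem~\ref{thm derivative-socle} at $x=-1$ is destroyed by your own stability lemma.

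What the argument in \cite{AM20} actually requires is direct control of the multiplicity of $\Delta_\rho[0,-1]^r\otimes\pi$ inside $[\mathrm{Jac}_{P_{2dr}}(\Delta_\rho[0,-1]^r\rtimes\pi)]$ via the geometric lemma, together with the observation that the $\rho|\cdot|^{-1}$-reduced hypothesis eliminates all contributions to this Jacquet module coming from the $G_n$-factor or from the ``wrong end'' of the segment. That multiplicity-one statement is the real content, and it does not reduce to the $x\neq 0$ case of Theorem~\ref{thm derivative-socle}.
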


\subsection{Local Arthur packet}
Recall that a local Arthur parameter is
a direct sum of irreducible representations
$$\psi: W_F \times \SL_2(\mathbb{C}) \times \SL_2(\mathbb{C}) \rightarrow \widehat{G}_n(\BC)$$
\begin{equation}\label{eq decomp psi +}
  \psi = \bigoplus_{i=1}^r \phi_i|\cdot|^{x_i} \otimes S_{a_i} \otimes S_{b_i},  
\end{equation}
satisfying the following conditions:
\begin{enumerate}
    \item [(1)]$\phi_i(W_F)$ is bounded and consists of semi-simple elements, and $\dim(\phi_i)=d_i$;
    \item [(2)] $x_i \in \R$ and $|x_i|<\half{1}$;
    \item [(3)]the restrictions of $\psi$ to the two copies of $\SL_2(\mathbb{C})$ are analytic, $S_k$ is the $k$-dimensional irreducible representation of $\SL_2(\mathbb{C})$, and 
    $$\sum_{i=1}^r d_ia_ib_i = N:= 
\begin{cases}
2n+1 & \text{ when } G_n=\Sp_{2n}(F),\\
2n & \text{ when } G_n=\SO_{2n+1}(F).
\end{cases}
$$ 
\end{enumerate}
We remark that the bound $|x_i|<\half{1}$ follows from the trivial bound of the Ramanujan Conjecture.

Two local Arthur parameters are equivalent if they are conjugate under $\widehat{G}_n(\BC)$. We do not distinguish $\psi$ and its equivalence class in the rest of the paper. We denote $\Psi^{+}_{\frac{1}{2}}(G_n)$ the equivalence class of local Arthur parameter, and $\Psi(G_n)$ the subset of $\Psi^+_{\frac{1}{2}}(G_n)$ consisting of local Arthur parameters $\psi$ whose restriction to $W_F$ is bounded. In other words, $\psi$ is in $\Psi(G_n)$ if and only if $x_i=0$ for $i=1,\dots, r$ in the decomposition \eqref{eq decomp psi +}. For simplicity, we omit the subscript $\frac{1}{2}$ and write 
$\Psi^{+}(G_n):=\Psi^{+}_{\frac{1}{2}}(G_n)$.

By the Local Langlands Correspondence for $\GL_{d_i}(F)$, the bounded representation $\phi_i$ of $W_F$ can be identified with an irreducible unitary supercuspidal representation $\rho_i$ of $\GL_{d_i}(F)$ (\cite{Hen00, HT01, Sch13}). Consequently, we may write
\begin{equation}\label{A-param decomp}
  \psi = \bigoplus_{\rho}\left(\bigoplus_{i\in I_\rho} \rho|\cdot|^{x_i} \otimes S_{a_i} \otimes S_{b_i}\right),  
\end{equation}
where first sum runs over 
irreducible unitary supercuspidal representations $\rho$ of $\GL_n(F)$, $n \in \mathbb{Z}_{\geq 1}$. Occasionally, we write $\rho|\cdot|^{x}\otimes S_a=\rho|\cdot|^x\otimes S_a \otimes S_1.$ 

Let $\psi$ be a local Arthur parameter as in \eqref{A-param decomp}, we say that $\psi$ is of \emph{good parity} if $\psi \in \Psi(G_n)$ (i.e. $x_i=0$ for all $i$) and every summand $\rho \otimes S_{a_i} \otimes S_{b_i}$ is self-dual and of the same type as $\psi.$ That is, $\rho$ is self dual and
\begin{itemize}
    \item if $G_n=\Sp_{2n}(F)$ and $\rho$ is orthogonal (resp. symplectic), then $a_i+b_i$ is even (resp. odd);
    \item if $G_n=\SO_{2n+1}(F)$ and $\rho$ is orthogonal (resp. symplectic), then $a_i+b_i$ is odd, (resp. even).
\end{itemize}
We denote $\Psi_{gp}(G_n)$ the subset of $\Psi(G_n)$ consists of local Arthur parameters of good parity.

Let $\psi \in \Psi^{+}(G_n).$ From the decomposition \eqref{A-param decomp}, we define a subrepresentation $\psi_{nu,>0}$ of $\psi$ by
\[ \psi_{nu,>0}:= \bigoplus_{\rho}\left(\bigoplus_{\substack{i\in I_\rho,\\ x_i>0}} \rho|\cdot|^{x_i} \otimes S_{a_i} \otimes S_{b_i}\right). \]
Since the image of $\psi$ is contained in  $\widehat{G}_n(\BC)$, $\psi$ is self-dual, and hence $\psi$ also contains $(\psi_{nu,>0})^{\vee}$. We define $\psi_{u} \in \Psi(G_m)$ for some $m\leq n$ by 
\begin{align}\label{eq def of psi_u}
    \psi= \psi_{nu,>0} \oplus \psi_u \oplus (\psi_{nu,>0})^{\vee}.
\end{align}
Equivalently,
\[ \psi_{u}:= \bigoplus_{\rho}\left(\bigoplus_{\substack{i\in I_\rho,\\ x_i=0}} \rho \otimes S_{a_i} \otimes S_{b_i}\right). \]

In \cite{Art13}, for a local Arthur parameter $\psi \in \Psi(G_n)$, Arthur constructed a finite multi-set $\Pi_\psi$ consisting of irreducible unitary representations of $G_n.$ We call $\Pi_\psi$ the \emph{local Arthur packet} of $\psi.$ M{\oe}glin showed that $\Pi_\psi$ is multiplicity-free (\cite{Moe11}). For $\psi \in \Psi^+(G_n)$, Arthur defined (\cite[(1.5.1)]{Art13})
\begin{align}\label{eq def packet +}
    \Pi_{\psi}:= \{ \tau_{\psi_{nu,>0}} \rtimes \pi_u \ | \ \pi_{u} \in \Pi_{\psi_u}   \},
\end{align}
where $\tau_{\psi_{nu,>0}}$ is the following irreducible representation of $\GL$
$$
\tau_{\psi_{nu,>0}}=\bigtimes_\rho\bigtimes_{i\in I_\rho}\begin{pmatrix}
\frac{a_i-b_i}{2}+x_i & \cdots & \frac{a_i+b_i}{2}-1+x_i \\
\vdots & \ddots & \vdots \\
\frac{-a_i-b_i}{2}+1+x_i & \cdots & \frac{b_i-a_i}{2}+x_i
\end{pmatrix}_{\rho}.
$$
Since $|x_i|<\half{1}$ in the decomposition \eqref{A-param decomp}, the parabolic induction in \eqref{eq def packet +} is always irreducible (this follows from \cite[Theorem 9.3(6)]{Jan97}, \cite[Proposition 3.2(i)]{Tad09}, and is also proved in \cite[Proposition 5.1]{Moe11b}; see Theorem \ref{thm red from nu to gp} below). We say that an irreducible representation $\pi$ of $G_n$ is \emph{of Arthur type} if $\pi\in\Pi_\psi$ for some local Arthur parameter $\psi \in \Psi^+(G_n)$.

Next, we further decompose $\psi_{u}$. Suppose $\rho \otimes S_a \otimes S_b$ is an irreducible summand of $\psi_u$ that is either not self-dual, or self-dual but not of the same type as $\psi$. It follows that $\psi$ must contain the other summand $(\rho\otimes S_a\otimes S_b)^{\vee}=\rho^{\vee} \otimes S_a \otimes S_b$. Therefore, we may choose a subrepresentation $\psi_{np}$ of $\psi_u$ such that
\begin{align}\label{eq decomp of psi_u}
     \psi_{u}= \psi_{np} \oplus \psi_{gp} \oplus \psi_{np}^{\vee},
\end{align}
where $\psi_{gp} $ is of good parity, and any irreducible summand of $\psi_{np}$ is either not self-dual or self-dual but not of the same type as $\psi$. In \cite{Moe06a}, M{\oe}glin constructed the local Arthur packet $\Pi_{\psi_u}$ from $\Pi_{\psi_{gp}}$, which we record below.

\begin{thm}[{\cite[Theorem 6]{Moe06a}, \cite[Proposition 8.11]{Xu17b}}]\label{thm reduction to gp}
Let $\psi_u \in \Psi(G_n)$ with a choice of decomposition \eqref{eq decomp of psi_u}. Write
\[ \psi_{np}= \bigoplus_{\rho} \left( \bigoplus_{i \in I_{\rho}} \rho \otimes S_{a_i} \otimes S_{b_i}\right),  \]
and consider the following irreducible parabolic induction
$$
\tau_{\psi_{np}}=\bigtimes_\rho\bigtimes_{i\in I_\rho}\begin{pmatrix}
\frac{a_i-b_i}{2} & \cdots & \frac{a_i+b_i}{2}-1 \\
\vdots & \ddots & \vdots \\
\frac{-a_i-b_i}{2}+1 & \cdots & \frac{b_i-a_i}{2}
\end{pmatrix}_{\rho}.
$$
Then for any $\pi_{gp}\in\Pi_{\psi_{gp}}$ the induced representation $\tau_{\psi_{np}}\rtimes\pi_{gp}$ is irreducible, independent of choice of $\psi_{np}$. Moreover,
$$
\Pi_\psi=\{\tau_{\psi_{np}}\rtimes\pi_{gp} \, | \, \pi\in\Pi_{\psi_{gp}}\}.
$$
\end{thm}

Combined with \eqref{eq def packet +}, we obtain the following.

\begin{thm}[{\cite[Proposition 5.1]{Moe11b}}]\label{thm red from nu to gp}
Let $\psi\in\Psi^+(G_n)$ with decomposition $\psi=\psi_{nu,>0}+\psi_{np}+\psi_{gp}+\psi_{np}^\vee+\psi_{nu,>0}^\vee$ as above. Then, for any $\pi_{gp}\in\Pi_{\psi_{gp}},$ the induction $\tau_{\psi_{nu,>0}}\times\tau_{\psi_{np}}\rtimes\pi_{gp}$ is irreducible. As a consequence, \begin{equation}\label{non-unitary A-packet}
    \Pi_\psi=\{\tau_{\psi_{nu,>0}}\times\tau_{\psi_{np}}\rtimes\pi_{gp} , | , \pi_{gp}\in\Pi_{\psi_{gp}}\}.
\end{equation}
\end{thm}

Now we show that how to use Theorem \ref{thm red from nu to gp} to reduce the following main Problems we consider in this paper to the good parity case.

\begin{problems}\label{problem general case}\ 
\begin{enumerate}
    \item Given an irreducible representation $\pi$, determine whether it is of Arthur type, i.e. whether there exists a local Arthur parameter $\psi \in \Psi^+(G_n)$  such that $\pi \in \Pi_{\psi}$.
    \item Given an irreducible representation $\pi \in \Pi_{\psi}$, find all $\psi'$ such that $ \pi \in \Pi_{\psi'}$.
\end{enumerate}
\end{problems}

First, we give an analogous definition of good parity for representations.

\begin{defn}\label{def good parity reps}
We say an irreducible representation
\[ \pi= L(\Delta_{\rho_1}[x_1,y_1],\dots,\Delta_{\rho_r}[x_r,y_r]; \pi_{temp})\] 
of $G_n$ is of \emph{good parity} if the following hold:
\begin{enumerate}
    \item [$\oldbullet$] The tempered representation $\pi_{temp}$ lies in $\Pi_{\psi_{temp}}$ for some $\psi_{temp} \in \Psi_{gp}(G_m)$.
    \item [$\oldbullet$] For $1 \leq i \leq r$, $x_i,y_i \in \half{1} \Z$ and $\rho_i \otimes S_{x_i-y_i+1}\otimes S_1 $ is self-dual of the same type as $\widehat{G}_n$.
\end{enumerate}
\end{defn}

Let $\pi \in \Pi_{\psi}$. By the construction of local Arthur packets in the good parity case in the next section (see Definition \ref{def rep of segment} and Theorem \ref{thm Atobe's reformulation}) and Theorem \ref{thm reduction to gp}, one can show that $\pi$ is of good parity if and only if $\psi$ is of good parity.

\begin{cor}\label{cor reduction from nu to gp} \ 
\begin{enumerate}
    \item For $\pi \in \Pi(G_n)$, there exists a $\psi \in \Psi^+(G_n)$ such that $\pi \in \Pi_{\psi}$ if and only if $\pi$ is of the form
    \begin{align}\label{eq decomp pi red from nu to gp}
        \pi= \tau_{\psi_{nu,>0}} \times \tau_{\psi_{np}} \rtimes \pi_{gp}
    \end{align}
    and $\pi_{gp} \in \Pi_{\psi_{gp}}$ for some $\psi_{gp} \in \Psi_{gp}(G_n)$. Moreover, we have
    \begin{align*}
        \Psi(\pi):=& \{ \psi \in \Psi^+(G_n)\ | \ \pi \in \Pi_{\psi}\}\\
        =& \{ \psi_{nu,>0} + \psi_{np} + \psi_{gp} + \psi_{np}^{\vee}+ \psi_{nu,>0} \ | \ \psi_{gp} \in \Psi(\pi_{gp})\}.
    \end{align*}
    \item For $\psi, \psi' \in \Psi^+(G_n)$, decompose 
\begin{align*}
\psi&=\psi_{nu,>0}+\psi_{np}+\psi_{gp}+\psi_{np}^\vee+\psi_{nu,>0}^\vee,\\ \psi'&=\psi_{nu,>0}'+\psi_{np}'+\psi_{gp}'+(\psi_{np}')^\vee+(\psi_{nu,>0}')^\vee
\end{align*}
as \eqref{eq decomp psi +}, \eqref{eq decomp of psi_u}. Then $\Pi_{\psi}\cap \Pi_{\psi'}\neq \emptyset$ if and only if $\psi_{nu,>0}+\psi_{nu,>0}^\vee=\psi_{nu,>0}'+(\psi_{nu,>0}')^\vee$, $\psi_{np}+ \psi_{np}^{\vee}= \psi_{np}'+ (\psi_{np}')^{\vee}$, and $\Pi_{\psi_{gp}}\cap \Pi_{\psi_{gp}'}\neq \emptyset$.
\end{enumerate}  
\end{cor}

\begin{proof}
Part (1) follows directly from Theorem \ref{thm red from nu to gp}. Part (2) follows from the fact that if $\pi$ is of the form of \eqref{eq decomp pi red from nu to gp}, then $\psi_{nu,>0}+ \psi_{nu,>0}^{\vee}$ and $\psi_{np}+ \psi_{np}^{\vee}$ can be recovered from the $L$-data of $\pi$. This completes the proof of the corollary.
\end{proof}

Therefore, to answer Problems \ref{problem general case}, it suffices to answer the following good parity version. 

\begin{problems}\label{problem good parity}\ 
\begin{enumerate}
    \item Given an irreducible representation $\pi$ of good parity, determine whether it is of Arthur type, i.e. whether there exists a local Arthur parameter $\psi$ of good parity such that $\pi \in \Pi_{\psi}$.
    \item Given an irreducible representation $\pi \in \Pi_{\psi}$, where $\psi$ is of good parity, find all good parity $\psi'$ such that $ \pi \in \Pi_{\psi'}$.
\end{enumerate}
\end{problems}

\subsection{Parametrization of tempered spectrum}\label{sec tempered parametrization}

Let $\psi \in \Psi(G_n)$. An important ingredient in the construction of local Arthur packets is a map
\begin{align}\label{eq character of Arthur packet}
    \Pi_{\psi} &\longrightarrow \widehat{\mathcal{S}}_{\psi},\\
   \nonumber \pi & \longmapsto \langle \cdot, \pi \rangle,
\end{align}
where $\widehat{\mathcal{S}}_{\psi}$ is the Pontryagin dual of the \emph{component group}
\[  \mathcal{S}_{\psi}:= \pi_0( \mathrm{Cent}_{\widehat{G}_n(\BC)}(\mathrm{Im}(\psi))/Z(\widehat{G}_n(\BC))). \]
The map \eqref{eq character of Arthur packet} is not injective nor surjective in general. However, when $\psi$ is tempered, it is a bijection (see Theorem \ref{thm Arthur tempered} below). In this subsection, we recall a combinatorial description of $\widehat{\mathcal{S}}_{\psi}$ in \cite[Section 2]{Xu17b}, and use it to give a parametrization of the tempered spectrum of $G_n$.

Write $\psi=\psi_{np} + \psi_{gp}+ \psi_{np}^{\vee}$. There is a bijection between $ \mathcal{S}_{\psi}$ and $\mathcal{S}_{\psi_{gp}}$. Therefore, to describe $\widehat{\mathcal{S}}_{\psi}$, we may assume $\psi \in \Psi_{gp}(G_n)$. Write
\begin{align}\label{eq decomp psi parametrization of tempered}
    \psi= \bigoplus_{\rho} \bigoplus_{i \in I_{\rho}} \rho \otimes S_{a_i} \otimes S_{b_i}.
\end{align}
First, we consider the \emph{enhanced component group} of $\psi$ defined by
$$
\mathcal{A}_\psi=\bigoplus_{\rho}\bigoplus_{i\in I_\rho}(\mathbb{Z}/2\mathbb{Z}) \alpha_{\rho,i}.
$$
That is, $\mathcal{A}_\psi$ is the finite vector space over $\mathbb{Z}/2\mathbb{Z}$ with basis $\alpha_{\rho,i}$ corresponding to the summands $\rho \otimes S_{a_i} \otimes S_{b_i}$ of Equation (\ref{eq decomp psi parametrization of tempered}). While it is possible that for some $i,j\in I_\rho$, we have $\rho \otimes S_{a_i} \otimes S_{b_i}=\rho \otimes S_{a_j} \otimes S_{b_j}$, we distinguish these summands in $\mathcal{A}_\psi.$ That is,  $\alpha_{\rho,i}\neq\alpha_{\rho,j}$ in $\mathcal{A}_\psi.$ The \emph{central element} of $\mathcal{A}_\psi$ is $z_\psi:=\sum_\rho\sum_{i\in I_\rho}\alpha_{\rho,i}.$

The component group $\mathcal{S}_\psi$ of $\psi$ can be identified with the quotient of $\mathcal{A}_\psi$ by the subgroup generated by the central element and the elements $\alpha_{\rho,i}+\alpha_{\rho,j}$ such that $i,j\in I_\rho$ with $\rho \otimes S_{a_i} \otimes S_{b_i}=\rho \otimes S_{a_j} \otimes S_{b_j}.$ As a consequence, we may identify $\widehat{\mathcal{S}}_{\psi}$ as the set of functions $\varepsilon$ from the summands of $\psi$ to $\{\pm 1\}$ that satisfy
\begin{enumerate}
    \item [$\oldbullet$] $\varepsilon(\rho \otimes S_{a_i} \otimes S_{b_i})=\varepsilon(\rho \otimes S_{a_j} \otimes S_{b_j})$ if $\rho \otimes S_{a_i} \otimes S_{b_i}=\rho \otimes S_{a_i} \otimes S_{b_i}$, and
    \item [$\oldbullet$] $\prod_{\rho} \prod_{i \in I_{\rho}} \varepsilon(\rho \otimes S_{a_i}\otimes S_{b_i})=1$.
\end{enumerate}

Recall that a local Arthur parameter $\psi,$ decomposed as in Equation (\ref{eq decomp psi parametrization of tempered}), is \emph{tempered} if $b_i=1$ for any $\rho$ and $i\in I_\rho.$ That is, $\psi$ is trivial on the second $\SL_2(\mathbb{C}).$ We say that a local Arthur packet $\Pi_\psi$ is $\emph{tempered}$ if $\psi$ is tempered. A Whittaker datum for $G_n$ is a $G_n$-conjugacy class of a tuple $(B,\chi)$, where $B$ is an $F$-rational Borel subgroup of $\mathrm{G}_n$ and $\chi$ is a generic character of the $F$-points of the unipotent radical of $B$. The following theorem is Arthur's classification of the tempered representations.

\begin{thm}[{\cite[Theorem 1.5.1]{Art13}}]\label{thm Arthur tempered}
Any irreducible tempered representation of $G_n$ lies in $\Pi_\psi$ for some tempered local Arthur parameter $\psi.$ Moreover, if $\psi_1 $ and $\psi_2$ are two non-isomorphic tempered local Arthur parameters, then 
$$
\Pi_{\psi_1}\cap\Pi_{\psi_2}=\emptyset.
$$
Finally, if one fixes a choice of Whittaker datum for $G_n$ and $\psi$ is tempered, then there is a bijective map between the tempered local Arthur packet $\Pi_{\psi}$ and $\widehat{\mathcal{S}}_\psi.$
\end{thm}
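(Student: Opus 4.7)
The plan is to follow Arthur's strategy in his endoscopic classification, which ultimately rests on the stabilization of the twisted trace formula for $\GL_N$ and ordinary endoscopy for $G_n$. The organizing principle is that the packet $\Pi_\psi$ attached to a tempered parameter $\psi$ is characterized by a twisted endoscopic character identity: the stable distribution on $\GL_N(F)$ attached to the self-dual representation $\tau_\psi$ (twisted by the outer automorphism defining the transfer) should equal the endoscopic transfer of a sum of Harish-Chandra characters of tempered representations on $G_n$, weighted by characters of $\mathcal{S}_\psi$. Granting the existence and stability of this transfer, the three assertions can be derived in turn.

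For existence, given an irreducible tempered $\pi$ of $G_n$, I would realize it as the local component at a chosen place $v$ of a cuspidal tempered automorphic representation $\Pi$ of a global classical group $\mathbf{G}$ with $\mathbf{G}(F_v)\cong G_n$; the components at other places can be prescribed by a suitable simple trace formula argument (for instance the Arthur simple trace formula together with a Poincar\'e-series or Shin-type globalization). The global endoscopic classification then places $\Pi$ in a global Arthur packet attached to a global parameter $\Psi$ whose localization at $v$ is a tempered local Arthur parameter $\psi$, and hence $\pi\in\Pi_\psi$.

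For disjointness, suppose an irreducible tempered $\pi$ lies in $\Pi_{\psi_1}\cap\Pi_{\psi_2}$. The two local character identities pair $\pi$ with the stable characters of $\tau_{\psi_1}$ and $\tau_{\psi_2}$; linear independence of Harish-Chandra characters of pairwise non-isomorphic tempered irreducibles of $G_n$, combined with injectivity of $\psi\mapsto\phi_\psi$ noted in the introduction, forces $\tau_{\psi_1}\cong\tau_{\psi_2}$ and hence $\psi_1\cong\psi_2$. For the bijection with $\widehat{\mathcal{S}}_\psi$, fix the Whittaker datum and introduce the Whittaker-normalized self-intertwining operators $R_\psi(s,\pi)$ for $s\in\mathcal{S}_\psi$ acting on the standard module whose unique irreducible constituent is $\pi$. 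Arthur's local intertwining relation identifies $s\mapsto \tr R_\psi(s,\pi)/\dim\pi$ with an element $\varepsilon_\pi\in\widehat{\mathcal{S}}_\psi$, and finite-group Fourier inversion applied to the defining character identity for $\Pi_\psi$ shows that $\pi\mapsto\varepsilon_\pi$ is the desired bijection.

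The main obstacle is, of course, that the deep inputs powering each step, namely the stabilization of the twisted trace formula for $\GL_N$, the fundamental lemma together with Waldspurger's spectral transfer, and the local intertwining relation, are themselves monumental. None admit a short self-contained proof, so any honest write-up must import them from Arthur's book; the role of the proof sketched above is only to verify that the three assertions are formal consequences of these inputs once the packet $\Pi_\psi$ has been characterized by its endoscopic character identity.
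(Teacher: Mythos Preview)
The paper does not prove this theorem at all: it is stated with a citation to \cite[Theorem 1.5.1]{Art13} and used as a black-box input throughout. So there is no ``paper's own proof'' to compare your proposal against. Your sketch is a reasonable high-level outline of the architecture of Arthur's argument, and you are right that the result ultimately rests on the stabilization of the twisted trace formula, the fundamental lemma, and the local intertwining relation, none of which can be reproduced here.

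That said, your disjointness argument is not quite right as written. For a \emph{tempered} parameter $\psi$ one has $\phi_\psi=\psi$, so the packet $\Pi_\psi$ coincides with the $L$-packet $\Pi_{\phi_\psi}$; disjointness of tempered Arthur packets is therefore the same as disjointness of tempered $L$-packets, which is part of the local Langlands correspondence for $G_n$ (established by Arthur as a consequence of the classification). Your attempt to deduce $\tau_{\psi_1}\cong\tau_{\psi_2}$ from linear independence of characters on $G_n$ does not work directly: the endoscopic character identity expresses a twisted character on $\GL_N$ as a transfer of a \emph{sum} over the packet, and knowing that a single $\pi$ appears in two such sums does not by itself force the $\GL_N$ sides to agree. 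One needs the finer structure of the pairing with $\widehat{\mathcal{S}}_\psi$ and the orthogonality relations, or simply the identification of $\Pi_\psi$ with the $L$-packet. In any case, since the paper treats this as a citation, no argument is expected here.
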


Hereinafter, we implicitly fix a choice of Whittaker datum for $G_n.$ When $\psi$ is tempered and of good parity, we write $\pi(\psi,\varepsilon)$ for the element of $\Pi_\psi$ corresponding to $\varepsilon\in\widehat{\mathcal{S}}_\psi$ via the bijection in Theorem \ref{thm Arthur tempered}.

Following the notation in \cite{AM20}, when 
\[\psi=\bigoplus_{i=1}^m \rho_i \otimes S_{a_i} \otimes S_1\]
is a tempered local Arthur parameter, we denote $x_{i}=\frac{a_i-1}{2}$ for $i=1,\dots, m$. We also write $\pi(\psi,\varepsilon)\in\Pi_\psi$, the representation corresponding to $\varepsilon$ via Theorem \ref{thm Arthur tempered}, as
\begin{equation*}
\pi((x_{1},\rho_1)^{\varepsilon(\rho_1 \otimes S_{a_1})},\dots, (x_m,\rho_m)^{\varepsilon(\rho \otimes S_{a_m})}).
\end{equation*}
When $\psi$ has only one $\rho$ in its decomposition, we often ignore it in the notation and simply write
\begin{equation*}
\pi(x_{1}^{\varepsilon(\rho \otimes S_{a_1})},\dots,x_{m}^{\varepsilon(\rho \otimes S_{a_m})})=\pi(\psi,\varepsilon).
\end{equation*}

We say that a tempered local Arthur parameter $\psi,$ is \emph{discrete} if every summand of Equation (\ref{eq decomp psi parametrization of tempered}) is self-dual and the decomposition is multiplicity free. Arthur showed that the discrete series is parametrized by local Arthur packets of discrete local Arthur parameters.

\begin{thm}[{\cite[Theorem 1.5.1]{Art13}}]\label{thm Arthur discrete}
Any irreducible discrete series representation of $G_n$ lies in $\Pi_\psi$ for some discrete local Arthur parameter $\psi.$
\end{thm}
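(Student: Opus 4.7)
The plan is to leverage Theorem \ref{thm Arthur tempered}, which already provides the classification of tempered representations via tempered local Arthur packets. Since every discrete series representation is tempered, any discrete series $\pi$ already lies in $\Pi_\psi$ for some tempered local Arthur parameter $\psi$; the task thus reduces to showing that such a $\psi$ is necessarily \emph{discrete}, that is, every summand in the decomposition (\ref{A-param decomp}) is self-dual and the decomposition is multiplicity-free. I would argue by contrapositive: assuming $\psi$ is tempered but fails one of these conditions, I will produce an embedding of each $\pi \in \Pi_\psi$ into an induced representation from a proper parabolic, which by Casselman's square-integrability criterion rules out the discrete series.

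First, I would rule out the non-good-parity case. If $\psi$ is tempered but not of good parity, Theorem \ref{thm reduction to gp} decomposes $\psi = \psi_1 \oplus \psi_0 \oplus \psi_1^\vee$ with $\psi_1 \neq 0$, so $\tau_{\psi_1}$ is a nontrivial unitary tempered representation of a proper Levi factor $\GL_m(F)$ with $m>0$. Every element of $\Pi_\psi$ then has the form $\tau_{\psi_1} \rtimes \pi$ with $\pi \in \Pi_{\psi_0}$, and is thus a full parabolically induced representation. By Frobenius reciprocity, its Jacquet module along the corresponding parabolic contains $\tau_{\psi_1} \otimes \pi$, whose central exponent on the $\GL_m$-factor has real part zero and therefore lies on the boundary of the relevant Weyl chamber rather than strictly inside its negative part. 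Casselman's criterion then forbids square-integrability, so no element of $\Pi_\psi$ is a discrete series.

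It remains to handle the case where $\psi$ is tempered and of good parity but has a multiplicity, say $\rho \otimes S_a \otimes S_1$ appearing at least twice. Here I would appeal to M{\oe}glin's explicit construction of the tempered packet: removing two copies of $\rho \otimes S_a \otimes S_1$ gives a new tempered parameter $\psi'$ of strictly smaller size, and for every $\varepsilon \in \widehat{\mathcal{S}}_\psi$ one realizes $\pi(\psi,\varepsilon)$ as an irreducible subrepresentation of $\Delta_\rho\!\left[\tfrac{a-1}{2},-\tfrac{a-1}{2}\right] \rtimes \sigma$ for a suitable $\sigma \in \Pi_{\psi'}$; the relation $\alpha_{\rho,i}+\alpha_{\rho,j}=0$ in $\mathcal{S}_\psi$ for equal Jordan blocks is exactly what makes this embedding possible at the level of the component group. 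The same Casselman-criterion argument then excludes discrete series. The main obstacle, and the step that requires real input beyond Theorem \ref{thm Arthur tempered}, is precisely this multiplicity case: translating the abstract character identities that define $\Pi_\psi$ into a concrete parabolic embedding requires M{\oe}glin's construction (or, equivalently, the Knapp--Stein decomposition on the endoscopic side), whereas the non-good-parity case is handled essentially for free by Theorem \ref{thm reduction to gp}.
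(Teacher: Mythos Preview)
The paper does not supply its own proof of this statement: Theorem~\ref{thm Arthur discrete} is simply quoted from \cite[Theorem 1.5.1]{Art13} as background, with no argument given. There is therefore nothing in the paper to compare your proposal against.

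That said, your derivation of the discrete-series statement from the tempered classification (Theorem~\ref{thm Arthur tempered}) together with structure theory is essentially sound as an \emph{a posteriori} argument. The non-good-parity case is handled cleanly by Theorem~\ref{thm reduction to gp}, and for the good-parity multiplicity case the embedding $\pi(\psi,\varepsilon)\hookrightarrow \Delta_\rho\!\left[\tfrac{a-1}{2},-\tfrac{a-1}{2}\right]\rtimes\sigma$ is indeed a standard consequence of M{\oe}glin's construction of tempered packets (it is the tempered instance of the parabolic-induction description in \cite[\S8]{Xu17b}, or equivalently the Knapp--Stein/$R$-group analysis on the classical-group side). One caveat worth flagging: in Arthur's book, the tempered classification and the discrete-series classification are proved \emph{together} as part of the same long induction (Theorem~1.5.1 packages both), so from a foundational standpoint your argument is not an independent proof but rather an explanation of why the discrete statement follows once the tempered one is in hand. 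Within the expository context of this paper that is entirely appropriate, but you should be aware that you are not reproducing Arthur's actual route.
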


\section{Atobe's reformulation}\label{sec atob refor}
In this section, we recall the main definitions and results of \cite{Ato20b} for the construction of local Arthur packets of good parity.

We fix the following notation throughout the section. Let $\psi$ be any local Arthur parameter of good parity with decomposition 
\[ \psi= \bigoplus_{\rho} \bigoplus_{i \in I_{\rho}} \rho \otimes S_{a_i} \otimes S_{b_i}. \]
 We set $A_i=\frac{a_i+b_i}{2}-1$ and $B_i=\frac{a_i-b_i}{2}$ for $i\in I_\rho.$ 
 
 We say that a total order $>_\psi$ on $I_\rho$ is \emph{admissible} if satisfies:
\[
\tag{$P$}
\text{
For $i,j \in I_\rho$, 
if $A_i > A_j$ and $B_i > B_j$, 
then $i >_\psi j$.
}
\]
Sometimes we consider an order $>_\psi$ on $I_\rho$ satisfying:
\[
\tag{$P'$}
\text{
For $i,j \in I_\rho$, 
if $B_i > B_j$, 
then $i >_\psi j$.
}
\]

Note that ($P'$) implies ($P$). Often, we write $>$ instead of $>_\psi$ when it is clear that we are working with a fixed admissible order. 

Suppose now that we have fixed an admissible order for $\psi.$ Then we define the collection of ordered multi-sets 
$$\supp(\psi) := \cup_{\rho}\{ [A_i,B_i]_{\rho} \}_{i \in (I_\rho,>)}.
$$
We call this the support of $\psi$. Note that $\supp(\psi)$ depends implicitly on the fixed admissible order.

\subsection{Extended multi-segments and associated representations}

In this subsection, we recall the definition of extended multi-segments, their associated representations, and the explicit construction of $\Pi_{\psi}$ when $\psi$ is of good parity. We also give some notation on extended multi-segments that we will use throughout our arguments.

\begin{defn} [{\cite[Definition 3.1]{Ato20a}}]
(Extended multi-segments)\label{def multi-segment}
\begin{enumerate}
\item
An \emph{extended segment} is a triple $([A,B]_\rho, l, \eta)$,
where
\begin{itemize}
\item
$[A,B]_\rho = \{\rho|\cdot|^A, \rho|\cdot|^{A-1}, \dots, \rho|\cdot|^B \}$ is a segment 
for an irreducible unitary supercuspidal representation $\rho$ of some $\GL_d(F)$; 
\item
$l \in \Z$ with $0 \leq l \leq \frac{b}{2}$, where $b = \#[A,B]_\rho = A-B+1$; 
\item
$\eta \in \{\pm1\}$. 
\end{itemize}

\item
An \emph{extended multi-segment} for $G_n$ is 
an equivalence class (via the equivalence defined below) of multi-sets of extended segments 
\[
\EE = \cup_{\rho}\{ ([A_i,B_i]_{\rho}, l_i, \eta_i) \}_{i \in (I_\rho,>)}
\]
such that 
\begin{itemize}
\item
$I_\rho$ is a totally ordered finite set with a fixed admissible total order $>$;

\item
$A_i + B_i \geq 0$ for all $\rho$ and $i \in I_\rho$; 

\item
as a representation of $W_F \times \SL_2(\BC) \times \SL_2(\BC)$, 
\[
\psi_{\EE} = \bigoplus_\rho \bigoplus_{i \in I_\rho} \rho \otimes S_{a_i} \otimes S_{b_i} 
\]
where $(a_i, b_i) = (A_i+B_i+1, A_i-B_i+1)$,
is a local Arthur parameter for $G_n$ of good parity. We shall denote $\psi_{\EE}$ the local Arthur parameter associated with $\EE$. 
\item The sign condition
\begin{align}\label{eq sign condition}
\prod_{\rho} \prod_{i \in I_\rho} (-1)^{[\frac{b_i}{2}]+l_i} \eta_i^{b_i} = 1
\end{align}
holds.
\end{itemize}

\item
Two extended segments $([A,B]_\rho, l, \eta)$ and $([A',B']_{\rho'}, l', \eta')$ are \emph{weakly equivalent} 
if 
\begin{itemize}
\item
$[A,B]_\rho = [A',B']_{\rho'}$; 
\item
$l = l'$; and 
\item
$\eta = \eta'$ whenever $l = l' < \frac{b}{2}$. 
\end{itemize}
Two extended multi-segments 
$\EE = \cup_{\rho}\{ ([A_i,B_i]_{\rho}, l_i, \eta_i) \}_{i \in (I_\rho,>)}$ 
and 
$\EE' = \cup_{\rho}\{ ([A'_i,B'_i]_{\rho}, l'_i, \eta'_i) \}_{i \in (I_\rho,>)}$ 
are \emph{weakly equivalent}
if for any $\rho$ and $i \in I_\rho$, the extended segments $([A_i,B_i]_\rho, l_i, \eta_i)$ and $([A'_i,B'_i]_{\rho}, l'_i, \eta'_i)$ are weakly equivalent.

\item
We define the \emph{support} of $\EE$ to be the collection of ordered multi-sets 
\[
\supp(\EE) = \cup_{\rho}\{ [A_i,B_i]_{\rho} \}_{i \in (I_\rho,>)}.
\]
\end{enumerate}
\end{defn}

If the admissible order $>$ is clear in the context, for $k \in I_{\rho}$, we often denote $k+1 \in I_{\rho}$ to be the unique element adjacent with $k$ and $k+1>k$.

We attach a pictograph to each extended multi-segment by the same way in \cite[Section 3]{Ato20b}. We give an example to explain this.
\begin{exmp}
 Let $\rho$ be the trivial representation. The pictograph
\[\EE=\bordermatrix{
& -1 & 0 &1 & 2 &3 & 4\cr
& \lhd & \lhd & \oplus & \ominus & \rhd & \rhd \cr
&  &  &  & \lhd &\rhd  &  \cr
&  &  &  &  &  & \ominus \cr
}_{\rho}\]
corresponds to the extended multi-segment $\EE= \{ ([A_i,B_i]_{\rho},l_i,\eta_i)\}_{i \in (1<2<3)}$ of $\Sp_{44}(F)$ where
\begin{enumerate}
    \item [$\oldbullet$] $([A_1,B_1]_{\rho},[A_2,B_2]_{\rho},[A_3,B_3]_{\rho})=([4,-1]_{\rho},[3,2]_{\rho},[4,4]_{\rho})$ specify the ``support" of each row.
    \item [$\oldbullet$] $(l_1,l_2,l_3)=(2,1,0)$ counts the number of pairs of triangles in each row. 
    \item [$\oldbullet$] $(\eta_1,\eta_2,\eta_3)=(1, 1, -1)$ records the sign of the first circle in each row. Note that $\eta_2=1,-1$ are weakly equivalent.
\end{enumerate}
The associated local Arthur parameter is
    \[ \psi_{\EE}= \rho \otimes S_{4}\otimes S_{6} + \rho \otimes S_{6}\otimes S_{2} + \rho \otimes S_9 \otimes S_1.  \]

\end{exmp}

Next, we introduce two operators that we use frequently. They are considered in \cite{Ato20b}, but not written in the form of operators. 

\begin{defn}(shift, add)\\
Let $\EE = \cup_{\rho}\{ ([A_i,B_i]_{\rho}, l_i, \eta_i) \}_{i \in (I_\rho,>)}$ be an extended multi-segment. For $j \in I_{\rho'}$ and $d \in \Z$, we define the following operators. It is immediate that the operators commute with each other and so we denote the composition by summation. 

\begin{enumerate}
    \item [1.] $sh_j^{d}(\EE)= \cup_{\rho}\{ ([A_i',B_i']_{\rho}, l_i, \eta_i) \}_{i \in (I_\rho,>)}$ with 
    \[ [A_i',B_i']_{\rho}= \begin{cases}
    [A_i+d,B_i+d]_{\rho} & \text{ if }\rho=\rho' \text{ and } i = j,\\
     [A_i,B_i]_{\rho} & \text{ otherwise, }\end{cases} \]
    and $sh^d_{\rho'}=\sum_{j\in I_{\rho'}} sh_j^{d}$. Also, we define $sh^d:= \sum_{\rho} sh_{\rho}^d.$
     \item [2.] $add_j^{d}(\EE)= \cup_{\rho}\{ ([A_i',B_i']_{\rho}, l_i', \eta_i) \}_{i \in (I_\rho,>)}$ with 
    \[ ([A_i',B_i']_{\rho},l_i')= \begin{cases}
    ([A_i+d,B_i-d]_{\rho},l_i+d) & \text{ if }\rho=\rho' \text{ and } i = j,\\
     ([A_i,B_i]_{\rho},l_i) & \text{ otherwise, }\end{cases} 
    \]
    and $add^d_{\rho'}=\sum_{j\in I_{\rho'}} add_j^{d}$. Also, we define $add^d:= \sum_{\rho} add_{\rho}^d.$
\end{enumerate}
We will use these notation in the case that the resulting object is still an extended multi-segment.
\end{defn}

We remark that the local Arthur parameter $\psi_{sh_i^1(\EE)}$ (resp. $\psi_{add_i^1(\EE)}$) can be obtained from $\psi_{\EE}$ by replacing the summand $\rho \otimes S_{a_i} \otimes S_{b_i}$ with $\rho \otimes S_{a_i+2} \otimes S_{b_i}$ (resp. $\rho \otimes S_{a_i} \otimes S_{b_i+2}$). Therefore, the parity of its dimension is the same as that of $\psi$, so it is a local Arthur parameter of the same type of group as $\psi$ with larger rank.

For each extended multi-segment $\EE$, Atobe defined a representation $\pi(\EE)$ (possibly zero) as follows.

\begin{defn}[\S3.2, \cite{Ato20b}]\label{def rep of segment}
Suppose $\EE$ is an extended multi-segment such that for any $\rho$, if there exists $i \in I_{\rho}$ with $B_{i}<0$, then the admissible order on $I_{\rho}$ satisfies ($P'$). We first suppose that $\EE$ satisfies
\begin{enumerate}
    \item [$\oldbullet$]  $B_i \geq 0$ for any $i\in I_\rho,$
    \item [$\oldbullet$] for $i>j\in I_\rho$, $B_i >A_{j}$.
\end{enumerate}
Then we define
\[ \pi(\EE)= soc\left( \bigtimes_\rho\bigtimes_{i\in I_\rho} \begin{pmatrix} B_i &\cdots& B_i+l_i-1\\\vdots & \ddots & \vdots \\ -A_i&\cdots &-(A_i-l_i+1) \end{pmatrix}_\rho \rtimes \pi(\phi,\varepsilon)  \right), \]
where 
\[ \phi= \bigoplus_\rho \left( \rho \otimes \left(\bigoplus_{i\in I_\rho} S_{2(B_i+l_i)+1}\oplus\cdots\oplus S_{2(A_{i}-l_i)+1}  \right) \right),\]
and 
$ \varepsilon(\rho \otimes S_{2(B_i+l_i+k)+1})= \eta_i (-1)^{k} $ for $0 \leq  k \leq b_i-2l_i-1$.

In general, let $t_i \in \Z_{\geq 0}$ such that $\EE'=(\sum_\rho\sum_{i\in I_\rho} sh_i^{t_i})(\EE)$ satisfies above conditions. Then we define 
\[ \pi(\EE)= \circ_\rho\circ_{i\in I_\rho} \left(D_{\rho|\cdot|^{B_i+1,\dots,A_i+1}} \circ\cdots\circ D_{\rho|\cdot|^{B_i+t_i,\dots,A_i+t_i}}  \right)(\pi(\EE') ), \]
where if $I_\rho=\{1,\dots,n\}$ with $1<\cdots<n,$ we write $\circ_{i\in I_\rho}D_i= D_n\circ\cdots\circ D_1.$
\end{defn}

Atobe showed that the composition of derivatives in the definition above can be computed by the formulae in \cite{AM20} explicitly based on the proposition below. We give more details of this argument in Lemma \ref{lem derivative algorithm}.

\begin{prop}[{\cite[Proposition 8.3]{Xu17b}}]\label{prop derivative support}

Suppose $\pi$ is an irreducible representation in $\Pi_{\psi}$, where $\supp(\psi) = \cup_{\rho}\{ [A_i,B_i]_{\rho} \}_{i \in (I_\rho,>)}.$
\begin{enumerate}
    \item [(i)] If $D_{\rho|\cdot|^{x}}^{(k)}(\pi)\neq 0$, then 
    \[ k \leq \#\{ i \in I_{\rho} \ | \ B_i= x\}.\]
    \item [(ii)]There exists $0 \leq x \leq y$ such that
    \[ D_{\rho|\cdot|^{x,\dots,y}}(\pi(\EE)) \neq 0 \]
    only if there exists $i <j$ in $I_{\rho}$ such that
    \begin{enumerate}
        \item [$\oldbullet$] $B_i=x$, $A_j \geq y$,
        \item [$\oldbullet$] for $i \leq k < j$,  $B_k \leq B_{k+1}\leq A_k+1.$
    \end{enumerate}
\end{enumerate}
\end{prop}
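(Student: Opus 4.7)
The plan is to reduce to the good-parity case via Theorem \ref{thm reduction to gp}, so that $\pi = \pi(\EE)$ for an extended multi-segment $\EE$ with $\psi_{\EE} = \psi$ and $\supp(\EE) = \supp(\psi)$. The entire argument then proceeds by analyzing Jacquet modules of $\pi(\EE)$ through M{\oe}glin's construction (Definition \ref{def rep of segment}), combined with the Leibniz rule for derivatives (Lemma \ref{lem Leibniz rule}) and the explicit derivative formulas for Speh and tempered representations from \cite{AM20}.

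For part (i), the strategy is to apply a composition of right-shifts $\sum_{i} sh_i^{t_i}$ so that $\EE$ satisfies the two conditions of Definition \ref{def rep of segment}. Then $\pi(\EE)$ embeds into a product of Speh representations inducing from a tempered representation $\pi(\phi, \varepsilon)$. A careful application of the Leibniz rule shows that $D_{\rho|\cdot|^x}$ can act nontrivially only through a Speh factor whose leftmost column begins with $\rho|\cdot|^{B_i}$, i.e.\ with $B_i = x$, or through $\pi(\phi, \varepsilon)$, where the explicit tempered derivative formula forces $\rho \otimes S_{2x+1}$ to be a summand of $\phi$ and traces this back to some $B_i = x$ in the original $\EE$. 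Unwinding the inverse-shift derivatives in Definition \ref{def rep of segment} does not introduce new $B$-values, so the multiplicity bound $k \leq \#\{i \in I_\rho : B_i = x\}$ follows.

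For part (ii), I would induct on $y - x$. The base case reduces to (i). For the inductive step, by Lemma \ref{lem Frobenius} there is an embedding $\pi(\EE) \hookrightarrow \rho|\cdot|^x \rtimes \sigma$ where $\sigma$ is an irreducible constituent of $D_{\rho|\cdot|^x}(\pi(\EE))$ still satisfying $D_{\rho|\cdot|^{x+1,\ldots,y}}(\sigma) \neq 0$. The essential input (from M{\oe}glin's Jacquet module computation, cf.\ \cite{Xu17b}) is that every such $\sigma$ lies in $\Pi_{\psi'}$, where $\psi'$ is obtained from $\psi$ by replacing a single summand $\rho \otimes S_{a_i} \otimes S_{b_i}$ with $B_i = x$ by $\rho \otimes S_{a_i - 1} \otimes S_{b_i - 1}$, thereby shrinking the support segment $[B_i, A_i]$ to $[B_i+1, A_i]$. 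Applying the inductive hypothesis to $\sigma \in \Pi_{\psi'}$ yields indices $i'', j''$ with $B'_{i''} = x+1$, $A'_{j''} \geq y$ and the chain condition on the modified support. Translating back: either $i''$ corresponds to $i$ (with its $B$-value shifted from $x$ to $x+1$), in which case the original chain extends forward through $B_{i+1} \leq A_i + 1$; or $i''$ corresponds to a distinct index $i'$ with $B_{i'} = x+1 \leq A_i + 1$. In either case we obtain a chain starting at $i$ and terminating at $j = j''$ satisfying the stated conditions.

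The main obstacle is the derivative--packet compatibility statement invoked in the inductive step: tracking precisely how the Arthur parameter of an irreducible constituent of $D_{\rho|\cdot|^x}(\pi)$ relates to the original $\psi$. This is exactly the heart of the matter and is what forces the inequality $B_{k+1} \leq A_k + 1$ in the chain condition — it encodes the exact combinatorial regime in which an iterated $\rho|\cdot|^{x,\ldots,y}$-derivative can ``walk'' from one support segment to the next while remaining inside an Arthur packet, rather than dropping out to zero after one step.
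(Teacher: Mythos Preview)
The paper does not supply its own proof of this proposition: it is quoted verbatim as \cite[Proposition 8.3]{Xu17b} and used as an input throughout. So there is no in-paper argument to compare against; your task amounts to reconstructing Xu's proof.

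Your outline for (i) is along the right lines, but the step ``unwinding the inverse-shift derivatives does not introduce new $B$-values'' hides the actual work. After shifting, $\pi(\EE)$ is obtained from $\pi(\EE')$ by a long composition of derivatives $D_{\rho|\cdot|^{B_i+1,\ldots,A_i+t_i}}$, and you must explain why $D_{\rho|\cdot|^{x}}^{(k)}(\pi(\EE))\neq 0$ forces $D_{\rho|\cdot|^{x}}^{(k)}$ to already be bounded at the level of $\pi(\EE')$ in terms of the original $B$-values. This is not automatic: composing Jacquet functors in different orders can in principle create new nonvanishing. One clean way is to observe that the shift derivatives are taken at exponents strictly larger than any $B_i$, and then argue that $D_{\rho|\cdot|^{x}}$ commutes past them (Lemma \ref{lem Leibniz rule}(3)) until it hits the socle description; but you have to say this, not assume it.

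For (ii) there are two problems. First, a computational slip: replacing $\rho\otimes S_{a_i}\otimes S_{b_i}$ by $\rho\otimes S_{a_i-1}\otimes S_{b_i-1}$ sends the segment $[A_i,B_i]_\rho$ to $[A_i-1,B_i]_\rho$, not to $[A_i,B_i+1]_\rho$; the replacement you want is $(a_i,b_i)\mapsto (a_i+1,b_i-1)$. Second, and more seriously, the ``essential input'' you invoke---that every irreducible constituent of $D_{\rho|\cdot|^{x}}(\pi)$ lies in the packet for such a modified $\psi'$---is precisely the content (or at least the hard half) of the Jacquet-module analysis that underlies \cite[Proposition 8.3]{Xu17b} itself. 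Citing it from \cite{Xu17b} while trying to prove a statement attributed to the same proposition is circular unless you can point to a strictly more primitive statement there (e.g.\ the explicit Jacquet-module formula for $\pi(\psi,\underline{l},\underline{\eta})$ in M{\oe}glin's parametrization) and derive the packet-membership claim from it directly. As written, the inductive step assumes what is essentially the conclusion.
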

Following is Atobe's parametrization of the local Arthur packet associated with local Arthur parameter of good parity.
\begin{thm}[{\cite[Theorem 3.3]{Ato20b}}]\label{thm Atobe's reformulation}
Suppose $\psi= \bigoplus_{\rho} \bigoplus_{i \in I_{\rho}} \rho \otimes S_{a_i} \otimes S_{b_i}$ is a local Arthur parameter of good parity of $G_n$. Choose an admissible order $>_{\psi}$ on $I_{\rho}$ for each $\rho$ that satisfies ($P'$) if $\half{a_i-b_i}<0$ for some $i \in I_{\rho}$. Then
\[ \bigoplus_{\pi \in \Pi_{\psi}} \pi= \bigoplus_{\EE} \pi(\EE),\]
where $\EE$ runs over all extended multi-segments with $\supp(\EE)= \supp(\psi)$ and $\pi(\EE) \neq 0$. 
\end{thm}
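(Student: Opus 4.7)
The plan is to leverage M{\oe}glin's explicit construction of $\Pi_{\psi}$ for $\psi$ of good parity and show that Atobe's parametrization via extended multi-segments exactly reproduces it. Recall from \cite{Moe06a,Moe06b,Moe11} that for a good parity $\psi$ with a fixed admissible order and every $B_i$ sufficiently large (so all segments are well-separated and non-negative), M{\oe}glin constructed each element of $\Pi_{\psi}$ explicitly as a parabolic induction from a discrete series, indexed by the data $(l_i, \eta_i)$ subject to the sign condition \eqref{eq sign condition}, and showed that $\Pi_{\psi}$ is multiplicity-free.

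First I would treat the ``base case'' in which $B_i \geq 0$ for all $i$ and $B_i > A_{j}$ whenever $i > j$. Here I would match Atobe's socle construction in Definition \ref{def rep of segment} with M{\oe}glin's construction directly: the Speh representations $\begin{pmatrix} B_i & \cdots & B_i+l_i-1 \\ \vdots & \ddots & \vdots \\ -A_i & \cdots & -(A_i-l_i+1) \end{pmatrix}_{\rho}$ are precisely the inducing data M{\oe}glin uses, and $\pi(\phi,\varepsilon)$ is the underlying tempered piece. Taking the socle of the induced representation gives a well-defined irreducible representation by Theorems \ref{thm derivative-socle} and \ref{thm self-dual derivative}, and this matches the Langlands quotient appearing in M{\oe}glin's formula. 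In this regime, distinct tuples $(l_i,\eta_i)$ satisfying the sign condition produce distinct representations (by M{\oe}glin's multiplicity-one theorem), and every member of $\Pi_{\psi}$ arises this way. Thus both sides of the claimed identity match up as multi-sets.

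Next I would reduce the general case to the base case using the operators $sh_j^{d}$ and Proposition \ref{prop derivative support}. Given $\EE$ with $\supp(\EE) = \supp(\psi)$, choose shifts $t_i \geq 0$ large enough so that $\EE' = (\sum_{i} sh_i^{t_i})(\EE)$ falls into the base case. By the stability properties of Arthur packets under these ``shift'' operations (which is essentially the content of M{\oe}glin's construction and the compatibility of derivatives with the endoscopic character identities), we have $\pi(\EE') \in \Pi_{\psi_{\EE'}}$, and the iterated derivative
$\circ_{\rho}\circ_{i \in I_\rho}\bigl( D_{\rho|\cdot|^{B_i+1,\ldots,A_i+1}} \circ \cdots \circ D_{\rho|\cdot|^{B_i+t_i,\ldots,A_i+t_i}}\bigr)$
transports $\pi(\EE')$ to an element of $\Pi_{\psi}$ (or zero) by compatibility of Jacquet functors with the packet structure in good parity. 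The admissibility hypothesis $(P')$ in the case when some $B_i < 0$ ensures that these derivatives are computed in an order which avoids collisions among the derivatives of different rows and keeps the resulting socle/derivative expressions irreducible (as needed for Lemma \ref{lem Leibniz rule} and Theorem \ref{thm derivative-socle} to apply cleanly). I would then verify that the map $\EE \mapsto \pi(\EE)$ is independent of the chosen shifts, so $\pi(\EE)$ is well-defined.

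Finally, I would show the identity of multi-sets. Surjectivity onto $\Pi_{\psi}$ follows because every member of $\Pi_{\psi}$ is obtained, after appropriate shift, from one of the base-case socles, which is $\pi(\EE')$ for some $\EE'$; pulling back along the shift gives an $\EE$ with $\pi(\EE)$ equal to the prescribed representation. The non-vanishing condition $\pi(\EE) \neq 0$ is equivalent, by Xu's criterion \cite{Xu17b}, to the usual combinatorial non-vanishing condition on M{\oe}glin's parameters, so restricting to $\pi(\EE) \neq 0$ we recover exactly $\Pi_\psi$. The hardest step, I expect, is the identification of Atobe's socle construction with M{\oe}glin's inductive construction in the base case: one must trace through the precise comparison of the inducing Speh data and tempered pieces, and show that the resulting Langlands parameter and component-group character match those predicted by M{\oe}glin's sign function $\eta$. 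Once this comparison is established, the reduction via shifts and derivatives is mechanical.
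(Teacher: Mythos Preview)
This theorem is not proved in the present paper: it is quoted verbatim from Atobe \cite[Theorem 3.3]{Ato20b} and used as input. There is therefore no proof here to compare your proposal against.

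That said, your outline tracks the actual strategy in \cite{Ato20b} reasonably well at the level of shape: establish the identification with M{\oe}glin's construction in the well-separated non-negative regime, then descend to the general case by controlled derivatives along shifts. One caution: the phrase ``mechanical'' for the reduction step undersells the work. The independence of $\pi(\EE)$ from the choice of shifts $t_i$, and the fact that the iterated derivatives in Definition \ref{def rep of segment} land on a single irreducible (rather than a virtual sum), require the highest-derivative property established in Lemma \ref{lem derivative algorithm} and the support constraint of Proposition \ref{prop derivative support}; you gesture at this with the $(P')$ hypothesis but do not pin down why each derivative is highest. Also, your appeal to ``compatibility of Jacquet functors with the packet structure'' is doing a lot of load-bearing: what is actually used is M{\oe}glin's inductive definition of the packet via Jacquet modules, which is precisely what makes the derivative descent land inside $\Pi_\psi$. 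If you intend to write this up, those are the two places where a referee would ask for detail.
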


Note that by $\supp(\EE)= \supp(\psi),$ we mean that they are equal as multi-sets and also that the admissible orders on $I_\rho$ agree.

\begin{lemma}\label{lem Moeglin}
Suppose $\supp(\EE_1)=\supp(\EE_2)$ and $\pi(\EE_1)\cong \pi(\EE_2) \neq 0$. Then $\EE_1=\EE_2$.
\end{lemma}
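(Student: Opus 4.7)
The plan is to deduce this immediately from Atobe's parametrization (Theorem~\ref{thm Atobe's reformulation}) together with the multiplicity-freeness of local Arthur packets established by M{\oe}glin.

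First, I would observe that since $\supp(\EE_1) = \supp(\EE_2)$ agrees as collections of ordered multi-sets (segments together with the admissible order on each $I_\rho$), the associated local Arthur parameters coincide: writing $\psi := \psi_{\EE_1} = \psi_{\EE_2}$, this $\psi$ is automatically of good parity. Moreover, because $\pi(\EE_1) \neq 0$ presupposes, via Definition~\ref{def rep of segment}, that the shared admissible order on each $I_\rho$ satisfies condition $(P')$ whenever some $B_i < 0$, this order also satisfies the running hypothesis of Theorem~\ref{thm Atobe's reformulation}.

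Next, I would invoke Theorem~\ref{thm Atobe's reformulation} for this $\psi$ and this admissible order to obtain the decomposition
\begin{equation*}
\bigoplus_{\pi \in \Pi_\psi} \pi \;=\; \bigoplus_{\EE} \pi(\EE),
\end{equation*}
where $\EE$ ranges over all extended multi-segments with $\supp(\EE) = \supp(\psi)$ and $\pi(\EE) \neq 0$. Since M{\oe}glin proved that $\Pi_\psi$ is multiplicity-free, every irreducible representation appears at most once on the left-hand side, and hence at most once on the right-hand side. Consequently, the assignment $\EE \mapsto \pi(\EE)$ is injective on the set of extended multi-segments with the given support and nonzero image.

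Finally, since $\pi(\EE_1) \cong \pi(\EE_2) \neq 0$ and both $\EE_1, \EE_2$ share the common support $\supp(\psi)$, this injectivity immediately forces $\EE_1 = \EE_2$. There is no substantive obstacle here; the entire content of the lemma is already packaged inside Atobe's theorem and M{\oe}glin's multiplicity-one result, and the only verification needed is the compatibility of the admissible-order hypotheses, which follows from the nonvanishing assumption on $\pi(\EE_1)$.
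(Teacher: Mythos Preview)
Your proof is correct and follows exactly the same approach as the paper: both deduce the lemma directly from Theorem~\ref{thm Atobe's reformulation} combined with M{\oe}glin's multiplicity-freeness of $\Pi_\psi$. Your version simply spells out the details (matching supports implies matching $\psi$, and the admissible-order hypothesis is inherited from the nonvanishing assumption), which the paper leaves implicit in its one-line proof.
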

\begin{proof}
This is a direct consequence of Theorem \ref{thm Atobe's reformulation} and Mœglin's work that $\Pi_{\psi}$ is multiplicity free (\cite{Moe11}).
\end{proof}

The following definition provides some of the notation and terminology used throughout this paper.

\begin{defn}
Suppose $\EE= \cup_{\rho} \{([A_i,B_i]_{\rho}, l_i, \eta_i)\}_{i \in (I_{\rho},>)}$ satisfies $\pi(\EE)\neq 0$. 
\begin{enumerate}
    \item In many applications, we want to study $\EE$ piece by piece, so we set  
\[ \EE_{\rho}=\{([A_i,B_i]_{\rho}, l_i, \eta_i)\}_{i \in (I_{\rho},>)},\  \EE^{\rho}=\cup_{\rho' \not\cong \rho}\{([A_i,B_i]_{\rho'}, l_i, \eta_i)\}_{i \in (I_{\rho'},>)}. \]
\item For $i \in I_{\rho}$, we usually call the extended segment $ ([A_i,B_i]_{\rho}, l_i, \eta_i)$ the $i$-th row of $ \EE_{\rho}$.
\item We define $sh^{d}_i(\EE_{\rho})=sh^d_i(\EE)_{\rho}$, and $sh^{d}(\EE_{\rho})=((\sum_{i \in I_{\rho}} sh_i^{d})(\EE))_{\rho} $ and similarly for $add^d_i$ and $add^d$.
\item If it is clear from the context we often use $\FF$ to denote $\EE_{\rho}$ for brevity. Suppose $I_{\rho}$ is the disjoint union $I_{\rho,1} \sqcup I_{\rho,2}$, and for any $i \in I_{\rho,1}$ and $j \in I_{\rho,2}$, $i<j$. Then we denote $\FF=\FF_1 + \FF_2$ where
\[ \FF_1=\{([A_i,B_i]_{\rho}, l_i, \eta_i)\}_{i \in (I_{\rho,1},>)} ,\ \FF_2=\{([A_i,B_i]_{\rho}, l_i, \eta_i)\}_{i \in (I_{\rho,2},>)}.\]
Similarly, if $I_{\rho}= I_{\rho,1} \sqcup\cdots\sqcup I_{\rho,k}$, we denote $\FF= \FF_1 +\cdots+\FF_k$. In this case, we define 
\[ \FF_1 +\cdots+\FF_{r-1}+ Sh^d(\FF_r) + \FF_{r+1} +\cdots+\FF_k= 
\left(\sum_{i \in I_{\rho,r}} Sh_i^d\right)(\FF).\]
and similarly for $add$. We denote the support of $\FF_j$ as the ordered multi-set 
\[ \supp(\FF_j)= \{ [A_i,B_i]_{\rho}\}_{i \in (I_{\rho,j},>)}. \]

\item Finally, we say $\FF= \{ ([A_i,B_i]_{\rho}, l_i,\eta_i)\}_{i \in I_{\rho}}$ is \emph{positive} (resp. \emph{non-negative}) if $B_i$ is positive (resp. non-negative) for all $i \in I_{\rho}$. We say $\EE= \cup_{\rho} \EE_{\rho}$ is \emph{positive} (resp. \emph{non-negative}) if $\EE_{\rho}$ is positive (resp. non-negative) for all $\rho$.

\end{enumerate}

\end{defn}

We state the following definition in order to give a technical lemma which we use multiple times in later sections.

\begin{defn}\label{def modification}
Suppose $\EE= \EE^{\rho} \cup \EE_{\rho}$ and $\EE_{\rho} = \FF_1+ \FF_2$ with 
\[ \FF_1=\{([A_i,B_i]_{\rho}, l_i, \eta_i)\}_{i \in (I_{\rho,1},>)} ,\ \FF_2=\{([A_i,B_i]_{\rho}, l_i, \eta_i)\}_{i \in (I_{\rho,2},>)}.\]

Then for any $\rho^{\ast}$ such that
\begin{enumerate}
    \item [$\oldbullet$] $\EE_{\rho^{\ast}}=\emptyset$,
    \item [$\oldbullet$] $\rho^{\ast}$ is of the same type as $\rho$,
    \item [$\oldbullet$]$\dim(\rho^{\ast})=\dim(\rho)$,
\end{enumerate}
we define the extended multi-segment 
\[\EE-(\FF_2)_{\rho} + (\FF_2)_{\rho^{\ast}}:= \EE^{\rho} \cup \FF_1 \cup \{([A_i,B_i]_{\rho^{\ast}}, l_i, \eta_i)\}_{i \in (I_{\rho,2},>)},\]
and often write it as $\EE^{\rho} \cup \FF_1 \cup (\FF_2)_{\rho^{\ast}}$ for short.
\end{defn}
The following lemma is used frequently in later sections to perform induction arguments. Atobe has also used this argument (see the proof of \cite[Theorem 3.6]{Ato20b}).
\begin{lemma}\label{lemma far away} \ 
\begin{enumerate}
    \item [(i)] Suppose $\pi(\EE)$ and $ \pi(\EE')$ are both nonzero. Then $\pi(\EE) \cong \pi(\EE')$ if and only if $\pi(\EE)\cong \pi(\EE^{\rho} \cup (\EE')_{\rho})$ for all $\rho$.
    \item [(ii)] Suppose $\EE$ and $\EE'$ are extended multi-segments such that $\pi(\EE)$ and $\pi(\EE')$ are both nonzero. We decompose $\EE_{\rho}= \FF_1 + \FF_2$, $\EE_{\rho}'= \FF_1'+ \FF_2'$ where 
\begin{align*}
    \FF_1=\{([A_i,B_i]_{\rho}, l_i, \eta_i)\}_{i \in (I_{\rho,1},>)} ,\ \FF_2=\{([A_i,B_i]_{\rho}, l_i, \eta_i)\}_{i \in (I_{\rho,2},>)},\\
    \FF_1'=\{([A_i',B_i']_{\rho}, l_i', \eta_i')\}_{i \in (I_{\rho,1}',>)} ,\ \FF_2'=\{([A_i',B_i']_{\rho}, l_i', \eta_i')\}_{i \in (I_{\rho,2}',>)}.
\end{align*}
Suppose that the following holds.
\begin{enumerate}
    \item [$\oldbullet$] We have an equality between the ordered multi-sets  
    \[ \supp(\FF_2)= \supp(\FF_2').\]
    \item [$\oldbullet$] There exists a sequence of positive integers $\{t_i\}_{i \in I_{\rho,1}}$ (resp. $\{t_i'\}_{i \in I_{\rho,1}'}$) such that
    \[\begin{cases}
    B_i+t_i \geq 0 &\text{ for }i \in I_{\rho,1},\\
    B_{i}+t_i >A_j+t_j &\text{ for }i>j \in I_{\rho,1},
    \end{cases} \]
    \[\begin{cases}
    B_i'+t_i' \geq 0 &\text{ for }i \in I_{\rho,1}',\\
    B_{i}'+t_i' >A_j'+t_j' &\text{ for }i>j \in I_{\rho,1}',
    \end{cases}\]
    and for any $i \in I_{\rho,1}$ (resp. $I_{\rho,1}'$) and $j \in I_{\rho,2}$ (resp. $I_{\rho,2}' $), we have $ A_i+t_i <B_j$ (resp. $A_i'+t_i' < B_j'$). 
\end{enumerate}
 Then, taking $\rho^{\ast}$ as in Definition \ref{def modification}, we have 
    \[ \pi(\EE) \cong \pi(\EE') \Longleftrightarrow \pi(\EE- (\FF_2)_{\rho}+ (\FF_2)_{\rho^{\ast}})\cong \pi(\EE'- (\FF_2')_{\rho}+ (\FF_2')_{\rho^{\ast}}).\]
    In this case, we have $\FF_2=\FF_2'$.
\end{enumerate}
\end{lemma}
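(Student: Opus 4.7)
Both parts rest on a single structural observation about Atobe's construction in Definition \ref{def rep of segment}: once one shifts to positive, separated form, the base representation
\[ \prod_{\rho'}\prod_{i\in I_{\rho'}} \text{(Speh}_{\rho'}\text{)} \rtimes \pi(\phi,\varepsilon) \]
factors across the distinct cuspidal representations $\rho'$ (since $\phi = \bigoplus_{\rho'} \phi_{\rho'}$ and the Speh factors are built from a single $\rho'$ each), and the derivatives used to recover $\pi(\EE)$ are indexed row by row, each associated to a single $\rho'$. By Lemma \ref{lem Leibniz rule}, combined with the standard commutativity of Jacquet modules along parabolics supported on disjoint cuspidal blocks, derivatives $D_{\rho|\cdot|^x}$ and $D_{\rho'|\cdot|^y}$ commute whenever $\rho \not\cong \rho'$. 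As a consequence, the Langlands data of $\pi(\EE)$ decomposes into independent $\rho'$-pieces, each piece determined solely by $\EE_{\rho'}$.

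For part (i), the plan is to exploit this $\rho$-factorization directly. In the $(\Rightarrow)$ direction, $\pi(\EE)\cong \pi(\EE')$ equates Langlands data, so the $\rho$-piece of that data is computed equivalently from $\EE_\rho$ or from $(\EE')_\rho$, while the $\rho''$-pieces for $\rho''\neq \rho$ are given by $\EE_{\rho''}$. The mixed extended multi-segment $\EE^{\rho}\cup (\EE')_\rho$ produces exactly this reassembled data, hence $\pi(\EE)\cong \pi(\EE^{\rho}\cup (\EE')_\rho)$. For $(\Leftarrow)$, if the equality holds for every $\rho$, comparing $\rho$-pieces of Langlands data gives $\EE_\rho$ and $(\EE')_\rho$ the same $\rho$-piece for each $\rho$, which reassembles to $\pi(\EE)\cong \pi(\EE')$.

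For part (ii), the hypothesis on $\{t_i\}$ ensures that $(\sum_{i\in I_{\rho,1}} sh_i^{t_i})(\EE)$ is positive and separated, while $\FF_2$ already lies above it and is untouched; the analogous statement holds for $\EE'$. Thus $\pi(\EE)$ is obtained by applying the $\rho$-derivatives prescribed by these shifts to a base representation whose factorization splits as (Speh factors for shifted $\FF_1$) times (Speh factors for $\FF_2$), induced with the tempered piece. Now replacing $(\FF_2)_\rho$ by $(\FF_2)_{\rho^*}$ modifies only the $\rho^*$-block of the base (and of the tempered parameter), and this modification commutes with the $\rho$-derivatives recovering $\pi$. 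Consequently $\pi(\EE - (\FF_2)_\rho + (\FF_2)_{\rho^*})$ carries exactly the same $\rho$-piece of Langlands data as $\pi(\EE)$, together with an independent $\rho^*$-piece obtained from $\FF_2$; the parallel statement holds for $\EE'$. Applying part (i) on both sides of the substitution gives the asserted equivalence. Finally, either isomorphism equates the $\rho^*$-parts of the Langlands data; since the only $\rho^*$-extended segments on the two sides are $(\FF_2)_{\rho^*}$ and $(\FF_2')_{\rho^*}$ (both supported on $\rho^*$ alone), Lemma \ref{lem Moeglin} forces $\FF_2=\FF_2'$.

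The principal obstacle is making the $\rho$-factorization rigorous: one must verify that the Speh factors attached to distinct $\rho'$ commute inside the parabolic induction (which they do by Lemma \ref{lem commutativity}, as segments on different $\rho$'s are never linked), that the $\mathrm{soc}$ functor respects the resulting product structure on the base, and that the sequence of $\rho$-derivatives recovering $\pi(\EE)$ from the positive-separated lift never interacts with data on other $\rho''$. Once assembled from the Leibniz rule and the standard commutativity of parabolic induction across disjoint cuspidal blocks, both parts follow cleanly from the explicit formulas in Definition \ref{def rep of segment} and Proposition \ref{prop derivative support}.
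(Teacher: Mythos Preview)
Your proposal is correct and follows essentially the same approach as the paper. The paper's proof is even terser than yours: for (i) it simply asserts that ``the algorithm for computing derivatives in [AM20] for distinct $\rho$ are independent,'' and for (ii) that ``the conditions imply that the computation of the derivative for $\FF_1$ and $\FF_2$ are independent,'' concluding $\FF_2=\FF_2'$ from (i) and Lemma~\ref{lem Moeglin}---precisely the factorization-of-$L$-data idea you have spelled out in more detail.
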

\begin{proof}
Part (i) follows from the fact that the algorithm for computing derivatives in \cite{AM20} for distinct $\rho$ are independent.

The equivalence in Part (ii) follows similarly. The conditions imply that the computation of the derivative for $\FF_1$ and $\FF_2$ are independent and similar for $\FF_1'$ and $\FF_2'$. The equality $\FF_2=\FF_2'$ follows from Part (i) and Lemma \ref{lem Moeglin}. This completes the proof of the lemma.
\end{proof}

Now we define the sets that we are interested in the most.
\begin{defn}\label{def PRep Rep}
We define $\Rep$ to be the set of extended multi-segments $\EE$ satisfying the condition in Definition \ref{def rep of segment} and $\pi(\EE) \neq 0$. Furthermore, we let $\Rep^{(P')}$ be the subset of $\Rep$ consisting of extended multi-segments whose admissible orders already satisfy ($P'$).
\end{defn}

We end this subsection by rephrasing Problem \ref{problem good parity} in terms of extended multi-segments below.
\begin{problems}\label{problem ext seg}\ 
\begin{enumerate}
    \item Given an irreducible representation $\pi$ of good parity, determine whether the following set below is empty.
    $$\{\EE \in \Rep \ | \ \pi(\EE)= \pi\}$$ 
    \item Given $\EE \in \Rep$, construct the set 
    \[\{ \EE' \in \Rep \ | \ \pi(\EE')= \pi(\EE)\}.\]
\end{enumerate}
\end{problems}

Problems \ref{problem ext seg}(1) will be answered by Algorithm \ref{alg Arthur type}, and (2) will be answered by the formula in Theorem \ref{thm exhaustion of symbol}.

\subsection{Change of admissible order and non-vanishing conditions}
After giving the construction of $\pi(\EE)$, it is an interesting question to give a purely combinatorial criterion on $\EE$ such that $\pi(\EE)$ is nonzero. Xu gave an algorithm in \cite{Xu21b} to answer this question in M{\oe}glin's parametrization. Atobe reformulated this algorithm on non-negative extended multi-segments, and then extended the algorithm by relating $\pi(\EE)$ to $\pi(sh^{t}(\EE))$ for $t \in \Z_{\geq 0}$ such that $sh^{t}(\EE)$ is non-negative. We collect these results in this subsection.

First, we give a necessary condition.
\begin{prop}\label{prop positive non-vanishing} 
Let $\EE=\cup_{\rho}\{([A_i,B_i]_{\rho},l_i,\eta_i)\}_{i \in (I_{\rho},>)}$ be an extended multi-segment whose admissible order satisfies ($P'$) if $B_i<0$ for some $i \in I_{\rho}$.
\begin{enumerate}
    \item [(i)] \textup{(}\cite[Lemma 5.5, 5.6, 5.7]{Xu21b}\textup{)} $\pi(\EE)\neq 0$ only if the following conditions hold for all $k<k+1 \in I_{\rho}$.
 Denote $\epsilon= (-1)^{A_k-B_k}\eta_k \eta_{k+1}.$
    \begin{enumerate}
        \item [(1)] If $ A_k\leq  A_{k+1}$, $B_k \leq B_{k+1}$, then
        \[ \begin{cases}
        \epsilon=1 &\Rightarrow B_k+l_k \leq B_{k+1}+l_{k+1}, A_k-l_k \leq A_{k+1}-l_{k+1},\\
        \epsilon=-1 &\Rightarrow  A_k-l_k < B_{k+1}+l_{k+1}.
        \end{cases} \]
        \item [(2)] If $[A_k,B_k]_{\rho}\subset [A_{k+1},B_{k+1}]_{\rho}$, then
        \[ \begin{cases}
        \epsilon=1 &\Rightarrow 0 \leq l_{k+1} -l_{k} \leq b_{k+1}-b_k,\\
        \epsilon=-1 &\Rightarrow  l_k+l_{k+1} \geq b_k.    \end{cases} \]
        \item [(3)] If $[A_k,B_k]_{\rho}\supset [A_{k+1},B_{k+1}]_{\rho}$, then
        \[ \begin{cases}
        \epsilon=1 &\Rightarrow 0 \leq l_{k} -l_{k+1} \leq b_{k}-b_{k+1},\\
        \epsilon=-1 &\Rightarrow  l_k+l_{k+1} \geq b_{k+1}.    \end{cases} \]
       
    \end{enumerate} 
    \item [(ii)]\textup{(}\cite[Theorem A.3]{Xu21a}\textup{)} In a special case that
    \begin{enumerate}
        \item [$\oldbullet$]$\EE$ is non-negative, and
        \item [$\oldbullet$]$ A_k\leq  A_{k+1}$, $B_k \leq B_{k+1}$ for all $k<k+1 \in I_{\rho}$,
    \end{enumerate}
    $\pi(\EE) \neq 0$ if and only if condition in (i) holds for all $k<k+1 \in I_{\rho}$. Note that under these assumptions, we are always in case (1) in (i).
\end{enumerate}
\end{prop}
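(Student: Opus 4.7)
Both parts of the proposition are essentially translations of previously established non-vanishing criteria from Mœglin's parametrization (as worked out by Xu in \cite{Xu21b} and \cite{Xu21a}) into Atobe's extended multi-segment formalism. The strategy I would follow is therefore not to redo the non-vanishing analysis from scratch, but to match the two sets of combinatorial data carefully and then invoke Xu's lemmas row-pair by row-pair. Since Theorem \ref{thm Atobe's reformulation} tells us that Atobe's $\pi(\EE)$ coincides, up to equivalence, with the representation Mœglin attaches to the corresponding data, the non-vanishing of $\pi(\EE)$ is governed by exactly the same inequalities.

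For part (i), the plan is to restrict attention to a single adjacent pair $k < k+1$ in $(I_\rho, >)$ and show that, after freezing the rest of $\EE$, the three configurations listed are an exhaustive case split for how the segments $[A_k,B_k]_\rho$ and $[A_{k+1},B_{k+1}]_\rho$ can sit relative to each other (given that the order is admissible, i.e.\ $A_i>A_j$ and $B_i>B_j$ force $i>_\psi j$). In each case I would translate $(l_k,\eta_k,l_{k+1},\eta_{k+1})$ into the Mœglin-type discrete parameters that appear in \cite[Lemma 5.5, 5.6, 5.7]{Xu21b}, and verify that the sign $\epsilon = (-1)^{A_k-B_k}\eta_k\eta_{k+1}$ is precisely the ``interaction sign'' that governs whether the two blocks can coexist; the inequalities in (1)--(3) are then literal transcriptions of Xu's conditions. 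One subtle point is that, when some $B_i<0$, the hypothesis ($P'$) is required so that the extended multi-segment can be moved into a non-negative configuration via $sh^t$ without reshuffling the admissible order, so that the row-pair analysis is meaningful.

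For part (ii), the added assumptions (non-negativity of $\EE$ and $A_k\le A_{k+1}$, $B_k\le B_{k+1}$ for all $k$) place every adjacent pair in Case~(1), and in this ``standard'' situation \cite[Theorem A.3]{Xu21a} asserts that the obviously necessary inequalities are also sufficient. So the plan is simply: the ``only if'' direction is part (i); for the ``if'' direction, since no shift is needed and the ordering already satisfies ($P'$) automatically (because $B_k\le B_{k+1}$), one is in the exact hypothesis of \cite[Theorem A.3]{Xu21a} and can quote it.

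The main obstacle I anticipate is purely bookkeeping: matching Atobe's pictograph data $(l_k,\eta_k)$, which is only defined up to the equivalence of extended segments (the sign $\eta_k$ is discarded when $l_k=b_k/2$), with Xu's parameters, which encode the same information differently and in which the equivalence is imposed implicitly. Getting the case $l_k=b_k/2$ right, and checking that the sign $\epsilon$ really controls the $\pm 1$ dichotomy in every one of cases (1)--(3), is the place where errors are easiest to make; everything else is a mechanical comparison.
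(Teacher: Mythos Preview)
Your proposal is correct and matches the paper's approach: the paper does not supply any proof for this proposition beyond the embedded citations to \cite[Lemma 5.5, 5.6, 5.7]{Xu21b} and \cite[Theorem A.3]{Xu21a}, treating it purely as a translation of Xu's non-vanishing criteria into Atobe's extended multi-segment language. Your plan to carry out that dictionary-matching explicitly is exactly what underlies the citation, and your identification of the $l_k = b_k/2$ equivalence and the sign $\epsilon$ as the delicate bookkeeping points is accurate.
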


Xu also studied how the M{\oe}glin's parametrization for $\Pi_{\psi}$ changes for different admissible orders of $\psi$ in \cite[Section 6]{Xu21b}. Atobe translated this result into his notation in \cite[Section 4.2]{Ato20b}. To make the description notationally correct, we consider a wider class of objects as follows.
\begin{defn}\label{def symbol}
  A symbol is a multi-set of extended segments 
\[
\EE = \cup_{\rho}\{ ([A_i,B_i]_{\rho}, l_i, \eta_i) \}_{i \in (I_\rho>)},
\] which satisfies the same conditions in Definition \ref{def multi-segment}(2) except we drop the condition $0 \leq l_i \leq \half{b_i}$, for each $i \in I_{\rho}$.
\end{defn}

Any change of admissible orders can be derived from a composition of the operators $R_k$ defined below.

\begin{defn}[{\cite[Section 4.2]{Ato20b}}, Row exchange]\label{def row exchange} 
Suppose $\EE$ is a symbol where
$$\EE_{\rho}=\{([A_i,B_i]_{\rho},l_i,\eta_i)\}_{i \in (I_{\rho},>)}.$$
For $k<k+1 \in I_{\rho}$, let $\gg$ be the total order on $I_\rho$ defined by $k\gg k+1$ and if $(i,j)\neq (k,k+1)$, then $ i \gg j$ if and only if $
i >j .$ 

Suppose $\gg$ is not an admissible order on $I_{\rho}$, then we define $R_k(\EE)=\EE$. Otherwise, we define 
\[R_{k}(\EE_{\rho})=\{([A_i,B_i]_{\rho},l_i',\eta_i')\}_{i \in (I_{\rho},\gg)},\]
where $( l_i',\eta_i')=(l_i,\eta_i)$ for $i \neq k,k+1$, and $(l_k',\eta_k')$ and $(l_{k+1}', \eta_{k+1}')$ are given as follows: Denote $\epsilon=(-1)^{A_k-B_k}\eta_k\eta_{k+1}$.
\begin{enumerate}
    \item [Case 1.] $ [A_k,B_k]_{\rho} \supset [A_{k+1},B_{k+1}]_{\rho}$:
    
    In this case, we set $(l_{k+1}',\eta_{k+1}')=(l_{k+1}, (-1)^{A_k-B_k}\eta_{k+1})$, and
    \begin{enumerate}
    \item [(a)] If $\epsilon=1$ and $b_k- 2l_k < 2(b_{k+1}-2l_{k+1})$, then
    \[ (l_k', \eta_{k}')= (b_k-(l_k+ (b_{k+1}-2l_{k+1})), (-1)^{A_{k+1}-B_{k+1}} \eta_k).  \]
    \item [(b)] If $\epsilon=1$ and $b_k- 2l_k \geq  2(b_{k+1}-2l_{k+1})$, then
    \[ (l_{k}', \eta_{k}')= (l_k+ (b_{k+1}-2l_{k+1}), (-1)^{A_{k+1}-B_{k+1}+1} \eta_k).  \]
    \item [(c)] If $\epsilon=-1$, then
    \[ (l_{k}', \eta_{k}')= (l_k- (b_{k+1}-2l_{k+1}), (-1)^{A_{k+1}-B_{k+1}+1} \eta_k).  \]
\end{enumerate}
    \item [Case 2.] $ [A_k,B_k]_{\rho} \subseteq [A_{k+1},B_{k+1}]_{\rho}$:
    
    In this case, we set $(l_{k}',\eta_{k}')=(l_{k}, (-1)^{A_{k+1}-B_{k+1}}\eta_{k})$, and
    \begin{enumerate}
   \item [(a)] If $\epsilon=1$ and $b_{k+1}- 2l_{k+1} < 2(b_{k}-2l_{k})$, then
    \[ (l_{k+1}', \eta_{k+1}')= (b_{k+1}-(l_{k+1}+ (b_{k}-2l_{k})), (-1)^{A_{k}-B_{k}} \eta_{k+1}).  \]
    \item [(b)] If $\epsilon=1$ and $b_{k+1}- 2l_{k+1} \geq  2(b_{k}-2l_{k})$,
    then
    \[ (l_{k+1}', \eta_{k+1}')= (l_{k+1}+ (b_{k}-2l_{k}), (-1)^{A_{k}-B_{k}+1} \eta_{k+1}).  \]
    \item [(c)] If $\epsilon=-1$, then
    \[ (l_{k+1}', \eta_{k+1}')= (l_{k+1}- (b_{k}-2l_{k}), (-1)^{A_{k}-B_{k}+1} \eta_{k+1}).  \]
\end{enumerate}
\end{enumerate}
Finally, we define $R_{k}(\EE)= \EE^{\rho} \cup R_{k}(\EE_{\rho})$.
\end{defn}

\begin{remark}\label{rmk on row exchange}
Definition \ref{def row exchange} is slightly different from the one in \cite{Ato20b}. Let us explain the difference in Case 1. Case 2 follows similarly.

In \cite[Section 4.2]{Ato20b}, Atobe identified the set $\{l \in \Z \ | \ 0 \leq l \leq \half{b_k}\}$ with $(\Z/b_k\Z)/\{\pm 1\}$ and directly defined $l_{k}'$ by $l_k+\epsilon (b_{k+1}-2 l_{k+1})$ in $(\Z/b_{k}\Z)/\{\pm 1\}$. On the other hand, suppose $\EE$ satisfies Proposition \ref{prop positive non-vanishing}(i); then one can check that the $l_{k}'$ in our definition gives the same element as $l_k+\epsilon (b_{k+1}-2 l_{k+1})$ in $(\Z/b_{k}\Z)/\{\pm 1\}$ and $l_{k}' \in \{l \in \Z \ | \ 0 \leq l \leq \half{b_{k}}\}$, and hence these two definitions coincide on $\EE$. In particular, two definitions agree on $\Rep$, the set that we are interested in the most.

Our definition has a disadvantage that for an extended multi-segment $\EE$, $R_k(\EE)$ is only a symbol as the condition $0 \leq l_i \leq \half{b_i}$ may not be preserved. However, Definition \ref{def row exchange} makes some identities among operators more clear. See Lemma \ref{lem identities} for example. 
\end{remark}

\begin{exmp}
Let $\rho$ be the trivial representation and $$\psi=\rho\otimes S_5 \otimes S_2+ \rho\otimes S_5 \otimes S_4 + \rho\otimes S_3 \otimes S_2$$ be a local Arthur parameter of good parity for $G_n=\SO_{37}(F).$ Let $I_\rho=\{1,2,3\}$ and $A_1=\half{3}$, $B_1=\half{1},$ $A_2=\half{7},$ $B_2=\half{1},$ $A_3=\half{5},$ and $B_3=\half{3}.$ Since $A_3>A_2$ and $B_3>B_2,$ there are only 3 admissible orders on $I_\rho$ which we denote by $1<_1 2 <_1 3,$ $1<_2 3 <_2 2,$ and $2<_3 1 <_3 3.$ Let $\EE=\{([A_i,B_i]_\rho,l_i,\eta_i\}_{i\in (I_\rho, >_1)}$ with $l_1=l_3=0,$ $l_2=2,$ and $\eta_1=\eta_2=\eta_3=1.$ That is,
$$
\EE=
\bordermatrix{
 &\half{1}&\half{3}&\half{5}&\half{7} \cr
& \oplus& \ominus& & \cr
 & \lhd&  \lhd&\rhd&\rhd \cr
& &\oplus &   \ominus& \cr
}_{\rho}.
$$
We have $\pi(\EE)= L(\Delta_{\rho}[\half{1},\half{-5}];\pi(\half{1}^+,\half{3}^-,\half{3}^-,\half{3}^-,\half{5}^+,\half{7}^-))\in\Pi_\psi.$ Exchanging the first and second rows gives 
$$
R_1(\EE)=
\bordermatrix{
 &\half{1}&\half{3}&\half{5}&\half{7} \cr
& \oplus& \ominus& \oplus & \ominus \cr
 & \ominus&  \oplus& & \cr
& &\oplus &   \ominus& \cr
}_{\rho}.
$$
We have $\pi(R_1(\EE))\cong\pi(\EE).$ We can also exchange the second and third rows of $\EE$. In this case, we find
$$
R_2(\EE)=
\bordermatrix{
 &\half{1}&\half{3}&\half{5}&\half{7} \cr
& \oplus& \ominus& & \cr
 & &  \ominus&\oplus& \cr
& \oplus &\ominus &   \oplus& \ominus \cr
}_{\rho}.
$$
Again, we have $\pi(R_2(\EE))\cong\pi(\EE).$ 
\end{exmp}

In the above example, we saw that every row exchange gave the same representation. This was not a coincidence as the next theorem shows.

\begin{thm} [{\cite[Theorem 4.3]{Ato20b}}] \label{thm row exchange}
Let $\EE \in \Rep$. If $\EE$ is non-negative, then 
    \[ \pi(R_k(\EE))\cong \pi(\EE). \]
\end{thm}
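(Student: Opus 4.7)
The plan is to reduce the claim to a two-row verification and then match the formulas in Definition \ref{def row exchange} against the known change-of-admissible-order recipe for M{\oe}glin's parametrization.

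First, by Lemma \ref{lemma far away}(i), the derivative computations in Definition \ref{def rep of segment} decouple across distinct cuspidal supports, so we may assume $\EE = \EE_\rho$ for a single $\rho$. Write $\EE_\rho = \FF_{<k} + \FF_{\{k,k+1\}} + \FF_{>k+1}$, collecting rows with index below, among, and above $\{k,k+1\}$; the operator $R_k$ touches only the middle piece. Next, apply Lemma \ref{lemma far away}(ii) twice to transfer $\FF_{<k}$ and $\FF_{>k+1}$ onto auxiliary cuspidal supports $\rho_1^\ast, \rho_2^\ast$ of the same type and dimension as $\rho$. Since $\EE$ is non-negative, the existence of the shifts $t_i$ required by the lemma is easily arranged (trivial shifts when the sub-blocks are already well-separated; otherwise a uniform positive shift suffices). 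On the resulting multi-support extended multi-segment the derivative computation factorizes across the three blocks and $R_k$ only affects the middle block, so the theorem reduces to the case in which $\EE_\rho = \FF_{\{k,k+1\}}$ consists of exactly two rows.

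For the two-row case, the admissibility of the reversed order forces one of the nestings $[A_k,B_k]_\rho \supsetneq [A_{k+1},B_{k+1}]_\rho$ or $[A_k,B_k]_\rho \subseteq [A_{k+1},B_{k+1}]_\rho$, corresponding to Case 1 and Case 2 of Definition \ref{def row exchange}. In each situation, Definition \ref{def rep of segment} exhibits $\pi(\EE)$ as the socle of a Speh-representation-cross-tempered induction, with the tempered constituent $\pi(\phi,\varepsilon)$ having an alternating character $\varepsilon$ determined explicitly by $(\eta_k,l_k)$ and $(\eta_{k+1},l_{k+1})$. I would then invoke the change-of-admissible-order formula for M{\oe}glin's parametrization from \cite[\S 6]{Xu21b} and transport it through Atobe's dictionary between M{\oe}glin-data and extended multi-segments to verify, termwise, that the output of Xu's reordering agrees with the formulas for $(l'_k,\eta'_k)$, $(l'_{k+1},\eta'_{k+1})$ in Definition \ref{def row exchange}. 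The trichotomy (a)/(b)/(c) encodes whether the outer alternating sign sequence of the longer segment is long enough to absorb the inner length of the shorter one or not (case $\epsilon = 1$), or whether cancellation forces a decrease in $l$ (case $\epsilon = -1$); these are exactly the three regimes in Xu's algorithm.

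The main obstacle is the sign bookkeeping. Concretely, one has to check (i) that the global sign condition \eqref{eq sign condition} is preserved by $R_k$ in every subcase --- because this is exactly what makes $R_k(\EE)$ a legitimate extended multi-segment rather than merely a symbol --- and (ii) that the modified character $\varepsilon'$ on the tempered component group picks out the same tempered representation appearing in the $L$-data of $\pi(\EE)$ after the exchange. Once the subcase-by-subcase translation with Xu's formulas from \cite[\S 6]{Xu21b} is aligned, both checks reduce to elementary parity computations involving $A_k-B_k$, $A_{k+1}-B_{k+1}$, $l_k$, $l_{k+1}$, $\eta_k$, $\eta_{k+1}$, and the sign $\epsilon = (-1)^{A_k-B_k}\eta_k\eta_{k+1}$.
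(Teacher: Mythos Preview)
This theorem carries no proof in the paper: it is quoted verbatim from \cite[Theorem 4.3]{Ato20b}. The content there is precisely what you describe in your final paragraph --- the formulas in Definition~\ref{def row exchange} are Atobe's translation of Xu's change-of-admissible-order recipe \cite[\S 6]{Xu21b} for M{\oe}glin's parametrization, and Xu already proved that this operation preserves the representation; since Atobe's $\pi(\EE)$ agrees with M{\oe}glin's construction (Theorem~\ref{thm Atobe's reformulation}), the result follows. No reduction to a two-row configuration is needed: Xu's formula is stated and proved for an arbitrary adjacent pair inside the full multi-segment.

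Your two-row reduction via Lemma~\ref{lemma far away}(ii) is therefore superfluous, and as written it has two genuine gaps. First, the lemma assumes $\pi(\EE)\neq 0$ \emph{and} $\pi(\EE')\neq 0$; with $\EE' = R_k(\EE)$ you do not yet know $\pi(R_k(\EE))\neq 0$, so the hypothesis is circular. Second, and more concretely, Lemma~\ref{lemma far away}(ii) only transfers the \emph{top} block $\FF_2$ to an auxiliary $\rho^\ast$ (the shifting integers $t_i$ are indexed by $I_{\rho,1}$, the bottom block). You can peel off $\FF_{>k+1}$ this way, but there is no mechanism in that lemma for peeling off $\FF_{<k}$, which sits at the bottom of the order. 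The sentence ``trivial shifts when the sub-blocks are already well-separated; otherwise a uniform positive shift suffices'' does not address this asymmetry. If you insist on a two-row reduction, you would need a dual version of the lemma (essentially what later becomes Corollary~\ref{cor shift add}(iii)), but that in turn relies on the Aubert--Zelevinsky dual formula, whose proof already consumes Theorem~\ref{thm row exchange}.
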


In above theorem, we assume $\EE$ is non-negative since for $\EE$ that is not non-negative, $\pi(\EE)$ is only defined if $\EE \in \Rep^{(P')}$. However, it is possible that $\EE \in \Rep^{(P')}$ but $R_{k}(\EE) \not\in \Rep^{(P')}$.

We give the following notation for general change of admissible orders.

\begin{defn}
Suppose $\EE= \cup_{\rho}\EE_{\rho}$ is an extended multi-segment, and $\gg$ is an admissible order on $I_{\rho}$. Write $\EE_{\rho}=\{([A_i,B_i]_{\rho},l_i,\eta_i)\}_{i \in (I_{\rho, >})}$. Then we define 
\[ \EE_{\rho, \gg}= \{([A_i,B_i]_{\rho},l_i',\eta_i')\}_{i \in (I_{\rho, \gg)}}\]
by applying a sequence of $R_k$ on $\EE_{\rho}$ to change the admissible order from $>$ to $\gg$.

We say $(i,j,\gg)$ is an adjacent pair if $i\gg j$ are two adjacent elements in $I_{\rho}$ with respect to $\gg$. We say an adjacent pair $(i,j, \gg)$ satisfies Proposition \ref{prop positive non-vanishing}(i) if the conditions for $\EE_{\rho,\gg}$ are satisfied with respect to $i \gg j$.
\end{defn}

Now we recall Atobe's reformulation of the non-vanishing criterion for general extended multi-segments.

\begin{thm}[{\cite[Theorems 3.6, 4.4]{Ato20b}}]\label{thm non-vanishing}
Let $\EE$ be an extended multi-segment such that for any $\rho$, if there exists $i \in I_{\rho}$ with $B_{i}<0$, then the admissible order on $I_{\rho}$ satisfies ($P'$).
\begin{enumerate}
    \item [(i)] $\EE=\cup_{\rho} \{ ([A_i,B_i]_{\rho},l_i,\eta_i)\}_{i \in (I_{\rho},>)} \in \Rep$ if and only if $ sh^d (\EE) \in \Rep$ for any $d \gg 0$ such that $sh^d (\EE)$ is non-negative, and the following condition holds for all $\rho$ and $i\in I_{\rho}$
    \[ (\ast) \ \ \ \ \ \ \ \ \ B_i+l_i \geq \begin{cases}  0 & \text{ if }B_i \in \Z, \\
    \frac{1}{2} & \text{ if } B_i \not\in \Z \text{ and }\eta= (-1)^{\alpha_i+1},\\
        -\frac{1}{2} & \text{ if } B_i \not\in \Z \text{ and }\eta= (-1)^{\alpha_i},
        \end{cases} \]
        where 
        \[ \alpha_i:= \sum_{j < i }A_j+B_j+1. \]
    In this case, if  
    \[\pi\left( sh^d(\EE)\right) =  L\left( \Delta_{\rho_1}[x_1,-y_1],\dots,\Delta_{\rho_t}[x_t,-y_t]; \pi\left(\sum_{j \in J}\rho_j\otimes S_{2z_j+1},\varepsilon \right) \right),\]
    then 
    \begin{enumerate}
        \item [$\oldbullet$] $x_i+y_i+1 \geq 2d$,
        \item [$\oldbullet$] $2z_j+1 \geq 2d$,  
    \end{enumerate}
    and
    \[ \pi(\EE) \cong  L\left( \Delta_{\rho_1}[x_1-d,-(y_1-d)],\dots,\Delta_{\rho_t}[x_t-d,-(y_t-d)]; \pi\left(\sum_{j \in J} \rho\otimes S_{2(z_j-d)+1},\varepsilon_{-d} \right) \right), \]
    where $ \varepsilon_{-d}(\rho_j\otimes S_{2(z_i-d)+1})= \varepsilon( \rho_j \otimes S_{2z_i+1})$ and we omit any term of the form $\rho \otimes S_{0}$ and segment of the form $ \Delta_{\rho}[x,x+1]$.
    \item [(ii)]If $\EE$ is non-negative, then $\pi(\EE)\neq 0$ if and only if any adjacent pair $(i,j,\gg)$ satisfies Proposition \ref{prop positive non-vanishing}(i).
\end{enumerate}
\end{thm}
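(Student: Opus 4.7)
The plan is to handle part (ii) first (the non-negative case), and then deduce part (i) by reducing to the non-negative case via shifts.

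For part (ii), necessity is short: if $\pi(\EE) \neq 0$, then for any other admissible order $\gg$ on each $I_\rho$, applying the appropriate sequence of $R_k$'s to reach $\EE_{\rho,\gg}$ preserves nonvanishing by Theorem \ref{thm row exchange}. Hence $\pi(\EE_{\rho,\gg}) \neq 0$, and Proposition \ref{prop positive non-vanishing}(i) at each adjacent pair in $(I_\rho,\gg)$ yields the required inequalities. For sufficiency, the goal is to bring $\EE$ into the special sorted form of Proposition \ref{prop positive non-vanishing}(ii), namely $A_i \leq A_{i+1}$ and $B_i \leq B_{i+1}$ along the order. The main obstacle is that when some segments are properly nested, no such linear sort exists and one cannot reach the hypothesis of Proposition \ref{prop positive non-vanishing}(ii) merely by reordering; my plan here is to induct on $\sum_{\rho,i}(A_i-B_i+1)$, moving a row whose segment is maximal for inclusion to one end of the order via Definition \ref{def row exchange} and peeling it off as an explicit parabolic factor via Lemma \ref{lem Frobenius}. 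The delicate point is to track the sign transformations $\eta \mapsto (-1)^{A-B}\eta$ appearing in the nested branches (a)--(c) of Definition \ref{def row exchange} and to verify that the reduced data still satisfies the adjacent-pair hypothesis, so that the induction hypothesis applies.

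For part (i), I would choose $d$ large enough that $\EE' := (\sum_\rho sh_\rho^d)(\EE)$ is non-negative and satisfies $B_i' > A_j'$ for $i>j$ in each $I_\rho$, so by Definition \ref{def rep of segment}
\[
\pi(\EE) = \bigl( \circ_\rho \circ_{i \in I_\rho} D_{\rho|\cdot|^{B_i+1,\dots,A_i+1}} \circ \cdots \circ D_{\rho|\cdot|^{B_i+d,\dots,A_i+d}} \bigr)\bigl(\pi(\EE')\bigr),
\]
and then induct on $d$ by peeling off one shift level at a time. Each single-shift derivative $D_{\rho|\cdot|^{B_i+s,\dots,A_i+s}}$ applied to a representation with known $L$-data is computed explicitly by Lemma \ref{lem Leibniz rule}(2) together with the derivative formula from \cite{AM20}: it strips off one Steinberg factor $\Delta_\rho[A_i+s,-(B_i+s-1)]$ when such a segment is present in the Langlands data, or else plucks a tempered summand $\rho \otimes S_{2(A_i+s)+1}$ from the Langlands parameter of the tempered part, matching signs through $\varepsilon$. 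Proposition \ref{prop derivative support} confirms that these are the only possibilities and pinpoints when each occurs. The combinatorial condition $(*)$ encodes precisely the statement that every such intermediate derivative is nonvanishing, with its three cases corresponding to whether $B_i \in \Z$ (so the chain stays within the Steinberg part) or $B_i \in \half{1}+\Z$ (so the chain eventually reaches the tempered piece, and the sign parity needed to survive is exactly what $\alpha_i$ records). Independence of the choice of $d$ once $d$ is sufficiently large follows from Lemma \ref{lem Leibniz rule}(3), since derivatives at $\rho|\cdot|^x$ levels differing by more than one commute.

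Reading both directions of this inductive computation then yields the stated equivalence, and the $L$-data formula for $\pi(\EE)$ is obtained by simply substituting the shifted segments $\Delta_{\rho_i}[x_i,-y_i]$ by $\Delta_{\rho_i}[x_i-d,-(y_i-d)]$ and the tempered summands $\rho\otimes S_{2z_j+1}$ by $\rho\otimes S_{2(z_j-d)+1}$, with trivial terms such as $\Delta_\rho[x,x+1]$ and $\rho \otimes S_0$ discarded. The inequalities $x_i+y_i+1\geq 2d$ and $2z_j+1\geq 2d$ listed in the statement are exactly what $(*)$ forces at the shift levels $1,\dots,d$ and ensure that no such term actually degenerates before the reduction completes.
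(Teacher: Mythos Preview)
This theorem is not proved in the present paper: it is stated with attribution to \cite[Theorems 3.6, 4.4]{Ato20b} and recalled as background, with no proof given here. So there is no proof in this paper to compare your proposal against.

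That said, a few comments on your sketch as a standalone argument. Your plan for part~(ii) sufficiency --- inducting on $\sum_{\rho,i}(A_i-B_i+1)$ by peeling off a row whose segment is maximal for inclusion --- is too optimistic as written. Removing a single extended segment from $\EE$ does not in general leave an extended multi-segment for a group of the same type (the sign condition \eqref{eq sign condition} and the parity of $\dim\psi_\EE$ can fail), and ``peeling it off as an explicit parabolic factor via Lemma~\ref{lem Frobenius}'' is not how the construction in Definition~\ref{def rep of segment} works: the socle there involves Speh representations, not a single Steinberg segment that can be stripped by a Jacquet-module argument. The actual argument in \cite{Ato20b} (building on \cite{Xu21b}) is more delicate and does not proceed by this kind of peeling.

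For part~(i), your high-level picture (reduce to the non-negative case by shifting, then compute the derivatives back down) is correct and matches how $\pi(\EE)$ is defined. But your description of what $(\ast)$ encodes is off: the three cases of $(\ast)$ do not correspond to ``the chain stays within the Steinberg part'' versus ``reaches the tempered piece''. Rather, $(\ast)$ is exactly the condition that guarantees the successive derivatives remain nonzero as you shift back; the integer case $B_i+l_i\geq 0$ comes from the socle-type construction in Definition~\ref{def rep of segment}, and the half-integer dichotomy records whether the sign $\eta_i$ is compatible with the alternating pattern forced by $\alpha_i$ (compare the convention $\varepsilon(\rho\otimes S_0)=1$ used in \cite{AM20}). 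Your final claim that the inequalities $x_i+y_i+1\geq 2d$ and $2z_j+1\geq 2d$ ``ensure that no such term actually degenerates before the reduction completes'' is backwards: these inequalities are \emph{conclusions} of the construction, not hypotheses, and in fact degeneration (omitting $\Delta_\rho[x,x+1]$ and $\rho\otimes S_0$) is allowed and occurs precisely when $(\ast)$ is tight.
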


\begin{remark}\label{rmk change rho}
Suppose $\EE\in \Rep^{(P')}$ and $\EE_{\rho}= \FF_1 +\FF_2$. Then the theorem above shows 
\[ (\EE^{\rho} \cup (\FF_1 + sh^1(\FF_2)))- (sh^1(\FF_2))_{\rho}+ (sh^1(\FF_2))_{\rho^{\ast}} \in \Rep,\]
where the choice of $\rho^{\ast}$ is the same as Definition \ref{def modification}. We will denote this extended multi-segment by $\EE-(\FF_2)_{\rho}+ (sh^1(\FF_2))_{\rho^{\ast}}$ or $\EE^{\rho} \cup \FF_1 \cup (sh^1(\FF_2))_{\rho^{\ast}}$ for short. Note that it is not true in general that $ \EE- (\FF_2)_{\rho}+ (\FF_2)_{\rho^{\ast}} \in \Rep$ since the condition ($\ast$) in theorem above may fail. Here is an example. Let
\[ \FF_1= \bordermatrix{
& \half{-1} &  \half{1} \cr
& \oplus&\ominus \cr
}_{\rho}, \ \FF_2= \bordermatrix{
& \half{-1} &  \half{1} \cr
& \ominus&\oplus \cr
}_{\rho}\]
Then
\[ \EE= \FF_1+\FF_2=  \bordermatrix{
& \half{-1} &  \half{1} \cr
& \oplus&\ominus \cr
& \ominus &\oplus\cr
}_{\rho} \in \Rep,\]
but 
\[ \EE- (\FF_2)_{\rho}+ (\FF_2)_{\rho^{\ast}}=  \bordermatrix{
& \half{-1} &  \half{1} \cr
& \oplus&\ominus \cr
}_{\rho} \cup \bordermatrix{
& \half{-1} &  \half{1} \cr
& \ominus&\oplus \cr
}_{\rho^{\ast}} \not\in \Rep.\]
\end{remark}

We need a version of Theorem \ref{thm non-vanishing}(i) with only $\EE_{\rho}$ shifted, which we state below.
\begin{cor}\label{cor shift in single rho}
Suppose $\EE \in \Rep$. Write 
\[ \pi(\EE^{\rho} \cup sh^d(\EE_{\rho}))= L\left( \Delta_{\rho_1}[x_1,-y_1],\dots,\Delta_{\rho_t}[x_t,-y_t]; \pi\left(\sum_{j \in J}\rho_j\otimes S_{2z_j+1},\varepsilon \right) \right) .\]
Then 
\begin{enumerate}
    \item [$\oldbullet$] $x_i+y_i+1 \geq 2d$ if $\rho_i \cong \rho$,
    \item [$\oldbullet$] $2z_j+1 \geq 2d$ if $\rho_j \cong \rho$.
\end{enumerate}
Moreover, we have
\[ \pi(\EE)=L\left( \Delta_{\rho_1}[x_1',-y_1'],\dots,\Delta_{\rho_t}[x_t',-y_t']; \pi\left(\sum_{j \in J}\rho_j\otimes S_{2z_j'+1},\varepsilon' \right) \right), \]
where
\begin{align*}
    \Delta_{\rho_i}[x_i',-y_i']&:= \begin{cases}
    \Delta_{\rho_i}[x_i,-y_i] & \text{ if } \rho_i \not\cong \rho,\\
\Delta_{\rho_i}[x_i-d,-(y_i-d)] & \text{ otherwise,}
\end{cases}\\
    \rho_j\otimes S_{2z_j'+1}&:= \begin{cases}
    \rho_j\otimes S_{2z_j+1} & \text{ if } \rho_i \not\cong \rho,\\
\rho_j\otimes S_{2(z_j-d)+1}& \text{ otherwise,}
    \end{cases}\\
    \varepsilon'(\rho_j\otimes S_{2z_j'+1})&:=\varepsilon(\rho_j\otimes S_{2z_j+1}).
\end{align*}
We omit any term of the form $\rho \otimes S_{0}$ and segment of the form $ \Delta_{\rho}[x,x+1]$.
\end{cor}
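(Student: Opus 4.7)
The idea is to express $\pi(\EE)$ as a composition of $\rho$-derivatives applied to $\pi(\EE^{\rho}\cup sh^{d}(\EE_{\rho}))$ and then to track the change in $L$-data via the Atobe--M\'inguez derivative formulae (\cite[Proposition 6.1, Theorem 7.1]{AM20}). The key enabling observations are that $D_{\rho|\cdot|^{x}}$ annihilates any $\Delta_{\rho'}[\cdot]$ or tempered component attached to $\rho'\not\cong\rho$, and that $D_{\rho|\cdot|^{x}}$ commutes with $D_{\rho'|\cdot|^{y}}$ whenever $\rho\not\cong\rho'$ (a standard geometric-lemma fact, compatible with Lemma~\ref{lem Leibniz rule}).

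The plan is to first fix $D\in\Z_{\geq 0}$ sufficiently large so that
\[
\EE'\ :=\ sh^{D}(\EE^{\rho})\,\cup\, sh^{D+d}(\EE_{\rho})
\]
is non-negative with well-separated rows, making $\pi(\EE')$ computable directly from the socle formula in Definition~\ref{def rep of segment}. I would then apply that definition to both $\EE$ (with row-wise shifts $t_i=D$ for $i\in I_{\rho'}$ when $\rho'\not\cong\rho$, and $t_i=D+d$ for $i\in I_{\rho}$) and to $\EE^{\rho}\cup sh^{d}(\EE_{\rho})$ (with uniform shift $t_i=D$). Both expressions recover $\pi(\EE')$ by applying the same derivatives to the non-$\rho$ rows together with the last $D$ layers of derivatives on the $\rho$-rows. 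Using commutativity of derivatives attached to distinct supercuspidals to move these shared pieces to the inside, they cancel and one is left with
\[
\pi(\EE)\ =\ \left(\circ_{i\in I_{\rho}} D_{\rho|\cdot|^{B_i+1,\dots,A_i+1}}\circ\cdots\circ D_{\rho|\cdot|^{B_i+d,\dots,A_i+d}}\right)\!\bigl(\pi(\EE^{\rho}\cup sh^{d}(\EE_{\rho}))\bigr).
\]

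Starting from the $L$-data of $\pi(\EE^{\rho}\cup sh^{d}(\EE_{\rho}))$ given in the statement, iterating the Atobe--M\'inguez formulae through these $d$ rounds of $\rho$-derivatives decreases, one step at a time, either the upper endpoint of a unique $\rho$-segment or the exponent of a unique $\rho$-tempered component, while leaving all non-$\rho$ data untouched. Non-vanishing of every intermediate derivative, forced by $\pi(\EE)\neq 0$, yields the stated inequalities $x_i+y_i+1\geq 2d$ and $2z_j+1\geq 2d$ for $\rho$-components; the cumulative effect is to shift every $\rho$-parameter down by $d$, while segments of the form $\Delta_{\rho}[x,x+1]$ and components $\rho\otimes S_0$ collapse to the trivial representation and are dropped. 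The main technical obstacle is the bookkeeping when $|I_{\rho}|>1$: the $d$ rounds act on multiple rows of varying length, and one needs each individual derivative in the composition to land on the correct row of the intermediate $L$-data. This is handled either by induction on $d$, reducing to the case $d=1$ where the derivative formula applies directly to a single shift, or by induction on $|I_{\rho}|$ using Lemma~\ref{lemma far away}(ii) to isolate well-separated sub-blocks and treat each independently.
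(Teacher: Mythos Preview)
Your proposal is correct and takes essentially the same approach as the paper: the paper's one-line proof simply cites the derivative algorithms of \cite{AM20} together with Theorem~\ref{thm non-vanishing}(i), relying implicitly on the independence of those algorithms for distinct~$\rho$ (the same fact underlying Lemma~\ref{lemma far away}(i)), and your argument is exactly the unpacking of that reference. One small slip: a \emph{uniform} shift by $D$ does not produce well-separated rows within a given $I_{\rho'}$, so in your setup you should take row-specific shifts $t_i$ as in Definition~\ref{def rep of segment} (chosen so that the $\rho$-row shifts for $\EE$ exceed those for $\EE^{\rho}\cup sh^{d}(\EE_{\rho})$ by exactly $d$); with that correction the cancellation and the subsequent AM20 bookkeeping go through as you describe.
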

\begin{proof}
This follows from the algorithms for taking derivative in \cite{AM20} and Theorem \ref{thm non-vanishing}(i).
\end{proof}

\subsection{Deformation and Aubert-Zelevinsky dual formula}
In this subsection, we introduce another operator, called  union-intersection, which was defined by Atobe to give a quicker algorithm to compute $\pi(\EE)$. We also recall the Aubert-Zelevinsky involution and Atobe's formula for computing it on extended multi-segments.
\begin{defn}\label{ui def} (union-intersection) \cite[Section 5.2]{Ato20b}\\
 Let $\EE$ be an extended multi-segment. For $k< k+1 \in I_{\rho}$, we define an operator $ui_k$, called union-intersection, on $\EE$ as follows. Write 
 \[ \EE_{\rho}= \{([A_i,B_i]_\rho, l_i,\eta_i)\}_{i \in (I_{\rho},>)}.\]
  Denote $\epsilon=(-1)^{A_k-B_k}\eta_k \eta_{k+1}.$ If $A_{k+1}>A_k$, $B_{k+1}>B_k$ and any of the following cases holds:
\begin{enumerate}
    \item [{Case 1}.] $ \epsilon=1$ and $A_{k+1}-l_{k+1}=A_k-l_k,$
    \item [{Case 2}.] $ \epsilon=1$ and $B_{k+1}+l_{k+1}=B_k+l_k,$
    \item [{Case 3}.] $ \epsilon=-1$ and $B_{k+1}+l_{k+1}=A_k-l_k+1,$
\end{enumerate}
we define
\begin{align*}
     ui_{k}(\EE_{\rho})=\{ ([A_i',B_i']_{\rho},l_i',\eta_i')\}_{i \in (I_{\rho}, >)},
\end{align*} 
where $ ([A_i',B_i']_{\rho},l_i',\eta_i')=([A_i,B_i]_{\rho},l_i,\eta_i)$ for $i \neq k,k+1$, and $[A_k',B_k']_{\rho}=[A_{k+1},B_k]_{\rho}$, $[A_{k+1}',B_{k+1}']_{\rho}=[A_k,B_{k+1}]_{\rho}$, and $( l_k', \eta_k', l_{k+1}',\eta_{k+1}' )$ are given case by case as follows:
\begin{enumerate}
    \item[$(1)$] in Case 1, $( l_k', \eta_k', l_{k+1}',\eta_{k+1}' )= (l_k,\eta_k, l_{k+1}-(A_{k+1}-A_k), (-1)^{A_{k+1}-A_k}\eta_{k+1})$;
    \item [$(2)$] in Case 2, if $b_k-2l_k \geq A_{k+1}-A_k$, then
    \[( l_k', \eta_k', l_{k+1}',\eta_{k+1}' )= (l_k+(A_{k+1}-A_k),\eta_k, l_{k+1}, (-1)^{A_{k+1}-A_k}\eta_{k+1}),\]
    if $b_k-2l_k < A_{k+1}-A_k$, then
    \[( l_k', \eta_k', l_{k+1}',\eta_{k+1}' )= (b_k-l_k,-\eta_k, l_{k+1}, (-1)^{A_{k+1}-A_k}\eta_{k+1});\]
    \item [$(3)$] in Case 3, if $l_{k+1} \leq  l_k$, then
    \[( l_k', \eta_k', l_{k+1}',\eta_{k+1}' )= (l_k,\eta_k, l_{k+1}, (-1)^{A_{k+1}-A_k}\eta_{k+1}),\]
    if $l_{k+1}> l_{k}$, then
    \[( l_k', \eta_k', l_{k+1}',\eta_{k+1}' )= (l_k,\eta_k, l_{k}, (-1)^{A_{k+1}-A_k+1}\eta_{k+1});\]
    \item [$(3')$] if we are in Case 3 and $l_k=l_{k+1}=0$, then we delete $ ([A_{k+1}',B_{k+1}']_{\rho},l_{k+1}',\eta_{k+1}')$ from $ui_k(\EE_{\rho})$.
\end{enumerate}
Otherwise, we define $ui_k(\EE_{\rho})=\EE_{\rho}$. In any case, we define $ui_k(\EE)= \EE^{\rho} \cup ui_k(\EE_{\rho})$.

We say $ui_k$ is applicable on $\EE$ or $\EE_{\rho}$ if $ui_k(\EE)\neq \EE$. We say this $ui_k$ is of type 1 (resp. 2, 3, 3') if $\EE_{\rho}$ is in case 1 (resp. 2, 3, 3'). 
\end{defn}
We remark that the three cases in the above definition are exactly the extreme cases of the conditions in Proposition \ref{prop positive non-vanishing}(i)(1).

\begin{exmp}\label{ui case 2}
Let $\rho$ be the trivial representation and $$\psi=\rho\otimes S_3\otimes S_1+\rho\otimes S_4\otimes S_2+\rho\otimes S_6\otimes S_2$$ be a local Arthur parameter of good parity for $\Sp_{22}(F).$ Let 
$$\EE=\bordermatrix{
 &{1}&2&3 \cr
& \ominus& &  \cr
 & \lhd&  \rhd& \cr
& &\oplus &   \ominus \cr
}_{\rho}.$$
We have $\pi(\EE)= L(\Delta_{\rho}[1,-2];\pi(1^-,2^+,3^-))\in\Pi_\psi.$ Identify $(I_{\rho},>)$ as $\{1,2,3\}$ with $1<2<3$. We can apply $ui_2$ of type 2 (that is, we apply union-intersection to the second and the third rows of the pictograph of $\EE$) to obtain
$$
ui_2(\EE)=\bordermatrix{
 &{1}&2&3 \cr
& \ominus& &  \cr
 & \lhd&  \oplus & \rhd \cr
& &\ominus &   \cr
}_{\rho}.
$$
One can check that $\pi(ui_2(\EE))\cong\pi(\EE).$
\end{exmp}

\begin{exmp}
Let $\rho$ be the trivial representation and $$\psi=\rho\otimes S_4\otimes S_2+\rho\otimes S_7\otimes S_1$$ be a local Arthur parameter of good parity for $\Sp_{14}(F).$ Let 
$$\EE=\bordermatrix{
 &{1}&2&3 \cr
& \ominus& \oplus &  \cr
 & & & \ominus \cr
}_{\rho}.$$
We have $\pi(\EE)= L(\pi(1^-,2^+,3^-))\in\Pi_\psi.$
We can apply $ui_1$ of type 3' to obtain
$$
ui_1(\EE)=\bordermatrix{
 &{1}&2&3 \cr
& \ominus& \oplus & \ominus  \cr
}_{\rho}.
$$
Again, one can check by definition that $\pi(ui_1(\EE))\cong\pi(\EE).$
\end{exmp}
In the above examples, each union-intersection preserves the representation. Atobe showed that this is true in general.
\begin{thm}[{\cite[Corollary 5.3]{Ato20b}}]\label{thm ui}
Let $\EE \in \Rep$, we have 
     \[ \pi( ui_k (\EE)) \cong \pi(\EE). \]
\end{thm}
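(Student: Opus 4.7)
The plan is to reduce first to the non-negative case via the shift operator, and then establish the identity by comparing the two explicit constructions of $\pi(\EE)$ and $\pi(ui_k(\EE))$ in that setting.

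For the reduction, I observe that the operator $ui_k$ commutes with $sh^d_\rho$. Indeed, in Definition \ref{ui def} whether $ui_k$ is applicable and the resulting data $(l_i', \eta_i', [A_i',B_i']_\rho)$ depend only on the differences $A_i - l_i$, $B_i + l_i$, $A_{k+1} - A_k$ and the signs $\eta_i$, all of which are invariant under a simultaneous shift. Admissibility and condition $(P')$ are also preserved since $B_i$ is unchanged by $ui_k$. Choose $d \gg 0$ so that $sh^d_\rho(\EE)$ is non-negative; by Corollary \ref{cor shift in single rho}, $\pi(\EE)$ is obtained from $\pi(sh^d_\rho(\EE))$ by an explicit composition of derivatives depending only on $\supp(\EE)_\rho$, and the same derivative applied to $\pi(sh^d_\rho(ui_k(\EE)))$ yields $\pi(ui_k(\EE))$. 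Hence it suffices to prove the equality for non-negative $\EE$.

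In the non-negative case, Definition \ref{def rep of segment} gives
$$\pi(\EE) = \mathrm{soc}\Bigl(\Sigma \times \mathrm{Speh}_k \times \mathrm{Speh}_{k+1} \rtimes \pi(\phi, \varepsilon)\Bigr),$$
where $\Sigma$ collects the Speh representations from the rows $i\neq k,k+1$ (which are unaffected by $ui_k$), $\mathrm{Speh}_i$ is the shifted Speh representation attached to the $i$-th extended segment, and $(\phi,\varepsilon)$ assembles the tempered pieces from every row. It therefore suffices to identify
$$\mathrm{Speh}_k \times \mathrm{Speh}_{k+1} \rtimes \pi(\phi,\varepsilon) \;\cong\; \mathrm{Speh}_k' \times \mathrm{Speh}_{k+1}' \rtimes \pi(\phi',\varepsilon'),$$
where the primed quantities correspond to $ui_k(\EE)$. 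In Case 1 (respectively Case 2), the equality $A_{k+1}-l_{k+1}=A_k-l_k$ (resp.\ $B_{k+1}+l_{k+1}=B_k+l_k$) forces the two Speh representations to share a common bottom (resp.\ top) row, which can be peeled off one factor and glued onto the other using Lemma \ref{lem commutativity} since the remaining Steinberg constituents are pairwise unlinked; the tempered datum $(\phi,\varepsilon)$ is untouched. In Case 3, where $\eta_k\eta_{k+1}=(-1)^{A_k-B_k+1}$ and $B_{k+1}+l_{k+1}=A_k-l_k+1$, the tempered pieces from rows $k$ and $k+1$ are adjacent with opposite parities, and one absorbs a boundary Steinberg block from one tempered constituent into a Speh row of the adjacent factor, producing the prescribed $(l_{k+1}', \eta_{k+1}')$; Case 3' ($l_k=l_{k+1}=0$) is the boundary in which the absorption consumes the entire extra row.

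The main obstacle is Case 3(3'), where the sign tracking in the enhanced component group is delicate: one must check that the new $\eta_{k+1}'$ resulting from absorbing a tempered Steinberg block into a Speh row matches exactly the rules of Definition \ref{ui def}. This can be handled by computing the highest $\rho|\cdot|^x$-derivatives of both sides using the explicit formulas of \cite{AM20} together with the Frobenius reciprocity of Lemma \ref{lem Frobenius}, and then invoking the uniqueness of the socle in Theorem \ref{thm derivative-socle} to conclude. An alternative, more conceptual route is to realize $ui_k$ as a short composition of row exchanges (Theorem \ref{thm row exchange}), which already preserves representations, followed by an elementary reorganization of the construction data; this would bypass the direct sign computation. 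Once the rearrangement identity is established in each of Cases 1, 2, 3, 3', taking socles yields $\pi(ui_k(\EE)) \cong \pi(\EE)$ as required.
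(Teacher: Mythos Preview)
This theorem is quoted from \cite[Corollary 5.3]{Ato20b}; the present paper does not supply a proof of its own, so there is no in-paper argument to compare against directly. Atobe's original argument proceeds via his deformation machinery (partial shifts of a single row, cf.\ the result of \cite[Theorem 5.1]{Ato20b} generalised here as Lemma \ref{lem shift}(v)), and the passage from positive to arbitrary $\EE\in\Rep$ is carried out in this paper only later, in Proposition \ref{prop nonzero of ui}, for the generalised operator $ui_{i,j}$.

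Your non-negative argument has a genuine gap. The socle formula in Definition \ref{def rep of segment} requires not merely $B_i\geq 0$ but that the rows be \emph{far apart}: $B_i>A_j$ for all $i>j$. But $ui_k$ is applicable precisely when rows $k$ and $k+1$ overlap, and after applying $ui_k$ the new rows are $[A_{k+1},B_k]_\rho$ (the union) and $[A_k,B_{k+1}]_\rho$ (the intersection), with the latter contained in the former. Hence no amount of uniform shifting makes the rows of $ui_k(\EE)$ far apart, and your displayed socle identity is not a valid expression for $\pi(ui_k(\EE))$. The whole content of the theorem lies in comparing the derivative chains that bring each side back from a far-apart configuration, and your outline does not engage with that.

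Two smaller points. First, your reduction step glosses over non-vanishing: to invoke Corollary \ref{cor shift in single rho} for $ui_k(\EE)$ you need $ui_k(\EE)\in\Rep$, and knowing $sh^d(ui_k(\EE))\in\Rep$ does not suffice by itself because condition $(\ast)$ of Theorem \ref{thm non-vanishing}(i) must still be checked; the paper fills exactly this gap (for $ui_{i,j}$) via Theorem \ref{thm shift left}. Second, your proposed alternative is a misconception: row exchanges $R_k$ preserve $\supp(\EE)$, whereas $ui_k$ replaces two segments by their union and intersection, so $ui_k$ can never be realised as a composition of row exchanges.
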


Along with row exchange and union-intersection, to describe another basic operator, we need to introduce the Aubert-Zelevinsky involution. In \cite{Aub95}, Aubert defined an involution on the irreducible representations of $G_n$ generalizing the one defined by Zelevinsky in \cite{Zel80}, which is called the Aubert-Zelevinsky dual or Aubert-Zelevinsky involution. We describe this construction below. 

Let $\pi$ be an irreducible representation of $G_n.$
In \cite{Aub95}, Aubert showed that there exists $\varepsilon\in\{\pm 1\}$ such that
$$
\widehat{\pi}:=\varepsilon\sum_P (-1)^{\mathrm{dim}(A_P)}[\mathrm{Ind}_{P}^{G_n}(Jac_P(\pi))]
$$
gives an irreducible representation. Here the sum is over all standard parabolic subgroups $P$ of $G_n$ and $A_P$ is the maximal split torus of the center of the Levi subgroup of $P.$ We say $\widehat{\pi}$ is the Aubert-Zelevinsky dual or Aubert-Zelevinsky involution of $\pi.$

Let $\psi=\bigoplus_\rho \bigoplus_{i \in I_\rho} \rho \otimes S_{a_i} \otimes S_{b_i}$ be a local Arthur parameter of good parity and set $\widehat{\psi}:=\bigoplus_\rho \bigoplus_{i \in I_\rho} \rho \otimes S_{b_i} \otimes S_{a_i}.$ From the compatibility of Aubert-Zelevinsky duality and twisted endoscopic character identities (see \cite[\S A]{Xu17b}), we have 
$$
\Pi_{\widehat{\psi}}=\{\widehat{\pi} \, | \, \pi\in\Pi_\psi\}.
$$

The next proposition shows that the Aubert-Zelevinsky dual is also compatible with taking derivatives.

\begin{prop}[{\cite[Proposition 3.9]{AM20}}]\label{compatability of Aubert-Zelevinsky dual and derivative} 
Let $\pi\in \Pi(G_n)$ and $\rho$ be an irreducible unitary self-dual supercuspidal representation of $\GL_d(F).$
\begin{enumerate}
    \item If  $D_{\rho|\cdot|^{x}}^{(k)}( \pi)$ is the highest $\rho|\cdot|^{x}$-derivative of $\pi$, then
    $$
    \widehat{D_{\rho|\cdot|^{x}}^{(k)}(\pi)}=D_{\rho|\cdot|^{-x}}^{(k)}(\widehat{\pi}).
    $$
    \item If $\pi$ is $\rho|\cdot|\inv$-reduced and $D_{\Delta_\rho[0,-1]}^{(k)}(\pi)$ is the highest $\Delta_\rho[0,-1]$-derivative of $\pi,$ then
    $$
    \widehat{D_{\Delta_\rho[0,-1]}^{(k)}(\pi)}=D_{Z_\rho[0,1]}^{(k)}(\widehat{\pi}).
    $$
\end{enumerate}
\end{prop}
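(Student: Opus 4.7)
The plan is to reduce both identities to the standard compatibility of the Aubert involution with parabolic induction and Jacquet functors. The crucial input, for classical groups, is the relation
\[
\widehat{\tau \rtimes \sigma} \;=\; \widehat{\tau^\vee} \rtimes \widehat{\sigma}
\]
(in a suitable subquotient/Grothendieck-group sense coming from the alternating-sum definition of $\widehat{(\cdot)}$), combined with the two elementary facts that $(\rho|\cdot|^x)^\vee = \rho|\cdot|^{-x}$ whenever $\rho$ is self-dual, and that a supercuspidal is Aubert self-dual on $\GL$. The $\vee$ on the $\GL$ factor appears because the Levi subgroup of a classical-group parabolic is defined by the transpose-inverse involution.

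For part (1), I would start from the translation of ``highest $\rho|\cdot|^x$-derivative'' given by Lemma \ref{lem Frobenius} and Theorem \ref{thm derivative-socle}: setting $\sigma = D_{\rho|\cdot|^x}^{(k)}(\pi)$, there is an embedding
\[
\pi \hookrightarrow (\rho|\cdot|^x)^{\times k} \rtimes \sigma, \qquad \sigma \text{ is } \rho|\cdot|^x\text{-reduced}.
\]
Applying the compatibility above, $\widehat{\pi}$ is an irreducible constituent of $(\rho|\cdot|^{-x})^{\times k} \rtimes \widehat{\sigma}$, which by Frobenius reciprocity gives $D_{\rho|\cdot|^{-x}}^{(k)}(\widehat{\pi}) \geq \widehat{\sigma}$. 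To upgrade this to equality (and to verify that the derivative on the right-hand side is itself highest), I would argue symmetrically: if $D_{\rho|\cdot|^{-x}}(\widehat\sigma)\neq 0$ then the same compatibility forces $D_{\rho|\cdot|^x}(\sigma)\neq 0$, contradicting $\rho|\cdot|^x$-reducedness of $\sigma$. So $\widehat\sigma$ is $\rho|\cdot|^{-x}$-reduced, which pins down $D_{\rho|\cdot|^{-x}}^{(k)}(\widehat\pi)=\widehat\sigma$.

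For part (2), the strategy is parallel but uses the segments $\Delta_\rho[0,-1]$ and $Z_\rho[0,1]$. The two inputs are: the contragredient on $\GL$ sends $\Delta_\rho[0,-1]$ to $\Delta_\rho[1,0]$ (using that $\rho$ is self-dual); and the Aubert-Zelevinsky dual of $\Delta_\rho[1,0]$ is $Z_\rho[0,1]$. Therefore an embedding $\pi \hookrightarrow \Delta_\rho[0,-1]^{\times k} \rtimes \sigma$ (provided by the highest-derivative hypothesis, with $\sigma$ both $\Delta_\rho[0,-1]$-reduced and $\rho|\cdot|\inv$-reduced) becomes, after applying $\widehat{(\cdot)}$, a realization of $\widehat\pi$ inside $Z_\rho[0,1]^{\times k} \rtimes \widehat\sigma$. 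The hypothesis that $\pi$ is $\rho|\cdot|\inv$-reduced is used to guarantee that the Jacquet component really extracted by $D_{\Delta_\rho[0,-1]}^{(k)}$ is the pure $\Delta_\rho[0,-1]^{\otimes k} \otimes \sigma$ piece (no contamination by $\rho|\cdot|\inv$-strings); dually, $\widehat\pi$ is $\rho|\cdot|$-reduced, so the same clean extraction works for $D_{Z_\rho[0,1]}^{(k)}$.

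The main obstacle is making the core compatibility $\widehat{\tau \rtimes \sigma} = \widehat{\tau^\vee} \rtimes \widehat\sigma$ rigorous: one must track signs and subquotient-versus-subrepresentation distinctions coming from the alternating sum over standard parabolics, and then check that ``highest derivative'' is preserved rather than merely bounded. All other steps are formal manipulations with Jacquet modules and the $\GL$ Aubert-Zelevinsky duality table.
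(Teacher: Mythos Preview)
The paper does not give its own proof of this proposition; it is quoted from \cite[Proposition 3.9]{AM20} and used as a black box throughout. So there is no in-paper argument to compare against.

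Your sketch is in the right spirit, but the route through parabolic induction creates exactly the obstacle you flag and is not the cleanest path. Two comments. First, your displayed identity $\widehat{\tau\rtimes\sigma}=\widehat{\tau^\vee}\rtimes\widehat{\sigma}$ conflates two separate facts: Aubert's involution commutes with parabolic induction \emph{without} any contragredient twist (so $\widehat{\tau\rtimes\sigma}=\widehat{\tau}\rtimes\widehat{\sigma}$ in the Grothendieck group), and separately for classical groups one has $\tau\rtimes\sigma\cong\tau^\vee\rtimes\sigma$. Combining these does give what you wrote, but the explanation you offer (``the Levi is defined by transpose-inverse'') is not quite the mechanism. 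Second, and more seriously, once you only know that $\widehat{\pi}$ is a \emph{constituent} of $(\rho|\cdot|^{-x})^{k}\rtimes\widehat{\sigma}$, Frobenius reciprocity does not apply, and there is no direct way to deduce $D_{\rho|\cdot|^{-x}}^{(k)}(\widehat{\pi})\geq\widehat{\sigma}$ from that alone.

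The standard argument (and the one in \cite{AM20}) bypasses this by working on the Jacquet-module side. Aubert's theorem gives, in the Grothendieck group, $[\mathrm{Jac}_P(\widehat{\pi})]=\mathrm{Ad}(w)\bigl[\widehat{\mathrm{Jac}_P(\pi)}\bigr]$ for a suitable Weyl element $w$; for the maximal parabolic with Levi $\GL_d\times G_{n-d}$ in a classical group, $w$ acts on the $\GL_d$ factor by transpose-inverse, hence by contragredient on representations. Extracting the $\rho|\cdot|^{-x}$-isotypic piece on the left then matches the $\rho|\cdot|^{x}$-isotypic piece on the right, giving $D_{\rho|\cdot|^{-x}}(\widehat{\pi})=\widehat{D_{\rho|\cdot|^{x}}(\pi)}$ as an identity of semisimple representations, with no subquotient ambiguity. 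Iterating $k$ times yields part (1); applying once more with $k+1$ shows the right-hand side is also the highest derivative. Part (2) is the same argument with the $\GL_{2d}$-Levi, using that the Zelevinsky involution swaps $\Delta_\rho[0,-1]$ and $Z_\rho[-1,0]$ and the contragredient sends the latter to $Z_\rho[0,1]$; the hypothesis that $\pi$ is $\rho|\cdot|^{-1}$-reduced is needed so that the $\Delta_\rho[0,-1]$-isotypic and $Z_\rho[0,1]$-isotypic components of the respective Jacquet modules are cleanly separated (cf.\ Theorem \ref{thm self-dual derivative}).
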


Given an $\EE\in\Rep^{(P')},$ Atobe gave a formula \cite[\S6]{Ato20b} to compute a $dual(\EE)\in\Rep^{(P')}$ such that $\pi(dual(\EE))$ is the Aubert-Zelevinsky dual of $\pi(\EE).$ We record the definition of the operator \emph{dual}.
\begin{defn}[{\cite[Definition 6.1]{Ato20b}}]\label{dual segment}
Let $\EE= \cup_\rho \{([A_i,B_i]_{\rho},l_i,\eta_i)\}_{i\in (I_\rho, >)}$ be an extended multi-segment such that the admissible order $>$ on $I_{\rho}$ satisfies (P') for all $\rho$. We define 
$$dual(\EE)=\cup_{\rho}\{([A_i,-B_i]_{\rho},l_i',\eta_i')\}_{i\in (I_\rho, >')}$$ as follows:
\begin{enumerate}
    \item The order $>'$ is defined by $i>'j$ if and only if $j>i.$ 
    \item We set \begin{align*}
l_i'=\begin{cases}
l_i+B_i  & \mathrm{if} \, B_i\in\mathbb{Z},\\
 l_i+B_i+\frac{1}{2}(-1)^{\alpha_{i}}\eta_i  & \mathrm{if} \, B_i\not\in\mathbb{Z},
\end{cases}
\end{align*}
and
\begin{align*}
\eta_i'=\begin{cases}
(-1)^{\alpha_i+\beta_i}\eta_i  & \mathrm{if} \, B_i\in\mathbb{Z},\\
 (-1)^{\alpha_i+\beta_i+1}\eta_i  & \mathrm{if} \, B_i\not\in\mathbb{Z},
\end{cases}
\end{align*}
where $\alpha_{i}=\sum_{j\in I_\rho, j<i}a_j,$ and $\beta_{i}=\sum_{j\in I_\rho, j>i}b_j,$ $a_j=A_j+B_j+1$, $b_j=A_j-B_j+1$.
\item When $B_i\not\in\mathbb{Z}$ and $l_i=\frac{b_i}{2}$, we set $\eta_i=(-1)^{\alpha_i+1}.$
\end{enumerate}
If $\FF= \EE_{\rho}$, we define $dual(\FF):= (dual(\EE))_{\rho}$.
\end{defn} 

\begin{exmp}
Let $\rho$ be a symplectic representation of dimension $d$ and
$$\psi=\rho\otimes S_1\otimes S_5+\rho\otimes S_7\otimes S_1$$ be a local Arthur parameter of good parity for $\SO_{12d+1}(F).$ Let 
$$\EE=\bordermatrix{
 &-3 &-2&-1&0&1&2&3 \cr
& & \lhd& \lhd& \oplus & \rhd &\rhd &  \cr
 & & & & & & & \oplus \cr
}_{\rho}.$$
We have $\pi(\EE)= L(\Delta_\rho[-2,-2],\Delta_\rho[-1,-1];\pi(0^+,3^+))\in\Pi_\psi.$
Now,
$$
dual(\EE)=\bordermatrix{
 &-3 &-2&-1&0&1&2&3 \cr
&\lhd & \lhd& \lhd& \ominus & \rhd &\rhd & \rhd \cr
 & & & & & & \ominus &  \cr
}_{\rho}.$$
We compute $$\pi(dual(\EE))=L(\Delta_\rho[-3,-3],\Delta_\rho[-2,-2],\Delta_\rho[-1,-1];\pi(0^-,2^-))\in\Pi_{\widehat{\psi}}.$$
Using \cite[Algorithm 4.1]{AM20}, we see $\widehat{\pi(\EE)}\cong\pi(dual(\EE)).$
\end{exmp}

Atobe proved that the dual of an extended multi-segment gives the Aubert-Zelevinsky dual of the corresponding representation. 

\begin{thm}[{\cite[Theorem 6.2]{Ato20b}}]\label{thm Aubert-Zelevinsky dual formula}

Suppose $\EE \in \Rep^{(P')}$, then 
\[ \pi(dual(\EE)) \cong \widehat{\pi(\EE)}. \]
\end{thm}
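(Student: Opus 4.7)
The plan is to reduce to a combinatorial base case via the shift operator and then verify that base case using the compatibility of Aubert--Zelevinsky duality with parabolic induction and with derivatives (Proposition \ref{compatability of Aubert-Zelevinsky dual and derivative}). The central idea is that $\widehat{(-)}$ intertwines the $\rho|\cdot|^x$-derivative with the $\rho|\cdot|^{-x}$-derivative, and the formula in Definition \ref{dual segment} --- which reverses the admissible order and sends $B_i \mapsto -B_i$ --- is set up precisely so that $dual$ should commute with the chain of derivatives used to construct $\pi(\EE)$ in Definition \ref{def rep of segment}, up to a negation of exponents.

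First I would reduce to the case when $\EE$ is non-negative. Choose $d \gg 0$ so that $\EE_d := (\sum_\rho sh_\rho^d)(\EE)$ is non-negative; by Theorem \ref{thm non-vanishing}(i) this still lies in $\Rep$ and $\pi(\EE)$ is recovered from $\pi(\EE_d)$ by a prescribed sequence of $\rho|\cdot|^x$-derivatives with $x > 0$. Applying $\widehat{(-)}$ and invoking Proposition \ref{compatability of Aubert-Zelevinsky dual and derivative} converts these into derivatives at negative exponents applied to $\widehat{\pi(\EE_d)}$. The key combinatorial lemma I would prove is that $dual(\EE_d)$ differs from $dual(\EE)$ by exactly the data needed to undo this dualized chain: shifting $\EE$ to the right by $d$ extends the $dual$-rows leftward on the negative side, which is what the negative-exponent derivatives are expected to strip off.

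Next I would reduce further to the separated non-negative case $B_i > A_j$ for $i > j$ within each $I_\rho$ by applying row-wise shifts $sh_i^{t_i}$ and repeating the dualization-of-derivatives argument. In this regime $\pi(\EE)$ is directly the socle of a product of Speh representations induced with a tempered representation $\pi(\phi, \varepsilon)$. I would then compute $\widehat{\pi(\EE)}$ using: (i) the fact that Aubert--Zelevinsky duality sends a Speh representation to another Speh representation with transposed rectangular shape; (ii) its explicit description on tempered representations via the enhanced component group (by Theorem \ref{thm Arthur tempered} this is encoded by a character twist on $\widehat{\mathcal{S}}_\phi$); and (iii) compatibility with parabolic induction up to passing to socle/cosocle. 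Matching the result against $\pi(dual(\EE))$, itself computed by the same socle formula applied to $dual(\EE)$, should yield the desired isomorphism.

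The main obstacle I anticipate is the sign bookkeeping in the half-integer case. The formulas for $l_i'$ and $\eta_i'$ in Definition \ref{dual segment} involve the cumulative parities $\alpha_i = \sum_{j<i} a_j$ and $\beta_i = \sum_{j>i} b_j$, which reflect a global rather than row-local interaction of Aubert--Zelevinsky duality with the enhanced component group $\mathcal{A}_\psi$. Any inductive step that modifies $\EE$ (by a shift, a derivative, or the detachment of one row) must carefully track how $\alpha_i$ and $\beta_i$ change and how these changes interact with the sign condition \eqref{eq sign condition}; keeping every $(-1)^{\alpha_i}\eta_i$ consistent across the reduction --- especially when $B_i \notin \Z$ --- is where I expect the proof to become genuinely delicate.
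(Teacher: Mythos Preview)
This theorem is not proved in the present paper: it is quoted from \cite[Theorem 6.2]{Ato20b} and used as a black box. There is therefore no ``paper's own proof'' to compare your proposal against. The only hint the paper gives about Atobe's argument is in \S\ref{sec shift and add}, where Proposition \ref{prop uniform} is described as ``a statement from the proof of Theorem \ref{thm Aubert-Zelevinsky dual formula} in \cite{Ato20b}'': that proposition says that $D_{\Omega(sh^1(\EE_\rho))}(\pi(\EE^\rho \cup sh^1(\EE_\rho))) = \pi(\EE)$ up to multiplicity, and the paper supplies a detailed proof of it based on the derivative algorithms of \cite{AM20}.

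Your outline is broadly compatible with this: reducing via shifts and invoking the compatibility of $\widehat{(-)}$ with highest derivatives (Proposition \ref{compatability of Aubert-Zelevinsky dual and derivative}) is exactly the mechanism that makes Proposition \ref{prop uniform} relevant. Two cautions, however. First, your step (ii) in the base case --- ``its explicit description on tempered representations via the enhanced component group \ldots\ encoded by a character twist on $\widehat{\mathcal{S}}_\phi$'' --- is doing a lot of work: the Aubert dual of a tempered $\pi(\phi,\varepsilon)$ is generally \emph{not} tempered, so you cannot stay inside the tempered parametrization; you really do need to compute $\widehat{\pi(\phi,\varepsilon)}$ as an element of $\Pi_{\widehat{\phi}}$ and match it against $\pi(dual(\EE))$ for the corresponding anti-tempered extended multi-segment. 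Second, your worry about the half-integer sign bookkeeping is well placed: the convention in Definition \ref{dual segment}(3) (forcing $\eta_i = (-1)^{\alpha_i+1}$ when $l_i = b_i/2$) exists precisely to make the dual formula consistent, and any inductive argument must respect it. If you want the actual details, you will need to consult \cite{Ato20b} directly.
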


Finally, we list several identities among the operators defined in this section, which follow from direct computations, details are omitted. 
\begin{lemma}\label{lem identities}
Suppose $\EE= \cup_{\rho} \{ ([A_i,B_i]_{\rho}, l_{i}, \eta_i) \}_{i \in (I_{\rho, >})}$. Let $i,k \in I_{\rho}$ and $t,s \in \Z$. The following hold.
\begin{enumerate}
    \item [(i)]Let $T \in \{sh_i^{t}, add_i^{s}\}$. Suppose $R_{k}(\EE) \neq \EE$ and $R_{k}(T(\EE))\neq T(\EE)$. Then
    \[ T \circ R_k (\EE)= R_k \circ T(\EE). \]
    In other words, $sh_i^{t}$, $add_i^{s}$ commute with $R_{k}$.
    \item [(ii)] Suppose the admissible orders of $\EE$ and $sh_k^t(\EE)$ both satisfy (P'). Then
        \[dual \circ sh_{k}^t(\EE)= add_{k}^t \circ dual(\EE).\]
\end{enumerate}
\end{lemma}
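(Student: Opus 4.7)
The plan is to verify both identities by direct comparison of the outputs, exploiting that each of $sh$, $add$, $R_k$, and $dual$ acts by very explicit formulas on the triples $([A,B]_\rho, l, \eta)$, and that $sh$ and $add$ preserve the invariants governing $R_k$ and $dual$.

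For (i), I would first dispose of the easy case $i \notin \{k, k+1\}$: here $T$ and $R_k$ modify disjoint rows, so they commute trivially. In the remaining case $i \in \{k, k+1\}$, observe that every quantity entering the formulas of Definition~\ref{def row exchange} is invariant under $T$. Explicitly, $sh_i^t$ does not touch $l_i$, $\eta_i$, $b_i$, or $A_i - B_i$; and $add_i^s$ changes $b_i$ by $2s$ and $l_i$ by $s$, thereby preserving both $b_i - 2l_i$ and the parity $(-1)^{A_i - B_i}$, and hence also the sign $\epsilon = (-1)^{A_k - B_k} \eta_k \eta_{k+1}$. Moreover, the containment direction between $[A_k, B_k]_\rho$ and $[A_{k+1}, B_{k+1}]_\rho$ is preserved: the hypothesis $R_k(T(\EE)) \neq T(\EE)$ requires nestedness of the two segments after applying $T$, and a quick check (comparing endpoints in each direction) rules out switching from case~1 to case~2. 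Once the case is fixed, a direct substitution in each of the six subcases 1(a)--(c) and 2(a)--(c) shows that applying $T$ after $R_k$ and applying $R_k$ after $T$ produce the same $(l_j', \eta_j')$. For instance, in case 1(a) with $T = add_k^s$, the new $l_k'$ computed by $R_k \circ T$ is
\[
(b_k + 2s) - \bigl((l_k + s) + (b_{k+1} - 2l_{k+1})\bigr) = l_k' + s,
\]
which matches the output of $T \circ R_k$. All other subcases reduce similarly to a single linear identity.

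For (ii), both $sh_k^t$ and $add_k^t$ affect only the $k$-th row, so it suffices to compare the $k$-th rows of the two sides. Under $sh_k^t$, the segment $[A_k, B_k]_\rho$ becomes $[A_k + t, B_k + t]_\rho$, while $l_k$, $\eta_k$, $b_k$, and every $a_j, b_j$ with $j \neq k$ are unchanged, and $a_k$ changes by the even integer $2t$. Hence every parity $(-1)^{\alpha_i}$ and $(-1)^{\beta_i}$ entering Definition~\ref{dual segment} is preserved. Applying $dual$ then yields the $k$-th row
\[
\bigl([A_k + t,\, -(B_k + t)]_\rho,\; l_k + (B_k + t) + (\text{half-integer correction}),\; (-1)^{\alpha_k + \beta_k}\eta_k \bigr)
\]
(up to the standard sign twist in the half-integer case). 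On the other side, $dual$ first produces the $k$-th row $([A_k, -B_k]_\rho,\, l_k + B_k + (\text{correction}),\, (-1)^{\alpha_k + \beta_k}\eta_k)$, and then $add_k^t$ shifts the endpoints to $[A_k + t, -B_k - t]_\rho$ and adds $t$ to the $l$-value, giving exactly the same triple. The half-integer correction $\tfrac{1}{2}(-1)^{\alpha_k}\eta_k$ transforms correctly because $\alpha_k$ is a sum over $j < k$ (hence independent of $a_k$) and $\eta_k$ is fixed by $sh_k^t$.

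The main obstacle is the bookkeeping in part (i), namely handling all six subcases of Definition~\ref{def row exchange} uniformly. Once the invariants of $T$ relevant to $R_k$ are identified---namely $b_j - 2l_j$, $\epsilon$, the nesting type of $[A_k, B_k]_\rho$ versus $[A_{k+1}, B_{k+1}]_\rho$, and the parities $(-1)^{A_j - B_j}$---each subcase collapses to a single arithmetic check, and the verification is mechanical. Part (ii) is simpler once one notes that $sh_k^t$ changes $a_k$ by an even integer and leaves every other $a_j, b_j$ untouched, so all $dual$-relevant parities survive.
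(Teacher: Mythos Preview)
Your proposal is correct and takes essentially the same approach as the paper: the paper's proof is the single sentence ``follow easily from direct computations, details are omitted,'' and what you have written is exactly a careful outline of those direct computations, identifying the key invariants ($b_j - 2l_j$, $\epsilon$, the nesting type, and the parities $(-1)^{A_j - B_j}$ for part (i); the preservation of all $(-1)^{\alpha_i}, (-1)^{\beta_i}$ under the even shift $a_k \mapsto a_k + 2t$ for part (ii)) that make the formulas line up. Your sketch supplies precisely the details the paper omits.
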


\section{Shift of extended multi-segments}\label{sec shift and add}

In this section, we compare the representations of $\EE$ and certain types of shift of $\EE$. We show that there is a close relation between the $L$-data of $\pi(\EE)$ and $\EE$ (see Lemma \ref{lem $L$-data}). From this relation, we determine invariants that can be read from $\pi(\EE)$ (see Theorems \ref{thm max A}, \ref{thm min B+l}). These invariants allow us to narrow down the set of all possible extended multi-segments $\EE'$ such that  $\pi(\EE')\cong\pi(\EE).$

We recall the importance of the segment $[A_i,B_i]_{\rho}$ associated with the summand $\rho \otimes S_{a_i} \otimes S_{b_i}$. Let $\Delta: \SL_2(\BC) \to \SL_2(\BC) \times \SL_2(\BC)$ be the diagonal embedding. The composition, which we denote by the diagonal restriction of $\rho \otimes S_{a_i} \otimes S_{b_i}$,
\[ W_F \times \SL_2(\BC) \xrightarrow[]{id\times \Delta} W_F \times \SL_2(\BC) \times \SL_2(\BC) \xrightarrow[]{\rho \otimes S_{a_i}\otimes S_{b_i}} GL_{d a_i b_i} \]
decomposes into a direct sum
\[ \rho \otimes S_{a_i+b_i-1} \oplus \rho \otimes S_{a+b-3} \oplus \cdots \oplus \rho \otimes S_{|a_i-b_i|+1}. \]
As $A_i= \half{a_i+b_i}-1$ and $B_i= \half{a_i-b_i}$, the segment $[A_i,B_i]_{\rho}$ records the information of this diagonal restriction and the order of the pair $(a_i,b_i)$. Under the identification
\[ [A_i,B_i]_{\rho} \longleftrightarrow (\rho, A, |B|, B/|B|),\]
(if $B=0$, then $B/|B|$ is replaced by an arbitrary choice in $\{+1,-1\}$) these segments are exactly the ``Jordan blocks" in the theory of M{\oe}glin and Xu (\cite{Moe06a, Xu17a}), which plays an important role in the construction.

Based on the discussion, when $\FF$ is a part of an extended multi-segment $\EE$, we give the following definition to collect the sum of segments in $\FF$.

\begin{defn}\label{def omega E} \
\begin{enumerate}
    \item Suppose 
\[ \FF=\{([A_i,B_i]_{\rho},l_i,\eta_i)\}_{i \in (I_{\rho, >})}. \]
(For example, $\FF= \EE_{\rho}$ for some extended multi-segment $\EE$.) We define ordered multi-sets  
\begin{align*}
    \Omega(\FF):=& \sum_{i\in I_{\rho}} [A_i,B_i]_{\rho}= \{ \rho|\cdot|^{\alpha_1} ,\dots,\rho|\cdot|^{\alpha_t} \},\\
    \overline{\Omega(\FF)}:=& \sum_{i\in I_{\rho}} [B_i,-A_i]_{\rho}= \{ \rho|\cdot|^{\beta_1} ,\dots,\rho|\cdot|^{\beta_r} \},
\end{align*}
where $\alpha_1 \leq \cdots\leq \alpha_t$ and $\beta_1 \geq \cdots\geq \beta_r$. 
Suppose $\EE=\cup_{\rho} \EE_{\rho}$ is an extended multi-segment. Fix an arbitrary order on the set 
\[ \{ \rho \ | \ \EE_{\rho} \neq \emptyset \}= \{ \rho_1,\dots,\rho_r\}.\]
We define multi-sets $\Omega(\EE)$ and $\overline{\Omega(\EE)}$ to be the sum of multi-sets 
\begin{align*}
    \Omega(\EE)&:= \Omega(\EE_{\rho_1}) +\cdots+ \Omega(\EE_{\rho_r}),\\
    \overline{\Omega(\EE)}&:= \overline{\Omega(\EE_{\rho_1})} +\cdots+ \overline{\Omega(\EE_{\rho_r})}.
\end{align*}

\item For each ordered multi-set $\Omega= \{ \rho_1|\cdot|^{\gamma_1} ,\dots,\rho_t|\cdot|^{\gamma_t} \}$, we define
\begin{align*}
     D_{\Omega}&:= D_{\rho_t|\cdot|^{\gamma_t}} \circ\cdots\circ D_{\rho_1|\cdot|^{\gamma_1}},\\
      S_{\Omega}&:= S_{\rho_1|\cdot|^{\gamma_1}} \circ\cdots\circ S_{\rho_t|\cdot|^{\gamma_t}}.
\end{align*}
For an extended multi-segment $\EE$, we define $D_{\Omega(\EE)}= \circ_{\rho} D_{\Omega(\EE_{\rho})}$. Note that the derivative is independent of the composition order of $D_{\Omega(\EE_\rho)}$.
\end{enumerate}
\end{defn}

We remark that $\overline{\Omega(\FF)}$ can be obtained by negating the exponent in $\Omega(dual(\FF))$. If $\FF$ is positive and $D_{\Omega(\FF)}(\pi)$ is a composition of highest derivatives up to a scalar, then we have 
\[ D_{\overline{\Omega(dual(\FF))} }( \widehat{\pi})= \reallywidehat{D_{\Omega(\FF)}(\pi)}  \]
 by Proposition \ref{compatability of Aubert-Zelevinsky dual and derivative}.

\subsection{Shift of a block}
In this subsection, we prove several technical lemmas which will be used frequently. 

The following lemma is a step in the proof of \cite[Proposition 8.5]{Xu17b}. We record it here for completeness. It shows that if $\EE \in \Rep^{(P')}$, then each derivative in the construction of $\pi(\EE)$ in Definition \ref{def rep of segment} is highest, and hence it can be computed by the formulae in \cite{AM20}.
\begin{lemma} \label{lem derivative algorithm}
Suppose $\EE \in \Rep^{(P')}$ where $\EE_{\rho}= \{([A_i,B_i]_{\rho},l_i, \eta_i)\}_{i \in (I_\rho, >)}$. Take a sequence of non-negative integers $\{t_i\}_{i\in I_{\rho}}$ such that 
\[\begin{cases}
 B_i+t_i \geq 0 & \text{ for } i \in I_{\rho},\\
 B_i + t_i > A_{j}+t_{j}& \text{ for all } i >j \in I_{\rho}. \, \, \\
\end{cases}\]
 Fix $k \in I_{\rho}$ and denote $\EE'=(\sum_{i>k} sh_{i}^{t_i})(\EE)$. Then the following holds.
 \begin{enumerate}
     \item [(i)] We have
     \[D_{\rho|\cdot|^{B_k+1,\dots,A_k+1}}(\pi(sh^{1}_k (\EE')))= \pi(\EE').\]
     \item [(ii)] There is an injection
     \[ \pi(sh^{1}_k (\EE')) \hookrightarrow Z_{\rho}[B_k+1, A_k+1] \rtimes \pi(\EE'). \]
     \item [(iii)] Each derivative in (i) is highest.
 \end{enumerate}
\end{lemma}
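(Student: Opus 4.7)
The plan is to apply Definition~\ref{def rep of segment} to both $\EE'$ and $sh_k^1(\EE')$ with respect to a common fully-shifted multi-segment, and then read off all three parts. After possibly enlarging the $t_i$ (the claimed identities are independent of the choice of sufficiently large $t_i$), set $\EE'' := (\sum_{i \in I_\rho} sh_i^{t_i})(\EE)$, which is non-negative and well-separated, and to which Definition~\ref{def rep of segment} applies directly. Observe that $\EE''$ is obtained from $\EE'$ by further shifting rows $j\leq k$ by $t_j$, and is obtained from $sh_k^1(\EE')$ by further shifting rows $j<k$ by $t_j$ and row $k$ by $t_k-1$. Writing out the iterated-derivative formula of Definition~\ref{def rep of segment} for both representations, the chains agree verbatim for all rows $j<k$ and differ for row $k$ by exactly one extra outer application of $D_{\rho|\cdot|^{B_k+1,\dots,A_k+1}}$ in the formula for $\pi(\EE')$. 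Since this outer derivative sits outside the chains for rows $j<k$ by the composition order dictated in Definition~\ref{def rep of segment}, (i) follows at once with no commutation needed.

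For (ii), I would combine (i) with Lemma~\ref{lem Frobenius}. Non-vanishing of $D_{\rho|\cdot|^{B_k+1,\dots,A_k+1}}(\pi(sh_k^1(\EE')))=\pi(\EE')$ yields some $\sigma$ with $\pi(sh_k^1(\EE'))\hookrightarrow\rho|\cdot|^{B_k+1}\times\cdots\times\rho|\cdot|^{A_k+1}\rtimes\sigma$, and the second assertion of Lemma~\ref{lem Frobenius} forces the unique irreducible subrepresentation of $\sigma$ to be $\pi(\EE')$. Passing to this socle and then replacing the $\GL$-factor by its unique irreducible subrepresentation $Z_\rho[B_k+1,A_k+1]$ (the Speh module), each time via exactness of parabolic induction and a standard socle argument, gives the claimed embedding $\pi(sh_k^1(\EE'))\hookrightarrow Z_\rho[B_k+1,A_k+1]\rtimes\pi(\EE')$.

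For (iii), I would argue by induction on $j\in\{1,\dots,A_k-B_k+1\}$ that each single-step derivative $D_{\rho|\cdot|^{B_k+j}}$ appearing inside $D_{\rho|\cdot|^{B_k+1,\dots,A_k+1}}$ is the highest derivative at the moment it is applied. The inductive hypothesis is that after the first $j-1$ steps the intermediate representation is again of Arthur type, with parameter obtained from $\psi_{sh_k^1(\EE')}$ by shrinking row $k$'s segment $[A_k+1,B_k+1]$ to $[A_k+1,B_k+j]$ and leaving all other rows unchanged; this inductive step follows from the explicit highest-derivative formulae of \cite{AM20}. Proposition~\ref{prop derivative support}(i) then bounds the multiplicity of $D_{\rho|\cdot|^{B_k+j}}$ on this intermediate by the number of rows with $B$-value equal to $B_k+j$: row $k$ contributes exactly one such row, rows $j'<k$ have $B_{j'}\leq B_k<B_k+j$ by admissibility~$(P')$, and rows $i>k$ have $B_i+t_i>A_k+j\geq B_k+j$ once $t_i$ is taken sufficiently large. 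The multiplicity is therefore exactly one, so a single application of $D_{\rho|\cdot|^{B_k+j}}$ already realises the highest derivative.

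The main obstacle will be the inductive identification in (iii), namely that each step produces another Arthur-type representation whose parameter differs from the previous one only by the controlled shrinking of row $k$'s segment. This identification relies on the explicit highest-derivative algorithms of \cite{AM20}, which are used implicitly throughout the paper; once that bookkeeping is in place, Proposition~\ref{prop derivative support}(i) plus the admissibility~$(P')$ bound on $B_{j'}$ finishes the count. The remaining technical inputs—the direct identification in (i) and the socle extraction in (ii)—are comparatively routine once the common fully-shifted $\EE''$ is set up.
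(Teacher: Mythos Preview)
Your part (i) is correct and matches the paper. The problems are in (ii) and (iii), and the paper's logic actually runs in the opposite order from yours: (iii) is deduced from (ii), not proved independently.

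In (ii), the ``standard socle argument'' you invoke does not work. From an embedding $\pi \hookrightarrow M \rtimes \sigma$ one cannot conclude $\pi \hookrightarrow \mathrm{soc}(M) \rtimes \sigma$: the image of $\pi$ may project nontrivially to a higher subquotient $\tau\rtimes\sigma$ of the filtration induced by a composition series of $M$. What \emph{is} true (and what the paper uses) is that $\pi(sh_k^1(\EE'))$ embeds in $\tau\rtimes\pi(\EE')$ for \emph{some} irreducible constituent $\tau$ of $\rho|\cdot|^{B_k+1}\times\cdots\times\rho|\cdot|^{A_k+1}$, and one then has to identify $\tau$. The paper does this via Proposition~\ref{prop derivative support}(i) applied to $sh_k^1(\EE')$: since the admissible order satisfies $(P')$, the only row of $sh_k^1(\EE')$ with $B$-value in $\{B_k+1,\dots,A_k+1\}$ is row $k$ itself (with value $B_k+1$), so $D_{\rho|\cdot|^x}(\pi(sh_k^1(\EE')))=0$ for $x\in\{B_k+2,\dots,A_k+1\}$, and Frobenius reciprocity forces $D_{\rho|\cdot|^x}(\tau)=0$ for those $x$ as well. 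By \cite[Lemma~5.7]{Xu17a} this pins down $\tau=Z_\rho[B_k+1,A_k+1]$.

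In (iii), your plan to identify each intermediate $D_{\rho|\cdot|^{B_k+1,\dots,B_k+j-1}}(\pi(sh_k^1(\EE')))$ as an Arthur-type representation with row $k$ shrunk to $[A_k+1,B_k+j]$ has no evident justification: Definition~\ref{def rep of segment} constructs $\pi(\EE)$ via iterated $sh_k^{-1}$, which moves \emph{both} endpoints simultaneously, so a segment with only the lower endpoint moved does not arise this way, and invoking the highest-derivative formulae of \cite{AM20} here is circular since those presuppose the derivative is already highest. Note also that you cannot ``take $t_i$ sufficiently large'' for $i>k$, as those $t_i$ are fixed by the definition of $\EE'$. The paper instead gets (iii) essentially for free from (ii): applying the Leibniz rule to the embedding $\pi(sh_k^1(\EE'))\hookrightarrow Z_\rho[B_k+1,A_k+1]\rtimes\pi(\EE')$, together with the $\rho|\cdot|^x$-reducedness of $\pi(\EE')$ for $x\in\{B_k+1,\dots,A_k+1\}$ (again from Proposition~\ref{prop derivative support}), shows that each single-step derivative is highest.
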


\begin{proof}
Part (i) follows directly from the definition. Indeed, denote $\EE''= (\sum_{i \leq k}sh_i^{t_i} )(\EE')$. Then we have 
\begin{align*}
    \pi(\EE')= &\left(D_{\rho|\cdot|^{B_k+1,\dots,A_k+1}} \circ\cdots\circ D_{\rho|\cdot|^{B_k+t_k,\dots,A_k+t_k}}  \right)\\
    &\circ_{i \leq k-1} \left(D_{\rho|\cdot|^{B_i+1,\dots,A_i+1}} \circ\cdots\circ D_{\rho|\cdot|^{B_i+t_i,\dots,A_i+t_i}}  \right)\pi(\EE''),\\
    \pi(sh_k^1(\EE'))= &\left(D_{\rho|\cdot|^{B_k+2,\dots,A_k+2}} \circ\cdots\circ D_{\rho|\cdot|^{B_k+t_k,\dots,A_k+t_k}}  \right)\\
    &\circ_{i\leq k-1} \left(D_{\rho|\cdot|^{B_i+1,\dots,A_i+1}} \circ\cdots\circ D_{\rho|\cdot|^{B_i+t_i,\dots,A_i+t_i}}  \right)\pi(\EE''),
\end{align*}
where the composition order is the same as the one in Definition \ref{def rep of segment}.

For Part (ii), by Lemma \ref{lem Frobenius},
\[ \pi(sh^{1}_k (\EE')) \hookrightarrow \rho|\cdot|^{B_k+1}\times\cdots\times \rho|\cdot|^{A_k+1} \rtimes \pi(\EE'), \]
and there exists an irreducible constituent $\tau$ of $\rho|\cdot|^{B_k+1}\times\cdots\times \rho|\cdot|^{A_k+1}$ such that $\Hom(\pi(sh_k^{1}( \EE')), \tau \rtimes \pi(\EE')) \neq 0$. Applying Frobenius reciprocity, for any $x \in \R$, if
$D_{\rho|\cdot|^{x}}( \tau) \neq 0$, then  $D_{\rho|\cdot|^{x}}( \pi(sh_k^{1}( \EE')))\neq 0.$
On the other hand, since the admissible order of $\EE$ satisfies ($P'$), $B_i<B_k+1$ for any $i<k$, and hence Proposition \ref{prop derivative support} implies  
\[ \{ x \in \{B_k+1,\dots,A_k+1\}\ | \ D_{\rho|\cdot|^{x}}(\pi(sh_k^{1}( \EE')))\neq 0\} \subseteq \{ B_k+1\}. \]
Therefore, $D_{\rho|\cdot|^{x}}(\tau)=0$ unless $x= B_{k}+1$. Consequently, we must have $\tau=Z_{\rho}[B_k+1,A_k+1]$ by \cite[Lemma 5.7]{Xu17a} and hence proves Part (ii).

Applying Proposition \ref{prop derivative support} again, we see that $\pi(\EE')$ is $\rho|\cdot|^{x}$-reduced for $x \in \{B_k+1,\dots,A_k+1\}.$ Thus Part (iii) follows directly from the injection in Part (ii) and Lemma \ref{lem Leibniz rule}. This completes the proof of the lemma. 
\end{proof}

Next, we generalize the above lemma in several aspects.

\begin{lemma} \label{lem shift}
Let $\EE\in \Rep^{(P')}$ and 
\[\EE_{\rho}= \{([A_i,B_i]_{\rho},l_i,\eta_i)\}_{i \in (I_{\rho}, >)}.\] The following holds. 
\begin{enumerate}
    \item [(i)] For any $j \in I_{\rho}$, $ \left(\sum_{i>j} sh_i^1\right)(\EE) \in \Rep^{(P')} $.
    \item [(ii)] Suppose $sh^1_k(\EE) \in \Rep^{(P')}$ for some $k \in I_{\rho}$ (in particular, $B_i \geq  B_k+1$ for $i>k$). Then  \[\pi(sh^{1}_k(\EE)) \hookrightarrow Z_{\rho}[ B_{k}+1, A_{k}+1] \rtimes \pi(\EE).\]
    In particular, $D_{\rho|\cdot|^{B_k+1,\dots,A_k+1}}(\pi(sh_{k}^{1}(\EE))) \geq \pi(\EE)\neq 0$.
    \item [(iii)] Suppose $k \in I_{\rho}$ satisfies
    \begin{enumerate}
        \item [$\oldbullet$] $B_k \geq 0$;
        \item [$\oldbullet$]$ B_k<B_i, A_k\geq A_i$ for all $i >k$.
    \end{enumerate}
    Then $sh_k^{1}(\EE) \in \Rep^{(P')}$. In this case, $\pi(sh^{1}_k(\EE))$ is the unique irreducible subrepresentation of $ Z_{\rho}[ B_{k}+1, A_{k}+1] \rtimes \pi(\EE)$, and 
    \[D_{\rho|\cdot|^{B_k+1,\dots,A_k+1}}(\pi(sh_{k}^{1}(\EE))) = \pi(\EE).\]
    \item [(iv)] Fix $j \in I_{\rho}$ and decompose $ \EE_{\rho}= \FF_1 + \FF_2$ where
    \begin{align*}
        \FF_1&=\{([A_i,B_i]_{\rho},l_i,\eta_i)\}_{i<j},\\
        \FF_2&=\{([A_i,B_i]_{\rho},l_i,\eta_i)\}_{i \geq j}.
    \end{align*}
    Then we have an injection
    \[ \pi\left( \EE^{\rho} \cup (\FF_1 + sh^1(\FF_2))\right) \hookrightarrow \times_{i\geq j} Z_\rho[B_i+1, A_i+1] \rtimes \pi(\EE). \]
    Moreover, $D_{\Omega(sh^1(\FF_2))} (\pi(\EE^{\rho} \cup (\FF_1 + sh^1(\FF_2)) )$ is a composition of highest derivative (modulo the factorial in the definition), and up to a multiplicity,
    \[ D_{\Omega(sh^1(\FF_2))}\left(\pi\left(\EE^{\rho} \cup (\FF_1 + sh^1(\FF_2)\right)\right)= \pi(\EE).\]
    If $B_i+1>0$ for all $i \geq j$, we also have 
    \[ \pi\left(\EE^{\rho} \cup (\FF_1 + sh^1(\FF_2)\right)= S_{\Omega(sh^1(\FF_2))}(\EE). \]
    \item [(v)] Suppose there exists a decomposition $ \EE_{\rho}= \FF_1 + \FF_2 +\FF_3$, where 
    \begin{align*}
    \FF_1&=\{([A_i,B_i]_{\rho},l_i,\eta_i)\}_{i<k},\\
        \FF_2&=\{([A_i,B_i]_{\rho},l_i,\eta_i)\}_{k \leq i<m},\\
        \FF_3&=\{([A_i,B_i]_{\rho},l_i,\eta_i)\}_{k \geq i\geq m},
    \end{align*}
    such that 
    \begin{enumerate}
        \item [$\oldbullet$] $ B_i=B$ for all $  k\leq i <m$,
        \item [$\oldbullet$] $B_i \leq B$ for all $ i <k$,
        \item [$\oldbullet$] $B_i >A_j+1$ for all $i\geq m$ and $ k \leq j <m$,
    \end{enumerate}
    for some $B \in \half{1} \Z$. Then $\pi( \EE^{\rho} \cup (\FF_1 + sh^1(\FF_2)+\FF_3))$ is the unique irreducible subrepresentation of 
    \[ \times_{i=k}^{m-1} Z_{\rho}[B+1, A_i+1] \rtimes \pi(\EE), \]
    and up to a multiplicity,
    \[ D_{\Omega(sh^1(\FF_2))}(\pi(\EE^{\rho} \cup (\FF_1 + sh^1(\FF_2)+\FF_3)))  \cong \pi(\EE).\]
\end{enumerate}
\end{lemma}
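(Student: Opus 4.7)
The plan is to prove the five parts in order, using the non-vanishing criterion (Theorem \ref{thm non-vanishing}) and Lemma \ref{lem derivative algorithm} as the core tools. For (i), the operator $\sum_{i>j} sh^1_i$ uniformly increases each $B_i$ (and $A_i$) by $1$ for $i > j$, so the admissible order satisfying (P') is preserved. To verify non-vanishing of the shifted multi-segment, I would apply Theorem \ref{thm non-vanishing}(i): a further shift by a large $d$ yields a non-negative extended multi-segment whose non-vanishing follows from that of $\sum_\rho sh^d_\rho(\EE)$, and condition ($\ast$) depends only on $B_i + l_i$ and $\eta_i$, whose values under $\sum_{i>j} sh^1_i$ remain consistent with the non-vanishing of $\pi(\EE)$.

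For (ii), I would reduce to Lemma \ref{lem derivative algorithm} by choosing non-negative integers $\{t_i\}_{i \neq k}$ that are large and well-separated, so that $\EE'' := sh^1_k(\EE) + \sum_{i \neq k} sh^{t_i}_i(\EE)$ satisfies its hypotheses with respect to $k$ (in particular, so that each exponent $B_i+t_i+1, \ldots, A_i+t_i+1$ for $i \neq k$ is strictly separated from $\{B_k+1, \ldots, A_k+1\}$ and from its negatives). Then Lemma \ref{lem derivative algorithm}(ii) yields $\pi(\EE'') \hookrightarrow Z_\rho[B_k+1, A_k+1] \rtimes \pi(\widetilde{\EE}'')$, where $\widetilde{\EE}''$ drops the shift on row $k$. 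Applying the chain of derivatives that recovers $\pi(sh^1_k(\EE))$ from $\pi(\EE'')$, the Leibniz rule (Lemma \ref{lem Leibniz rule}) together with the separation of exponents shows that these derivatives pass through $Z_\rho[B_k+1, A_k+1]$ without touching it, producing the desired injection $\pi(sh^1_k(\EE)) \hookrightarrow Z_\rho[B_k+1, A_k+1] \rtimes \pi(\EE)$; Lemma \ref{lem Frobenius} then converts it into the derivative bound.

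Part (iii) builds on (ii). The stated hypotheses ($B_k \geq 0$ and $B_k < B_i$, $A_k \geq A_i$ for $i > k$) ensure $sh^1_k(\EE)$ already lies in $\Rep^{(P')}$ without reordering, and that $Z_\rho[B_k+1, A_k+1]$ is unlinked to any segment appearing in the $L$-data of $\pi(\EE)$ coming from rows $i \neq k$; via Lemma \ref{lem commutativity} together with irreducibility of the socle (Theorem \ref{thm derivative-socle}), this forces $Z_\rho[B_k+1, A_k+1] \rtimes \pi(\EE)$ to have a unique irreducible subrepresentation. Combined with the injection from (ii), this yields equality in the derivative formula.

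Parts (iv) and (v) follow by induction on the number of rows in $\FF_2$, applying (ii) (respectively (iii)) to shift one row at a time, starting from the largest index and working downward. The Leibniz rule organizes the successive derivatives; Proposition \ref{prop derivative support} rules out spurious cross-terms by constraining the values of $B_i$ at which non-zero $\rho|\cdot|^x$-derivatives can occur. For (v), the hypotheses $B_i = B$ on $\FF_2$ and $B_i > A_j+1$ for $i \geq m$, $k \leq j < m$, ensure that the induced segments $Z_\rho[B+1, A_i+1]$ for $k \leq i < m$ are pairwise unlinked and also unlinked to segments arising from $\FF_3$, so uniqueness of the socle propagates through the induction as in (iii). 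The main obstacle in (iv) and (v) is carefully isolating the leading highest-derivative term in the Leibniz expansion and verifying the vanishing of lower-order contributions, which I expect to reduce to the support constraint in Proposition \ref{prop derivative support} and the irreducibility in Lemma \ref{lem derivative algorithm}(iii).
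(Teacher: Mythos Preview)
Your approach to (ii) has a genuine gap. You claim that the Leibniz rule, together with separation of exponents, shows the derivatives ``pass through $Z_\rho[B_k+1, A_k+1]$ without touching it, producing the desired injection.'' But the Leibniz rule is an identity on the Grothendieck-group level, not on the level of subrepresentations: from $\pi(\EE'') \hookrightarrow Z_\rho[B_k+1,A_k+1] \rtimes \pi(\widetilde{\EE}'')$ you only obtain that $\pi(sh^1_k(\EE))$ is a \emph{subquotient} of $Z_\rho[B_k+1,A_k+1] \rtimes \pi(\EE)$, not a subrepresentation. To upgrade to an injection the paper argues step by step: at stage $r$ one has $\pi_{r-1}' \hookrightarrow Z \rtimes \pi_{r-1}$ and $\pi_{r-1} \hookrightarrow \rho|\cdot|^{x_r} \rtimes \pi_r$, then commutes $\rho|\cdot|^{x_r}$ past $Z$ and applies Frobenius reciprocity. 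This commutation fails precisely when $x_r = A_k+2$ (the segments $[x_r,x_r]_\rho$ and $[A_k+1,B_k+1]_\rho$ are linked), and the paper handles that case via the short exact sequence for $Z_\rho[B_k+1,A_k+1] \times \rho|\cdot|^{A_k+2}$ plus a vanishing check. Your ``separation of exponents'' does not avoid this: the derivatives descending from $\widetilde{\EE}''$ to $\EE$ run down to exponents $B_i+1$ for each $i>k$, which can equal $A_k+2$; and you cannot shift rows $i<k$ independently of row $k$ without destroying (P').

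Your sketches for (i) and (iii) are also incomplete. For (i), the claim that non-vanishing of $(\sum_\rho sh^d_\rho)((\sum_{i>j} sh^1_i)(\EE))$ ``follows from that of $\sum_\rho sh^d_\rho(\EE)$'' is the whole content of the proof: the paper observes that every admissible order on the partially-shifted index set is admissible on the original, and then checks Proposition~\ref{prop positive non-vanishing}(i) for every adjacent pair $(i_1,i_2,\gg')$ by a case analysis (whether both, neither, or exactly one of $i_1,i_2$ was shifted, and which subcase (1)--(3) applies before and after). For (iii), your assertion that $Z_\rho[B_k+1,A_k+1]$ is unlinked to segments in the $L$-data of $\pi(\EE)$ is not justified (the $L$-data is not simply the multi-set $\{[A_i,B_i]_\rho\}$). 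The paper instead first proves $\pi(sh^1_k(\EE)) \neq 0$ by reordering so that $k$ becomes maximal, and then proves uniqueness of the socle by a length count: one shows directly via Lemma~\ref{lem Leibniz rule} and Proposition~\ref{prop derivative support}(ii) that $D_{\rho|\cdot|^{B_k+1,\dots,A_k+1}}(Z_\rho[B_k+1,A_k+1] \rtimes \pi(\EE)) = \pi(\EE)$ exactly, so a second irreducible submodule would over-count. This length-counting device is also what drives the uniqueness in (v), rather than an unlinked-segments argument.
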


\begin{proof}
\textbf{Proof of (i).}
We check that $\left(\sum_{i\geq j} sh_i^1\right)(\EE_{\rho})$ satisfies Theorem \ref{thm non-vanishing} directly. The condition ($\ast$) in Theorem \ref{thm non-vanishing}(i) is clearly preserved under shift, so we focus on the examination of Part (ii) of Theorem \ref{thm non-vanishing}.

For brevity, denote $\FF= \EE_{\rho}$ and $\FF'=\left(\sum_{i \geq j} sh_i^1\right)(\FF)$, and denote their index sets by $I, I'$. We may identify $I$ and $I'$ in the obvious manner. It is not hard to see that any admissible order $\gg'$ on $I'$ is also an admissible order on $I$, which we denote by $\gg$. 

Now we fix an admissible order $\gg'$ of $I'$ and check the non-vanishing conditions in Proposition \ref{prop positive non-vanishing}(i) for all adjacent pair $(i_1, i_2,\gg')$ of $ \FF'$. We again identify $i_1,i_2$ as elements of $I$.

By Lemma \ref{lem identities}(i), we have 
\[ \FF_{\gg'}'=  \left(\sum_{i \geq j} sh_i^1\right)(\FF_{\gg}). \]
Now suppose both of $i_1\geq j $ and $i_2 \geq j$ (or $i_1 <j$ and $i_2<j$), then since the adjacent pair $(i_1,i_2,\gg)$ of $\FF$ satisfies Proposition \ref{prop positive non-vanishing}, so does $(i_1,i_2, \gg')$ of $\FF'$. Therefore, we only need to deal with the following cases
\begin{enumerate}
    \item [(a)] $i_1 \geq j$ and $i_2 <j$,
    \item [(b)] $i_1 <j$ and $i_2\geq j$.
\end{enumerate}
 A key observation is that since the admissible order $>$ of $I$ satisfies ($P'$), (a) implies $ B_{i_1} \geq B_{i_2}$, and (b) implies $ B_{i_1} \leq B_{i_2}$.

Suppose the adjacent pair $(i_1,i_2,\gg)$ is in the case of Proposition \ref{prop positive non-vanishing}(i)(1) but not in (2). Then $ B_{i_1}> B_{i_2}$, so only case (a) is possible. One can see that $(i_1,i_2, \gg')$ is still in the case of Proposition \ref{prop positive non-vanishing}(i)(1), and the condition is satisfied.  

Suppose the adjacent pair $(i_1,i_2,\gg)$ is in the case of Proposition \ref{prop positive non-vanishing}(i)(2). Then there are two possibilities.
\begin{enumerate}
    \item [$\oldbullet$] If $B_{i_1}<B_{i_2}$, then it is of case (b). Therefore, the adjacent pair $(i_1,i_2,\gg')$ is still in the case of Proposition \ref{prop positive non-vanishing}(i)(2), and the condition is satisfied.
    \item [$\oldbullet$] If $B_{i_1}= B_{i_2}$, then both cases (a) and (b) are possible. If it is of case (a), then the adjacent pair $(i_1,i_2,\gg')$ is in the setting of Proposition \ref{prop positive non-vanishing}(i)(1), and one can see that 
    \begin{align*}
    0 \leq l_{i_1}-l_{i_2} \leq b_{i_1}-b_{i_2} &\Longrightarrow \begin{cases} (A_{i_2}+1) -l_{i_2} \geq A_{i_1}-l_{i_1}\\
    (B_{i_2}+1)+l_{i_2} \geq B_{i_1}+l_{i_1}\end{cases}\\
    l_{i_2}+l_{i_1} \geq b_{i_2} &\Longrightarrow (B_{i_2}+1)+l_{i_2} \geq A_{i_1}-l_{i_1},
\end{align*}
and hence the condition is satisfied. If it is of case (b), then one can see that $(i_1,i_2, \gg')$ is still in the setting of Proposition \ref{prop positive non-vanishing}(i)(2), and the condition is satisfied.  
\end{enumerate}
The case that the adjacent pair $(i_1,i_2,\gg)$ is in the case of Proposition \ref{prop positive non-vanishing}(i)(3) is similar as the previous one and we omit the detail. This completes the proof of (i).

\textbf{Proof of (ii).}
Take a sequence of non-negative integers $\{t_i\}_{i \in I_{\rho}}$ such that \[\begin{cases}
 B_i+t_i \geq 0 & \text{ for } i \in I_{\rho},\\
 B_i + t_i > A_{j}+t_{j}& \text{ for all } i >j \in I_{\rho}. \, \, \\
\end{cases}\]
Denote $\EE'=(\sum_{i > k} sh^{t_i}_{i}) (\EE) $ and 
\[ D= \circ_{i>k} \left( D_{ \rho|\cdot|^{ B_{i}+1,\dots,A_i+1}} \circ\cdots \circ D_{\rho|\cdot|^{B_i+t_i,\dots,A_i+t_i}} \right), \]
where the order of the composition is the same as the one in Definition \ref{def rep of segment}. (If $k=n$, then $\EE'=\EE$ and we take $D$ to be identity.) By definition, we have \begin{enumerate}
    \item [$\oldbullet$] $D_{\rho|\cdot|^{B_{k}+1,\dots,A_k+1}}(\pi(sh_k^{1}(\EE'))) =\pi( \EE')$,
    \item [$\oldbullet$] $D(\pi(\EE'))=\pi(\EE)$,
    \item [$\oldbullet$] $D(\pi(sh^{1}_k(\EE')))= \pi(sh^{1}_k(\EE))$.
\end{enumerate}
Then, by Lemma \ref{lem derivative algorithm}(ii), 
\begin{align}\label{eq lem shift of a block (ii)}
    \pi(sh^{1}_k(\EE')) \hookrightarrow  Z_{\rho}[B_k+1,A_k+1] \rtimes \pi( \EE').
\end{align}

The idea of the proof is to pass the above injection from $\EE'$ to $\EE$ by taking the  derivative $D$. Let $\Omega=\{\rho|\cdot|^{x_1},\dots,\rho|\cdot|^{x_t}\}$ be the ordered multi-set such that $D= D_{\Omega}$. We define $\pi_r$ and $\pi_r'$ for $0\leq r \leq t$, inductively by 
\[ \begin{cases} \pi_0= \pi(\EE'),\\
\pi_r= D_{\rho|\cdot|^{x_r}}(\pi_{r-1}),  \end{cases}\ \begin{cases}
\pi_0'=\pi(sh_k^1(\EE')),\\
\pi_r'= D_{\rho|\cdot|^{x_r}}(\pi_{r-1}').  \end{cases}\]
Note that $\pi_t= \pi(\EE)$ and $\pi_t'= \pi(sh_k^1(\EE))$. By Lemma \ref{lem derivative algorithm}(iii), the derivative $D_{\rho|\cdot|^{x_r}}(\pi_{r-1})$ (resp. $D_{\rho|\cdot|^{x_r}}(\pi_{r-1}')$) is highest, hence, each $\pi_r$ (resp. $\pi_{r}'$) is irreducible and  $D_{\rho|\cdot|^{x_r}}$-reduced.

To finish the proof of (ii), it suffices to show that for any $ 0 \leq r \leq t$, we have
\[ \pi_r' \hookrightarrow  Z_{\rho}[B_k+1,A_k+1] \rtimes \pi_r. \]
We show this by applying induction on $r$. The case $r=0$ is already done in \eqref{eq lem shift of a block (ii)}. 

First, we claim that
\[ \pi_{r-1}' \hookrightarrow  \rho|\cdot|^{x_r} \times Z_{\rho}[B_k+1,A_k+1] \rtimes \pi_r. \]
Indeed, if $ x_r \neq A_k+2$, then
\begin{align*}
    \pi_{r-1}' &\hookrightarrow Z_{\rho} [B_k+1,A_k+1] \rtimes \pi_{r-1}\\
    & \hookrightarrow Z_{\rho} [B_k+1,A_k+1] \times \rho|\cdot|^{x_r} \rtimes \pi_r\\
    &= \rho|\cdot|^{x_r} \times Z_{\rho} [B_k+1,A_k+1]  \rtimes \pi_r,
\end{align*}
where the first injection is the induction hypothesis, the second injection follows from Lemma \ref{lem Frobenius}, and the last equation holds since the segments $[x_r,x_r]_{\rho}$ and $[A_k+1,B_k+1]_{\rho}$ are not linked. ($x_r \geq B_k+1$ by construction.)

If $x_r= A_k+2$, then the segments $[x_r,x_r]_{\rho}$ and $[A_k+1,B_k+1]_{\rho}$ are linked. However, we have a short exact sequence 
\begin{align*}
    0 \to Z_{\rho}[B_k+1, A_k+2]  &\to  Z_{\rho}[B_k+1,A_k+1] \times \rho|\cdot|^{A_k+2}  \\
    &\to  soc(\rho|\cdot|^{A_k+2}  \times Z_{\rho}[B_k+1,A_k+1]) \to 0.
\end{align*}
Hence, using the left exactness of Hom functor, we see $\pi_{r-1}'$ injects to one of 
\begin{align*}
    \tau_1&= soc(\rho|\cdot|^{x_r}  \times Z_{\rho}[B_k+1,A_k+1]) \rtimes \pi_r,\\
    \tau_2&=  Z_{\rho}[B_k+1,A_k+2] \rtimes \pi_r.
\end{align*}
If $\pi_{r-1}'$ injects into $\tau_2$, then
\begin{align*}
\pi_{r}'=&D_{\rho|\cdot|^{x_r}}(\pi_{r-1}')\\
\leq& D_{\rho|\cdot|^{x_r}}(\tau_2) \\
=& D_{\rho|\cdot|^{x_r}}( Z_{\rho}[B_k+1,A_k+2] )\rtimes \pi_r + D_{\rho|\cdot|^{-x_r}}^{op}( Z_{\rho}[B_k+1,A_k+2] )\rtimes \pi_r\\
&+ Z_{\rho}[B_k+1,A_k+2] \rtimes D_{\rho|\cdot|^{x_r}}(  \pi_r)\\
=&0,
\end{align*}
where the second and third equality follows from Lemma \ref{lem Leibniz rule} and the fact that $\pi_r$ is $D_{\rho|\cdot|^{x_r}}$-reduced. This contradicts to the fact that $\pi_r'\neq 0$. Therefore,
\begin{align*}
     \pi_{r-1}' &\hookrightarrow  soc(\rho|\cdot|^{x_r}  \times Z_{\rho}[B_k+1,A_k+1]) \rtimes \pi_r \\
     &\hookrightarrow \rho|\cdot|^{x_r}  \times Z_{\rho}[B_k+1,A_k+1] \rtimes \pi_r.
\end{align*}
This completes the proof of the claim.

Finally, we apply Frobenius reciprocity to show that 
\[0 \neq \Hom( \pi_{r}', Z_{\rho}[B_k+1,A_k+1] \rtimes \pi_r).\]
Indeed, say $\pi_{r-1}'$ (resp. $\pi_r$) is a representation on the group $G$ (resp. $G^{-}$). Let $ P=MN$ be the standard parabolic subgroup of $G$ such that $\rho|\cdot|^{x_r} \otimes \pi_{r}' $ is a representation on $M$. Then
\begin{align*}
    0 &\neq \Hom_G(\pi_{r-1}', \rho|\cdot|^{x_r}  \rtimes (Z_{\rho}[B_k+1,A_k+1] \rtimes \pi_r) )\\
    &=\Hom_M( Jac_P(\pi_{r-1}'), \rho|\cdot|^{x_r}  \otimes (Z_{\rho}[B_k+1,A_k+1] \rtimes \pi_r) )\\
    &=\Hom_{G^{-}}( D_{\rho|\cdot|^{x_r}}(\pi_{r-1}'),  Z_{\rho}[B_k+1,A_k+1] \rtimes \pi_r )\\
     &=\Hom_{G^{-}}( \pi_{r}',  Z_{\rho}[B_k+1,A_k+1] \rtimes \pi_r ).
\end{align*}
This completes the proof of Part (ii).

\textbf{Proof of (iii).}
We first show that $\pi(sh_k^1(\EE)) \neq 0$. Indeed, by Theorem \ref{thm non-vanishing}, we may assume $B_i >0$ for all $i$. In this case, the assumption shows $[A_k,B_k]_{\rho} \supsetneq [A_i,B_i]_{\rho}$ for all $i >k$, and hence we may consider a new total order $\gg$ defined by
\begin{enumerate}
    \item [$\oldbullet$] $ k \gg i$ for all $ i \in I_{\rho}-\{k\}$.
    \item [$\oldbullet$] For $i,j \in I_{\rho}- \{k\}$, $i \gg j$ if and only if $i >j$. 
\end{enumerate}
It is admissible by the assumption. Denote $\EE'= \EE^{\rho} \cup \EE_{\rho, \gg}$. Note that Theorem \ref{thm row exchange} indicates $\pi(\EE')\cong \pi(\EE)$. Then by the construction of  $\pi(\EE')$, we see that
\[ D_{\rho|\cdot|^{B_k+1,\dots,A_k+1}} (\pi(sh_k^1(\EE')))= \pi(\EE') \cong \pi(\EE)\neq 0. \]
Therefore, $\pi(sh_k^1(\EE'))\neq 0$. By Lemma \ref{lem identities}(i) and Theorem \ref{thm row exchange}, we have $\pi(sh_k^1(\EE)) =\pi(sh_k^1(\EE'))\neq 0$ and
\[ D_{\rho|\cdot|^{B_k+1,\dots,A_k+1}} (\pi(sh_k^1(\EE)))= \pi(\EE). \]

By applying Part (ii), we obtain an injection
\[ \pi(sh_k^1(\EE)) \hookrightarrow Z_{\rho}[B_k+1,A_k+1] \rtimes \pi(\EE).\]
Now we show that $\pi(sh_k^1(\EE))$ is the unique irreducible subrepresentation of the right hand side. 

Indeed, if $\pi$ is an irreducible subrepresentation of the right hand side, then Frobenius reciprocity shows 
\[ D_{\rho|\cdot|^{B_k+1,\dots,A_k+1}}(\pi) \neq 0.\]
Thus, if one can show
\begin{align}\label{eq 4.1}
    D_{\rho|\cdot|^{B_k+1,\dots,A_k+1}}  (Z_{\rho}[ B_k+1, A_k+1] \rtimes \pi( \EE))= \pi(\EE),
\end{align}
then by comparing the length,
\[ D_{\rho|\cdot|^{B_k+1,\dots,A_k+1}}  (Z_{\rho}[ B_k+1, A_k+1] \rtimes \pi( \EE)-\pi)=0\]
in the Grothendieck group. Therefore, $Z_{\rho}[ B_k+1, A_k+1] \rtimes \pi( \EE)$ has a unique irreducible subrepresentation.

Now we show \eqref{eq 4.1}. By Lemma \ref{lem Leibniz rule},
\begin{align*}
     &D_{\rho|\cdot|^{B_k+1,\dots,A_k+1}}  (Z_{\rho}[ B_k+1, A_k+1] \rtimes \pi( \EE))\\
     =&\pi(\EE) +\sum_{t=0}^{A_k-B_k} Z_{\rho}[B_k+1+t,A_k+1] \rtimes D_{ \rho|\cdot|^{B_{k}+t+1,\dots,A_{k}+1}}( \pi(\EE)) \\
     =& \pi(\EE),
\end{align*}
where the last equality follows from Proposition \ref{prop derivative support}(ii). This completes the proof of Part (iii).\\

\textbf{Proof of (iv).}
Combining Parts (i) and (ii), one can derive the injection in Part (iv). By Frobenius reciprocity, we have 
\[D_{\Omega(sh^1(\FF_2))}(\pi\left(\EE^{\rho} \cup (\FF_1 + sh^1(\FF_2)\right)) \geq \pi(\EE)\neq 0.\] 

Now we show that $D_{\Omega(sh^1(\FF_2))}$ is a composition of highest derivatives modulo a scalar, and then the assertion about $S_{\Omega(sh^1(\EE_2))}$ follows from Theorem \ref{thm derivative-socle}(3).

We apply induction on the cardinality of the set $\mathcal{B}=\{B_i \ | \ i \geq j\}$. If $\mathcal{B}$ is a singleton, then $\pi(\EE)$ is $\rho|\cdot|^{B_n+1}$-reduced by Proposition \ref{prop derivative support}(i), and hence for some positive integer $m$, 
\begin{align*}
    &D_{\Omega(sh^1(\FF_2))}(\times_{i\geq j} Z_{\rho}[B_i+1,A_i+1]\rtimes \pi(\EE))\\
    =&D_{\Omega(sh^1(\FF_2))}(\times_{i \geq j} Z_{\rho}[B_i+1,A_i+1]) \rtimes \pi(\EE)\\
    =&m \cdot \pi(\EE),
\end{align*}
and it is a composition of highest derivatives up to a scalar. Therefore, $D_{\Omega(sh^1(\FF_2))}(\pi)$ is also a composition of highest derivatives for any irreducible subrepresentation $\pi$ of $ \times_{i\geq j} Z_{\rho}[B_i+1,A_i+1]\rtimes \pi(\EE)$ as long as $D_{\Omega(sh^1(\FF_2))}(\pi) \neq 0$. As a consequence, $D_{\Omega(sh^1(\FF_2))}(\pi\left(\EE^{\rho} \cup (\FF_1 + sh^1(\FF_2)\right))$ is a composition of highest derivatives up to a scalar.

When $\mathcal{B}$ is not a singleton, we assume that $ B_i=B_j$ for $j \leq i<k$ and $B_{k}> B_j$. We let $B=B_j$ and let
\[ \FF_3= \{([A_i, B]_{\rho},l_i,\eta_i)\}_{j \leq i <k},\ \FF_4= \{([A_i, B_i]_{\rho},l_i,\eta_i)\}_{i \geq k}.\]
Note that $\FF_2=\FF_3+\FF_4.$ We also define the ordered multi-sets
\begin{align*}
    \Omega_1&=\{ \rho|\cdot|^{x} \in \Omega(sh^1(\FF_3)) \ | \ x < B_{k}+1\}, \\
    \Omega_2&=\{ \rho|\cdot|^{x} \in \Omega(sh^1(\FF_2)) \ | \ x \geq B_{k}+1\},
\end{align*}
with the orders induced from the bigger ordered multi-sets containing them. Note that we have $\Omega(sh^1(\FF_2))=\Omega_1 \cup \Omega_2$, and hence $D_{\Omega(sh^1(\FF_2))}= D_{\Omega_2} \circ D_{\Omega_1}$.

Since $\pi( \EE^{\rho} \cup(\FF_1+ \FF_3 + sh^1(\FF_4)))$ and $\pi( \EE^{\rho} \cup(\FF_1+ sh^1(\FF_3) + sh^1(\FF_4))) $ are both in $\Rep^{(P')}$, applying Part (ii) repeatedly, we have
\[ \pi( \EE^{\rho} \cup(\FF_1+ sh^1(\FF_3) + sh^1(\FF_4))) \hookrightarrow \times_{j \leq i <k}Z_{\rho}[B+1,A_i+1] \rtimes  \pi( \EE^{\rho} \cup(\FF_1+ \FF_3 + sh^1(\FF_4))). \]
Proposition \ref{prop derivative support}(i) implies $\pi( \EE^{\rho} \cup(\FF_1+ \FF_3 + sh^1(\FF_4)))$ is $D_{\rho|\cdot|^{x}}$-reduced for $B+1 \leq x <B_k+1$. Therefore, for some positive integer $m_1$,
\begin{align*}
     &D_{\Omega_1}(\ \times_{j \leq i <k}Z_{\rho}[B+1,A_i+1] \rtimes  \pi( \EE^{\rho} \cup(\FF_1+ \FF_3 + sh^1(\FF_4))))\\
     =& m_1 \cdot  \times_{j \leq i <k}Z_{\rho}[B_k+1,A_i+1] \rtimes  \pi( \EE^{\rho} \cup(\FF_1+ \FF_3 + sh^1(\FF_4))),
\end{align*}
where we omit $Z_{\rho}[B_k+1,A_i+1]$ in the product if $B_k+1>A_{i}+1$. Note that this is a composition of highest derivatives up to a scalar. The induction hypothesis implies the following derivative is a composition of highest derivatives up to a scalar
\[ D_{\Omega(sh^1(\FF_4))} ( \pi( \EE^{\rho} \cup(\FF_1+ \FF_3 + sh^1(\FF_4))))=m_2 \cdot \pi( \EE^{\rho} \cup(\FF_1+ \FF_3 + \FF_4)),\]
where $m_2$ is some positive integer. Therefore, there is a positive integer $m_3$ such that
\[ D_{\Omega_2} (\times_{j \leq i <k}Z_{\rho}[B+1,A_i+1] \rtimes  \pi( \EE^{\rho} \cup(\FF_1+ \FF_3 + sh^1(\FF_4))))= m_3 \cdot \pi(\EE^{\rho}  \cup (\FF_1+  \FF_3 + \FF_4)), \]
and this is also a composition of highest derivatives up to a scalar. In conclusion, 
\[D_{\Omega(sh^{1}( \FF_2))}(\ \times_{j \leq i <k}Z_{\rho}[B+1,A_i+1] \rtimes  \pi(\EE))\]
is a composition of highest derivative up to a scalar, and hence the same holds for $$D_{\Omega(sh^1(\FF_2))}\left(\pi\left(\EE^{\rho} \cup (\FF_1 + sh^1(\FF_2)\right)\right).$$ This completes the proof of Part (iv).\\

\textbf{Proof of (v).}
We remark that when $\FF_3$ is empty and $B>0$, Part (v) is the same as \cite[Theorem 5.1]{Ato20b}. In general, the assumptions imply $\pi( \EE^{\rho} \cup (\FF_1 + sh^1(\FF_2)+\FF_3)) \neq 0$, and hence Parts (i) and (ii) give an injection 
\[ \pi( \EE^{\rho} \cup (\FF_1 + sh^1(\FF_2)+\FF_3)) \hookrightarrow \times_{k\leq i <m} Z_{\rho}[B+1, A_i+1] \rtimes \pi(\EE). \]
Suppose $\pi$ is any irreducible subrepresentation of $\times_{k\leq i <m} Z_{\rho}[B+1, A_i+1] \rtimes \pi(\EE)$. Then Frobenius reciprocity implies that  $D_{\Omega(sh^1(\FF_2))}(\pi) \neq 0$. Applying Lemma \ref{lem Leibniz rule}, the derivative $D_{\Omega(sh^1(\FF_2))}(\times_{k\leq i <m} Z_{\rho}[B+1, A_i+1] \rtimes \pi(\EE))$ is exactly a direct sum of $M$ copies of $\pi(\EE)$, where
\[ M:=\Pi_{x \geq B} (\text{the multiplicity of } \rho|\cdot|^{x} \text{ in } \Omega(sh^1(\FF_2)))!. \]
In particular, it has length $M$.

On the other hand, for an arbitrary irreducible representation $\sigma$, the length of $D_{\Omega(sh^1(\FF_2))}(\sigma)$ is either $0$ or $M\cdot r$ for some positive integer $r$ coming from self-dual derivatives if there is any. As a consequence, one can see that 
\[D_{\Omega(sh^1(\FF_2))}(\times_{k\leq i <m} Z_{\rho}[B+1, A_i+1] \rtimes \pi(\EE)-\pi)=0.\]
This shows that $\times_{k\leq i <m} Z_{\rho}[B+1, A_i+1] \rtimes \pi(\EE)$  has a unique irreducible subrepresentation, which is $\pi( \EE^{\rho} \cup (\FF_1 + sh^1(\FF_2)+\FF_3))$. This completes the proof of Part (v) and the proof of Lemma \ref{lem shift}.
\end{proof}

The following corollary allows us to ``cancel" certain parts of extended multi-segments. 

\begin{cor} \label{cor shift add}\
Suppose $\EE, \EE' \in \Rep^{(P')}$ and $\pi(\EE)\cong \pi(\EE')$. We fix decompositions $\EE_{\rho}=\FF_{1}+\FF_2$, $\EE_{\rho}'= \FF_1'+ \FF_2'.$
\begin{enumerate}
    \item [(i)] Suppose $\supp(\FF_2)= \supp(\FF_2')$. Then for any $d \in \Z_{\geq 0},$ we have 
    \[ \pi(\EE^{\rho} \cup (\FF_1+ sh^d(\FF_2)))\cong \pi((\EE')^{\rho} \cup (\FF_1'+ sh^d(\FF_2'))).  \]
    As a consequence, $\FF_2=\FF_2'$, and for any choice of $\rho^{\ast}$ in Definition \ref{def modification}, we have
    \[ \pi( \EE^{\rho} \cup \FF_1 \cup (sh^1(\FF_2))_{\rho^{\ast}}) \cong \pi( (\EE')^{\rho} \cup \FF_1' \cup (sh^1(\FF_2'))_{\rho^{\ast}}).\]
    \item [(ii)] Under the same assumptions as (i), suppose that both $\EE^{\rho} \cup (\FF_1+ sh^{-1}(\FF_2))$ and $(\EE')^{\rho} \cup (\FF_1'+ sh^{-1}(\FF_2'))$ still satisfy (P'). Then
    
   \[ \pi(\EE^{\rho} \cup (\FF_1+ sh^{-1}(\FF_2)))  \neq 0 \Leftrightarrow \pi((\EE')^{\rho} \cup (\FF_1'+ sh^{-1}(\FF_2'))) \neq 0. \]
   If $\pi(\EE^{\rho} \cup (\FF_1+ sh^{-1}(\FF_2)))  \neq 0,$ then
    \[   \pi(\EE^{\rho} \cup (\FF_1+ sh^{-1}(\FF_2))) ) \cong  \pi((\EE')^{\rho} \cup (\FF_1'+ sh^{-1}(\FF_2'))). \]
    \item [(iii)] Suppose $\supp(\FF_1)=\supp(\FF_1')$. Then for any $d \in \Z_{\geq 0}$, we have 
    \[ \pi(\EE^{\rho} \cup (add^d(\FF_1)+ \FF_2))\cong \pi((\EE')^{\rho} \cup (add^d(\FF_1')+ \FF_2')).  \]
\end{enumerate}
\end{cor}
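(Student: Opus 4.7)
The plan is to establish Part (i) by induction on $d$ using Lemma \ref{lem shift}(iv); the consequence $\FF_2=\FF_2'$ and the $\rho^{\ast}$-version of (i) will then drop out by combining Part (i) with Lemma \ref{lemma far away}(ii) and Remark \ref{rmk change rho}. Part (ii) arises from running the derivative identity of Lemma \ref{lem shift}(iv) in reverse, while Part (iii) is Part (i) transported through the Aubert--Zelevinsky duality via Lemma \ref{lem identities}(ii) and Theorem \ref{thm Aubert-Zelevinsky dual formula}.

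For the induction in Part (i), the base case $d=0$ is the hypothesis $\pi(\EE)\cong\pi(\EE')$. For the step from $d-1$ to $d$, I apply Lemma \ref{lem shift}(iv) with base $\EE^{\rho}\cup(\FF_1+sh^{d-1}(\FF_2))$ and its upper block $sh^{d-1}(\FF_2)$ shifted by one. This produces the injection
\[
\pi(\EE^{\rho}\cup(\FF_1+sh^{d}(\FF_2)))\hookrightarrow \prod_{i\in \FF_2} Z_{\rho}[B_i+d,A_i+d]\rtimes \pi(\EE^{\rho}\cup(\FF_1+sh^{d-1}(\FF_2)))
\]
together with the identity $D_{\Omega(sh^{d}(\FF_2))}(\pi(\EE^{\rho}\cup(\FF_1+sh^{d}(\FF_2))))=m\cdot \pi(\EE^{\rho}\cup(\FF_1+sh^{d-1}(\FF_2)))$ in which the left-hand operator is a composition of highest derivatives up to a scalar. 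The analogous statements hold for $\EE'$, and the inductive hypothesis identifies the two base representations. Since a composition of highest derivatives is inverted by the iterated socle via Theorem \ref{thm derivative-socle}(3) --- replaced by Theorem \ref{thm self-dual derivative} whenever $0\in [B_i+d,A_i+d]$ for some $i$ --- applying $S_{\Omega(sh^{d}(\FF_2))}$ to the two derivative identities recovers the two subrepresentations appearing in the injection above and forces them to coincide.

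To deduce $\FF_2=\FF_2'$, I choose $d$ large enough that $B_i+d>A_k$ for all $i\in \FF_2$ and $k\in \FF_1$ (the analogous condition holds for $\EE'$ since $\supp(\FF_2)=\supp(\FF_2')$), and invoke Part (i) at this $d$. Then the shifted block $sh^{d}(\FF_2)$ sits strictly above $\FF_1$, and Lemma \ref{lemma far away}(ii) applies with the $t_i$'s for the $\FF_1$-part chosen large enough to render $B_i+t_i\geq 0$ while keeping $A_i+t_i<B_j+d$ for $j\in \FF_2$. That lemma simultaneously delivers the $\rho^{\ast}$-isomorphism $\pi(\EE^{\rho}\cup \FF_1\cup (sh^{d}(\FF_2))_{\rho^{\ast}})\cong \pi((\EE')^{\rho}\cup \FF_1'\cup (sh^{d}(\FF_2'))_{\rho^{\ast}})$ and the equality $sh^{d}(\FF_2)=sh^{d}(\FF_2')$, whence $\FF_2=\FF_2'$. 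The $\rho^{\ast}$-version of (i) at $d=1$ then follows from Remark \ref{rmk change rho} combined with the independence of $\rho$- and $\rho^{\ast}$-derivatives used in the proof of Lemma \ref{lemma far away}(i).

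For Part (ii), I view $\EE$ as $\EE^{\rho}\cup (\FF_1+sh^{1}(\FF_2^{-}))$ with $\FF_2^{-}:=sh^{-1}(\FF_2)$. Whenever $\EE^{\rho}\cup(\FF_1+\FF_2^{-})\in \Rep$, Lemma \ref{lem shift}(iv) yields $D_{\Omega(\FF_2)}(\pi(\EE))=m\cdot \pi(\EE^{\rho}\cup(\FF_1+\FF_2^{-}))$ as a composition of highest derivatives, and the analog holds for $\EE'$. Since $\pi(\EE)\cong \pi(\EE')$ and $\Omega(\FF_2)=\Omega(\FF_2')$, the two derivatives agree, forcing the two shifted-down representations to be isomorphic when nonzero and to vanish simultaneously. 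For Part (iii), Theorem \ref{thm Aubert-Zelevinsky dual formula} identifies $\pi(dual(\EE))$ with $\widehat{\pi(\EE)}$, so $\pi(dual(\EE))\cong \pi(dual(\EE'))$. Lemma \ref{lem identities}(ii) transports $add^d$ on $\FF_1$ to $sh^d$ on $dual(\FF_1)$; since the $(P')$-order reverses under $dual$, the decomposition $\EE_{\rho}=\FF_1+\FF_2$ becomes $dual(\EE_{\rho})=dual(\FF_2)+dual(\FF_1)$, and $\supp(\FF_1)=\supp(\FF_1')$ translates to $\supp(dual(\FF_1))=\supp(dual(\FF_1'))$. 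Part (i) applied to $dual(\EE)$ and $dual(\EE')$ with shift on the upper block $dual(\FF_1)$, followed by one more application of $dual$, yields the desired isomorphism. The main obstacle throughout is the careful inversion of iterated highest derivatives via socles in Part (i), especially when some derivative exponent equals $0$; this forces the use of Theorem \ref{thm self-dual derivative} together with its $\rho|\cdot|^{\pm 1}$-reducedness hypotheses in place of Theorem \ref{thm derivative-socle}.
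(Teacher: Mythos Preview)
Your reduction of Part (iii) to Part (i) via duality is exactly the paper's argument, and the derivation of $\FF_2=\FF_2'$ and the $\rho^\ast$-version from Part (i) at large $d$ is also correct.  However, there are genuine gaps in your arguments for Parts (i) and (ii).

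\textbf{Part (i).}  Your inductive step hinges on inverting the composition of highest derivatives $D_{\Omega(sh^d(\FF_2))}$ by the iterated socle $S_{\Omega(sh^d(\FF_2))}$.  But Lemma~\ref{lem shift}(iv) only asserts the socle identity under the hypothesis $B_i+d>0$ for every row of $\FF_2$; since $\EE\in\Rep^{(P')}$ permits $B_i\leq -1$, the early steps of your induction may involve a derivative $D_{\rho}$ at exponent $0$, where Theorem~\ref{thm derivative-socle} does not apply.  Your proposed fix through Theorem~\ref{thm self-dual derivative} does not work: that theorem concerns the $\Delta_\rho[0,-1]$- and $Z_\rho[0,1]$-derivatives and their socles, not the plain $\rho$-derivative $D_\rho$, and nothing in the setup identifies these.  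The paper circumvents this entirely: rather than a uniform inductive shift, it shifts the rows of $\FF_2$ by \emph{separate} large amounts $t_i$ (so that they become mutually disjoint and sit far above $\FF_1$), establishes the isomorphism at that level one row at a time via the \emph{uniqueness} statement in Lemma~\ref{lem shift}(v) (which carries no positivity hypothesis), and then descends to any $d\geq 0$ by the derivative chain of Lemma~\ref{lem derivative algorithm}.

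\textbf{Part (ii).}  The claim that the two shifted-down representations ``vanish simultaneously'' is not justified.  Suppose $\pi(\EE^\rho\cup(\FF_1+sh^{-1}(\FF_2)))\neq 0$.  Lemma~\ref{lem shift}(iv) then gives $D_{\Omega(\FF_2)}(\pi(\EE))\neq 0$, hence $D_{\Omega(\FF_2)}(\pi(\EE'))\neq 0$.  But Lemma~\ref{lem shift}(iv) requires its base extended multi-segment to lie in $\Rep^{(P')}$; if $\pi((\EE')^\rho\cup(\FF_1'+sh^{-1}(\FF_2')))=0$ you cannot invoke it on the $\EE'$ side, so you have no implication from nonvanishing of the derivative back to nonvanishing of the shifted-down object.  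The paper instead proceeds by induction on the number of rows of $\FF_2$: it shifts the top row of $\FF_2'$ down by one using the \emph{definition} of $\pi(\EE')$ as a derivative (so that nonvanishing is inherited from the $\EE$ side via Part~(i) and Lemma~\ref{lem shift}(i)), then absorbs that row into $\FF_1'$ and repeats.
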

\begin{proof}
We may assume $\EE^{\rho}= (\EE')^{\rho}$ by Lemma \ref{lemma far away}(i).

For Part (i), write 
\[ \FF_2= \{ ([A_i,B_i]_{\rho}, l_i,\eta_i)\}_{i=1}^n, \ \FF_2'= \{ ([A_i,B_i]_{\rho} ,l_i',\eta_i')\}_{i =1}^n.\]
We take a sequence of positive integers $ \{t_i \}_{ i=1}^n$ such that 
\[\begin{cases}
t_i>d  &\text{ for }1 \leq i\leq n,\\
t_i >t_j &\text{ for }1 \leq j<i \leq n,\\
B_i+t_i > A_{j}+t_{j} &\text{ for }1 \leq j<i \leq n.
\end{cases}\]
Then applying Lemma  \ref{lem shift}(v) repeatedly, we get
\[ \pi\left(\EE^{\rho} \cup \left(\FF_1 + \left(\sum_{i=1}^n sh^{t_i}_i\right)(\FF_2)\right)\right) \cong \pi\left(\EE^{\rho} \cup \left(\FF_1' + \left(\sum_{i=1}^n sh^{t_i}_i\right)(\FF_2')\right)\right). \]
When we take $t_1$ large enough, we may apply Lemma \ref{lem derivative algorithm} repeatedly to construct an ordered multi-set $\Omega$ such that
\begin{align*}
    &\pi(\EE^{\rho} \cup (\FF_1+sh^d(\FF_2)))\\
    =& D_{\Omega}\left( \pi\left(\EE^{\rho} \cup \left(\FF_1 + \left(\sum_{i=1}^n sh^{t_i}_i\right)(\FF_2)\right)\right)\right)\\
    =&D_{\Omega}\left( \pi\left(\EE^{\rho} \cup \left(\FF_1' + \left(\sum_{i=1}^n sh^{t_i}_i\right)(\FF_2')\right)\right)\right)\\
    =&\pi(\EE^{\rho} \cup (\FF_1'+sh^d(\FF_2'))).
\end{align*}
When $d$ is large enough, the conditions in Lemma \ref{lemma far away}(ii) are satisfied, so we have $ sh^d(\FF_2)= sh^d(\FF_2')$, which implies $\FF_2=\FF_2'$, and 
\[ \pi(\EE^{\rho}\cup \FF_1\cup (sh^d(\FF_2))_{\rho^{\ast}} ) \cong  \pi((\EE')^{\rho}\cup \FF_1\cup (sh^d(\FF_2'))_{\rho^{\ast}} ). \]
On the other hand, Remark \ref{rmk change rho} ensures that both $\EE^{\rho}\cup \FF_1\cup (sh^1(\FF_2))_{\rho^{\ast}}$ and $ (\EE')^{\rho}\cup \FF_1'\cup (sh^1(\FF_2'))_{\rho^{\ast}}$ are in $\Rep^{(P')}$. Applying Lemma \ref{lem shift}(v) repeatedly, we get
\[ \pi(\EE^{\rho}\cup \FF_1\cup (sh^1(\FF_2))_{\rho^{\ast}} ) \cong  \pi((\EE')^{\rho}\cup \FF_1\cup (sh^1(\FF_2'))_{\rho^{\ast}} ). \]
This completes the proof of Part (i).
 
 For Part (ii), it suffices to show 
  \[ \pi(\EE^{\rho} \cup (\FF_1+ sh^{-1}(\FF_2)))  \neq 0 \Rightarrow \pi(\EE^{\rho} \cup (\FF_1+ sh^{-1}(\FF_2))) \cong \pi((\EE')^{\rho} \cup (\FF_1'+ sh^{-1}(\FF_2'))). \]
 We use the same notation as in the proof of Part (i) except that $t_i> d $ is replaced by $t_i \gg 0$. We apply induction on $n$. When $n=1$, it follows from definition. In general, we first shift the top row of $\FF_2'$ to left by one. We have (the isomorphism is by Part (i))
 \begin{align*}
    &\,\pi\left( \EE^{\rho} \cup\left(\FF_1'+ \left(sh_{1}^{-1} + \sum_{i=2}^n sh_i^{t_i} \right)(\FF_2')\right) \right)\\
     =&\, D_{\rho|\cdot|^{B_1 ,\dots,A_1}}\left( \pi\left( \EE^{\rho} \cup\left(\FF_1'+ \left( \sum_{i=2}^n sh_i^{t_i} \right)(\FF_2')\right)\right) \right)\ \ 
     \\
     \cong &\,  D_{\rho|\cdot|^{B_1 ,\dots,A_1}}\left( \pi\left( \EE^{\rho} \cup\left(\FF_1+ \left( \sum_{i=2}^n sh_i^{t_i} \right)(\FF_2)\right)\right) \right)\\
     =&\, \pi\left( \EE^{\rho} \cup\left(\FF_1+ \left(sh_{1}^{-1} + \sum_{i=2}^n sh_i^{t_i} \right)(\FF_2)\right) \right)\\
     =& \,\pi\left( \EE^{\rho} \cup\left(\FF_1+ \left(\sum_{i=2}^n sh_i^{t_i+1} \right)(sh^{-1}(\FF_2))\right) \right)\\
     \neq&\, 0  \text{ (by applying Lemma  \ref{lem shift}(i) repeatedly).}
\end{align*}
By definition, this implies 
\[ \pi(\EE^{\rho} \cup( \FF_1 + sh_1^{-1}(\FF_2))) \cong \pi(\EE^{\rho} \cup( \FF_1' + sh_1^{-1}(\FF_2'))). \]
It remains to apply $\sum_{i=2}^n sh_i^{-1}$ to $sh_1^{-1}(\FF_2)$, and hence we are done by induction hypothesis. This completes the proof of Part (ii).
 
For Part (iii), we apply Part (i) on $dual(\EE)$ and $dual(\EE')$, then the equation follows from Theorem \ref{thm Aubert-Zelevinsky dual formula} and Lemma \ref{lem identities}(ii). This completes the proof of the corollary. 
\end{proof}

\subsection{Uniform shift}

In this subsection, we recall a statement from the proof of Theorem \ref{thm Aubert-Zelevinsky dual formula} in \cite{Ato20b}, which will be used in later sections. To be complete, we give a proof here, which is based on suggestions communicated by Atobe. 
Then we give a corollary on the condition that $D_{\Omega(\EE_{\rho})}(\pi(\EE))$ is nonzero.  

\begin{prop}[{\cite[\S 3]{Ato20b}}]\label{prop uniform}
Suppose $\pi(\EE) \neq 0$. Up to a multiplicity, we have
$$D_{ \Omega(sh^1(\EE_{\rho}))} (\pi(\EE^{\rho} \cup sh^1(\EE_{\rho}))) = \pi(\EE).$$
\end{prop}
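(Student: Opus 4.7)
The plan is to reduce the statement to a direct application of Lemma \ref{lem shift}(iv) via the trivial decomposition $\EE_\rho = \emptyset + \EE_\rho$, i.e., by taking $j$ to be the minimum of $I_\rho$ so that $\FF_1 = \emptyset$ and $\FF_2 = \EE_\rho$. In this configuration, Lemma \ref{lem shift}(iv) asserts that $D_{\Omega(sh^1(\EE_\rho))}$ acts on $\pi(\EE^\rho \cup sh^1(\EE_\rho))$ as a composition of highest derivatives (modulo the factorial constants), and that, up to a positive multiplicity, the result equals $\pi(\EE)$---precisely the conclusion sought.

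The only obstacle is that Lemma \ref{lem shift}(iv) requires $\EE \in \Rep^{(P')}$, whereas the proposition only supposes $\EE \in \Rep$. To handle this, I would first observe that if $\EE \in \Rep$ has, for some $\rho'$, an admissible order on $I_{\rho'}$ failing $(P')$, then by Definition \ref{def rep of segment} every $B_i$ for $i \in I_{\rho'}$ must be non-negative, for otherwise the hypothesis on $\EE$ would force $(P')$ to hold on $I_{\rho'}$. Theorem \ref{thm row exchange} then permits a finite sequence of row exchanges on such $\EE_{\rho'}$, producing $\EE'_{\rho'}$ whose induced order satisfies $(P')$ while preserving the attached representation. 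Performing this for all offending $\rho'$ yields $\EE' \in \Rep^{(P')}$ with $\pi(\EE') \cong \pi(\EE)$.

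By Lemma \ref{lem identities}(i), row exchanges commute with the shift operators, and since $sh^1(\EE_\rho)$ is strictly positive, Theorem \ref{thm row exchange} applies again to give $\pi(\EE^\rho \cup sh^1(\EE'_\rho)) \cong \pi(\EE^\rho \cup sh^1(\EE_\rho))$. Moreover, $\Omega(sh^1(\EE'_\rho)) = \Omega(sh^1(\EE_\rho))$ since, by Definition \ref{def omega E}, $\Omega$ depends only on the underlying multi-set of segments and not on the chosen admissible order on $I_\rho$. Applying Lemma \ref{lem shift}(iv) to $\EE'$ then transfers the conclusion back to $\EE$. The heart of the argument is already packaged inside Lemma \ref{lem shift}(iv), whose proof is itself an induction on $|\{B_i \mid i \geq j\}|$ invoking parts (ii), (iii), and (v) of the same lemma; beyond quoting that lemma, the remaining work here is the routine bookkeeping of passing between $\Rep$ and $\Rep^{(P')}$ via row exchanges.
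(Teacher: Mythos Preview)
Your reduction is correct: with $\FF_1=\emptyset$ and $\FF_2=\EE_\rho$, Lemma~\ref{lem shift}(iv) yields exactly the statement of Proposition~\ref{prop uniform} for $\EE\in\Rep^{(P')}$, and nothing in the proof of Lemma~\ref{lem shift}(iv) (which rests on parts (i), (ii), Proposition~\ref{prop derivative support}, and the Leibniz rule) invokes Proposition~\ref{prop uniform}, so there is no circularity. One minor technical point: Theorem~\ref{thm row exchange} as stated requires the \emph{entire} $\EE$ to be non-negative, not just the $\rho'$-component you are exchanging; if some other $\EE_{\rho''}$ has a negative $B_i$, you cannot invoke it directly. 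This is easily repaired---e.g., apply a uniform shift $\sum_\rho sh_\rho^d$ to make everything non-negative, perform the row exchanges, then undo the shift via Theorem~\ref{thm non-vanishing}(i)---but you should say so.

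Your route is genuinely different from the paper's. The paper does not appeal to Lemma~\ref{lem shift}(iv); instead it tracks the $L$-data of $\pi(\EE^\rho\cup sh^1(\EE_\rho))$ explicitly through each successive derivative $D_{\rho|\cdot|^{a_i}}^{(r_i)}$, splitting into the cases $a_i<0$, $a_i=0$, $a_i>0$ and invoking Lemma~\ref{lem $L$-data} together with the concrete formulae of \cite[Proposition~6.1, Theorem~7.1]{AM20}. Your argument is cleaner and shorter for the bare statement. What the paper's computation buys, however, is the stage-by-stage description of the $L$-data: Corollary~\ref{cor uniform derivative} is proved by literally reusing that computation (``Using the same computation in the proof above''), and this refined information feeds into Theorem~\ref{thm min B+l} and Theorem~\ref{thm shift left}. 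So while your proof suffices for Proposition~\ref{prop uniform} itself, it would leave Corollary~\ref{cor uniform derivative} without a proof.
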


First, we associate a multi-set with an irreducible representation. 
\begin{defn}
For an irreducible representation
\[ \pi= L\left( \Delta_{\rho_1}[x_1,-y_1],\dots,\Delta_{\rho_t}[x_t,-y_t]; \pi(\sum_{j=t+1}^m \rho_j\otimes S_{2z_j+1},\varepsilon) \right), \]
We define
\[ \Omega(\pi):= \{ \rho_1|\cdot|^{x_1},\dots,\rho_t|\cdot|^{x_t} \} + \{ \rho_1|\cdot|^{y_1},\dots, \rho_t|\cdot|^{y_t} \} + \{ \rho_{t+1}|\cdot|^{z_{t+1}},\dots,\rho_m|\cdot|^{z_m} \}. \]
We denote $\Omega(\pi)_\rho$ to be the maximal sub-multi-set of $\Omega(\pi)$ whose elements are all of the form $\rho|\cdot|^x$ for some $x \in \R$.
\end{defn}

The following lemma shows that if $\pi=\pi(\EE),$ then the multi-set $\Omega(\EE)$ is closely related with $\Omega(\pi)$. Note that $ \Omega(\EE)$ only depends on the local Arthur parameter $\psi_{\EE}$.

\begin{lemma}\label{lem $L$-data}
For any $\EE \in \Rep$, we have 
\[ \Omega(\EE_{\rho}) \supseteq \Omega(\pi(\EE))_{\rho}\]
as multi-sets. Moreover, 
\begin{enumerate}
    \item [(i)] If $\pi(\EE^{\rho} \cup sh^{-1}(\EE_{\rho})) \neq 0$, then $\Omega(\EE_{\rho}) = \Omega(\pi(\EE))_{\rho}$.
    \item [(ii)] The difference multi-set $\Omega(\EE_{\rho})\setminus \Omega(\pi(\EE))_{\rho}$ is symmetric about $\rho|\cdot|^{-1/2}$ in the following sense: The multiplicity of $\rho|\cdot|^x$ in $ \Omega(\EE_{\rho})\setminus\Omega(\pi(\EE))_{\rho}$ is the same as that of  $\rho|\cdot|^{-x-1}$.
\end{enumerate}
\end{lemma}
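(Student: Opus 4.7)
The plan is to establish the containment (and its refinements (i), (ii)) in two stages: a direct computation in a convenient base case, followed by an inductive reduction of the general case via the uniform shift operator and Corollary \ref{cor shift in single rho}. By Lemma \ref{lemma far away}(i), it suffices to track a single $\rho$ at a time, so throughout the proof we focus on $\EE_\rho$.

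For the base case, first assume $\EE$ is already in the canonical form of Definition \ref{def rep of segment} (namely $B_i \geq 0$ for all $i \in I_\rho$ and $B_i > A_j$ for all $i > j \in I_\rho$). Then $\pi(\EE)$ is the unique Langlands subrepresentation of $\bigtimes_i \mathrm{Speh}_i \rtimes \pi(\phi,\varepsilon)$, so its Langlands data can be read off directly: each row $i$ contributes $x$-exponents $\{B_i,\dots,B_i+l_i-1\}$ and $y$-exponents $\{A_i,\dots,A_i-l_i+1\}$ from the Speh factor, together with the $z$-exponents $\{B_i+l_i,\dots,A_i-l_i\}$ from the tempered factor $\pi(\phi,\varepsilon)$. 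Their union is exactly $[A_i,B_i]_\rho$, and summing over $i \in I_\rho$ one obtains $\Omega(\pi(\EE))_\rho = \Omega(\EE_\rho)$.

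For the general case, I would induct on a measure of how far $\EE_\rho$ is from the base case (for example, the total amount of individual row shift needed to reach it). Given $\EE \in \Rep$ outside the base case, apply Corollary \ref{cor shift in single rho} with a suitable $d \geq 1$ relating $\pi(\EE)$ to $\pi(\EE^\rho \cup sh^d(\EE_\rho))$: the Langlands data of $\pi(\EE)$ is obtained from that of $\pi(\EE^\rho \cup sh^d(\EE_\rho))$ by translating every $\rho$-exponent down by $d$ and discarding any resulting segment of the form $\Delta_\rho[x,x+1]$ and any summand $\rho \otimes S_0$. The induction hypothesis, combined with a secondary reduction to the base case via individual row shifts using Lemma \ref{lem shift}(iii)--(iv) and Corollary \ref{cor shift add}(i), gives $\Omega(\pi(\EE^\rho \cup sh^d(\EE_\rho)))_\rho \subseteq \Omega(\EE_\rho)+d$ as multi-sets; translating by $-d$ then yields the desired containment, where the possible discrepancy consists of exactly the discarded exponents. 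For (ii), a collapsed segment $\Delta_\rho[x,-y]$ with $x+y+1=2d$ contributes the discarded pair $(x-d,y-d)$ satisfying $(x-d)+(y-d)=-1$, symmetric about $-\tfrac{1}{2}$, and a discarded tempered summand $\rho \otimes S_{2z+1}$ with $2z+1=2d$ contributes the singleton at $z-d=-\tfrac{1}{2}$, fixed by the symmetry. For (i), applying Corollary \ref{cor shift in single rho} with $d=1$ to $\EE^\rho \cup sh^{-1}(\EE_\rho)$ in place of $\EE$ and invoking the hypothesis $\pi(\EE^\rho \cup sh^{-1}(\EE_\rho)) \neq 0$ together with condition $(\ast)$ of Theorem \ref{thm non-vanishing}(i) forces $x+y+1>2$ for every $\rho$-segment and $2z+1>2$ for every $\rho$-summand in the $L$-data of $\pi(\EE)$; no collapses can then occur and the containment becomes an equality.

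The main obstacle is the secondary reduction to the base case, since the uniform shift $sh^d$ preserves relative positions of segments and therefore cannot by itself achieve the separation $B_i > A_j$ for $i > j$ required by Definition \ref{def rep of segment}. Chaining uniform shifts with individual row shifts and repeatedly applying Lemma \ref{lem shift} and Corollary \ref{cor shift add} to transfer $\Omega$-containments through the derivative process is the delicate part, and it requires ensuring throughout that each derivative taken is a composition of highest derivatives so that the explicit formulas of \cite{AM20} remain available.
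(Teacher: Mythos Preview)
Your overall strategy matches the paper's: reduce to proving the \emph{equality} $\Omega(sh^t(\EE_\rho)) = \Omega(\pi(\EE^\rho \cup sh^t(\EE_\rho)))_\rho$ for $t$ large, and then read off the containment and (i)--(ii) from Corollary \ref{cor shift in single rho} by tracking which $\rho$-segments collapse to $\Delta_\rho[x,x+1]$ and which summands collapse to $\rho\otimes S_0$. Your diagnosis of the obstacle---that the uniform shift cannot separate rows, so an additional argument with individual row shifts is needed---is also right.

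The gap is in your ``secondary reduction.'' Invoking Lemma \ref{lem shift}(iii)--(iv) and Corollary \ref{cor shift add}(i) yields identities at the level of \emph{representations}, not at the level of $\Omega$; it does not directly say how $\Omega(\pi)$ behaves along the chain of row shifts. The paper closes this gap by: (a) choosing $t$ large enough that, besides positivity and $(P')$, one has $B_i > A_j - B_j + 1$ for all $i,j$; (b) inducting on the number $n$ of rows of $\EE_\rho$, shifting only the \emph{last} row by $d\gg 0$ to separate it from the rest (the last row is then in the base case and the first $n-1$ rows are covered by the induction hypothesis, giving equality for $\EE^d$); (c) decreasing $d$ one unit at a time, where each step is the single highest-derivative pass $D_{\rho|\cdot|^{B_n+d,\dots,A_n+d}}$ of Lemma \ref{lem derivative algorithm}. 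By the explicit formula of \cite[Theorem~7.1]{AM20}, each $D_{\rho|\cdot|^u}$ lowers the multiplicity of $\rho|\cdot|^u$ in $\Omega(\pi)$ by one and raises that of $\rho|\cdot|^{u-1}$ by one; the extra inequality in (a) is exactly what rules out short segments $\Delta_\rho[u,u]$ that would otherwise cause the derivative to also touch $\rho|\cdot|^{-u}$ and spoil this count. This explicit bookkeeping on $\Omega(\pi)$ is the missing ingredient that converts your representation-level toolkit into the desired $\Omega$-equality.
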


\begin{proof}
  We first show that it is sufficient to prove the identity 
  \begin{align}
      \Omega(sh^t(\EE_{\rho}))=\Omega(\pi(\EE^{\rho} \cup (sh^t(\EE_{\rho}))))_{\rho}
  \end{align}
  for a large enough integer $t$. Indeed, Corollary \ref{cor shift in single rho} implies for any $t \in \N$,
 \[ \Omega(sh^t(\EE_{\rho}))=\Omega(\pi(\EE^{\rho} \cup (sh^t(\EE_{\rho}))))_{\rho} \Longrightarrow   \Omega(\EE_{\rho}) \supseteq \Omega(\pi(\EE))_{\rho},\]
and $ \Omega(\EE_{\rho}) \neq \Omega(\pi(\EE))_{\rho}$ if and only if there are segments $\Delta_{\rho}[x,x+1]$ or summands $\rho \otimes S_0$ being omitted, which happens only when $\pi(\EE^{\rho} \cup sh^{-1}(\EE_{\rho}))=0$. In this case, each segment $\Delta_{\rho}[x,x+1]$ (resp. each summand $\rho \otimes S_{0}$) contributes a pair $\{\rho|\cdot|^{x}, \rho|\cdot|^{-x-1}\}$ (resp. an element $\{\rho|\cdot|^{-1/2}\}$) to the difference multi-set $\Omega(\EE_{\rho})\setminus \Omega(\pi(\EE))_{\rho}$, so Part (ii) also follows.

Next, we show that $\Omega(sh^t(\EE_{\rho}))=\Omega(\pi(\EE^{\rho} \cup (sh^t(\EE_{\rho}))))_{\rho}$ for a large enough integer $t$. Write
\[\EE_{\rho}= \{ ([A_i,B_i]_{\rho},l_i,\eta_i)\}_{i=1}^n.\]
By replacing $\EE_{\rho}$ with $sh^t(\EE_{\rho})$ for $t$ large, we may assume
\begin{enumerate}
    \item [$\oldbullet$]$B_i >0$ for all $1 \leq i\leq n$;
    \item [$\oldbullet$]$\EE_{\rho}$ satisfies ($P'$);
    \item [$\oldbullet$] $B_i>A_j-B_j+1$ for any $1 \leq i,j\leq n$.
\end{enumerate}
Now we apply induction on $n$. If $n=1$, the claim follows from the definition. Assume that $n>1$.
Let $\EE^d= sh_n^{d}(\EE)$. When $d$ is large, we write $ sh_n^{d}(\EE)_{\rho}=\FF_1+\FF_2$ where
\[\FF_1=\{([A_i,B_i]_{\rho},l_i,\eta_i)\}_{i=1}^{n-1},\  \FF_2=\{([A_n+d,B_n+d]_{\rho},l_n,\eta_n)\}. \]
Then by Definition \ref{def rep of segment}, the multiplicities for $\rho|\cdot|^{x}$ for $A_n+d \leq x \leq B_n+d$ of the two multi-sets $\Omega(\EE^d_{\rho})$, $\Omega(\pi(\EE^{d}))_{\rho}$ agree. Therefore, $\Omega(\EE^d_{\rho})= \Omega(\pi(\EE^d))_{\rho}$ if and only if 
\[ \Omega(\FF_1)= \Omega(\pi(\EE^d- (\FF_2)_{\rho}+ (\FF_2)_{\rho^{\ast}}))_{\rho}, \]
where we take $\rho^{\ast}$ as in Definition \ref{def modification}. Then we reduce $n$ by 1, and the equality follows from induction hypothesis. 

It remains to show that $\Omega(\EE^d_{\rho})=\Omega(\pi(\EE^{d}))_{\rho}$ implies $\Omega(\EE^{d-1}_{\rho})=\Omega(\pi(\EE^{d-1}))_{\rho}$. By Lemma \ref{lem derivative algorithm}, we have
\[ D_{ \rho|\cdot|^{A_{n}+d}} \circ\cdots\circ D_{\rho|\cdot|^{B_{n}+d}}(\pi(\EE^{d}))= \pi(\EE^{d-1}),  \]
 and each derivative is the highest derivative. Then by the algorithm for taking positive derivatives in \cite[Theorem 7.1]{AM20}, each derivative $D_{\rho |\cdot|^{u}}(\pi)$ decreases the multiplicity of $\rho|\cdot|^u$ in $\Omega(\pi)$ by one, and increases the multiplicity of $\rho|\cdot|^{u-1}$ by one. Note that the assumption $B_i>A_j-B_j+1$ assures that there are no segments of the form $\Delta_{\rho}[u,u]$ in each stage, hence the derivative $D_{\rho |\cdot|^{u}}$ won't decrease the multiplicity of $ \rho|\cdot|^{-u}$. Therefore, $ \Omega(\EE^d_{\rho})= \Omega(\pi(\EE^d))_{\rho}$ implies that $ \Omega(\EE^{d-1}_{\rho})= \Omega(\pi(\EE^{d-1}))_{\rho}$. 
 This completes the proof of Lemma \ref{lem $L$-data}.
\end{proof}

Now we give a proof of Proposition \ref{prop uniform}. 

\begin{proof}[Proof of Proposition \ref{prop uniform}] 
We first describe the strategy of the proof, which is based on the algorithm for computing derivatives in \cite{AM20}. Suppose $D_{\rho|\cdot|^{x}}^{(k)}$ is the highest derivative in each stage. We use Lemma \ref{lem $L$-data} to show that $k$ is exactly the multiplicity of $\rho|\cdot|^{x}$ in $\Omega(sh^1(\EE_{\rho}))$, and hence $D_{\Omega(sh^1(\EE_{\rho}))}(\pi(\EE^{\rho} \cup sh^1(\EE_{\rho})))$ is a composition of highest derivatives. Finally, by explicit computation, we show that $D_{\Omega(sh^1(\EE_{\rho}))}(\pi(\EE^{\rho} \cup sh^1(\EE_{\rho})))$ is isotypic, and the $L$-data of any irreducible subrepresentation of it is the same as the $L$-data of $\pi(\EE)$ by Theorem \ref{thm non-vanishing}(i).

To be explicit, we write the multi-set $\Omega(sh^1(\EE_{\rho}))$ as 
\[ \Omega(sh^1(\EE_{\rho}))= \{ (\rho|\cdot|^{a_1})^{r_1},\dots,(\rho|\cdot|^{a_t})^{r_t}, \} \]
where $a_1 < \cdots < a_t$ and $r_i$ denotes the multiplicity of $\rho|\cdot|^{a_i}$ in $\Omega(sh^1(\EE_{\rho}))$. Write
\[ \pi(\EE^{\rho} \cup sh^1(\EE_{\rho}))= L\left( \Delta_{\rho_1}[x_1,-y_1],\dots,\Delta_{\rho_p}[x_p,-y_p]; \pi\left(\sum_{j=p+1}^{m}\rho_{j}\otimes S_{2z_j+1},\varepsilon \right) \right). \]
Lemma \ref{lem $L$-data} shows that $\Omega( sh^1(\EE_{\rho}))= \Omega(\pi(\EE^{\rho} \cup sh^1(\EE_{\rho}))_{\rho}$, and hence for any $a_i$, we have 
\begin{align}\label{eq proof of uniform shift left}
    r_i= \#\{ j \ | \ \rho_j \cong \rho,\  x_j=a_i\}+\#\{ j \ | \ \rho_j \cong \rho,\ y_j=a_i\}+\#\{ j \ | \ \rho_j \cong \rho,\ z_j=a_i\}.
\end{align}
To describe the $L$-data of $ D_{\rho|\cdot|^{a_i}}^{(r_i)}\circ\cdots\circ D_{\rho|\cdot|^{a_1}}^{(r_1)}(\pi(\EE^{\rho}\cup sh^{1}(\EE_{\rho})))$, we give the following notation. For any real number $a$, we denote 
 \[ a^{(i)}= \begin{cases}
    a-1 & \text{ if }a \leq a_i,\\
    a & \text{ otherwise,}
    \end{cases}\]
and 
\begin{align*}
    \Delta_{\rho_j}[x_j, -y_j]^{(i)}:=& \begin{cases}
    \Delta_{\rho_j}[x_j, -y_j] & \text{ if }\rho_j \not\cong \rho,\\
    \Delta_{\rho_j}[x_j^{(i)}, -y_j^{(i)}] & \text{ otherwise, }
    \end{cases}\\
    (\rho_j \otimes S_{2 z_j +1})^{(i)}:=&\begin{cases}
    \rho_j \otimes S_{2 z_j +1} & \text{ if }\rho_j \not\cong \rho,\\
    \rho_j \otimes S_{2 z_j^{(i)} +1} &\text{ otherwise.}
    \end{cases} 
\end{align*}
Now we show by induction on $i$ that up to a multiplicity,
\begin{align*}
  &   D_{\rho|\cdot|^{a_i}}^{(r_i)}\circ\cdots\circ D_{\rho|\cdot|^{a_1}}^{(r_1)}(\pi(\EE^{\rho} \cup sh^1(\EE_{\rho})))\\
=& L\left( \Delta_{\rho}[x_1,-y_1]^{(i)},\dots,\Delta_{\rho}[x_p,-y_p]^{(i)}; \pi\left(\sum_{j=p+1}^{m} (\rho_j\otimes S_{2z_j+1})^{(i)},\varepsilon^{(i)} \right) 
\right),
\end{align*}
where $\varepsilon^{(i)}((\rho_j\otimes S_{2z_j+1})^{(i)})= \varepsilon(\rho_j\otimes S_{2z_j+1})$. Denote the right hand side by $\pi_i$ for brevity, and set $\pi_0= \pi(sh^1(\EE))$. We separate into three cases:
(1) $a_i<0$, (2) $a_i=0$, and (3) $a_i>0$. 

Case (1) Suppose $a_i<0$. We follow the notation and definitions of \cite[Section 6.2]{AM20} and apply \cite[Proposition 6.1]{AM20} to compute the highest derivative $D_{\rho|\cdot|^{a_i}}^{(k)}(\pi_{i-1})$. 
    
    Since $ y_j$ and $z_{j}$ are all positive, \eqref{eq proof of uniform shift left} implies
    \[ r_i=\# \{j \ | \  \rho_j \cong \rho,\  x_j=a_i\}= \# \{j \ | \ \rho_j \cong \rho,\  x_j^{(i-1)}=a_i\}. \]
    On the other hand, by the induction hypothesis, 
    \[A_{\rho|\cdot|^{a_i-1}}:=\{j \ | \  \rho_j \cong \rho,\ x_{j}^{(i-1)}=a_i-1\}=\emptyset,\]
    hence in this special case 
    \[A_{\rho|\cdot|^{a_i}}^c= A_{\rho|\cdot|^{a_i}}:= \{j \ | \  \rho_j \cong \rho,\ x_j^{(i-1)}=a_i\}.\]
    For the definitions of $A$ and $A^c$, see \cite[Section 6.2]{AM20}. 
    Therefore, the formula gives $k=|A_{\rho|\cdot|^{a_i}}^{c}|=r_i$, and the $L$-data of $D_{\rho|\cdot|^{a_i}}^{(r_i)}(\pi_{i-1})$ is obtained from that of $\pi_{i-1}$ by replacing $\Delta_{\rho_j}[x_j, -y_j]^{(i-1)}$ with $\Delta_{\rho_j}[x_j, -y_j]^{(i)}$. Note that since $\pi(\EE) \neq 0$, by Theorem \ref{thm non-vanishing}(i) we have $x_j^{(i)} \geq -y_j^{(i)}$ whenever $\rho_j \cong \rho$. In other words, no segments are deleted in this case.
    
Case (2) Suppose $a_i=0$. According to the Langlands classification, we have an injection 
    \[ \pi_{i-1} \hookrightarrow  \times_{j=1}^p \Delta_{\rho_j}[x_j,-y_j]^{(i-1)} \rtimes \pi\left(\sum_{j=p+1}^{m} \rho_j\otimes S_{2z_j+1},\varepsilon\right),   \]
    where $\pi_{i-1}$ is the unique irreducible subrepresentation of the right hand side. (Note that $(\rho_j\otimes S_{2z_j+1})^{(i-1)}=\rho_j\otimes S_{2z_j+1}$.) 

    Since $\pi(\EE) \neq 0$, Corollary \ref{cor shift in single rho} implies $ 2z_j+1 \geq  2$ whenever $\rho_j \cong \rho$. Therefore, \eqref{eq proof of uniform shift left} implies
    \[ r_i=\# \{ j \ | \ \rho_j \cong \rho,\  x_j^{(i-1)}=0 \}. \]
    Also, we know $ x_j^{(i-1)} \neq -1$ for any $j$ with $\rho_j \cong \rho$ by the induction hypothesis, and hence the segments $ [0,0]_\rho $ and $[x_j^{(i-1)},-(y_j^{(i-1)})]_\rho$ are not linked for any $j$. Therefore, by Lemma \ref{lem commutativity}, the following product commutes for any $j$
    \[ \Delta_{\rho_j}[x_j,-y_j]^{(i-1)} \times \rho= \rho \times \Delta_{\rho_j}[x_j,-y_j]^{(i-1)}.  \]
    As a consequence, we have an injection $\pi_{i-1} \hookrightarrow \rho^{r_i} \times \sigma$, where
    \[\sigma= \times_{j=1}^p \Delta_{\rho_j}[x_j,-y_j]^{(i)} \rtimes \pi\left(\sum_{j=p+1}^{m} (\rho\otimes S_{2z_j+1})^{(i)},\varepsilon^{(i)}\right). \]
    Note that $\pi_{i}$ is the unique irreducible subrepresentation of $\sigma$ from the Langlands classification. We know $\sigma$ is $\rho$-reduced by \cite[Proposition 3.6]{Ato20a} (or \cite[Theorem 3.1]{Jan18}). Therefore, Lemma \ref{lem Frobenius} indicates
    \[ D_{\rho}^{(r_i)}(\pi_{i-1}) \geq \pi_i,\]
    and it is the highest derivative. From \cite[Proposition 2.7]{Ato20a}, we know the highest derivative of an irreducible representation is always isotypic, so $D_{\rho}^{(r_i)}(\pi_{i-1})$ is a multiple of $\pi_i$. 
    
Case (3) Suppose $a_i>0$. We use the notation of \cite[Section 7.1]{AM20} and compute the highest derivative of $D_{\rho|\cdot|^{a_i}}^{(k)}(\pi_{i-1})$ according to \cite[Theorem 7.1]{AM20}.
    
    The induction hypothesis implies that any segment $\Delta_{\rho}[x_j^{(i-1)},-(y_j^{(i-1)})]$ in the $L$-data of $\pi_{i-1}$ satisfies $x_j^{(i-1)} \neq a_i-1$ and $y_j^{(i-1)} \neq a_i-1$. Therefore, 
    \begin{align*}
        t&:=\#\{j\ | \  \rho_j \cong \rho,\  \Delta_{\rho}[x_j^{(i-1)},-(y_j^{(i-1)})]= \Delta_{\rho}[a_i-1,-a_i]\}=0,\\
        A_{\rho|\cdot|^{a_i-1}}&:= \{j \ | \ \rho_j \cong \rho,\  x_j^{(i-1)}=a_i-1\}=\emptyset,\\
        B_{\rho|\cdot|^{a_i-1}}&:= \{j \ | \  \rho_j \cong \rho,\  y_j^{(i-1)}=a_i-1\}=\emptyset,
    \end{align*}
    and hence in this special case, we have
    \begin{align*}
        A_{\rho|\cdot|^{a_i}}^c=A_{\rho|\cdot|^{a_i}}:=\{j \ | \  \rho_j \cong \rho,\  x_j^{(i-1)}=a_i\},&\ \ A_{\rho|\cdot|^{a_i-1}}^c=\emptyset,\\
        B_{\rho|\cdot|^{a_i}}^c=B_{\rho|\cdot|^{a_i}}:=\{j \ | \ \rho_j \cong \rho,\  y_j^{(i-1)}=a_i\},&\ \ B_{\rho|\cdot|^{a_i-1}}^c=\emptyset.
    \end{align*}
    Finally, again by induction hypothesis, we have
    \begin{align*}
        m':&= \# \{j \ | \  \rho_j \cong \rho,\  z_j^{(i-1)}= a_i-1 \}= 0,\\
        m:&= \# \{j \ | \  \rho_j \cong \rho,\  z_j^{(i-1)}= a_i \}.
    \end{align*}
    These are all ingredients needed in the formula. We have
    \begin{align*}
        k&= |A_{\rho|\cdot|^{a_i}}^c|+ \max \{m + \max\{ |B_{\rho|\cdot|^{a_i}}^c|-m',0\}-|A_{\rho|\cdot|^{a_i-1}}^{c}|,0\}\\
        &= |A_{\rho|\cdot|^{a_i}}|+m+|B_{\rho|\cdot|^{a_i}}|\\
        &= \#\{ j \ | \  \rho_j \cong \rho,\  x_j^{(i-1)}=a_i\}+\#\{ j \ | \ \rho_j \cong \rho,\  y_j^{(i-1)}=a_i\}\\
        & \quad +\#\{ j \ \rho_j \cong \rho,\  | \ z_j^{(i-1)}=a_i\}\\
        &=\#\{ j \ | \  \rho_j \cong \rho,\  x_j=a_i\}+\#\{ j \ | \ \rho_j \cong \rho,\  y_j=a_i\}+\#\{ j \ | \ \rho_j \cong \rho,\  z_j=a_i\}\\
        &=r_i,
    \end{align*}
    and the $L$-data of $D_{\rho|\cdot|^{a_i}}^{(r_i)}(\pi_{i-1})$ is obtained from $\pi_{i-1}$ by replacing $\Delta_{\rho_j}[x_j, -y_j]^{(i-1)}$ with $\Delta_{\rho_j}[x_j, -y_j]^{(i)}$ and replacing $(\rho_j \otimes S_{2 z_j+1})^{(i-1)}$ with $(\rho_j \otimes S_{2 z_j+1})^{(i)}$. Note that the combinatorics here use the fact that no segments are removed in Case (1). In conclusion, we have $\pi_{i}= D_{\rho|\cdot|^{x}}^{(r_i)}(\pi_{i-1}).$

In summary, we show that $D_{\Omega(sh^1(\EE_{\rho}))}(\pi(\EE^{\rho} \cup sh^1(\EE_{\rho})))$ is isotypic, and each irreducible subrepresentation is exactly $\pi(\EE)$ according to Corollary \ref{cor shift in single rho}. This completes the proof of the proposition. 
\end{proof}

We obtain the following corollary by the same computation in the proof above.
\begin{cor} \label{cor uniform derivative}
Suppose $\EE \in \Rep$. Write the multi-set $\Omega(\EE_{\rho})$ as 
\[ \Omega(\EE_{\rho})= \{ (\rho|\cdot|^{a_1})^{r_1},\dots,(\rho|\cdot|^{a_t})^{r_t} \}, \]
where $a_1 < \cdots< a_t$ and $r_i$ denotes the multiplicity of $\rho|\cdot|^{a_i}$ in $\Omega(\EE_{\rho})$. Define $\pi_i$ recursively by 
\[ \begin{cases}
\pi_0= \pi(\EE),\\
\pi_i= D_{\rho|\cdot|^{a_i}}^{(r_i)}(\pi_{i-1}).
\end{cases} \]
If $\pi_{i-1}\neq 0$ and $D_{\rho|\cdot|^{a_i}}^{(k)}(\pi_i)$ is the highest derivative, then $k \leq r_i$. Moreover, $D_{\Omega(\EE_{\rho})}(\pi(\EE))\neq 0$ if and only if the following conditions hold:
\begin{enumerate}
    \item [$\oldbullet$] $\Omega(\EE_{\rho})= \Omega(\pi(\EE))_{\rho}$;
    \item [$\oldbullet$] any segment of the form $\Delta_{\rho}[\alpha,\beta]$ in the $L$-data of $\pi(
    \EE)$ satisfies $\alpha-\beta\geq 1$;
    \item [$\oldbullet$] the tempered $L$-parameter in the $L$-data of $\pi(\EE)$ does not contain $\rho\otimes S_{1}$. If it contains $\rho\otimes S_{2}$, then the value of the character at this summand is $1$.
\end{enumerate}
In this case $D_{\Omega(\EE_{\rho})}(\pi(\EE))$ is isotypic, and the $L$-data of its irreducible subrepresentation is obtained from the $L$-data of $\pi$ by replacing each segment of the form $\Delta_{\rho}[\alpha,\beta]$ with $\Delta_{\rho} [\alpha-1, \beta+1]$ and each summand of the form $\rho\otimes S_{2z+1}$ by $\rho \otimes S_{2(z-1)+1}$, where we omit the segments (resp. summands) of the form $\Delta_{\rho}[x,x+1]$ (resp. $\rho\otimes S_{0}$).
\end{cor}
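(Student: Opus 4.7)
The plan is to mirror the computational strategy in the proof of Proposition \ref{prop uniform}, but starting from $\pi(\EE)$ itself rather than from $\pi(\EE^\rho \cup sh^1(\EE_\rho))$. The bound $k \leq r_i$ follows directly from Lemma \ref{lem $L$-data}: since $\Omega(\EE_\rho) \supseteq \Omega(\pi_{i-1})_\rho$ at each stage (the multiplicities of $\rho|\cdot|^{a_j}$ for $j < i$ have only been decremented by the previous highest derivatives), the multiplicity of $\rho|\cdot|^{a_i}$ in $\Omega(\pi_{i-1})_\rho$ is at most $r_i$, and the order of the highest $\rho|\cdot|^{a_i}$-derivative is controlled by this multiplicity via the formulas in \cite[Propositions 3.3, 6.1, Theorem 7.1]{AM20}.

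For the equivalence, I would argue by induction on $i$, running the highest-derivative algorithm of \cite{AM20} through the three cases $a_i < 0$, $a_i = 0$, and $a_i > 0$ exactly as in the proof of Proposition \ref{prop uniform}. Necessity of the first condition, $\Omega(\EE_\rho) = \Omega(\pi(\EE))_\rho$, is immediate from Lemma \ref{lem $L$-data}: any strict inclusion forces some $D_{\rho|\cdot|^{a_i}}^{(r_i)}$ to vanish, because the highest derivative has order strictly less than $r_i$ at that layer. The second condition rules out singleton segments $\Delta_\rho[\alpha, \alpha]$ at which the algorithm would either drop the order of the derivative or leave a stray $\rho|\cdot|^{-\alpha - 1}$ that later disturbs the bookkeeping; the third condition captures exactly when the $a_i = 0$ (or $a_i = 1/2$) branch of Case (2) in the proof of Proposition \ref{prop uniform} succeeds, by forcing the sign computation in \cite[Proposition 6.1]{AM20} into the branch that preserves the derivative order.

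For sufficiency and the description of the $L$-data, one checks inductively that when the three conditions hold, the highest derivative at each stage acts on the $L$-data of $\pi_{i-1}$ by decrementing each exponent equal to $a_i$ by one, silently discarding any segment that collapses to $\Delta_\rho[x, x+1]$ and any summand that becomes $\rho \otimes S_0$. Iterating from $i = 1$ to $i = t$ then produces the description of the $L$-data of any irreducible constituent of $D_{\Omega(\EE_\rho)}(\pi(\EE))$ asserted in the corollary. The main obstacle will be the boundary analysis at $a_i = 0$ (integer-type) and $a_i = 1/2$ (half-integer-type), where one must track carefully how a segment of the form $\Delta_\rho[0, -y]$ interacts with the tempered summands $\rho \otimes S_1$ or $\rho \otimes S_2$ and how the character $\varepsilon$ evolves step by step; this is the place where the explicit sign and branch analysis of \cite[Proposition 6.1]{AM20} has to be invoked to pin down that the listed conditions on singletons and on the values of $\varepsilon$ at $\rho \otimes S_1$ and $\rho \otimes S_2$ are both necessary and sufficient.
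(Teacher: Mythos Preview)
Your proposal is correct and follows essentially the same approach as the paper: rerun the inductive case analysis from the proof of Proposition~\ref{prop uniform} directly on $\pi(\EE)$, isolating the boundary cases $a_i=0$ and $a_i=1/2$ as the only places where the computation differs and where the three bulleted conditions enter. One small point to tighten: your claim that the bound $k\leq r_i$ ``follows directly from Lemma~\ref{lem $L$-data}'' is slightly glib, since the order of the highest derivative in \cite[Theorem~7.1]{AM20} is not in general bounded by the multiplicity in $\Omega(\pi_{i-1})_\rho$ --- it is the inductive hypothesis (that $A_{\rho|\cdot|^{a_i-1}}$ and $B_{\rho|\cdot|^{a_i-1}}$ are empty) which forces $k=|A|+m+|B|$ to equal that multiplicity, and only then does Lemma~\ref{lem $L$-data} give the bound.
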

\begin{proof}
The computation of $D_{\rho|\cdot|^{a_i}}^{(k)}(\pi_{i-1})$ is identical to the proof of Proposition \ref{prop uniform} except for $ a_i=0,\half{1}$. We treat these two  cases separately. 

Suppose $\pi_{i-1} \neq 0$ where $a_i= 0 $ or $\half{1}$. Then the proof of Proposition \ref{prop uniform} shows that $\pi_{i-1}$ is irreducible and we write
\[ \pi_{i-1}= L(\Delta_{\rho_1}[x_1,-y_1],\dots,\Delta_{\rho_t}[x_t,-y_t]; \pi(\phi,\varepsilon)).\]
Let $ D_{\rho|\cdot|^{a_i}}^{(k)}(\pi_{i-1})$ be the highest derivative.

Case (1): Suppose $a_i=0$. Let $m_1$ be the multiplicity of $\rho\otimes S_{1}$ in $\phi$. The computation is the same if $m_1=0$. On the other hand, if $m_1 \neq 0$, we need to show $k<r_i$.

    By the proof of Proposition \ref{prop uniform} at the stage $a_{i-1}$, the multiplicities of $\rho|\cdot|^{-1}$ in $\Omega({\EE_{\rho}})$ and $\Omega(\pi(\EE))_{\rho}$ agree, so Lemma \ref{lem $L$-data}(ii) implies so do the multiplicities of $ \rho$ in the two multi-sets. Therefore, we have
    \[ r_i= \# \{j \ | \ \rho_j \cong \rho, x_j = 0\}+ m_1.\]
    Then \cite[Proposition 3.6]{Ato20a} (or \cite[Theorem 3.1]{Jan18}) implies
    \[ D_{\rho}^{(\lfloor m_1/2 \rfloor)} (\pi(\phi, \varepsilon))= c\cdot \pi(\phi- \rho^{2 \cdot \lfloor m_1/2 \rfloor} \otimes S_1, \varepsilon) \]
    is the highest derivative (for some positive integer $c$). Therefore, by the same argument in case (2) in the proof of Proposition \ref{prop uniform}, we have $\pi_{i-1} \hookrightarrow \rho^{r_i- \lfloor m_1/2 \rfloor } \rtimes \sigma$ where $\sigma$ is $\rho$-reduced. This concludes
    \[ k = \# \{j \ | \ \rho_j \cong \rho, x_j = 0\}+ \lfloor m_1/2 \rfloor= r_i - \lceil m_1/2 \rceil <r_i. \]

Case (2): Suppose $a_i=\half{1}$. Denote $m_2$ the multiplicity of $\rho \otimes S_{2}$ in $\phi$. If $m_2=0$, then there is no difference between this situation and case (3) in the proof of Proposition \ref{prop uniform}, and so $k=r_i$. On the other hand, when $m_2\neq 0$, $k=r_i$ if and only if $ \varepsilon(\rho\otimes S_{2})=1$ since the convention in \cite{AM20} sets the multiplicity of $\rho\otimes S_0$ in $\phi$ to be one, and $\varepsilon(\rho\otimes S_0)=1$.

This completes the proof of the corollary.
\end{proof}

We end this subsection by giving a comparison between the multi-set $\Omega(\pi)$ we defined and the extended cuspidal support $\exsupp(\pi)$ defined in the following remark.
\begin{remark}\label{rmk extended cuspidal support}
 We recall the definition of the extended cuspidal support of a representation defined in \cite{Moe09b}, which is crucial in \cite{Ato22}.

Suppose $\pi$ is an irreducible representation of $G_n$, then there exists an injection
\[ \pi \hookrightarrow \rho_1 \times \cdots \times \rho_{r} \rtimes \sigma\]
where $\rho_i$ are irreducible supercuspidal (not necessarily self-dual) representation of $\GL_{d_i}(F)$ and $\sigma= \pi(\phi, \varepsilon)$ is a supercuspidal representation of $G_{n'}$ for some $n' \leq n$. Write 
\[\phi= \bigoplus_{j=r+1}^m \rho_j \otimes S_{a_j}. \]
We define the extended cuspidal support to be the multi-set
\[ \exsupp(\pi):=\{\rho_1,\dots , \rho_r, \rho_1^{\vee}, \dots ,\rho_r^{\vee}\}+ \sum_{j= r+1}^m \{\rho_j|\cdot|^{\half{a_j-1}},\rho_j|\cdot|^{\half{a_j-3}}, \dots ,\rho_j|\cdot|^{-\half{a_j-1}}\}.\]
M{\oe}glin showed that all representations in $\Pi_{\psi}$ share the same extended cuspidal support and this can be computed from $ \psi$ (\cite[Proposition 4.1]{Moe09b}).

In \cite{Ato22}, Atobe considered this definition for $\pi$ satisfying the following condition: 
for any self-dual irreducible supercuspidal representation $\rho$ of $\GL_d(F)$,
\[D_{\rho|\cdot|^{x}}(\pi)\neq 0 \Longrightarrow  x \in \{0,1/2\}. \]
Atobe computed $\exsupp(\pi)$ from the $L$-data of $\pi$ explicitly in this case in \cite[Section 4.2]{Ato22}. If we denote the multiplicity of $\rho|\cdot|^{x}$ in $\Omega(\pi)$ (resp. $\exsupp(\pi)$) by $k_{\rho,x}$ (resp. $M_{\rho,x}$), then 
\begin{align*}
    M_{\rho,x}&= \sum_{z \in \half{1}\Z,\  z \geq x} k_{\rho,z},\\
    k_{\rho,x}&=M_{\rho, x}- M_{\rho,x+1}.
\end{align*}
In other words, we can recover any one of the multi-sets $\Omega(\pi)$ and $\exsupp(\pi)$ from the other. In particular, the associated local Arthur parameter (if any) is the same.

In general, it is not always possible to recover $\exsupp(\pi)$ from $\Omega(\pi)$, since $\Omega(\pi)$ only depends on $\phi_{\pi}$, the $L$-parameter of $\pi$, but the computation of $\exsupp(\pi)$ involves the character of the tempered representation in the $L$-data of $\pi$. For our purpose, it is sufficient to consider $\Omega(\pi)$.
\end{remark}

\subsection{Invariants of \texorpdfstring{$\pi(\EE)$}{}}
As corollaries of Lemma \ref{lem $L$-data} and the computation in the previous subsection, we show that if $\EE \in \Rep$ and $\EE_{\rho} = \{ ([A_i,B_i]_{\rho},l_i, \eta_i)\}_{i\in (I_{\rho},>)}$, then the following information
\begin{enumerate}
    \item [(a)] $A=\max\{ A_i \ | \ i \in I_{\rho}\}$ and the multiplicity $\# \{ i \in I_{\rho} \ | \ A_i= A \}$.
    \item [(b)] $\max\{ t \in \Z \ | \ \pi(\EE^{\rho} \cup sh^{-t}(\EE_{\rho})) \neq 0 \}$.
\end{enumerate}
can be recovered from the $L$-data of $\pi(\EE)$. To be precise, the $L$-data of $\pi(\EE)$ directly gives (a) (see Theorem \ref{thm max A}), but only an upper bound for (b) (see Theorem \ref{thm min B+l}). In particular, $A$ is independent of the choice of $\EE$ which gives the same representation.

\begin{thm} \label{thm max A}
 Suppose $\EE\in \Rep$, $\EE_{\rho}= \{([A_i,B_i]_{\rho},l_i,\eta_i)\}_{i\in (I_{\rho},>)}$ and denote  $A= \max\{ A_i \ | \ i \in I_{\rho} \}$, $m= \# \{ i \in I_{\rho} \ | \ A_i=A \}$.
Then we have
\begin{enumerate}
    \item [$\oldbullet$] $A= \max\{x \in \R \ | \ \rho|\cdot|^{x} \in \Omega(\pi(\EE))_{\rho} \}$,
    \item [$\oldbullet$] $m= \text{the multiplicity of }\rho|\cdot|^{A} \text{ in }\Omega(\pi(\EE))_{\rho}$.
\end{enumerate}
\end{thm}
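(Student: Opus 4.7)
The plan is to derive both equalities directly from Lemma~\ref{lem $L$-data}, which gives the inclusion $\Omega(\pi(\EE))_\rho \subseteq \Omega(\EE_\rho)$ together with the symmetry of the difference multi-set $\Omega(\EE_\rho) \setminus \Omega(\pi(\EE))_\rho$ about $\rho|\cdot|^{-1/2}$. By definition, $\Omega(\EE_\rho) = \sum_{i \in I_\rho} [A_i,B_i]_\rho$, so the largest exponent of $\rho$ appearing in $\Omega(\EE_\rho)$ is exactly $A$, and the multiplicity of $\rho|\cdot|^A$ in $\Omega(\EE_\rho)$ is exactly $m$ (only the $m$ rows with $A_i = A$ contribute a copy, and each contributes one). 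Thus the inclusion alone supplies $\max\{x : \rho|\cdot|^x \in \Omega(\pi(\EE))_\rho\} \leq A$, and bounds the multiplicity of $\rho|\cdot|^A$ in $\Omega(\pi(\EE))_\rho$ from above by $m$.

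The content of the argument is to show that $\rho|\cdot|^A$ does not appear in the difference multi-set $\Omega(\EE_\rho) \setminus \Omega(\pi(\EE))_\rho$. By Lemma~\ref{lem $L$-data}(ii), the multiplicity of $\rho|\cdot|^A$ in the difference equals the multiplicity of $\rho|\cdot|^{-A-1}$, so it suffices to show that $\rho|\cdot|^{-A-1}$ does not occur in $\Omega(\EE_\rho)$ at all. The exponent $-A-1$ would occur in $[A_j, B_j]_\rho$ only if $B_j \leq -A-1 \leq A_j$. However, the standing condition $A_j + B_j \geq 0$ from the definition of an extended multi-segment, together with $A_j \leq A$, forces $B_j \geq -A_j \geq -A > -A-1$, contradicting $B_j \leq -A-1$. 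Hence $\rho|\cdot|^{-A-1}$ has multiplicity $0$ in $\Omega(\EE_\rho)$, and therefore so does $\rho|\cdot|^A$ in the difference.

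Consequently, $\rho|\cdot|^A$ has the same multiplicity $m$ in $\Omega(\pi(\EE))_\rho$ as in $\Omega(\EE_\rho)$; in particular $m \geq 1$, so $A$ is actually attained as an exponent in $\Omega(\pi(\EE))_\rho$, yielding both equalities of the theorem. The one point requiring attention is that Lemma~\ref{lem $L$-data} must be invoked in its full generality for $\EE \in \Rep$, rather than only for $\EE \in \Rep^{(P')}$ or for non-negative $\EE$, which is already built into its statement; after that, the entire argument reduces to the combinatorial inequality $A_j + B_j \geq 0$ and presents no serious obstacle.
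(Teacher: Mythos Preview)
Your proof is correct and follows essentially the same approach as the paper's: both arguments invoke Lemma~\ref{lem $L$-data} for the inclusion and the symmetry of $\Omega(\EE_\rho)\setminus\Omega(\pi(\EE))_\rho$ about $\rho|\cdot|^{-1/2}$, and then rule out $\rho|\cdot|^{-A-1}$ from $\Omega(\EE_\rho)$ via $A_j+B_j\geq 0$. Your write-up is slightly more detailed in explaining why the maximum and the multiplicity follow, but the logic is identical.
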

\begin{proof}
Clearly, $ \rho|\cdot|^{A}$ appears in $\Omega(\EE_{\rho})$ with multiplicity $m$. On the other hand, there is no $\rho|\cdot|^{-A-1}$ in the multi-set $\Omega(\EE_{\rho})$ since 
\[ A_i+ B_i \geq 0 \Rightarrow -A-1\leq  -A_i-1 < -A_i \leq B_i.  \]
Thus, Lemma \ref{lem $L$-data}(ii) implies $\rho|\cdot|^{A}$ is not in the multi-set $\Omega(\EE_{\rho})\setminus \Omega(\pi(\EE))_{\rho}.$ It follows that the multiplicity of $\rho|\cdot|^{A}$ in $\Omega(\pi(\EE))$ is exactly $m$.
This completes the proof of the theorem. 
\end{proof}

Next, we give a characterization of $\max\{ t \in \Z \ | \ \pi(sh^{-t}(\EE)) \neq 0 \}$. Theorem \ref{thm shift left} below shows that if $(A_i-1)+ (B_i-1)\geq 0$ for all $i \in I_{\rho}$, then $\EE^{\rho} \cup sh^{-1}(\EE_{\rho}) \in \Rep$ if and only if $D_{\Omega(\EE_{\rho})} (\pi(\EE)) \neq 0$.
We thank Atobe for helpful communications on this theorem.
On the other hand, Corollary \ref{cor uniform derivative} translates the non-vanishing of $D_{\Omega(\EE_{\rho})}(\pi(\EE))$ into a simple combinatorial invariant of the $L$-data of $\pi(\EE)$. Combining them together, we have the following. 

\begin{thm}\label{thm min B+l}
Suppose $\EE\in \Rep$. Write $ \EE_{\rho}= \{([A_i,B_i]_{\rho},l_i,\eta_i)\}_{i \in (I_{\rho}, >)}$, and 
\[ \pi(\EE)= L\left( \Delta_{\rho_1}[x_1,-y_1],\dots,\Delta_{\rho_p}[x_p,-y_p]; \pi\left(\sum_{j=p+1}^{m}\rho_j\otimes S_{2z_j+1},\varepsilon \right) \right). \]
Then  
\[T:=\max\{ t \in \Z \ | \ \pi(\EE^{\rho} \cup sh^{-t}(\EE_{\rho})) \neq 0 \}\] 
is zero if $\Omega(\EE_{\rho}) \supsetneq \Omega(\pi(\EE))_{\rho}$. Otherwise, it is given by the maximum of $T_1 \cap (T_2 \cup T_3)$, where
\begin{align*}
   T_1= &\{t \in \Z \ | \ \forall i \in I_{\rho},\   (A_i-t)+(B_i-t) \geq 0 \}\\
    T_2=&\{ t \in \Z \ | \ \forall 1 \leq i\leq p \text{ and } \rho_i \cong \rho ,\  (x_i- (t-1))+ (y_i- (t-1)) \geq 1   \}\\
     T_3=&  \{ t \in \Z \ | \ \forall p+1 \leq j\leq m \text{ and } \rho_j \cong \rho,\  z_j-t\geq -1/2,\\
    & \text{ and }\varepsilon(\rho \otimes S_{2z_{j}+1})=1 \text{ if }  z_j-t=-1/2   \}.
\end{align*}
Note that $T_2,T_3$ only depend on $\pi(\EE)$, $T_1$ only depends on $ \psi_{\EE}$, and the condition $\Omega(\EE_{\rho}) = \Omega(\pi(\EE))_{\rho}$ only depends on $\pi(\EE)$ and $\psi_{\EE}$. In conclusion, $T$ is determined by $\pi(\EE)$ and $\psi_{\EE}$.
\end{thm}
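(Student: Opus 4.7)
The plan is to translate the non-vanishing of $\pi(\EE^\rho \cup sh^{-t}(\EE_\rho))$ into a non-vanishing condition on iterated derivatives of $\pi(\EE)$, and then to read off the three bounds from how the $L$-data transforms under these derivatives. First I would handle the degenerate case $\Omega(\EE_\rho) \supsetneq \Omega(\pi(\EE))_\rho$: by Corollary \ref{cor uniform derivative} this forces $D_{\Omega(\EE_\rho)}(\pi(\EE)) = 0$, while Proposition \ref{prop uniform} applied to $\EE^\rho \cup sh^{-1}(\EE_\rho)$ would (if the latter were nonzero) exhibit a nonzero multiple of $\pi(\EE^\rho \cup sh^{-1}(\EE_\rho))$ as exactly this derivative. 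Hence $\pi(\EE^\rho \cup sh^{-1}(\EE_\rho)) = 0$ and the maximum is $0$, as claimed.

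In the remaining case $\Omega(\EE_\rho) = \Omega(\pi(\EE))_\rho$, set $\EE^{(s)} := \EE^\rho \cup sh^{-s}(\EE_\rho)$. I plan to iterate Proposition \ref{prop uniform}, together with its converse direction (the content of Theorem \ref{thm shift left}), to obtain that for each $s \geq 0$, $\pi(\EE^{(s+1)}) \neq 0$ if and only if $sh^{-s-1}(\EE_\rho)$ is a valid extended multi-segment and $D_{\Omega(sh^{-s}(\EE_\rho))}(\pi(\EE^{(s)})) \neq 0$, and in that case the two sides agree up to a multiplicity. The validity of $sh^{-s-1}(\EE_\rho)$ is exactly condition 1 at $t = s+1$. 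For the derivative non-vanishing I would apply Corollary \ref{cor uniform derivative} at each step and track the $L$-data: each segment $\Delta_\rho[x_i, -y_i]$ transforms successively to $\Delta_\rho[x_i - s, -(y_i - s)]$ and each summand $\rho \otimes S_{2z_j+1}$ to $\rho \otimes S_{2(z_j - s) + 1}$, while the character value on the corresponding summand is preserved.

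Invoking Corollary \ref{cor uniform derivative} at the step $s \to s+1$, the derivative fails exactly when a segment in the $L$-data of $\pi(\EE^{(s)})$ has collapsed to $\Delta_\rho[\alpha, \alpha+1]$ (i.e.\ $x_i + y_i = 2s + 1$, the failure of condition 2 at $t = s+1$), or a summand has become $\rho \otimes S_1$ (i.e.\ $z_j = s$, violating $z_j - t \geq -1/2$ at $t = s+1$), or a summand has become $\rho \otimes S_2$ with character value $-1$ (the equality case in condition 3 at the final step $s = t - 1$). Running the iteration from $s = 0$ through $s = t - 1$, all three conditions must hold simultaneously, giving the stated description of the maximum. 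The main obstacle in this plan is the rigorous justification of the converse direction of Proposition \ref{prop uniform}, which underlies the iteration; I would rely on Theorem \ref{thm shift left} for this, as it encapsulates exactly the equivalence between non-vanishing of $\pi(\EE^\rho \cup sh^{-1}(\EE_\rho))$ and non-vanishing of the associated derivative on $\pi(\EE)$.
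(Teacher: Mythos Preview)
Your proposal is correct and follows essentially the same approach as the paper. The paper's proof is just the short paragraph preceding the theorem statement: it says that Theorem~\ref{thm shift left} gives the equivalence between non-vanishing of $\pi(\EE^\rho \cup sh^{-1}(\EE_\rho))$ and non-vanishing of $D_{\Omega(\EE_\rho)}(\pi(\EE))$ (under the validity hypothesis on the $A_i+B_i$), and then Corollary~\ref{cor uniform derivative} converts the latter into the listed $L$-data constraints; your proposal is a more detailed unpacking of precisely this combination, including the iteration and the explicit tracking of how each of the three conditions arises from the three bullets of Corollary~\ref{cor uniform derivative}.
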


We work towards proving Theorem \ref{thm shift left} below. First, we establish the following lemma.

\begin{lemma}\label{lem Lapid}
For $y \geq x+1$, the parabolic induction
\[ soc(Z_{\rho}[x+1,y]\times \rho|\cdot|^{x}) \times \rho|\cdot|^{x}\]
is irreducible, and hence the product commutes.
\end{lemma}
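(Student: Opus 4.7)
Let $\pi := \mathrm{soc}(Z_\rho[x+1, y] \times \rho|\cdot|^x)$. Since the segments $[x+1, y]_\rho$ and $[x, x]_\rho$ are linked (neither contains the other, and their union $[x, y]_\rho$ is a segment), the product $Z_\rho[x+1, y] \times \rho|\cdot|^x$ has length $2$ by Zelevinsky's classification, with socle $\pi$ and cosocle $Z_\rho[x, y]$. My plan is to identify $\pi \times \rho|\cdot|^x$ as a specific constituent of a larger length-$2$ module, using the left $\rho|\cdot|^x$-derivative as the discriminating invariant. Irreducibility of $\pi \times \rho|\cdot|^x$ will then automatically force the commutativity $\pi \times \rho|\cdot|^x \cong \rho|\cdot|^x \times \pi$, since parabolic inductions from opposite parabolics share their Jordan-H\"older content.

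I will first establish that $\pi$ is $\rho|\cdot|^x$-reduced. Applying the Leibniz rule of Lemma \ref{lem Leibniz rule} together with $D_{\rho|\cdot|^x}(Z_\rho[x+1, y]) = 0$ and $D_{\rho|\cdot|^x}(\rho|\cdot|^x) = \mathbf{1}$ gives $D_{\rho|\cdot|^x}(Z_\rho[x+1, y] \times \rho|\cdot|^x) = Z_\rho[x+1, y]$, while Lemma \ref{lem Leibniz rule}(2) independently yields $D_{\rho|\cdot|^x}(Z_\rho[x, y]) = Z_\rho[x+1, y]$. Comparing these in the Grothendieck group forces $D_{\rho|\cdot|^x}(\pi) = 0$, whence one more application of Leibniz produces the key identity
\[D_{\rho|\cdot|^x}(\pi \times \rho|\cdot|^x) = \pi.\]

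Next, I will analyze the ambient representation $\Pi := \rho|\cdot|^x \times \rho|\cdot|^x \times Z_\rho[x+1, y]$ via Zelevinsky's combinatorics. Starting from the multi-segment $\{[x, x], [x, x], [x+1, y]\}$, the only available elementary union-intersection move merges one copy of $[x, x]$ with $[x+1, y]$ to produce $\{[x, x], [x, y]\}$, after which no further moves are possible since $[x, x] \subset [x, y]$. Hence $\Pi$ is multiplicity-free of length $2$, with constituents $\tau_1 := \langle \{[x, x], [x, x], [x+1, y]\} \rangle$ and $\tau_2 := \langle \{[x, x], [x, y]\} \rangle = \rho|\cdot|^x \times Z_\rho[x, y]$ (the last product being irreducible as the segments are nested). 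A direct Leibniz computation then gives $D_{\rho|\cdot|^x}(\Pi) = 2\pi + 2 Z_\rho[x, y]$ and $D_{\rho|\cdot|^x}(\tau_2) = Z_\rho[x, y] + \rho|\cdot|^x \times Z_\rho[x+1, y] = \pi + 2 Z_\rho[x, y]$, from which subtraction yields $D_{\rho|\cdot|^x}(\tau_1) = \pi$.

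Finally, the embedding $\pi \times \rho|\cdot|^x \hookrightarrow Z_\rho[x+1, y] \times \rho|\cdot|^x \times \rho|\cdot|^x$ — whose target shares its Jordan-H\"older content $\{\tau_1, \tau_2\}$ with $\Pi$ — restricts the composition series of $\pi \times \rho|\cdot|^x$ to a sub-multiset of $\{\tau_1, \tau_2\}$. The three candidates $\{\tau_1\}$, $\{\tau_2\}$, $\{\tau_1, \tau_2\}$ produce derivatives $\pi$, $\pi + 2 Z_\rho[x, y]$, and $2\pi + 2 Z_\rho[x, y]$ respectively; only the first matches the identity $D_{\rho|\cdot|^x}(\pi \times \rho|\cdot|^x) = \pi$ established above, forcing $\pi \times \rho|\cdot|^x = \tau_1$ and hence irreducibility. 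The main obstacle is justifying the length of $\Pi$ as exactly $2$; this rests on Zelevinsky's theorem that the Jordan-H\"older constituents of a product of segment representations are enumerated (with multiplicity one) by the multi-segments reachable via successive union-intersection moves. Once that is invoked as a black box, the rest is a transparent matching of derivatives through the Leibniz rule.
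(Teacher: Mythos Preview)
Your proof is correct and takes a genuinely different route from the paper's. The paper invokes \cite[Proposition 4.1(4)]{LM16}: it computes the Zelevinsky dual $\mathfrak{m}=(\mathfrak{m}^t)^t$ of $\mathfrak{m}^t=[x+1,y]_\rho+[x,x]_\rho$ via the M{\oe}glin--Waldspurger algorithm, then verifies the combinatorial hypotheses of the Lapid--M\'inguez irreducibility criterion for $L(\mathfrak{m})\times Z([x,x]_\rho)$. Your argument instead stays entirely within Zelevinsky's classical theory and the Leibniz rule already recorded in the paper (Lemma \ref{lem Leibniz rule}): you embed $\pi\times\rho|\cdot|^x$ into a product whose length is pinned down as $2$ by Zelevinsky's composition-series theorem, then separate the two constituents by their $\rho|\cdot|^x$-derivatives. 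The paper's approach is shorter because it outsources the work to \cite{LM16}, but yours is more self-contained and arguably more transparent for this paper's readership, since it uses exactly the derivative calculus that the surrounding sections rely on. The one external input you need is Zelevinsky's multiplicity-one theorem for products of segments; that is as standard as the Lapid--M\'inguez result, so neither proof is more elementary in a strict sense, but yours avoids introducing an additional reference and an additional notational apparatus ($\mathfrak{m}_{\leq_e e(\Delta)}$, etc.) that appear nowhere else in the paper.
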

\begin{proof}
This is a consequence of \cite[Proposition 4.1(4)]{LM16} and we adopt their notation for our proof. We set $\Delta=[x,x]_{\rho}$, $\mathfrak{m}^t=[x+1,y]_{\rho}+ [x,x]_{\rho}$, and $\sigma=Z(\Delta)$. Then $soc(Z_{\rho}[x+1,y]\times \rho|\cdot|^{x}) = Z(\mathfrak{m}^t)=L(\mathfrak{m})$, where \[\mathfrak{m}= (\mathfrak{m}^t)^t=[y,y]_{\rho}+[y-1,y-1]_{\rho}+\cdots+[x+2,x+2]_{\rho}+ [ x,x+1]_{\rho}\]
via the Mœglin–Waldspurger algorithm (see \cite{MW86}). It follows that we have $\mathfrak{m}_1= \mathfrak{m}_{\leq_e e(\Delta) }= 0, \mathfrak{m}_2= \mathfrak{m}_{ \geq_{b} b(\Delta)}= \mathfrak{m}$. We verify directly that
\begin{align*}
    (\mathfrak{m}_1^t+\Delta)^t&=\mathfrak{m}_1+\Delta^t,\\
    (\mathfrak{m}_2^t+\Delta)^t&= \mathfrak{m}_2+\Delta^t.
\end{align*}
By \cite[Proposition 4.1(4)]{LM16}, $ L(\mathfrak{m}) \times Z(\Delta)$ is irreducible, and the product commutes. This completes the proof of the lemma.
\end{proof}

Now we prove the following theorem.

\begin{thm} \label{thm shift left}
Let $\EE\in \Rep^{(P')}$ and write $\EE_{\rho}=\{ ([A_i,B_i]_{\rho},l_i,\eta_i) \}_{i \in (I_{\rho},>)}$. Suppose $ (A_i-1)+(B_i-1) \geq 0$ for all $i \in I_{\rho}$ (i.e. $\EE^{\rho} \cup sh^{-1}(\EE_{\rho})$ is still an extended multi-segment). Then $ D_{\Omega(\EE_{\rho})}(\pi(\EE))=\pi( \EE^{\rho} \cup sh^{-1}(\EE_{\rho}))$ (possibly vanishing) up to a multiplicity.
\end{thm}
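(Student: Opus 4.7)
The approach is to reduce the claim to Proposition \ref{prop uniform} applied to $\tilde\EE := \EE^\rho \cup sh^{-1}(\EE_\rho)$. The hypothesis $(A_i-1)+(B_i-1) \geq 0$ ensures $A_i' + B_i' \geq 0$ in each row after the shift, and since uniform shift by $-1$ preserves $b_i$, $l_i$, $\eta_i$ and the relative orderings of the $B_i$'s, the extended multi-segment conditions of Definition \ref{def multi-segment} (in particular the sign condition \eqref{eq sign condition} and the $(P')$ admissibility inherited from $\EE$) all hold for $\tilde\EE$. The only remaining question is whether $\pi(\tilde\EE) \neq 0$.

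Case 1 ($\tilde\EE \in \Rep^{(P')}$): apply Proposition \ref{prop uniform} with the extended multi-segment there taken to be $\tilde\EE$. Because $sh^1(\tilde\EE_\rho) = sh^1(sh^{-1}(\EE_\rho)) = \EE_\rho$, one has $\tilde\EE^\rho \cup sh^1(\tilde\EE_\rho) = \EE$ and $\Omega(sh^1(\tilde\EE_\rho)) = \Omega(\EE_\rho)$. The proposition then gives $D_{\Omega(\EE_\rho)}(\pi(\EE)) = \pi(\tilde\EE)$ up to a multiplicity, which is exactly the desired identity.

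Case 2 ($\pi(\tilde\EE) = 0$): we must show the left-hand side also vanishes. The non-vanishing conditions in Proposition \ref{prop positive non-vanishing}(i) depend only on the differences $A_i-A_j$, $B_i-B_j$ and on $l_i, \eta_i$, all of which are preserved under uniform shift, so $(\sum_{\rho'} sh^d_{\rho'})(\tilde\EE)$ lies in $\Rep$ for $d$ sufficiently large. By Theorem \ref{thm non-vanishing}(i) the vanishing of $\pi(\tilde\EE)$ must therefore come from a failure of condition $(*)$; combined with $(*)$ holding for $\EE$ itself, some row $i \in I_\rho$ sits exactly at the boundary, i.e.\ $B_i + l_i$ equals the relevant threshold. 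A direct computation from Definition \ref{def rep of segment} shows that such a boundary row contributes a tempered summand $\rho \otimes S_1$ or $\rho \otimes S_2$ to the $L$-data of $\pi(\EE)$, and that the induced character at this summand, after descent to $\mathcal{S}_{\phi_\EE}$, violates condition (c) of Corollary \ref{cor uniform derivative}. That corollary then forces $D_{\Omega(\EE_\rho)}(\pi(\EE)) = 0$.

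The main obstacle lies in the boundary analysis of Case 2, particularly in the half-integer case. The threshold in $(*)$ depends on the parity of $\alpha_i = \sum_{j<i} a_j$ and on the sign of $\eta_i$, and translating the boundary equality $B_i + l_i = \pm 1/2$ into a genuine violation of the ``character at $\rho \otimes S_2$ equals $+1$'' condition requires careful bookkeeping of these parities together with the character conventions of Definition \ref{def rep of segment}. When several rows contribute to the same tempered summand one must further verify that the combined characters descend coherently to $\mathcal{S}_{\phi_\EE}$ with the wrong sign; this combinatorial matching is the delicate step of the argument.
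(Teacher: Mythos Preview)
Your Case 1 is correct and agrees with the paper: once $\pi(\tilde\EE)\neq 0$, Proposition \ref{prop uniform} applied to $\tilde\EE$ gives the identity immediately. The paper also records this as the ``forward direction'' of the key equivalence \eqref{eq thm 4.13}.

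The gap is in Case 2. You correctly observe that, since uniform shift preserves the adjacent-pair conditions of Proposition \ref{prop positive non-vanishing}(i), the vanishing of $\pi(\tilde\EE)$ must come from a failure of $(\ast)$, i.e.\ some row $i$ has $B_i+l_i$ at the threshold. But the next step---that this row forces a summand $\rho\otimes S_1$ or $\rho\otimes S_2$ (with the wrong character) in the tempered part of the $L$-data of $\pi(\EE)$---is not a direct computation from Definition \ref{def rep of segment}. That definition describes $\pi(\EE)$ as a sequence of derivatives of a socle, and the $L$-data of $\pi(\EE)$ is not given by simply reading off row-by-row contributions. All that is available beforehand is Lemma \ref{lem $L$-data}, which gives only the inclusion $\Omega(\EE_\rho)\supseteq\Omega(\pi(\EE))_\rho$; the equality (which you would need to pin down a specific summand) is asserted there precisely under the hypothesis $\pi(\tilde\EE)\neq 0$, the negation of your Case 2 assumption. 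So the bridge from ``$(\ast)$ fails for $\tilde\EE$'' to ``a condition of Corollary \ref{cor uniform derivative} fails for $\pi(\EE)$'' is exactly the content of the theorem, not an input to it. Your own remarks about the half-integer parity bookkeeping and about several rows contributing to the same summand acknowledge this, but they describe the difficulty rather than resolve it.

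The paper avoids this circularity by proving the contrapositive direction constructively: assuming $D_{\Omega(\EE_\rho)}(\pi(\EE))\neq 0$, it runs an induction on $|I_\rho|$ via the shifted family $\EE_t=\EE^\rho\cup sh_n^t(\EE_\rho)$ and builds an explicit injection
\[
\pi(\EE_1)\hookrightarrow \bigtimes_i (\rho|\cdot|^{a_i})^{r_i'}\times Z_\rho[B_n,A_n]\rtimes \pi_s
\]
through a four-step segment manipulation (moving a $Z_\rho$-factor past the $(\rho|\cdot|^{a_i})^{r_i}$'s, using linkedness and Corollary \ref{cor uniform derivative} at each stage to rule out the wrong subquotient). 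Frobenius reciprocity then yields $D_{\Omega((\EE_1)_\rho)}(\pi(\EE_1))\neq 0$, and the induction closes. In short, the paper works on the representation side and never needs to read the tempered $L$-data of $\pi(\EE)$ off from $\EE$; your proposal tries to do the latter, and that is where it breaks.
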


\begin{proof}
We begin by showing
\begin{align}\label{eq thm 4.13}
     \pi( \EE^{\rho} \cup sh^{-1}(\EE_{\rho})) \neq 0 \Longleftrightarrow D_{\Omega(\EE_{\rho})}(\pi(\EE)) \neq 0.
\end{align}
This is sufficient to prove the theorem. Indeed, if $\pi( \EE^{\rho} \cup sh^{-1}(\EE_{\rho})) \neq 0$, then Proposition \ref{prop uniform} gives the theorem. If $ \pi( \EE^{\rho} \cup sh^{-1}(\EE_{\rho}))=0$, then \eqref{eq thm 4.13} shows $ D_{\Omega(\EE_{\rho})}(\pi(\EE))=0$ also.

Assume that $\pi( \EE^{\rho} \cup sh^{-1}(\EE_{\rho})) \neq 0.$ From Proposition \ref{prop uniform} we have that
 \[ D_{\Omega(\EE_{\rho})}(\pi(\EE))= \pi(\EE^{\rho} \cup sh^{-1}(\EE_{\rho})) \]
 up to a multiplicity and hence $D_{\Omega(\EE_{\rho})}(\pi(\EE)) \neq 0$.
 
 Now we prove the backward direction of \eqref{eq thm 4.13}. To make the picture clear, we identify $I_{\rho}=\{1, \dots ,n\}$ with $1< \cdots < n$. Now we apply induction on $n$. The case $n=1$ follows from the definition.
 
 Denote $ \EE_{t}= \EE^{\rho} \cup sh_n^{t}(\EE_{\rho})$. When $t \gg 0$, the conclusion for $\EE_{t}$ follows from the induction hypothesis for $n-1$, so it suffices to show that for any $t \in \Z_{\geq 0}$, the conclusion for $\EE_{t+1}$ implies that of $\EE_{t}$. Replacing $\EE$ by $\EE_t$, we may assume $t=0$.
 
Write the multi-set $\Omega(\EE_{\rho})$ as 
\[ \Omega(\EE_{\rho})= \{ (\rho|\cdot|^{a_1})^{r_1},\dots,(\rho|\cdot|^{a_t})^{r_t} \} \]
where $a_1 < \cdots< a_t$ and $r_i$ denotes the multiplicity of $\rho|\cdot|^{a_i}$ in $\Omega(\EE)$. Assume $ a_p= B_n, a_q= A_n$. Then we know
\[ \Omega((\EE_1)_{\rho})= \{ (\rho|\cdot|^{a_1})^{r_1'},\dots,(\rho|\cdot|^{a_s})^{r_s'} \} \]
where 
\[ r_i'=\begin{cases}
r_i-1 &\text{ if } i=p,\\
r_i+1 & \text{ if }i=q+1,\\
r_i & \text{otherwise.}
\end{cases} \]
Define irreducible representation $\pi_i$ recursively by 
\[ \begin{cases}
\pi_0= \pi(\EE),\\
\pi_{i-1} \hookrightarrow (\rho|\cdot|^{a_i})^{r_i} \rtimes \pi_i . 
\end{cases} \]
This is well-defined (see the proofs of Proposition \ref{prop uniform} and Corollary \ref{cor uniform derivative}). Our strategy is to show 
\begin{align}\label{eq uniform shift}
    \pi(\EE_1) \hookrightarrow \times_{i=1}^s (\rho|\cdot|^{a_i})^{r_i'} \times Z_{\rho}[B_n,A_n] \rtimes \pi_s.
\end{align} 
If this is done, then Frobenius reciprocity says $D_{\Omega((\EE_1)_{\rho})}(\pi(\EE_1)) \neq 0$. The conclusion for $\EE_1$ implies $\pi(\EE^{\rho} \cup sh^{-1}((\EE_1)_{\rho})) \neq 0$. Moreover, Proposition \ref{prop uniform} shows $D_{\Omega((\EE_1)_{\rho})}(\pi(\EE_1))$ is a multiple of $\pi(\EE^{\rho} \cup sh^{-1}((\EE_1)_{\rho}))$. Now  \eqref{eq uniform shift} implies 
\[ D_{\rho|\cdot|^{ B_n,\dots,A_n}} \circ  D_{\Omega((\EE_1)_{\rho})}( \pi(\EE_1) )\neq 0,\]
and hence 
\[ \pi(\EE^{\rho} \cup sh^{-1}(\EE_{\rho}))=D_{\rho|\cdot|^{ B_n,\dots,A_n}} ( \pi(\EE^{\rho} \cup sh^{-1}((\EE_1)_{\rho}))) \neq 0, \]
and we are done.

To prove (\ref{eq uniform shift}), we proceed in four steps.
\begin{enumerate}
    \item [\textbf{Step 1:}] For $j \leq p-1$ ($a_j < B_n$), we show that
    \[ \pi(\EE_1) \hookrightarrow \times_{i=1}^j (\rho|\cdot|^{a_i})^{r_i} \times Z_{\rho}[B_n+1,A_n+1] \rtimes \pi_{j}.  \]
    
    We apply induction on $j$. When $j=0$, this is Lemma  \ref{lem shift}(ii). For $j >0$, it follows from the fact that the segments $[a_j,a_j]_{\rho}$ and $[A_n+1,B_n+1]_{\rho}$ are not linked, so the product of $\rho|\cdot|^{a_j}=Z_{\rho}[a_j,a_j]$ and $Z_{\rho}[B_n+1,A_n+1]$ commute. To be explicit,
    \begin{align*}
        \pi(\EE_1) & \hookrightarrow \times_{i=1}^{j-1} (\rho|\cdot|^{a_i})^{r_i} \times Z_{\rho}[B_n+1,A_n+1] \rtimes \pi_{j-1}\\
       &\hookrightarrow \times_{i=1}^{j-1} (\rho|\cdot|^{a_i})^{r_i} \times Z_{\rho}[B_n+1,A_n+1] \times (\rho|\cdot|^{a_j})^{r_j} \rtimes \pi_{j}\\
       &= \times_{i=1}^{j} (\rho|\cdot|^{a_i})^{r_i} \times Z_{\rho}[B_n+1,A_n+1] \rtimes \pi_{j}. 
    \end{align*}
  \item[\textbf{Step 2:}] For $p \leq j \leq q$ ($a_j= B_n,\dots,A_n$), we  show that
  \[ \pi(\EE_{1}) \hookrightarrow \times_{i=1}^j (\rho|\cdot|^{a_i})^{r_i'} \times Z_{\rho}[a_j+1, A_n+1] \times Z_{\rho}[B_n, a_j] \rtimes \pi_j. \]
  
  For $j=p$, we have 
  \[ \pi(\EE_1) \hookrightarrow \times_{i=1}^{j-1} (\rho|\cdot|^{a_i})^{r_i} \times Z_{\rho}[B_n+1,A_n+1] \times (\rho|\cdot|^{B_n})^{r_j} \rtimes \pi_{j}. \]
  We know the two irreducible constituents of $Z_{\rho}[B_n+1,A_n+1] \times \rho|\cdot|^{B_n}$ are $Z_{\rho}[B_n,A_n+1]$ and $soc(Z_{\rho}[B_n+1,A_n+1] \times \rho|\cdot|^{B_n})$, so $\pi(\EE_1)$ injects to one of 
  \begin{align*}
      \tau_1 := & \times_{i=1}^{j-1} (\rho|\cdot|^{a_i})^{r_i} 
       \times soc(Z_{\rho}[B_{n}+1, A_{n}+1] \times \rho|\cdot|^{B_{n}})\times (\rho|\cdot|^{B_{n}})^{r_j-1} \rtimes \pi_{j}, \\
       \tau_2:= & \times_{i=1}^{j-1} (\rho|\cdot|^{a_i})^{r_i} 
       \times Z_{\rho}[B_{n}, A_{n}+1] \times (\rho|\cdot|^{B_{n}})^{r_j-1} \rtimes \pi_{j}\\
       =&\times_{i=1}^{j-1} (\rho|\cdot|^{a_i})^{r_i} \times (\rho|\cdot|^{B_{n}})^{r_j-1}
       \times Z_{\rho}[B_{n}, A_{n}+1]  \rtimes \pi_{j}.
  \end{align*}
However, if $\pi(\EE_1)$ injected into $\tau_2$, then
\[ D_{\rho|\cdot|^{a_j}}^{(r_j'+1)} \circ D_{\rho|\cdot|^{a_{j-1}}}^{(r_{j-1}')}\circ\cdots\circ D_{\rho|\cdot|^{a_{1}}}^{(r_1')}(\pi(\EE_1)) \neq 0 . \]
This contradicts to Corollary \ref{cor uniform derivative}. As a consequence $\pi(\EE_1)$ must inject into $\tau_1$. Then applying Lemma \ref{lem Lapid}, we obtain
\begin{align*}
    \pi(\EE_1) &\hookrightarrow \times_{i=1}^{j} (\rho|\cdot|^{a_i})^{r_i'} \times soc(Z_{\rho}[B_n+1,A_n+1] \times \rho|\cdot|^{B_n})  \rtimes \pi_{j}\\
    & \hookrightarrow \times_{i=1}^{j} (\rho|\cdot|^{a_i})^{r_i'} \times Z_{\rho}[B_n+1,A_n+1] \times Z_{\rho}[B_n,B_n]  \rtimes \pi_{j}.
\end{align*}
This finishes the case $j=p$. 

For $p<j \leq q $, we have 
\begin{align*}
    \pi(\EE_1) \hookrightarrow \times_{i=1}^{j-1} (\rho|\cdot|^{a_i})^{r_i'} \times Z_{\rho}[a_j,A_n+1] \times Z_{\rho}[B_n,a_j-1]\times (\rho|\cdot|^{a_j})^{r_j}  \rtimes \pi_{j}.
\end{align*}
Therefore, $\pi(\EE_1)$ injects into one of 
\begin{align*}
      \tau_1 := & \times_{i=1}^{j-1} (\rho|\cdot|^{a_i})^{r_i} \times (\rho|\cdot|^{a_j})^{r_j}
       \times Z_{\rho}[a_j, A_{n}+1] \times Z_{\rho}[B_n,a_j-1] \rtimes \pi_{j}, \\
      \tau_2 := & \times_{i=1}^{j-1} (\rho|\cdot|^{a_i})^{r_i} \times (\rho|\cdot|^{a_j})^{r_j-1}
       \times Z_{\rho}[a_j, A_{n}+1] \times Z_{\rho}[B_n,a_j] \rtimes \pi_{j} .
  \end{align*}
  However, if it injected into $\tau_1$, we would have
  \[ D_{\rho|\cdot|^{a_j}}^{(r_j'+1)} \circ D_{\rho|\cdot|^{a_{j-1}}}^{(r_{j-1}')}\circ\cdots\circ D_{\rho|\cdot|^{a_{1}}}^{(r_1')}(\pi(\EE_1)) \neq 0,  \]
  which again contradicts to Corollary \ref{cor uniform derivative}. Therefore, $\pi(\EE_1)$ must inject into $\tau_2$, and we have 
\begin{align*}
    \pi(\EE_1)  \hookrightarrow \times_{i=1}^{j} (\rho|\cdot|^{a_i})^{r_i'} \times Z_{\rho}[a_j+1,A_n+1] \times Z_{\rho}[B_n,a_j] \rtimes \pi_{j}.
\end{align*}
\item [\textbf{Step 3:}] For $j = q+1$ ($a_j=A_n+1$), we  show that
 \[ \pi(\EE_{1}) \hookrightarrow \times_{i=1}^j (\rho|\cdot|^{a_i})^{r_i'} \times Z_{\rho}[B_n, A_n] \rtimes \pi_j. \]
 We have 
 \begin{align*}
      \pi(\EE_1)  \hookrightarrow \times_{i=1}^{j-1} (\rho|\cdot|^{a_i})^{r_i'} \times \rho|\cdot|^{A_n+1}\times Z_{\rho}[B_n,A_n] \times (\rho|\cdot|^{A_n+1})^{r_j} \rtimes \pi_{j},
 \end{align*}
 so it injects into one of 
 \begin{align*}
     \tau_1&:= \times_{i=1}^{j-1} (\rho|\cdot|^{a_i})^{r_i'} \times (\rho|\cdot|^{A_n+1})^{r_j+1} \times Z_{\rho}[B_n,A_n] \rtimes \pi_{j},\\
     \tau_2&:= \times_{i=1}^{j-1} (\rho|\cdot|^{a_i})^{r_i'} \times (\rho|\cdot|^{A_n+1})^{r_j} \times Z_{\rho}[B_n,A_n+1] \rtimes \pi_{j}.
 \end{align*}
 It remains to show that 
 \[ D_{\rho|\cdot|^{a_j}}^{(r_j')} \circ D_{\rho|\cdot|^{a_{j-1}}}^{(r_{j-1}')}\circ\cdots\circ D_{\rho|\cdot|^{a_{1}}}^{(r_1')}(\pi(\EE_1)) \neq 0,  \]
 and hence $\pi(\EE_1)$ must inject into $\tau_1$. Indeed, the computation so far shows (see the proof of Proposition \ref{prop uniform})
 \[ \{\rho|\cdot|^x \in \Omega((\EE_1)_{\rho})\setminus \Omega(\pi(\EE_1))_{\rho} \ | \ x \leq A_n\} =\emptyset .\]
 Since $A_n \geq 0$, Lemma \ref{lem $L$-data}(ii) implies $\Omega((\EE_1)_{\rho})=\Omega(\pi(\EE_1))_{\rho}$. Then it suffices to show the second condition in Corollary \ref{cor uniform derivative}. That is, in the $L$-data of $\pi(\EE_1)$, there is no segment of the form $\Delta_{\rho} [-A_n-1, -A_n-1]$.
 
By definition, we have 
\[ \pi(\EE)= D_{\rho|\cdot|^{A_n+1}} \circ\cdots \circ D_{\rho|\cdot|^{B_n+1}}(\pi(\EE_1)), \]
and each derivative is highest. Denote $\sigma_{B_n}= \pi(\EE_1)$ and
\[ \sigma_i= D_{\rho|\cdot|^{i}} \circ\cdots\circ D_{\rho|\cdot|^{B_n+2}} \circ D_{\rho|\cdot|^{B_n+1}}(\pi(\EE_1)). \]
We show that the multiplicities of the segment of the form $\Delta_{\rho}[ -A_n-1, -A_n-1]$ in the $L$-data of $\pi(\EE_1)$ and $\pi(\EE)$ agree by keeping track of $ \#\Omega( \sigma_i)_{\rho}$.

From the algorithm for taking a nonzero derivative (\cite[Proposition 6.1, Theorem 7.1]{AM20}), for $i \neq 0$, we have $\#\Omega( \sigma_i)_{\rho} \leq \#\Omega(\sigma_{i-1})_{\rho}$. The equality holds if and only if the derivative $D_{\rho|\cdot|^{i}}$ does not remove the segment of the form $\Delta_{\rho}[-|i|,-|i|]$ or summand $\rho \otimes S_{2}$.

For $i=0$, we write the injection given by Langlands classification as  
\[ \sigma_{i-1} \hookrightarrow  \times_{j=1}^t \Delta_{\rho_j}[\alpha_j,\beta_j] \rtimes \pi(\phi,\varepsilon). \]
 By the algorithm of taking negative derivative, $\pi(\phi,\varepsilon)$ is the same as the tempered part in the $L$-data of $\pi(\EE_1)$. Then since (say $a_j=0$)  
\[ D_{\rho}^{(r_j')} \circ\cdots\circ D_{\rho|\cdot|^{a_1}}^{(r_1')}(\pi(\EE_1)) \neq 0, \]
the multiplicity of $ \rho \otimes S_{1}$ in $\phi$ is zero. Therefore, $\pi(\phi,\varepsilon)$ is $D_{\rho}$-reduced (see the second step of the proof of Proposition \ref{prop uniform}), and we have 
\[ D_{\rho}^{(1)} ( soc( \times_{j=1}^t \Delta_{\rho_j}[\alpha_j,\beta_j])) \neq 0. \]
As a consequence, the $L$-data of $\sigma_i$ is obtained from that of $\sigma$ by changing one of the $\alpha_j=0$ by $-1$ ($\rho_j \cong \rho$), and hence $ \#\Omega( \sigma_i)=\#\Omega( \sigma_{i-1})$. In summary, we have 
\begin{align}\label{eq 3.7}
    \# \Omega(\sigma_{B_n}) \geq \cdots\geq \# \Omega(\sigma_{A_n}) \geq \# \Omega(\sigma_{A_n+1}).
\end{align}

Now we draw the conclusion from above discussion. By Corollary \ref{cor uniform derivative}, we have
\[ \# \Omega(\pi(\EE))= \# \Omega(\EE)= \# \Omega(\EE_1) = \#\Omega(\pi(\EE_1)) , \]
so the inequalities in (\ref{eq 3.7}) are all equalities. This shows the multiplicity of the segment of the form $\Delta_{\rho}[-A_n-1,-A_n-1] $ in the $L$-data of $ \pi(\EE_1)$ is the same as that of $\pi(\EE)$, which is zero by Corollary \ref{cor uniform derivative} and the assumption that $D_{\Omega(\EE_{\rho})}(\pi(\EE))\neq 0$.

\item [\textbf{Step 4:}] For $j > q+1$ ($a_j>A_n+1$), we  show that
 \[ \pi(\EE_{1}) \hookrightarrow \times_{i=1}^j (\rho|\cdot|^{a_i})^{r_i'} \times Z_{\rho}[B_n, A_n] \rtimes \pi_j \]
 
 Indeed, since the segments $[a_j,a_j]_{\rho}$ and $[A_n,B_n]_{\rho}$ are not linked it follows from the same reason in step 1.
\end{enumerate}

This completes the proof of the Theorem \ref{thm shift left}.
\end{proof}

\section{Union-intersection}\label{sec union-intersection}

In this section, we give a generalization of the operator $ui_k$, which we denote by $ui_{i,j}$. We use it to define a preorder on the collection of extended multi-segments, and show that for each $\EE \in \Rep$, there exists a unique (up to row exchanges) minimal element $\EE^{min} \leq \EE$ with $\pi(\EE^{min}) \cong \pi(\EE)$. This minimal element carries rich derivative information of $\pi(\EE)$.

\subsection{Definition and well-definedness}
In this subsection, we give the definition of $ui_{i,j}$, and show that it is well-defined.
\begin{defn} \label{def ui}
Suppose $\EE$ is an extended multi-segment, and write
\[\EE_{\rho}=\{ ([A_i,B_i]_{\rho},l_i,\eta_i)\}_{i\in (I_{\rho,>})}.\] 
Given $i,j \in I_{\rho}$, we define $ui_{i,j}(\EE_{\rho})=\EE_{\rho}$ unless
\begin{enumerate}
    \item [1.] We have $ A_i< A_j$, $B_i <B_j$ and $(j,i,>')$ is an adjacent pair for some admissible order $>'$ on $I_{\rho}$. 
    \item [2.] $ui_i$ is applicable on $\EE_{\rho,>'}$
\end{enumerate}
In this case, we define $ui_{i,j}(\EE_{\rho}):=(ui_{i}(\EE_{\rho,>'}))_{>}$, so that the admissible order of $ui_{i,j}(\EE_{\rho})$ and $\EE_{\rho}$ are the same. (If the $ui_i$ is of type 3', then we delete the $j$-th row.) Finally, we define $ui_{i,j}(\EE)= \EE^{\rho} \cup ui_{i,j}(\EE_{\rho})$.

We say $ui_{i,j}$ is applicable on $\EE$ if $ui_{i,j}(\EE) \neq \EE$. Furthermore, we say that $ui_{i,j}$ is of type 1, 2, 3, or 3' if the operation $ui_i$ is of type 1, 2, 3, or 3', respectively, in Definition \ref{ui def}.
\end{defn}

We remark that $ui_k$ in Definition \ref{ui def} is the same as $ui_{k,k+1}$.

\begin{exmp}
Let $\rho$ be the trivial representation and $$\psi=\rho\otimes S_5 \otimes S_2+ \rho\otimes S_5 \otimes S_4 + \rho\otimes S_3 \otimes S_2$$ be a local Arthur parameter of good parity for $G_n=\SO_{37}(F).$ Let $I_\rho=\{1,2,3\}$ and $A_1=\half{3}$, $B_1=\half{1},$ $A_2=\half{7},$ $B_2=\half{1},$ $A_3=\half{5},$ and $B_3=\half{3}.$ Since $A_3>A_2$ and $B_3>B_2,$ there are only 3 admissible orders on $I_\rho$ which we denote by $1<_1 2 <_1 3,$ $1<_2 3 <_2 2,$ and $2<_3 1 <_3 3.$
Let
$$
\EE=
\bordermatrix{
 &\half{1}&\half{3}&\half{5}&\half{7} \cr
& \ominus& \oplus& & \cr
 & \lhd&  \oplus&\ominus&\rhd \cr
& &\lhd &   \rhd& \cr
}_{\rho}.
$$
We have $\pi(\EE)= L(\Delta_{\rho}[\half{1},\half{-5}],\Delta_{\rho}[\half{3},\half{-7}];\pi(\half{1}^-,\half{3}^+,\half{3}^+,\half{5}^-))\in\Pi_\psi.$ $ui_k$ is not applicable on $\EE$ for $k=1,2$. The only possible union-intersection is for the first and third row.
Exchanging the second and the third rows gives 
$$
R_2(\EE)=
\bordermatrix{
 &\half{1}&\half{3}&\half{5}&\half{7} \cr
& \ominus& \oplus& & \cr
 & &  \lhd&\rhd& \cr
& \lhd &\oplus & \ominus  & \rhd \cr
}_{\rho}.
$$
Then we apply $ui_1$ of type 1 to obtain
$$
(ui_1\circ R_2)(\EE)=R_2(\EE)=
\bordermatrix{
 &\half{1}&\half{3}&\half{5}&\half{7} \cr
& \ominus& \oplus& \ominus & \cr
 & &\ominus& & \cr
& \lhd &\oplus & \ominus  & \rhd \cr
}_{\rho}.
$$ 
Finally we exchange the second and the third rows to return to the original order. We have
$$
ui_{1,3}(\EE)=(R_2\circ ui_1\circ R_2)(\EE)=R_2(\EE)=
\bordermatrix{
 &\half{1}&\half{3}&\half{5}&\half{7} \cr
& \ominus& \oplus& \ominus & \cr
& \ominus &\oplus & \ominus  & \oplus \cr
&  &\oplus &   &  \cr
}_{\rho}.
$$
At each stage the representation is preserved and hence $\pi(u_{1,3}(\EE))\cong\pi(\EE).$ 
\end{exmp}

Suppose $\EE_{\rho}\in \Rep$ is positive, then as $ui_{i,j}$ is a composition of row exchanges and $ui$, Theorem \ref{thm row exchange} and Theorem \ref{thm ui} shows that we have 
\[ \pi(\EE) \cong \pi(ui_{i,j}(\EE))\neq 0.\]

However, it is not immediate from the definition that it preserves the representation in general since if $B_i<0$ for some $i$, applying row exchange may not preserve the conditions in Definition \ref{def rep of segment}. To verify $ui_{i,j}$ preserves the representation, the main issue is to check whether the condition ($\ast$) in Theorem \ref{thm non-vanishing}(i) is preserved. We use Theorem \ref{thm shift left} to achieve this.

\begin{prop} \label{prop nonzero of ui}
Suppose $\EE \in \Rep$ and $ui_{i,j}$ is applicable on $\EE$. Then 
\[ \pi(\EE) \cong \pi(ui_{i,j} (\EE)) \neq 0. \]
In particular, $ui_{i,j}$ doesn't depend on the choice of admissible order $>'$ in the definition.
\end{prop}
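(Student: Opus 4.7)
The plan is to reduce to the case where $\EE$ is positive, where the conclusion follows directly from Theorems \ref{thm row exchange} and \ref{thm ui}. The first step is to verify that $ui_{i,j}$ commutes with the uniform shift operator $sh_\rho^d := \sum_{k \in I_\rho} sh_k^d$, i.e., $sh_\rho^d(ui_{i,j}(\EE)) = ui_{i,j}(sh_\rho^d(\EE))$. This is a direct check from Definitions \ref{ui def} and \ref{def ui}: both the applicability conditions and the formulae giving the output $(l_k', \eta_k')$ depend only on differences of the $A_k, B_k$ and on the values $l_k, \eta_k$, all of which are invariant under $sh_\rho^d$.

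Next, choose $d \gg 0$ so that $\EE^{(d)} := \EE^\rho \cup sh_\rho^d(\EE_\rho)$ is positive; by Theorem \ref{thm non-vanishing}(i), $\EE^{(d)} \in \Rep^{(P')}$. On the positive object $\EE^{(d)}$, the operation $ui_{i,j}$ unfolds, by Definition \ref{def ui}, into a sequence of row exchanges $R_k$, one $ui_k$, and then row exchanges back to the original order; Theorems \ref{thm row exchange} and \ref{thm ui} then guarantee that each such step preserves the associated representation, since row exchanges preserve positivity and all intermediate objects consequently remain in $\Rep$. Hence $\pi(ui_{i,j}(\EE^{(d)})) \cong \pi(\EE^{(d)}) \neq 0$ and $ui_{i,j}(\EE^{(d)}) \in \Rep^{(P')}$. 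Combined with the commutation in Step 1, this reads $sh_\rho^d(ui_{i,j}(\EE)) \in \Rep^{(P')}$ and $\pi(sh_\rho^d(ui_{i,j}(\EE))) \cong \pi(sh_\rho^d(\EE))$.

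To descend from $sh_\rho^d(\EE)$ back to $\EE$, I would invoke Theorem \ref{thm non-vanishing}(i) applied to $ui_{i,j}(\EE)$: two conditions need to be checked, namely (a) that the admissible order of $ui_{i,j}(\EE)$ satisfies $(P')$ where required, and (b) that condition $(\ast)$ holds. Condition (a) is immediate: the admissible order of $ui_{i,j}(\EE)$ is inherited from that of $\EE$ and the multi-set of $B$-values is unchanged (the two segments are merely repartitioned as union and intersection). The hard part will be $(\ast)$; since only the two rows indexed by $i$ and $j$ are modified, one must verify $(\ast)$ for these two rows in each of the four types of $ui_{i,j}$ in Definition \ref{ui def}, tracking how the prescribed transformation of $(l_k, \eta_k)$ interacts with the sign–parity constraints in $(\ast)$, using as input that $\EE$ itself satisfies $(\ast)$. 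Once $(\ast)$ is established, Theorem \ref{thm non-vanishing}(i) expresses $\pi(ui_{i,j}(\EE))$ by a formula that depends solely on the $L$-data of $\pi(sh_\rho^d(ui_{i,j}(\EE)))$ and on $d$; since this $L$-data coincides with that of $\pi(sh_\rho^d(\EE))$, the same formula recovers $\pi(\EE)$, yielding $\pi(ui_{i,j}(\EE)) \cong \pi(\EE)$.

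Finally, independence of $>'$ follows from Lemma \ref{lem Moeglin}: the support of $ui_{i,j}(\EE)$, as an ordered multi-set, is determined by the union–intersection operation applied to $\{[A_i,B_i]_\rho, [A_j,B_j]_\rho\}$ (or by the single union in type 3'), which is intrinsic to $(i,j,\EE)$ and independent of $>'$; together with $\pi(ui_{i,j}(\EE)) \cong \pi(\EE) \neq 0$, this forces $ui_{i,j}(\EE)$ to be uniquely determined.
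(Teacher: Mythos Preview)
Your overall plan---shift to the positive case, where the result follows from Theorems~\ref{thm row exchange} and~\ref{thm ui}, then descend via Theorem~\ref{thm non-vanishing}(i)---matches the paper's. The divergence is in the descent. You propose to verify condition~$(\ast)$ for $ui_{i,j}(\EE)$ directly and leave this as ``the hard part.'' The paper instead observes that $\Omega(ui_{i,j}(\EE))=\Omega(\EE)$ and descends one shift at a time: from $\pi(sh^{t-d+1}(ui_{i,j}(\EE)))\cong\pi(sh^{t-d+1}(\EE))$ (induction hypothesis) together with $\pi(sh^{t-d}(\EE))\neq 0$ (since $\EE\in\Rep$), Proposition~\ref{prop uniform} makes the relevant derivative $D_{\Omega}$ nonzero, and Theorem~\ref{thm shift left} then gives $\pi(sh^{t-d}(ui_{i,j}(\EE)))\neq 0$. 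No inspection of $(\ast)$ is ever needed.

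Your proposed direct check also rests on an inaccurate premise. The claim that ``only the two rows indexed by $i$ and $j$ are modified'' is not literally true: by definition $ui_{i,j}$ is the composite of row exchanges to an auxiliary order $>'$, a single $ui_i$, and row exchanges back to $>$; since the support of row $i$ changes under $ui_i$, the return exchanges are governed by different parameters than the outgoing ones, and intermediate rows can emerge with flipped $\eta_k$. One can verify that this flip is exactly compensated by the change in $\alpha_k$, so $(\ast)$ survives for those rows, but that already requires an argument you did not give. The remaining check for rows $i$ and $j$ themselves is a genuine case analysis through Definitions~\ref{def row exchange} and~\ref{ui def}, compounded by the intervening row exchanges; it can presumably be carried out, but the paper's $\Omega$-based route is both shorter and more conceptual.
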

\begin{proof}
 We take $t \gg 0$ such that $sh^{t}(\EE)$ is positive. Then since $ui_{i,j}$ is a composition of row exchange and $ui$, we have 
 \[ \pi(sh^t(\EE)) \cong \pi( sh^t(ui_{i,j}(\EE))).  \]
 By Theorem \ref{thm non-vanishing}(i), for $1 \leq d \leq t$, $\pi( sh^{t-d}(\EE)) \cong \pi( sh^{t-d}(ui_{i,j}(\EE)))$ provided $ \pi( sh^{t-d}(ui_{i,j}(\EE))) \neq 0$. Next, we show $\pi( sh^{t-d}(ui_{i,j}(\EE))) \neq 0,$ by induction on $d$.
 
 Note that we have $\Omega(ui_{i,j}(\EE))= \Omega(\EE)$. Since $\pi(\EE)$ is nonzero, by Theorem \ref{thm non-vanishing}, we have $\pi(sh^{t-d}(\EE))$ is nonzero for any $1\leq d \leq t.$
 Proposition \ref{prop uniform} then implies that  $D_{\Omega(sh^{t-d+1}(\EE))}(\pi(sh^{t-d+1}(\EE)))\neq 0$ and hence, from the induction hypothesis, we have $D_{\Omega(sh^{t-d+1}(ui_{i,j}(\EE)))}(\pi(sh^{t-d+1}(ui_{i,j}(\EE))))\neq 0.$ By Theorem \ref{thm shift left}, $\pi(sh^{t-d}(ui_{i,j}(\EE))) \neq 0$, which completes the proof of the proposition.
\end{proof}

We end this subsection with the following useful observation.

\begin{lemma}\label{lem ui shift}
If $\EE \in \Rep$ and $ui_{i,j}$ is applicable on $\EE$, then $\pi(sh_{j}^{-1}(\EE))=0$.   
\end{lemma}
\begin{proof}
    This follows directly from Definition \ref{ui def} and Proposition \ref{prop positive non-vanishing}.
\end{proof}

\subsection{Applicability of ui}\label{sec applicability of ui}
In this subsection, we prove that $ui_{i,j}$ is applicable on $\EE$ if and only if $\pi(\EE)$ is in the intersection of two local Arthur packets $\Pi_{\psi}$ and $\Pi_{ui_{i,j}(\psi)}$ defined below. As a consequence, we derive a formula for inverse of $ui_{i,j}$.

Given $\EE \in \Rep$, we denote $\psi= \bigoplus_{\rho} \bigoplus_{i \in I_{\rho}}  \rho \otimes S_{a_i} \otimes S_{b_i}$ the local Arthur parameter associated with $\EE$. That is, if $\EE= \cup_{\rho} \{ [A_i,B_i]_{\rho}, l_i,\eta_i\}_{i \in (I_{\rho},>)}$, then  
\[a_i= A_i+B_i+1,\ b_i= A_i-B_i+1.\]
Fix $i,j \in I_{\rho}$. Suppose $\EE$ satisfies the first condition in Definition \ref{def ui} for $ui_{i,j}$, then we denote $ui_{i,j}(\psi):= \bigoplus_{\rho} \bigoplus_{i \in I_{\rho}}  \rho \otimes S_{a_i'} \otimes S_{b_i'}$, where 
\[a_r'= A_r'+B_r'+1,\ b_r'= A_r'-B_r'+1,\]
and
\[ [A_r',B_r']= \begin{cases} [A_r,B_r] & \text{ if } r \neq i,j \text{ or } r \not\in I_{\rho},\\
[A_j,B_i] & \text{ if }r=i,\\
[A_i,B_j] & \text{ if }r=j.\end{cases} \]
We delete the summand of $ui_{i,j}(\psi)$ of the form $\rho \otimes S_{a_i}\otimes S_{0}$ if any. Then we can interpret Proposition \ref{prop nonzero of ui} as 
\[ ui_{i,j} \text{ is applicable on }\EE \Rightarrow \pi(\EE) \in \Pi_{\psi} \cap \Pi_{ui_{i,j}(\psi)}.  \]
We  show the backward direction in the next theorem. Before that, we fix a specific choice of admissible order in Definition \ref{def ui}.

Assume $(I_{\rho},>)$ satisfies ($P'$). We take $i,j \in I_{\rho}$ such that $\EE_{\rho}$ satisfies the first condition in Definition \ref{def ui}. It is necessary that $i<j$. For each $i < \alpha <j$, we have $ B_{i} \leq B_{\alpha} \leq B_{j}$ since $\mathcal{E}$ satisfies ($P'$). After row exchanges, we further assume that $ B_{i} < B_{\alpha} < B_{j}$ for every $i <\alpha <j$.
 
 If $(j,i,>')$ is an adjacent pair, then
\[ \begin{cases}
     \alpha <' i <' j &\Rightarrow A_{\alpha}\leq A_{i},\\
     i<'j<'\alpha &\Rightarrow A_{\alpha}\geq A_{j}.
\end{cases}\]
Therefore, we have a partition of the set $\{ \alpha \in I_{\rho} \ | \  i <\alpha< j \}= \Phi_1 \sqcup \Phi_2$, where
\begin{align*}
     \Phi_1=&\{ \alpha \ | \ A_{\alpha}  \leq A_{i}\}= \{ \alpha_1<\cdots<\alpha_r \}, \\
     \Phi_2=&\{ \alpha \ | \ A_{\alpha}  \geq A_{j}\}=\{ \beta_1<\cdots <\beta_s \}.
\end{align*} 
In other words, for $ \alpha \in \Phi_1, \beta \in \Phi_2$, we have 
\[ \begin{cases}
    [ A_\alpha, B_{\alpha} ]_{\rho} &\subseteq [A_{i},B_i]_{\rho},\\
    [ A_\beta, B_{\beta} ]_{\rho} &\supseteq [A_{j},B_j]_{\rho},\\
    [ A_\alpha, B_{\alpha} ]_{\rho} &\subseteq [A_{\beta},B_{\beta}]_{\rho}\ \ \text{ if } \alpha > \beta.
\end{cases}  \]

Then we fix a specific admissible order $\gg$ by 
\[ 1 \ll\cdots\ll i-1 \ll \alpha_1 \ll\cdots\ll \alpha_r \ll i \ll j \ll \beta_1 \ll\cdots\ll \beta_s \ll j+1 \ll\cdots\ll n, \]
where we identified $(I_{\rho},>)$ with $\{1,\dots,n\}$ and $1 < \cdots <n$.

\begin{thm} \label{thm ui packet}
Suppose $\EE \in \Rep$ and it satisfies condition 1. in Definition \ref{def ui} for $ui_{i,j}$. Then $ui_{i,j}$ is applicable on $\EE$ if and only if $\pi(\EE) \in \Pi_{\psi} \cap \Pi_{ui_{i,j}(\psi)}$.
\end{thm}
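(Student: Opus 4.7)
The forward direction is essentially already done: if $ui_{i,j}$ is applicable on $\EE$, then Proposition \ref{prop nonzero of ui} gives $\pi(ui_{i,j}(\EE)) \cong \pi(\EE) \neq 0$. Since $ui_{i,j}$ was designed so that the local Arthur parameter associated to $ui_{i,j}(\EE)$ is exactly $ui_{i,j}(\psi)$ (the segment supports $[A_i,B_i]_\rho, [A_j,B_j]_\rho$ are replaced by $[A_j,B_i]_\rho, [A_i,B_j]_\rho$, with deletion of trivial factors in the type 3$'$ case), we conclude $\pi(\EE) \in \Pi_\psi \cap \Pi_{ui_{i,j}(\psi)}$.

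For the backward direction, assume $\pi(\EE) \in \Pi_\psi \cap \Pi_{ui_{i,j}(\psi)}$. The first step is to reduce to the case where $\EE$ is non-negative. Using Theorem \ref{thm non-vanishing}(i), for $t \gg 0$, both $\pi(sh^t(\EE))$ and the corresponding shifted extended multi-segment representative for $ui_{i,j}(\psi)$ retain the hypotheses, and the combinatorial data $(A_i - l_i, B_i + l_i, \eta_i,\ldots)$ relevant to checking the cases of $ui_i$ are preserved by uniform shift. So we may assume $\EE$ is non-negative.

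Next, apply Theorem \ref{thm Atobe's reformulation} to the hypothesis $\pi(\EE) \in \Pi_{ui_{i,j}(\psi)}$ to obtain an extended multi-segment $\EE' \in \Rep$ with $\supp(\EE') = \supp(ui_{i,j}(\psi))$ and $\pi(\EE') \cong \pi(\EE)$. Reorder $\EE$ to the specific admissible order $\gg$ given just before the theorem; by Theorem \ref{thm row exchange}, $\pi(\EE_{\rho,\gg}) \cong \pi(\EE)$, and in $\EE_{\rho,\gg}$ the rows $i,j$ are adjacent, so that ``$ui_{i,j}$ applicable on $\EE$'' reduces to ``$ui_i$ applicable on $\EE_{\rho,\gg}$.'' Reorder $\EE'$ analogously so that the two rows whose segments are $[A_j,B_i]_\rho$ and $[A_i,B_j]_\rho$ become adjacent.

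The core step is a direct comparison of non-vanishing constraints. For the adjacent pair in $\EE_{\rho,\gg}$, Proposition \ref{prop positive non-vanishing}(ii) (non-negativity is ensured after reduction) gives the inequalities in \ref{prop positive non-vanishing}(i)(1) for $(A_i,B_i,l_i,\eta_i)$ and $(A_j,B_j,l_j,\eta_j)$; the same proposition applied to $\EE'$ gives the analogous inequalities with supports swapped to $[A_j,B_i]_\rho$ and $[A_i,B_j]_\rho$. Combining the two sets of inequalities with the matching constraints from $\pi(\EE) = \pi(\EE')$ (i.e.\ $L$-data agreement via Lemma \ref{lem $L$-data} and Theorem \ref{thm max A}, which fix the multiplicity pattern of each $\rho|\cdot|^x$ appearing in $\Omega(\EE_\rho)$), one forces at least one of the inequalities in \ref{prop positive non-vanishing}(i)(1) to saturate; this saturation is exactly the defining condition for one of Cases 1, 2, 3 (or 3$'$) of $ui_i$, so $ui_{i,j}$ is applicable.

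The main obstacle I expect is the parity/sign bookkeeping in the saturation step: the cases split according to $\epsilon = (-1)^{A_i-B_i}\eta_i\eta_j$, and one must verify that the compatibility of the two constraint systems together with the sign condition \eqref{eq sign condition} indeed forces an extreme case rather than some interior configuration. A clean way to organize this is to use Corollary \ref{cor shift add}(i) to ``cancel'' the intermediate rows $\Phi_1,\Phi_2$ -- reducing to the two-row situation with supports $\{[A_i,B_i]_\rho,[A_j,B_j]_\rho\}$ versus $\{[A_j,B_i]_\rho,[A_i,B_j]_\rho\}$ -- where the comparison of non-vanishing inequalities becomes a finite case check that matches exactly the three defining cases of $ui_k$ in Definition \ref{ui def}.
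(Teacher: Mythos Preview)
Your forward direction and the initial reductions (shifting to non-negative, picking $\EE'\in\Pi_{ui_{i,j}(\psi)}$ with $\pi(\EE')\cong\pi(\EE)$, reordering to make $i,j$ adjacent) are fine and match the paper. The gap is in your ``core step.''

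You propose to compare the non-vanishing inequalities of Proposition~\ref{prop positive non-vanishing} for the adjacent pair in $\EE$ against those for the corresponding pair in $\EE'$, and to use ``$L$-data agreement via Lemma~\ref{lem $L$-data} and Theorem~\ref{thm max A}'' to force saturation. But those two results only constrain the multi-set $\Omega(\EE_\rho)$, and by construction $\Omega(\EE_\rho)=\Omega(\EE'_\rho)$ (since $\psi^\Delta=ui_{i,j}(\psi)^\Delta$). They give you \emph{no} information linking $(l_i,\eta_i,l_j,\eta_j)$ in $\EE$ to the unknown $(l',\eta')$ data in $\EE'$. Without such a link, the two inequality systems are independent and there is no mechanism to force a boundary case. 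Your proposed reduction to the two-row situation via Corollary~\ref{cor shift add}(i) has the same problem: even in the two-row case, you still need some representation-theoretic input to relate the $(l,\eta)$ data across the two packets, and a direct ``finite case check'' from the non-vanishing inequalities alone cannot supply it.

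The paper's argument is genuinely different and supplies exactly this missing input via a \emph{derivative criterion}. After reducing to $i=1,j=n$ and first treating the case $\Phi_2=\emptyset$, it observes that $ui_{1,n}$ is \emph{not} applicable if and only if $\pi(\EE^\rho\cup(\FF_1+sh^{-1}(\FF_2)))\neq 0$ (where $\FF_1$ is the first row), which by Lemma~\ref{lem shift}(iv) forces $D_{\Omega(\FF_2)}(\pi(\EE))\neq 0$. It then uses the structure of $\EE'$: since $sh^{-1}(\EE')$ is positive, Lemma~\ref{lem shift}(iv) gives an embedding $\pi(\EE')\hookrightarrow \bigtimes_i Z_\rho[B_i',A_i']\rtimes\pi(\EE^\rho\cup sh^{-1}(\EE'_\rho))$, and the key point is that the first row of $\EE'$ has support $[A_n,B_1]$, so $\rho|\cdot|^{A_n}$ does not occur in $\Omega(sh^{-1}(\EE'_\rho))$. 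A direct Leibniz computation then shows $D_{\Omega(\FF_2)}(\pi(\EE'))=0$, contradicting $\pi(\EE)\cong\pi(\EE')$. The case $\Phi_2\neq\emptyset$ is handled by induction on $|\Phi_2|$, shifting the largest $\beta_s$ far away via Lemma~\ref{lem shift}(iii) and invoking Lemma~\ref{lemma far away}. The reduction from general $i,j$ to $i=1,j=n$ does use Corollary~\ref{cor shift add}, as you suggest, but only for the rows outside $[i,j]$.
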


\begin{proof}
As stated above, one direction is already complete. We  show that if  $\pi(\EE) \in \Pi_{\psi} \cap \Pi_{ui_{i,j}(\psi)},$ then $ui_{i,j}$ is applicable on $\EE.$

Let $\EE'$ be the corresponding extended multi-segment in $\Pi_{ui_{i,j}(\psi)}$ such that $\pi(\EE) \cong \pi(\EE')$. For any $t >0$, since $\Omega(sh^t(\EE))= \Omega(sh^t(\EE'))$, Lemma \ref{lem $L$-data} and Theorem \ref{thm non-vanishing}(i) imply  $\pi(sh^t(\EE))\cong \pi(sh^t(\EE'))$. Therefore, we may assume that $sh^{-1}(\EE)$ and $sh^{-1}(\EE')$ are positive.

We identify $(I_{\rho},>)$ with $\{1, \dots, n\}$ and $1 < \cdots < n$, and write 
\begin{align*}
    \EE_{\rho}&= \{ ( [A_i,B_i]_{\rho},l_i,\eta_i)\}_{i=1}^n,\\
\EE_{\rho}'&=\{ ([ A_i',B_i']_{\rho},l_i',\eta_i')\}_{i=1}^n.
\end{align*}
We first assume $i=1, j=n$. We use the notation $\Phi_1,\Phi_2$ defined above.

Case (1): Suppose $\Phi_2$ is empty. We decompose $\EE_{\rho}=\FF_1 + \FF_2, \EE_{\rho}'= \FF_1' + \FF_2'$ where $\FF_1, \FF_1'$ are the first row of $\EE_{\rho},\EE_{\rho}'$ respectively. From the definition of union-intersection and Lemma \ref{lem identities}(i), $ui_{1,n}$ is not applicable on $\EE$ if and only if 
    \[\pi( \EE^{\rho} \cup( \FF_1 + sh^{-1}(\FF_2))) \neq 0.\]
    Hence if $ui_{1,n}$ is not applicable, then by Lemma \ref{lem shift}(iv), $D_{\Omega(\FF_2)} (\pi(\EE)) \neq 0.$ Now we show that this derivative must be zero for contradiction.
    
    Our assumption that $sh^{-1}(\EE')$ is positive implies $ \EE^{\rho} \cup (sh^{-1}(\EE_{\rho}'))$ is in $\Rep$ by Theorem \ref{thm non-vanishing}(i). Then Lemma  \ref{lem shift}(iv) shows
    \[ \pi(\EE') \hookrightarrow \times_{i=1}^n Z_{\rho}[B_i',A_i'] \rtimes \pi(\EE^{\rho} \cup (sh^{-1}(\EE_{\rho}'))). \]
    Also, Lemma \ref{lem Leibniz rule} implies 
    \begin{align*}
         D_{\Omega(\FF_2)}(\pi(\EE'))&\leq D_{\Omega(\FF_2)}(\times_{i=1}^n Z_{\rho}[B_i',A_i'] \rtimes \pi(\EE^{\rho} \cup (sh^{-1}(\EE_{\rho}'))))\\
         &=Z_{\rho}[B_1,A_n] \rtimes D_{\Omega(\FF_2)}(\times_{i=2}^n Z_{\rho}[B_i',A_i'] \rtimes \pi(\EE^{\rho} \cup (sh^{-1}(\EE_{\rho}')))).
    \end{align*}
     Finally, observe that $\rho|\cdot|^{A_n}$ is not in $\Omega(sh^{-1}(\EE_{\rho}'))$ and hence $\rho|\cdot|^{A_n}$ is also not in $\Omega(\pi(\EE^{\rho} \cup (sh^{-1}(\EE_{\rho}'))))$ by Lemma \ref{lem $L$-data}. Also, $A_i'<A_n$ for $2 \leq i \leq n$. We conclude that
    \[D_{\Omega(\FF_2)}(\times_{i=2}^n Z_{\rho}[B_i',A_i'] \rtimes \pi(\EE^{\rho} \cup (sh^{-1}(\EE_{\rho}'))))=0.\]
    As a consequence, $ui_{1,n}$ is applicable on $\EE$.

Case (2): In general, we apply induction on the size of $\Phi_2$. The case that $\Phi_2=\emptyset$ is done above. 
Write $\Phi_2= \{ \beta_1 < \cdots <\beta_s \}$. We define another admissible order $\gg$ by
\[ 1 \ll \cdots \ll \beta_s -1 \ll \beta_{s}+1 \ll \cdots \ll n \ll \beta_s. \]
 Lemma  \ref{lem shift}(iii) says that for any $d \in \N$, $\EE^{\rho} \cup sh_{\beta_s}^{d}(\EE_{\rho,\gg}), \EE^{\rho} \cup sh_{\beta_s}^{d}(\EE_{\rho,\gg}')$ are both in $\Rep$.

 We first claim that 
 \[ \pi(\EE) \cong \pi(\EE') \Longleftrightarrow \pi(\EE^{\rho} \cup sh_{\beta_s}^{d}(\EE_{\rho,\gg})) \cong \pi(\EE^{\rho} \cup sh_{\beta_s}^{d}(\EE_{\rho,\gg}')). \]
 Indeed, Lemma  \ref{lem shift}(iii) implies the forward direction. It follows from Definition \ref{def rep of segment} that
 \begin{align*}
      D_{\rho|\cdot|^{B_{\beta_s}+d,\dots,A_{\beta_s}+d}}(\pi(\EE^{\rho} \cup sh_{\beta_s}^{d}(\EE_{\rho,\gg})) &= \pi(\EE^{\rho} \cup sh_{\beta_s}^{d-1}(\EE_{\rho,\gg})),\\
       D_{\rho|\cdot|^{B_{\beta_s}+d,\dots,A_{\beta_s}+d}}(\pi(\EE^{\rho} \cup sh_{\beta_s}^{d}(\EE_{\rho,\gg}')) &= \pi(\EE^{\rho} \cup sh_{\beta_s}^{d-1}(\EE_{\rho,\gg}')),
 \end{align*}
 which shows the backward direction.
 
 As a consequence, let $\psi_d$ and $\psi_d'$ be the local Arthur parameters associated with $\EE^{\rho} \cup sh_{\beta_s}^{d}(\EE_{\rho,\gg})$ and $\EE^{\rho} \cup sh_{\beta_s}^{d}(\EE_{\rho,\gg}')$ respectively. We have 
 \[ \pi(\EE) \in \Pi_{\psi } \cap \Pi_{ui_{i,j}(\psi)} \Longleftrightarrow \pi( \EE^{\rho} \cup sh_{\beta_s}^{d}(\EE_{\rho,\gg})) \in \Pi_{\psi_d} \cap \Pi_{\psi_d'}. \]

When $d$ is sufficiently large, by Lemma \ref{lemma far away}(ii), the induction hypothesis gives that $\pi( \EE^{\rho} \cup sh_{\beta_s}^{d}(\EE_{\rho,\gg})) \in \Pi_{\psi_d} \cap \Pi_{\psi_d'}$ implies $ui_{1,n}$ is applicable on $\EE^{\rho} \cup sh_{\beta_s}^{d}(\EE_{\rho,\gg})$. This shows $ui_{1,n}$ is applicable on $\EE$.

Next, consider general case that $ 1\leq i<j \leq n$. We decompose $\EE_{\rho}=\FF_1 + \FF_2 + \FF_3$ where
\begin{align*}
    \FF_1&=\{([A_r,B_r]_{\rho},l_r,\eta_r)\}_{r=1}^{i-1},\\
    \FF_2&=\{([A_r,B_r]_{\rho},l_r,\eta_r)\}_{r=i}^{j},\\
    \FF_3&=\{([A_r,B_r]_{\rho},l_r,\eta_r)\}_{r=j+1}^{n},
\end{align*}
and similarly for $\EE_{\rho}'=\FF_1'+\FF_2' + \FF_3'$. Then Corollary \ref{cor shift add}(i), (iii) implies we may assume $\FF_3, \FF_3'$ are empty and replace $\FF_1$ (resp. $\FF_1'$) by $add^{t}(\FF_1)$ (resp. $add^{t}(\FF_1')$) for a large integer $t$ since
\[ \pi(\EE^{\rho} \cup (add^{t}(\FF_1)+ \FF_2) \cup (sh^1(\FF_3))_{\rho^{\ast}})\cong \pi(\EE^{\rho} \cup (add^{t}(\FF_1')+ \FF_2') \cup (sh^1(\FF_3)')_{\rho^{\ast}}). \]
Then we can apply the same argument above to show that $ui_{i,j}$ must be applicable on $add^{t}(\FF_1)+ \FF_2$, and hence applicable on $\EE$. This completes the proof of the theorem. 
\end{proof}

As a result, we give a formula of the inverse of $ui_{i,j}$ if it is not of type 3' in Definition \ref{def ui}. 

 \begin{cor} \label{cor ui inverse}
Suppose $ui_{i,j}$ is applicable on $\EE \in Rep^{(P')}$ and it is not of type 3' in Definition \ref{def ui}. Then $ui_{j,i}$ is applicable on $dual \circ ui_{i,j}(\EE)$, and we have 
\[ dual \circ ui_{j,i} \circ dual \circ ui_{i,j}(\EE)= \EE. \]
In other words, we may regard $(dual \circ ui_{j,i} \circ dual)$ as the inverse of $ui_{i,j}$.
\end{cor}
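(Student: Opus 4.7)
The strategy is to combine the packet characterization of applicability (Theorem \ref{thm ui packet}) with the compatibility between $dual$ and Aubert-Zelevinsky involution (Theorem \ref{thm Aubert-Zelevinsky dual formula}), and then conclude by the multiplicity-freeness of local Arthur packets (Lemma \ref{lem Moeglin}). This avoids a direct, case-by-case tracking of the data $(l,\eta)$ through four successive operations.

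First I would verify that $ui_{i,j}(\EE) \in \Rep^{(P')}$, so that $dual \circ ui_{i,j}(\EE)$ is defined. By Proposition \ref{prop nonzero of ui}, $ui_{i,j}(\EE) \in \Rep$. Moreover, inspection of Definition \ref{def ui} shows that $ui_{i,j}$ changes only the $A$-endpoints of rows $i$ and $j$, leaving every $B$-value fixed; since $(P')$ only sees $B$-values, it continues to hold, so $ui_{i,j}(\EE) \in \Rep^{(P')}$. Applying $dual$ reverses the order and negates the $B$-values, so the monotonicity encoded by $(P')$ is preserved, giving $dual \circ ui_{i,j}(\EE) \in \Rep^{(P')}$.

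Next, set $\psi = \psi_{\EE}$. Theorem \ref{thm ui packet} gives $\pi(\EE) \in \Pi_{\psi} \cap \Pi_{ui_{i,j}(\psi)}$. Applying Aubert-Zelevinsky duality (compatible with local Arthur packets) and Theorem \ref{thm Aubert-Zelevinsky dual formula}, we obtain
\[ \pi(dual\circ ui_{i,j}(\EE)) \;=\; \widehat{\pi(\EE)} \;\in\; \Pi_{\widehat{\psi}} \cap \Pi_{\widehat{ui_{i,j}(\psi)}}. \]
A direct computation on the segments $[A_i,B_i]$ and $[A_j,B_j]$ shows that $ui_{j,i}$ sends the parameter $\widehat{ui_{i,j}(\psi)}$ to $\widehat{\psi}$: under $dual \circ ui_{i,j}$, the rows $i,j$ carry segments $[A_j,-B_i]$ and $[A_i,-B_j]$ in the reversed order $j<'i$, and the inequalities $A^{dual}_j = A_i < A_j = A^{dual}_i$ and $B^{dual}_j = -B_j < -B_i = B^{dual}_i$ exactly match the hypothesis of Definition \ref{def ui} for $ui_{j,i}$. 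The assumption that $ui_{i,j}$ is not of type $3'$ is used here to ensure no row has been deleted, so both segments are still present in $dual\circ ui_{i,j}(\EE)$. By Theorem \ref{thm ui packet} applied to $dual\circ ui_{i,j}(\EE)$, the operator $ui_{j,i}$ is therefore applicable, and $ui_{j,i}\circ dual\circ ui_{i,j}(\EE)$ lies in $\Rep^{(P')}$ with associated parameter $\widehat{\psi}$ and represents $\widehat{\pi(\EE)}$.

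Finally, apply $dual$ once more: the extended multi-segment $dual\circ ui_{j,i}\circ dual\circ ui_{i,j}(\EE)$ is in $\Rep^{(P')}$, has associated parameter $\psi$, and represents $\pi(\EE)$ (by Proposition \ref{prop nonzero of ui}, Theorem \ref{thm Aubert-Zelevinsky dual formula}, and the involutivity of the Aubert-Zelevinsky dual). Since local Arthur packets are multiplicity-free, Lemma \ref{lem Moeglin} forces
\[ dual \circ ui_{j,i} \circ dual \circ ui_{i,j}(\EE) = \EE. \]
The main obstacle I anticipate is the bookkeeping in the preceding paragraph: verifying that the reversed order, the $A,B$ conditions, and the non-deletion of rows all align correctly with Definition \ref{def ui} for $ui_{j,i}$, especially across the three sub-cases (Types 1, 2, 3). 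Routing the argument through the packet condition in Theorem \ref{thm ui packet} is precisely what makes this clean: it circumvents the sub-case analysis of $(l,\eta)$ that would otherwise be required.
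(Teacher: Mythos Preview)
Your proposal is correct and follows essentially the same approach as the paper's proof: both argue via Theorem~\ref{thm ui packet} to get $\pi(\EE)\in\Pi_\psi\cap\Pi_{ui_{i,j}(\psi)}$, pass to the Aubert--Zelevinsky dual, use the segment identity $\widehat{ui_{i,j}(\psi)}=ui_{j,i}(\widehat{\psi})$ to invoke Theorem~\ref{thm ui packet} again, and conclude equality by multiplicity-freeness (Lemma~\ref{lem Moeglin}). Your version simply spells out more of the bookkeeping (the $(P')$ check and the segment inequalities) that the paper leaves implicit.
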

\begin{proof}
By assumption and Theorem \ref{thm ui packet}, $ \pi(\EE) \in \Pi_{\psi} \cap \Pi_{ui_{i,j}(\psi)}$. Then we have $\widehat{\pi(\EE)} \in \Pi_{\widehat{\psi}} \cap \Pi_{\widehat{ui_{i,j}(\psi)}}$. A simple computation shows $ \widehat{ui_{i,j}(\psi)}= ui_{j,i}(\widehat{\psi}),$
and the corollary follows.
\end{proof}

Alternatively, one can prove Corollary \ref{cor ui inverse} purely combinatorially from the definitions of these operators. 

\begin{exmp}
Recall the setup of Example \ref{ui case 2}. Let $\rho$ be the trivial representation and $$\psi=\rho\otimes S_3\otimes S_1+\rho\otimes S_4\otimes S_2+\rho\otimes S_6\otimes S_2$$ be a local Arthur parameter of good parity for $G_n=\Sp_{22}(F).$ Let 
$$\EE=\bordermatrix{
 &{1}&2&3 \cr
& \ominus& &  \cr
 & \lhd&  \rhd& \cr
& &\oplus &   \ominus \cr
}_{\rho}.$$
We have $\pi(\EE)= L(\Delta_{\rho}[1,-2];\pi(1^-,2^+,3^-))\in\Pi_\psi.$
Let
$$
\EE'=ui_{2,3}(\EE)=\bordermatrix{
 &{1}&2&3 \cr
& \ominus& &  \cr
 & \lhd&  \oplus & \rhd \cr
& &\ominus &   \cr
}_{\rho},
$$
where the union-intersection is of type 2.
Then $\EE$ is the inverse of $ui_{2,3}$ for $\EE'.$ On the other hand, Corollary \ref{cor ui inverse}, says $(dual\circ ui_{3,2} \circ dual)(\EE')=\EE.$ We check this explicitly.

We have 
$$
dual(\EE')=\bordermatrix{
 &-2 & -1 & 0 &{1}&2&3 \cr
&\lhd & \lhd & \ominus& \rhd & \rhd &  \cr
 & & \lhd & \lhd & \oplus &\rhd & \rhd  \cr
& &\lhd & \ominus & \rhd & &   \cr
}_{\rho}.
$$ Note that the ordering in the dual is $3<'2<'1.$ So $ui_{3,2}$ is the union-intersection of the first and second rows.
We have
$$
(ui_{3,2}\circ dual)(\EE')=\bordermatrix{
 &-2 & -1 & 0 &{1}&2&3 \cr
&\lhd & \lhd & \ominus& \oplus & \rhd & \rhd   \cr
 & & \lhd & \lhd  &\rhd & \rhd &  \cr
& &\lhd & \ominus & \rhd & &   \cr
}_{\rho}.
$$
Thus,
$$
(dual\circ ui_{3,2}\circ dual)(\EE')=\bordermatrix{
 &{1}&2&3 \cr
& \ominus& &  \cr
 & \lhd&  \rhd& \cr
& &\oplus &   \ominus \cr
}_{\rho}=\EE.
$$
\end{exmp}

Next, we describe the inverse of $ui_{i,j}$ of type 3' of Definition \ref{def ui}.

\begin{cor}\label{cor split}
Let $\EE \in \Rep$. 
\begin{enumerate}
    \item [(1)] Suppose $\EE_{\rho}$ is positive and there exists an admissible order $\gg$ such that $\EE_{\rho,\gg}= \{( [A_i,B_i]_{\rho},l_i',\eta_i')\}_{i \in (I_{\rho}, \gg)}$ with $l_j'=0$ for some $j$. Let 
    \[\mathcal{F}_1=\{ ([A_i,B_i]_{\rho},l_i',\eta_i')\}_{i \ll j },\  \mathcal{F}_2=\{ ([A_i,B_i]_{\rho},l_i',\eta_i')\}_{i \gg j}.\]
    For $0 \leq r \leq A_j-B_j -1$, we set
    \begin{align*}
         \EE_{\rho,r}:= \mathcal{F}_1 + \{([B_j+r, B_j]_{\rho},0,\eta_j')\} + \{([A_j,B_j+r+1]_{\rho},0, (-1)^{r+1}\eta_j')\} +\mathcal{F}_2.
    \end{align*}
    If the total order of $\EE_{\rho,r}$ satisfies ($P$), then for any admissible order $\gg'$ of $\EE_{\rho,r}$, we have 
    \[ 0 \neq \pi(\EE^{\rho} \cup (\EE_{\rho, r})_{\gg'}) \cong \pi(\EE) \]
    \item [(2)] The same statement in (1) holds for general $\EE_{\rho}$ if we further require $ 2B_j+r\geq 0$ and $\gg'$ satisfies ($P'$).
\end{enumerate}
\end{cor}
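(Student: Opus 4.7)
The strategy is to realize the splitting in (1) as the inverse of a single union-intersection of type 3', so that Proposition \ref{prop nonzero of ui} immediately gives the equality of representations. The heart of the argument is a direct verification that the two newly-created rows, placed adjacent in a suitable admissible order, satisfy the hypotheses of case (3') of Definition \ref{ui def}.

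For part (1), fix $0 \leq r \leq A_j - B_j - 1$ and write
\[
\EE^{\text{sp}} = \mathcal{F}_1 + \{([B_j+r, B_j]_{\rho}, 0, \eta_j')\} + \{([A_j, B_j+r+1]_{\rho}, 0, (-1)^{r+1}\eta_j')\} + \mathcal{F}_2
\]
with the obvious total order (extending $\gg$ by placing the two new rows in the slot previously occupied by the $j$-th row, in the order shown). This order is admissible because the hypothesis says it satisfies $(P)$, and one checks the sign condition \eqref{eq sign condition} is preserved since $(-1)^{[b_j/2]+0}(\eta_j')^{b_j}$ factorizes as the product of the analogous quantities for the two new rows. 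Now call these new rows the $k$-th and $(k+1)$-th; I will check that $ui_k$ of type 3' is applicable. We have $A_{k+1} > A_k$, $B_{k+1} > B_k$ (since $r \leq A_j - B_j - 1$), and $B_{k+1} + l_{k+1} = (B_j+r+1) + 0 = (A_k + 1) = A_k - l_k + 1$, so we are in Case 3 of Definition \ref{ui def}. The sign is $\epsilon = (-1)^{A_k - B_k}\eta_k\eta_{k+1} = (-1)^r \cdot \eta_j' \cdot (-1)^{r+1}\eta_j' = -1$, and $l_k = l_{k+1} = 0$, so we are in subcase (3'). Applying $ui_k$ deletes the $(k+1)$-th row and yields the row $([A_{k+1}, B_k]_{\rho}, 0, \eta_k) = ([A_j, B_j]_{\rho}, 0, \eta_j')$, which is exactly the $j$-th row of $\EE_{\rho,\gg}$. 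Therefore $ui_k(\EE^\rho \cup \EE^{\text{sp}}) = \EE^\rho \cup \EE_{\rho,\gg}$, and Proposition \ref{prop nonzero of ui} gives $\pi(\EE^\rho \cup \EE^{\text{sp}}) = \pi(\EE^\rho \cup \EE_{\rho,\gg}) = \pi(\EE) \neq 0$, where the second equality comes from Theorem \ref{thm row exchange}. For an arbitrary admissible order $\gg'$ on $\EE_{\rho,r}$, we reach $(\EE_{\rho,r})_{\gg'}$ from $\EE^{\text{sp}}$ by row exchanges, so Theorem \ref{thm row exchange} finishes the proof of (1).

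For part (2), the plan is to reduce to (1) by uniformly shifting. Choose $t \in \Z_{\geq 0}$ large enough that $sh^t(\EE)$ is positive and $sh^t(\EE^\rho \cup \EE_{\rho,r})$ is positive. The hypothesis $2B_j + r \geq 0$ guarantees that $\EE_{\rho,r}$ is a well-defined extended multi-segment (the new segment $[B_j+r, B_j]_\rho$ satisfies $A + B \geq 0$), and $\gg'$ satisfying $(P')$ ensures the datum lies in the setting of Definition \ref{def rep of segment}. Applying part (1) to $sh^t(\EE)$ yields $\pi(sh^t(\EE^\rho \cup (\EE_{\rho,r})_{\gg'})) \cong \pi(sh^t(\EE))$. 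Since $\pi(\EE) \neq 0$, Theorem \ref{thm non-vanishing}(i) lets us undo the shift: the same derivative formula that recovers $\pi(\EE)$ from $\pi(sh^t(\EE))$ recovers $\pi(\EE^\rho \cup (\EE_{\rho,r})_{\gg'})$ from $\pi(sh^t(\EE^\rho \cup (\EE_{\rho,r})_{\gg'}))$, yielding the required isomorphism.

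The main obstacle will be a careful verification of the sign $(-1)^{r+1}\eta_j'$ for the second new row and the check that $\EE_{\rho,r}$ really is an extended multi-segment (admissible order plus the sign condition \eqref{eq sign condition}); these are bookkeeping rather than conceptual, but are easy to get wrong. A secondary subtlety in part (2) is that the uniform shift moves all rows of $\EE_{\rho,r}$ simultaneously, so one must confirm that the non-vanishing criterion of Theorem \ref{thm non-vanishing}(i) applies to the shifted $\EE_{\rho,r}$ as well; this follows because $\pi(sh^t(\EE^\rho \cup (\EE_{\rho,r})_{\gg'})) \cong \pi(sh^t(\EE)) \neq 0$ and the condition $(\ast)$ of Theorem \ref{thm non-vanishing}(i) on $\EE_{\rho,r}$ is implied by $(\ast)$ on $\EE_\rho$ together with $2B_j + r \geq 0$.
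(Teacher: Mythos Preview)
Your argument for part (1) is circular. You write that $ui_k(\EE^\rho \cup \EE^{\text{sp}}) = \EE^\rho \cup \EE_{\rho,\gg}$ and then invoke Proposition~\ref{prop nonzero of ui} to conclude $\pi(\EE^\rho \cup \EE^{\text{sp}}) \cong \pi(\EE) \neq 0$. But Proposition~\ref{prop nonzero of ui} has the hypothesis ``$\EE \in \Rep$'' --- it takes a multi-segment whose representation is already known to be nonzero and asserts that applying $ui_{i,j}$ preserves the representation. You are feeding it $\EE^\rho \cup \EE^{\text{sp}}$, whose non-vanishing is precisely the ``$0 \neq$'' part of the conclusion you are trying to prove. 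The proposition does not run backwards: knowing $\pi(ui_k(\EE^{\text{sp}})) \neq 0$ does not tell you $\pi(\EE^{\text{sp}}) \neq 0$. Indeed, the whole point of Corollary~\ref{cor split} is to supply exactly this reverse direction for type 3', which is why the paper places it \emph{after} Corollary~\ref{cor ui inverse} (which handles the non-3' types via duality) as a separate result.

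The paper's proof avoids this trap by a direct derivative computation: in the special case $j = n$ it first shifts the two new rows far to the right so that Theorem~\ref{thm non-vanishing}(ii) gives non-vanishing cheaply, then uses the commutativity of the derivative blocks $D_{\rho|\cdot|^{B_n+r+1+i,\dots,A_n+i}}$ and $D_{\rho|\cdot|^{B_n+1,\dots,B_n+r+1}}$ to identify the composite derivative recovering $\pi(\EE^\rho \cup \EE_{\rho,r})$ with the one recovering $\pi(\EE)$. The general $j$ then reduces to $j=n$ by shifting the rows above $j$ away. Your verification that $ui_k$ of type 3' collapses the two new rows back to the original is correct and useful (it shows the construction is the right candidate for the inverse), but it cannot replace the non-vanishing argument. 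If you wanted to rescue your approach you would need to verify Theorem~\ref{thm non-vanishing}(ii) for $\EE^{\text{sp}}$ directly --- checking Proposition~\ref{prop positive non-vanishing}(i) for every adjacent pair in every admissible order, including the new boundary pairs between the split rows and $\mathcal F_1$, $\mathcal F_2$ --- which you have not done and which is not obviously easier than the paper's route. Your part (2) reduction by uniform shifting is essentially the same as the paper's (the paper phrases it via Theorem~\ref{thm shift left} and the equality $\Omega(sh^t(\EE)) = \Omega(sh^t(\EE_{\rho,r}))$), and is fine once part (1) is correctly established.
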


\begin{proof}
We identify $(I_{\rho},>)$ with $\{1, \dots , n\}$ with $1 < \cdots < n$ for simplicity.

For Part (1), we may assume $\EE_{\rho}= \EE_{\rho, \gg}$. It suffices to show $\pi(\EE) \cong \pi(\EE^{\rho} \cup \EE_{\rho, r})$. We first deal with the special case that $j=n$. We identify the index set of $\EE_{\rho,r}$ with $\{1, \dots, n, n+1\}$ and $1 < \cdots <n <n+1$.

By Theorem \ref{thm non-vanishing}(ii), we have 
\[ 0 \neq \pi( \EE^{\rho} \cup (sh^{t}_n+sh^{t}_{n+1})(\EE_{\rho, r})) \cong \pi(\EE^{\rho} \cup sh^{t}_n(\EE_{\rho})),  \]
when $t$ is large. By definition, we have 
\[0 \neq  \pi(\EE)= \circ_{i=t}^{1} D_{\rho|\cdot|^{B_n+t,\dots,A_n+t}}( \pi(\EE^{\rho} \cup sh^{t}_n(\EE_{\rho}))) ), \]
where $\circ_{i=t}^{1}$ means we take the derivative with $i=t, $ then $i=t-1,$ and so on until we arrive at $i=1.$
Therefore, it suffices to show that
\[\pi( \EE^{\rho} \cup (sh^{t-1}_n+sh^{t-1}_{n+1})(\EE_{\rho, r}))= D_{\rho|\cdot|^{B_n+t,\dots,A_n+t}}(\pi( \EE^{\rho} \cup (sh^{t}_n+sh^{t}_{n+1})(\EE_{\rho, r})). \]
We may assume $t=1$. By definition, we can take $s\gg 0$ such that
\[ \pi(\EE^{\rho} \cup \EE_{\rho,r})=  \left(\circ_{i=s}^{1}  D_{\rho |\cdot|^{B_n+r+ 1+i,\dots,A_n+i}} \right) \circ D_{\rho |\cdot|^{B_n+1,\dots,B_n+r+1}}(\pi(\EE^{\rho} \cup (sh^{1}_n+sh^{s}_{n+1})(\EE_{\rho, r})). \]
Then we apply the commutativity of $D_{\rho |\cdot|^{B_n+r+ 1+i,\dots,A_n+i}} $ and $D_{\rho |\cdot|^{B_n+1,\dots,B_n+r+1}}$ for $i \geq 2$ and obtain (note that $D_{\rho |\cdot|^{B_n+r+2,\dots,A_n+1}} \circ D_{\rho |\cdot|^{B_n+1,\dots,B_n+r+1}}= D_{\rho |\cdot|^{B_n+1,\dots,A_n+1}}$)
\begin{align*}
    \pi(\EE^{\rho} \cup \EE_{\rho,r})=&  D_{\rho |\cdot|^{B_n+1,\dots,A_n+1}}  \circ_{i=s}^{2}  D_{\rho |\cdot|^{B_n+r+ 1+i,\dots,A_n+i}} (\pi(\EE^{\rho} \cup (sh^{1}_n+sh^{s}_{n+1})(\EE_{\rho, r}))) \\
    &=D_{\rho |\cdot|^{B_n+1,\dots,A_n+1}} (\pi( (sh_n^{1}+sh_{n+1}^1)(\EE_r))).
\end{align*}
This completes the proof of the special case.

For the general case, we take a sequence of non-negative integers $\{t_i\}_{i=1}^n$ such that $B_{i}+t_i > A_{i-1}+t_{i-1} $ for $1 < i \leq n$. Then the same argument above shows 
\[ 0 \neq \pi\left(\EE^{\rho}\cup \left(\sum_{i=j+2}^{n+1} sh_i^{t_i}\right)(\EE_{\rho,r})\right)  \cong  \pi\left(\EE^{\rho} \cup \left(\sum_{i=j+1}^{n} sh_i^{t_i}\right)(\EE_{\rho})\right).  \]
Let 
\[ D= \circ_{i=j+1}^n \left( D_{ \rho|\cdot|^{ B_{i}+1,\dots,A_i+1}} \circ\cdots \circ D_{\rho|\cdot|^{B_i+t_i,\dots,A_i+t_i}} \right). \]
By definition,
\begin{align*}
    \pi(\EE^{\rho}\cup \EE_{\rho,r})&= D\left(\pi\left(\EE^{\rho}\cup \left(\sum_{i=j+2}^{n+1} sh_i^{t_i}\right)(\EE_{\rho,r})\right)\right)\\
    &=D\left(\pi\left(\EE^{\rho}\cup \left(\sum_{i=j+2}^{n+1} sh_i^{t_i}\right)(\EE_{\rho})\right)\right)\\
    &=\pi(\EE).
\end{align*}
This completes the proof of Part (1).

For Part (2), we take a large integer $t$ such that $sh^t(\EE)$ is positive. Then Part (1) implies
\[ 0 \neq \pi(sh^t(\EE) ) \cong \pi( sh^t(\EE^{\rho}) \cup sh^t(  (\EE_{\rho, r})_{\gg'})). \]
Now observe that $\Omega( sh^t(\EE))= \Omega(sh^t((\EE_r)_{\gg'}))$, so Theorem \ref{thm shift left} implies the result.
This completes the proof of the corollary.
\end{proof}

\begin{exmp}
Let $\rho$ be the trivial representation and 
\[\psi=\rho\otimes S_4\otimes S_6+ \rho \otimes S_2 \otimes S_2+ \rho \otimes S_7 \otimes S_1\] be a local Arthur parameter of good parity for $G_n=\Sp_{34}(F).$ We consider
\[\EE= \bordermatrix{
&-1&0&1&2&3&4\cr
&\lhd &\lhd & \ominus&\oplus & \rhd & \rhd \cr
&&\ominus & \oplus &&&\cr 
&&&&&\oplus&\cr
}_{\rho}, \]
where $\pi(\EE)= L( \Delta_{\rho}[ -1,-1], \Delta_{\rho}[ 3,-4]; \pi( 0^{+},0^{+},1^{-},2^{+},3^{-}))\in\Pi_\psi$. Then 
\[\EE'= R_1(\EE)= \bordermatrix{
&-1&0&1&2&3&4\cr
&&\oplus & \ominus &&&\cr 
&\ominus &\oplus & \ominus&\oplus & \ominus & \oplus \cr
&&&&&\oplus&\cr
}_{\rho}. \]
Corollary \ref{cor split} is applicable at the second row of $\EE'$. Then only $\EE_1',\EE_{2}', \EE_{3}'$ satisfy (P), and only $\EE_2',\EE_3'$ satisfies $ 2B_2+r \geq 0$. Thus, 
\[\EE_2'= \bordermatrix{
&-1&0&1&2&3&4\cr
&&\oplus & \ominus &&&\cr 
&\ominus &\oplus & \ominus&& &  \cr
& & & &\oplus & \ominus & \oplus \cr
&&&&&\oplus&\cr
}_{\rho},\ \EE_3'= \bordermatrix{
&-1&0&1&2&3&4\cr
&&\oplus & \ominus &&&\cr 
&\ominus &\oplus & \ominus& \oplus& &  \cr
& & & & & \ominus & \oplus \cr
&&&&&\oplus&\cr
}_{\rho}. \]
Since $B_1<0$, we need to do a row exchange to satisfy (P'). We obtain
\[R_1(\EE_2')= \bordermatrix{
&-1&0&1&2&3&4\cr
&\lhd &\oplus & \rhd&& &  \cr
&&\oplus & \ominus &&&\cr 
& & & &\oplus & \ominus & \oplus \cr
&&&&&\oplus&\cr
}_{\rho},\ R_1(\EE_3')= \bordermatrix{
&-1&0&1&2&3&4\cr
&\lhd &\lhd & \rhd& \rhd& &  \cr
&&\ominus & \oplus &&&\cr 
& & & & & \ominus & \oplus \cr
&&&&&\oplus&\cr
}_{\rho}. \]
By direct computation, we verify $\pi(\EE)\cong\pi(R_1(\EE'_2))\cong\pi(R_1(\EE'_3)).$ One can also check that we can apply $ui_{1,3}$ of type 3' to $R_1(\EE_2')$ and $R_1(\EE_3')$ to recover $\EE.$
\end{exmp}

\begin{defn}\label{def ui inverse}
Let $\EE \in \Rep$. We say that $ui_{i,j}\inv$ is applicable of type 1, 2, or 3 (excluding 3') if there exists $\EE'\in\Rep$ such that $\EE=ui_{i,j}(\EE')$ and this $ui_{i,j}$ is not of type 3' in Definition \ref{def ui}. Furthermore, we say $ui_{i}\inv$ is applicable of type 3' if there exists $\EE'\in\Rep$ such that $\EE=ui_{i,j}(\EE')$ and this $ui_{i,j}$ is of type 3' in Definition \ref{def ui}.
\end{defn}

Note that Corollary \ref{cor ui inverse} shows that for $ui_{i,j}\inv$ not of type 3', we have $dual \circ ui_{j,i} \circ dual=ui_{i,j}\inv.$ Also, Corollary \ref{cor split}, describes the image of $ui_i\inv.$

\subsection{Preorder on extended multi-segments}\label{sec preorder}
In this subsection, we define a preorder on the set of extended multi-segments. We  show that fixing $\EE \in \Rep$, the set $\{ \EE' \ | \ \EE' \leq \EE\}$ has a unique minimal element (up to row exchanges). Moreover, this minimal element carries certain invariants of $\pi(\EE)$.
\begin{defn}
For $\EE, \EE' \in \Rep$, we define $\EE \leq \EE'$ if $\EE$ can be obtained from $\EE'$ by successively applying $ui_{i,j}$ and row exchange $R_k$.

We say $\EE$ is minimal if for any $\EE'$ such that $\EE' \leq \EE$, the relation $\EE'\geq \EE$ also holds. In other words, $\EE'=\EE$ up to row exchanges. Equivalently, $\EE$ is minimal if and only if for any $\rho$ and $i,j \in I_{\rho}$, $ui_{i,j}$ is not applicable on $\EE$.

Denote $\EE_{\rho}= \FF$, and let $ \FF= \FF_1 + \FF_2 + \FF_3$ be an arbitrary decomposition (we allow $\FF_1$ and $\FF_3$ to be empty). We define $ \FF_2 \leq \FF_2'$ if $\EE^{\rho} \cup (\FF_1 + \FF_2+ \FF_3) \leq \EE^{\rho} \cup ( \FF_1 + \FF_2' + \FF_3)$. We say $\FF_2$ is minimal if it satisfies the same property in the previous paragraph.
\end{defn}

We remark that we consider this preorder on $\FF_2$ where $\EE_{\rho}= \FF_1+\FF_2+ \FF_3$ because sometimes we want to replace $\FF_2$ by a minimal element in the set $\{ \FF_2' \ | \ \FF_2' \leq \FF_2\}$ without changing the part $\FF_1$ and $ \FF_3$. This is crucial in the proof of the main theorems in the next section.

We fix the following notation. Suppose $\EE \in \Rep^{(P')}$ and  denote $$\FF=\EE_{\rho}= \{([A_i,B_i]_{\rho},l_i,\eta_i)\}_{ i \in (I_{\rho},>)}.$$ 
Let $ x \in \R$. We denote
\begin{align*}
\FF_{ >x}&=\{ ([A_i,B_i]_{\rho},l_i,\eta_i)\}_{i \in I_{\rho}, B_i>x},\\
    \FF_{=x}&=\{ ([A_i,B_i]_{\rho},l_i,\eta_i)\}_{i \in I_{\rho}, B_i=x},\\
    \FF_{ <x}&=\{ ([A_i,B_i]_{\rho},l_i,\eta_i)\}_{i \in I_{\rho}, B_i<x},
\end{align*}
with the admissible order inherited from $(I_{\rho}, >)$. Note that $\FF= \FF_{<x}+\FF_{=x}+ \FF_{>x}$. Also, we write $\FF_{\leq x}= \FF_{<x} + \FF_{=x}$ and $\FF_{\geq x}= \FF_{=x} + \FF_{>x}$.

We first show that minimal elements under this preorder enjoy the following property. 
\begin{lemma}\label{lem minimal shift}
Suppose $\EE \in \Rep^{(P')}$ is minimal. Denote $\FF= \EE_{\rho}$. Then for any $B>1/2$, we have $\EE^{\rho} \cup (\FF_{<B} + sh^{-1}(\FF_{\geq B})) \in \Rep$. In particular, we have 
\begin{align*}
    D_{\Omega(\FF_{\geq B})}(\pi(\EE))&= \pi(\EE^{\rho} \cup (\FF_{<B} + sh^{-1}(\FF_{\geq B})))
\end{align*}
is a composition of highest derivatives up to a multiplicity, and
\[S_{\Omega(\FF_{\geq B})} (\pi(\EE^{\rho} \cup(\FF_{<B} + sh^{-1}(\FF_{\geq B}))))= \pi(\EE).\]
\end{lemma}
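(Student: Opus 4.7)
The plan is to first verify that $\EE' := \EE^{\rho} \cup (\FF_{<B} + sh^{-1}(\FF_{\geq B}))$ satisfies the structural requirements of an extended multi-segment, then prove $\pi(\EE') \neq 0$ using the minimality of $\EE$, and finally deduce the derivative and socle formulae from Lemma \ref{lem shift}(iv) together with Theorem \ref{thm derivative-socle}(3).

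For the structural verification I would use the hypothesis $B > 1/2$ decisively. Since the $B$-values lie in $\tfrac{1}{2}\Z$ and differ by integers within a fixed $\rho$, the inequality $B_i \geq B > 1/2$ forces $B_i \geq 1$, so after the shift each new entry satisfies $B_i - 1 \geq 0$ and $A_i + B_i - 2 \geq 0$. The sign condition \eqref{eq sign condition} is unaffected by $sh^{-1}$ since each $a_i, b_i$ retains its parity, so $\EE'$ is still attached to a local Arthur parameter of good parity. The inherited order on $\EE'$ still satisfies $(P')$: for any $i \in I_{\rho,<B}$ and $j \in I_{\rho,\geq B}$ with $i<j$, the original inequality $B_i < B_j$ in $\tfrac{1}{2}\Z$ forces $B_j - B_i \geq 1$, hence $B_j - 1 \geq B_i$ in $\EE'$.

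To show $\pi(\EE') \neq 0$, I would first establish a partial-shift analogue of Theorem \ref{thm shift left}: under the structural conditions above,
\[ \EE' \in \Rep \iff D_{\Omega(\FF_{\geq B})}(\pi(\EE)) \neq 0, \]
and in that case both sides agree up to a positive multiplicity. The proof would adapt the four-step argument of Theorem \ref{thm shift left} almost verbatim: produce an injection $\pi(\EE) \hookrightarrow \times_{i \in I_{\rho,\geq B}} Z_\rho[B_i,A_i] \rtimes \pi(\EE')$ by successively inserting the characters of $\Omega(\FF_{\geq B})$ via Lemma \ref{lem Lapid}, using Corollary \ref{cor uniform derivative} at each intermediate stage to pin down the correct irreducible subrepresentation. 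The hypothesis $B > 1/2$ guarantees every character in $\Omega(\FF_{\geq B})$ has exponent $\geq 1$, so the insertion process is independent of the $\FF_{<B}$-block.

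The main step, and the main obstacle, is to deduce $D_{\Omega(\FF_{\geq B})}(\pi(\EE)) \neq 0$ from the minimality of $\EE$. Assume for contradiction that this derivative vanishes. Then tracing through the Atobe--M\'inguez algorithm as in the proof of Corollary \ref{cor uniform derivative}, there is a first character $\rho|\cdot|^{a}$ with $a \geq B$ whose required multiplicity exceeds the highest derivative at that stage; by Lemma \ref{lem $L$-data}(ii) the resulting discrepancy $\Omega(\FF_{\geq B}) \setminus \Omega(\pi(\EE))_\rho$ (being symmetric about $\rho|\cdot|^{-1/2}$) corresponds to an invisible segment $\Delta_\rho[a,a+1]$ or summand $\rho \otimes S_0$ in the $L$-data of $\pi(\EE)$. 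Tracing this invisibility back through Definition \ref{def rep of segment} locates a pair of indices $i<j$ in $I_\rho$ whose segments $[A_i,B_i]_\rho$ and $[A_j,B_j]_\rho$ sit in the relative position required by Definition \ref{def ui}; an appropriate row exchange followed by $ui_{i,j}$ (of type 1, 2, 3, or 3') is then applicable on $\EE$, contradicting minimality. The hardest part will be this final combinatorial bookkeeping: showing that every such obstruction matches one of the four explicit patterns in Definition \ref{def ui} after appropriate row exchanges, with careful handling of the half-integer versus integer cases and of the sign conventions.
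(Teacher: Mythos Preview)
Your route is far more indirect than needed and has a real gap in Step 3. You invoke Lemma~\ref{lem $L$-data}(ii) to argue about the discrepancy $\Omega(\FF_{\geq B}) \setminus \Omega(\pi(\EE))_\rho$, but that lemma concerns the full multiset $\Omega(\EE_\rho) \setminus \Omega(\pi(\EE))_\rho$; since every exponent appearing in $\Omega(\FF_{\geq B})$ is at least $B > 1/2$, the symmetry about $\rho|\cdot|^{-1/2}$ is vacuous here and gives no link from ``derivative vanishes'' to ``invisible segment''. The subsequent passage from this to the applicability of some $ui_{i,j}$ is then left as unexecuted bookkeeping that you yourself flag as the hardest part. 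Your Step 2 (a partial-shift analogue of Theorem~\ref{thm shift left}) is also a genuinely new statement requiring its own proof; the four-step argument there is tailored to the full uniform shift and does not transfer ``almost verbatim''.

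The paper's argument is purely combinatorial and avoids $L$-data and derivatives entirely for the main assertion $\EE' \in \Rep$. The key observation is that the inequalities in Proposition~\ref{prop positive non-vanishing}(i) for an adjacent pair depend only on $l_k,l_{k+1},\eta_k,\eta_{k+1}$ and the \emph{differences} $A_k-A_{k+1}$, $B_k-B_{k+1}$, so they are unchanged when both rows shift by the same amount. Hence any failure of the criterion in Theorem~\ref{thm non-vanishing}(ii) for $\EE'$ must occur at an adjacent pair with one row in $\FF_{<B}$ and the other in $sh^{-1}(\FF_{\geq B})$. After row exchanges this reduces to two consecutive rows $([A_1,B_1]_\rho,l_1,\eta_1)$, $([A_2,B_2]_\rho,l_2,\eta_2)$ in $\FF$ with $B_1<B\leq B_2$ and $A_1<A_2$. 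A direct case check then shows that Proposition~\ref{prop positive non-vanishing}(i)(1) fails for the shifted pair $([A_1,B_1]_\rho,l_1,\eta_1),([A_2-1,B_2-1]_\rho,l_2,\eta_2)$ precisely when one of the three equalities in Definition~\ref{ui def} holds for the original pair --- i.e., precisely when $ui_{1,2}$ is applicable on $\FF$, contradicting minimality. Once $\EE' \in \Rep$ is known, the derivative and socle formulae follow at once from Lemma~\ref{lem shift}(iv) (applied with $\EE'$ in the role of $\EE$ there).
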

\begin{proof}
The second assertion follows from the first one and Lemma  \ref{lem shift}(iv). Now we prove the first assertion. Fix $B>\frac{1}{2}$, and write $\FF= \{ ([A_i,B_i]_{\rho},l_i,\eta_i)\}_{i=1}^n$. We denote $\FF^{B}=\FF_{<B} + sh^{-1}(\FF_{\geq B}) $ and $ \EE^{B}= \EE^{\rho} \cup \FF^{B}$ for brevity.

Assume for the moment that $\EE^B$ is non-negative. 
Suppose for contradiction that $\EE^B$ is not in $\Rep$. 
Since $\EE^B\not\in \Rep$, by Theorem \ref{thm non-vanishing}(ii), there is an adjacent pair $(i,j, \gg)$ of $ \EE^{B}$ violates the non-vanishing conditions in Proposition \ref{prop positive non-vanishing}(i). Then $\gg$ must not be an admissible order of $\FF$ since $\EE \in \Rep$. After row exchanges, we may assume $\FF$ is of the form
\begin{align*}
    \FF_{<B}=& \{([A_1,B_1]_{\rho},l_i,\eta_i)\},\\
    \FF_{\geq B}=& \{([A_1+1,B_2]_{\rho},l_i,\eta_i)\}_{i=2}^{n-1} + \{([A_n,B_3]_{\rho},l_n,\eta_n)\},
\end{align*}
where $B_3 \geq B_2$ and $i=1, j=n$. Then $\FF_{<B} \cup sh^{-1}(\FF_{\geq B})$ satisfies the condition of Proposition \ref{prop positive non-vanishing}(ii), so we only need to check the non-vanishing conditions in Proposition \ref{prop positive non-vanishing}(i) in this specific order. It remains to check it for the first two rows. However, since $\EE$ is minimal, $ui_{1,2}$ is not applicable on $\FF$. Consequently, the non-vanishing conditions in Proposition \ref{prop positive non-vanishing}(i) hold in this specific order. This is a contradiction and hence verifies the lemma when $\EE^B$ is non-negative.

Suppose that $\EE^B$ is not necessarily non-negative. Let $t$ be any large enough integer such that $sh^t(\EE^B)=(sh^t(\EE))^B$ is non-negative. The previous case shows that $(sh^t(\EE))^B\in\Rep.$ Since $\EE,(sh^t(\EE))^B\in\Rep,$ it follows from Theorem \ref{thm non-vanishing}(i) that $\EE^B\in\Rep$. This completes the proof of the lemma.
\end{proof}

As a corollary, we show the uniqueness of the minimal element (up to row exchanges) in the set $\{\EE' \ | \ \EE' \leq \EE\}$, and denote the one with a specific admissible order by $\EE^{min}$.
\begin{cor}\label{cor minimal}\ 
\begin{enumerate}
    \item [(i)] Suppose $\EE ,\EE' \in \Rep^{(P')}$ are minimal elements under this preorder and $\pi(\EE) \cong \pi(\EE')$. Then for any $\rho$, we have up to row exchanges,
    \[ (\EE_{\rho})_{>1/2} =(\EE_{\rho}')_{>1/2}.\]
 \item[(ii)] Suppose $\EE \in \Rep$. There exists a unique minimal element (up to row exchanges) in the set $\{ \EE' \ | \ \EE' \leq \EE \}$. Let $\EE^{\min}=\cup_{\rho} \{([A_i,B_i]_{\rho},l_i,\eta_i)\}_{i \in (I_{\rho, >})}$ denote the unique minimal element in this set such that for any $i<j \in I_{\rho}$,
\begin{enumerate}
    \item [$\oldbullet$] $B_i \leq B_j$ , 
    \item [$\oldbullet$] if $B_i= B_{j}$, then $ A_i \leq A_{j}$.
\end{enumerate}
Suppose $\EE_{\rho}= \FF_1 +\FF_2 +\FF_3$. Then we denote $\FF_2^{min}$ the unique minimal element in the set $\{ \FF_2' \ | \ \FF_2' \leq \FF_2\}$ satisfying the same conditions above.
\end{enumerate}
\end{cor}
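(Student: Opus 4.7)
The plan is as follows. By Lemma \ref{lemma far away}(i), for both parts it suffices to work one $\rho$ at a time, so fix $\rho$ and abbreviate $\FF = \EE_\rho$, $\FF' = \EE_\rho'$.

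For (i), I would argue by induction on $|\{i \in I_\rho : B_i > 1/2\}|$. The base case is vacuous. For the inductive step, Theorem \ref{thm max A} applied to $\pi(\EE)$ determines $A_{\max} = \max_i A_i$ together with its multiplicity, which must therefore agree for $\FF$ and $\FF'$. Let $B_{\max}$ denote the largest $B$-value appearing in $\FF$; since $\EE$ is minimal, Lemma \ref{lem minimal shift} at $B = B_{\max}$ yields
\[
D_{\Omega(\FF_{=B_{\max}})}(\pi(\EE)) = \pi(\EE^\rho \cup (\FF_{<B_{\max}} + sh^{-1}(\FF_{=B_{\max}})))
\]
up to multiplicity as a composition of highest derivatives, and the same identity holds for $\EE'$. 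A second application of Theorem \ref{thm max A} to this shifted representation identifies the rows with $B_i = B_{\max}$ together with their $A_i$-values and multiplicities, which must match for $\FF$ and $\FF'$. Separating these rows to an auxiliary cuspidal $\rho^*$ via Corollary \ref{cor shift add}(i) and Remark \ref{rmk change rho} decouples them from the rest; the $(l_i, \eta_i)$ data at level $B_{\max}$ is then pinned down by Lemma \ref{lem Moeglin} combined with the explicit derivative/socle formulas of Theorem \ref{thm derivative-socle} (run on the decoupled piece, whose $B$-values are uniformly $B_{\max}$). Lemma \ref{lemma far away}(ii) reduces the remaining comparison to a strictly lighter pair of minimal extended multi-segments, where the induction hypothesis applies.

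For (ii), existence of a minimal element follows because the preorder admits no infinite strictly decreasing chain: type 3' union-intersections reduce the row count outright, and a case-by-case inspection of Definition \ref{ui def} shows that types 1--3 strictly decrease a suitable lexicographic invariant on the multi-set of segment-and-length data; row exchanges are invertible and do not contribute to descent. For uniqueness up to row exchanges, let $\EE_1, \EE_2 \leq \EE$ be two minimal elements. Proposition \ref{prop nonzero of ui} and Theorem \ref{thm row exchange} give $\pi(\EE_1) \cong \pi(\EE) \cong \pi(\EE_2)$, so (i) yields $(\EE_{1,\rho})_{>1/2} = (\EE_{2,\rho})_{>1/2}$ up to row exchange for every $\rho$. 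Transporting these matching high parts to a disjoint cuspidal $\rho^*$ via Corollary \ref{cor shift add}(i) reduces the problem to comparing two minimal extended multi-segments with all $B_i \leq 1/2$ and the same representation; in this low range, $(P')$ together with the explicit form of Definition \ref{ui def} makes minimality rigid enough that the supports must agree, at which point Lemma \ref{lem Moeglin} forces equality up to row exchange.

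The principal obstacle will be the inductive identification of the full data at level $B_{\max}$ in (i): when several rows share $B_i = B_{\max}$, disentangling their individual $(A_i, l_i, \eta_i)$ contributions from the single derivative will require delicate use of the non-vanishing conditions in Proposition \ref{prop positive non-vanishing}, the commutation identities of Lemma \ref{lem identities}, and the $L$-data analysis of Corollary \ref{cor shift in single rho}. A secondary subtlety in (ii) is the treatment of rows with $B = 1/2$, where the sign $\eta_i$ has additional freedom (for instance when $l_i = b_i/2$); one expects minimality combined with $(P')$ to force a canonical choice, but verifying this will require a separate case-by-case check from the explicit rules of Definition \ref{ui def}.
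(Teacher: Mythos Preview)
Your approach to (i) is correct in spirit but more involved than necessary, and the ``principal obstacle'' you flag is in fact a non-issue. The paper observes that Lemma~\ref{lem minimal shift} (applied at every $B>1/2$, not just at $B=B_{\max}$) shows each multi-set $\Omega((\EE_\rho)_{\geq B})$ is an invariant of $\pi(\EE)$ via a composition of highest derivatives. The collection $\{\Omega((\EE_\rho)_{\geq B})\}_{B>1/2}$ determines $\supp((\EE_\rho)_{>1/2})$ outright, and then a single application of Corollary~\ref{cor shift add}(i) (with $\FF_2=(\EE_\rho)_{>1/2}$) pins down the full data $(l_i,\eta_i)$ in one step. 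There is no need to peel off rows level by level, no induction, and no separate disentangling of the $(l_i,\eta_i)$ at a fixed $B$-value.

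For (ii) there is a genuine gap: you miss the shifting trick, which is the whole point. The paper notes that $\EE\leq\EE'$ if and only if $sh^t(\EE)\leq sh^t(\EE')$ for any $t\geq 0$, since $ui_{i,j}$ and row exchanges commute with uniform shifts. Taking $t$ large enough that $sh^t(\EE)$ is positive (indeed all $B_i>1/2$), the minimal elements in $\{\EE':\EE'\leq\EE\}$ correspond bijectively to those in $\{\EE'':\EE''\leq sh^t(\EE)\}$, and now (i) applies directly to give uniqueness. Your proposed reduction to the range $B_i\leq 1/2$ is not a proof: the claim that ``minimality is rigid enough that the supports must agree'' is precisely what is difficult in the low range and is handled only later in the paper (Proposition~\ref{prop shift left 1/2} in the half-integer case, and the proof of Theorem~\ref{thm integer} in the integer case). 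Attempting a direct case-by-case analysis here would essentially reprove those results, and your own flagged ``secondary subtlety'' at $B=1/2$ is a symptom of this. The shift to the positive range sidesteps the issue entirely.
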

\begin{proof}
For Part (i), the previous lemma shows that for any $B >1/2$, $\Omega((\EE_{\rho})_{\geq B})$ can be recovered from the derivative information of $\pi(\EE)$, so
\[\Omega((\EE_{\rho})_{\geq B})=\Omega((\EE_{\rho}')_{\geq B}). \]
On the other hand, the multi-sets $\{\Omega((\EE_{\rho})_{\geq B})\}_{B >1/2}$ uniquely determine $\supp((\EE_{\rho})_{>1/2})$ as a multi-set. So Part (i) follows from Corollary \ref{cor shift add}(i).

For Part (ii), we observe that by definition, for any $t \in \N$,
\[ \EE \leq \EE' \Longleftrightarrow sh^t(\EE) \leq sh^t(\EE').   \]
 Therefore, we may assume for any $\rho$ and $\FF= \EE_{\rho}$, $\FF=\FF_{>1/2}$, and the conclusion follows from Part (i). This completes the proof of the corollary.
\end{proof}

\begin{remark}
\begin{itemize}
    \item [1.] Let $\EE\in \Rep$ with $\EE_{\rho}=\FF$. The comparison of $\FF_{\geq 1/2}$ between minimal elements in the set $\{\FF' \ | \ \pi(\EE^{\rho} \cup \FF')\cong \pi(\EE^{\rho} \cup \FF)\}$ is more subtle, which we defer to the next section (see Proposition \ref{prop shift left 1/2}).
    \item [2.] Though the minimal element in the set $\{ \EE' \ | \ \EE' \leq \EE \}$ is unique (up to row exchanges), there may be more than one minimal elements in the set $\{ \EE' \ | \ \pi(\EE') \cong \pi(\EE) \}$. Let us explain this phenomenon based on Example \ref{ex Atobe}, where we have 
    \[ \{ \EE' \ | \ \pi(\EE') \cong \pi(\EE) \}/\text{(row exchanges)}= \{ \EE_1 ,\dots , \EE_9\}, \]
    among which $\EE_1,\EE_5,\EE_6,\EE_7$ are all minimal. Indeed, one can check that
    \[ \{ \EE' \ | \ \pi(\EE') \cong \pi(\EE) \}= \bigcup_{i=4,7,8,9} \{ \EE' \ | \ \EE' \leq \EE_i\},  \]
    and $ \EE_1=\EE_4^{min}, \EE_5=\EE_8^{min}, \EE_6=\EE_9^{min}, \EE_7= \EE_7^{min}$. 
\end{itemize}
\end{remark}

\subsection{Algorithm for \texorpdfstring{$UI^{-1}(\EE)$}{}}
We end this section by showing that how to compute the set 
\[UI^{-1}(\EE'):=\{ \EE \ | \ \EE \geq \EE'\}/(\text{row exchanges}).\]
The quotient means any $\EE \geq \EE'$ is equal to exactly one element in $UI^{-1}(\EE')$ after row exchanges. Clearly, if we define 
\[ UI^{-1} (\EE_{\rho}'):= \{ \FF\ | \ \FF \geq \EE_{\rho}'\}/\text{(row exchanges)}, \]
then we have 
\[ UI^{-1}(\EE')= \{ \cup_\rho \FF_{\rho} \ | \ \FF_{\rho} \in UI^{-1}(\EE_{\rho})  \}. \]
We give an algorithm to compute the set $UI^{-1}(\FF')$. It is simple to check that the algorithm computes $UI\inv(\FF')$ using Corollary \ref{cor ui inverse} and \ref{cor split}.

\begin{algo} \label{algo ui inverse}
Given $\EE' \in \Rep$, and $\FF'= \EE_{\rho}'$ we proceed as follows:
\begin{enumerate}
    \item [\textbf{Step 1:}] Set $A=\{\FF'\}$, $B=\{\FF'\}$, and $C= \emptyset$.
    \item [\textbf{Step 2:}]  For each $\FF$ in $B$, we compute the set 
    \[S=\{dual(\FF)_{\gg} \ | \ \gg \text{is an admissible order on }dual(\FF)\}.\]
    For each element $\FF^{\ast}=\{([A_i,B_i]_{\rho},l_i, \eta_i)\}_{i=1}^n$ in $S$ and $1 \leq k <n$, if $ui_{k}$ is applicable and not of type 3' (see Definition \ref{ui def}) on $\FF^{\ast}$ and no element in $A$ is equal to $dual(ui_k(\FF^{\ast}))$ up to row exchanges, then we add $dual(ui_k(\FF^{\ast}))$ into both $A$ and $C$.
    \item [\textbf{Step 3:}]
        For each $\FF$ in $B$, we compute the set 
    \[S=\{\FF_{\gg} \ | \ \gg \text{is an admissible order on }\FF\}.\]
    For each $\FF_{\gg}= \{([A_i,B_i]_{\rho},l_i, \eta_i)\}_{i=1}^n$ and $1 \leq i \leq n$, if Corollary \ref{cor split} is applicable at the $i$-th row, and no element in $A$ is equal to the resulting extended multi-segment (up to row exchanges), then we add it into both $A$ and $C$.
\item [\textbf{Step 4:}] If $C$ is empty, then the procedure ends. Otherwise replace $B$ by $C$ and $C$ by $\emptyset$ and go back to step 2.
\end{enumerate}
When the procedure ends we have $A=UI^{-1}(\FF')$.
\end{algo}

We give a simple example for this algorithm.
\begin{exmp}\label{ex UI inverse}
We apply the algorithm to compute $UI^{-1}(\FF)$ where
\[\FF= \bordermatrix{
& 0& 1&2 \cr
& \ominus & \oplus & \ominus \cr
& &\ominus & \cr
}_{\rho}.\]
Initially, we set $A=\{\FF\}, B=\{\FF\}$ and $C=\emptyset$. In the second step, we see $ui_{1}$ is applicable on $dual(\FF)$ of type 2. We have
 \[dual(\FF)= \bordermatrix{
& 0& 1&2 \cr
& \ominus & \oplus &  \cr
& &\lhd & \rhd\cr
}_{\rho}, ui_{1}(dual(\FF)) = \bordermatrix{
& 0& 1&2 \cr
& \ominus & \oplus & \ominus \cr
& &\ominus & \cr
}_{\rho}.\]
So we add 
\[ \FF_2=dual \circ ui_{1,2} \circ dual (\FF)= \bordermatrix{
& 0& 1&2 \cr
& \ominus & \oplus &  \cr
& &\lhd & \rhd\cr
}_{\rho} \]
 in $A$ and $C$. Note that $\FF_2=ui_{1,2}^{-1}(\FF)$. Nothing happens in step 3. Since $C$ is non-empty, we let $B=C$, $C= \emptyset$ and go back to step 2.
 
 In the second loop, nothing happens in step 2, but in step 3, Corollary \ref{cor split} is applicable to the first row of $\FF_2$. So we add 
\[ \FF_3= \bordermatrix{
& 0& 1&2 \cr
& \ominus &  &  \cr
&&\oplus& \cr
& &\lhd & \rhd\cr
}_{\rho} \]
in $A$ and $C$. One can check that nothing happens in the third loop and the procedure terminates. Therefore, we get
\[UI^{-1}(\FF)=\left\{\bordermatrix{
& 0& 1&2 \cr
& \ominus & \oplus & \ominus \cr
& &\ominus & \cr
}_{\rho},\  \bordermatrix{
& 0& 1&2 \cr
& \ominus & \oplus &  \cr
& &\lhd & \rhd\cr
}_{\rho},\ \bordermatrix{
& 0& 1&2 \cr
& \ominus &  &  \cr
&&\oplus& \cr
& &\lhd & \rhd\cr
}_{\rho}  \right\}. \]
\end{exmp}

\section{Theorem \ref{main thm intro}(1) and (2)}\label{sec main thm}
In this section, we prove Theorem \ref{main thm intro}(1) (see Propositions \ref{prop basic operators} and \ref{prop partial dual} below) and (2) (see Theorems \ref{thm integer} and \ref{thm half integer} below). 
We separate into two cases: (i) integer case, i.e.,  $a_i+b_i$ being even; (ii) half integer case, i.e.,  $a_i+b_i$ being odd.  
To make the statements more concise, we give the following definition.

\begin{defn}\label{def basic operators}
We call the operators $R_k$, $ui_{i,j}$,  $dual \circ ui_{i,j} \circ dual$ and their inverses the basic operators. 
\end{defn}

Basic operators preserve representations as mentioned in Theorem \ref{main thm intro}(1).  

\begin{prop}\label{prop basic operators}
Let $\EE \in \Rep$ and $T$ be any one of the basic operators. Then we have 
$$\pi(\EE) \cong \pi(T(\EE)).$$ 
\end{prop}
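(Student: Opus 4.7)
The plan is to verify the identity $\pi(T(\EE)) \cong \pi(\EE)$ separately for each basic operator, reducing in each case to preservation results already established in the paper. The case $T = ui_{i,j}$ is immediate from Proposition \ref{prop nonzero of ui}, which gives $\pi(ui_{i,j}(\EE)) \cong \pi(\EE) \neq 0$ whenever $ui_{i,j}$ is applicable to $\EE \in \Rep$.

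For $T = R_k$, I would extend Theorem \ref{thm row exchange}, which handles only the non-negative case, to general $\EE \in \Rep$. Choose $t \gg 0$ so that $sh^t(\EE)$ is non-negative; by Theorem \ref{thm non-vanishing}(i) this still lies in $\Rep$, and $\pi(\EE)$ is obtained from $\pi(sh^t(\EE))$ by the specific chain of derivatives prescribed in Definition \ref{def rep of segment}. Since $R_k$ commutes with the $sh_i^t$ operators by Lemma \ref{lem identities}(i), we have $sh^t(R_k(\EE)) = R_k(sh^t(\EE))$, and the same chain of derivatives computes $\pi(R_k(\EE))$ from $\pi(R_k(sh^t(\EE)))$. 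Applying Theorem \ref{thm row exchange} to the non-negative $sh^t(\EE)$ gives $\pi(R_k(sh^t(\EE))) \cong \pi(sh^t(\EE))$, and this equality passes through the common derivative chain to yield $\pi(R_k(\EE)) \cong \pi(\EE)$.

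For $T = dual \circ ui_{i,j} \circ dual$, I would first invoke the $R_k$ case (already established) to apply a sequence of row exchanges putting $\EE$ into $\Rep^{(P')}$, so that the dual is defined via Definition \ref{dual segment}. Then Theorem \ref{thm Aubert-Zelevinsky dual formula} gives $\pi(dual(\EE)) \cong \widehat{\pi(\EE)}$; Proposition \ref{prop nonzero of ui} applied to $dual(\EE)$ gives $\pi(ui_{i,j}(dual(\EE))) \cong \widehat{\pi(\EE)}$; and one more application of Theorem \ref{thm Aubert-Zelevinsky dual formula}, combined with the fact that the Aubert--Zelevinsky involution is an involution, yields $\pi(dual(ui_{i,j}(dual(\EE)))) \cong \widehat{\widehat{\pi(\EE)}} \cong \pi(\EE)$.

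For the inverses, $R_k^{-1}$ is itself realised by another row exchange and therefore reduces to the $R_k$ case. For $ui_{i,j}^{-1}$ that is not of type~3', Corollary \ref{cor ui inverse} identifies it with $dual \circ ui_{j,i} \circ dual$, so it is already covered. For $ui_{i,j}^{-1}$ of type~3', Corollary \ref{cor split} enumerates the possible preimages and shows directly that each preimage satisfies $\pi(\EE^{\rho} \cup (\EE_{\rho,r})_{\gg'}) \cong \pi(\EE)$. Finally, $(dual \circ ui_{i,j} \circ dual)^{-1} = dual \circ ui_{i,j}^{-1} \circ dual$, which is handled by the previous cases together with Theorem \ref{thm Aubert-Zelevinsky dual formula}. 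The main obstacle throughout will be bookkeeping admissibility: the dual formula requires $\Rep^{(P')}$, and as Remark \ref{rmk on row exchange} warns, $R_k$ applied to an extended multi-segment yields only a symbol in general, so I need to verify at each step, using the non-vanishing conditions of Proposition \ref{prop positive non-vanishing} that come from $\EE \in \Rep$, that the intermediate objects are genuine extended multi-segments rather than merely symbols.
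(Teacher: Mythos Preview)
Your proposal is correct and aligns with the paper's approach: the paper's proof is the single sentence ``It follows from Theorems \ref{thm row exchange}, \ref{thm Aubert-Zelevinsky dual formula} and Proposition \ref{prop nonzero of ui},'' and you invoke exactly these three results. Your write-up is considerably more detailed than the paper's---in particular, you spell out the shift-to-non-negative argument for $R_k$ (filling in what Theorem \ref{thm row exchange} leaves implicit in the general case; this is the same mechanism used inside the proof of Proposition \ref{prop nonzero of ui}) and you treat the inverses explicitly via Corollaries \ref{cor ui inverse} and \ref{cor split}, which the paper's one-liner does not bother to mention.
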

\begin{proof}
It follows from Theorems \ref{thm row exchange}, \ref{thm Aubert-Zelevinsky dual formula} and Proposition \ref{prop nonzero of ui}.
\end{proof}

We will use the following immediate consequence of Lemma \ref{lem $L$-data}.
\begin{cor}\label{cor symmetric support}
Suppose $\pi(\EE)= \pi(\EE') \neq 0$. If the symmetric difference $\Omega(\EE_{\rho})\Delta \Omega(\EE_{\rho}')$ doesn't contain an element of the form $\rho|\cdot|^{x}$ with $x \leq -1/2$, then $\Omega(\EE_{\rho})=\Omega(\EE_{\rho}')$.
\end{cor}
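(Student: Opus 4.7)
The plan is to derive the statement directly from Lemma \ref{lem $L$-data}(ii) applied to both extended multi-segments.

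First, I would set $M = \Omega(\pi(\EE))_{\rho}$. Since $\pi(\EE) \cong \pi(\EE')$, we have $\Omega(\pi(\EE'))_{\rho} = M$ as well. By Lemma \ref{lem $L$-data}, $\Omega(\EE_\rho) \supseteq M$ and $\Omega(\EE'_\rho) \supseteq M$, so I can consider the multi-sets
\[
X := \Omega(\EE_\rho) \setminus M, \qquad X' := \Omega(\EE'_\rho) \setminus M,
\]
each of which, by Lemma \ref{lem $L$-data}(ii), is symmetric about $\rho|\cdot|^{-1/2}$: the multiplicity of $\rho|\cdot|^{x}$ equals the multiplicity of $\rho|\cdot|^{-x-1}$.

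Next, since both $\Omega(\EE_\rho)$ and $\Omega(\EE'_\rho)$ contain the common sub-multi-set $M$, I would observe that
\[
\Omega(\EE_\rho) \,\Delta\, \Omega(\EE'_\rho) \;=\; X \,\Delta\, X'.
\]
The symmetric difference of two multi-sets that are each symmetric about $\rho|\cdot|^{-1/2}$ is itself symmetric about $\rho|\cdot|^{-1/2}$, so the multiplicity function $m$ of $\Omega(\EE_\rho) \Delta \Omega(\EE'_\rho)$ satisfies $m(\rho|\cdot|^{x}) = m(\rho|\cdot|^{-x-1})$ for every $x$.

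Finally, I would invoke the hypothesis that the symmetric difference contains no element of the form $\rho|\cdot|^{x}$ with $x \leq -1/2$. If some $\rho|\cdot|^{y}$ occurred in $\Omega(\EE_\rho) \Delta \Omega(\EE'_\rho)$ with positive multiplicity, then by the symmetry we would also have $\rho|\cdot|^{-y-1}$ occurring with positive multiplicity. Since one of $y$ or $-y-1$ is always $\leq -1/2$ (their average is $-1/2$), this contradicts the hypothesis. Hence $\Omega(\EE_\rho) \Delta \Omega(\EE'_\rho) = \emptyset$, that is, $\Omega(\EE_\rho) = \Omega(\EE'_\rho)$. There is no real obstacle here; the only subtle point is the bookkeeping of the two symmetric ``phantom'' contributions $X$ and $X'$ and noticing that their symmetric difference inherits the mirror symmetry, which immediately reduces the claim to a one-line pigeonhole argument.
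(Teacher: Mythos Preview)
Your proof is correct and follows essentially the same approach as the paper: both use the identity $\Omega(\EE_{\rho})\,\Delta\,\Omega(\EE_{\rho}') = (\Omega(\EE_{\rho})\setminus \Omega(\pi)_{\rho})\,\Delta\,(\Omega(\EE_{\rho}')\setminus \Omega(\pi)_{\rho})$ together with the mirror symmetry from Lemma~\ref{lem $L$-data}(ii). The paper states this in two lines, while you have spelled out the straightforward verification that the resulting symmetric difference inherits the symmetry and hence must be empty under the hypothesis.
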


\begin{proof}
Let $\pi= \pi(\EE) = \pi(\EE')$. This follows from
\[\Omega(\EE_{\rho}) \Delta \Omega(\EE_{\rho}')= (\Omega(\EE_{\rho})\setminus \Omega(\pi)_{\rho})\Delta (\Omega(\EE_{\rho}')\setminus \Omega(\pi)_{\rho})\]
and Lemma \ref{lem $L$-data}(ii).
\end{proof}

\subsection{Integer case}
In this subsection, we assume 
$$\FF=\{([A_i,B_i]_{\rho},l_i,\eta_i)\}_{i\in (I_{\rho},>)} \ \mathrm{and} \ \FF'=\{ ([A_j',B_j']_{\rho},l_j',\eta_j')\}_{j \in (I_{\rho}',>')}$$ are such that
\begin{enumerate}
    \item [$\oldbullet$] $\pi( \EE^{\rho} \cup \FF) \cong \pi(\EE^{\rho} \cup \FF') \neq 0,$
    \item [$\oldbullet$] $A_1 \in \Z$.
\end{enumerate}

We first prove Theorem \ref{main thm intro}(2) in the integer case.

\begin{thm}\label{thm integer}
In the setting above, $\FF'$ can be obtained from $\FF$ by applying a sequence of basic operators. 
\end{thm}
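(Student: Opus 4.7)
The plan is to reduce both $\EE_1 := \EE^{\rho}\cup\FF$ and $\EE_2 := \EE^{\rho}\cup\FF'$ to unique minimal representatives under the preorder $\leq$ of Section \ref{sec preorder}, and then to apply M{\oe}glin's multiplicity-one result (Lemma \ref{lem Moeglin}). First I would descend from each $\EE_i$ to its unique minimal element $\EE_i^{\min}$ (Corollary \ref{cor minimal}(ii)) by applying the basic operators $ui_{i,j}$; by Proposition \ref{prop basic operators} we have $\pi(\EE_1^{\min}) \cong \pi(\EE_2^{\min})$. Corollary \ref{cor minimal}(i) then gives, after a row exchange, that $((\EE_1^{\min})_\rho)_{>1/2}$ and $((\EE_2^{\min})_\rho)_{>1/2}$ agree; in the integer case these are precisely the rows with $B \geq 1$.

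To treat the rows with $B \leq -1$, I would apply $dual$ to both minimal forms. By Theorem \ref{thm Aubert-Zelevinsky dual formula} the dualized representations coincide, and by Lemma \ref{lem identities}(ii) a row $[A,B]$ with $B\leq -1$ dualizes to a row $[A,-B]$ with $-B\geq 1$. Reducing the duals to minimal via $ui_{i,j}$ corresponds on the original side to applying $dual\circ ui_{i,j}\circ dual$, which is a basic operator by Definition \ref{def basic operators}. Reapplying Corollary \ref{cor minimal}(i) on the dual side and dualizing back, I would match the $B\leq -1$ parts of the originals as well. Iterating this bidirectional matching until it stabilizes yields representatives whose $B\neq 0$ portions agree as ordered multi-sets.

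The remaining rows have $B=0$. Since $\pi(\EE_1^{\min}) \in \Pi_{\psi_{\EE_1^{\min}}} \cap \Pi_{\psi_{\EE_2^{\min}}}$, M{\oe}glin's theorem \cite[Corollaire 4.2]{Moe09a} gives $\psi_{\EE_1^{\min}}^{\Delta}=\psi_{\EE_2^{\min}}^{\Delta}$. A direct computation via Clebsch--Gordan shows that a row $[A,B]_\rho$ contributes $\bigoplus_{k=|B|}^{A}\rho\otimes S_{2k+1}$ to the diagonal restriction; subtracting the contributions from the already matched $B\neq 0$ rows, the leftover must equal $\sum_i\bigoplus_{k=0}^{A_i}\rho\otimes S_{2k+1}$, from which the multi-set $\{A_i : B_i=0\}$ can be read off by successive subtraction. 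Hence the full supports agree as multi-sets, and after a final row exchange Lemma \ref{lem Moeglin} forces $\EE_1^{\min}=\EE_2^{\min}$. The main obstacle will be verifying that the bidirectional matching in the second paragraph terminates and does not spoil the previously matched $B\geq 1$ portion; a convenient feature of the integer case is that $a_i+b_i=2A_i+2$ is always even, so no partial dual $dual_k$ is needed and all perturbations stay within the three basic operators of Definition \ref{def basic operators}.
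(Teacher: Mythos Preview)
Your high-level strategy matches the paper's: pass to minimal to pin down the $B>0$ part, dualize to pin down the $B<0$ part, and handle $B=0$ by a support comparison. Your treatment of the $B=0$ rows via $\psi^{\Delta}$ is correct and is essentially equivalent to the paper's Corollary~\ref{cor symmetric support} (both encode that $\Omega(\EE_\rho)\setminus\Omega(\pi)_\rho$ is symmetric about $\rho|\cdot|^{-1/2}$).

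However, there is a genuine gap exactly where you flag ``the main obstacle.'' After matching $(\FF^{\min})_{>0}=(\FF'^{\min})_{>0}$, you dualize and then minimize the \emph{whole} dual. But $ui_{i,j}$ on $dual(\FF^{\min})$ can act between a row with $B'<0$ (coming from the original $B>0$ part) and a row with $B'\geq 0$; this changes the supports of both. Consequently $(dual(\FF^{\min}))^{\min}_{<0}$ need not agree with $(dual(\FF'^{\min}))^{\min}_{<0}$, so dualizing back you have lost the previously established equality of $B>0$ parts. Your proposed ``iteration'' has no monovariant and it is not clear it stabilizes; knowing that the $>0$ part of every minimal representative is unique (Corollary~\ref{cor minimal}(i)) does not by itself force the $<0$ parts to eventually align.

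The paper's fix is a single extra move that removes the need for any iteration: before dualizing, shift the already-matched positive part far away using Corollary~\ref{cor shift add}(i), setting $\FF_t=\FF_{\leq 0}+sh^t(\FF_{>0})$. For $t\gg 0$, every segment of $\widetilde{sh^t(\FF_{>0})}:=dual(\FF_t)_{<0}$ strictly contains every segment of $\widetilde{\FF_{\leq 0}}:=dual(\FF_t)_{\geq 0}$, so no $ui_{i,j}$ can mix the two and
\[
(dual(\FF_t))^{\min}=(\widetilde{sh^t(\FF_{>0})})^{\min}+(\widetilde{\FF_{\leq 0}})^{\min}.
\]
The first summand is already equal on the two sides (since $\FF_{>0}=\FF'_{>0}$), and Corollary~\ref{cor minimal}(i) applied once more matches the $>0$ part of the second summand. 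Dualizing back and undoing the shift via Corollary~\ref{cor shift add}(ii) yields $\supp(\FF_{<0})=\supp(\FF'_{<0})$ without ever disturbing $\FF_{>0}$. Equivalently, in the language of Definition~\ref{def can form}, the point is that one only minimizes $(dual(\FF^{\min}))_{\geq -1/2}$, not the entire dual. Once you insert this shift (or this restriction), your argument goes through in one pass with no iteration.
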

\begin{proof} We may assume both $\FF$ and $\FF'$ are minimal. We decompose them into
\[ \FF= \FF_{\leq 0} + \FF_{>0} ,\ \FF'= \FF_{\leq 0}' + \FF_{>0}'. \]
By Corollary \ref{cor minimal}(i), we have $\FF_{>0}= \FF_{>0}'$ after row exchanges.


Next, we compare $\FF_{\leq 0}$ and $\FF_{\leq 0}'$ further. To avoid complications from the positive parts, we consider $ \FF_t=\FF_{\leq 0} + sh^{t}(\FF_{>0}) , \FF_t'=\FF_{\leq 0}' + sh^{t}(\FF_{>0}') $ for $t \gg0$. Corollary \ref{cor shift add}(i) implies 
\[  \pi(\EE^{\rho} \cup \FF_t)  \cong \pi(\EE^{\rho} \cup\FF_t'). \]
Then we apply the Aubert-Zelevinsky involution to swap the positive and non-positive parts to negative and non-negative parts. Write
\begin{align*}
    \widetilde{\FF_t}:= dual(\FF_t)&= \widetilde{sh^t(\FF_{>0})}+ \widetilde{\FF_{\leq 0}},\\
    \widetilde{\FF_t'}:= dual(\FF_t')&= \widetilde{sh^t(\FF_{>0}')} + \widetilde{\FF_{\leq 0}'}.
\end{align*}
where $ \widetilde{sh^t(\FF_{>0})}= (dual(\FF_t))_{<0}$, $\widetilde{\FF_{\leq 0}}= (dual(\FF_t))_{\geq 0} $ and similar for $\FF'$. Theorem \ref{thm Aubert-Zelevinsky dual formula} shows 
\[ \pi(dual(\EE^{\rho})\cup( \widetilde{sh^t(\FF_{>0})} + \widetilde{\FF_{\leq 0}})) \cong \pi(dual(\EE^{\rho}) \cup (\widetilde{sh^t(\FF_{>0}')} + \widetilde{\FF_{\leq 0}'})). \]

We choose $t$ large enough such that if $[A_{i}+t, -B_{i}-t]_{\rho}$ (resp. $[A_{j},-B_{j}]_{\rho}$) is a support of a row in $\widetilde{sh^t(\FF_{>0})}$ (resp. $\widetilde{\FF_{\leq 0}}$), we have 
\[ [A_{i}+t, -B_{i}-t]_{\rho} \supset [A_j,-B_j]_{\rho},\]
so that $ ui_{i,j}$ is not applicable on $\widetilde{sh^t(\FF_{>0})} + \widetilde{\FF_{\leq 0}} $, and similarly for $\FF'$. Under this assumption, we have
\begin{align*}
    (\widetilde{\FF_t} )^{min}&=\left(\widetilde{sh^t(\FF_{>0})}\right)^{min} +  \left(\widetilde{\FF_{\leq 0}}  \right)^{min},\\
    (\widetilde{\FF_t'})^{min}&=\left(\widetilde{sh^t(\FF_{>0}')}\right)^{min} + \left(\widetilde{\FF_{\leq 0}'}  \right)^{min}.
\end{align*}  
We further decompose
\begin{align*}
     \left(\widetilde{\FF_{\leq 0}}  \right)^{min}&= \widetilde{\FF_{=0}} + \widetilde{\FF_{<0}},\\
      \left(\widetilde{\FF_{\leq 0}'}  \right)^{min}&= \widetilde{\FF_{=0}'} + \widetilde{\FF_{<0}'},
\end{align*}
where $ \widetilde{\FF_{=0}}= \left(\left(\widetilde{\FF_{\leq 0}}  \right)^{min}\right)_{=0}$, and similar for $\FF'$. Despite the notation, we note that $\widetilde{\FF_{<0}}$ is not necessarily $dual(\FF_t)_{>0}.$ By Corollary \ref{cor minimal}(i) again, we obtain $\widetilde{\FF_{<0}}= \widetilde{\FF_{<0}'}$. Taking the Aubert-Zelevinsky involution again, we write
\begin{align*}
   \widetilde{\widetilde{\FF_t}}:= dual (\widetilde{sh^t(\FF_{>0})}+ \widetilde{\FF_{=0}} + \widetilde{\FF_{<0}}  )= \widetilde{\widetilde{ \FF_{<0}}} + \widetilde{\widetilde{ \FF_{=0}}} + sh^{t}(\FF_{>0}),\\
    \widetilde{\widetilde{\FF_t'}}:=  dual (\widetilde{sh^t(\FF_{>0}')}+ \widetilde{\FF_{=0}'} + \widetilde{\FF_{<0}'}  )= \widetilde{\widetilde{ \FF_{<0}}'} + \widetilde{\widetilde{ \FF_{=0}}'} + sh^{t}(\FF_{>0}'),
\end{align*}
where
\[ \left( \widetilde{\widetilde{\FF_t}}\right)_{<0}=\widetilde{\widetilde{ \FF_{<0}}}, \ \left( \widetilde{\widetilde{\FF_t}}\right)_{=0}=\widetilde{\widetilde{ \FF_{=0}}}, \]
and similar for $\FF'$. Note that $\widetilde{\widetilde{\FF_t}}$ is not necessarily $dual(dual(\FF_t))=\FF_t$; however, they are related by a sequence of basic operators. Now we may apply Corollary \ref{cor shift add}(ii) to obtain
\[  \pi(\EE^{\rho} \cup (\widetilde{\widetilde{ \FF_{<0}}} + \widetilde{\widetilde{ \FF_{=0}}} +\FF_{>0})) \cong \pi(\EE^{\rho} \cup (\widetilde{\widetilde{ \FF_{<0}}'} + \widetilde{\widetilde{ \FF_{=0}}'} + \FF_{>0}')) \cong \pi(\EE^\rho \cup \FF). \]

We may replace $\FF$ with $\widetilde{\widetilde{ \FF_{<0}}} + \widetilde{\widetilde{ \FF_{=0}}} +\FF_{>0}$ (resp. $\FF'$ with $\widetilde{\widetilde{ \FF_{<0}}'} + \widetilde{\widetilde{ \FF_{=0}}'} + \FF_{>0}'$) since the latter is obtained from the former by a sequence of basic operators. Therefore, we reduce to the case that
\[ \FF= \FF_{<0}+ \FF_{=0} + \FF_{>0}, \  \FF'= \FF_{<0}' + \FF_{=0}' + \FF_{>0}' \]
such that
\begin{enumerate}
    \item [$\oldbullet$]$\FF_{>0}= \FF_{>0}'$,
    \item [$\oldbullet$]$]supp(\FF_{<0}) = ]supp(\FF_{<0}')$.
\end{enumerate}

We claim that $\supp(\FF_{=0})=\supp(\FF_{=0}')$ up to row exchanges. Indeed, it is equivalent to check
\[ \Omega(  \FF_{<0} + \FF_{=0} + \FF_{>0}) =  \Omega(\FF_{<0}'+ \FF_{=0}' + \FF_{>0}' ). \]
However,
\[ \Omega(  \FF_{<0} + \FF_{=0} + \FF_{>0}) \Delta \Omega(\FF_{<0}'+ \FF_{=0}' + \FF_{>0}' ) = \Omega(\FF_{=0}) \Delta \Omega(\FF_{=0}') \]
which must be empty by Corollary \ref{cor symmetric support}. This verifies the claim.

The claim shows $\supp(\FF)=\supp(\FF')$ as multi-sets. Then Lemma \ref{lem Moeglin} shows that $\FF=\FF'$ after row exchanges. This completes the proof of the theorem.
\end{proof}

\subsection{Half integer case}
In this subsection, we  consider the half integer case, that is $\EE_{\rho}=\FF=\{ ([A_i,B_i]_{\rho}, l_i, \eta_i)\}_{i \in (I_{\rho}, >)}$ where $A_i \in \Z +\half{1}$. The same conclusion in Theorem \ref{thm integer} does not hold in this case. A counter example is 
\begin{align*}
    \EE_1=\bordermatrix{
    & \half{-1} & \half{1}&\half{3} & \half{5} \cr
    & &\ominus &\oplus  &\ominus \cr
    }_{\rho}\ ,\ \EE_2=\bordermatrix{
    & \half{-1} & \half{1}&\half{3} & \half{5} \cr
    & \oplus &\ominus &\oplus  &\ominus \cr
    }_{\rho}
\end{align*}
$\EE_1$ and $\EE_2$ are not related by any combination of basic operators; however,
\[ \pi(\EE_1) \cong \pi(\EE_2)= \pi\left( \left(\half{1}\right)^{-},\left(\half{3}\right)^{+},\left(\half{5}\right)^{-}\right).\]

In the following, we define a new operator $dual_k$, which we call the partial dual, and show that if $\pi(\EE')\cong\pi(\EE),$ then $\EE'_\rho$ can be obtained from $\EE_\rho$ by a sequence of basic operators and at most one $dual_k$.

We continue to examine the example $\EE_1,\EE_2$ above. Note that $\EE_2=dual(\EE_1)$. Since the associated representation is supercuspidal (see \cite[Theorem 2.5.1]{Moe11}),  it is self-Aubert-Zelevinsky-dual, and hence $\pi(\EE_1)=\pi(dual(\EE_1))$ by Theorem \ref{thm Aubert-Zelevinsky dual formula}. We shall interpret the supercuspidal condition as the extreme case of Theorem \ref{thm non-vanishing}(i) along with $l_i=0$.

We observe a similar pattern in the following example. Let  
\begin{align*}
\EE_3= \begin{blockarray}{cccccc}
\frac{-5}{2}&\frac{-3}{2}&\frac{-1}{2}&\frac{1}{2}&\half{3}&\half{5}\\
\begin{block}{(cccccc)}
\lhd&\lhd&\lhd&\rhd&\rhd&\rhd\\
&&&\oplus&&\\
&&&&\oplus&\\\end{block}
\end{blockarray}\ , \ \EE_4= \begin{blockarray}{cccccc}
\frac{-5}{2}&\frac{-3}{2}&\frac{-1}{2}&\frac{1}{2}&\half{3}&\half{5}\\
\begin{block}{(cccccc)}
\lhd&\lhd&\oplus&\ominus&\rhd&\rhd\\
&&\ominus&\oplus&&\\
&&&&\oplus&\\\end{block}
\end{blockarray}.
\end{align*} 

We have $\pi(\EE_3)\cong \pi(\EE_4)$, and the second row of $\EE_3, \EE_4$ are both in the extreme case of Theorem \ref{thm non-vanishing}(i) and $l_2=0$. We see the third rows of $\EE_3$ and $\EE_4$ are identical, which is guaranteed by Corollary \ref{cor shift add}(i), but the first rows are different. On the other hand, we can take the dual for both of them.
\begin{align*}
dual(\EE_3)= \begin{blockarray}{ccccc}
\frac{-3}{2}&\frac{-1}{2}&\frac{1}{2}&\half{3}&\half{5}\\
\begin{block}{(ccccc)}
\lhd&\ominus&\oplus&\rhd&\\
&\ominus&\oplus&&\\
&&&&\oplus\\\end{block}
\end{blockarray}\ , \ dual(\EE_4)= \begin{blockarray}{ccccc}
\frac{-3}{2}&\frac{-1}{2}&\frac{1}{2}&\half{3}&\half{5}\\
\begin{block}{(ccccc)}
\lhd&\lhd&\rhd&\rhd&\\
&&\oplus&&\\
&&&&\oplus\\\end{block}
\end{blockarray}\ .
\end{align*} 
Then the first rows of $\EE_3$ and $\EE_4$ transfer to the third rows of $dual(\EE_3)$ and $dual(\EE_4)$ as dual reverses the order. Now the third rows of $dual(\EE_3)$ and $dual(\EE_4)$ are the same by Corollary \ref{cor shift add}(i). Based on this observation, we give the following definition. 

\begin{defn}\label{def partial dual}
Suppose $\EE\in \Rep^{(P')}$ and $\EE_{\rho}=\FF= \{([A_i,B_i]_{\rho},l_i,\eta_i)\}_{i \in (I_{\rho},>)}$. For $i \in I_{\rho}$, denote 
\[\alpha_i= \sum_{ j<i} (A_j+B_j+1) ,\ \beta_i= \sum_{j> i} (A_j+B_j+1).\]
Suppose there exists $k \in I_{\rho}$ such that
\begin{enumerate}
    \item [1.] $B_k=1/2,l_k=0,$
    \item [2.] $(-1)^{\alpha_k}\eta_k= -1$,
    \item [3.] for any $i < k$, $B_i < 1/2$.
\end{enumerate}
Then we define $dual_k^{+}(\FF)$ as follows. We write the decomposition
\[ \FF= \FF_1 + \{([A_k,1/2]_{\rho},0,\eta_k)\} + \FF_2,\]
where $\FF_1=\FF_{<1/2}$, and 
\[dual(\FF)= \widetilde{\FF_2} + \{([A_k,-1/2]_{\rho},0,(-1)^{\beta_k}) \}+ \widetilde{\FF_1},\]
where $\widetilde{\FF_1}=(dual(\FF))_{>-1/2}$. Finally, write
\[ dual(\widetilde{\FF_2} + \{([A_k,1/2]_{\rho},0,(-1)^{\beta_k+1})\} + \widetilde{\FF_1})= \widetilde{\widetilde{\FF_1}} + \{([A_k,-1/2]_{\rho},0,-\eta_k)\}+ \widetilde{\widetilde{\FF_2}},\]
where $\widetilde{\widetilde{\FF_2}}=(dual(\widetilde{\FF_2} + \{([A_k,1/2]_{\rho},0,(-1)^{\beta_k+1})\} + \widetilde{\FF_1}))_{>-1/2}$. Then we define
\[ dual_k^{+}( \FF)= \widetilde{\widetilde{\FF_1}} + \{([A_k,-1/2]_{\rho},0,-\eta_k)\} + \FF_2, \]
and say $dual_k^{+}$ is applicable on $\FF$.

Suppose $dual(\FF)$ satisfies above condition, then we define
\[ dual_k^{-}(\FF)= dual \circ dual_{k}^{+} \circ dual (\FF), \]
and say $dual_k^{-}$ is applicable on $\FF$.

We  call this operator partial dual, and use $dual_k$ to denote $dual_k^{+}$ or $dual_k^{-}$ if it is clear from the context.

Finally, we define $dual_k(\EE)= \EE^{\rho} \cup dual_k(\EE_{\rho}).$ 
\end{defn}

By the definition of $dual$, one can show that $dual_k^{-}$ is applicable on $\FF$ if and only if $B_k=-1/2$, $l_k=0$ and $B_{j} >-1/2$ for all $j >k$.
\begin{remark}
We remark that if $dual_k$ is applicable on $\EE$ and the local Arthur parameter $\psi$ associated with $\EE$ is elementary (see \cite[Section 5]{Xu17b}), then the local Arthur parameter associated with $dual_k(\EE)$ is $\psi^{\sharp}$ in \cite[Theorem 6.10]{Xu17b}, which corresponds to the change of $\psi$ under the generalized Aubert-Zelevinsky involution $inv_{<3}$ defined by Mœglin \cite{Moe06b}. 
\end{remark}

\begin{exmp}
Let $\rho$ be the trivial representation and
$$
\psi=\rho\otimes S_1\otimes S_6+\rho\otimes S_3\otimes S_2+ \rho\otimes S_7\otimes S_2
$$
be a local Arthur parameter of good parity for $G_n=\SO_{27}(F).$ Let 
$$
\EE=\bordermatrix{
 &\half{-5} &\half{-3}&\half{-1}&\half{1} &\half{3}&\half{5}&\half{7} \cr
 &\lhd&\lhd&\lhd&\rhd&\rhd&\rhd \cr
& & & & \oplus & \ominus & &  \cr
& & & & & & \oplus & \ominus  \cr
}_{\rho}.
$$
We have $\pi(\EE)=L(\Delta_\rho[\half{-5},\half{-5}],\Delta_\rho[\half{-3},\half{-3}],\Delta_\rho[\half{-1},\half{-1}];\pi(\half{1} ^+,\half{3}^-,\half{5}^+,\half{7}^-))\in\Pi_\psi.$ We see that this fits the situation of $dual_k^+.$ We compute it now. Let
$$
\FF_1=\bordermatrix{
 &\half{-5} &\half{-3}&\half{-1}&\half{1} &\half{3}&\half{5}&\half{7} \cr
 &\lhd&\lhd&\lhd&\rhd&\rhd&\rhd \cr
}_{\rho}
$$
and
$$
\FF_2=\bordermatrix{
 &\half{-5} &\half{-3}&\half{-1}&\half{1} &\half{3}&\half{5}&\half{7} \cr
& & & & & & \oplus & \ominus  \cr
}_{\rho}.
$$
We have
$$
dual(\EE)=\bordermatrix{
 &\half{-5} &\half{-3}&\half{-1}&\half{1} &\half{3}&\half{5}&\half{7} \cr
 &\lhd&\lhd&\lhd&\ominus &\rhd&\rhd&\rhd \cr
& & & \oplus & \ominus & \oplus & &  \cr
& & & & & & \oplus &   \cr
}_{\rho}.
$$
Then
$$
\widetilde{\FF}_1=\bordermatrix{
 &\half{-5} &\half{-3}&\half{-1}&\half{1} &\half{3}&\half{5}&\half{7} \cr
& & & & & & \oplus &   \cr
}_{\rho}
$$
and
$$
\widetilde{\FF}_2=\bordermatrix{
 &\half{-5} &\half{-3}&\half{-1}&\half{1} &\half{3}&\half{5}&\half{7} \cr
 &\lhd&\lhd&\lhd&\ominus &\rhd&\rhd&\rhd \cr
}_{\rho}.
$$ So
$$
\EE':=\widetilde{\FF}_2+\left\{\left(\left[\half{3},\half{1}\right]_{\rho},0,1\right)\right\}+\widetilde{\FF}_1=\bordermatrix{
 &\half{-5} &\half{-3}&\half{-1}&\half{1} &\half{3}&\half{5}&\half{7} \cr
 &\lhd&\lhd&\lhd&\ominus &\rhd&\rhd&\rhd \cr
& & & & \oplus & \ominus & &  \cr
& & & & & & \oplus &   \cr
}_{\rho}.
$$
We have 
$$
dual(\EE')=\bordermatrix{
 &\half{-5} &\half{-3}&\half{-1}&\half{1} &\half{3}&\half{5}&\half{7} \cr
 &\lhd&\lhd&\oplus&\ominus &\rhd&\rhd& \cr
& & & \lhd & \oplus & \rhd & &  \cr
& & & & & & \ominus & \oplus   \cr
}_{\rho}.
$$
Hence we see that 
$$
\widetilde{\widetilde{\FF_1}}=\bordermatrix{
 &\half{-5} &\half{-3}&\half{-1}&\half{1} &\half{3}&\half{5}&\half{7} \cr
 &\lhd&\lhd&\oplus&\ominus &\rhd&\rhd& \cr
}_{\rho}.
$$
Therefore,
$$
dual_2^+(\EE)=\widetilde{\widetilde{\FF_1}}+\left\{\left(\left[\half{3},\half{-1}\right]_{\rho},0,-1\right)\right\}+\FF_2=\bordermatrix{
 &\half{-5} &\half{-3}&\half{-1}&\half{1} &\half{3}&\half{5}&\half{7} \cr
 &\lhd&\lhd&\oplus&\ominus &\rhd&\rhd& \cr
& & & \ominus & \oplus & \ominus & &  \cr
& & & & & & \oplus & \ominus   \cr
}_{\rho}.
$$
One can check that $\pi(dual_2^+(\EE))\cong\pi(\EE).$
\end{exmp}

We show that the isomorphism of representations in the previous example is not a coincidence. That is, this operator also preserves representations as follows. This completes the proof of Theorem \ref{main thm intro}(1). 

\begin{prop} \label{prop partial dual}
Suppose $\EE \in \Rep$ and $\EE_{\rho}=\FF= \{([A_i,B_i]_{\rho},l_i,\eta_i)\}_{i \in (I_{\rho},>)}$. Suppose $dual_k$ is applicable on $\EE$ for some $k \in I_{\rho}$, then 
\[ \pi(dual_k(\EE))\cong \pi(\EE).\]
\end{prop}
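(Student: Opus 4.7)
The plan is to treat $dual_k^+$ first; the case $dual_k^-$ then follows from the identity $dual_k^- = dual \circ dual_k^+ \circ dual$ combined with Theorem~\ref{thm Aubert-Zelevinsky dual formula}. Write $\FF := \EE_\rho = \FF_1 + \{R_k\} + \FF_2$ with $R_k = ([A_k,1/2]_\rho, 0, \eta_k)$, $\FF_1 = \FF_{<1/2}$, $\FF_2 = \FF_{>1/2}$. Unpacking Definition~\ref{def partial dual}, $dual_k^+(\FF)$ is obtained via the sequence
\[
\FF \xrightarrow{dual} \widetilde{\FF_2} + \{R_-\} + \widetilde{\FF_1} \xrightarrow{\text{modify}} \FF^{\sharp} := \widetilde{\FF_2} + \{R_+\} + \widetilde{\FF_1} \xrightarrow{dual} \widetilde{\widetilde{\FF_1}} + \{R_*\} + \widetilde{\widetilde{\FF_2}} \xrightarrow{\text{swap}} dual_k^+(\FF),
\]
with $R_- = ([A_k,-1/2]_\rho, 0, (-1)^{\beta_k})$, $R_+ = ([A_k,1/2]_\rho, 0, (-1)^{\beta_k+1})$, and the final step replacing $\widetilde{\widetilde{\FF_2}}$ with the original $\FF_2$. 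By Theorem~\ref{thm Aubert-Zelevinsky dual formula}, the two dualizations correspond to the Aubert--Zelevinsky involution on representations, so the proof reduces to proving the two claims: (a) $\pi(\FF^{\sharp}) \cong \pi(dual(\FF))$; and (b) $\widetilde{\widetilde{\FF_2}} = \FF_2$ together with $R_* = ([A_k,-1/2]_\rho, 0, -\eta_k)$, so that $dual(\FF^{\sharp}) = dual_k^+(\FF)$. Granting (a) and (b), we obtain $\pi(dual_k^+(\FF)) \cong \widehat{\widehat{\pi(\FF)}} = \pi(\FF)$, with non-vanishing inherited from $\pi(\FF) \neq 0$.

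For claim (a), the extended multi-segments $dual(\FF)$ and $\FF^{\sharp}$ differ only in their middle row. Since $l = 0$ in both, no non-trivial Speh factor contributes in Definition~\ref{def rep of segment}, and the middle row contributes only through the tempered character $\varepsilon$. Applying the explicit formula, $R_-$ contributes summands $\rho \otimes S_{2j}$ for $j = 0, 1, \ldots, A_k + 1/2$ with $\varepsilon = (-1)^{\beta_k + j}$, while $R_+$ contributes $\rho \otimes S_{2j+2}$ for $j = 0, 1, \ldots, A_k - 1/2$ with $\varepsilon = (-1)^{\beta_k + 1 + j}$. Re-indexing the latter by $j' = j + 1$ produces $\varepsilon(\rho \otimes S_{2j'}) = (-1)^{\beta_k + j'}$ for $j' = 1, \ldots, A_k + 1/2$, matching $R_-$ exactly on all common summands $\rho \otimes S_2, \ldots, \rho \otimes S_{2A_k+1}$, while the extra $\rho \otimes S_0$ from $R_-$ is discarded by convention. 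Since $\widetilde{\FF_1}$ and $\widetilde{\FF_2}$ are identical in $dual(\FF)$ and $\FF^{\sharp}$, claim (a) follows.

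For claim (b), the decisive input is that good parity forces $a_j + b_j = 2A_j + 2$ to be even for every row, i.e. $a_j \equiv b_j \pmod 2$. For each row $j$ of $\widetilde{\FF_2}$, a direct count using the reversed ordering in $\FF^{\sharp}$ and the dual identity $a' = b$ yields the parity identities $\alpha_j^{(\FF^{\sharp})} \equiv \beta_j^{(\FF)} \pmod 2$ and $\beta_j^{(\FF^{\sharp})} \equiv \alpha_j^{(\FF)} \pmod 2$. Here the crucial observation is that the middle row has $a$-value $A_k + 3/2$ in both $\FF$ and $\FF^{\sharp}$ (but $A_k + 1/2$ in $dual(\FF)$). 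Substituting these parities into the sign and $l$-formulas of Definition~\ref{dual segment} and applying them twice, one verifies $l_j'' = l_j$ and $\eta_j'' = \eta_j$ for every row of $\widetilde{\widetilde{\FF_2}}$. An analogous parity computation for the middle row $R_*$ of $dual(\FF^{\sharp})$, combined with the hypothesis $(-1)^{\alpha_k}\eta_k = -1$, yields $\eta_* = -\eta_k$ and $l_* = 0$, matching the definition of $dual_k^+(\FF)$.

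The main obstacle will be the parity bookkeeping in (b): Definition~\ref{dual segment} splits into cases depending on whether $B_j \in \Z$ or $B_j \in \Z + \tfrac{1}{2}$, each with its own sign adjustment, so the parity identities must be verified in each case. The hypotheses of Definition~\ref{def partial dual}---namely $(-1)^{\alpha_k}\eta_k = -1$, $l_k = 0$, and $B_i < 1/2$ for all $i < k$---are exactly what is required to make all the parities align: the first controls $\eta_*$, the second kills the Speh contribution so that (a) reduces to matching characters, and the third ensures that $\FF = \FF_1 + \{R_k\} + \FF_2$ is the correct decomposition for the dual formula. Once (a) and (b) are established, the proposition follows directly from Theorem~\ref{thm Aubert-Zelevinsky dual formula}.
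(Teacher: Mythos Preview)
Your argument has a genuine gap in claim (a). You assert $\pi(\FF^\sharp)\cong\pi(dual(\FF))$ by matching the tempered contributions of the middle rows $R_+$ and $R_-$, treating each row's contribution to $\pi(\EE)$ as independent of the others. But Definition~\ref{def rep of segment} does not build $\pi(\EE)$ additively row by row: when some $B_i<0$ (as holds for every row of $\widetilde{\FF_2}$ and for $R_-$ itself), one must first shift the whole extended multi-segment into the non-negative far-apart regime, form the socle there, and then undo the shifts by a composition of derivatives that depends on \emph{all} rows simultaneously. That derivative composition differs between $dual(\FF)$ and $\FF^\sharp$ because the middle row has a different $B$-value, and the socle in the shifted regime is not in general the Langlands subrepresentation you implicitly use. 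Your remark that ``the extra $\rho\otimes S_0$ from $R_-$ is discarded by convention'' is only meaningful in the base case $\widetilde{\FF_1}=\widetilde{\FF_2}=\emptyset$; with other rows present there is no $S_0$ after shifting, and the discrepancy between $R_-$ and $R_+$ is entangled with the remaining rows through the derivatives.

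More structurally: granted (b), claim (a) is \emph{equivalent} to the proposition, since $dual(\FF^\sharp)=dual_k^+(\FF)$ and Theorem~\ref{thm Aubert-Zelevinsky dual formula} turn (a) into $\pi(dual_k^+(\FF))\cong\pi(\FF)$. So (a) carries the full content of what is to be proved, and a one-paragraph character match cannot suffice. The paper instead performs a reduction to the base case. Since $\FF$ and $dual_k^+(\FF)$ share the same tail $\FF_2$, Corollary~\ref{cor shift add}(i),(ii) together with Lemma~\ref{lemma far away} let one shift $\FF_2$ far to the right on both sides and then transplant it onto an auxiliary cuspidal $\rho^\ast$, reducing to $\FF_2=\emptyset$. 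Dualizing, $dual(\FF)$ and $dual(dual_k^+(\FF))$ now share the same tail $\widetilde{\FF_1}$, and the same trick reduces to $\FF_1=\emptyset$ as well. In that single-row base case your row-by-row computation is valid and is exactly the ``easy computation'' the paper invokes.
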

\begin{proof}
By Theorem \ref{thm Aubert-Zelevinsky dual formula}, we only need to deal with the case that $dual_k^{+}$ is applicable on $\FF$. We keep the same notation as in the definition. When both $\FF_1$ and $\FF_2$ are empty, we have
\begin{align*}
    \FF=&\{([A_1, 1/2]_{\rho},0,-1)\},\\
    dual_k(\FF)=&\{([A_1, -1/2]_{\rho},0,1)\}.
\end{align*}
The conclusion follows from direct computation in this case. We reduce the general case to this case in two steps.

Step 1: We reduce $\FF_2$ to be  $\emptyset$. Note that by construction,
    \begin{align*}
        \FF&=\FF_1 + \{([A_k,1/2]_{\rho},0,\eta_k)\} + \FF_2\\
        dual_k(\FF)&= \widetilde{\widetilde{\FF_1}}+ \{([A_k,-1/2]_{\rho},0,-\eta_k)\} + \FF_2.
    \end{align*}
    As both of them satisfy ($P'$), Corollary \ref{cor shift add}(i) and (ii) implies the statement
 \begin{align*}
    0\neq &\pi(\EE^{\rho} \cup(\widetilde{\widetilde{\FF_1}}+ \{([A_k,-1/2]_{\rho},0,-\eta_k)\} + \FF_2))\\  \cong& \pi( \EE^{\rho} \cup(\FF_1 + \{([A_k,1/2]_{\rho},0,\eta_k)\} + \FF_2)
\end{align*}
is equivalent to the statement
\begin{align*} 
    0 \neq& \pi(\EE^{\rho} \cup (\widetilde{\widetilde{\FF_1}}+ \{([A_k,-1/2]_{\rho},0,-\eta_k)\} + sh^{d}(\FF_2))) \\
    \cong& \pi(\EE^{\rho} \cup(\FF_1 + \{([A_k,1/2]_{\rho},0,\eta_k)\} + sh^{d}(\FF_2))),
\end{align*}
 for any $d \in \N$. On the other hand, when $d$ is sufficiently large, Lemma \ref{lemma far away}(ii) shows that the previous statement is equivalent to the following statement  
\begin{align*}
    0& \neq \pi(\EE^{\rho }\cup (\widetilde{\widetilde{\FF_1}}+ \{([A_k,-1/2]_{\rho},0,-\eta_k)\}) \cup (sh^{d}(\FF_2))_{\rho^{\ast}} )\\
    &\cong \pi( \EE^{\rho} \cup (\FF_1 + \{([A_k,1/2]_{\rho},0,\eta_k)\})\cup (sh^{d}(\FF_2))_{\rho^{\ast}}).
\end{align*}
This completes the reduction in this step.

Step 2: We reduce $\FF_1$ to be $\emptyset$. Theorem \ref{thm Aubert-Zelevinsky dual formula} says 
\[ 0 \neq \pi(\EE^{\rho} \cup dual_k(\FF)) \cong \pi(\EE^{\rho} \cup \FF) \Longleftrightarrow 0 \neq \pi(dual \circ dual_k(\EE^{\rho} \cup\FF)) \cong \pi(dual(\EE^{\rho} \cup\FF)). \]
As we have assumed $\FF_2= \emptyset$, we have
\begin{align*}
    dual \circ dual_k(\FF)& =\{([A_k,1/2]_{\rho},0,-1)\} +  \widetilde{\FF_1},\\
    dual(\FF)&=\{([A_k,-1/2]_{\rho},0,1)\} + \widetilde{\FF_1}.
\end{align*}
Then we repeat step 1 with $\FF_2=\widetilde{\FF_1}$ now. Then, we have reduced back to the case $\FF=\{([A_1, 1/2]_{\rho},0,-1)\}$. This completes the proof of the proposition.
\end{proof}

 Next, we describe how $dual_k^+$ behaves on the local Arthur packets. The result is analogous to Theorem \ref{thm ui packet}. 

\begin{thm} \label{thm applicability of dual_k}
Suppose $\EE\in \Rep^{(P')}$, and write 
\[ \EE_{\rho}=\{ ([A_i,B_i]_{\rho},l_i,\eta_i)\}_{i \in (I_{\rho}, >)}.\]
Suppose there exists $k \in I_{\rho}$ such that $B_k= 1/2$ and for any $i < k$, $B_i<1/2$. Let $\psi= \bigoplus_{\rho} \bigoplus_{i \in I_{\rho}} \rho \otimes S_{a_i} \otimes  S_{b_i}$ be the local Arthur parameter associated with $\EE$. Denote $dual_k(\psi)$ the local Arthur parameter obtained from $\psi$ by replacing the summand $\rho \otimes S_{a_k} \otimes S_{b_k}$ by $ \rho \otimes S_{b_k} \otimes S_{a_k}$.

Then $dual_k^{+}$ is applicable on $\EE$ if and only if $\pi(\EE) \in \Pi_{\psi} \cap \Pi_{dual_k(\psi)}$. In this case, $dual_k^{+}(\EE)$ is the unique extended multi-segment with $\pi(\EE)\cong \pi(dual_k(\EE))$ and $\supp(dual_k^{+}(\EE))=\supp(dual_k(\psi))$. In particular, $dual_k \circ dual_k=id$.
\end{thm}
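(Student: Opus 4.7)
If $dual_k^+$ is applicable on $\EE$, Proposition \ref{prop partial dual} gives $\pi(dual_k^+(\EE)) \cong \pi(\EE) \neq 0$. By the construction in Definition \ref{def partial dual}, the $k$-th row's support is changed from $[A_k, 1/2]$ to $[A_k, -1/2]$ (equivalently, $a_k$ and $b_k$ are swapped), while all other rows' supports are preserved (only labels $l_i, \eta_i$ are modified through $\widetilde{\widetilde{\FF_1}}$). Hence the local Arthur parameter associated with $dual_k^+(\EE)$ is precisely $dual_k(\psi)$, which gives $\pi(\EE) \in \Pi_{dual_k(\psi)}$; and $\pi(\EE) \in \Pi_\psi$ trivially.

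\textbf{Backward direction: reduction.} Conversely, assume $\pi(\EE) \in \Pi_\psi \cap \Pi_{dual_k(\psi)}$. By Theorem \ref{thm Atobe's reformulation} together with the multiplicity-freeness of local Arthur packets (Lemma \ref{lem Moeglin}), there exists a unique $\EE' \in \Rep^{(P')}$ with $\supp(\EE') = \supp(dual_k(\psi))$ and $\pi(\EE') \cong \pi(\EE)$. The plan is to show $\EE' = dual_k^+(\EE)$, which simultaneously forces the applicability of $dual_k^+$ and the uniqueness claim. Write
\[
\EE_\rho = \FF_1 + \{([A_k, 1/2], l_k, \eta_k)\} + \FF_2, \qquad \EE'_\rho = \FF_1' + \{([A_k, -1/2], l_k', \eta_k')\} + \FF_2'
\]
as in Definition \ref{def partial dual}. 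Applying Corollary \ref{cor shift add}(i) to shift $\FF_2, \FF_2'$ far to the right, we obtain $\FF_2 = \FF_2'$. Combining this with the Aubert-Zelevinsky dual version (via Theorem \ref{thm Aubert-Zelevinsky dual formula}) applied to $\FF_1, \FF_1'$, the problem is localized around the $k$-th row and its interaction with the adjacent structure.

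\textbf{Determination of $l_k$ and $\eta_k$.} The non-vanishing condition ($\ast$) of Theorem \ref{thm non-vanishing}(i), applied to the $k$-th row of $\EE'$ with $B_k' = -1/2$, imposes $B_k' + l_k' \geq 1/2$ precisely when $\eta_k' = (-1)^{\alpha_k' + 1}$, where $\alpha_k' = \sum_{j < k,\, j \in I_{\rho'}} a_j'$. Using the $L$-data invariants extracted from $\pi(\EE) = \pi(\EE')$ (Theorem \ref{thm max A} pins down $A_k$ and its multiplicity; Theorem \ref{thm min B+l} and the symmetry in Lemma \ref{lem $L$-data}(ii) constrain multiplicities near $\rho|\cdot|^{-1/2}$), we match the structures of $\EE$ and $\EE'$ and deduce $l_k = 0$ and $(-1)^{\alpha_k}\eta_k = -1$. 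These are exactly the applicability conditions for $dual_k^+$ on $\EE$. Then a direct comparison of supports and labels, using Definition \ref{def partial dual}, shows $dual_k^+(\EE) = \EE'$; the uniqueness of $\EE'$ in $\Pi_{dual_k(\psi)}$ is immediate from Lemma \ref{lem Moeglin}. Finally, $dual_k \circ dual_k = id$ follows from Definition \ref{def partial dual}, since two applications reverse both the support swap $[A_k, 1/2] \leftrightarrow [A_k, -1/2]$ and the sign flip $\eta_k \mapsto -\eta_k$, while the $\widetilde{\widetilde{\FF_1}}$ modification is involutive by $dual \circ dual = id$ on the relevant piece.

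\textbf{Main obstacle.} The delicate point is the precise tracking of the sign condition across the reduction. Moving $\FF_1$ and $\FF_2$ to different cuspidal representations via Corollary \ref{cor shift add}(i) preserves the intersection question in the forward direction only, because shifting back to the original $\rho$ may violate condition ($\ast$). The essential content of the backward direction is that this validity of ($\ast$) on the candidate $\EE'$ translates exactly into the parity constraint $(-1)^{\alpha_k}\eta_k = -1$ together with $l_k = 0$ on the original $\EE$, which is what ultimately pins down the applicability of $dual_k^+$ rather than some weaker condition.
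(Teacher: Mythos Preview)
Your forward direction is fine and matches the paper. The backward direction, however, has a real gap in the paragraph ``Determination of $l_k$ and $\eta_k$.'' You invoke Theorems \ref{thm max A}, \ref{thm min B+l}, and Lemma \ref{lem $L$-data}(ii), but none of these give information about the labels $l_k$ or $\eta_k$ of a specific row; they constrain the multi-set $\Omega(\EE_\rho)$, the maximal $A$-value, and the maximal uniform left-shift. The sentence ``we match the structures of $\EE$ and $\EE'$ and deduce $l_k=0$ and $(-1)^{\alpha_k}\eta_k=-1$'' is the entire content of the theorem and is asserted rather than proved. Likewise, knowing that condition~($\ast$) holds for the $k$-th row of $\EE'$ tells you something about $(l_k',\eta_k')$, not about $(l_k,\eta_k)$; you never explain how to transfer this constraint back to $\EE$.

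The paper's argument for the backward direction is quite different and much sharper. After the reduction (which uses \emph{both} $sh^d$ on $\FF_2,\FF_2'$ via Corollary~\ref{cor shift add}(i) \emph{and} $add^d$ on $\FF_1,\FF_1'$ via Corollary~\ref{cor shift add}(iii), so that the $k$-th row is isolated), the key observation is that the two applicability conditions together are \emph{equivalent} to $\pi(sh_k^{-1}(\EE))=0$. One then argues by contradiction: if $dual_k^+$ is not applicable, Lemma~\ref{lem shift}(ii) gives $D_{\rho|\cdot|^{1/2,\dots,A_k}}(\pi(\EE))\neq 0$, so in particular $D_{\rho|\cdot|^{1/2}}(\pi(\EE'))\neq 0$. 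But every row of $\EE'_\rho$ has $B_i\neq 1/2$ (the $k$-th has $B_k'=-1/2$, the others are controlled by the reduction), so Proposition~\ref{prop derivative support}(i) forces $D_{\rho|\cdot|^{1/2}}(\pi(\EE'))=0$, a contradiction. This derivative obstruction is the missing mechanism in your argument. The uniqueness and $dual_k\circ dual_k=\mathrm{id}$ then follow from Lemma~\ref{lem Moeglin} applied to $\Pi_{dual_k(\psi)}$ and $\Pi_\psi$ respectively, rather than from a direct computation on the formula in Definition~\ref{def partial dual}.
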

\begin{proof}
The forward direction follows from the previous proposition. We prove the backward direction now.

Let $\EE'$ be the multi-segment with associated local Arthur parameter $ dual_k(\psi)$ such that $\pi(\EE)\cong \pi(\EE')$. We write 
\[ \EE_{\rho}= \FF_1 + \{([A_k,1/2]_{\rho},l_k, \eta_k)\} + \FF_2,\  \EE_{\rho}'= \FF_1' + \{([A_k,-1/2]_{\rho},l_k', \eta_k') \}+ \FF_2'.\]
By Corollary \ref{cor shift add}(i) and (iii), we may fix $d \gg0$ and replace $\FF_1,\FF_1', \FF_2,\FF_2'$ by
\[add^d(\FF_1),add^d(\FF_1'),sh^{d}(\FF_2), sh^{d}(\FF_2')\]
respectively. When $d$ is large enough, the conditions 
\begin{enumerate}
    \item [1.] $l_k=0$,
    \item [2.] $ (-1)^{\alpha_k}\eta_k=-1$
\end{enumerate}
are equivalent to $ \pi( sh_k^{-1} (\EE)) = 0$. Therefore, supposing $dual_k$ is not applicable on $\EE$, Lemma  \ref{lem shift}(ii) shows
\[ 0 \neq D_{\rho|\cdot|^{1/2,\dots,A_k}}(\pi(\EE)) \cong D_{\rho|\cdot|^{1/2,\dots,A_k}}(\pi(\EE')). \]
In particular, $D_{\rho|\cdot|^{1/2}}(\pi(\EE')) \neq 0$. This contradicts to Proposition \ref{prop derivative support}(i). This completes the proof of the theorem.
\end{proof}

\begin{remark}\label{rmk partial dual}
When $dual_k^{+}$ is applicable on $\EE$, Atobe showed in step 4 of \cite[Algorithm 5.5]{Ato20b} that after applying his operator P to formally obtain 
\[\EE'=\{([-1/2,-1/2]_{\rho},0,1)\}\cup \EE,\]
$ui_{0,k+1}$ is applicable on $\EE'$ of type 3', where $0$ denotes the index of the phantom row we just added. He denoted the resulting extended multi-segment $\EE^{\ast}$, and claimed $\pi(\EE) \cong \pi(\EE^{\ast})$. 

Since the local Arthur parameter associated with $ \EE^{\ast}$ is exactly $dual_k(\psi)$. Our theorem above shows that $\EE^{\ast}=dual_k(\EE)$, which is not immediate from the two different definitions. In other words, our operator $dual_k$ can be realized as a composition of $R_i$, $ui_i$ and P and their inverses.
\end{remark}

We show that partial dual commutes with the basic operators in the following sense:
\begin{lemma}\label{lem commutativity of partial dual} \ 
\begin{enumerate}
    \item [(a)] Suppose $\EE\in \Rep^{(P')}$ and write
    \[ \EE_{\rho}=\{([A_r,B_r]_{\rho},l_r,\eta_r)\}_{r \in (I_{\rho,>})}. \]
    Suppose $ui_{i,j}$ is applicable on $\EE$ for some $i,j \in I_{\rho}$. Write 
    \[ ui_{i,j}(\EE_{\rho})= \{ ([A_r',B_r']_{\rho},l_r',\eta_r')\}_{r \in (I_{\rho},>)}.\]
    Under the assumption $B_k=-1/2$, we have $l_k=0$ if and only if $l_k'=0$. In other words, $dual_k^{-}$ is applicable on $\EE$ if and only if it is applicable on $ui_{i,j}(\EE)$.
    \item [(b)] Suppose $dual_k^{+}$ is applicable on $\EE$. Then
    \begin{enumerate}
    \item [(1)]$ui_{i,j}$ commutes with $dual_k^{+}$, i.e. 
\[ ui_{i,j} (dual_k^{+}(\EE))=  dual_k^{+}(ui_{i,j}(\EE)),  \]
unless $i=k$ and $B_j=1/2$.
\item [(2)] If $ui_{k,j}$ is applicable on $dual_k^{+}(\EE)$ with $B_j=1/2$, then we have 
\[ ui_{k,j} (dual_k^{+}(\EE_{\rho}))=  dual_k^{+}(\EE_{\rho,\gg})  \]
for some admissible order $\gg$ on $I_{\rho}$.
\end{enumerate}
\end{enumerate}
\end{lemma}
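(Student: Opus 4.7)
The plan is to handle the two parts with different techniques. Part (a) reduces to a finite case analysis on the types of $ui_{i,j}$ and the relative positions of $k$ with $\{i,j\}$; part (b) is best attacked by the packet-theoretic characterizations of the operators, combined with M\oe glin's multiplicity-free result (Lemma \ref{lem Moeglin}).

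For part (a), first note that if $k \notin \{i,j\}$ then row $k$ is untouched by $ui_{i,j}$ and $l_k = l_k'$ trivially. The main work is in the cases $k = i$ and $k = j$ with $B_k = -1/2$, where one verifies each of the four subcases (Cases 1, 2, 3, 3' of Definition \ref{ui def}) using the explicit formulas for $(l_i', l_j')$, the defining equations of each case (e.g.\ $B_j + l_j = B_i + l_i$ in Case 2), the bound $0 \le l_r \le b_r/2$, and crucially the non-vanishing condition $(\ast)$ of Theorem \ref{thm non-vanishing}(i), which when $B_r \le -1/2$ is a non-integer forces lower bounds on $l_r$. In many subcases these constraints jointly force $l_k, l_k' \ge 1$ simultaneously, so the equivalence $l_k = 0 \Longleftrightarrow l_k' = 0$ is vacuously true; in the remaining subcases the formulas yield $l_k' = l_k$ outright. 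For instance, in Case 1 with $k = j$, the $(\ast)$ condition at row $i$ (where $B_i \le -3/2$) forces $l_i \ge 1$, whence $l_j = l_i + (A_j - A_i) \ge 2$ and $l_j' = l_i \ge 1$; and Case 3' with $k = j$, $B_j = -1/2$ is impossible since it would demand $B_j = A_i + 1 \ge 5/2$.

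For part (b), I would use the packet-theoretic characterizations: Theorem \ref{thm ui packet} describes applicability of $ui_{i,j}$ via $\pi(\EE) \in \Pi_{\psi} \cap \Pi_{ui_{i,j}(\psi)}$, and Theorem \ref{thm applicability of dual_k} does the same for $dual_k^+$ via $\Pi_{\psi} \cap \Pi_{dual_k(\psi)}$. For (b)(1), one checks that outside the exception the operations $ui_{i,j}$ and $dual_k$ commute as transformations of the underlying local Arthur parameter, i.e.\ $ui_{i,j}(dual_k(\psi)) = dual_k(ui_{i,j}(\psi))$. This is automatic when $k \notin \{i,j\}$ (disjoint summands), and a short computation on segments handles $k = i$ with $B_j > 1/2$ or $k = j$. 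Both compositions $ui_{i,j} \circ dual_k^+(\EE)$ and $dual_k^+ \circ ui_{i,j}(\EE)$ are then applicable (using part (a) to transport $l_k = 0$ across $ui_{i,j}$), share the same support, and represent $\pi(\EE)$ by Propositions \ref{prop basic operators} and \ref{prop partial dual}; Lemma \ref{lem Moeglin} forces equality. For (b)(2), the exception corresponds to $B_k = B_j = 1/2$ in $\EE$; I would construct $\gg$ by swapping the order of rows $k$ and $j$ via the row exchange $R_k$ (admissible since $B_k = B_j$), identify the row that plays the role of ``the $B = 1/2$, $l=0$ row'' in $\EE_{\rho, \gg}$ on which $dual_k^+$ is applied, and verify that both sides represent $\pi(\EE)$ with matching supports, finishing once more via Lemma \ref{lem Moeglin}.

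The main obstacle is in (b)(2): identifying the correct admissible order $\gg$ and tracking the sign/parity conditions (notably $(-1)^{\alpha_k}\eta_k = -1$) through the row exchange formulas of Definition \ref{def row exchange}, so as to confirm that $dual_k^+$ remains applicable on $\EE_{\rho, \gg}$ with the expected row playing the role of index $k$. By contrast, the commuting cases of (b)(1) are almost formal once the Arthur-parameter commutation is checked; the exception case is what carries the real content.
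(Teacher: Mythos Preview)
Your approach to part (b) matches the paper's: both use the packet-theoretic characterizations (Theorems~\ref{thm ui packet} and~\ref{thm applicability of dual_k}) together with M\oe glin's multiplicity-one to force equality once the supports agree, and both lean on part (a) to transport applicability of $dual_k$ across $ui_{i,j}$. Your identification of (b)(2) as the delicate point is accurate; the paper resolves it exactly by exhibiting an explicit admissible order $\gg$ (placing $j$ in the position of $k$ among the $B=1/2$ rows) and invoking Theorem~\ref{thm applicability of dual_k}.

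There is, however, a genuine gap in your treatment of (a). The assertion that ``if $k \notin \{i,j\}$ then row $k$ is untouched by $ui_{i,j}$ and $l_k = l_k'$ trivially'' is not correct in general. By Definition~\ref{def ui}, $ui_{i,j}$ is computed by row-exchanging into an order $>'$ with $i,j$ adjacent, applying $ui_i$, and then row-exchanging back to $>$. When $k$ lies strictly between $i$ and $j$ and $[A_k,B_k]_\rho \supseteq [A_j,B_j]_\rho$ (i.e.\ $k\in\Phi_2$ in the notation of \S\ref{sec applicability of ui}), row $k$ must be swapped past row $j$. In that swap row $k$ is the \emph{larger} segment, so by Definition~\ref{def row exchange} its $l$-value is altered by a quantity depending on $(l_j,\eta_j,b_j)$. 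After $ui_i$ the data of row $j$ change (in particular $b_j$ drops from $A_j-B_j+1$ to $A_i-B_j+1$), so the reverse swap does not cancel the forward one and $l_k$ need not be preserved.

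The paper does not show $l_k'=l_k$ in this subcase; instead it proves that under $B_k=-1/2$ the configuration $k\in\Phi_2$ makes $ui_{i,j}$ \emph{inapplicable}. After reducing to $I_\rho=\{1<2<3\}$ with $(i,j,k)=(1,3,2)$ and $[A_2,B_2]_\rho\supseteq[A_3,B_3]_\rho$, the non-vanishing constraints of Proposition~\ref{prop positive non-vanishing}(i) combined with condition $(\ast)$ of Theorem~\ref{thm non-vanishing}(i) force $B_1+l_1=-1/2$ and $l_3=0$, and from this one checks directly that none of the three $ui$ cases can hold. So your case analysis for $k\in\{i,j\}$ is on the right track, but the $k\notin\{i,j\}$ case is not vacuous and requires this additional non-applicability argument.
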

\begin{proof}
We first prove (a) combinatorially. We separate into three cases.

Case 1. Suppose $j=k$. Then we may reduce to the case that $I_{\rho}=\{1<2\}$ and $(i,j,k)=(1,2,2)$. If $l_2=0$, then the non-vanishing conditions force that $B_1+l_1 = -1/2$ and $ui_{1,2}$ is of type 2 in Definition \ref{def ui}. In this case we see $l_{2}'=l_2=0$ by definition. If $l_2'=0$, then the non-vanishing condition forces $(-1)^{A_1'-B_1'}\eta_1'\eta_{2}'=-1$. Then $l_2=0$ by the definition of $ui$.

Case 2. Suppose $i=k$ and $ B_{j}>1/2$. Then we may reduce to the case that $I_{\rho}=\{1<2\}$ and $(i,j,k)=(1,2,1)$. If $l_1=0$, then $ui_{1,2}$ is of type 3 or 3' in Definition \ref{def ui} and $l_1'=l_1=0$ by definition. If $l_1'=0$, then the non-vanishing condition forces $l_2'=0$ and $(-1)^{A_1'-B_1'}\eta_1'\eta_{2}'=1$. Then $l_1=0$ by the definition of $ui$.

Case 3. Suppose $k \neq i,j$. Then we may reduce to the case that $I_{\rho}=\{1<2<3\}$. If $(i,j,k)=(1,2,3)$ or $(2,3,1)$, then nothing to prove. Suppose $(i,j,k)=(1,3,2)$. In this case, in order that $ui_{1,3}$ is applicable, we have either $[A_1,B_1]_{\rho} \supseteq [A_2,B_2]_{\rho}$ or $[A_2,B_2]_{\rho} \supseteq [A_3,B_3]_{\rho}$. If $[A_1,B_1]_{\rho} \supseteq [A_2,B_2]_{\rho}$, then the assertion is clear by the definition of row exchange. If $[A_2,B_2]_{\rho} \supseteq [A_3,B_3]_{\rho}$, then $A_1 \leq A_2, B_1 \leq B_2$. The non-vanishing conditions in Proposition \ref{prop positive non-vanishing}(i) force $B_1+l_1=-1/2$ and $l_3=0$. Then $ui_{1,3}$ is not applicable.

Now we prove Part (b). For (1), if both of the $ui$ in the equation are not applicable, then there is nothing to prove. Suppose $ui_{i,j}$ is applicable on $dual_k(\EE)$, then Part (a) implies $dual_k^{-}$ is applicable on $ ui_{i,j}(dual_k(\EE))$. It is verified directly that
\[ \supp( dual_k \circ ui_{i,j} \circ dual_k(\psi))= \supp(ui_{i,j}(\psi)),\]
where $\psi$ is the local Arthur parameter associated with $\EE$. (Note that if $i=k$, the assumption $B_k>1/2$ is used in this computation.) Therefore, the conclusion follows from Theorem \ref{thm ui packet}.

On the other hand, suppose $ui_{i,j}$ is applicable on $\EE$. It suffices to show that $dual_k^{+}$ is applicable on $ui_{i,j}(\EE)$, or equivalently, $dual_{k}^{-}$ is applicable on $dual(ui_{i,j}(\EE))$. If $ui_{i,j}(\EE)$ is of type 3', then it is clear that $dual_k^{+}$ is still applicable on $ui_{i,j}(\EE)$. If $ui_{i,j}(\EE)$ is not of type 3', then $dual_k^{-}$ is applicable on 
\[ ui_{j,i} \circ dual\circ ui_{i,j}(\EE)= dual(\EE)\]
by Corollary \ref{cor ui inverse}. Then Part (a) implies $dual_{k}^{-}$ is applicable on $ dual(ui_{i,j})(\EE)$. This completes the proof of (1).

For (2), identify $ I_{\rho}= \{ 1 ,\dots ,n\}$ with $1 < \cdots <n$. Then we define a total order $\gg $ on $I_{\rho}$ by
\[1 \ll \cdots \ll k-1 \ll j \ll k+1 \ll \cdots \ll j-1 \ll k \ll j+1 \ll \cdots \ll n. \]
This is an admissible order for $\EE_{\rho}$ since $ui_{k,j}$ is applicable on $ dual_k^{+}( \EE_{\rho})$. Then by comparing the support, Theorem \ref{thm applicability of dual_k} implies $dual_k^{+}$ is applicable on $\EE_{\rho,\gg}$ and the equality holds. This completes the proof of the lemma.
\end{proof}
 As a consequence of the previous lemma, we show that if $dual_k^+$ is applicable on an extended multi-segment after a composition of basic operators, then there exists an admissible order of the original extended multi-segment for which we can apply $dual_k^+$ directly. This corollary is used as a reduction step in the arguments of later sections.
\begin{cor}\label{cor partial dual is always applicable}
Suppose $\EE\in\Rep$ and denote $\FF=\EE_{\rho}$. Suppose $dual_k^{+}$ is applicable on $\FF$, and $\FF'$ is obtained from $\FF$ by a sequence of basic operators. Then there exists an admissible $\gg$ order of $\FF'$ and $k'$ such that $dual_{k'}$ is applicable $(\FF')_{\gg}$.
\end{cor}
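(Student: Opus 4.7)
The plan is by induction on the length $n$ of the sequence of basic operators taking $\FF$ to $\FF'$. The base case $n = 0$ is immediate, so it suffices to treat a single basic operator: write $\FF' = T(\FF'')$ where $dual_{k''}^{\pm}$ is applicable on $(\FF'')_{\gg''}$, and produce an admissible order $\gg'$ on $\FF'$ together with an index $k'$ so that $dual_{k'}^{\pm}$ is applicable on $(\FF')_{\gg'}$. The case $T = R_i^{\pm 1}$ is trivial: row exchanges only change the admissible order, so choose $\gg'$ to match $\gg''$ with $k' = k''$.

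For $T = ui_{i,j}^{\pm 1}$, perform a preliminary row exchange so that $\FF''$ already carries the order $\gg''$. When $dual_{k''}^{-}$ is applicable on $\FF''$, Lemma \ref{lem commutativity of partial dual}(a), which is an if-and-only-if statement, directly yields applicability of $dual_{k''}^{-}$ on $\FF' = ui_{i,j}^{\pm 1}(\FF'')$ in the same order. When $dual_{k''}^{+}$ is applicable, one has $B_{k''} = 1/2$, and the key point is that the exceptional case $(i, B_j) = (k'', 1/2)$ of Lemma \ref{lem commutativity of partial dual}(b)(1) cannot arise for the $ui_{i,j}$ at hand: since $ui_{i,j}$ requires $B_i < B_j$ in its domain, and since $ui_{i,j}$ leaves all $B$-values of individual rows unchanged, the condition $i = k''$ with $B_{k''} = 1/2$ forces $B_j > 1/2 \ne 1/2$ in both $\FF''$ and $\FF'$. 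Thus commutativity of Lemma \ref{lem commutativity of partial dual}(b)(1) applies, and $dual_{k''}^{+}$ remains applicable on $\FF'$.

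The case $T = (dual \circ ui_{i,j} \circ dual)^{\pm 1}$ is obtained by conjugating the previous case by the Aubert-Zelevinsky dual: applicability of $dual_{k''}^{\pm}$ on $\FF''$ yields applicability of $dual_{k''}^{\mp}$ on $dual(\FF'')$, which, by the $ui$ case, gives applicability of $dual_{k''}^{\mp}$ on $ui_{i,j}^{\pm 1}(dual(\FF''))$, and dualizing again produces applicability of $dual_{k''}^{\pm}$ on $\FF'$.

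The main obstacle is when $T$ is $ui_{i,j}$ of type 3' (which deletes a row) or its inverse (which splits a row via Corollary \ref{cor split}), since these change the set of rows. One must handle these by direct inspection. If $k'' \notin \{i,j\}$, the above argument still applies unchanged. If $k'' \in \{i,j\}$ in a type 3' deletion, the surviving row of the pair retains the needed $B$-value $\pm 1/2$ together with $l = 0$, and one verifies the remaining conditions of Definition \ref{def partial dual} at this surviving index using the explicit data given in Case 3 of Definition \ref{ui def}. For the type 3' inverse, the symmetric analysis via Corollary \ref{cor split} shows that exactly one of the two newly created rows carries the applicability data, after a row exchange if necessary. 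These checks are finite and routine, completing the induction.
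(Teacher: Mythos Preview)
Your inductive strategy is the same as the paper's, and most of the case analysis is sound. There is, however, a gap in the step $T = ui_{i,j}^{-1}$ when $dual_{k''}^{+}$ is applicable on $\FF''$. Lemma~\ref{lem commutativity of partial dual}(b) is stated \emph{under the hypothesis} that $dual_k^{+}$ is applicable on $\EE$, and its proof then deduces applicability on $ui_{i,j}(\EE)$. To go from $\FF'' = ui_{i,j}(\FF')$ back to $\FF'$ you would need the hypothesis on $\FF'$, which is exactly what you are trying to establish; so invoking (b)(1) here is circular. Your observation that the exception $(i,B_j)=(k'',1/2)$ does not occur is correct, but it only justifies the forward direction $\FF''\mapsto ui_{i,j}(\FF'')$.

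The paper avoids this by never using part~(b) at all. It proves the single \emph{if-and-only-if}: $dual_k^{+}$ is applicable on $\FF$ $\Longleftrightarrow$ $dual_k^{+}$ is applicable on $\FF'$, for $\FF' = ui_{i,j}(\FF)$ or $\FF' = dual\circ ui_{i,j}\circ dual(\FF)$, which automatically covers all inverses. The mechanism is to pass to the dual: $dual_k^{+}$ applicable on $\FF$ is equivalent to $dual_k^{-}$ applicable on $dual(\FF)$; for $ui_{i,j}$ not of type~3', Corollary~\ref{cor ui inverse} gives $dual(\FF) = ui_{j,i}(dual(\FF'))$; and Lemma~\ref{lem commutativity of partial dual}(a), which \emph{is} an exceptionless if-and-only-if, then transfers $dual_k^{-}$ applicability between $dual(\FF)$ and $dual(\FF')$. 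Type~3' is handled by a direct check, as you also propose. Note that this repair is already implicit in your own framework: $ui_{i,j}^{-1}$ not of type~3' equals $dual\circ ui_{j,i}\circ dual$ by Corollary~\ref{cor ui inverse}, so your conjugation paragraph (which lands in the $dual_k^{-}$ case and hence in part~(a)) covers it; you just should not route this case through (b)(1).
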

\begin{proof}
It suffices to show that if $\FF'=dual \circ ui_{i,j} \circ dual(\FF)$ or $\FF'=ui_{i,j}(\FF)$, then $dual_k^{+}$ is applicable on $\FF'$ if and only if $dual_k^{+}$ is applicable on $\FF$.

Suppose $\FF'=dual \circ ui_{i,j} \circ dual(\FF)$. Then \[dual(\FF')=ui_{i,j} \circ dual(\FF) .\] 
We know $dual_k^{+}$ is applicable on $\FF'$ (resp. $\FF$) if and only if $dual_{k}^{-}$ is applicable on $dual(\FF')$ (resp. $dual(\FF)$). On the other hand, Lemma \ref{lem commutativity of partial dual}(a) implies $dual_{k}^{-}$ is applicable on $dual(\FF')$ if and only if the same holds on $dual(\FF)$, and hence we are done.

Suppose $\FF' =ui_{i,j}(\FF)$. If this $ui$ is of type 3' in Definition \ref{def ui}, the assertion is clear. If it is not of 3', then we have 
\[ ui_{j,i} \circ dual(\FF')= dual(\FF).  \]
Hence, the conclusion holds by the same reasoning as the previous case. This completes the proof of the corollary.
\end{proof}

 Before we prove the main theorem of this subsection, we need the following proposition which extends Corollary \ref{cor minimal}(i).

\begin{prop}\label{prop shift left 1/2}
Suppose $\EE \in \Rep^{(P')}$ and write
\[\FF= \EE_{\rho} =\{ ([A_i,B_i]_{\rho}, l_i, \eta_i)\}_{i=1}^n,\]
where we identify $(I_{\rho},>)$ with $\{1, \dots , n\}$ where $1 <\cdots <n$. We assume that $A_i \in \Z+ \half{1}$.
\begin{enumerate}
    \item [(i)]  $\EE^{\rho} \cup (\FF_{<1/2} + sh^{-1}(\FF)_{\geq 1/2})$ is in $\Rep$ if $\FF$ satisfies all of the following conditions.  
\begin{enumerate}
    \item [(a)] $\FF$ is minimal.
    \item [(b)] If we write $\FF_{=1/2}= \{ ([A_i,1/2]_{\rho},l_i,\eta_i)\}_{i=k}^{m}$, then
\[ A_k \leq \cdots\leq A_{m}. \]
 \item [(c)] $dual_k^{+}$ is not applicable on $\FF$, where $\FF_{<1/2}$ has $k-1$ rows.
\end{enumerate}
In particular, suppose $\pi(\EE^{\rho} \cup \FF)\cong \pi(\EE^{\rho} \cup \FF')$ and both $\FF$ and $\FF'$ satisfy above conditions, then $\FF_{\geq 1/2}= \FF_{\geq 1/2}'$.
\item [(ii)] Suppose $\FF$ satisfies conditions (a) and (b) in (i) but (c) fails. Then either $dual_k(\FF)$ is already minimal or  $(dual_k(\FF))^{min}= ui_{k,j}(dual_k(\FF))$. Moreover, $(dual_k(\FF))^{min}$ satisfies all of (a), (b) and (c) and $dual_{k}^{-}$ is applicable on it with
\[dual_{k}((dual_k(\FF))^{min})= \FF\]
up to row exchanges.
\end{enumerate}

\end{prop}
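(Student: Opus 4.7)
The plan for (i) is to imitate Lemma \ref{lem minimal shift} across the boundary $B = 1/2$. Set $\FF^{\flat} := \FF_{<1/2} + sh^{-1}(\FF_{\geq 1/2})$; the task is to verify that $\EE^{\rho} \cup \FF^{\flat}$ satisfies the hypotheses of Theorem \ref{thm non-vanishing}. Two things must be checked: first, the condition $(\ast)$ at every shifted row $([A_i - 1, -1/2]_\rho, l_i, \eta_i)$ coming from $\FF_{=1/2}$, and second, the non-vanishing inequalities of Proposition \ref{prop positive non-vanishing}(i) across every adjacent pair crossing the new $B = -1/2$ block. For the first check, condition (c) --- the inapplicability of $dual_k^+$ at the first row of $\FF_{=1/2}$ --- is equivalent to saying that either $l_k > 0$ or $(-1)^{\alpha_k}\eta_k = +1$, and in either case $(\ast)$ holds for the shifted row $k$. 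Condition (b), the monotonicity $A_k \leq \cdots \leq A_m$ in $\FF_{=1/2}$, together with a parity induction on $\alpha_i - \alpha_k$ for $i > k$ in $\FF_{=1/2}$, propagates $(\ast)$ to every subsequent shifted row. For the second check, minimality of $\FF$ forbids any $ui_{i,j}$ being applicable on $\FF$, and the same algebra as in the proof of Lemma \ref{lem minimal shift} shows that no non-vanishing inequality fails after shifting.

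Once $\EE^{\rho} \cup \FF^{\flat} \in \Rep$ is established, the uniqueness half of (i) follows quickly. Proposition \ref{prop uniform} together with Theorem \ref{thm shift left} yield
\[
D_{\Omega(\FF_{\geq 1/2})}\bigl(\pi(\EE^{\rho} \cup \FF)\bigr) \cong \pi(\EE^{\rho} \cup \FF^{\flat})
\]
up to a multiplicity. Hence $\Omega(\FF_{\geq 1/2})$ is an invariant of $\pi(\EE^{\rho} \cup \FF)$, and under the same hypotheses on $\FF'$ we obtain $\Omega(\FF_{\geq 1/2}) = \Omega(\FF'_{\geq 1/2})$. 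Condition (b) uniquely pins down the admissible order on this sub-block from its support, so $\supp(\FF_{\geq 1/2}) = \supp(\FF'_{\geq 1/2})$ as ordered multi-sets. An application of Corollary \ref{cor shift add}(i) then upgrades this to $\FF_{\geq 1/2} = \FF'_{\geq 1/2}$.

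For (ii), condition (c) fails, so $dual_k^+$ is applicable. Theorem \ref{thm applicability of dual_k} exhibits $dual_k^+(\FF)$ as the unique extended multi-segment giving the same representation and with the $k$-th Jordan block $\rho \otimes S_{a_k} \otimes S_{b_k}$ replaced by $\rho \otimes S_{b_k} \otimes S_{a_k}$; concretely the $k$-th row becomes $([A_k, -1/2]_\rho, 0, -\eta_k)$. This new row now belongs to the $B < 1/2$ block under any admissible order. A case check, using condition (b) and the structure of $ui$, shows that the only operation that could further reduce $dual_k^+(\FF)$ is a single $ui_{k,j}$, where $j$ is the adjacent row in the original $\FF_{=1/2}$ block. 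Accordingly either $dual_k^+(\FF)$ is already minimal or $(dual_k^+(\FF))^{min} = ui_{k,j}(dual_k^+(\FF))$, and in both cases a direct verification shows the minimal form satisfies (a)--(c) and that $dual_k^-$ applied to it recovers $\FF$ (using $dual_k \circ dual_k = \mathrm{id}$ from Theorem \ref{thm applicability of dual_k} and the commutativity relations in Lemma \ref{lem commutativity of partial dual}).

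The main obstacle throughout will be the parity bookkeeping in (i): ensuring that the failure of $dual_k^+$ at the single distinguished row $k$ actually translates into $(\ast)$ holding at every row of $\FF_{=1/2}$ after the shift. This is where condition (b) is essential, since the propagation of signs through $\alpha_i$ relies on knowing the ordering of the $A$-values in $\FF_{=1/2}$; without (b), one would need to analyze several $dual_j^+$ inapplicabilities simultaneously and the clean inductive argument would collapse. A secondary difficulty in (ii) will be verifying that $(dual_k^+(\FF))^{min}$ always satisfies (b), which in turn requires a careful analysis of how the single potentially applicable $ui_{k,j}$ interacts with the $\FF_{=1/2}$ block inherited from $\FF$.
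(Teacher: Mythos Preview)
Your approach is essentially the paper's. For (i) the paper also reduces to checking condition $(\ast)$ at the rows of $sh^{-1}(\FF_{=1/2})$ and runs exactly the parity induction you describe, while the adjacent-pair inequalities are handled by minimality via the argument of Lemma~\ref{lem minimal shift}. For (ii) the paper invokes Lemma~\ref{lem commutativity of partial dual} parts (b)(1) and (b)(2) to show that only operations $ui_{k,j}$ with $B_j=1/2$ can apply to $dual_k^+(\FF)$ and that any sequence of such collapses to a single one --- this is your ``case check'' made precise.

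Two points need tightening. First, the identity $D_{\Omega(\FF_{\geq 1/2})}(\pi(\EE^{\rho}\cup\FF)) \cong \pi(\EE^{\rho}\cup\FF^{\flat})$ (as a composition of highest derivatives) comes from Lemma~\ref{lem shift}(iv) applied with $\FF_1=\FF_{<1/2}$ and $\FF_2=sh^{-1}(\FF_{\geq 1/2})$, not from Proposition~\ref{prop uniform} or Theorem~\ref{thm shift left}: both of those treat only the \emph{uniform} shift of the entire block $\EE_\rho$, not a partial shift. Second, your inference ``$\Omega(\FF_{\geq 1/2}) = \Omega(\FF'_{\geq 1/2})$, so $\supp(\FF_{\geq 1/2}) = \supp(\FF'_{\geq 1/2})$'' is a non-sequitur as written: a multi-set of exponents does not by itself determine its partition into segments. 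You must first invoke Corollary~\ref{cor minimal}(i) (which uses the whole family $\{\Omega(\FF_{\geq B})\}_{B>1/2}$) to get $\supp(\FF_{>1/2}) = \supp(\FF'_{>1/2})$; subtracting then gives $\Omega(\FF_{=1/2}) = \Omega(\FF'_{=1/2})$, and since every segment in $\FF_{=1/2}$ starts at $1/2$ this equality \emph{does} determine $\supp(\FF_{=1/2})$, after which condition~(b) fixes the order and Corollary~\ref{cor shift add}(i) finishes. The paper compresses all of this into ``follows from the same proof as Corollary~\ref{cor minimal}(i)''.
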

\begin{proof}
For Part (i), if $\EE^{\rho} \cup (\FF_{<1/2} + sh^{-1}(\FF)_{\geq 1/2})\in \Rep$, then the second assertion follows from the same proof of Corollary \ref{cor minimal}(i). Therefore, it suffices to show $\EE^{\rho} \cup (\FF_{<1/2} + sh^{-1}(\FF)_{\geq 1/2})\in \Rep$, and it remains to show $\FF_{<1/2} \cup sh^{-1}(\FF_{\geq 1/2})$ satisfies the condition ($\ast$) in Theorem \ref{thm non-vanishing}(i). This also follows from \cite[Algorithm 5.5]{Ato20b}. We give the details below for completeness.

Write $\FF_{<1/2} + \FF_{=1/2}= \{ ([A_i,B_i]_{\rho},l_i, \eta_i)\}_{i=1}^{k-1} +\{ ([A_i, 1/2]_{\rho},l_i,\eta_i)\}_{i=k}^{m}$. By our assumption and non-vanishing conditions in Proposition \ref{prop positive non-vanishing}(i)(2), there exists a unique integer $r \geq k-1$ such that 
\[\begin{cases}
l_i=0 &\text{for } k \leq i \leq r,\\
(-1)^{A_i-B_i}\eta_i \eta_{i+1}=1 &\text{for } k \leq i <i+1 \leq r,\\
l_i>0 &\text{for }  i> r.
\end{cases}\]
To see whether $\FF_{<1/2} + sh^{-1}(\FF_{=1/2})$ satisfies the condition ($\ast$) in Theorem \ref{thm non-vanishing}, we only have to check it for the $i$-th rows for $k \leq i \leq r$. We apply induction on these $i$.

Since $dual_k^{+}$ is not applicable on $\FF$, we see either $l_k>0$ so nothing needs to be checked ($r=k-1$), or the induction hypothesis holds for $i=k$.

Suppose $k \leq i <i+1 \leq r $ and the induction hypothesis holds for $i$. Then 
\[ \begin{cases}
 (-1)^{\alpha_i} \eta_i= 1,\\
 \alpha_{i+1}= \alpha_i+ A_i+B_i+1,\\
 \eta_{i+1}= (-1)^{A_i-B_i} \eta_i.
\end{cases}\]
Since $A_i , B_i \in \Z+1/2$, we see $(-1)^{A_i-B_i}= (-1)^{A_i-B_i+ (2B_i+1)}=(-1)^{A_i+B_i+1}$. So we have
\[ (-1)^{\alpha_{i+1}}\eta_{i+1}=1.\]
Therefore, the $(i+1)$-th row of $\FF_{<1/2} + sh^{-1}(\FF_{=1/2})$ also satisfies $(\ast)$ in Theorem \ref{thm non-vanishing}(i). This proves Part (i).

Part (ii) follows directly from Lemma \ref{lem commutativity of partial dual}(b). Indeed, Lemma \ref{lem commutativity of partial dual}(b)(1) implies that if $\FF$ is minimal and $dual_k^{+}$ is applicable on $\FF$, then no $ui_{i,j}$ with $i \neq k$ is applicable on $dual_k(\FF)$. So $(dual_k(\FF))^{min}$ is obtained from $dual_k(\FF)$ by a sequence of $ui_{k,j}$ with $B_j=1/2$. However, Lemma \ref{lem commutativity of partial dual}(b)(2) helps us to reduce the composition of $ui_{k,j}$ into a single one, and shows that $dual_{k}^{-}$ is applicable on $(dual_k^{+}(\FF))^{min}$ with 
\[dual_{k}^{-}((dual_k^{+}(\FF))^{min})= \FF' =\FF\]
up to a row exchange. This completes the proof of the proposition.
\end{proof}

With all of the tools in hand, we now prove Theorem \ref{main thm intro}(2) in the half integer case.

\begin{thm}\label{thm half integer}
Suppose 
$$\FF= \{ ([A_i,B_i]_{\rho},l_i,\eta_i)\}_{i \in (I_{\rho},>)},\FF'=\{ ([A_j',B_j']_{\rho},l_j',\eta_j')\}_{ j \in (I_{\rho}', >')}$$ satisfy
\begin{enumerate}
    \item [$\oldbullet$] $\pi(\EE^{\rho} \cup \FF) \cong \pi(\EE^{\rho} \cup \FF') \neq 0$, for some $\EE^{\rho}$ such that both
    $\EE^{\rho} \cup \FF$ are $\EE^{\rho} \cup \FF'$ are extended multi-segments,
    \item [$\oldbullet$] $A_1 \in \Z +\half{1}$.
\end{enumerate}
Then $\FF'$ can be obtained from $\FF$ by a sequence of basic operators and at most one $dual_k$. 
\end{thm}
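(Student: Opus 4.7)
The plan is to mimic the proof structure of Theorem \ref{thm integer}, with an additional step at rows of support $[\cdot,1/2]_{\rho}$, where the partial dual $dual_k$ may intervene.

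First, using Proposition \ref{prop basic operators} and Corollary \ref{cor minimal}, I may replace $\FF$ and $\FF'$ by their (unique up to row exchanges) minimal forms under the preorder of Section \ref{sec preorder}; only basic operators $R_k$ and $ui_{i,j}$ are used. By Corollary \ref{cor minimal}(i), after suitable row exchanges I have $\FF_{>1/2}=\FF'_{>1/2}$. Further row exchanges arrange both $\FF$ and $\FF'$ to satisfy condition (b) of Proposition \ref{prop shift left 1/2}, i.e., the rows in $\FF_{=1/2}$ (resp.\ $\FF'_{=1/2}$) are ordered with increasing $A_i$.

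Next, I do a case analysis on whether condition (c) of Proposition \ref{prop shift left 1/2} holds for $\FF$ and $\FF'$. If both satisfy (c), then Proposition \ref{prop shift left 1/2}(i) gives $\FF_{\geq 1/2}=\FF'_{\geq 1/2}$. To compare $\FF_{<1/2}$ with $\FF'_{<1/2}$, I shift the common $\geq 1/2$ part far to the right (Corollary \ref{cor shift add}(i)), then apply the Aubert-Zelevinsky dual $dual$ so that the original $<1/2$ part becomes the new $>1/2$ part; re-running the same reduction on the dual side and invoking Corollary \ref{cor symmetric support} at the end forces $\FF_{<1/2}=\FF'_{<1/2}$. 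In this sub-case no partial dual is needed. If exactly one of $\FF,\FF'$ fails (c), say $\FF'$ at its first row $k'$ with $B_{k'}=1/2$, Proposition \ref{prop shift left 1/2}(ii) produces $\GG':=(dual_{k'}^{+}\FF')^{min}$ satisfying (c) and related to $\FF'$ by one $dual_{k'}$ plus basic operators; the pair $(\FF,\GG')$ then falls into the previous sub-case, and the total number of partial duals used is exactly one.

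The main obstacle is the case when neither $\FF$ nor $\FF'$ satisfies (c). The naive approach—applying Proposition \ref{prop shift left 1/2}(ii) to both sides and invoking the ``both satisfy (c)'' argument in between—introduces two partial duals, one at each end of the chain. To collapse them into one, the plan is to use the commutativity relations of $dual_k^{\pm}$ with $R_k$ and $ui_{i,j}$ from Lemma \ref{lem commutativity of partial dual}, together with the observation that under minimality and (b), the row at which $dual^{+}$ becomes applicable is essentially determined by invariants of $\pi$ recoverable through Theorem \ref{thm max A} and Lemma \ref{lem $L$-data}. This should let a composition of the form $dual_{k}^{-}\circ(\text{basic operators})\circ dual_{k'}^{+}$ relating $\FF'$ to $\FF$ be rewritten as $(\text{basic operators})\circ dual_{k}^{\pm}$. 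Verifying this merger carefully via the combinatorial definitions (or equivalently through the local Arthur parameter characterization in Theorem \ref{thm applicability of dual_k}) will be the main technical challenge of the proof.
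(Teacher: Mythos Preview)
Your overall architecture is correct and close to the paper's: reduce to minimal forms, match the $>1/2$ parts via Corollary~\ref{cor minimal}, pass to the dual side, and use Proposition~\ref{prop shift left 1/2} to control the $B=\pm 1/2$ boundary where the single $dual_k$ may enter. One cosmetic difference is that you invoke Proposition~\ref{prop shift left 1/2} directly on $\FF,\FF'$ to pin down $\FF_{\geq 1/2}$ first and only then dualize, whereas the paper dualizes first and applies Proposition~\ref{prop shift left 1/2} on the dual side; either order works, and your ``both satisfy (c)'' and ``exactly one fails (c)'' sub-cases go through as written (note: the original $<1/2$ part becomes the dual $\geq 1/2$ part, not $>1/2$, but your use of Corollary~\ref{cor symmetric support} on the dual side handles the residual $=1/2$ layer correctly).

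Where you make life harder than necessary is the ``both fail (c)'' case. Your plan to push one $dual_k$ through the chain of basic operators via Lemma~\ref{lem commutativity of partial dual} and cancel it against the other is delicate (the exceptional case $i=k$, $B_j=1/2$ of that lemma must be tracked). The paper avoids this entirely with a one-line trace-back: apply $dual_k^+$ to both sides, run the ``both satisfy (c)'' argument to its conclusion, and observe that the two resulting objects agree, hence have equal $\Omega$. But $dual_k^+$ alters $\Omega$ only by inserting a single $\rho|\cdot|^{-1/2}$, so the $\Omega$'s of the pre-$dual_k^+$ minimal forms already agreed. Combined with what you already know (same representation, both minimal, matching $>1/2$ parts), this forces the two minimal forms to coincide \emph{before} any partial dual was applied. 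Thus in this sub-case \emph{zero} partial duals are needed, and no commutativity bookkeeping is required.
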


\begin{proof}
By the same argument as Theorem \ref{thm integer}, we assume both $\FF$ and $\FF'$ are minimal, and decompose
\[ \FF= \FF_{\leq 1/2} + \FF_{>1/2},\ \FF= \FF_{\leq 1/2}' + \FF_{>1/2}'. \]
Then $\FF_{>1/2} =\FF_{>1/2}'$ after row exchanges.

Next, denote $\FF_t= \FF_{\leq 1/2} + sh^t(\FF_{>1/2})$ for $t$ sufficiently large, similar for $\FF'$. Then take dual of them to get 
\begin{align*}
    \widetilde{\FF_t}:= dual(\FF_t)&= \widetilde{sh^t(\FF_{>1/2})} + \widetilde{\FF_{\leq 1/2}},\\
    \widetilde{\FF_t'}:= dual(\FF_t')&= \widetilde{sh^t(\FF_{>1/2}')} + \widetilde{\FF_{\leq 1/2}'},
\end{align*}
where 
\[\widetilde{sh^t(\FF_{>1/2})}= (\widetilde{\FF_t})_{< -1/2},\  \widetilde{\FF_{\leq 1/2}}= (\widetilde{\FF_t})_{\geq -1/2}, \]
and similar for $\widetilde{\FF_{\leq 1/2}'}$. For $t$ sufficiently large, we have 
\begin{align*}
    (\widetilde{\FF_t} )^{min}&=\left(\widetilde{sh^t(\FF_{>1/2})}\right)^{min} +  \left(\widetilde{\FF_{\leq 1/2}}  \right)^{min},\\
    (\widetilde{\FF_t'})^{min}&=\left(\widetilde{sh^t(\FF_{>1/2}')}\right)^{min} +  \left(\widetilde{\FF_{\leq 1/2}'}  \right)^{min}.
\end{align*}  
Suppose $(\widetilde{\FF_t} )^{min}$ satisfies condition (c) in Proposition \ref{prop shift left 1/2}(i), then we denote 
\[ \widetilde{\FF_{=1/2}}= \left(\left(\widetilde{\FF_{\leq 1/2}}  \right)^{min}\right)_{=1/2},\ \widetilde{\FF_{<1/2}}= \left(\left(\widetilde{\FF_{\leq 1/2}}  \right)^{min}\right)_{>1/2}.\]
Suppose $(\widetilde{\FF_t} )^{min}$ doesn't satisfy condition (c) in Proposition \ref{prop shift left 1/2}(i), then we denote
\[ \widetilde{\FF_{=1/2}}= \left(dual_k^{+}\left( (\widetilde{\FF_t} )^{min}\right)^{min}\right)_{=1/2},\ \widetilde{\FF_{<1/2}}= \left(dual_k^{+}\left( (\widetilde{\FF_t} )^{min}\right)^{min}\right)_{>1/2}.\]
We repeat this for $(\widetilde{\FF_{\leq 1/2}'})^{min}$. Then Proposition \ref{prop shift left 1/2} shows that $\widetilde{\FF_{< 1/2}}=\widetilde{\FF_{< 1/2}'} $ up to row exchanges. Finally, taking the dual of 
\begin{align*}
&\widetilde{sh^t(\FF_{>1/2})} + \widetilde{\FF_{= 1/2}} + \widetilde{\FF_{< 1/2}},\\
&\widetilde{sh^t(\FF_{>1/2}')} + \widetilde{\FF_{= 1/2}'} + \widetilde{\FF_{< 1/2}'},   
\end{align*}
and canceling the $sh^t$, we may assume $\FF$ and $\FF'$ have the following decomposition
\[ \FF= \FF_{<1/2} + \FF_{=1/2} + \FF_{>1/2},\ \FF'= \FF_{<1/2}' + \FF_{=1/2}' + \FF_{>1/2}',\]
where 
\begin{enumerate}
    \item [$\oldbullet$]$\FF_{>1/2}= \FF_{>1/2}'$,
    \item [$\oldbullet$] $\supp(\FF_{<1/2})= \supp(\FF_{<1/2}')$.
\end{enumerate}
 Using the same argument to compare $\FF_{=0}$ and $\FF_{=0}'$ as in integer case, we may conclude $\FF_{=1/2}=\FF_{=1/2}'$ and hence $\FF=\FF'$ up to row exchanges.

Under this procedure, to go from $\FF$ to $\FF'$, we may have to apply $dual_k$ twice. However, in that case, tracing back from the conclusion, we see that $\Omega(\widetilde{\FF_{\leq 1/2}})=\Omega(\widetilde{\FF_{\leq 1/2}'})$ before applying $dual_k$. Then indeed $(\widetilde{\FF_{\leq 1/2}})^{min}= (\widetilde{\FF_{\leq 1/2}})^{min}$ and we do not need to apply partial dual to go from $\FF$ to $\FF'$. 
\end{proof}


\begin{remark}
The assertion that at most one partial dual is needed to show that the set 
\[ \{ \FF \ | \ \pi(\EE^{\rho} \cup \FF) \cong \pi(\EE^{\rho} \cup \EE_{\rho})\}\]
splits into at most two orbits under the basic operators. 
\end{remark}

\section{Canonical form and Theorem \ref{main thm intro}(3)}\label{sec can form}

In this section, for each $\EE\in \Rep$, we construct an extended multi-segment $\EE_{can} \in \Rep$, which we call the \emph{canonical form} of $\EE$, such that $\pi(\EE) \cong \pi(\EE')$ if and only if $\EE_{can}=\EE_{can}'$. Based on this construction, we prove Theorem \ref{main thm intro}(3), that is, we give a formula  (Theorem \ref{thm exhaustion of symbol} below) to exhaust the set
\[ \Psi(\EE):=\{ \EE' \ | \ \pi(\EE') \cong \pi(\EE)\}/\text{(row exchanges)}.\]
Moreover, we show that $\EE_{can}$ is uniquely characterized by the derivative information of $\pi(\EE)$ (see Theorem \ref{thm canonical form and derivatives} below). From this property, we give an algorithm (Algorithm \ref{alg Arthur type} below) to determine whether a representation of good parity is of Arthur type or not.

Based on the proof of Theorems \ref{thm integer}, \ref{thm half integer}, we give the following definition.

\begin{defn}\label{def can form}
For $\EE \in \Rep$ and $\FF= \EE_{\rho}$, we define $\FF_{can}$ as follows:
\begin{itemize}
    \item [(i)] Suppose $dual_k^{+}$ is not applicable on 
    \[dual(\FF^{min})_{<-1/2} + (dual(\FF^{min})_{\geq -1/2})^{min}\]
    for any $k$. Then we define
    \[ \FF_{can}=dual(dual(\FF^{min})_{<-1/2} + (dual(\FF^{min})_{\geq -1/2})^{min}).\]
    \item [(ii)] Suppose $dual_k^{+}$ is applicable on $dual(\FF^{min})_{<-1/2} + (dual(\FF^{min})_{\geq -1/2})^{min}$ for some $k$. Apply $dual_k^{+}$ on it and denote the resulting extended multi-segment by  $\widetilde{\FF}$. Then we define
    \[ \FF_{can}= dual( \widetilde{\FF}_{<-1/2} + (\widetilde{\FF}_{\geq -1/2})^{min} ).\]
\end{itemize}
We define $\EE_{can}= \cup_{\rho} (\EE_{\rho})_{can}$.
\end{defn}

We give some examples explaining the above definition. 

\begin{exmp}
Let $\rho$ be the trivial representation. We compute the canonical forms of the following extended multi-segments.
\[ \EE_1= \bordermatrix{
&\half{-1}& \half{1} & \half{3} & \half{5} \cr 
&& \ominus &&\cr 
&&&\oplus & \cr 
&&&&\ominus \cr
}_{\rho}, \ \EE_2= \bordermatrix{
& \half{-1}& \half{1} & \half{3} & \half{5} \cr 
&\oplus& \ominus &&\cr 
&&&\oplus & \cr 
&&&&\ominus \cr
}_{\rho}, \]
where $\pi(\EE_1)= \pi(\EE_2) = \pi((1/2)^{-},(3/2)^{+}, (5/2)^{-})$ is a supercuspidal representation of $\SO_{13}(F)$.

For $\EE_1$, we have 
\[ \EE_1^{min}= \bordermatrix{
& \half{-1}& \half{1} & \half{3} & \half{5} \cr 
&& \ominus &\oplus&\ominus\cr 
}_{\rho}, \  dual(\EE_1^{min})=\bordermatrix{
&\half{-1}& \half{1} & \half{3} & \half{5} \cr 
& \oplus& \ominus &\oplus&\ominus\cr 
}_{\rho}.  \]
Thus $dual(\EE_1^{min})_{<-1/2}+ (dual(\EE_1^{min})_{\geq -1/2})^{min}= dual(\EE_1^{min}),$ and no $dual_k^{+}$ is applicable. Then $\EE_1$ is in case (i) in the definition above, and 
\[ (\EE_1)_{can}= dual(dual(\EE_1^{min}))= \bordermatrix{
& \half{-1}& \half{1} & \half{3} & \half{5} \cr 
&& \ominus &\oplus&\ominus\cr 
}_{\rho}.\]

For $\EE_2$, we have 
\[ \EE_2^{min}= \bordermatrix{
&\half{-1}& \half{1} & \half{3} & \half{5} \cr 
& \oplus & \ominus &\oplus&\ominus\cr 
}_{\rho},\  dual(\EE_2^{min})=\bordermatrix{
& \half{1} & \half{3} & \half{5} \cr 
& \ominus &\oplus&\ominus\cr 
}_{\rho}.  \]
Thus $dual(\EE_2^{min})_{<-1/2}+ (dual(\EE_2^{min})_{\geq -1/2})^{min}= dual(\EE_2^{min}),$ and $dual_1^{+}$ is applicable on it. Then $\EE_2$ is in case (ii) in the definition above, and 
\begin{align*}
    \widetilde{\EE_2}= dual_1^{+}(dual(\EE_2^{min}))&=\bordermatrix{
&\half{-1}& \half{1} & \half{3} & \half{5} \cr 
& \oplus & \ominus &\oplus&\ominus\cr 
}_{\rho},\\
(\EE_2)_{can}= dual(\widetilde{\EE_2})&= \bordermatrix{
& \half{-1}& \half{1} & \half{3} & \half{5} \cr 
& & \ominus &\oplus&\ominus\cr 
}_{\rho}.
\end{align*}
We see that $(\EE_1)_{can}=(\EE_2)_{can}$, which follows from the proof of Theorem \ref{thm half integer}.
\end{exmp}

The following is an explicit criterion for determining $\pi(\EE)\cong \pi(\EE')$ or not.
\begin{cor}\label{cor canonical form}
For $\EE ,\EE' \in \Rep$, we have 
\[\pi(\EE) \cong \pi(\EE')\Longleftrightarrow \EE_{can}=\EE_{can}'. \]
\end{cor}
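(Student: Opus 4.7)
The backward direction is essentially routine: each operation in Definition~\ref{def can form}---minimization by $ui_{i,j}$ and row exchanges, the partial dual $dual_k^+$, and the outer $dual$ applied in pairs---preserves $\pi$, by Propositions~\ref{prop basic operators} and \ref{prop partial dual}. Hence $\pi(\EE)\cong\pi(\EE_{can})$ and likewise $\pi(\EE')\cong\pi(\EE_{can}')$, so $\EE_{can}=\EE_{can}'$ immediately forces $\pi(\EE)\cong\pi(\EE')$.

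For the forward direction, fix $\rho$ via Lemma~\ref{lemma far away}(i) and set $\FF:=\EE_\rho$, $\FF':=\EE'_\rho$; I will prove $\FF_{can}=\FF_{can}'$ by showing that $\FF_{can}$ depends only on $\pi(\FF)$. By Theorems~\ref{thm integer} and \ref{thm half integer}, $\FF'$ is reached from $\FF$ by a chain of basic operators and at most one $dual_k$. It therefore suffices to verify (i) that $\FF_{can}$ is unchanged under any basic operator applied to $\FF$, and (ii) that $\FF_{can}=(dual_k(\FF))_{can}$ whenever $dual_k$ is applicable. Claim (i) is immediate from the construction: basic operators preserve the basic-operator orbit, hence preserve $\FF^{min}$ with the fixed ordering of Corollary~\ref{cor minimal}(ii), and the remaining steps in Definition~\ref{def can form} are then identical inputs to identical procedures.

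The main obstacle is Claim (ii), where the case dichotomy of Definition~\ref{def can form} must be shown to be invariant under $dual_k$. The plan here is as follows. Applying $dual_k$ passes between two basic-operator orbits whose minimal representatives are related as in Proposition~\ref{prop shift left 1/2}(ii): exactly one of these two orbits contains minimal forms satisfying condition (c) of that proposition (call this the \emph{Type A} orbit), the other not. The canonical form is engineered so that its dual side is Type A: in case (i) of Definition~\ref{def can form} the intermediate object is already Type A once dualized, while in case (ii) the partial dual $dual_k^+$ precisely transitions from a Type B intermediate to a Type A one. By Theorem~\ref{thm applicability of dual_k}, the applicability of $dual_k^+$ on the intermediate object is an invariant of $\widehat{\pi(\FF)}$, encoding exactly whether $\widehat{\pi(\FF)}$ lies in an intersection $\Pi_{\widehat{\psi}}\cap\Pi_{dual_k(\widehat{\psi})}$. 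Combining this with the uniqueness clause of Proposition~\ref{prop shift left 1/2}(i) and the invariance of the $<-1/2$-part given by Corollary~\ref{cor minimal}(i), the Type A dual-side representative is unique, whence $\FF_{can}$ depends only on $\pi(\FF)$. This settles Claim (ii) and completes the forward direction.
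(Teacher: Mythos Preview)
Your backward direction is correct. The forward direction, however, has a genuine gap in Claim~(i): the assertion that basic operators preserve $\FF^{min}$ is false for $T = dual \circ ui_{i,j} \circ dual$ when the inner $ui_{i,j}$ is of type~$3'$. By Corollary~\ref{cor minimal}(ii), $\FF^{min}$ is the unique minimal element of the down-set $\{\FF'' \mid \FF'' \leq \FF\}$, where $\leq$ is generated by $ui$ and row exchanges only. The basic-operator orbit is in general strictly larger: when the inner $ui$ is of type~$3'$, Corollary~\ref{cor ui inverse} does not apply, and $T$ is not an inverse of any $ui_{j,i}$ (cf.\ Remark~\ref{rmk dual ui dual}), so $T$ can move $\FF$ outside its $\leq$-class. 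Concretely, take $\EE_1 = \{([2,0]_\rho, 0, -1)\}$ and $\EE_2 = \{([2,-1]_\rho, 1, -1), ([0,0]_\rho, 0, -1)\}$ as in the example following Remark~\ref{rmk dual ui dual}. Then $\EE_1 = dual \circ ui \circ dual(\EE_2)$ with the inner $ui$ of type~$3'$, yet $\EE_1^{min} = \EE_1$ while $\EE_2^{min} = \EE_2$ (one checks directly that no $ui_{i,j}$ is applicable on $\EE_2$ in either admissible order). So the ``hence'' in your Claim~(i) fails, and the rest of your argument, which feeds $\FF^{min}$ into identical subsequent steps, does not get off the ground.

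The paper's one-line proof avoids this by not arguing operator-by-operator. Definition~\ref{def can form} is built to mirror, step for step, the reductions in the proofs of Theorems~\ref{thm integer} and~\ref{thm half integer}: minimize (Corollary~\ref{cor minimal}(i) shows the $>1/2$ part then depends only on $\pi$), dualize, minimize the $\geq -1/2$ part (the dual-side analogue pins down the $<0$ part), and use Proposition~\ref{prop shift left 1/2} with at most one $dual_k^+$ to resolve the half-integer ambiguity. Each step was already shown there to produce output depending only on $\pi(\FF)$, not on $\FF$; that is exactly what makes $\FF_{can}$ canonical. In the counterexample above, $\EE_1^{min} \neq \EE_2^{min}$, but the subsequent dualize-and-minimize step collapses both to $\EE_1$, so $(\EE_1)_{can} = (\EE_2)_{can}$ as it must. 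Your approach could be salvaged by tracking how the second minimization repairs the discrepancy in $\FF^{min}$, but doing so amounts to rerunning the proofs of Theorems~\ref{thm integer} and~\ref{thm half integer}, which is precisely what the paper invokes.
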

\begin{proof}
It follows from the proofs of Theorems \ref{thm integer} and \ref{thm half integer}. 
\end{proof}

    



For $\EE =\cup_{\rho} \EE_{\rho} \in \Rep$, we describe how to compute the sets
\begin{align*}
\Psi (\EE_{\rho}):=&\{\FF' \ | \ \FF_{can}'= (\EE_{\rho})_{can} \}/(\text{row exchanges}),\\
    \Psi (\EE):=&\{\EE' \ | \ \EE_{can}= \EE_{can}' \}/(\text{row exchanges}),
\end{align*}
in the following theorem by reversing the construction of $\FF_{can}$ and $\EE_{can}$. Note that
\[ \{ \psi \ | \ \pi(\EE) \in \Pi_{\psi}\}=\{ \psi_{\EE'}\ | \ \EE' \in \Psi(\EE)\},\]
where $\psi_{\EE'}$ is the local Arthur parameter associated with $\EE'$.

To simplify the description, for any $\EE \in \Rep$ and $\FF= \EE_{\rho}$, we denote
\[ UI^{-1}_{\geq -1/2}(\FF):= \left\{ \FF_{<-1/2}+ \FF^{\ast} \ | \ \FF^{\ast} \in UI^{-1}( \FF_{\geq -1/2}) \right\}.\]

\begin{thm}\label{thm exhaustion of symbol} \
\begin{enumerate}
    \item [1.] Suppose $\EE \in \Rep$ and $\FF =\EE_{\rho}$. If $dual_k^{+}$ is not applicable on $\FF_{can}$ for any $k$, then we set $A=\{\FF_{can}\}$ to be a singleton. Otherwise, if $dual_k^{+}$ is applicable on $\FF_{can}$, we set $A= \{ \FF_{can} , dual_k(\FF_{can})\}$. Then we have   
     \[ \Psi(\FF)=\bigcup_{\FF^{\ast}\in A}\left( \bigcup_{\FF^{\ast \ast} \in UI^{-1}_{\geq-1/2}(dual(\FF^{\ast}))} UI^{-1}(dual(\FF^{\ast\ast})) \right). \]  
    \item [2.] Suppose $\EE= \cup_{\rho}\EE_{\rho} \in \Rep$, then
    \[ \Psi(\EE)= \{ \cup_{\rho}\FF_{\rho}\ | \ \FF_{\rho} \in \Psi(\EE_{\rho}) \}.\]
\end{enumerate}
\end{thm}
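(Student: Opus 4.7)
The plan is to handle Part 2 quickly and then attack Part 1 by proving the two inclusions separately.

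Part 2 is an immediate consequence of Lemma \ref{lemma far away}(i): that lemma gives $\pi(\EE_1) \cong \pi(\EE_2)$ if and only if $\pi(\EE_1) \cong \pi(\EE_1^{\rho} \cup (\EE_2)_{\rho})$ for every $\rho$ appearing in the support. Hence the equivalence class $\Psi(\EE)$ splits as a product over $\rho$ of the equivalence classes $\Psi(\EE_{\rho})$, which is precisely the asserted formula.

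For the forward inclusion in Part 1 (RHS $\subseteq \Psi(\FF)$), I would verify that every step in the formula preserves the associated representation. Any $\FF^{\ast} \in A$ satisfies $\pi(\EE^{\rho} \cup \FF^{\ast}) \cong \pi(\EE)$: for $\FF^{\ast} = \FF_{can}$ this is Corollary \ref{cor canonical form}, while for $\FF^{\ast} = dual_k(\FF_{can})$ it follows from Proposition \ref{prop partial dual}. Applying $dual$ replaces the representation by its Aubert--Zelevinsky involution by Theorem \ref{thm Aubert-Zelevinsky dual formula}, so dualling twice is trivial on isomorphism classes. Finally, both $UI^{-1}$ and $UI^{-1}_{\geq -1/2}$ are built (via Algorithm \ref{algo ui inverse}) from the inverses of $ui_{i,j}$ not of type~3' and from the type~3' reverse described in Corollary \ref{cor split}; these preserve $\pi$ by Corollary \ref{cor ui inverse} and Corollary \ref{cor split}. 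Combining, every element of the RHS lies in $\Psi(\FF)$.

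For the backward inclusion ($\Psi(\FF) \subseteq$ RHS), take any $\FF' \in \Psi(\FF)$. By Corollary \ref{cor canonical form}, $\FF'_{can} = \FF_{can}$, so I would trace Definition \ref{def can form} in reverse. Its construction proceeds through $(\FF')^{min} \to dual((\FF')^{min}) \to$ (min-ing of the $\geq -1/2$ part) $\to$ (possible $dual_k^{+}$) $\to dual$. Running this backwards starting from $\FF_{can}$, one recovers $(\FF')^{min}$ as $dual(\FF^{\ast\ast})$ for some $\FF^{\ast\ast} \in UI^{-1}_{\geq -1/2}(dual(\FF^{\ast}))$ with an appropriate $\FF^{\ast} \in A$; then $\FF' \in UI^{-1}((\FF')^{min})$ by the definition of $\FF^{min}$ (Corollary \ref{cor minimal}).

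The main obstacle is justifying the case split in Definition \ref{def can form}: whether $dual_k^{+}$ is applicable during the construction of $\FF'_{can}$ depends on $\FF'$, so distinct $\FF', \FF'' \in \Psi(\FF)$ may genuinely fall into different cases of Definition \ref{def can form}. The theorem fixes this by enlarging $A$ to $\{\FF_{can}, dual_k(\FF_{can})\}$ precisely when $dual_k^{+}$ is applicable on $\FF_{can}$. To match the two case distinctions, I would use Theorem \ref{thm applicability of dual_k} (which identifies $dual_k$ as an involution and characterizes its applicability through intersections of local Arthur packets), together with Lemma \ref{lem commutativity of partial dual} and Corollary \ref{cor partial dual is always applicable}, which guarantee that applicability of $dual_k^{+}$ is preserved (possibly after row exchange) under the basic operators. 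These tools show that an $\FF'$ falling into case (i) of Definition \ref{def can form} is recovered from $\FF^{\ast} = \FF_{can}$, while one falling into case (ii) is recovered from $\FF^{\ast} = dual_k(\FF_{can})$; Theorem \ref{thm half integer} then guarantees that at most one $dual_k$ is ever required, so the two choices in $A$ exhaust all possibilities.
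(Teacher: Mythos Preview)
Your proposal is correct and takes essentially the same approach as the paper, whose proof is the single sentence ``It follows from the construction of $\FF_{can}$ and $\EE_{can}$.'' You have simply unpacked what that sentence means: run Definition~\ref{def can form} in reverse. One refinement: for the case-(ii) matching you should cite Proposition~\ref{prop shift left 1/2}(ii) explicitly, since it is precisely what guarantees that $dual_k^{-}(dual(\FF_{can}))$ recovers (up to row exchange) the intermediate object $dual((\FF')^{min})_{<-1/2} + (dual((\FF')^{min})_{\geq -1/2})^{min}$, and hence that $\FF^{\ast} = dual_k(\FF_{can})$ is the correct choice there.
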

\begin{proof}
It follows from the construction of $\FF_{can}$ and $\EE_{can}$.
\end{proof}

We give two examples of this theorem.
\begin{exmp} \label{ex exhaustion}
Consider 
\[ \EE= \bordermatrix{
&-1&0&1&2 \cr
&\lhd&\ominus&\rhd&\cr
&&\ominus&\oplus&\ominus \cr
&&&&\ominus \cr
}_{\rho},\]
where $\rho$ is the trivial representation. We have $\EE \in \Rep$ and
\[\pi(\EE)= L(\Delta_{\rho}[-1,-1], \pi( 0^{-}, 0^{-}, 1^{+},2^{-},2^{-})) \]
is an irreducible representation of $\Sp_{16}(F)$.
Let us construct all local Arthur parameters $\psi$ such that $\pi(\EE) \in \Pi_{\psi}$.

We first compute the canonical form $\EE_{can}$. Denote $\EE_1=\EE$.  $\EE_1$ is not minimal since $ui_{1,2}$ is applicable. We have
\[ui_{1,2}(\EE_1)=\EE_2=\ \begin{blockarray}{cccc}-1&0&1&2\\
\begin{block}{(cccc)}
\lhd&\lhd&\rhd&\rhd\\
&\oplus&\ominus&\\
&&&\ominus\\
\end{block}
\end{blockarray}\ , \]
and it is minimal. After taking dual, $ui_{2,3}$ is applicable. 
\[dual(\EE_2)=\ \begin{blockarray}{ccccc}-2&-1&0&1&2\\
\begin{block}{(ccccc)}
\lhd&\lhd&\ominus&\rhd&\rhd\\
&&\ominus&\oplus&\\
&&&\lhd&\rhd\\
\end{block}
\end{blockarray}\ ,\  ui_{2,3}(dual(\EE_2))=\ \begin{blockarray}{ccccc}-2&-1&0&1&2\\
\begin{block}{(ccccc)}
\lhd&\lhd&\ominus&\rhd&\rhd\\
&&\ominus&\oplus&\ominus\\
&&&\ominus&\\
\end{block}
\end{blockarray}\ .\]
Then $ui_{2,3}(dual(\EE_2))_{\geq -1/2}$ is minimal. In fact, $ui_{2,3}(dual(\EE_2))=dual(\EE_1)$ by Corollary \ref{cor ui inverse} or direct computation. Therefore, $\EE_1=\EE_{can}$.

Next, we compute $UI^{-1}_{\geq -1/2}(dual(\EE_1))$. It is computed in Example \ref{ex UI inverse} that  
\[UI^{-1}( dual(\EE_1)_{\geq -1/2})=\left\{ \begin{blockarray}{ccc}0&1&2\\
\begin{block}{(ccc)}
\ominus&\oplus&\ominus\\
&\ominus&\\
\end{block}
\end{blockarray}\ ,\ \begin{blockarray}{ccc}0&1&2\\
\begin{block}{(ccc)}
\ominus&\oplus&\\
&\lhd&\rhd\\
\end{block}
\end{blockarray} \ ,\ \begin{blockarray}{ccc}0&1&2\\
\begin{block}{(ccc)}
\ominus&&\\
&\oplus & \\
&\lhd&\rhd\\
\end{block}
\end{blockarray}   \right\},  \]
and hence $UI^{-1}_{\geq-1/2}(dual(\EE_1))=\{dual(\EE_1), dual(\EE_2), dual(\EE_3)\}$, where 
\[\EE_3=\ \begin{blockarray}{cccc}-1&0&1&2\\
\begin{block}{(cccc)}
\lhd&\lhd&\rhd&\rhd\\
\lhd&\oplus&\rhd&\\
&\ominus&&\\
&&&\ominus\\
\end{block}
\end{blockarray}\ = dual\left( \begin{blockarray}{ccccc}-2&-1&0&1&2\\
\begin{block}{(ccccc)}
\lhd&\lhd&\ominus&\rhd&\rhd\\
&&\ominus&&\\
&&&\oplus&\\
&&&\lhd&\rhd\\
\end{block}
\end{blockarray} \right).\]
It remains to compute $UI^{-1}(\EE_1)$, $UI^{-1}(\EE_2)$ and $UI^{-1}(\EE_3)$. As $\EE_1= ui_{1,2}^{-1}(\EE_2)$, we have $ UI^{-1}(\EE_1) \subseteq UI^{-1}(\EE_2)$.

so we only need to apply Algorithm \ref{algo ui inverse} to compute $UI^{-1}(\EE_2)$ and $ UI^{-1}(\EE_3)$. 

We omit the details of the computation in Algorithm \ref{algo ui inverse}. When computing $UI^{-1}(\EE_2)$, the first time we enter step 4, $C=\{\EE_1,\EE_4,\EE_5\}$, where 
\[ \EE_4= \begin{blockarray}{cccc}-1&0&1&2\\
\begin{block}{(cccc)}
\lhd&\lhd&\rhd&\rhd\\
&\oplus&&\\
&&\ominus &\\
&&&\ominus\\
\end{block}
\end{blockarray}\ , \ \EE_5= \begin{blockarray}{cccc}-1&0&1&2\\
\begin{block}{(cccc)}
\lhd&\ominus&\rhd&\\
&\ominus&\oplus&\\
&&&\ominus \\
&&&\ominus\\
\end{block}
\end{blockarray}\ . \]
The second times we enter step 4 with $C=\{\EE_6, \EE_7\}$ where 
\[ \EE_6= \begin{blockarray}{cccc}-1&0&1&2\\
\begin{block}{(cccc)}
\lhd&\ominus&\rhd&\\
&\ominus&&\\
&&\oplus&\ominus \\
&&&\ominus\\
\end{block}
\end{blockarray}\ , \ \EE_7= \begin{blockarray}{cccc}-1&0&1&2\\
\begin{block}{(cccc)}
\lhd&\ominus&\rhd&\\
&\ominus&&\\
&&\oplus&\\
&&&\ominus \\
&&&\ominus\\
\end{block}
\end{blockarray}\ . \]
The third time we encounter step 4, we have $C=\emptyset$, and the procedure ends. For the computation of $UI^{-1}(\EE_3)$, the first time we apply step 4, $C=\{ \EE_8\}$ where
\[\EE_8= \begin{blockarray}{cccc}-1&0&1&2\\
\begin{block}{(cccc)}
\lhd&\ominus&\rhd&\\
\lhd&\ominus&\rhd&\\
&\oplus&&\\
&&&\ominus \\
&&&\ominus\\
\end{block}
\end{blockarray}\ . \]
The second time we enter step 4, $C$ is empty and the procedure ends. In conclusion, 
\[ \{ \EE' \ | \ \pi(\EE')\cong \pi(\EE)\}/\text{(row exchange)}=\{\EE_1,\ldots,\EE_8\}.\]
In the following diagram, we visualize the relations among these extended multi-segments that we have seen during the process. We write $ \EE \to \EE'$ (resp. $ \EE \dashrightarrow \EE'$) if $ \EE'= ui_{i,j}(\EE)$ (resp. $\EE'= dual\circ ui_{i,j} \circ dual (\EE)$) for some $i,j$.
\[ \begin{tikzcd} 
\EE_8\arrow[d]& & \EE_7\arrow[d] & \\
\EE_3\arrow[rd, dashrightarrow]& \EE_4 \arrow[d]& \EE_5 \arrow[ld]&\\
& \EE_2\arrow[rd, dashrightarrow] & & \EE_6 \arrow[ld]\\
& & \EE_1 &
 \end{tikzcd}\]
Notice that to make the picture clean, we did not draw all relations among these extended multi-segments. For example, in the computation of $\EE_{can}$, we have seen $\EE_1 \to \EE_2$, but we decide to draw $\EE_2 \dashrightarrow \EE_1$ according to the formula in Theorem \ref{thm exhaustion of symbol}. Also, $ \EE_4=ui_{1,3}(\EE_6)$ and $ \EE_6= ui_{3,4}(\EE_{7})$, so $ \EE_7 \to \EE_6 \to \EE_4$, which is not drawn in the picture above. 

We list the local Arthur parameters $\psi_i$ associated with $\EE_i$.
\begin{align*}
    \psi_1&= \rho\otimes S_{1}\otimes S_{3}+\rho\otimes S_{3}\otimes S_{3}+\rho\otimes S_{5}\otimes S_{1},\\
     \psi_2&= \rho\otimes S_{2}\otimes S_{4}+\rho\otimes S_{2}\otimes S_{2}+\rho\otimes S_{5}\otimes S_{1},\\
      \psi_3&= \rho\otimes S_{1}\otimes S_{3}+\rho\otimes S_{2}\otimes S_{4}+\rho\otimes S_{1}\otimes S_{1}+\rho\otimes S_{5}\otimes S_{1},\\
      \psi_4&= \rho\otimes S_{2}\otimes S_{4}+\rho\otimes S_{1}\otimes S_{1}+\rho\otimes S_{3}\otimes S_{1}+\rho\otimes S_{5}\otimes S_{1},\\
      \psi_5&= \rho\otimes S_{1}\otimes S_{3}+\rho\otimes S_{2}\otimes S_{2}+\rho\otimes S_{5}\otimes S_{1}+\rho\otimes S_{5}\otimes S_{1},\\
      \psi_6&= \rho\otimes S_{1}\otimes S_{3}+\rho\otimes S_{1}\otimes S_{1}+\rho\otimes S_{4}\otimes S_{2}+\rho\otimes S_{5}\otimes S_{1},\\
      \psi_7&= \rho\otimes S_{1}\otimes S_{3}+\rho\otimes S_{1}\otimes S_{1}+\rho\otimes S_{3}\otimes S_{1}+\rho\otimes S_{5}\otimes S_{1}+S_{5}\otimes S_{1},\\
       \psi_8&= \rho\otimes S_{1}\otimes S_{3}+\rho\otimes S_{1}\otimes S_{3}+\rho\otimes S_{1}\otimes S_{1}+\rho\otimes S_{5}\otimes S_{1}+S_{5}\otimes S_{1}.
\end{align*}
\end{exmp}
\begin{exmp}\label{ex Atobe}
In this example, we apply Theorem \ref{thm exhaustion of symbol} on the same extended multi-segment considered in \cite[Section 3.4]{Ato22}. Let $\rho$ be the trivial representation and 
\[ \EE= \bordermatrix{
&0&1&2\cr
& \ominus & \oplus & \ominus
}_{\rho}, \]
where $\pi(\EE)= \pi(0^{-},1^{+},2^{-})$ is an irreducible representation of $\Sp_{8}(F)$.

First, we compute $\EE_{can}$. Let $\EE_1=\EE$. Since it is minimal and $dual(\EE_1)=\EE_1$, we have $\EE_1=\EE_{can}$ by definition.

Next, we compute $UI_{\geq -1/2}^{-1}(dual(\EE_{can}))$. It is not hard to see that Algorithm \ref{algo ui inverse} gives that
\[UI_{\geq -1/2}^{-1}(dual(\EE_{can})) = UI^{-1}(\EE_1)= \left\{ \EE_1, \EE_2,\EE_3, \EE_4 \right\}, \]
where 
\begin{align*}
    \EE_2= \bordermatrix{&0&1&2 \cr 
&\ominus&& \cr
&&\oplus&\ominus \cr}_{\rho} ,\ \EE_3= \bordermatrix{&0&1&2 \cr 
&\ominus& \oplus& \cr
&&&\ominus \cr}_{\rho} ,\ \EE_4= \bordermatrix{&0&1&2 \cr 
&\ominus&& \cr
&&\oplus& \cr
&&&\ominus \cr
}_{\rho}.
\end{align*}
Thus, we let 
\begin{align*}
    \EE_5&= dual(\EE_2)= \bordermatrix{
    &-1&0&1&2 \cr
    &\lhd & \ominus&\oplus&\rhd \cr
    & &\ominus&&\cr    
    }_{\rho},\\
    \EE_6&= dual(\EE_3)= \bordermatrix{
    &-2&-1&0&1&2 \cr
    &\lhd&\lhd & \ominus&\rhd&\rhd \cr
    && &\oplus&\ominus&\cr    
    }_{\rho},\\
    \EE_7&= dual(\EE_4)= \bordermatrix{
    &-2&-1&0&1&2 \cr
    &\lhd&\lhd & \ominus&\rhd&\rhd \cr
    &&\lhd &\oplus&\rhd&\cr
    &&&\ominus&&\cr
    }_{\rho}.
\end{align*}
From the formula in Theorem \ref{thm exhaustion of symbol}, we have 
$$\Psi(\EE)=  UI^{-1}(\EE_1) \cup UI^{-1}(\EE_5) \cup UI^{-1}(\EE_6) \cup UI^{-1}(\EE_7).$$ 
We have already computed $UI^{-1}(\EE_1)=\{ \EE_1 , \dots, \EE_4\}$. It is not hard to check that $ UI^{-1}(\EE_5)= \{\EE_5, \EE_8\}$, $UI^{-1}(\EE_6)=\{\EE_6,\EE_9\}$ and $UI^{-1}(\EE_7)= \{\EE_7\}$, where 
\[ \EE_8=  \bordermatrix{
    &-1&0&1&2 \cr
    &\lhd & \ominus&\rhd& \cr
    & &\oplus&&\cr  
    &&&&\ominus \cr
    }_{\rho},\ \EE_9= \bordermatrix{
    &-2&-1&0&1&2 \cr
    &\lhd&\lhd & \ominus&\rhd&\rhd \cr
    && &\oplus&&\cr
    &&&&\ominus&\cr
    }_{\rho}.\]
    In conclusion, $\Psi(\EE)=\{\EE_1,\dots, \EE_9\}$. In the following diagram, we visualize the relations among these extended multi-segments that we have seen during the process. We write $ \EE \to \EE'$ (resp. $ \EE \dashrightarrow \EE'$) if for some $i,j$, $ \EE'= ui_{i,j}(\EE)$ (resp. $\EE'= dual\circ ui_{i,j} \circ dual (\EE)$).
    \[
    \begin{tikzcd}
     \EE_9\ar[d]& \EE_8\ar[d]& \EE_7\ar[ld,dashrightarrow]&&\EE_4\ar[d]\\
    \EE_6\ar[drr,dashrightarrow] & \EE_5\ar[dr,dashrightarrow]&&\EE_2\ar[ld]&\EE_3\ar[lld]\\
    &&\EE_1&&
    \end{tikzcd}
    \]
The representation $\pi(\EE)$ is supercuspidal by \cite[Theorem 2.5.1]{Moe11}. 
As in Example \ref{ex exhaustion}, 
we did not draw all relations among these extended multi-segments. For example $ \EE_2=ui_{2,3}(\EE_4)$, so $ \EE_4 \to \EE_2$, which is not drawn in the picture above. 
\end{exmp}

Next, we show that $\EE_{can}$ can be uniquely characterized by the following derivative properties of $\pi(\EE)$. In some sense, among all extended multi-segments $\EE'$ with $\pi(\EE')\cong\pi(\EE)$, $\EE_{can}$ is the one that carries the most derivative information of $\pi(\EE)$.
\begin{thm}\label{thm canonical form and derivatives}
Suppose $\EE \in \Rep^{(P')}$ and denote
\[\FF= \EE_{\rho}=\{ ([A_i,B_i]_{\rho},l_i,\eta_i)\}_{i \in (I_{\rho},>)}.\]
Let $A= \max\{A_i\ | \ i \in I_{\rho}\}$. Then $\FF=\FF_{can}$ up to row exchanges if and only if the following properties hold:
\begin{enumerate}
    \item [(i)] For any $B > 1/2$, $D_{\Omega(\FF_{\geq B})} (\pi(\EE))\neq 0$ and this is a composition of highest derivatives (modulo a scalar). Also, we have 
       \[\pi(\EE^{\rho} \cup (\FF_{\leq 1/2} + sh^{\lceil A \rceil}(\FF_{>1/2})))= S_{\Omega(sh^{\lceil A \rceil}(\FF_{>1/2}))} \circ \cdots \circ S_{\Omega(sh^{1}(\FF_{>1/2}))} (\pi(\EE)). \]
       Denote this representation by $\pi_{A}$. 
    \item [(ii)] For any $B <0$, $ D_{\overline{\Omega(\FF_{\leq B})}}(\pi_{A}) \neq 0$ and this is a composition of highest derivatives (modulo a scalar).
\end{enumerate}
\end{thm}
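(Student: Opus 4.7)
The plan is to prove both directions by combining the minimality properties built into $\FF_{can}$ with the derivative--socle machinery developed earlier. For the forward direction, I would first observe that $\FF_{can}$ is minimal as an extended multi-segment by construction (Definition \ref{def can form}): its $>1/2$ portion inherits minimality from $\FF^{min}$, while its $\leq 1/2$ portion is the dual of a minimal configuration on the $\geq -1/2$ side, and no $ui_{i,j}$ can straddle the boundary without violating one of the two minimalities. Given this, Lemma \ref{lem minimal shift} immediately supplies (i): for any $B > 1/2$, $\EE^{\rho} \cup (\FF_{<B} + sh^{-1}(\FF_{\geq B})) \in \Rep$ and $D_{\Omega(\FF_{\geq B})}(\pi(\EE))$ is a composition of highest derivatives equal (up to multiplicity) to $\pi(\EE^{\rho} \cup (\FF_{<B} + sh^{-1}(\FF_{\geq B})))$. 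Iterating this $\lceil A \rceil$ times on $\FF_{>1/2}$ and taking socles via Theorem \ref{thm derivative-socle}(3) then delivers the socle identity defining $\pi_A$.

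For condition (ii), I would translate to the Aubert--Zelevinsky dual picture. By Theorem \ref{thm Aubert-Zelevinsky dual formula} and Proposition \ref{compatability of Aubert-Zelevinsky dual and derivative}, the derivative $D_{\overline{\Omega(\FF_{\leq B})}}(\pi_A)$ corresponds under duality to a positive-exponent derivative of the form $D_{\Omega(dual(\FF_{\leq B}))}(\widehat{\pi_A})$ after suitable identification of exponents (since $\overline{\Omega(\FF_{\leq B})}$ is obtained from $\Omega(dual(\FF_{\leq B}))$ by negation). By the construction of $\FF_{can}$, the $\geq -1/2$ side of $dual(\FF_{can})$ is minimal, possibly after one application of $dual_k^+$. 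Hence Lemma \ref{lem minimal shift} applied on the dual side shows this derivative is a composition of highest derivatives, and dualizing back yields (ii) on $\pi_A$.

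For the reverse direction, my strategy is to show that (i) and (ii) force $\FF$ to possess the minimality properties that uniquely characterize $\FF_{can}$, and then invoke Corollary \ref{cor canonical form}. Concretely, suppose (i) holds but some $ui_{i,j}$ is applicable in $\FF_{\geq 1/2}$; choosing $B$ equal to the smaller of $B_i, B_j$, one uses the combinatorial criteria in Corollary \ref{cor uniform derivative} together with the multiplicity computation underlying Theorem \ref{thm shift left} to conclude that $D_{\Omega(\FF_{\geq B})}(\pi(\EE))$ is either zero or strictly fails to factor as a composition of highest derivatives (the ui-collision produces an extra segment $\Delta_\rho[x,x+1]$ or an excess multiplicity at some exponent), contradicting (i). Thus $\FF_{>1/2}$ must be minimal. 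A symmetric argument applied to $\widehat{\pi_A}$ via (ii) forces the $\geq -1/2$ side of $dual(\FF)$ to be minimal (up to one $dual_k^+$). These two minimalities, together with the unambiguous cancellation of the boundary ambiguity, identify $\FF$ with $\FF_{can}$ up to row exchanges.

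The main obstacle will be the careful bookkeeping at the boundaries $B = 1/2$ and $B = -1/2$, especially in the half-integer case where the partial dual $dual_k^+$ may legitimately be applied. I expect to need Lemma \ref{lem commutativity of partial dual} and Proposition \ref{prop shift left 1/2} to verify that conditions (i) and (ii) distinguish $\FF_{can}$ from configurations differing by a $dual_k^+$, and that a spurious row with $B_j = 1/2$ and $l_j = 0$ does not accidentally satisfy the derivative criteria. A case analysis analogous to that in Theorem \ref{thm half integer} should resolve this, but handling the simultaneous interaction of (i), (ii), and the single-partial-dual ambiguity is the delicate combinatorial core of the argument.
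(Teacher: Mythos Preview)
Your forward direction is essentially the paper's argument: minimality of the relevant pieces of $\FF_{can}$ plus Lemma \ref{lem minimal shift} gives (i), and dualizing via Theorem \ref{thm Aubert-Zelevinsky dual formula} and Proposition \ref{compatability of Aubert-Zelevinsky dual and derivative} gives (ii), with Proposition \ref{prop shift left 1/2}(i) handling the $B=1/2$ boundary.

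Your reverse direction, however, takes an unnecessarily hard route and has a genuine gap. You try to prove that $\FF$ itself must be minimal by arguing that an applicable $ui_{i,j}$ would force $D_{\Omega(\FF_{\geq B})}(\pi(\EE))$ to fail the highest-derivative property. But this is a \emph{converse} to Lemma \ref{lem minimal shift}, which is nowhere established; your appeal to Corollary \ref{cor uniform derivative} and Theorem \ref{thm shift left} does not produce the needed ``extra $\Delta_\rho[x,x+1]$ or excess multiplicity'' in any direct way, since those results concern uniform shifts of all of $\EE_\rho$, not partial shifts of $\FF_{\geq B}$. You also leave the $dual_k^+$ boundary case open.

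The paper avoids all of this. The key observation you are missing is that conditions (i) and (ii) are statements about \emph{highest} derivatives of $\pi(\EE)$, and highest derivatives are intrinsic to the representation. Since $\FF_{can}$ also satisfies (i) and (ii) (by the forward direction) and $\pi(\EE^\rho\cup\FF)=\pi(\EE^\rho\cup\FF_{can})$, the orders of these highest derivatives must agree, so $\Omega(\FF_{\geq B})=\Omega((\FF_{can})_{\geq B})$ for every $B>1/2$ and $\overline{\Omega(\FF_{\leq B})}=\overline{\Omega((\FF_{can})_{\leq B})}$ for every $B<0$. These families of multi-sets determine $\supp(\FF_{>1/2})$ and $\supp(\FF_{<0})$ as multi-sets, so they coincide with those of $\FF_{can}$. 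Then $\Omega(\FF)\Delta\Omega(\FF_{can})=\Omega(\FF_{=\epsilon})\Delta\Omega((\FF_{can})_{=\epsilon})$ lies entirely at exponents $\geq \epsilon\geq 0$, and Corollary \ref{cor symmetric support} forces it to be empty. Hence $\supp(\FF)=\supp(\FF_{can})$ as multi-sets, and Lemma \ref{lem Moeglin} (multiplicity one) gives $\FF=\FF_{can}$ up to row exchanges. No minimality-by-contradiction, no $dual_k^+$ case analysis, is needed.
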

\begin{proof}
First, suppose $\FF=\FF_{can}$. We are going to verify Conditions (i) and (ii). By construction, we have $(\FF_{can})_{>1/2}= (\FF^{min})_{>1/2}$, and Condition (i) follows from Lemma \ref{lem minimal shift}. Note that when $\FF$ is minimal, we have that $\FF_{\leq 1/2}+ sh^t(\FF_{> 1/2})$ is also minimal for any $t \in \Z_{>0}$.

For Condition (ii), denote $\widetilde{\FF}=  dual((\FF_{can})_{\leq 1/2} \cup sh^{\lceil A \rceil} (\FF_{can})_{>1/2})$. Then $\pi(dual (\EE^{\rho}) \cup \widetilde{\FF})$ is the Aubert-Zelevinsky dual of  $\pi(\EE^{\rho} \cup (\FF_{\leq 1/2} + sh^{\lceil A \rceil}(\FF_{>1/2})))$. Also, for any $B >0$, Proposition \ref{compatability of Aubert-Zelevinsky dual and derivative} implies
\[ D_{\overline{\Omega(\FF_{\leq -B})}}(\pi(\EE^{\rho} \cup (\FF_{\leq 1/2} + sh^{\lceil A \rceil}(\FF_{>1/2}))))= \reallywidehat{D_{ \Omega((\widetilde{\FF})_{\geq B})}(\pi(dual(\EE^{\rho}) \cup \widetilde{\FF}))}.\]
So it suffices to show $ D_{\Omega((\widetilde{\FF})_{\geq B})}(\pi(dual(\EE^{\rho}) \cup \widetilde{\FF}))$ is a composition of highest derivatives for any $B>0$. This follows from the construction that we have $(\widetilde{\FF})_{\geq 1/2}= ((\widetilde{\FF})^{min})_{\geq 1/2}$, and $\pi(\EE^{\rho}\cup (((\widetilde{\FF})^{min})_{< 1/2} + sh^{-1}(((\widetilde{\FF})^{min})_{\geq 1/2})))$ is always nonzero by Lemma \ref{lem minimal shift} and Proposition \ref{prop shift left 1/2}(i). 


Next, suppose $\FF$ satisfies both Conditions (i) and (ii). Observe that the multi-sets $\{\Omega(\FF_{\geq B})\}_{B >1/2}$ uniquely determine $\supp(\FF_{>1/2})$ as a multi-set, and $\{\overline{\Omega(\FF_{\leq  B})}\}_{B < 0}$ uniquely determine $\supp(\FF_{< 0})$ as a multi-set. As a consequence, Condition (i) implies that $ \supp(\FF_{>1/2})= \supp((\FF_{can})_{>1/2})$, and Condition (ii) implies that $ \supp(\FF_{<0})= \supp((\FF_{can})_{<0})$. In particular, we have 
\[ \Omega(\FF_{>1/2})= \Omega( (\FF_{can})_{>1/2}),\ \Omega(\FF_{<0})= \Omega( (\FF_{can})_{<0}).\]
Therefore, we have 
\[ \Omega(\FF)\Delta \Omega(\FF_{can})=\Omega(\FF_{=\epsilon})\Delta \Omega((\FF_{can})_{=\epsilon}) \]
where $\epsilon\in \{0,1/2\}$ such that $\epsilon +A \in \Z$. By Corollary \ref{cor symmetric support}, one can see that $\Omega(\FF_{=\epsilon})\Delta \Omega((\FF_{can})_{=\epsilon})$ is empty, and hence $\supp(\FF_{=\epsilon})= \supp((\FF_{can})_{=\epsilon})$ as multi-sets. The discussion so far indicates $ \supp(\FF)= \supp(\FF_{can})$ as multi-sets, so $\FF=\FF_{can}$ up to row exchanges by Lemma \ref{lem Moeglin}. 
This completes the proof of the theorem.
\end{proof}

Suppose $\EE \in \Rep^{(P')}$ and denote $\FF= \EE_{\rho}$. The next lemma shows that $\Omega(\FF_{=0})$ or $\Omega(\FF_{=1/2})$ can be computed from $ \Omega(\FF_{>1/2})$, $\Omega(\FF_{<0})$ and $\Omega(\pi(\EE))_{\rho}$. 

For $x\geq y$ with $x-y \in \Z$, we denote $[x,y]:= \{x,x-1,\ldots ,y\}$. 

\begin{lemma}
Suppose $\EE \in \Rep^{(P')}$ and 
\[ \FF= \EE_{\rho}= \{ ([A_i,B_i]_{\rho},l_i,\eta_i)\}_{i \in (I_{\rho}, >)}.\]
Let $A=\max\{A_i\ | \ i \in I_{\rho}\}$ and $\epsilon\in \{0,1/2\}$ such that $A_i+\epsilon \in \Z$. The multi-set $\Omega(\EE_{=\epsilon})$ can be computed from $\Omega(\pi(\EE))_{\rho}$ and $\Omega:=\Omega(\FF_{>1/2})\cup \Omega(\FF_{<0})$ as follows: For $t \in [A,-A]$, we denote $m_{1,t}$ (resp. $m_{2,t}$) the multiplicity of $\rho|\cdot|^{t}$ in the multi-set $\Omega\setminus\Omega(\pi(\EE))_{\rho}$ (resp. $\Omega(\pi(\EE))_{\rho}\setminus\Omega$). Then for $t \in [A,\epsilon]$, the multiplicity of $\rho|\cdot|^{t}$ in $\Omega(\FF_{=\epsilon})$ is given by $(m_{1,-t-1}-m_{1,t})+m_{2,t}$. 
\end{lemma}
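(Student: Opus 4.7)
The plan is to combine two decompositions of $\Omega(\FF)$ as multi-sets. On one hand, partitioning the rows of $\FF$ by the value of $B_i$ gives
$$\Omega(\FF) = \Omega + \Omega(\FF_{=\epsilon}),$$
where $\Omega = \Omega(\FF_{>1/2}) + \Omega(\FF_{<0})$. On the other hand, Lemma \ref{lem $L$-data} gives $\Omega(\FF) \supseteq \Omega(\pi(\EE))_\rho$, and the difference $D := \Omega(\FF) \setminus \Omega(\pi(\EE))_\rho$ is symmetric about $\rho|\cdot|^{-1/2}$ in the sense that the multiplicity of $\rho|\cdot|^x$ in $D$ equals that of $\rho|\cdot|^{-x-1}$.

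Let $n_t, \omega_t, N_t, d_t$ denote the multiplicities of $\rho|\cdot|^t$ in $\Omega(\FF), \Omega, \Omega(\pi(\EE))_\rho, D$ respectively. By definition of $m_{1,t}, m_{2,t}$ one has $m_{1,t}-m_{2,t} = \omega_t - N_t$ with $m_{1,t}\, m_{2,t}=0$, while $n_t = N_t + d_t$. For $t \in [A,\epsilon]$, the target multiplicity $\phi_t$ of $\rho|\cdot|^t$ in $\Omega(\FF_{=\epsilon})$ satisfies
$$\phi_t \;=\; n_t - \omega_t \;=\; d_t - (m_{1,t}-m_{2,t}).$$
Thus the formula reduces to proving the identity $d_t = m_{1,-t-1}$ for $t \in [A,\epsilon]$.

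To establish this, note that for $t \in [A,\epsilon]$ the dual exponent $-t-1$ satisfies $-t-1 \leq -\epsilon - 1 < 0$. By inspection, the exponents appearing in $\Omega(\FF_{>1/2})$ are all $\geq 1$ (or $\geq 3/2$ in the half-integer case), and those in $\Omega(\FF_{=\epsilon})$ are all $\geq \epsilon \geq 0$; hence only $\FF_{<0}$ can contribute to $n_{-t-1}$, so $n_{-t-1} = \omega_{-t-1}$. Combined with $n_{-t-1} \geq N_{-t-1}$ from Lemma \ref{lem $L$-data}, this forces $\omega_{-t-1} \geq N_{-t-1}$, i.e.\ $m_{2,-t-1}=0$ and $m_{1,-t-1} = \omega_{-t-1} - N_{-t-1} = n_{-t-1} - N_{-t-1} = d_{-t-1}$. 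The symmetry of $D$ then yields $d_t = d_{-t-1} = m_{1,-t-1}$, which gives the desired formula $\phi_t = m_{1,-t-1} - m_{1,t} + m_{2,t}$.

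The main conceptual point is the observation that for $t \in [A,\epsilon]$ the reflected exponent $-t-1$ is automatically negative, so all of its multiplicity in $\Omega(\FF)$ is already accounted for by $\Omega$; this forces $m_{2,-t-1}=0$ and lets the symmetry of Lemma \ref{lem $L$-data}(ii) translate the unknown $d_t$ directly into the known quantity $m_{1,-t-1}$. The boundary case $t=A$ is harmless since both $m_{1,-A-1}$ and $d_{-A-1}$ vanish (exponents in $\Omega(\FF)$ are bounded below by $-A$ because $A_i+B_i \geq 0$).
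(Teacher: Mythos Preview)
Your proof is correct and uses the same two inputs as the paper's argument: the symmetry of $\Omega(\FF)\setminus\Omega(\pi(\EE))_\rho$ about $\rho|\cdot|^{-1/2}$ from Lemma~\ref{lem $L$-data}(ii), and the fact that $\Omega(\FF_{=\epsilon})$ is supported entirely on nonnegative exponents. The paper reaches the same conclusion by subtracting the maximal symmetric sub-multi-set $\widetilde{\Omega}^{sym}$ of $\Omega\setminus\Omega(\pi(\EE))_\rho$ from both sides of the balancing equation and then doing a case split on whether $M_t\ge m_{2,t}$; your route---first evaluating at the reflected exponent $-t-1$ to force $m_{2,-t-1}=0$ and $d_{-t-1}=m_{1,-t-1}$, then invoking symmetry---is a cleaner packaging of the same idea and avoids the case analysis. (Your reading of $\Omega$ as the multi-set \emph{sum} $\Omega(\FF_{>1/2})+\Omega(\FF_{<0})$ rather than the union is the correct interpretation and is exactly what the paper's own proof uses.)
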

\begin{proof}
We prove it by a purely combinatorial argument based on the following two observations:
\begin{enumerate}
    \item [$\oldbullet$] (Lemma \ref{lem $L$-data}(ii)) $\Omega(\FF)\setminus \Omega(\pi(\FF))$ is symmetric about $\rho|\cdot|^{-1/2}$ in the sense that the multiplicity of $\rho|\cdot|^{t}$ is the same as the multiplicity of $\rho|\cdot|^{-t-1}$ in this multi-set. 
    \item [$\oldbullet$] $\Omega(\FF_{=\epsilon})$ lies completely on one side of $\rho|\cdot|^{-1/2}$ in the sense that if we have $\rho|\cdot|^{x} \in \Omega(\FF_{=\epsilon})$, then $ \rho|\cdot|^{-x-1}$ is not in $\Omega(\FF_{=\epsilon})$.
\end{enumerate}
Let $M_t$ be the multiplicity of $\rho|\cdot|^{t}$ in $\Omega(\FF_{=\epsilon})$, and
\begin{align*}
\widetilde{\Omega}:=& \Omega \setminus \Omega(\pi(\EE))_{\rho},\\
\widetilde{\Omega(\pi)}:=&\Omega(\pi(\EE))_{\rho}\setminus \Omega,\\
\Omega^{sym}:=& \Omega(\FF)\setminus \Omega(\pi(\EE))_{\rho}.
\end{align*}
We have 
\[ \widetilde{\Omega} + \Omega(\FF_{=\epsilon})=\widetilde{\Omega(\pi)} +\Omega^{sym}. \]
There is a unique sub-multi-set $\widetilde{\Omega}^{sym}$ of $\widetilde{\Omega}$ which is symmetric about $\rho|\cdot|^{-1/2}$ and maximal with respect to this property. As $\widetilde{\Omega}$ and $\widetilde{\Omega(\pi)}$ are disjoint, we have $\widetilde{\Omega}^{sym} \subseteq \Omega^{sym}$. We rewrite our equation as 
\[ (\widetilde{\Omega}\setminus\widetilde{\Omega}^{sym}) + \Omega(\FF_{=\epsilon})= \widetilde{\Omega(\pi)} + (\Omega^{sym}\setminus \widetilde{\Omega}^{sym}). \]
Note that the multiplicity of $\rho|\cdot|^{t}$ in $\widetilde{\Omega}\setminus \widetilde{\Omega}^{sym}$ is given by $m_{1,t}-\min\{m_{1,t},m_{1,-t-1}\}$.

Now we compute $M_t$ for $t \in [A,\epsilon]$. If $M_t\geq  m_{2,t}$, then  
\[M_t- (m_{1,-t-1}-\min\{m_{1,t},m_{1,-t-1}\})=m_{2,t}\]
and hence $M_t= (m_{1,-t-1}- m_{1,t})+m_{2,t}$. On the other hand, if $M_t < m_{2,t}$, then
\[M_t+ (m_{1,t}-\min\{m_{1,t},m_{1,-t-1}\})=m_{2,t}\]
and hence $M_t= (m_{1,-t-1}- m_{1,t})+m_{2,t}$ still holds. This completes the proof of the lemma.
\end{proof}

Combining Theorem \ref{thm canonical form and derivatives} and the lemma above, if $\FF= \FF_{can}$, we can recover $\supp(\FF)$ completely from the derivative information of $\pi(\EE)$. As a consequence, we give the following algorithm to determine whether a representation $\pi$ of good parity is of Arthur type or not. If $\pi$ is of Arthur type, the algorithm outputs an extended multi-segment $\EE$ such that $\pi=\pi(\EE)$ and $\EE= \EE_{can}$.

\begin{algo} \label{alg Arthur type}
Given a representation $\pi$ of good parity, proceed as follows:
\begin{enumerate}
    \item [\textbf{Step 0}:] 
    Set $\psi=0$. Repeat steps 1 to 4 for each $\rho \in \{ \rho \ | \ \Omega(\pi)_{\rho} \neq \emptyset\}$.
    \item [\textbf{Step 1:}] Let $A= \max\{ x \ | \ \rho|\cdot|^{x} \in \Omega(\pi)_{\rho}\}$ and $\epsilon\in\{0,1/2\}$ such that $A+\epsilon \in \Z$. Set $\Omega^{+}=\emptyset=\Omega^{-}$.
    \item [\textbf{Step 2:}] Compute the following set
    \begin{align*}
        \mathcal{B}=& \{ B >1/2 \ | \ D_{\rho|\cdot|^{B}}(\pi) \neq 0 \}= \{ B_1,\dots,B_r\},
    \end{align*}
    where $B_i$ is decreasing. For each $1 \leq i \leq r$, we compute (by \cite[Theorem 7.1]{AM20}) recursively the integer $k_{i,t}$ and representation $\pi_{i,t}$ for $t$ in the segment $[A+1,B_i-1]$ via 
    \[\begin{cases}
     \pi_{i,B_{i}-1}= \pi,\\
     \pi_{i,t}=D_{\rho|\cdot|^{t}}^{(k_{i,t})}(\pi_{i,t-1}) \text{ is the highest derivative.}
    \end{cases}\]
If $k_{i,A+1} \neq 0$, then $\pi$ is not of Arthur type and the procedure ends. Set $k_{i,t}:=0$ if $t \not\in [A+1,B_i-1]$.
    
    Denote $K_{i,t}:=k_{i,t}-  k_{i-1,t}$. For $t \in [A+1,B_i+1]$, if $ K_{i,t}> K_{i,t-1}$, then $\pi$ is not of Arthur type and the procedure ends. If $K_{i,t}<K_{i,t-1}$, then add $K_{i,t-1}-K_{i,t}$ copies of $\rho \otimes S_{(t-1)+B_i+1}\otimes S_{(t-1)-B_i+1}$ to $\psi$ and add the same copies of elements in the segment $[t-1,B_i]_{\rho}$ in $\Omega^{+}$.
    \item [\textbf{Step 3:}]Reorder
    \[\Omega^{+}=\{\rho|\cdot|^{x_1},\dots,\rho|\cdot|^{x_r}\}\]
    such that $x_1 \leq \cdots\leq x_r$. For $t \in [A+ \epsilon, 1]$, denote
    \[ sh^t(\Omega^{+})=\{\rho|\cdot|^{x_1+t},\dots,\rho|\cdot|^{x_r+t}\}. \]
    Compute the representation (by \cite[Theorem 7.1]{AM20})
    \[ \pi_{A}= S_{ sh^{A+\epsilon}(\Omega^{+})} \circ\cdots \circ S_{sh^{1}(\Omega^{+})}(\pi),\]
    and the set 
    \[\overline{\mathcal{B}}= \{ B <0 \ | \ D_{\rho|\cdot|^{B}}(\pi_A) \neq 0 \}= \{ \overline{B_1},\dots,\overline{B_{\overline{r}}}\},\]
     where $\overline{B_j}$ is increasing. For each $1 \leq i \leq \overline{r}$, compute (by \cite[Proposition 6.1]{AM20}) the integer $\overline{k_{i,t}}$ and representation $\overline{\pi_{i,t}}$ for $t$ in the segment $[\overline{B_i}+1,-A-1]$ recursively by 
    \[\begin{cases}
     \overline{\pi_{i,\overline{B_{i}}+1}}= \pi_A, \\
    \overline{\pi_{i,t}}= D_{\rho|\cdot|^{t}}^{(\overline{k_{i,t}})}(\overline{\pi_{i,t+1}}) \text{ is the highest derivative.}
    \end{cases}\]
    If $\overline{k_{i,-A-1}} \neq 0$, then $\pi$ is not of Arthur type and the procedure ends. Set $\overline{k_{i,t}}:=0$ if $t \not\in [\overline{B_{i}}+1,-A-1]$.
    
    Denote $\overline{K_{i,t}}:=\overline{k_{i,t}}-  \overline{k_{i-1,t}}$. For $t \in [\overline{B_i}-1,-A-1]$, if $ \overline{K_{i,t}}> \overline{K_{i,t+1}}$, then $\pi$ is not of Arthur type and the procedure ends. If $\overline{K_{i,t}}<\overline{K_{i,t+1}}$, then add $\overline{K_{i,t+1}}-\overline{K_{i,t}}$ copies of $\rho \otimes S_{-(t+1)+\overline{B_i}+1}\otimes S_{-(t+1)-\overline{B_i}+1}$ to $\psi$, and add the same copies of elements in the segment $[ -t -1,\overline{B_i}]_{\rho}$ in $\Omega^{-}$.
    \item [\textbf{Step 4:}]
    Let $\Omega=\Omega^{+} + \Omega^{-}$. Denote the multiplicity of $\rho|\cdot|^{t}$ in $\Omega\setminus \Omega(\pi)$ (resp. $\Omega(\pi)\setminus \Omega$) by $ m_{1,t}$ (resp. $m_{2,t}$), and let $ M_t =(m_{1,-t-1}-m_{1,t})+m_{2,t}$. 
    
    For any $t \in [A+1, \epsilon+1]$, if $M_{t}>M_{t-1}$, then $\pi$ is not of Arthur type and the procedure ends. If $M_{t}<M_{t-1}$, add $M_{t}-M_{t-1}$ copies of $\rho \otimes S_{(t-1)+\epsilon+1}\otimes S_{(t-1)-\epsilon+1}$ to $\psi$. 
    \item [\textbf{Step 5:}] Construct the local Arthur packet $\Pi_{\psi}$. If there exists $\EE$ in this packet such that $\pi(\EE)=\pi$, then $\pi$ is of Arthur type and $\EE=\EE_{can}$ up to row exchanges. Otherwise, $\pi$ is not of Arthur type. 
    \end{enumerate}
\end{algo}

In \cite{Ato22}, Atobe also gave an algorithm to determine whether a representation $\pi$ is of Arthur type or not. We remark on the difference between Atobe's algorithm and ours here. The two algorithms are identical if $\pi$ is $\rho|\cdot|^{x}$-reduced for all $\rho$ and $x \not\in \{0,1/2\}$. So we focus on the case that $D_{\rho|\cdot|^{x}}(\pi)\neq 0$ for some $x >1/2$. 

Atobe's algorithm uses recursion. He computes $\pi^{+}=D_{\Omega}(\pi)$, a derivative of $\pi$, then applies the algorithm again on $\pi^{+}$. If $\pi^{+}$ is of Arthur type, then he exhausts the set 
\[ \{ \EE^{+} \ | \ \pi(\EE^{+})\cong \pi^{+}\} \]
to see whether there exists an $\EE^{+}$ with $\EE_{\rho}^{+}= \FF_1 +\FF_2$ such that $\Omega(sh^1(\FF_2))=\Omega$. If so, then $\pi$ is of Arthur type and $\pi= \pi((\EE^{+})^{\rho} \cup (\FF_1+ sh^1(\FF_2)))$.

On the other hand, our algorithm does not use recursion. We first collect the positive derivative information of $\pi$ in $\Omega^{+}$. Then we compute $\pi_A$ by taking successive socles with respect to $\Omega^{+}$, and collect the negative derivative information of $\pi_A$ in $\Omega^{-}$. Finally, from the $L$-data of $\pi$ and $\Omega:=\Omega^{+} + \Omega^{-}$, we nail down one local Arthur parameter $\psi$ such that $\pi$ is of Arthur type if and only if $\pi$ is in the local Arthur packet $\Pi_{\psi}$.

We give examples for the algorithm.
\begin{exmp}\label{ex algo 7.9}
\begin{enumerate}
    \item [1.] We apply our algorithm on \[\pi=L(\Delta_{\rho}[-1,-1], \pi( 0^{-}, 0^{-}, 1^{+},2^{-},2^{-})),\] 
    the representation in Example \ref{ex exhaustion}. We initially set $\psi=0.$
    \begin{enumerate}
        \item [\textbf{Step 1:}] $A=2,\epsilon=0$, and set $\Omega^{+}=\Omega^{-}=\emptyset$.
        \item [\textbf{Step 2:}] We have 
    \[ \mathcal{B}=\{B>1/2 \ | \ D_{\rho|\cdot|^{B}}(\pi)\neq 0\}= \{2\}. \]
    Since $D_{\rho|\cdot|^{2}}(\pi)=L(\Delta_{\rho}[-1,-1],\Delta_{\rho}[1,-2], \pi( 0^{-}, 0^{-}, 1^{+}))$ is $\rho|\cdot|^{3}$-reduced, we add $\rho\otimes S_{5} \otimes S_{1} $ in $\psi$ and add $\rho|\cdot|^{2}$ in $\Omega^{+}$.
    \item [\textbf{Step 3:}] We compute that
    \[ \pi_{A}= S_{\rho|\cdot|^{4}}\circ S_{\rho|\cdot|^{3}} (\pi)= L(\Delta_{\rho}[-1,-1], \pi( 0^{-}, 0^{-}, 1^{+},2^{-},4^{-})). \]
    Then
    \[\overline{\mathcal{B}}= \{ B <0 \ | \ D_{\rho|\cdot|^{B}}(\pi_A) \neq 0 \}= \{-1\}.\]
    Since $D_{\rho|\cdot|^{-1}}(\pi_A)$ is $\rho|\cdot|^{-2}$-reduced, we add $ \rho \otimes S_{1} \otimes S_{3}$ in $\psi$ and $ \rho|\cdot|^{-1}, \rho, \rho|\cdot|^{1}$ in $\Omega^{-}$.
    \item [\textbf{Step 4:}] We have $\Omega(\pi) \supseteq \Omega$ and the difference multi-set is $ \{ \rho, \rho|\cdot|^{1}, \rho|\cdot|^{2}\}$. So $(M_{0},M_{1},M_{2},M_3)=(1,1,1,0)$, and we add $\rho\otimes S_{3}\otimes S_{3}$ in $\psi$.
    \item [\textbf{Step 5:}] Now we exhaust the local Arthur packet associated with 
    \[\psi= \rho\otimes S_{1}\otimes S_{3}+\rho\otimes S_{3}\otimes S_{3}+\rho\otimes S_{5}\otimes S_{1}. \]
     Then we find $\pi=\pi(\EE)$ where 
  \begin{align}\label{eq exmp A-type}
       \EE= \bordermatrix{
&-1&0&1&2 \cr
&\lhd&\ominus&\rhd&\cr
&&\ominus&\oplus&\ominus \cr
&&&&\ominus \cr
}_{\rho},
  \end{align}
and $\EE=\EE_{can}$. 
    \end{enumerate}
    Indeed, the representation $\pi_{A}$ we computed in step 3 is the representation associated with the following extended multi-segment
\[ \EE_{\leq 1/2}+ sh^{2}(\EE_{>1/2})= \bordermatrix{
&-1&0&1&2&3&4 \cr
&\lhd&\ominus&\rhd&&&\cr
&&\ominus&\oplus&\ominus&& \cr
&&&&&&\ominus \cr
}_{\rho}. \]

\item [2.]  We apply our algorithm on 
\[ \pi= L(\Delta_{\rho}[-3,-3],\Delta_{\rho}[-1,-1], \pi( 0^{-}, 0^{-}, 1^{+},2^{-},2^{-})),\]
which is $S_{\rho|\cdot|^{-3}}(\pi(\EE))$ where $\EE$ is the extended multi-segment given in \eqref{eq exmp A-type}. We initially set $\psi=0$.
\begin{enumerate}
    \item [\textbf{Step 1:}]$A=3, \epsilon=0$. Set  $\Omega^{+}=\Omega^{-}=\emptyset$.
    \item [\textbf{Step 2:}]  We have 
    \[ \mathcal{B}=\{B>1/2 \ | \ D_{\rho|\cdot|^{B}}(\pi)\neq 0\}= \{2\}. \]
    Since $D_{\rho|\cdot|^{2}}(\pi)=L(\Delta_{\rho}[-3,-3],\Delta_{\rho}[-1,-1],\Delta_{\rho}[1,-2], \pi( 0^{-}, 0^{-}, 1^{+}))$ is $\rho|\cdot|^{3}$-reduced, we add $\rho\otimes S_{5} \otimes S_{1} $ in $\psi$ and add $\rho|\cdot|^{2}$ in $\Omega^{+}$.
    \item [\textbf{Step 3:}] We have 
    \begin{align*}
        \pi_A=& S_{\rho|\cdot|^{5}}\circ S_{\rho|\cdot|^{4}} \circ S_{\rho|\cdot|^{3}}(\pi)\\
        =&L(\Delta_{\rho}[-3,-3],\Delta_{\rho}[-1,-1], \pi( 0^{-}, 0^{-}, 1^{+},2^{-},5^{-})),
    \end{align*}
    and 
     \[\overline{\mathcal{B}}= \{ B <0 \ | \ D_{\rho|\cdot|^{B}}(\pi_A) \neq 0 \}= \{-3,-1\}.\]
     Since $D_{\rho|\cdot|^{-3}}(\pi_A)$ is $\rho|\cdot|^{-4}$-reduced, we add $ \rho\otimes S_{1}\otimes S_{7}$ to $\psi$ and add $\{ \rho|\cdot|^{-3},\rho|\cdot|^{-2},\dots,\rho|\cdot|^{3}\}$ in $\Omega^{-}$. 
     
     Since $D_{\rho|\cdot|^{-3}}^{(1)} \circ D_{\rho|\cdot|^{-2}}^{(0)}  \circ D_{\rho|\cdot|^{-1}}^{(1)}(\pi_A)$ is a composition of highest derivatives and the resulting representation is $\rho|\cdot|^{-4}$-reduced,  we add $ \rho\otimes S_{1}\otimes S_{3}$ to $\psi$ and add $\{ \rho|\cdot|^{-1},\rho,\rho|\cdot|^{1}\}$ in $\Omega^{-}$.
     \item [\textbf{Step 4:}] We have $\Omega \supseteq \Omega(\pi)$, and the difference multi-set is $\{\rho|\cdot|^{-2},\rho|\cdot|^{-1}\}$, so $(M_0,M_1,M_2,M_3,M_4)=(1,1,0,0,0)$, and we add $\rho\otimes S_{2} \otimes S_{2}$ to $\psi$.
     \item [\textbf{Step 5:}] Finally, $\pi$ is of Arthur type if and only if it is in the local Arthur packet associated with the parameter
    \[ \psi= \rho\otimes S_{5}\otimes S_{1}+\rho\otimes S_{1}\otimes S_{7}+\rho\otimes S_{1}\otimes S_{3}+\rho\otimes S_{2}\otimes S_{2}. \]
    By exhausting this packet, we conclude $\pi$ is not of Arthur type.
\end{enumerate}
\end{enumerate}
\end{exmp}

\begin{exmp}\label{ex algo 7.9 Atobe}
In this example, we apply Algorithm \ref{alg Arthur type} on two representations considered in \cite[Section 3.5]{Ato22}. 

\begin{enumerate}
    \item [1.] Consider 
    \[\pi= L\left(\Delta_{\rho}\left[ \half{-1},\half{-5}\right], \Delta_{\rho}\left[ \half{-1},\half{-1}\right],\Delta_{\rho}\left[ \half{3},\half{-5}\right]; \pi\left(\half{1^+},\half{3^+}, \half{5^+}\right)\right).\] Initially, we set $\psi=0$.
    \begin{enumerate}
        \item [\textbf{Step 1:}] $A=5/2, \epsilon=1/2$, and set $\Omega^{+}=\Omega^{-}=\emptyset$.
        \item [\textbf{Step 2:}]We have 
        \[ \mathcal{B}= \{B>1/2\ | \ D_{\rho|\cdot|^{B}}(\pi) \neq 0\}= \{ 5/2, 3/2\}.\]
         Since $ D_{\rho|\cdot|^{5/2}}(\pi)$ is a highest derivative, and the resulting representation is $\rho|\cdot|^{7/2}$-reduced, we add $ \rho\otimes S_6 \otimes S_1 $ to $\psi$ and $\rho|\cdot|^{5/2}$ to $\Omega^{+}$.
         
        Since $ D_{\rho|\cdot|^{5/2}}^{(3)} \circ D_{\rho|\cdot|^{3/2}}^{(2)}(\pi)$ is a composition of highest derivative, and the resulting representation is $\rho|\cdot|^{7/2}$-reduced, we add $2$ copies of $ \rho \otimes S_5 \otimes S_2$ to $\psi$ and add $ \{\rho|\cdot|^{3/2},\rho|\cdot|^{3/2},\rho|\cdot|^{5/2},\rho|\cdot|^{5/2}\}$ to $ \Omega^{+}$.
    \item[\textbf{Step 3:}] We have 
    \begin{align*}
        \pi_A&= (S_{\rho|\cdot|^{9/2}}^{(2)} \circ S_{\rho|\cdot|^{11/2}}^{(3)} )\circ (S_{\rho|\cdot|^{7/2}}^{(2)} \circ S_{\rho|\cdot|^{9/2}}^{(3)}) \circ (S_{\rho|\cdot|^{5/2}}^{(2)} \circ S_{\rho|\cdot|^{7/2}}^{(3)} )(\pi)\\
        &= L\left(\Delta_{\rho}\left[ \half{-1},\half{-11}\right], \Delta_{\rho}\left[ \half{-1},\half{-1}\right],\Delta_{\rho}\left[ \half{9},\half{-11}\right]; \pi\left(\half{1^+},\half{9^+}, \half{11^+}\right)\right),
    \end{align*}
         and 
     \[\overline{\mathcal{B}}= \{ B <0 \ | \ D_{\rho|\cdot|^{B}}(\pi_A) \neq 0 \}= \{-1/2\}.\]
     We have 
     \[D_{\rho|\cdot|^{-7/2}} \circ D_{\rho|\cdot|^{-5/2}}\circ D_{\rho|\cdot|^{-3/2}}\circ
     D_{\rho|\cdot|^{-1/2}}^{(2)}(\pi_A) \]
     is a composition of highest derivatives. Since $ -7/2= -A-1$, we conclude that $\pi$ is not of Arthur type.
    \end{enumerate}
    \item [2.] Consider 
    \[\pi= L\left(\Delta_{\rho}\left[ \half{-1},\half{-5}\right], \Delta_{\rho}\left[ \half{-1},\half{-1}\right],\Delta_{\rho}\left[ \half{3},\half{-5}\right]; \pi\left(\half{1^-},\half{3^+}, \half{5^-}\right)\right).\] Initially, we set $\psi=0$.
    \begin{enumerate}
        \item [\textbf{Step 1:}] $A=5/2,\epsilon=1/2$. Set $\Omega^{+}=\Omega^{-}= \emptyset$.
         \item [\textbf{Step 2:}]We have 
        \[ \mathcal{B}= \{B>1/2\ | \ D_{\rho|\cdot|^{B}}(\pi) \neq 0\}= \{ 5/2, 3/2\}.\]
         Since $ D_{\rho|\cdot|^{5/2}}(\pi)$ is a highest derivative, and the resulting representation is $\rho|\cdot|^{7/2}$-reduced, we add $ \rho\otimes S_6 \otimes S_1 $ in $\psi$ and $\rho|\cdot|^{5/2}$ in $\Omega^{+}$.
         
        Since $ D_{\rho|\cdot|^{5/2}}^{(2)} \circ D_{\rho|\cdot|^{3/2}}^{(1)}(\pi)$ is a composition of highest derivative, and the resulting representation is $\rho|\cdot|^{7/2}$-reduced, we add $ \rho \otimes S_5 \otimes S_2$ to $\psi$ and add $ \{\rho|\cdot|^{3/2},\rho|\cdot|^{5/2}\}$ to $ \Omega^{+}$.
    \item [\textbf{Step 3:}] We have 
    \begin{align*}
        \pi_A&= (S_{\rho|\cdot|^{9/2}}^{(1)} \circ S_{\rho|\cdot|^{11/2}}^{(2)} )\circ (S_{\rho|\cdot|^{7/2}}^{(1)} \circ S_{\rho|\cdot|^{9/2}}^{(2)}) \circ (S_{\rho|\cdot|^{5/2}}^{(1)} \circ S_{\rho|\cdot|^{7/2}}^{(2)} )(\pi)\\
        &= L\left(\Delta_{\rho}\left[ \half{-1},\half{-5}\right], \Delta_{\rho}\left[ \half{-1},\half{-1}\right],\Delta_{\rho}\left[ \half{9},\half{-11}\right]; \pi\left(\half{1^-},\half{2^+}, \half{11^-}\right)\right)
    \end{align*}
          and 
     \[\overline{\mathcal{B}}= \{ B <0 \ | \ D_{\rho|\cdot|^{B}}(\pi_A) \neq 0 \}= \{-1/2\}.\]
     We have 
     \[ D_{\rho|\cdot|^{-5/2}} \circ D_{\rho|\cdot|^{-3/2}} \circ D_{\rho|\cdot|^{-1/2}}(\pi_A)\]
     is a composition of highest derivative, and the resulting representation is $\rho|\cdot|^{-7/2}$-reduced. Therefore, we add $\rho\otimes S_1 \otimes S_2+ \rho \otimes S_3 \otimes S_3$ to $\psi$ and add $\{ \rho|\cdot|^{-1/2},\rho|\cdot|^{1/2}, \rho|\cdot|^{-1/2},\rho|\cdot|^{1/2}, \cdots ,\rho|\cdot|^{5/2}\}$ to $\Omega^{-}$.
     \item[\textbf{Step 4:}] We have
     \begin{align*}
         \Omega=\{ (\rho|\cdot|^{-1/2})^2, (\rho|\cdot|^{1/2})^2, (\rho|\cdot|^{3/2})^2, (\rho|\cdot|^{5/2})^3 \}=\Omega(\pi).
     \end{align*}
     Thus we don't need to add anything to $\psi$ in this step.
     \item[\textbf{Step 5:}] Now we exhaust the local Arthur packet associated with
     \[ \psi= \rho \otimes S_6 \otimes S_1+\rho \otimes S_5 \otimes S_2+\rho \otimes S_1 \otimes S_2+\rho \otimes S_3 \otimes S_4. \]
     Then we find $\pi=\pi(\EE)$ where
     \[ \EE= \bordermatrix{
     & \half{-1} & \half{1} & \half{3} & \half{5} \cr 
     & \lhd & \rhd && \cr 
     & \lhd & \ominus & \oplus & \rhd \cr
     && &\lhd & \rhd \cr
     && & & \ominus \cr
     }_{\rho}.\]
     One can check that $\EE=\EE_{can}$, and $\pi_A$ is the representation associated with
     \[  \EE_{\leq 1/2}+ sh^{3}(\EE_{>1/2})= \bordermatrix{
     & \half{-1} & \half{1} & \half{3} & \half{5} & \half{7}& \half{9} & \half{11} \cr 
     & \lhd & \rhd && &&&\cr 
     & \lhd & \ominus & \oplus & \rhd &&&\cr
     && &&&&\lhd & \rhd \cr
     && & & &&&\ominus \cr
     }_{\rho}.\]
    \end{enumerate}
\end{enumerate}
\end{exmp}

\section{Local Arthur packets containing tempered representations}\label{sec tempered rep}
In this section, we classify the set of local Arthur packets which contain tempered representations. Moreover, for each local Arthur parameter $\psi \in \Psi^{+}(G_n)$, we give explicit formula for the number of tempered representations inside $\Pi_{\psi}$ (Theorem \ref{thm tempered general}).

Given a general local Arthur parameter $\psi \in \Psi^+(G_n)$, we consider the decomposition $\psi= \psi_{nu,>0}+\psi_{np} + \psi_{gp} +\psi_{np}^{\vee}+ \psi_{nu,>0}^{\vee}$ as in Theorem \ref{thm red from nu to gp}. Then $\Pi_{\psi}$ contains tempered representations only if $\psi_{np}$ is tempered and $\psi_{nu,>0}=0$ and the set of tempered representations in $\Pi_{\psi}$ in this case can be described as
\[ \{ \tau_{\psi_{np}} \rtimes \pi_{gp} \ | \ \pi_{gp} \in \Pi_{\psi_{gp}}, \pi_{gp} \text{ is tempered}\}. \]
Therefore, it suffices to consider local Arthur parameters of good parity. Hence, we assume all local Arthur parameters are of good parity in this section. 

For convenience, we give the following definition.

\begin{defn}
We say that
\[\FF= \{([A_i,B_i]_{\rho}, l_i,\eta_i)\}_{i \in (I_{\rho, >})}\]
is tempered if $A_i=B_i$ for all $i \in I_{\rho}$ and an extended multi-segment $\EE= \cup_{\rho} \EE_{\rho}$ is tempered if $\EE_{\rho}$ is tempered for any $\rho$. Equivalently, $\EE$ is tempered if $\psi_{\EE}$, the local Arthur parameter associated with $\EE$, is tempered.

\end{defn}

\subsection{Local Arthur packets containing supercuspidal representations}\label{sec supercuspidal}
In this subsection, we give a necessary and sufficient condition on $\EE \in \Rep$ that $\pi(\EE)$ is supercuspidal (Corollary \ref{cor cuspidal shape}). Also, fix a local Arthur parameter $\psi \in \Psi^{+}(G_n)$, we give a formula for the number of supercuspidal representations in $\Pi_{\psi}$ (Theorem \ref{thm cuspidal count}).

M{\oe}glin's work suggested how to characterize local Arthur packets containing supercuspidal representations.  We first recall several notations and state two of M{\oe}glin's results.

Let $\Delta: \SL_2(\BC) \to \SL_2(\BC)\times \SL_2(\BC)$ be the diagonal embedding. Recall the diagonal restriction of $\psi$ as in \eqref{def diag rest}
\[ \psi^{\Delta}: W_{F} \times \SL_2(\BC) \xrightarrow[]{id \times \Delta} W_F \times \SL_2(\BC) \times \SL_2(\BC) \xrightarrow[]{\psi} \widehat{G}.\]

For any tempered $L$-parameter $ \phi= \bigoplus_{i} \rho_i \otimes S_{a_i}$, we write $\rho \otimes S_{a} \subset \phi$ if $\rho \otimes S_a$ appears as a direct summand in  $\phi$. 
A discrete $L$-parameter $ \phi= \bigoplus_{i} \rho_i \otimes S_{a_i}$ is said to be \emph{without gaps} if for $a \geq 1$,
\[ \rho \otimes S_{a+2} \subset \phi \Rightarrow \rho \otimes S_{a} \subset \phi. \]

M{\oe}glin gave the following characterization of the $L$-data of supercuspidal representations.
\begin{thm}[{\cite[Theorem 2.5.1]{Moe11},\cite[Theorem 3.3]{Xu17a}}] \label{thm characterizatioin of supercuspidal}
A tempered representation $\pi(\phi,\varepsilon)$ is supercuspidal if and only if 
\begin{enumerate}
    \item [$\oldbullet$] As a tempered $L$-parameter, $\phi$ is discrete and without gaps.
    \item [$\oldbullet$] If $\rho \otimes S_{a} \subset \phi$ and $\rho \otimes S_{a+2} \subset \phi$, then $\varepsilon(\rho \otimes S_{a}) \varepsilon(\rho \otimes S_{a+2})=-1.$
    \item [$\oldbullet$] If $\rho \otimes S_{2} \subset \phi$, then $ \varepsilon(\rho \otimes S_2)=-1$.
\end{enumerate}
\end{thm}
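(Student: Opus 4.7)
The plan is to use the Jacquet module criterion: an irreducible representation $\pi$ of $G_n$ is supercuspidal if and only if $D_{\rho|\cdot|^x}(\pi) = 0$ for every irreducible self-dual unitary supercuspidal $\rho$ of $\GL_d(F)$ and every $x > 0$, together with vanishing of the self-dual derivatives $D_{\Delta_\rho[0,-1]}(\pi)$ and $D_{Z_\rho[0,1]}(\pi)$. First I would identify $\pi(\phi,\varepsilon)$ with $\pi(\EE)$ for the tempered extended multi-segment
\[ \EE = \cup_\rho \{([A_i, A_i]_\rho, 0, \eta_i)\}_{i \in (I_\rho, >)}, \]
where $A_i = (a_i-1)/2$, $\eta_i = \varepsilon(\rho \otimes S_{a_i})$, and $>$ is any admissible order (so $A_i$ is weakly decreasing along $>$). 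By Proposition \ref{prop derivative support}(i), any potentially nonvanishing derivative $D_{\rho|\cdot|^x}$ forces $x = A_i$ for some $i$, so only finitely many derivatives need to be tested for each $\rho$.

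For the necessity direction, I would assume $\pi(\phi,\varepsilon)$ is supercuspidal and derive each of the three conditions in turn via explicit application of \cite[Theorem 7.1]{AM20}. If $\phi$ were not discrete, i.e. $\rho\otimes S_a$ appears with multiplicity $\geq 2$, the highest $\rho|\cdot|^{(a-1)/2}$-derivative would be nonzero by a direct count; similarly, a gap (some $\rho\otimes S_a \subset \phi$ with $a\geq 3$ but $\rho\otimes S_{a-2} \not\subset \phi$) produces an unavoidable nonzero $\rho|\cdot|^{(a-1)/2}$-derivative because no adjacent row at $A_i - 1$ is available to absorb it. When both $\rho\otimes S_a$ and $\rho\otimes S_{a+2}$ lie in $\phi$ with equal signs, a row-exchange computation (Definition \ref{def row exchange}) followed by the AM20 formula produces a nonzero $\rho|\cdot|^{(a+1)/2}$-derivative. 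Finally, the half-integer case $\rho\otimes S_2 \subset \phi$ with $\varepsilon(\rho\otimes S_2) = +1$ yields a nonzero $\rho|\cdot|^{1/2}$-derivative directly from \cite[Theorem 7.1]{AM20}, and analogously the self-dual derivative formula of \cite[Proposition 3.7]{AM20} handles the potential $\rho\otimes S_1$ (which is excluded by discreteness anyway in the orthogonal/symplectic parity case). Each failure of a listed condition thus produces a nonzero derivative, contradicting supercuspidality.

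Conversely, assuming all three conditions hold, I would verify that $D_{\rho|\cdot|^{A_i}}(\pi(\EE)) = 0$ for each $i$ by checking the cancellation rule in \cite[Theorem 7.1]{AM20} between the $i$-th row and the adjacent row at $A_i - 1$ (which exists by the no-gaps hypothesis): the alternating sign condition is precisely what forces the two contributions to cancel. A symmetric argument using \cite[Proposition 6.1]{AM20} and \cite[Proposition 3.7]{AM20} handles the vanishing of the derivatives at $x = 0$ and of $D_{\Delta_\rho[0,-1]}$, $D_{Z_\rho[0,1]}$. The main obstacle is the sign bookkeeping: the derivative formulas of AM20 involve both the internal sign $\eta_i$ and an accumulated parity depending on the row position, so one must verify systematically that the combinatorial sign rule in the derivative algorithm matches the condition $\varepsilon(\rho \otimes S_a)\varepsilon(\rho \otimes S_{a+2}) = -1$ uniformly across all consecutive pairs and correctly meshes with the boundary case $\rho \otimes S_2$.
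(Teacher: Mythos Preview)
The paper does not prove this theorem; it is quoted from \cite{Moe11} and \cite{Xu17a}. Your overall strategy --- reduce supercuspidality to vanishing of all Jacquet-type derivatives and then compute those derivatives via the explicit formulas of \cite{AM20} --- is essentially the approach underlying those references and is sound in outline.

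There is, however, a genuine gap in your stated supercuspidality criterion. Supercuspidality requires $D_{\rho|\cdot|^{x}}(\pi)=0$ for \emph{all} $x\geq 0$, in particular for $x=0$; the derivatives $D_{\Delta_\rho[0,-1]}$ and $D_{Z_\rho[0,1]}$ are not a substitute for $D_{\rho}$. Concretely, take $\phi$ containing $\rho\otimes S_1$ with multiplicity two and no other $\rho$-summand (padding with a different $\rho'$ to meet the dimension and sign constraints). Then $\rho|\cdot|^{\pm 1}$ is absent from the cuspidal support, so $D_{\Delta_\rho[0,-1]}(\pi)=D_{Z_\rho[0,1]}(\pi)=0$ and $D_{\rho|\cdot|^{x}}(\pi)=0$ for every $x>0$; yet $D_{\rho}(\pi)\neq 0$ by \cite[Theorem~3.1]{Jan18} (cf.\ \cite[Proposition~3.6]{Ato20a}), so $\pi$ is not supercuspidal. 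Your criterion would wrongly declare it supercuspidal. Relatedly, your remark that $\rho\otimes S_1$ is ``excluded by discreteness anyway in the orthogonal/symplectic parity case'' is incorrect: discreteness only forces multiplicity one, and the parity conditions permit $\rho\otimes S_1$ for suitable $\rho$. The reference \cite[Proposition~3.7]{AM20} you invoke concerns the $\Delta_\rho[0,-1]$ and $Z_\rho[0,1]$ derivatives, not $D_\rho$.

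The fix is small: add $D_\rho(\pi)=0$ to the criterion and handle it via \cite[Theorem~3.1]{Jan18} or \cite[Proposition~3.6]{Ato20a}, which say $\pi(\phi,\varepsilon)$ is $\rho$-reduced precisely when $\rho\otimes S_1$ has multiplicity at most one in $\phi$. With that in place, discreteness gives $D_\rho=0$, the no-gaps and alternating-sign conditions give $D_{\rho|\cdot|^{x}}=0$ for $x\geq 1$ (and $x\geq 1/2$ via the convention $\varepsilon(\rho\otimes S_0)=1$), and temperedness forces $D_{\rho|\cdot|^{x}}=0$ for $x<0$. The necessity direction then runs by contrapositive using the same formulas.
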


The next result follows from \cite{Moe06b,Moe09a} and we state the version in \cite{GGP20}.
\begin{thm}[{\cite[Theorem 7.5]{GGP20}}]\label{thm GGP20}
Suppose $\psi, \psi'$ are local Arthur parameters of $G_n$. Then
\[ \Pi_{\psi} \cap \Pi_{\psi'} \neq \emptyset \Longrightarrow \psi^{\Delta} \cong \psi'^{\Delta}.\]
Furthermore, if $\Pi_{\psi^{\Delta}}^{sc}$ denotes the set of supercuspidal representations in $\Pi_{\psi^{\Delta}}$, then $\Pi_{\psi^{\Delta}}^{sc} \subset \Pi_{\psi}$.
\end{thm}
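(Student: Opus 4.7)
The plan is to reduce to the good-parity case via Theorem \ref{thm reduction to gp} and then to invoke the invariance of the extended cuspidal support together with Atobe's extended multi-segment parametrization.

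For the first statement, suppose $\pi \in \Pi_\psi \cap \Pi_{\psi'}$. The extended cuspidal support $\exsupp(\pi)$ is a representation-theoretic invariant of $\pi$. On the other hand, for any $\pi_0 \in \Pi_\psi$, the multi-set $\exsupp(\pi_0)$ can be read off directly from $\psi^\Delta$: each summand $\rho \otimes S_a \otimes S_b$ of $\psi$ contributes to $\psi^\Delta$ the sum $\bigoplus_{c = |a-b|+1,\, \mathrm{step}\, 2}^{a+b-1} \rho \otimes S_c$, and each $\rho \otimes S_c$ contributes $\{\rho|\cdot|^{(c-1)/2}, \ldots, \rho|\cdot|^{-(c-1)/2}\}$ to the extended cuspidal support. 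One then checks that the map from tempered $L$-parameters to extended cuspidal supports is injective by peeling off summands $\rho \otimes S_{2x+1}$ iteratively starting from the maximum exponent $x$ appearing in $\exsupp$. Hence $\exsupp(\pi)$ simultaneously determines $\psi^\Delta$ and $\psi'^\Delta$, giving $\psi^\Delta \cong \psi'^\Delta$.

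For the second statement, given $\pi(\phi, \varepsilon) \in \Pi_{\psi^\Delta}^{sc}$ with $\phi = \psi^\Delta$, the goal is to exhibit an extended multi-segment $\EE$ with $\psi_\EE = \psi$ and $\pi(\EE) = \pi(\phi, \varepsilon)$. The natural candidate is the one with $\supp(\EE) = \supp(\psi)$ in some admissible order, with $l_i = 0$ for every $i$, and with $\eta_i$ determined by $\eta_i = \varepsilon(\rho \otimes S_{2|B_i|+1})$. The discreteness of $\phi$ (part of Theorem \ref{thm characterizatioin of supercuspidal}) forces the exponent sets $\{|B_i|, |B_i|+1, \ldots, A_i\}$ arising from different rows to be pairwise disjoint, so these assignments are unambiguous. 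Shifting by $sh^t$ for $t$ large enough so that $sh^t(\EE)$ is positive, Definition \ref{def rep of segment} gives $\pi(sh^t(\EE)) = \pi(\phi^{(t)}, \varepsilon^{(t)})$ where the prescription $\varepsilon^{(t)}(\rho \otimes S_{2(|B_i|+t+k)+1}) = \eta_i(-1)^k$ matches the alternating pattern of $\varepsilon$ coming from the middle clause of Theorem \ref{thm characterizatioin of supercuspidal}; Theorem \ref{thm non-vanishing}(i) then transports back to $\pi(\EE) = \pi(\phi, \varepsilon)$.

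The hard part is the verification of the sign condition \eqref{eq sign condition}, $\prod_{\rho,i}(-1)^{[b_i/2]}\eta_i^{b_i} = 1$, which must be reconciled with the fact that $\varepsilon$ descends from $\widehat{\mathcal{A}}_\phi$ to $\widehat{\mathcal{S}}_\phi$ (vanishing on the central element). This reduces to a parity bookkeeping exercise using the discrete-without-gaps property of $\phi$ and the normalization $\varepsilon(\rho \otimes S_2) = -1$ from Theorem \ref{thm characterizatioin of supercuspidal} whenever $\rho \otimes S_2$ is a summand; this last normalization is precisely what calibrates the global parity.
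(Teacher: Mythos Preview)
The paper does not give its own proof of this theorem; it is quoted from M{\oe}glin's work via \cite{GGP20}. Your argument for the first implication is essentially M{\oe}glin's: the extended cuspidal support is constant on each $\Pi_\psi$ and determines $\psi^{\Delta}$, so nothing to add there.

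For the second implication, your construction has a real gap whenever $\psi$ contains a summand $\rho\otimes S_a\otimes S_b$ with $a<b$, i.e.\ whenever some $B_i<0$. With $l_i=0$ for every row, condition $(\ast)$ of Theorem~\ref{thm non-vanishing}(i) demands $B_i+l_i\geq 0$ in the integer case (and $B_i+l_i\geq \pm\tfrac12$ in the half-integer case). Hence for $B_i\leq -1$ integral, or $B_i\leq -\tfrac32$ half-integral, your $\EE$ satisfies $\pi(\EE)=0$, and the ``transport back'' via Theorem~\ref{thm non-vanishing}(i) returns the zero representation rather than $\pi(\phi,\varepsilon)$. The shift $sh^t$ only tells you the $L$-data of $\pi(sh^t(\EE))$; it does not exempt $\EE$ itself from $(\ast)$.

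The paper's \S\ref{sec supercuspidal} does contain an independent argument that recovers this inclusion in the good-parity case (Theorem~\ref{thm surjection cuspidal} and Corollary~\ref{cor cuspidal shape}), but the mechanism is different: one does \emph{not} take $l_i=0$ in a $(P')$ order. Rather, one builds $E_\pi$ recursively from $E_\pi^-$ by appending a top row, dualizing, or applying $ui_{j,n}$, and the resulting extended multi-segments for negative~$B_i$ carry nontrivial~$l_i$ after row exchange to a $(P')$ order. If you want a closed-form fix, the condition that actually characterizes the right $\EE$ is the one in Corollary~\ref{cor cuspidal shape}(i)(b), stated in the $(P'')$ order; translating it back to $(P')$ produces exactly the $l_i$ needed to satisfy~$(\ast)$.

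A secondary point: even when all $B_i\geq 0$, Definition~\ref{def rep of segment} gives the tempered formula only under the separation $B_i>A_j$ for $i>j$, which a uniform $sh^t$ does not achieve. This is repairable via non-uniform shifts or via Corollary~\ref{cor pos tempered}, but it is not immediate from Definition~\ref{def rep of segment} alone. Finally, the sign-condition verification you flag as ``bookkeeping'' is not entirely routine in the negative-$B_i$ regime either, since it interacts with the $(P')$ versus $(P'')$ row exchanges.
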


Let $\phi$ be a tempered local Arthur parameter. Then Theorem \ref{thm GGP20} implies that 
$\Pi_{\psi} \cap \Pi_{\phi}\neq \emptyset$ if and only if $\Pi_{\psi}$ contains a tempered representation  
 and $\psi^{\Delta}=\phi$. Note that $\phi^{\Delta}=\phi$ and $\psi^{\Delta}$ is always a tempered $L$-parameter. 

 Now we assign an extended multi-segment for each supercuspidal representation $\pi$. By Theorem \ref{thm characterizatioin of supercuspidal}, the $L$-parameter of $\pi$ is of the form
$$
\phi=\bigoplus_{i=1}^{n_1}\rho_i\otimes (S_1+S_3+\cdots+S_{2A_i+1})\bigoplus_{i=n_1+1}^{n_1+n_2}\rho_i\otimes (S_2+S_4+\cdots+S_{2A_i+1}),
$$
where each $\rho_i$ is distinct and $A_i\in\mathbb{Z}$ if $i=1,\dots,n_1$ and $A_i\in\half{1}+\mathbb{Z}$ otherwise. Moreover, $\pi\cong\pi(\EE_0')$ where 
\begin{align*}
    \EE_0'=& \cup_{i=1}^{n_1} \{([ 0, 0]_{\rho_i},0,\eta_i),([ 1, 1]_{\rho_i},0,-\eta_i), \dots,([ A_i, A_i]_{\rho_i},0,(-1)^{A_i}\eta_i)\} \\
    &\cup_{i=n_1+1}^{n_1+n_2} \{([ 1/2, 1/2]_{\rho_i},0,-1),([ 3/2,3/2]_{\rho_i},0,1), \dots,([ A_i, A_i]_{\rho_i},0,(-1)^{A_i+\half{1}})\}.
\end{align*}
We remark that the sign condition \eqref{eq sign condition} should hold. In other words, we assume
\begin{align}\label{eq sign eq sc}
    \prod_{i=1}^{n_1}  (-1)^{[\frac{A_i+1}{2}]} \eta_i^{A_i+1} \prod_{i=n_1+1}^{n_2}  (-1)^{[\frac{A_i+1/2}{2}]} (-1)^{A_i+1/2} = 1.
\end{align}
By applying a sequence of union-intersection operators, we obtain that $\pi\cong\pi(\EE_0)$ where \begin{align}\label{eq EE_0}
    \EE_0= \cup_{i=1}^{n_1} \{([ A_i, 0]_{\rho_i},0,\eta_i)\} \cup_{i=n_1+1}^{n_1+n_2} \{([A_i,1/2]_{\rho_i},0,-1)\}.
\end{align}

 Our first goal in this subsection is to describe the set 
 \[E_{\pi}= \{\EE \ | \ \pi(\EE)\cong \pi\}/\textrm{(row exchanges)}.\] 
 One can directly apply Theorem \ref{thm exhaustion of symbol} to construct this set from $\EE_0$; however, we provide another construction which gives a better explanation of the size of $E_{\pi}$.

The construction is based on an induction on $\alpha(\pi):=\sum_{i=1}^{n_1+n_2} \lfloor A_i\rfloor$. The base case is $A_i=0$ or $1/2$ for every $i$. In this case, the only operators applicable on $\EE_0$ are partial duals for $ n_1+1 \leq i \leq n_1+n_2$. Therefore, there are exactly $2^{n_2}$ local Arthur packets containing $\pi$, with corresponding multi-segments being
\[  \left\{\bigcup_{i=1}^{n_1} \{([ 0, 0]_{\rho_i},0,\eta_i)\} \bigcup_{i=n_1+1}^{n_1+n_2} \{([1/2, \pm 1/2]_{\rho_i},0,\mp 1) \}\right\}. \]

Suppose $\alpha(\pi)>0$. We are going to construct another supercuspidal representation $\pi^{-}$ with $\alpha(\pi^{-})= \alpha(\pi)-1$. Fix an $i$ such that $ \lfloor A_i \rfloor > 0$. From now on, we denote $\rho= \rho_i$, $A:=A_i$, $\epsilon:=B_i$ and $\eta:=\eta_i$. We define $\EE_0^{-}$ by formally replacing $\{([A,\epsilon]_{\rho},0,\eta)\}$ in $\EE_0$ with $\{([A-1,\epsilon]_{\rho},0,\eta)\}$. Then we fix a $\rho^{\ast}$ distinct from $\{\rho_1,\ldots,\rho_{n_1+n_2}\}$ such that
\[ \EE_{0}^{-} \cup \{ ([0,0]_{\rho^{\ast}}, 0,\eta^{\ast})\}\]
is an extended multi-segment of the same type of group, where $\eta^{\ast}$ is determined by \eqref{eq sign eq sc}. We set
\[ \pi^{-}:=\pi(\EE_{0}^{-} \cup \{ ([0,0]_{\rho^{\ast}}, 0,\eta^{\ast})\}),\]
which is supercuspidal by Theorem \ref{thm characterizatioin of supercuspidal}.

We know how to construct $E_{\pi^{-}}$ by induction hypothesis, and any extended multi-segment in $E_{\pi^{-}}$ can be uniquely written as 
\[ \EE^{-} \cup \{ ([0,0]_{\rho^{\ast}},0, \eta^{\ast})\}.\]
Denote the set of such $\EE^{-}$ (up to admissible order) by $E^{-}_{\pi}$. Towards constructing $E_{\pi}$, for each $\EE^{-} \in E^{-}_\pi$, we define three extended multi-segments $\EE^{-}_{1},\EE^{-}_{2},\EE^{-}_3$ whose representations are $\pi$ in the following way:
\begin{enumerate}
    \item [$\oldbullet$] $\EE_1^{-}: = (\EE^{-})^{\rho} \cup((\EE^{-})_{\rho}+ \{([A,A]_{\rho},0, (-1)^{A-\epsilon} \eta)\}) $,
    \item [$\oldbullet$] $\EE_2^{-}:= dual(\EE_1^{-})$,
    \item [$\oldbullet$] Write $(\EE_1^{-})_{\rho}= \{([A_r,B_r]_{\rho},l_r, \eta_r)\}_{r \in (I_{\rho},>)}$. Denote $n$ the maximal element in $(I_{\rho},>)$, which corresponds to the row $([A,A]_{\rho},0, (-1)^{A-\epsilon} \eta)$. By Theorem \ref{thm max A}, there is a unique $j$ such that $A_j= A-1$. Then we define $\EE_{3}^{-}:= (\EE^{-})^{\rho} \cup ui_{j,n}((\EE_1^{-})_{\rho})$.

\end{enumerate}
Clearly $\pi(\EE_3^{-}) \cong \pi(\EE_1^{-}) \cong \pi \cong \hat{\pi} \cong \pi(\EE_2^{-})$. The following lemma shows that $ui_{j,n}$ is applicable on $\EE_1^{-}$ of type 3', so $\EE_1^{-} $ is different from $\EE_3^{-}$.

\begin{lemma}\label{lem l_n=0}
Suppose $\EE \in \Rep$ and denote $\FF=\EE_{\rho}$. Write
\[ \FF= \{([A_i,B_i]_{\rho},l_i,\eta_i)\}_{i =1}^{n}, \ \FF'=\{([A_i',B_i']_{\rho},l_i',\eta_i')\}_{ i =1}^{n'}  \]
and suppose further that they satisfy the following conditions.
\begin{enumerate}
    \item [$\oldbullet$] $\pi(\EE^{\rho} \cup \FF_{\gg}) \cong \pi(\EE^{\rho} \cup \FF_{\gg'}') $ for some admissible orders $\gg$ and $\gg'$ satisfy $(P')$.  
    \item [$ \oldbullet$]
    $ A_n= \max\{ A_i \ | \ i \in I_{\rho} \}$, $B_n= \max\{ B_j \ | \ A_j=A_n \}$, and similarly $ A_{n'}'=\max\{ A_{i}' \ | \ i \in I_{\rho} \}$, $B_{n'}'= \max\{ B_j' \ | \ A_j'=A_n' \}$.
    \item [$\oldbullet$] $l_n=0$.
\end{enumerate}
Then $l_{n'}'=0$ and $(-1)^{A_n-B_n} \eta_n= (-1)^{A_{n'}'-B_{n'}'}\eta_{n'}'$.
\end{lemma}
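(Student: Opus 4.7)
By Theorem~\ref{thm max A} applied to both $\FF$ and $\FF'$, we have $A := A_n = A_{n'}'$, and the multiplicity $m := \#\{i \in I_\rho : A_i = A\}$ equals the multiplicity of $\rho|\cdot|^A$ in $\Omega(\pi(\EE^{\rho} \cup \FF_{\gg}))_\rho$, hence also equals $\#\{j \in I_\rho' : A_j' = A\}$. Applying row exchanges (which preserve the representation by Proposition~\ref{prop basic operators}), I may further arrange that in both $\FF$ and $\FF'$ the top $m$ rows under the admissible order are precisely those with $A$-value equal to $A$, ordered so that the $B$-values are non-decreasing; then the topmost row has the maximum $B$-value, matching the hypothesis that $B_n$ (resp.\ $B_{n'}'$) is the maximum.

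\textbf{Paragraph 2 (Strategy).} The plan is to identify the combinatorial data ``$l_n = 0$ together with corner sign $(-1)^{A - B_n}\eta_n$'' with an invariant of $\pi(\EE^{\rho} \cup \FF_{\gg})$. According to Definition~\ref{def rep of segment}, when $l_n = 0$ the $n$-th row contributes to the tempered $L$-parameter $\phi$ of $\pi(\EE)$ a subrepresentation $\rho \otimes (S_{2B_n+1} \oplus \cdots \oplus S_{2A+1})$ with character values $\eta_n, -\eta_n, \dots, (-1)^{A-B_n}\eta_n$; in particular, $(-1)^{A-B_n}\eta_n$ is the value of $\varepsilon$ at the specific copy of $\rho \otimes S_{2A+1}$ corresponding to the top row. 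By Theorems~\ref{thm integer} and~\ref{thm half integer}, $\FF'$ is obtained from $\FF$ by a finite sequence of basic operators together with at most one partial dual $dual_k$. Thus it suffices to verify that the ``corner data'' (the condition $l_n = 0$ and the sign $(-1)^{A-B_n}\eta_n$ at the max-$B$ row of maximum $A$) is preserved by each of these operations.

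\textbf{Paragraph 3 (Row exchanges, $ui_{i,j}$, and $dual\circ ui_{i,j}\circ dual$).} For row exchanges $R_k$ not touching any max-$A$ row, the corner data is unchanged trivially. When $R_k$ exchanges two adjacent max-$A$ rows, a direct case analysis using Definition~\ref{def row exchange} (Cases 1 and 2, applicable since $[A,B_k]_\rho$ and $[A,B_{k+1}]_\rho$ are nested) shows that the new top-most row continues to have $l = 0$ and the same corner sign; the key point is that the row with larger $B$ continues to sit on top under $(P')$ and its signed corner is transferred correctly. For $ui_{i,j}$: since $ui_{i,j}$ requires $A_i < A_j$ strictly, it can only act between a non-max-$A$ row and a max-$A$ row, turning the former into a new max-$A$ row with $B_i < B_n$; neither the identity nor the corner sign of the max-$B$ top row changes. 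The same conclusion holds for $dual \circ ui_{i,j} \circ dual$ by applying Corollary~\ref{cor ui inverse} together with the observation that duality swaps the role of top-right and bottom-left corners without altering the invariant.

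\textbf{Paragraph 4 (Partial dual — the main obstacle).} The subtle case is the partial dual $dual_k$, applicable only when $B_k \in \{\pm 1/2\}$ and $l_k = 0$. If $dual_k$ acts on a row distinct from the top (i.e., $k \neq n$), then by Corollary~\ref{cor partial dual is always applicable} and the locality of the formulas in Definition~\ref{def partial dual}, the corner data at position $(A, B_n)$ is unaffected provided $B_n \notin \{\pm 1/2\}$ (if $B_n \in \{\pm 1/2\}$ one reduces to the top-row case by interchanging). If $k = n$ and, say, $B_n = 1/2$ so that $dual_n^+$ applies, then Definition~\ref{def partial dual} transforms $([A, 1/2]_\rho, 0, \eta_n)$ into $([A, -1/2]_\rho, 0, -\eta_n)$ (the auxiliary relabelling of $\widetilde{\widetilde{\FF_1}}$ accounting for the shift of rows $\FF_1$). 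The new corner sign is
\[
(-1)^{A-(-1/2)}(-\eta_n) \;=\; -(-1)^{A+1/2}\eta_n \;=\; (-1)^{A-1/2}\eta_n,
\]
where the last equality uses that $A$ is a half-integer so $(-1)^{A+1/2}$ and $(-1)^{A-1/2}$ differ by a factor of $-1$. This exactly yields the identity $(-1)^{A-B_n}\eta_n = (-1)^{A-B_{n'}'}\eta_{n'}'$ and confirms $l_{n'}' = 0$. The case $B_n = -1/2$ (with $dual_n^-$) is symmetric via the dual operator. Assembling all cases completes the proof.
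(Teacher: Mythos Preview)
Your approach is quite different from the paper's, and as written it has a real gap.

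The paper does not invoke Theorems~\ref{thm integer} and~\ref{thm half integer} at all. Instead it runs a direct vanishing test: set $\eta = (-1)^{A_n - B_n}\eta_n$, adjoin two copies of $([A_n+t, A_n+t]_\rho, 0, -\eta)$ to both $\FF_{\gg}$ and $\FF'_{\gg'}$ for $t \gg 0$ (so the results are nonzero and isomorphic by Theorem~\ref{thm non-vanishing} and Lemma~\ref{lemma far away}), then take derivatives to pull one copy down to $[A_n, A_n]_\rho$. On the $\FF$-side the adjacent pair $\{([A_n, B_n]_\rho, 0, \eta_n), ([A_n, A_n]_\rho, 0, -\eta)\}$ fails Proposition~\ref{prop positive non-vanishing}(i), so the representation is zero; hence the $\FF'$-side is zero too, and Proposition~\ref{prop positive non-vanishing}(ii) then forces $l_{n'}' = 0$ together with the sign equality. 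This is short and uses only the non-vanishing criteria.

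Your argument, by contrast, relies on tracking a ``corner invariant'' through basic operators, and this is where it breaks down. The identification in Paragraph~2 of $(-1)^{A-B_n}\eta_n$ with the value $\varepsilon(\rho \otimes S_{2A+1})$ is only justified by Definition~\ref{def rep of segment} in the far-apart situation; pushing it through the derivative procedure in general is precisely what the lemma asserts, so appealing to it here is circular. The operator-by-operator check in Paragraph~3 does not rescue this: your claim that $ui_{i,j}$ ``can only act between a non-max-$A$ row and a max-$A$ row'' is false (two non-max-$A$ rows are allowed, though that case is harmless), and more seriously, when $ui_{i,n}$ is applicable with $j = n$ the top row, the support at index $n$ becomes $[A_i, B_n]$ while row $i$ acquires support $[A, B_i]$, so the max-$A$, max-$B$ segment moves to a different row whose $(l, \eta)$ you have not controlled. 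The one-line treatment of $dual \circ ui_{i,j} \circ dual$ likewise does not cover the type $3'$ case or any interaction with the top row. Since intermediate stages in the chain of operators need not keep the max-$A$, max-$B$ segment at the top index, a step-by-step invariance argument is not well-posed unless you first define an invariant for \emph{arbitrary} $\FF$ and prove it agrees with the corner sign whenever the hypothesis holds; you have not done this.
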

\begin{proof}
First we give an outline of the idea of this proof. Let $\eta= (-1)^{A_n-B_n}\eta_n.$ Then $\FF+ \{([A_n,A_n]_{\rho},0, -\eta)\}$ does not satisfy Proposition \ref{prop positive non-vanishing}(i), and hence neither does $\FF'+ \{([A_n,A_n]_{\rho},0, -\eta)\}$. This gives the desired conclusion. 

 We proceed with the details. For a large integer $t$, we consider the following extended multi-segments
\begin{align*}
    \EE_t:=&\EE^{\rho} \cup ( \FF_{\gg}+ \{ ([A_n+t,A_n+t]_{\rho},0,-\eta), ([A_n+t+1,A_n+t+1]_{\rho},0,-\eta) \}),\\
   \EE_t':= &\EE^{\rho} \cup ( \FF_{\gg}'+ \{ ([A_n+t,A_n+t]_{\rho},0,-\eta), ([A_n+t+1,A_n+t+1]_{\rho},0,-\eta) \}).
\end{align*}
 These extended multi-segments give nonzero representations by Theorem \ref{thm non-vanishing} and are isomorphic by Lemma \ref{lemma far away}. We have
\begin{align*}
    &\pi(\EE^{\rho} \cup( \FF_{\gg}+ \{([A_n,A_n]_{\rho},0, -\eta)\}+ \{([A_n+t+1,A_n+t+1]_{\rho},0, -\eta)\}) )\\
    =& D_{\rho|\cdot|^{ A_n+t,\dots,A_n+1} } (\pi(\EE_t))\\
    =& D_{\rho|\cdot|^{ A_n+t,\dots,A_n+1} } (\pi(\EE_t'))\\
    =&\pi(\EE^{\rho} \cup( \FF_{\gg'}'+ \{([A_n,A_n]_{\rho},0, -\eta)\}+ \{([A_n+t+1,A_n+t+1]_{\rho},0, -\eta)\}) ).
\end{align*}
The first representation is zero since $\FF+ \{([A_n,A_n]_{\rho},0, (-1)^{A_n-B_n+1} \eta_n)\}$ contains two adjacent rows
\[ \{([A_n,B_n]_{\rho},0,\eta_n) , ([A_n,A_n]_{\rho},0, (-1)^{A_n-B_n+1} \eta_n)\} \]
which do not satisfy Proposition \ref{prop positive non-vanishing}(i)(3). It remains to check that the last representation vanishes if and only if $l_{n'}'=0$ and $ -\eta= (-1)^{A_{n'}'-B_{n'}'+1} \eta_{n'}'$. Indeed, since $\EE^{\rho} \cup \FF'$ satisfies Theorem \ref{thm non-vanishing}(ii), we only need to check the following adjacent rows in $\FF'+ \{([A_n,A_n]_{\rho},0, (-1)^{A_n-B_n+1} \eta_n)\}$
\[ \{([A_{n'}',B_{n'}']_{\rho},l_{n'}',\eta_{n'}') , ([A_n,A_n]_{\rho},0, (-1)^{A_n-B_n+1} \eta_n)\} \]
by Proposition \ref{prop positive non-vanishing}(ii). Then Proposition \ref{prop positive non-vanishing}(i)(1) fails if and only if $l_{n'}'=0$ and $ -\eta= (-1)^{A_{n'}'-B_{n'}'+1} \eta_{n'}'$.
This completes the proof of the lemma.
\end{proof}

We give an example to explain the construction of $\EE_1^{-},\EE_2^{-},\EE_3^{-}$.
\begin{exmp} \label{ex supercuspidal}
Let $\rho$ be the trivial representation. Let $\pi=\pi(0^{-},1^+,2^{-})$ be a supercuspidal representation of $\Sp_{8}(F)$. Set
\[ \EE_0= \bordermatrix{  &0&1&2 \cr 
&\ominus & \oplus& \ominus \cr}_{\rho}.\]

It is not hard to show that $E_{\pi}^{-}$ is 
\[ \left\{  \bordermatrix{
&0&1 \cr 
&\ominus & \cr
& &\oplus \cr
}_{\rho}, \bordermatrix{
&-1&0&1 \cr 
&\lhd&\ominus &\rhd \cr
& &\oplus& \cr
}_{\rho}, \bordermatrix{
&0&1 \cr 
&\ominus & \oplus \cr
}_{\rho}
\right\} .\]

Then for each $\EE^{-} \in E_{\pi}^{-}$, the corresponding $\EE^{-}_1$ are
\[  \bordermatrix{&0&1&2 \cr 
&\ominus&& \cr
&&\oplus& \cr
&&&\ominus \cr
}_{\rho}, \bordermatrix{
    &-1&0&1&2 \cr
    &\lhd & \ominus&\rhd& \cr
    & &\oplus&&\cr  
    &&&&\ominus \cr
    }_{\rho}, \bordermatrix{&0&1&2 \cr 
&\ominus& \oplus& \cr
&&&\ominus \cr}_{\rho} \ . \]
The corresponding $\EE_2^{-}$ are 
\[ \bordermatrix{
    &-2&-1&0&1&2 \cr
    &\lhd&\lhd & \ominus&\rhd&\rhd \cr
    &&\lhd &\oplus&\rhd&\cr
    &&&\ominus&&\cr
    }_{\rho}, \bordermatrix{
    &-2&-1&0&1&2 \cr
    &\lhd&\lhd & \ominus&\rhd&\rhd \cr
    && &\oplus&&\cr
    &&&&\ominus&\cr
    }_{\rho}, \]
    \[ \bordermatrix{
    &-2&-1&0&1&2 \cr
    &\lhd&\lhd & \ominus&\rhd&\rhd \cr
    && &\oplus&\ominus&\cr    
    }_{\rho}\ .   \]
Finally, the corresponding $\EE^{-}_3$ are \[ \bordermatrix{&0&1&2 \cr 
&\ominus&& \cr
&&\oplus&\ominus \cr}_{\rho},  \bordermatrix{
    &-1&0&1&2 \cr
    &\lhd & \ominus&\oplus&\rhd \cr
    & &\ominus&&\cr    
    }_{\rho}, \bordermatrix{  &0&1&2 \cr 
&\ominus & \oplus& \ominus \cr}_{\rho}\ . \]
As we have shown in Example \ref{ex Atobe}, this exhausts the set $\Psi(\EE_0)=E_{\pi}$.
\end{exmp}

Now we show that the construction exhausts the set $E_{\pi}$.
\begin{thm}\label{thm surjection cuspidal}
Continue the notation from above. The map
\begin{align*}
   E_{\pi}^{-} \times \{1,2,3\} &\to E_{\pi}\\
    (\EE^{-},i)&\mapsto \EE_{i}^{-}
\end{align*}
is bijective. In particular, we have 
\[ |E_{\pi}|=2^{n_2} 3^{ \sum_{i=1}^{n_1+n_2} \lfloor A_i \rfloor}. \]
\end{thm}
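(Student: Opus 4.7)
The plan is to prove the bijection by establishing well-definedness, injectivity, and surjectivity, and then to deduce the cardinality formula by induction on $N := \sum_{i=1}^{n_1+n_2} \lfloor A_i \rfloor$.

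First, I will verify that the map is well-defined, i.e. $\EE_i^- \in E_\pi$ for each $i \in \{1,2,3\}$. For $\EE_1^-$, it suffices to check the sign condition in Definition \ref{def multi-segment} and the non-vanishing conditions in Theorem \ref{thm non-vanishing} at the newly created adjacent pair involving the row $([A,A]_\rho, 0, (-1)^{A-\epsilon}\eta)$; then $\pi(\EE_1^-)$ is forced to equal $\pi$ by Theorem \ref{thm characterizatioin of supercuspidal}, since the associated local Arthur parameter has the correct diagonal restriction and $\pi(\EE_1^-)$ is supercuspidal with the same $L$-parameter as $\pi$. For $\EE_2^- = dual(\EE_1^-)$, Theorem \ref{thm Aubert-Zelevinsky dual formula} and the fact that supercuspidal representations are self-Aubert-Zelevinsky-dual give $\pi(\EE_2^-) \cong \widehat{\pi} \cong \pi$. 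For $\EE_3^-$, the main point is to verify that $ui_{j,n}$ is applicable of type 3' on $\EE_1^-$: this requires $l_j = l_n = 0$ (the former from the supercuspidal sign constraints on $\pi^-$ applied to $\EE^- \cup \{([0,0]_{\rho^*},0,\eta^*)\}$, the latter by construction) together with the extreme sign condition $(-1)^{A_j - B_j}\eta_j \eta_n = -1$, which follows from Lemma \ref{lem l_n=0} applied to $\EE^- \cup \{([0,0]_{\rho^*},0,\eta^*)\} \in E_{\pi^-}$. Then Proposition \ref{prop basic operators} yields $\pi(\EE_3^-) \cong \pi(\EE_1^-) \cong \pi$.

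Next, I will prove injectivity. The three types are separated by structural invariants of $(\EE_i^-)_\rho$: $\EE_1^-$ is characterized by having a row $([A,A]_\rho, 0, *)$ at the top; $\EE_2^- = dual(\EE_1^-)$ has a dual characterization (its dual has such a row); $\EE_3^-$ contains a unique row with $A_r = A$ satisfying $B_r < A$ or $l_r > 0$, and has one fewer $\rho$-row than $\EE_1^-$ by the type 3' deletion. Lemma \ref{lem l_n=0} rigidly controls the signs of these rows, ruling out accidental coincidences between the three types. Once $i$ is identified, $\EE^-$ is recovered from $\EE_i^-$ by the inverse operation: remove the top row (for $i=1$), apply $dual$ and remove (for $i=2$), or un-apply $ui_{j,n}$ of type 3' via Corollary \ref{cor split} and then remove (for $i=3$).

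For surjectivity, I argue by induction on $N$. Given $\EE \in E_\pi$ with $\EE_\rho$ containing some row with $A_r = A := \max\{A_i\}$ (guaranteed by Theorem \ref{thm max A}), I distinguish three exhaustive cases: (A) $\EE_\rho$ contains a row $([A,A]_\rho, 0, *)$; (B) Case (A) fails but holds for $dual(\EE)$; (C) neither, in which case the unique row attaining $A_r = A$ can be split via Corollary \ref{cor split} (equivalently, $ui^{-1}$ of type 3') to produce an $\EE_1^-$-form. In Case (A), removing the top row from $\EE$ and adjoining $\{([0,0]_{\rho^*}, 0, \eta^*)\}$ for suitable $\eta^*$ produces an element of $E_{\pi^-}$, where the sign compatibility follows from Lemma \ref{lem l_n=0} and the supercuspidal sign rule of Theorem \ref{thm characterizatioin of supercuspidal}; the induction hypothesis then identifies $\EE$ as $\EE_1^-$ for some $\EE^- \in E_\pi^-$. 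Cases (B) and (C) reduce to Case (A) via $dual$ and $ui$ respectively. The cardinality formula then follows: in the base case $N = 0$ (all $A_i \in \{0, 1/2\}$), $|E_\pi| = 2^{n_2}$ by direct enumeration as in the text; inductively, $n_2$ is preserved and $\sum \lfloor A_j^- \rfloor = N - 1$ (one chosen $A_i$ drops by one, while the new $\rho^*$ contributes $\lfloor 0 \rfloor = 0$), so $|E_\pi| = 3 |E_{\pi^-}| = 3 \cdot 2^{n_2} 3^{N-1} = 2^{n_2} 3^N$.

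The main obstacle will be the surjectivity step, specifically showing that Cases (A)--(C) are genuinely exhaustive and mutually exclusive. The delicate point is in Case (C), where one must locate the hidden $ui$-of-type-3' structure using only the sign and $l$-value data of $\EE_\rho$; the combination of Lemma \ref{lem l_n=0} with the supercuspidal sign constraints from Theorem \ref{thm characterizatioin of supercuspidal} should make this possible.
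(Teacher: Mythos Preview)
Your proposal is correct and takes essentially the same approach as the paper. The paper's proof is more streamlined: well-definedness is handled in the setup before the theorem statement, and the proof itself establishes surjectivity via a three-case split on the value of $B_j$ for the unique index $j$ with $A_j = A$ (namely $B_j = A$, $B_j = -A$, or $-A < B_j < A$), then notes that this same argument yields the inverse map and hence injectivity. Your cases (A), (B), (C) are exactly this trichotomy rephrased, and your use of Lemma~\ref{lem l_n=0} in Case~(C) to obtain $l=0$ before invoking Corollary~\ref{cor split} matches the paper's argument.

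One small point: your justification that $\pi(\EE_1^-) \cong \pi$ via ``$\pi(\EE_1^-)$ is supercuspidal with the same $L$-parameter as $\pi$'' is slightly circular, since supercuspidality of $\pi(\EE_1^-)$ is not a priori known (Corollary~\ref{cor cuspidal shape}, which would give this, comes later). The paper instead treats this as clear from the construction: since $\pi(\EE^- \cup \{([0,0]_{\rho^\ast},0,\eta^\ast)\}) \cong \pi(\EE_0^- \cup \{([0,0]_{\rho^\ast},0,\eta^\ast)\})$, one appends the row $([A,A]_\rho,0,(-1)^{A-\epsilon}\eta)$ to both sides and uses Lemma~\ref{lemma far away} and Corollary~\ref{cor shift add} to preserve the isomorphism; on the $\EE_0^-$ side this visibly recovers $\EE_0$ via a type 3' union-intersection.
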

\begin{proof}
We first show the map is surjective. Suppose $$\EE=\EE^{\rho} \cup \{([A_i,B_i]_{\rho},l_i,\eta_i)\}_{i \in (I_{\rho, >})} \in E_{\pi}.$$ Then by Theorem \ref{thm max A}, there exists one and only one $j \in I_{\rho}$ such that $A_j=A$. Since $A_j \geq B_j \geq  -A_j$, we separate into the following three cases.
\begin{enumerate}
    \item [\textbf{Case 1:}] $A_j=B_j$. Then $\EE= \EE_1^{-}$, where $\EE^{-}$ is defined by removing the $j$-th row of $\EE_{\rho}$. It is not hard to check $\EE^{-} \in E_{\pi}^{-}$.
    \item [\textbf{Case 2:}] $-A_j=B_j$. Then $dual(\EE)= \EE^{-}_1$ for some $\EE^{-} \in E_{\pi}^{-}$ by Case 1, and hence $ \EE= \EE^{-}_{2}$.
    \item [\textbf{Case 3:}]  $-A_j <B_j<A_j $. Identify $(I_{\rho},>)$ with the set $ \{1 ,\ldots ,n\}$ where $1 < \cdots <n$. Then we define a new admissible order $\gg$ on $I_{\rho}$ by
    \[ 1 \ll \cdots \ll j-1 \ll j+1 \ll \cdots \ll n \ll j. \]
    Write $\EE_{\rho,\gg}= \{ ([A_i,B_i]_{\rho}, l_i',\eta_i')\}_{i \in (I_{\rho},\gg)}$
     By Lemma \ref{lem l_n=0}, we have $l_r'=0$. Then applying Corollary \ref{cor split}, $ui_j^{-1}$ is applicable on $\EE_{\rho,\gg}$, and hence $\EE= \EE_{3}^{-}$ for some $\EE^{-} \in E_{\pi}^{-}$.
\end{enumerate}
This argument also gives the inverse of the map, which completes the proof of the theorem.
\end{proof}

We remark that the size $|E_{\pi}|$ also follows from M{\oe}glin's result (Theorem \ref{thm GGP20}).

The following definition rephrases the condition that $\psi^{\Delta}$ is discrete and without gaps. 
\begin{defn}
Let $S_{\rho}= \{ [A_i,B_i]_{\rho}\}_{i=1}^n$ be a finite multi-set of segments with the same $\rho$. We say that $S_{\rho}$ is of cuspidal shape if 
\begin{enumerate}
    \item [$\oldbullet$] $\{[ A_i, |B_i| ]_{\rho}\}_{i=1}^n$ is disjoint, and 
    \item [$\oldbullet$] if $A= \max\{ A_1,\ldots,A_n \}$, we have 
\[ \cup_{i=1}^{n} [A_i, |B_i|]_{\rho}= [A, B]_{\rho}  \]
for $B=0 $ or $1/2$.
\end{enumerate}
For a general finite multi-set of segments $S$, we decompose it according to supercuspidal representation $\rho$ as $S= \cup_{\rho} S_{\rho}$. Then we say $S$ is of cuspidal shape if every $S_{\rho}$ is of cuspidal shape.

We say a local Arthur parameter $\psi$ (resp, an extended multi-segment $\EE$) is of cuspidal shape if $\supp(\psi)$ (resp. $\supp(\EE)$) is of cuspidal shape.
\end{defn}

Let $\pi$ be a supercuspidal representation. By Theorem \ref{thm surjection cuspidal}, every extended multi-segment in $E_{\pi}$
is of cuspidal shape. Our next goal is to determine which extended multi-segment of cuspdial shape gives a supercuspidal representation. We define a new condition on an admissible order. It is useful to detect whether $\pi(\EE)$ is tempered or supercuspidal.
\begin{defn}
We say an admissible order $\gg$ of $I_{\rho}$ satisfies ($P''$) if for any $i,j \in I_{\rho}$,
\[ \begin{cases} A_i> A_j, \text{ or}  \\ A_i=A_j, B_i>B_j  \end{cases}\Longrightarrow i\gg j.  \]
We denote $\EE' \in \Rep^{(P'')}$ if there exists an $\EE \in \Rep$ such that $(\EE')_{\rho}= (\EE_\rho)_{\gg_{\rho}}$ for a collection of admissible order $\gg_{\rho}$ on $I_{\rho}$ satisfying $(P'')$. Finally, for $\EE \in \Rep$, we denote $\EE^{(P'')}$ the unique extended multi-segment in $\Rep^{(P'')}$ obtained from $\EE$ by row exchange. We define $ \EE_{\rho}^{(P'')}= (\EE^{(P'')})_{\rho}$. 
\end{defn}

The following corollary gives a necessary and sufficient condition on $\EE \in \Rep$ of cuspidal shape such that $\pi(\EE)$ is supercuspidal.
\begin{cor}\label{cor cuspidal shape}
Suppose $\EE \in \Rep$ is of cuspidal shape. As above, we write 
\[\EE^{(P'')}= \cup_{\rho}\{ ([A_i,B_i]_{\rho},l_i,\eta_i)\}_{i \in (I_{\rho},>)}.\]
Denote $n_{\rho}$ the maximal element in $(I_{\rho},>)$, $A_{\rho}:= \max\{ A_i \ | \ i \in I_{\rho}\}$ and take $\epsilon_{\rho} \in \{0,1/2\}$ such that $ A_{\rho}+\epsilon_{\rho} \in \Z$. Then 
\begin{enumerate}
    \item [(i)] The followings are equivalent.
    \begin{enumerate}
        \item [(a)] $\pi(\EE^{\rho} \cup \EE_{\rho})\cong \pi(\EE^{\rho} \cup \{ ([A_{\rho},\epsilon_{\rho}]_{\rho},0,\eta) \}) $ for some $\eta \in \{\pm 1\}$. We require $\eta =-1$ if $\epsilon_{\rho}=1/2$.
        \item [(b)] $\EE_{\rho}^{(P'')}=\{ ([A_i,B_i]_{\rho},l_i,\eta_i)\}_{i \in (I_{\rho},>)}$ satisfies the following conditions: For all $i,j \in I_{\rho}$,
        \begin{enumerate}
    \item [(1)] $l_i=0$, 
    \item [(2)] if $B_{j}=A_{i}+1$, then $(-1)^{A_{i}-B_i}\eta_{i}\eta_{j}=-1$,
    \item [(3)] if $B_i=1/2$, then $\eta_i=-1$.
\end{enumerate}
    \end{enumerate}
In this case, $\eta= (-1)^{B_{n_{\rho}}-\epsilon} \eta_{n_{\rho}}$. We say $\EE_{\rho}$ satisfies the \emph{cuspidal condition} if Condition (b) holds.
\item [(ii)] $\pi(\EE)$ is supercuspidal if and only if $\EE_{\rho}$ satisfies the cuspidal condition for all $\rho$. In this case, we have $\pi(\EE)= \pi(\phi,\varepsilon)$ where 
\begin{enumerate}
    \item [$\oldbullet$] $\phi= \sum_{\rho} \rho \otimes S_{ 2 A_{\rho} +1}$,
    \item [$\oldbullet$] if $ \rho \otimes S_{2} \subset \phi$, then $\varepsilon(\rho \otimes S_{2})=-1$,
    \item [$\oldbullet$] if $\rho \otimes S_{1} \subset \phi$, then $\varepsilon(\rho \otimes S_1)=(-1)^{B_{n_{\rho}}} \eta_{n_{\rho}}$.
\end{enumerate}
The rest of the values of $\varepsilon$ are determined by Theorem \ref{thm characterizatioin of supercuspidal}.
\end{enumerate}
\end{cor}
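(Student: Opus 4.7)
The plan is to derive (ii) from (i) combined with Theorem \ref{thm characterizatioin of supercuspidal}, so (i) is the main content. I would prove (i) by induction on $n_\rho$, exploiting cuspidal shape to apply union-intersections of type 3' repeatedly.

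For (b)$\Rightarrow$(a): in $(P'')$ order, cuspidal shape forces $A_1<\cdots<A_{n_\rho}=A_\rho$ and $|B_{i+1}|=A_i+1$ for every $i$. After first applying $dual$ (via Theorem \ref{thm Aubert-Zelevinsky dual formula}) where necessary to convert rows with $B_i<0$ into rows with $B_i>0$, I may assume $B_{i+1}=A_i+1$ throughout. Then conditions (1) and (2) of (b) are precisely the hypotheses for $ui_{i,i+1}$ of type 3' in Definition \ref{ui def}: one has $B_{i+1}+l_{i+1}=A_i+1=A_i-l_i+1$ and $(-1)^{A_i-B_i}\eta_i\eta_{i+1}=-1$. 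Applying these union-intersections successively from $i=n_\rho-1$ down to $i=1$ collapses $\EE_\rho$ to a single tempered row $([A_\rho,B_1]_\rho,0,\eta_1)$ with $|B_1|=\epsilon_\rho$; condition (3) supplies the required sign $\eta_1=-1$ when $\epsilon_\rho=1/2$. Since every step preserves the representation by Proposition \ref{prop basic operators}, and tracking the sign changes through the $dual$s and row exchanges yields $\eta=(-1)^{B_{n_\rho}-\epsilon_\rho}\eta_{n_\rho}$, (a) follows.

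For the converse (a)$\Rightarrow$(b), assume the isomorphism in (a). The target's single row has $l=0$, so Lemma \ref{lem l_n=0} applied to the top row of $\EE_\rho^{(P'')}$ forces $l_{n_\rho}=0$ and pins down $\eta_{n_\rho}$ in terms of $\eta$. I then peel off this top row: after a uniform shift to arrange positivity, Corollary \ref{cor split} lets me write the single-row target as $ui_{n_\rho-1,n_\rho}^{-1}$ of type 3' applied to a cuspidal-shape multi-segment with one extra row of smaller $A$-value, while Lemma \ref{lem shift}(iv)--(v) realizes both sides as the unique subrepresentation of $Z_\rho[B_{n_\rho}+1,A_{n_\rho}+1]\rtimes(\cdot)$; cancelling this socle reduces the claim to cuspidal-shape multi-segments with $n_\rho-1$ rows whose maximal $A$-value is $A_{n_\rho-1}$. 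The inductive hypothesis delivers conditions (1)--(3) for $i<n_\rho$, and conditions (2)--(3) at $i=n_\rho$ come out of the sign bookkeeping from Lemma \ref{lem l_n=0}. The base case $n_\rho=1$ is immediate from Lemma \ref{lem Moeglin}: matching supports and the fact that both sides give a nonzero representation force the two extended multi-segments to coincide up to the sign conventions.

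Part (ii) is then quick: if each $\EE_\rho$ satisfies the cuspidal condition, then (i)(a) together with Lemma \ref{lemma far away}(i) yields $\pi(\EE)\cong\pi(\EE^*)$, where $\EE^*=\bigcup_\rho\{([A_\rho,\epsilon_\rho]_\rho,0,\eta_\rho)\}$ is tempered with $\eta_\rho=(-1)^{B_{n_\rho}-\epsilon_\rho}\eta_{n_\rho}$; hence $\pi(\EE)=\pi(\phi,\varepsilon)$ with $\phi=\bigoplus_\rho\rho\otimes S_{2A_\rho+1}$ and $\varepsilon(\rho\otimes S_{2A_\rho+1})=\eta_\rho$. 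Cuspidal shape makes $\phi$ discrete and without gaps, and condition (3) supplies $\varepsilon(\rho\otimes S_2)=-1$ whenever $A_\rho=1/2$, so Theorem \ref{thm characterizatioin of supercuspidal} delivers supercuspidality. Conversely, supercuspidality of $\pi(\EE)$ constrains its $L$-data to the asserted form, and (i) applied $\rho$ by $\rho$ recovers the cuspidal condition. The main obstacle I foresee is the negative-$B$ bookkeeping: when some $B_i<0$, the $(P'')$ order need not satisfy $(P')$, so $\EE_\rho^{(P'')}$ is a priori only a symbol; reducing cleanly to the non-negative case via $dual$ and verifying compatibility with the $\alpha_i,\beta_i$-dependent sign rules of Definition \ref{dual segment} is where I expect most of the technical work to sit.
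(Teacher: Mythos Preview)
Your overall shape is right—(ii) from (i), and (i) by reduction via $ui$ of type $3'$—and your treatment of (ii) matches the paper's. The gap is precisely the one you flag at the end: the negative-$B_i$ rows are not merely bookkeeping, they are the actual content.

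For (b)$\Rightarrow$(a), your proposed fix ``apply $dual$ to convert rows with $B_i<0$ into rows with $B_i>0$'' does not work as stated: $dual$ negates every $B_i$ at once, so a cuspidal-shape multi-segment with mixed signs stays mixed after $dual$. The paper instead splits into two cases inside the induction on $n$. If some adjacent pair in the $(P'')$ order has $B_j=A_i+1$, conditions (1) and (2) make $ui_{i,j}$ of type $3'$ applicable and one reduces $n$ directly. If no such pair exists, cuspidal shape forces $B_{i+1}=-A_i-1$ for every adjacent $i<i+1$; then condition (1) implies $add_i^{-1}(\EE_\rho)$ violates Proposition~\ref{prop positive non-vanishing}(i) for every $i$, so via Lemma~\ref{lem identities}(ii) and Theorem~\ref{thm Aubert-Zelevinsky dual formula} the same holds for $sh_i^{-1}(dual(\EE_\rho))$, forcing $ui_{i+1,i}$ of type $3'$ to be applicable on $dual(\EE_\rho)$ for every $i$. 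This collapses $dual(\EE_\rho)^{min}$ to a single row, and one more $dual$ (or $dual_k$ in the half-integer case) gives the target.

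For (a)$\Rightarrow$(b), your peeling-by-splitting approach hits the same wall: when $B_{n_\rho}<0$ the top row of $\EE_\rho$ has support $[A_\rho,B_{n_\rho}]_\rho\supsetneq[A_\rho,\epsilon_\rho]_\rho$, so no split of the single-row target via Corollary~\ref{cor split} can produce a matching piece, and uniform shifting does not change this containment. (Even the base case $n_\rho=1$ with $B_1=-1/2$ is not covered by Lemma~\ref{lem Moeglin}, since the supports differ.) The paper sidesteps this by inducting on $A_\rho$ rather than on $n_\rho$: Theorem~\ref{thm surjection cuspidal} says every such $\EE_\rho$ is either $\FF+\{([A_\rho,A_\rho]_\rho,0,\ast)\}$, the dual of such, or a $ui_{j,n}$ applied to such, where $\FF$ satisfies (a) with $A_\rho-1$ in place of $A_\rho$. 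The induction hypothesis gives condition (b) for $\FF$, and a direct check in each of the three cases propagates it to $\EE_\rho$. The sign formula $\eta=(-1)^{B_{n_\rho}-\epsilon_\rho}\eta_{n_\rho}$ then comes out of Lemma~\ref{lem l_n=0} without further tracking.
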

\begin{proof}
For Part (i), we first show that Condition (a) implies Condition (b). We apply induction on $A_{\rho}$. The base case that $A_{\rho}= \epsilon$ is obvious.

If $A_{\rho} > \epsilon$, Theorem \ref{thm surjection cuspidal} shows that there exists an $\FF$, which is obtained from $\{([A_{\rho}-1,\epsilon]_{\rho},0,\eta)\}$ by a sequence of basic operators and partial dual, such that $ \EE_{\rho}$ is in one of the following forms
\begin{enumerate}
    \item [$\oldbullet$] $\FF + \{([A_{\rho}, A_{\rho}]_{\rho},0, (-1)^{A_{\rho}-\epsilon}\eta)\}$,
    \item [$\oldbullet$] $dual(\FF + \{([A_{\rho}, A_{\rho}]_{\rho},0, (-1)^{A_{\rho}-\epsilon}\eta)\})$,
    \item [$\oldbullet$] $ ui_{j,n}(\FF + \{([A_{\rho}, A_{\rho}]_{\rho},0, (-1)^{A_{\rho}-\epsilon}\eta)\}) $ where $n$ is the index of the last row $\{([A_{\rho}, A_{\rho}]_{\rho},0, (-1)^{A_{\rho}-\epsilon}\eta)\}$ and $ui_{i,n}$ is applicable of type 3'.
\end{enumerate}
The induction hypothesis shows $\FF$ satisfies the cuspidal condition, and hence $\EE_{\rho}$ also satisfies the cuspidal condition. This completes the proof of this direction.

Next, we show Condition (b) implies Condition (a). We apply induction on $n:= |I_{\rho}|$. There is nothing to prove if $n=1$.

Assume $n>1$. If there exist $i,j \in I_{\rho}$ satisfying the Condition (2), then $(i,j,>)$ is an adjacent pair and $ui_{i,j}$ is applicable on $\EE_{\rho}$ of type 3' in Definition \ref{def ui}, and we reduce $n$ by 1. If Condition (2) is empty on $\EE_{\rho}$, then $\EE_{\rho}$ being of cuspidal shape implies $B_{i+1}=-A_i-1$ for any adjacent $i<i+1$. Now from Condition (1), for all $i$, $add_{i}^{-1}(\EE_{\rho})$ doesn't satisfy the non-vanishing conditions in Proposition \ref{prop positive non-vanishing}(i). Then Theorem \ref{thm Aubert-Zelevinsky dual formula} implies for all $i$, $ dual( add_{i}^{-1}(\EE_{\rho}))= sh^{-1}_{i}(dual(\EE_{\rho}))$ doesn't satisfy the non-vanishing conditions, either. Therefore, $ui_{i+1,i}$ is applicable on $dual(\EE_{\rho})$ of type 3' for all $i \in I_{\rho}$. It follows that $(dual(\EE_{\rho}))^{min}=([A, \epsilon]_{\rho},0,\eta)$ with $ \epsilon \in \{0, \pm 1/2\}$ so that $ A+\epsilon \in \Z$. Finally, Condition (3) shows that $\eta=-1$ if $\epsilon=1/2$, and hence $dual((dual(\EE_{\rho}))^{min})$ or its partial dual is of the form we want. This completes the proof of this direction.

Furthermore, Lemma \ref{lem l_n=0} implies 
\[ (-1)^{A_{\rho}- \epsilon_{\rho}}\eta=(-1)^{A_{n_{\rho}}-B_{n_{\rho}}}\eta_{n_{\rho}},\]
and hence the last assertion follows. This completes the proof of Part (i).

For Part (ii), $\pi(\EE)$ is supercuspidal if and only if $\pi(\EE) \cong \pi(\EE_0)$ for an $\EE_0$ of the form \eqref{eq EE_0}. Applying Part (i) for each $\rho$ and Lemma \ref{lemma far away}(i), we obtain the equivalence. The description of $\pi(\EE)$ is given by the last assertion of Part (i).
This completes the proof of the corollary.
\end{proof}

Suppose $\EE_{\rho}$ is of cuspidal shape and satisfies the cuspidal condition. Denote $\EE_{\rho}^{(P'')}=\{([A_i,B_i]_{\rho},l_i,\eta_i)\}_{i \in (I_{\rho,>})}$ and let $1$ denote the minimal element of $(I_{\rho},>)$. If we fix $\eta_1$, then all the other $\eta_i$ are determined by Corollary \ref{cor cuspidal shape}(i)(b)(1) and Proposition \ref{prop positive non-vanishing}(i) or Corollary \ref{cor cuspidal shape}(i)(b)(2). Moreover, Corollary \ref{cor cuspidal shape}(i)(b)(3) implies that we have no choice for $\eta_1$ in half integer cases. As a consequence, we can count the number of supercuspidal representations inside a local Arthur packet by taking the sign condition \eqref{eq sign condition} into account.

\begin{thm}\label{thm cuspidal count}
Let $\psi \in \Psi^+(G_n)$.
\begin{enumerate}
    \item [1.] $\Pi_{\psi}$ contains a supercuspidal representation only if $\psi$ is of good parity and of cuspidal shape.
    \item [2.] Suppose $\psi$ is of good parity and of cuspidal shape. Decompose $$\psi= \sum_{i=1}^{n_1} \psi_i+ \sum_{i=n_1+1}^{n_1+n_2} \psi_i,$$ where
    \[ \psi_i= \rho_i \otimes \left( \sum_{j\in I_{\rho}} S_{a_{i,j}} \otimes S_{b_{i,j}} \right), \]
    such that
    \begin{enumerate}
        \item [$\oldbullet$] each $\rho_i$ is distinct, and
        \item [ $\oldbullet$] $a_{i,j} + b_{i,j} \equiv \begin{cases}0 \mod 2 & \text{ if } 1 \leq i\leq n_1,\\ 1 \mod 2 & \text{ if } n_1+1 \leq i\leq n_1+n_2. \end{cases}  $
    \end{enumerate}
    Denote $d_i= \left\lfloor\max_{j}\left\{\frac{a_{i,j}+b_{i,j}}{2} \right\}\right\rfloor$ and
    \[ \epsilon_i= \begin{cases}
    1 &\text{ if } \begin{cases}
    d_i \equiv 2 \mod 4 &\text{ if } 1 \leq i \leq n_1,\\
    d_i \equiv 1 \text{ or }2 \mod 4 &\text{ if } n_1+1 \leq i \leq n_1+n_2,
    \end{cases}  \\
    0& \text{ otherwise.}
    \end{cases}\]
     Then the number of supercuspidal representations in $\Pi_{\psi}$ is given by
    \[ \begin{cases}
    2^{n_1-1} \ &\text{ if } d_i \equiv 1 \mod 2 \text{ for some } 1 \leq i \leq n_1, \\
    2^{n_1}\ &\text{ if }d_i\equiv 0 \mod 2 \text{ for any }1 \leq i \leq n_1, \text{ and } \sum_{i=1}^{n_1+n_2} \epsilon_i \equiv 0 \mod 2, \\
    0\ &\text{ otherwise.}
    \end{cases}\]
    Moreover, the $L$-data of these supercuspidal representations can be listed as in Corollary \ref{cor cuspidal shape}.
\end{enumerate}
\end{thm}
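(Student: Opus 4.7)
The plan is to handle the two parts separately: Part 1 follows quickly from results already cited, while Part 2 is an explicit enumeration based on Corollary \ref{cor cuspidal shape}.

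For Part 1, suppose $\Pi_\psi$ contains a supercuspidal $\pi$. Then $\pi$ is tempered, so Theorem \ref{thm GGP20} gives $\psi^\Delta \cong \phi_\pi$, the $L$-parameter of $\pi$. By Theorem \ref{thm characterizatioin of supercuspidal}, $\phi_\pi$ is self-dual, discrete, and without gaps. Expanding
\[
\psi^\Delta \;=\; \bigoplus_\rho\bigoplus_i\bigoplus_{j=|B_i|}^{A_i}\rho\otimes S_{2j+1}
\]
and translating back: self-duality forces $\psi$ to be of good parity; discreteness forces the segments $\{[A_i,|B_i|]_\rho\}_i$ to be pairwise disjoint for each $\rho$; and the ``without gaps'' property, combined with the fact that any $\varepsilon \in \widehat{\mathcal{S}}_{\phi_\pi}$ realizing the sign conditions of Theorem \ref{thm characterizatioin of supercuspidal} requires the smallest block to be $\rho\otimes S_1$ or $\rho\otimes S_2$, forces these segments to cover $[A_\rho,B]_\rho$ with $B\in\{0,1/2\}$. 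This is precisely the cuspidal shape condition.

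For Part 2, Corollary \ref{cor cuspidal shape}(ii) identifies the supercuspidals in $\Pi_\psi$ with those extended multi-segments $\EE$ in which every $\EE_\rho$ satisfies the cuspidal condition. I fix the $(P'')$-admissible ordering on each $\rho$. Condition (1) forces $l_j=0$, condition (2) propagates all row signs by $\eta_{j+1}=(-1)^{b_j}\eta_j$ (so $\EE_\rho$ is determined by the bottom sign $\eta_{1,\rho}$), and condition (3) pins $\eta_{1,\rho_i}=-1$ for each of the $n_2$ half-integer $\rho_i$ while leaving both signs of $\eta_{1,\rho_i}$ admissible for each of the $n_1$ integer $\rho_i$. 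This produces exactly $2^{n_1}$ candidates from the cuspidal condition alone. I then impose \eqref{eq sign condition}: using the key identity $\sum_j b_j = A_\rho - B_{\min,\rho} + 1 = b_i+1$ (with $b_i$ the invariant of the theorem attached to the index $i$ of $\rho$), substituting the propagated signs gives
\[
P_\rho \;:=\; \prod_j (-1)^{[b_j/2]}\,\eta_j^{b_j} \;=\; \eta_{1,\rho}^{\,b_i+1}\cdot X_\rho,
\]
where $X_\rho \in \{\pm 1\}$ depends only on the widths of the cuspidal-shape partition of $\supp(\psi)_\rho$. The global condition $\prod_\rho P_\rho = 1$ becomes a single linear equation over $\{\pm 1\}$ in the free signs $\{\eta_{1,\rho_i}\}_{i\leq n_1}$. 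Whether this equation has a free variable is controlled by the parities of the $b_i$'s, triggering the three cases of the theorem: a nondegenerate equation admits exactly half of the $2^{n_1}$ configurations as solutions, yielding $2^{n_1-1}$; a trivial equation either holds identically or fails, yielding $2^{n_1}$ or $0$ according to the value of $\prod_\rho X_\rho$ multiplied by the fixed half-integer contributions $(-1)^{b_i+1}$.

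The main obstacle will be the explicit combinatorial evaluation of $X_\rho$ and the identification of the total fixed sign with $(-1)^{\sum_i \epsilon_i}$. Writing $X_\rho = (-1)^{\sum_j [b_j/2] + \sum_{k<j} b_kb_j}$ and exploiting the fact that the widths $\{b_j\}$ are the prescribed widths of the consecutive segments partitioning $[A_\rho,B_{\min,\rho}]$ in $\supp(\psi)$, I will carry out the bookkeeping of both sums modulo $2$ in terms of how many $b_j$ are even versus odd. The case distinction between $b_i \bmod 4$ built into the definition of $\epsilon_i$ (distinct for the integer and half-integer cases, since the half-integer case carries the additional forced $\eta_{1,\rho_i}=-1$) reflects this analysis. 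Once the identity is verified, the count formula in Part 2 follows and the $L$-data of each supercuspidal is read off directly from Corollary \ref{cor cuspidal shape}(ii).
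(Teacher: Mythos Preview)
Your high-level strategy coincides with the paper's: Part 1 follows from Theorems \ref{thm characterizatioin of supercuspidal} and \ref{thm GGP20}, and Part 2 is a count based on Corollary \ref{cor cuspidal shape} (each $\EE_\rho$ satisfying the cuspidal condition is determined by the bottom sign $\eta_{1,\rho}$, the half-integer signs are forced, and one imposes the global sign condition \eqref{eq sign condition}). The paper in fact stops at exactly this point and does not write out the combinatorics of the count.

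Where your proposal runs into trouble is in the explicit bookkeeping you attempt beyond that. Two concrete errors:
\begin{itemize}
\item The propagation rule $\eta_{j+1}=(-1)^{b_j}\eta_j$ comes from condition (2) of Corollary \ref{cor cuspidal shape}(i)(b), which only applies when $B_{j+1}=A_j+1$. In cuspidal shape one may equally have $B_{j+1}=-(A_j+1)$, in which case $[A_j,B_j]\subset[A_{j+1},B_{j+1}]$ and Proposition \ref{prop positive non-vanishing}(i)(2) with $l=0$ forces $(-1)^{A_j-B_j}\eta_j\eta_{j+1}=1$, i.e.\ $\eta_{j+1}=(-1)^{b_j-1}\eta_j$. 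So the rule flips depending on the sign of $B_{j+1}$, and you cannot ignore this.
\item The ``key identity'' $\sum_j b_j = A_\rho - B_{\min,\rho}+1 = b_i+1$ is false on both counts. For instance with $\supp(\EE_\rho)=\{[1,0],[3,-2]\}$ one has $\sum_j b_j=2+6=8$, while $A_\rho-B_{\min}+1=6$ and $b_i+1=\lfloor\max(0,-2)\rfloor+1=1$. Even the parities disagree. The correct parity of $\sum_j b_j$ is governed by $A_\rho$ (in the integer case one has $\sum_j b_j \equiv A_\rho+1\bmod 2$, since the negative-$B$ rows contribute even corrections), not by $b_i$.
\end{itemize}
These two errors interact: with the wrong propagation rule your $X_\rho$ is off by a sign depending on how many rows have negative $B$, and with the wrong exponent for $\eta_{1,\rho}$ you misidentify which variables genuinely appear in the sign equation. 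A small test case such as $\supp(\EE_\rho)=\{[0,0],[3,-1]\}$ versus $\{[0,0],[3,1]\}$ already separates the correct and your claimed formulas. To salvage the argument you need to track both propagation rules and redo the parity analysis of $P_\rho$ accordingly; only then will the case split on $b_i\bmod 2$ and the definition of $\epsilon_i$ emerge.
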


\subsection{Non-negative local Arthur packets containing tempered representations}

In this subsection, we classify all non-negative local Arthur packets (see Definition \ref{def non-neg arthur para} below) containing tempered representations, more precisely, we classify all non-negative extended multi-segments $\EE$ such that $\pi(\EE)$ is tempered. This is a precursor to the general case considered in the next section, with simpler statement and argument. 


We begin by defining non-negative local Arthur parameters and packets.


\begin{defn}\label{def non-neg arthur para}
Suppose $\psi$ is a local Arthur parameter of good parity.

\begin{enumerate}
    \item [(1)] If we write $\psi= \bigoplus_{i=1}^n \rho_i \otimes S_{a_i} \otimes S_{b_i}$, then $\psi$
    (resp. $\Pi_{\psi}$) 
    is said to be non-negative if $a_i \geq b_i$ for all $1\leq i \leq n$. Equivalently, $\psi$ is non-negative if any extended multi-segment $\EE$ such that $\supp(\EE)= \supp(\psi)$ is non-negative.
    \item [(2)] We define $\Omega(\psi):= \Omega(\EE)$ for any extended multi-segment $\EE$ such that $\supp(\EE)= \supp(\psi)$.
\end{enumerate}
\end{defn}
We remark that if $\psi_1, \psi_2$ are both non-negative, then 
\[ \Omega(\psi_1)= \Omega(\psi_2) \Longleftrightarrow \psi_1^{\Delta}= \psi_2^{\Delta}.\]

Suppose $\EE$ is non-negative, now we give a necessary condition on $\supp(\EE)$ such that $\pi(\EE)$ is tempered. 
\begin{defn}\label{def pos chain shape}
We define a multi-set of segments $S=\{ [A_i,B_i]_{\rho_i} \}_{i=1}^n$ with $B_i \geq 0$ is of chain shape if the following hold: 
\begin{enumerate}
    \item [(i)] For any $i,j$, $|[A_i,B_i]_{\rho_i} \cap [A_j,B_j]_{\rho_j}| \leq 1$.
    \item [(ii)] Suppose $A_i>B>B_i$ for some $1 \leq i\leq n$, then the multiplicity of $[B,B]_{\rho_i}$ in $S$ is even.   
\end{enumerate}
Suppose $\psi$ (resp. $\EE$) is a non-negative local Arthur parameter (resp. extended multi-segment). We say $\psi$ (resp. $\EE$) is of chain shape if $ \supp(\psi)$ (resp. $\supp(\EE)$) is of chain shape.
\end{defn}

\begin{exmp}\label{exmp chain shape}
Here are two examples of $\EE \in \Rep^{(P'')}$ of chain shape.
\begin{align*}
 \EE_1=& \bordermatrix{
 &1&2&3&4&5&6&7 \cr
&\oplus&&&&&&&\cr 
&\oplus&\ominus&\oplus&\ominus&&&\cr
&&&&\ominus&&&\cr
&&&&\ominus&&&\cr
&&&&\ominus &\oplus&\ominus&\oplus\cr
 }_{\rho},\\
 \EE_2=& \bordermatrix{
 &1&2&3&4&5&6&7 \cr
&&\ominus&&&&&\cr
&&\ominus&&&&&\cr
&\ominus&\oplus&\ominus&\oplus&&&&\cr 
&&&& &&\oplus&\cr
&&&& &&\oplus&\cr
&&&&\oplus&\ominus&\oplus&\ominus\cr
 }_{\rho}.   
\end{align*}

We have that
\begin{align*}
    \pi(\EE_1)&= \pi(1^{+},1^{+},2^{-},3^{+},4^{-},4^{-},4^{-},4^{-},5^{+},6^{-},7^{+} ),\\
    \pi(\EE_2)&= \pi(1^{-},2^{+},2^{+},2^{+},3^{-},4^{+},4^{+},5^{-},6^{+},6^{+},6^{+},7^{-} )
\end{align*}
are both tempered (see Corollary \ref{cor pos tempered} below).
\end{exmp}

Now we show that a non-negative extended multi-segment whose representation is tempered must be of chain shape.
\begin{prop}\label{prop pos chain shape} 
Suppose $\EE \in \Rep$ is non-negative and $\pi(\EE)$ is tempered. Then $\EE$ is of chain shape.
\end{prop}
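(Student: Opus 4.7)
\emph{Plan.} My plan is to pass through a tempered counterpart of $\EE$ using the main theorems of the paper, and then argue that chain shape is preserved along the corresponding chain of operators. First, since $\pi(\EE)$ is tempered, Theorem \ref{thm Arthur tempered} yields a unique tempered local Arthur parameter $\phi$ with $\pi(\EE) \in \Pi_{\phi}$, and by Theorem \ref{thm GGP20} we must have $\phi = \psi_{\EE}^{\Delta}$. Let $\EE_t$ be the tempered extended multi-segment with $\pi(\EE_t) = \pi(\EE)$ and $\supp(\EE_t) = \supp(\phi)$; all its rows are singletons $[A_r, A_r]_\rho$ with $A_r \geq 0$, so $\supp(\EE_t)$ is vacuously of chain shape. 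A preliminary observation, via Lemma \ref{lem $L$-data}(ii), is that since $\EE$ is non-negative, $\Omega(\EE_\rho)$ lies in the non-negative part of the real line and the symmetric difference $\Omega(\EE_\rho) \setminus \Omega(\pi(\EE))_\rho$ must be symmetric about $\rho|\cdot|^{-1/2}$, hence empty; thus $\Omega(\EE_\rho) = \Omega((\EE_t)_\rho)$ for every $\rho$.

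By Theorem \ref{thm integer} (integer case) or Theorem \ref{thm half integer} (half-integer case), $\EE_t$ is obtained from $\EE$ by a finite chain of basic operators---$R_k$, $ui_{i,j}$, $dual \circ ui_{i,j} \circ dual$, and their inverses---together with at most one partial dual $dual_k$ in the half-integer case. The plan is to invert this chain and recover $\EE$ from $\EE_t$ step by step, verifying at each step that the chain-shape property of the multi-set of segments is preserved. Row exchanges $R_k$ leave the multi-set untouched. The inverse of $ui_{i,j}$ is realized by $dual \circ ui_{j,i} \circ dual$ outside type 3' (Corollary \ref{cor ui inverse}) and by the splitting procedure of Corollary \ref{cor split} in type 3'. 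A case analysis along Definition \ref{def ui} shows that each inverse either rearranges endpoints of a pair of segments (Cases 1, 2) or splits a single segment into two disjoint adjacent pieces $[A', B_i]$ and $[A_i, A'+1]$ (Case 3'); the extreme-case equalities built into the applicability conditions---$A_{k+1} - l_{k+1} = A_k - l_k$, $B_{k+1} + l_{k+1} = B_k + l_k$, or $B_{k+1} + l_{k+1} = A_k - l_k + 1$---force pairwise intersections of the resulting segments to have size at most one, verifying (i). For (ii), the parity of singleton multiplicities interior to longer segments is preserved because Case 3' (the only case that changes the number of segments) creates or removes pairs of rows in a controlled way, while $R_k$, Cases 1 and 2, and $dual_k$ do not alter the interior-singleton multiplicities.

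The main obstacle will be the meticulous case-by-case verification that each inverse basic operator preserves chain shape, especially in Case 3' of $ui^{-1}$, where the splitting creates two segments whose endpoints $A'$ and $A' + 1$ are adjacent but disjoint---precisely the boundary behavior tolerated by (i). A further delicate point is the interaction with the partial dual in the half-integer case: $dual_k$ only affects a single row with $B_k = \pm 1/2$ and does not alter overlap structure, and by Lemma \ref{lem commutativity of partial dual} one can commute it past $ui$-type operators to localize its effect, but verifying that the reverse chain remains compatible with chain shape near the non-negativity boundary $B = 0$ requires careful tracking. Ensuring that the argument closes without hidden assumptions---in particular, that the invariant ``multiplicity of $[B,B]_\rho$ strictly interior to some segment is even" is transported all the way back from $\EE_t$ to $\EE$---is the main conceptual hurdle.
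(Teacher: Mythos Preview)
Your approach has a genuine gap. The chain of basic operators linking $\EE_t$ to $\EE$ furnished by Theorems \ref{thm integer} and \ref{thm half integer} is neither unique nor canonical, and the individual basic operators do \emph{not} preserve chain shape. You only analyze $ui_{i,j}^{-1}$ (via Corollaries \ref{cor ui inverse} and \ref{cor split}), but the basic operators include $ui_{i,j}$ in the \emph{forward} direction as well, and this can destroy chain shape: take $\FF$ with rows $[1,0]_\rho$ and $[2,1]_\rho$, which is of chain shape (pairwise intersection $\{1\}$, condition (ii) vacuous); after applying $ui_{1,2}$ one obtains rows $[2,0]_\rho$ and $[1,1]_\rho$, which violates condition (ii) since $2>1>0$ but $[1,1]_\rho$ has multiplicity $1$. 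So even if every intermediate extended multi-segment in the chain happened to be non-negative---which is itself not guaranteed, since $dual\circ ui_{i,j}\circ dual$ passes through the dual---your invariant would not propagate. Your description of $dual\circ ui_{i,j}\circ dual$ as merely ``rearranging endpoints'' also overlooks the sign flip $[A,B]\mapsto[A,-B]$ under $dual$, so the support-level effect is not what you claim.

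The paper's proof takes a completely different route: it is a direct induction on the number of rows of $\FF=\EE_\rho$. The key tool is Lemma \ref{lem l_n=0}, which compares $\FF$ against the tempered $\FF_{temp}$ to force $l_n=0$ on the top row (under the order $(P'')$), so that Corollary \ref{cor split} can split that row. One then uses Corollary \ref{cor shift add} to ``cancel'' matching tails of $\FF$ and $\FF_{temp}$ and reduce to fewer rows. The argument proceeds through five explicit cases depending on the relative positions of $A_n,B_n,A_{n-1}$, directly verifying conditions (i) and (ii) of Definition \ref{def pos chain shape}---in particular, the parity condition (ii) on interior singletons emerges from a careful count in Step 5 rather than from any operator-invariance principle.
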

\begin{proof}
We denote 
\[\FF=\EE_{\rho}=\{([A_i,B_i]_{\rho},l_i,\eta_i)\}_{i=1}^n\]
and check that it is of chain shape. We apply induction on $n$. There is nothing to prove when $n=1$. After applying several row exchanges, we assume $\FF= \FF^{(P'')}$.  Denote $A= \max\{A_i\ | \ i \in I_{\rho}\}$ and $k= \#\{i \in I_{\rho}\ | \ A_i =A\}.$ From Theorem \ref{thm Arthur tempered}, we can find a tempered extended multi-segment $\EE_{temp}$ such that $\pi(\EE_{temp}) \cong \pi(\EE)$. We denote $\FF_{temp}= (\EE_{temp})_{\rho}$.

Here is the key observation that will be used repeatedly in this proof. Suppose $ \FF= \FF_1 + \FF_2$, $\FF_{temp}= \FF_{1,temp} + \FF_{2,temp}$ with $\FF_{2}= \FF_{2,temp}$. Then we have $\FF_1+ \FF_2$ is of chain shape if and only if $\FF_1$ is of chain shape. On the other hand, by Corollary \ref{cor shift add}(i) we have
\[\pi(\EE^{\rho} \cup \FF_{1} \cup (sh^1(\FF_2))_{\rho^{\ast}}) \cong \pi(\EE^{\rho} \cup \FF_{1,temp} \cup (sh^1(\FF_{2,temp}))_{\rho^{\ast}}),\]
which are also tempered. By abuse of notation, we will write
\[ \pi(\EE^{\rho} \cup \FF_1) \cong \pi(\EE^{\rho} \cup \FF_{1,temp})\]
in this case, which should be understood that we include $ (sh^1(\FF_2))_{\rho^{\ast}}$ in $\EE^{\rho}$. In summary, we can cancel $\FF_2=\FF_{2,temp}$ in the argument.

 We separate the argument into following five  steps.
 
\textbf{Step 1.} Suppose $A_n=B_n$. Then $\FF$ is of chain shape by the key observation and induction hypothesis. 

\textbf{Step 2.} We check that $A_n>B_n$ implies  $A_{n-1}<A_n$ in this step.
    
Suppose the contrary, that is $A_{n-1}=A_n$. Lemma \ref{lem l_n=0} implies $l_n=0$ by comparing $\FF$ and $\FF_{temp}$, and hence we can use Corollary \ref{cor split} to split the last row of $\FF$. It becomes
\[ \{ ([A_i,B_i]_{\rho}, l_i, \eta_i)\}_{i=1}^{n-1} + \{([A_n-1,B_n]_{\rho}, 0, \eta_n)\}+\{([A_n,A_n]_{\rho}, 0, (-1)^{ A_n-B_n}\eta_n). \]
Then we exchange the $n$-th and the $(n-1)$-th row. It becomes $\FF_1+ \FF_2+\FF_3+\FF_4$ where
\begin{align*}
\FF_1&=\{([A_i,B_i]_{\rho},l_i, \eta_i)\}_{i=1}^{n-2},\\
    \FF_2&=\{([A_n-1,B_n]_{\rho},0, (-1)^{A_{n}-B_{n-1}}\eta_n)\},\\
    \FF_3&=\{([A_{n},B_{n-1}]_{\rho}, l_{n-1}', \eta_{n-1}')\},\\
    \FF_4&=\{([A_n,A_n]_{\rho}, 0, (-1)^{ A_n-B_n}\eta_n)\}.
\end{align*}

We first show that $l_{n-1}'=0$. Since $\FF_4$ is the same as the last row of $\FF_{temp}$, by the key observation, we may cancel then and hence Lemma \ref{lem l_n=0} implies $l_{n-1}'=0$.

Next, we check Proposition \ref{prop positive non-vanishing}(i) on $\FF_2+\FF_3+\FF_4$.  Proposition \ref{prop positive non-vanishing}(i)(2) for $\FF_2+\FF_3$ implies
\[(-1)^{A_n-1- B_n}(-1)^{A_{n}-B_{n-1}} \eta_n \eta_{n-1}'=1 .\]
However, then
\[ (-1)^{A_n-B_{n-1}} \eta_{n-1}' (-1)^{A_n-B_n}\eta_n=-1,\]
which contradicts to condition Proposition \ref{prop positive non-vanishing}(i)(3) for the $\FF_3+\FF_4$.

\textbf{Step 3.} In this step, we show that if $ A_{n-1} \leq B_n$, then $\FF$ is of chain shape.

In this case, $\FF$ is of chain shape if and only if $\FF_1= \{ ([A_i,B_i]_{\rho},l_i,\eta_i)\}_{i=1}^{n-1}$ is of chain shape. Lemma \ref{lem l_n=0} implies $l_n=0$, and hence we may apply Corollary \ref{cor split} successively to the last row of $\FF$. We obtain 
\[ \FF'= \FF_1 + \{([B_n+r,B_n+r]_{\rho},0, (-1)^r \eta_n)\}_{r=0}^{A_n-B_n}  \]
such that $\pi(\EE^{\rho} \cup \FF')\cong \pi(\EE^{\rho} \cup \FF_{temp})$. Then the key observation shows that $\pi(\EE^{\rho} \cup \FF_1)$ is also tempered, and hence induction hypothesis implies $\FF_1$ is of chain shape.


\textbf{Step 4.} Suppose $A_{n-1} >B_n$. We show that any row in $\FF$ of the form $([A_{n-1},B_i]_{\rho},l_i,\eta_i)$ satisfies $ B_i=A_{n-1}$ in this step.

Using Corollary \ref{cor split} again, we may replace the last row of $\FF$ by
\[ \{( [  A_{n-1}, B_n]_{\rho},0,\eta_n)\} +\{ ([A_{n-1}+r,A_{n-1}+r]_{\rho},0,\ast)\}_{r=1}^{ A_n-A_{n-1}}. \]
Then the key observation shows if we denote
\[ \FF_1 :=\{([A_i,B_i]_{\rho},l_i,\eta_i) \}_{i=1}^{n-1}+ \{([A_{n-1},B_n]_{\rho},0,\eta_n)\},\]
 then $\pi(\EE^{\rho} \cup \FF_1)$ is also tempered. Since $A_{n-1}>B_n$, after removing all rows of the form $ ([A_{n-1},A_{n-1}],0,\ast)$ in $\FF_1$, the rest of the rows should satisfy $A_i<A_{n-1}$ by step 2.

\textbf{Step 5.} Again suppose $A_{n-1}>B_n$. Let $s$ be the multiplicity of $\rho|\cdot|^{A_{n-1}}$ inside $\Omega(\FF)$. We  show that $s-1$ is even, and conclude that $\FF$ is of chain shape.

Looking at the $\FF_1$ defined in previous step, after row exchange, we obtain
\[ \FF_1':= \{ ([A_i,B_i]_{\rho},l_i,\eta_i)\}_{i=1}^{n-s} + \{([A_{n-1},B_n]_{\rho},l',\ast)\} + \{([A_{n-1},A_{n-1}]_{\rho},0,\ast)^{s-1}\} \]
where $A_i<A_{n-1}$ for $i \leq n-s$, and
\[ l'=\begin{cases}
1 &\text{ if }s-1 \text{ is odd,} \\
0 &\text{ if }s-1 \text{ is even.} 
\end{cases} \]
Finally, denote 
\[ \FF_2:= \{ ([A_i,B_i]_{\rho},l_i,\eta_i)\}_{i=1}^{n-s} + \{([A_{n-1},B_n]_{\rho},l',\ast)\}.\]
Applying the key observation, $\pi(\EE^{\rho} \cup \FF_2)$ is tempered, and hence Lemma \ref{lem l_n=0} shows $l'$ should be zero, so $s-1$ must be even. 

Recall
\[ \FF= \{ ([A_i,B_i]_{\rho},l_i,\eta_i)\}_{i=1}^{n-s} + \{([A_{n-1},A_{n-1}]_{\rho},0,\ast)^{s-1}\} + \{([A_n,B_n]_{\rho},0, \eta_n)\}.\]
As $s-1$ is even, $\FF$ is of chain shape if and only if $\FF_2$ is of chain shape. Since $\pi(\EE^{\rho}\cup \FF_2)$ is tempered and the number of rows of $\FF_2$ is less than $n$ ($s \geq 2$), $\FF_2$ is of chain shape by induction hypothesis.



The completes the proof of Proposition \ref{prop pos chain shape}. 
\end{proof}

Based on the proof of the previous proposition, given any non-negative extended multi-segment $\EE$, we can also give a sufficient condition for $\pi(\EE)$ being tempered.

\begin{cor} \label{cor pos tempered}
Suppose $\EE \in \Rep$ is a non-negative extended multi-segment of chain shape. Write $\EE^{(P'')}=\cup_{\rho} \{([A_i,B_i]_{\rho},l_i, \eta_i)\}_{i \in (I_{\rho},>)}$.
\begin{enumerate}
    \item [(i)] The followings are equivalent:
    \begin{enumerate}
        \item [(a)] $\pi(\EE^{\rho}\cup \EE_{\rho}) \cong \pi(\EE^{\rho} \cup \FF_{temp})$ for some tempered $\FF_{temp}$.
        \item [(b)] $\EE_{\rho}^{(P'')}= \{([A_i,B_i]_{\rho},l_i, \eta_i)\}_{i \in (I_{\rho},>)}$ satisfies $l_i=0$ for all $i \in I_{\rho}$.
    \end{enumerate}
    We say $\EE_{\rho}$ satisfies the \emph{non-negative tempered condition} if (b) holds.
    \item [(ii)] $\pi(\EE)$ is tempered if and only if $\EE_{\rho}$ satisfies the non-negative tempered condition for all $\rho$. In this case, let $\phi$ be the unique tempered local Arthur parameter with $\Omega(\phi)=\Omega(\EE)$. We have $\pi(\EE)= \pi(\phi,\varepsilon)$, where the value of $\varepsilon$ on $\rho\otimes S_{2C+1} \subset \phi$ is given by follows:
\begin{enumerate}
    \item [$\oldbullet$] If $ \rho|\cdot|^{C} \in [A_i,B_i]_{\rho}$ for some $i \in I_{\rho}$ with $A_i>B_i$, then
    \[\varepsilon(\rho\otimes S_{2C+1})= (-1)^{C-B_i}\eta_i.\]
    \item [$\oldbullet$] Otherwise, for any $i \in I_{\rho}$ such that $B_i=C=A_i$,
    \[\varepsilon(\rho\otimes S_{2C+1})=\eta_i.\]
\end{enumerate}
\end{enumerate}
\end{cor}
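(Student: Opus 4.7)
The plan is to derive part (i) from the same splitting technique (Corollary \ref{cor split}) that underlies the proof of Proposition \ref{prop pos chain shape}, and then to bootstrap part (ii) via Lemma \ref{lemma far away}(i) together with a sign-tracking argument.

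For the direction (a) $\Rightarrow$ (b) in (i), I would re-examine the induction in the proof of Proposition \ref{prop pos chain shape}. At each case in that argument (Steps 1--5), whenever the inductive step reduces $\EE_\rho^{(P'')}$ to a smaller configuration whose representation is still tempered, Lemma \ref{lem l_n=0} is invoked to force $l_n = 0$ on the top row before the key observation is applied. Reading that proof with the additional bookkeeping that every top-row $l_n$ encountered equals $0$ yields the claim, since the rows whose $l$-labels are tested exhaust $I_\rho$ as the induction proceeds.

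For (b) $\Rightarrow$ (a), I would argue by induction on the number of rows of length $>1$. Using $\EE_\rho^{(P'')}$, pick the row $([A_i,B_i]_\rho, 0, \eta_i)$ of maximal $A_i$ with $A_i > B_i$. Because $\EE_\rho$ is of chain shape, the overlap condition guarantees that the hypotheses of Corollary \ref{cor split} are met at this row for every $0 \le r \le A_i - B_i - 1$; applying the corollary iteratively splits this row into singletons $\{([B_i+r, B_i+r]_\rho, 0, (-1)^r \eta_i)\}_{r=0}^{A_i-B_i}$. The evenness condition in Definition \ref{def pos chain shape}(ii) ensures that the resulting extended multi-segment still satisfies Proposition \ref{prop positive non-vanishing}(i) everywhere (any duplicated $[B,B]_\rho$ values introduced by the split are absorbed into the existing even-multiplicity rows of that shape without creating a sign obstruction). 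Iterating yields a tempered $\FF_{temp}$ with $\pi(\EE^\rho \cup \FF_{temp}) \cong \pi(\EE^\rho \cup \EE_\rho)$.

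For part (ii), the "only if" half is immediate: if $\pi(\EE)$ is tempered and we isolate a single $\rho$, then by Lemma \ref{lemma far away}(i), we can compare $\EE_\rho$ with the $\rho$-component of any tempered extended multi-segment realizing $\pi(\EE)$, and apply (i). The "if" half is proved by applying (i) separately for each $\rho$ and gluing via Lemma \ref{lemma far away}(i). For the character formula, I would read off $\varepsilon$ from the explicit tempered $\FF_{temp}$ produced by the splitting in (b) $\Rightarrow$ (a): a split row contributes the sign $(-1)^{C-B_i}\eta_i$ at each $C \in [A_i, B_i]$, while an unsplit singleton row $([A_i,A_i]_\rho,0,\eta_i)$ directly contributes $\eta_i$ at $C = A_i$. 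The evenness condition ensures that duplicated contributions at points $C$ interior to a longer segment cancel in pairs, making the formula well-defined.

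The main obstacle will be verifying that the repeated application of Corollary \ref{cor split} in (b) $\Rightarrow$ (a) stays inside $\Rep$ at every intermediate stage, and that the resulting sign assignments on overlapping singletons are consistent. Both issues are controlled precisely by the two clauses of the chain-shape condition, but the consistency check requires careful case analysis when a singleton $[B,B]_\rho$ produced by splitting one row coincides with an endpoint of a neighboring row; the admissible order $(P'')$ and the $l_i=0$ hypothesis are what make the sign bookkeeping close up.
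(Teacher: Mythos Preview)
Your proposal is correct and follows essentially the same approach as the paper: the paper's proof of (a) $\Rightarrow$ (b) also defers to the induction in Proposition~\ref{prop pos chain shape} (where Lemma~\ref{lem l_n=0} forces each top $l_n=0$), and for (b) $\Rightarrow$ (a) and (ii) it likewise applies Corollary~\ref{cor split} repeatedly exactly as in Steps~3--4 of that proof to split long rows into singletons and read off the tempered data. Your added detail on using Lemma~\ref{lemma far away}(i) to glue across $\rho$ and your sign-tracking for $\varepsilon$ are just making explicit what the paper leaves implicit in the phrase ``This procedure also shows (ii).''
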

\begin{proof}
For (i), the proof of Proposition \ref{prop pos chain shape} already implies (a) implies (b). One can show that (b) implies (a) by applying Corollary \ref{cor split} repeatedly as in step 3 and step 4 in the proof of Proposition \ref{prop pos chain shape}. This procedure also shows (ii), and hence completes the proof of the corollary.
\end{proof}

Next, we explain that the non-negative tempered condition also imposes restrictions on $\eta_{i}$ implicitly by the non-vanishing condition in Theorem \ref{thm non-vanishing}(ii). Indeed, if we have two adjacent rows
\[\{([A_1,B_1]_{\rho}, 0, \eta_1),([A_2,B_2]_{\rho}, 0, \eta_2)\} \]
such that $[A_1,B_1]_{\rho} \cap [A_2,B_2]_{\rho} \neq \emptyset$, then they satisfy Proposition \ref{prop positive non-vanishing} if and only if
\[ (-1)^{A_1-B_1} \eta_1 \eta_{2}=1, \]
and hence $\eta_1$ is determined by $\eta_2$ and vice versa. 

Now suppose $\EE \in \Rep^{(P'')}$ is a non-negative extended multi-segment of chain shape and write $\EE_{\rho}= \{([A_i,B_i]_{\rho}, 0 ,\eta_i)\}_{ i \in (I_{\rho,>})}$. Let $i\neq j \in I_{\rho}$. By applying row exchanges, one can extend above argument to show that if $ [A_i, B_i]_{\rho} \cap [A_j,B_j]_{\rho} \neq \emptyset$, then $\eta_i$ is determined by $\eta_j$ and vice versa. Consequently, if there exists a sequence of indices $\{k_0,\cdots, k_r\}$ with $i=k_0$ and $j=k_r$ such that for $1 \leq s \leq r$,
\[ [A_{k_s}, B_{k_s}]_{\rho} \cap [A_{k_{s+1}}, B_{k_{s+1}}]_{\rho}  \neq \emptyset,\]
then $\eta_i$ is determined by $\eta_j$, and vice versa. 

Therefore, we define the connected components of a local Arthur parameter in the following sense.
\begin{defn} 
\begin{enumerate}
    \item  Suppose $S= \{ [A_i,B_i]_{\rho_i}\}_{i=1}^n$ is a multi-set of segments. We say $S$ is connected if, for any pair $1 \leq i \neq j \leq n$, there exists a sequence of indices $\{k_0,\cdots,k_r\}$ with $i=k_0$ and $j=k_r$ such that for $1 \leq s \leq r$,
    \[  \left[A_{k_s},B_{k_s} \right]_{\rho_{k_{s}}}\cap \left[A_{k_{s-1}},B_{k_{s-1}} \right]_{\rho_{k_{s-1}}} \neq \emptyset .\]
     Let $\psi=  \bigoplus_{i=1}^n \rho_i \otimes S_{a_i} \otimes S_{b_i}$ be a representation of $W_F \times \SL_2(\BC) \times \SL_2(\BC)$. We define that $\psi$ is connected if the multi-set of segments 
     \[ \supp(\psi):= \left\{ \left[ \half{a_i+b_i}-1, \half{a_i-b_i}  \right]_{\rho_i} \right\}_{i=1}^n \]
     is connected.
    \item Suppose $\psi$ is a local Arthur parameter. We define the connected components of $\psi$ to be the maximal elements of 
    \[ \{ \psi' \leq \psi\ | \ \psi' \text{ is connected} \},\]
    where we define $\psi_1 \leq \psi_2$ if and only if $\psi_1$ is a subrepresentation of $\psi_2$.
\end{enumerate}
\end{defn}

Here are some immediate consequences of the definition.
\begin{lemma}
\begin{enumerate}
    \item Any local Arthur parameter decomposes uniquely into sum of its connected components.
    \item Suppose $\psi$ is a non-negative local Arthur parameter. Then $\psi$  is of chain shape if and only if all of its connected components are of chain shape.
\end{enumerate}
\end{lemma}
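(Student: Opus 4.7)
For part (1), the plan is to realize the connected components combinatorially as the connected components of an auxiliary graph. Given $\psi = \bigoplus_{i=1}^n \rho_i \otimes S_{a_i} \otimes S_{b_i}$, form the graph $G_\psi$ whose vertex set is $\{1,\dots,n\}$ (with multiplicity) and whose edges connect $i$ and $j$ whenever $\rho_i \cong \rho_j$ and the segments $[A_i,B_i]_{\rho_i}$ and $[A_j,B_j]_{\rho_j}$ intersect. Observe that for the intersection condition to even be nonempty one necessarily has $\rho_i\cong\rho_j$, since segments $[A,B]_\rho$ and $[A',B']_{\rho'}$ with $\rho\not\cong\rho'$ are automatically disjoint as multi-sets of representations of the relevant $\GL$. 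The connected components of $G_\psi$ are then precisely the maximal subsets of summands on which the chain condition in the definition can hold. It follows directly that (a) each such subset, regarded as a sub-parameter, is connected; (b) any connected sub-parameter $\psi' \leq \psi$ lies entirely inside a single connected component of $G_\psi$; and (c) each connected component is maximal. This yields simultaneously existence and uniqueness of the decomposition $\psi = \bigoplus_\alpha \psi_\alpha$ into connected components.

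For part (2), the key observation is that both conditions defining chain shape (Definition \ref{def pos chain shape}) are ``local'' to a connected component. The forward direction is straightforward: restriction of the multi-set $\supp(\psi)$ to any sub-multi-set preserves condition (i), and for condition (ii), if $[A_i,B_i]_\rho$ lies in a connected component $\psi_\alpha$ with $A_i > B > B_i$, then every copy of $[B,B]_\rho$ occurring in $\supp(\psi)$ shares the point $\rho|\cdot|^B$ with $[A_i,B_i]_\rho$, and hence must belong to $\psi_\alpha$ by the very definition of connected component; so the multiplicity of $[B,B]_\rho$ in $\supp(\psi_\alpha)$ equals that in $\supp(\psi)$, which is even.

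The reverse direction uses the same localization principle. For condition (i), pick any two segments $[A,B]_\rho$ and $[A',B']_{\rho'}$ in $\supp(\psi)$: if $\rho\not\cong\rho'$ their intersection is empty; if $\rho\cong\rho'$ and they intersect, they must lie in the same connected component $\psi_\alpha$, so the hypothesis on $\psi_\alpha$ forces $|[A,B]_\rho \cap [A',B']_\rho|\leq 1$. For condition (ii), suppose $[A_i,B_i]_\rho\in\supp(\psi)$ and $A_i>B>B_i$; any segment $[B,B]_\rho$ in $\supp(\psi)$ shares the point $\rho|\cdot|^B$ with $[A_i,B_i]_\rho$ and so belongs to the connected component $\psi_\alpha$ containing $[A_i,B_i]_\rho$. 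The multiplicity of $[B,B]_\rho$ in $\supp(\psi)$ therefore equals its multiplicity in $\supp(\psi_\alpha)$, which is even by hypothesis.

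There is no serious obstacle here: the whole lemma is essentially a reformulation of the graph-theoretic notion of connected component, and the only real content is the observation that any segment $[B,B]_\rho$ relevant to condition (ii) of chain shape is automatically linked to the ``ambient'' segment $[A_i,B_i]_\rho$ whose endpoints surround $B$. The mildest care needed is simply to be explicit about the intersection being taken in the multi-set sense and to keep track of multiplicities when restricting to a connected component.
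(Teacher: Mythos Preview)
Your proof is correct. The paper omits the argument entirely, declaring the lemma an ``immediate consequence of the definition''; your graph-theoretic unpacking of part (1) and the localization argument for part (2) are exactly the natural details one would fill in, and there is nothing to compare against.
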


Now suppose $\psi$ is a connected component of some non-negative local Arthur parameter of chain shape. Then there exist exactly two $\FF=\{ ([A_i,B_i]_{\rho},l_i,\eta_i)\}_{i\in (I_{\rho},>)}$ satisfying the following:
\begin{enumerate}
    \item [$\oldbullet$] $ \supp(\FF)= \supp(\psi)$ as multi-sets,
    \item [$\oldbullet$] $\FF$ satisfies the non-negative tempered condition,
    \item [$\oldbullet$]  $\FF$ satisfies Theorem \ref{thm non-vanishing}(ii).
\end{enumerate}
 We denote them by $ \FF_{\psi,temp,+}, \FF_{\psi,temp,-}$ according to the sign of $(-1)^{A_n-B_n}\eta_n$, where $n$ denotes the maximal element in $(I_{\rho},>)$. Note that it is the sign of the last circle of the last row of the pictograph associated with $\FF$.
 Pictorially, $\FF_{\psi,temp,-}$ can be obtained from $\FF_{\psi,temp,+}$ by replacing all $ \oplus$ with $\ominus$, and $\ominus$ with $\oplus$. 
 
Suppose $\psi$ is a non-negative local Arthur parameter of chain shape, and $\psi= \bigoplus_{i=1}^n \psi_i$ is a decomposition into sum of connected components. Then Corollary \ref{cor pos tempered} shows the set of tempered representations in $\Pi_{\psi}$ is given by
\[ \{ \cup_{i} \FF_{\psi_i,temp, \eta_i}\ | \ \text{ for }1 \leq i \leq n,\ \eta_i\in \{\pm \},\ \text{and }\cup_{i} \FF_{\psi_i,temp, \eta_i} \text{ satisfies }\eqref{eq sign condition}  \}.\]
To count the cardinality of above sets, we classify connected subrepresentations of non-negative local Arthur parameters of chain shape. 

\begin{defn}\label{def type of A-par}
Let $\psi$ be a connected subrepresentation of a non-negative local Arthur parameter of chain shape. We define
\begin{enumerate}
    \item [$\oldbullet$]$\psi$ is of type (I) if $\#\Omega(\FF_{\psi,temp,+})$ is odd. 
    \item [$\oldbullet$] $\psi$ is of type (II) if the pictograph $\FF_{\psi,temp,+}$ contains an even number of $\oplus$ and even number of $\ominus$.
    \item [$\oldbullet$] $\psi$ is of type (III) if the pictograph of $\FF_{\psi,temp,+}$ contains an odd number of $\oplus$ and odd number of $\ominus$.
\end{enumerate}
\end{defn}
We demonstrate the definition in the following example.
\begin{exmp}
Let $\psi_1, \psi_2$ be the local Arthur parameters associated with $\EE_1, \EE_2$ in Example \ref{exmp chain shape}. Then they are connected, and $\EE_1= \FF_{\psi_1,temp,+}$ and $\EE_{2}= \FF_{\psi_2,temp,-} $. We have $\psi_1$ is of type I and $\psi_2$ is of type II.
\end{exmp}

The following is the main theorem for non-negative case.
\begin{thm}\label{thm temp non-negative}Let $\psi$ be a non-negative local Arthur parameter.
\begin{enumerate}
    \item [1.] $\Pi_{\psi}$ contains a tempered representation only if $\psi$ is of chain shape. 
    \item [2.] Suppose $\psi$ is of chain shape. Denote 
\[ \psi= \sum_{i=1}^{r} \psi_i \]
the decomposition into connected components. Denote $r_I$ (resp. $r_{
II}, r_{III}$) the number of $\psi_i$ of type ($I$) (resp. type ($II$), type ($III$)).
Then the number of tempered representations inside $\Pi_{\psi}$ is given by
\[ \begin{cases}
2^{r-1} & \text{ if }r_{I} >0\\
2^{r} &\text{ if } r_{I}=0 \text{ and } r_{III} \text{ is even}\\
0 &\text{ if } r_{I}=0 \text{ and } r_{III} \text{ is odd}\\
\end{cases},\]
 and these representations are given by
 \[ \pi(\cup_{i=1}^{r} \FF_{\psi_i,temp,\eta_i}  ) \]
 where $\eta_i \in \{ +,-\}$ are chosen such that $\cup_{i=1}^{r} \FF_{\psi_i,temp,\eta_i}$ satisfies the sign condition (\ref{eq sign condition}). Moreover, their $L$-data are given explicitly in Corollary \ref{cor pos tempered}.
\end{enumerate}
 \end{thm}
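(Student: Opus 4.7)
Part (1) is essentially already in hand. If $\pi\in\Pi_\psi$ is tempered, then by Theorem \ref{thm Atobe's reformulation} we can write $\pi=\pi(\EE)$ for some $\EE$ with $\supp(\EE)=\supp(\psi)$; since $\psi$ is non-negative, $\EE$ is non-negative, and Proposition \ref{prop pos chain shape} forces $\supp(\EE)$ (hence $\supp(\psi)$) to be of chain shape. Part (3) is a short deduction from Theorems \ref{thm Arthur tempered} and \ref{thm GGP20}: the forward direction observes that any $\pi\in\Pi_\psi\cap\Pi_\phi$ is tempered (being in the tempered packet $\Pi_\phi$) and satisfies $\psi^{\Delta}=\phi^{\Delta}=\phi$; conversely, if $\Pi_\psi$ contains a tempered representation $\pi$, then by Theorem \ref{thm Arthur tempered} $\pi$ lies in a unique tempered packet $\Pi_{\phi'}$, and Theorem \ref{thm GGP20} gives $\phi'=(\phi')^{\Delta}=\psi^{\Delta}=\phi$.

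For Part (2), the plan is to first parametrize the tempered representations and then count. By Corollary \ref{cor pos tempered}(ii) the tempered members of $\Pi_\psi$ are exactly the $\pi(\EE)$ with $\supp(\EE)=\supp(\psi)$ and $l_i=0$ for all $i$, subject to the non-vanishing conditions of Theorem \ref{thm non-vanishing}(ii) and the sign condition \eqref{eq sign condition}. Decompose $\psi=\bigoplus_{i=1}^r\psi_i$ into connected components. Within a single connected component the overlap of adjacent segments in the chain propagates the sign choice: once $\eta_j$ for one row is fixed, the non-vanishing condition Proposition \ref{prop positive non-vanishing}(i)(1) (specifically the requirement $(-1)^{A_1-B_1}\eta_1\eta_2=1$ on overlapping rows) determines $\eta$ on every row in that component. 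Thus for each $\psi_i$ there are precisely two candidates $\FF_{\psi_i,temp,\pm}$, as in the statement, and a candidate extended multi-segment $\EE$ is parametrized by a vector $(\eta_1,\dots,\eta_r)\in\{\pm\}^r$.

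It remains to determine which sign vectors satisfy \eqref{eq sign condition}. The key calculation is that when $l_i=0$ the factor for a single row $([A_i,B_i]_\rho,0,\eta_i)$ equals
\[
(-1)^{[b_i/2]}\eta_i^{b_i}=\prod_{k=0}^{b_i-1}\bigl((-1)^k\eta_i\bigr),
\]
i.e. the product of the alternating signs of all symbols in that row; hence \eqref{eq sign condition} is equivalent to saying the total number of $\ominus$ in the pictograph of $\EE$ is even. Flipping $\eta_i\mapsto-\eta_i$ on the $i$-th component toggles every symbol of $\FF_{\psi_i,temp,\eta_i}$, so changes the parity of its $\ominus$-count by $\#\Omega(\psi_i)\pmod 2$. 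Classifying by the types of Definition \ref{def type of A-par}: type I components toggle parity and contribute a free $\Z/2\Z$-action on the parity of the total $\ominus$-count; type II components always contribute an even count; type III components always contribute an odd count. A straightforward linear-algebra count over $\mathbb{F}_2$ now yields $2^{r-1}$ solutions when $r_I>0$, $2^r$ when $r_I=0$ and $r_{III}$ is even, and $0$ when $r_I=0$ and $r_{III}$ is odd, matching the formula; the $L$-data of each tempered member is then read off from Corollary \ref{cor pos tempered}(ii).

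The main obstacle is the translation in the third paragraph: one must verify that the per-row factor in \eqref{eq sign condition} really equals the product of alternating symbol signs along that row, and that the non-vanishing conditions of Proposition \ref{prop positive non-vanishing}(i) (applied under row exchanges within a connected component) are rigid enough to rule out any sign assignment other than the two $\FF_{\psi_i,temp,\pm}$. Once these bookkeeping points are settled, the remainder of Part (2) is a short $\mathbb{F}_2$-parity count and the explicit $L$-data description comes directly from Corollary \ref{cor pos tempered}(ii).
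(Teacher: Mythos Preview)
Your proposal is correct and follows essentially the same route as the paper. The paper's own proof is a single line (``This follows from Corollary \ref{cor pos tempered} and the definitions''), and what you have written is precisely the unpacking of that line: Part~1 via Proposition \ref{prop pos chain shape}, Part~3 via Theorems \ref{thm Arthur tempered} and \ref{thm GGP20}, and Part~2 by reducing the sign condition \eqref{eq sign condition} (with all $l_i=0$) to the parity of the total $\ominus$-count and then doing the $\mathbb{F}_2$ count over the $2^r$ sign vectors according to the types in Definition \ref{def type of A-par}.
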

\begin{proof}
This follows from Corollary \ref{cor pos tempered} and the definitions.
\end{proof}

We check that Theorem $\ref{thm temp non-negative}$ agrees with Arthur's parameterization of tempered local Arthur parameters of good parity (see Theorem \ref{thm Arthur tempered}). Let
  $$\psi = \bigoplus_{\rho}\left(\bigoplus_{i\in I_\rho} \rho \otimes S_{a_i} \otimes S_{1}\right)$$  
 be a tempered local Arthur parameter of good parity.  We have 
 $$\supp(\rho \otimes S_{a_i} \otimes S_{1})=\left\{\left[\frac{a_i-1}{2},\frac{a_i-1}{2}\right]\right\},$$ 
 and hence the connected components of $\psi$ are the summands of the above decomposition with multiplicity. Let $\psi'=m_{\rho,i}(\rho \otimes S_{a_i} \otimes S_{1})$ where $m_{\rho,i}$ is the multiplicity of $\rho \otimes S_{a_i} \otimes S_{1}$ in $\psi.$ By direct computation, we see $$\FF_{\psi',temp,+}=\left\{\left(\left[\frac{a_i-1}{2},\frac{a_i-1}{2}\right],0,1\right)^{m_{\rho,i}}\right\}$$ and hence $\psi'$ is of type (I) if $m_{\rho,i}$ is odd and type (II) if $m_{\rho,i}$ is even. Suppose that $\psi$ has $r$ connected components. By Theorem \ref{thm temp non-negative}, we have $|\Pi_\psi|=2^{r-1}$ if $m_{\rho,i}$ is odd for some pair $(\rho,i).$ Otherwise, $|\Pi_\psi|=2^{r}.$
 
 On the other hand, by Theorem \ref{thm Arthur tempered}, we can compute $|\Pi_\psi|$ by computing the size of the $\widehat{\mathcal{S}}_\psi.$ If $m_{(\rho,i)}$ is always even for any pair $(\rho,i)$, then the central element $z_\psi$ of $\mathcal{A}_\psi$ is generated by $\alpha_{\rho,i}+\alpha_{\rho,j}$ such that $i,j\in I_\rho$ with $$\rho \otimes S_{a_i} \otimes S_{1}=\rho \otimes S_{a_j} \otimes S_{1}.$$ Hence $\mathcal{S}_\psi$ is isomorphic with the Abelian group $\oplus_{i=1}^r (\mathbb{Z}/2\mathbb{Z}) \alpha_{\rho,i}$. Thus we have $|\widehat{\mathcal{S}}_\psi|=2^r=|\Pi_\psi|.$ If some $m_{\rho,i}$ is odd, then the central element $z_\psi$ of $\mathcal{A}_\psi$ is not generated by $\alpha_{\rho,i}+\alpha_{\rho,j}$ such that $i,j\in I_\rho$ with $$\rho \otimes S_{a_i} \otimes S_{1}=\rho \otimes S_{a_j} \otimes S_{1}.$$ Thus, $|\widehat{\mathcal{S}}_\psi|=2^{r-1}=|\Pi_\psi|.$ Therefore, we see that the size of $\Pi_\psi$ given by Theorem \ref{thm temp non-negative} agrees with the size given by Theorem \ref{thm Arthur tempered}.

\subsection{General local Arthur packets containing tempered representations}\label{general case tempered}

In this section, we classify all local Arthur packets containing tempered representations (see Definition \ref{def chain shape} below). More precisely, 
we classify all extended multi-segments that give tempered representations (see Theorem \ref{thm tempered general}). The idea is to apply Theorem \ref{thm exhaustion of symbol} to a tempered extended multi-segment $\EE_{temp}.$ By construction, $(\EE_{temp})_{can}$ is non-negative.
However, since we have already treated the non-negative case, we know that $(\EE_{temp})_{can}$ is of chain shape. This allows us to compute $\Psi(\EE_{temp}).$

We first separate into two cases.

\begin{lemma}\label{lem classification of negative tempered}  Suppose $\EE \in \Rep^{(P')}$ and there exists a tempered  $\FF_{temp}$ such that $\pi(\EE^{\rho}\cup \EE_{\rho}) \cong \pi(\EE^{\rho} \cup \FF_{temp})$. Denote 
\[\FF=\EE_{\rho}=\{([A_i,B_i],l_i,\eta_i)\}_{i \in (I_{\rho},>)}\]
and $1$ the minimal element in $I_{\rho}$.
Take $\epsilon \in \{0,1/2\}$ such that $A_i+\epsilon\in \Z$ for $i \in I_{\rho}$. Then
\begin{enumerate}
    \item [(i)] If $B_1<-\epsilon$, then 
    \[(\FF_{temp})_{=\epsilon}= \{([\epsilon,\epsilon]_{\rho},0,\eta)^{s}\}\]
    for some odd number $s$, and $\eta=-1$ if $\epsilon=1/2$.
    \item [(ii)]  If $B_1=-1/2$, then $dual_1^{-}$ is applicable on $\EE_{\rho}$, and hence $dual_1^-(\EE_{\rho})$ is non-negative and its support is of chain shape.
\end{enumerate}
\end{lemma}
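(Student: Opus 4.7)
The overall plan is to reduce everything to the non-negative tempered case analyzed in the previous subsection. By Corollary \ref{cor canonical form}, the hypothesis $\pi(\EE^\rho \cup \EE_\rho) \cong \pi(\EE^\rho \cup \FF_{temp})$ gives $(\EE_\rho)_{can} = (\FF_{temp})_{can}$, and since $\FF_{temp}$ is tempered (hence non-negative), Proposition \ref{prop pos chain shape} together with Corollary \ref{cor pos tempered} imply that $(\FF_{temp})_{can}$ is non-negative and of chain shape with all $l_i=0$. Theorem \ref{thm exhaustion of symbol} then represents $\EE_\rho$ as the image of this canonical form under a finite sequence of basic operators and at most one partial dual, so the approach is to trace how a row with negative $B_1$ can arise in this process and what structural constraints it imposes back on $\FF_{temp}$.

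For part (ii), $B_1=-1/2$: first I would argue that, possibly after a harmless row exchange among adjacent rows of equal minimal $B$-value, we may take $B_j > -1/2$ for every $j>1$ in $\EE_\rho$. The substantive claim is that $l_1=0$ and the sign/parity conditions on $\eta_1$ required by Definition \ref{def partial dual} for $dual_1^-$ all hold automatically. If $l_1\geq 1$, then applying Theorem \ref{thm non-vanishing}(i) to a non-negative shift $sh^d(\EE)$ produces, via the shift-back procedure, a nontrivial segment $\Delta_\rho[-1/2,-A_1]$ in the Langlands data of $\pi(\EE)$, contradicting temperedness; the necessary sign conditions are then verified by checking the non-vanishing criterion of Proposition \ref{prop positive non-vanishing} at the interface between the first row and the rest. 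Once $dual_1^-(\EE)$ makes sense, Proposition \ref{prop partial dual} gives $\pi(dual_1^-(\EE))\cong \pi(\EE)\cong \pi(\FF_{temp})$, and by construction the transformed row $([A_1,1/2]_\rho,0,-\eta_1)$ raises the minimum $B$-value to $1/2$, so $dual_1^-(\EE_\rho)$ is non-negative; Proposition \ref{prop pos chain shape} then forces its support to be of chain shape.

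For part (i), $B_1<-\epsilon$: pick $d\geq -B_1$ so that $sh^d(\EE)$ is non-negative, and apply Theorem \ref{thm non-vanishing}(i) to express the Langlands data of $\pi(\EE)$ as that of $\pi(sh^d(\EE))$ with each segment $\Delta_\rho[x,-y]$ replaced by $\Delta_\rho[x-d,-(y-d)]$ and each tempered summand $\rho\otimes S_{2z+1}$ replaced by $\rho\otimes S_{2(z-d)+1}$, omitting all trivial pieces. Temperedness of $\pi(\EE)$ forces every segment and every non-vanishing summand on the shifted side to cancel after the shift, which pins down the exact multi-set of contributing pieces at position $d-1$ or at $z=d-1/2$. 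Comparing with $\Omega(\FF_{temp})=\Omega(\EE_\rho)$ via Lemma \ref{lem $L$-data} and invoking the non-negative tempered description of Corollary \ref{cor pos tempered}, the cancellation burden can only be met if $(\FF_{temp})_{=\epsilon}$ consists of $s$ equal copies of $([\epsilon,\epsilon]_\rho,0,\eta)$, and the parity of $s$ is determined by the sign condition \eqref{eq sign condition}; in the half-integer case, the chain-shape constraint together with the non-vanishing criterion at the boundary of the stratum $B=1/2$ forces $\eta=-1$.

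The main obstacle will be the bookkeeping in part (i): matching the segments and tempered summands on the shifted $sh^d(\EE)$ side (which live near position $d-1$) with the rows of $\FF_{temp}$ at level $\epsilon$, and extracting from this matching both the exact count $s$ and its parity. Part (ii) is technically cleaner but still requires the nontrivial observation that $l_1=0$ and the sign conditions on $\eta_1$ cannot be evaded by any admissible-order reshuffling, which again relies on an $L$-data argument built from the temperedness of $\pi(\EE)$.
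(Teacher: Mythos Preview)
Your approach differs substantially from the paper's, which proves both parts by applying Theorem~\ref{thm exhaustion of symbol} to $\FF_{temp}$. Since $\FF_{can}=(\FF_{temp})^{min}$ is non-negative and of chain shape (Proposition~\ref{prop pos chain shape}), the paper analyzes $UI^{-1}_{\geq-1/2}(dual(\FF_{can}))$ to determine which elements of $\Psi(\FF_{temp})$ can have a row with negative $B$-value. For (i), this set fails to be a singleton precisely when the stated conditions on $(\FF_{temp})_{=\epsilon}$ hold. For (ii), no element of this set has $\max\{B_i\}=1/2$, so no $\FF'$ obtained from $\FF_{can}$ by basic operators alone satisfies $\min\{B_i'\}=-1/2$; Theorem~\ref{thm half integer} and Corollary~\ref{cor partial dual is always applicable} then force $dual_1^-$ to be applicable on $\FF$.

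Your direct $L$-data route has real gaps. For (ii), the assertion that $l_1\geq 1$ forces a segment $\Delta_\rho[-1/2,-A_1]$ in the Langlands data is not justified: the $L$-data is computed through the shift-and-derivative procedure of Definition~\ref{def rep of segment}, and row interactions make this implication far from automatic. Moreover, row exchanges do not change $B$-values, so your ``harmless row exchange'' cannot ensure $B_j>-1/2$ for $j>1$, and you do not otherwise rule out multiple rows with $B=-1/2$. For (i), your claim that the parity of $s$ comes from the sign condition \eqref{eq sign condition} is incorrect: that condition is a product over all $\rho$ and all rows and cannot pin down the multiplicity at level $\epsilon$ for one fixed $\rho$. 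The oddness of $s$ in the paper's argument is a combinatorial consequence of the chain-shape structure of $(\FF_{temp})^{min}$ and the explicit form of $(dual(\FF_{can}))_{\geq -1/2}$, not of the global sign condition.
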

\begin{proof}
Part (i) follows from applying Theorem \ref{thm exhaustion of symbol} to $\FF_{temp}$. It follows directly that $\FF_{can}=(\FF_{temp})^{min}$, which is non-negative and hence of chain shape by Proposition \ref{prop pos chain shape}. Then one can see that the set $UI^{-1}_{\geq-1/2}(dual(\FF_{can}))$ is not a singleton only if the conclusion in the statement holds.

For Part (ii), a similar argument shows none of the elements in $UI^{-1}_{\geq-1/2}(dual(\FF_{can}))$ satisfy $\max\{B_i\}=1/2$. Hence, any $\FF'=\{([A_i',B_i'],l_i',\eta_i')\}_{i \in (I_{\rho}',>)}$ obtained from $\FF_{can}$ by a sequence of basic operators should satisfy $\min\{B_i'\ | \ i \in I_{\rho}'\}\neq -1/2$.

By Theorem \ref{thm half integer} and the Corollary \ref{cor partial dual is always applicable}, $dual_1^{-}$ must be applicable on $\FF$, and $dual_1^{-}(\FF)$ is non-negative, and hence its support is of chain shape by Proposition \ref{prop pos chain shape}.
This proves the lemma.
\end{proof}

Now we apply Theorem \ref{thm exhaustion of symbol} to the tempered $\FF_{temp}$ such that $$(\FF_{temp})_{=\epsilon}= \{([\epsilon,\epsilon]_{\rho},0,\eta)^{2s+1}\},$$ where $\eta=-1$ if $\epsilon=1/2$. A simple computation shows $(dual((\FF_{temp})_{can}))_{\geq -1/2}$ is of the form
\[\{([\epsilon,-\epsilon]_{\rho},0,\ast)^{2s}\}+\{ ([A, -\epsilon]_{\rho},0,\ast)\}.\]
The only possible way to apply $ui^{-1}$ is to split the last row by Corollary \ref{cor split}. Therefore, one can see that $\pi(\EE^{\rho}\cup \EE_{\rho} ) \cong \pi(\EE^{\rho} \cup \FF_{temp})$ only if $\EE_{\rho}^{min}$ or $dual_k(\EE_{\rho}^{min})$ for some $k$ of the form
\[ \FF_{cusp} + \{([\epsilon,\epsilon]_{\rho},0,\ast)^{2s}\} +\FF_{chain},\]
where the support of $\FF_{cusp}$ is of cuspidal shape and the support of $\FF_{chain}$ is non-negative of chain shape. Due to the cuspidal condition in Corollary \ref{cor cuspidal shape}, it is possible that $ui\inv$ is applicable. Here is an example.
\begin{exmp}\label{ex chain shape gneral}
Consider the extended multi-segments
\begin{align*}
     \EE_1&=   \begin{blockarray}{cccccccc}
\frac{-7}{2}&\frac{-5}{2}&\frac{-3}{2}&\frac{-1}{2}&\frac{1}{2}&\frac{3}{2}&\frac{5}{2}&\frac{7}{2}\\
\begin{block}{(cccccccc)}
\lhd&\lhd &\lhd&\lhd&\rhd&\rhd&\rhd&\rhd\\
&&&\ominus&\oplus&\ominus&\oplus&\\
&&&&&\oplus&&\\
&&&&&\oplus&&\\
\end{block}
\end{blockarray} \ ,\\
     \EE_2&=   \begin{blockarray}{cccccccc}
\frac{-7}{2}&\frac{-5}{2}&\frac{-3}{2}&\frac{-1}{2}&\frac{1}{2}&\frac{3}{2}&\frac{5}{2}&\frac{7}{2}\\
\begin{block}{(cccccccc)}
\lhd&\lhd &\lhd&\lhd&\rhd&\rhd&\rhd&\rhd\\
&&&\ominus&\oplus&\ominus&&\\
&&&&&\ominus&&\\
&&&&&\ominus&\oplus&\\
\end{block}
\end{blockarray} \ .
\end{align*}
We have $\EE_1= ui_{2,4}(\EE_2)$, and
\[ \pi(\EE_1')=\pi(\EE_2')= \pi( (1/2)^{-},(3/2)^{+},(3/2)^{+},(3/2)^{+},(5/2)^{-},(7/2)^{+})\]
are tempered.
\end{exmp}

Based on these observations, we extend Definition \ref{def pos chain shape}.
\begin{defn}\label{def chain shape}
We extend Definition \ref{def pos chain shape} for general multi-sets of segments. 

First suppose $S$ is a multi-set of segments consisting of a single $\rho$. We say $S$ is of chain shape if the following hold:
\begin{enumerate}
    \item [(i)] There exists a multiplicity free ordered subset $S_{1}= \{ [A_i,B_i]_{\rho} \}_{i=1}^{n}$ of $S$ satisfying:
\begin{enumerate}
    \item [(a)] $B_1<0$,
    \item [(b)] for $1 \leq  i < n$, if $B_i<0$, then $A_{i+1}=-B_{i}-1$,
    \item [(c)] for $1 <  i < n$, if $B_i>0$, then either $ A_{i+1}=B_{i}-1$ or $i<n-1$, $[A_{i+1},B_{i+1}]_{\rho}=[B_{i},B_{i}]_{\rho}$ and $ A_{i+2}=B_i, B_{i+2}<B_i$.
    \item [(d)] $B_n \in \{0,1/2,-1/2\}$. $B_i=0$ or $-1/2$ only when $i=n$. 
\end{enumerate}
\item [(ii)] $S\setminus S_1$ consists of segments $[A,B]_{\rho}$ with $B\geq 0$. Moreover, we have $\{ [A_1,|B_n|]_{\rho} \} + (S\setminus S_1)$ is of chain shape in Definition \ref{def pos chain shape}.
\item[(iii)] If $S_1$ is non-empty and $ S_1 \neq \{ [1/2,-1/2]_{\rho}\}$, the multiplicity of $[1/2,1/2]_{\rho}$ in $S$ is even.
\end{enumerate}
In this case, we write
\[ \sum_{i=1}^n [A_i, |B_i|]_{\rho}= [A,\epsilon]_{\rho} +\sum_{j=1}^r ([C_j,C_j]_{\rho})^2.\]
We denote $\abs(S_1):=\{[A,\epsilon]_{\rho}, ([C_1,C_1]_{\rho})^{2},\dots,([C_r,C_r]_{\rho})^2\}$ and define $\abs(S):=(S\setminus S_1) + \abs(S_1)$. 

For general $S$, we decompose $S= \cup_{\rho} S_{\rho}$ and say that $S$ is of chain shape if every $S_{\rho}$ is of chain shape. In this case, we define $\abs(S)=\cup_{\rho} \abs(S_{\rho})$. 

We say a local Arthur parameter $\psi$ is of chain shape if $\supp(\psi)$ is of chain shape. In this case, we denote $\abs(\psi)$ to be the unique (non-negative) local Arthur parameter with $\supp(\abs(\psi))= \abs(\supp(\psi))$.

We say an extended multi-segment $\EE$ is of chain shape if $\supp(\EE)$ is of chain shape.
\end{defn}

We remark that if $\psi$ is of chain shape, then $\psi^{\Delta}= (\abs(\psi))^{\Delta}$.
With the definition of chain shape in general case, we generalize Proposition \ref{prop pos chain shape} and Corollary \ref{cor pos tempered} as follows.
\begin{thm}\label{thm tempered general}
Let $\EE \in \Rep^{(P')}$.
\begin{enumerate}
    \item [(i)] There exists a tempered  $\FF_{temp}$ such that $\pi(\EE^{\rho}\cup \EE_{\rho}) \cong \pi(\EE^{\rho} \cup \FF_{temp})$ only if $ \supp(\EE_{\rho})$ is of chain shape. In particular, $\pi(\EE)$ is tempered only if $\EE$ is of chain shape. 
    \item [(ii)] Suppose $\EE$ is of chain shape. Write $\EE^{(P'')}=\cup_{\rho}\{ ([A_i,B_i]_{\rho_i},l_i,\eta_i)\}_{i \in (I_{\rho,>})}$. Then the followings are equivalent:
    \begin{enumerate}
        \item [(a)] $\pi(\EE^{\rho} \cup \EE_{\rho}) \cong \pi( \EE^{\rho} \cup \FF_{chain})$ for some $\FF_{chain}$ non-negative of chain shape satisfying the non-negative tempered condition and also that $\supp(\FF_{chain})= \abs(\supp(\EE_{\rho}))$. 
        \item [(b)] $\EE_{\rho}^{(P'')}= \{([A_i,B_i]_{\rho},l_i, \eta_i)\}_{i \in (I_{\rho},>)}$ satisfies $l_i=0$ for all $i \in I_{\rho}$.
    \end{enumerate}
    We say $\EE_{\rho}$ satisfies the \emph{tempered condition} if (b) holds. 
\end{enumerate}
 
\end{thm}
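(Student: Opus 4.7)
The overall strategy is to reduce to the non-negative case already handled in Proposition \ref{prop pos chain shape} and Corollary \ref{cor pos tempered}, using Lemma \ref{lem classification of negative tempered} as the key bridge together with the exhaustion formula of Theorem \ref{thm exhaustion of symbol}.

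For part (i), fix $\rho$, write $\EE_\rho = \{([A_i,B_i]_\rho, l_i, \eta_i)\}_{i \in (I_\rho, >)}$ in $(P')$-order with minimum index $1$, and let $\epsilon \in \{0,1/2\}$ so that $A_i + \epsilon \in \Z$. I split into three cases according to $B_1$. If $B_1 \geq 0$, then $\EE_\rho$ is non-negative and Proposition \ref{prop pos chain shape} applies directly, matching Definition \ref{def chain shape} with the tail $S_1$ empty. If $B_1 = -1/2$, then Lemma \ref{lem classification of negative tempered}(ii) shows $dual_1^-(\EE_\rho)$ is non-negative of chain shape; unwinding Definition \ref{def partial dual} of $dual_1^-$ shows the first row of $\EE_\rho$ produces the single-element tail $S_1 = \{[A_1,-1/2]_\rho\}$ and the remaining rows descend to the non-negative chain-shape structure. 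If $B_1 < -\epsilon$, then Lemma \ref{lem classification of negative tempered}(i) forces $(\FF_{temp})_{=\epsilon} = \{([\epsilon,\epsilon]_\rho,0,\eta)^s\}$ with $s$ odd; applying Theorem \ref{thm exhaustion of symbol} to $\FF_{temp}$, the canonical form $(\FF_{temp})_{can}$ is non-negative and of chain shape (by the case $B_1\ge 0$), and the only way for the exhaustion to produce an $\EE_\rho$ with $B_1<-\epsilon$ is through iterated $dual \circ ui_{i,j}\inv \circ dual$ operations applied to the central $\{([\epsilon,\epsilon]_\rho,0,\eta)^s\}$ block together with compatible chain-shaped neighbors. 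A careful unwinding of these operations yields precisely the interleaved tail structure required by Definition \ref{def chain shape}, including the alternation conditions (b), (c) and the parity condition (iii) on the multiplicity of $[1/2,1/2]_\rho$.

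For part (ii), the direction (b) $\Rightarrow$ (a) is obtained by applying Corollary \ref{cor split} iteratively to the rows of $\EE_\rho$ that carry the negative tail (the hypotheses of the corollary being secured by $l_i=0$), followed by row exchanges, to arrive at a non-negative $\FF_{chain}$ whose support is $\abs(\supp(\EE_\rho))$ and which satisfies the non-negative tempered condition of Corollary \ref{cor pos tempered}. The converse direction (a) $\Rightarrow$ (b) uses Theorem \ref{thm half integer} (together with Theorem \ref{thm integer}) to realize $\EE_\rho$ from $\FF_{chain}$ by a finite chain of basic operators and at most one $dual_k$. Since $\FF_{chain}$ already satisfies $l_i'=0$ for all $i$, an application of Lemma \ref{lem l_n=0} at each row exchange and union-intersection step, together with the explicit formulas for $dual$ and $dual_k$ in Definitions \ref{dual segment} and \ref{def partial dual} (which both preserve $l_i=0$ in the $(P'')$-order), propagates the condition $l_i=0$ to the rows of $\EE_\rho^{(P'')}$.

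The main obstacle is the combinatorial bookkeeping in Case C of part (i): showing that Theorem \ref{thm exhaustion of symbol}, applied to a tempered $\FF_{temp}$ whose $\epsilon$-degree block is $\{([\epsilon,\epsilon]_\rho,0,\eta)^s\}$ with $s$ odd, forces the resulting $\EE_\rho$ to assemble exactly the interleaved chain prescribed by conditions (i)(a)--(d) and (iii) of Definition \ref{def chain shape}, and no other configurations arise. A secondary but technical obstacle in part (ii) is verifying that the condition ``$l_i = 0$ for every row of $\EE_\rho^{(P'')}$'' is preserved under $dual \circ ui_{i,j} \circ dual$; this is not a single-row operation, and the argument requires a careful case analysis using Lemma \ref{lem l_n=0} on each side of the dual, together with the commutativity relations established in Lemma \ref{lem commutativity of partial dual}.
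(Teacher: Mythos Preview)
Your outline for part~(i) tracks the paper's proof: the three-way split on $B_1$, the appeal to Lemma~\ref{lem classification of negative tempered}, and the use of Theorem~\ref{thm exhaustion of symbol} in the third case are all as in the paper. The paper makes the third case concrete by writing $(\FF_{temp})_{can}=\{([A,\epsilon]_\rho,0,\eta)\}+\{([\epsilon,\epsilon]_\rho,0,\ast)^{2s}\}+\FF_{chain}$ and then factoring $UI^{-1}$ of the relevant $\FF'$ as a product over $\FF'_{<A}$ and $\FF'_{\geq A}$, but this is exactly the ``careful unwinding'' you already flag.

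In part~(ii), however, both directions have gaps as written. For (b)$\Rightarrow$(a), Corollary~\ref{cor split} goes in the wrong direction: splitting a row $([A_k,B_k]_\rho,0,\eta_k)$ with $B_k<0$ always leaves a piece $[B_k+r,B_k]_\rho$ whose bottom is still $B_k<0$, so iterated splitting cannot reach a non-negative $\FF_{chain}$. The paper instead runs an induction on $n=|S_1|$ with four explicit cases, in each applying a \emph{merging} operator ($ui_{i,j}$ or $dual\circ ui_{j,i}\circ dual$) that is forced by the non-vanishing conditions, and checking that $\abs(\supp)$ is unchanged at each step.

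For (a)$\Rightarrow$(b), propagating ``all $l_i=0$ in $(P'')$-order'' through an arbitrary chain of basic operators does not work with the tools you invoke. Lemma~\ref{lem l_n=0} constrains only the single top row (maximal $A$, then maximal $B$ among those), not every row; and $dual$ does not preserve $l_i=0$ --- on a row with $B_i>0$, Definition~\ref{dual segment} gives $l_i'\geq B_i>0$, so the parenthetical claim is false. The paper avoids operator-chain propagation entirely: it reuses the structural output of part~(i), writing $\FF=\FF_{<A}+\FF_{\geq A}$ with $\FF_{\geq A}$ handled by Corollary~\ref{cor pos tempered} and $\FF_{<A}\in UI^{-1}(\FF'_{<A})$ where $\FF'_{<0}$ satisfies the cuspidal condition of Corollary~\ref{cor cuspidal shape}. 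A single explicit three-row computation of $ui_{1,3}^{-1}$ on $\{([A,B]_\rho,0,\eta),([C,C]_\rho,0,\ast)^2\}$ then shows that every $l_i$ in the $(P'')$-order of $\FF_{<A}$ remains zero.
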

\begin{proof}
For Part (i), we check that each support of $\EE_{\rho}$ is of chain shape. Denote 
\[ \FF= \EE_{\rho}= \{ ([A_i,B_i]_{\rho},l_i,\eta_i)\}_{i \in (I_{\rho},>)},\]
and let $1$ denote the minimal element in $(I_{\rho},>)$. Since the non-negative case is done in Proposition \ref{prop pos chain shape}, we assume $B_1 <0$.

We first deal with the case that $B_1=-1/2$. Lemma \ref{lem classification of negative tempered}(ii) implies that $dual_1^{-}$ is applicable on $\FF$ and $dual_1^{-}(\FF)$ is non-negative of chain shape. Then in Definition \ref{def chain shape}, we have $S_1=\{[A_1, -1/2]_{\rho}\}$ is a singleton. It remains to check Condition (iii) in the same definition in the case $A_1>1/2$. This can be done by the same argument in Step 5 of the proof of Proposition \ref{prop pos chain shape} and we omit the details. 

Now we assume $B_1 <-1/2$. Following the notation Lemma \ref{lem classification of negative tempered} and discussion after it, we may write 
\[\FF_{can}= \{([A,\epsilon]_{\rho},0,\eta)\} + \{([\epsilon,\epsilon]_{\rho},0,\ast)^{2s}\} + \FF_{chain},\]
up to row exchanges, and $\FF\in UI^{-1}(\FF')$ where 
\[ \FF'=\{ ([A_i,B_i]_{\rho},l_i, \eta_i)\}_{i=1}^n + \{([\epsilon,\epsilon]_{\rho},0,\ast)^{2s}\} + \FF_{chain} \]
with 
\begin{enumerate}
\item [$\oldbullet$] $A_1=A$,
    \item [$\oldbullet$] $B_i<0$ for $1 \leq i\leq n$,
    \item [$\oldbullet$] for $1 \leq i<n$, $ A_{i+1}=-B_{i}-1$.
\end{enumerate}
Note that $\{ ([A_i,B_i],l_i, \eta_i)\}_{i=1}^n=\FF_{<0}'$.

Since $\FF_{can}$ is of chain shape, we may decompose $\FF_{chain}$ into
\[\FF_{chain}= \{ ([C_j,C_j]_{\rho},0,\ast)^2\}_{j=1}^m + (\FF_{chain})_{\geq A} \]
for some $C_j<A$, and hence 
\[\FF_{<A}'= \{ ([A_i,B_i]_{\rho},l_i, \eta_i)\}_{i=1}^n + \{([\epsilon,\epsilon]_{\rho},0,\ast)^{2s}\} +\{ ([C_j,C_j]_{\rho},0,\ast)^2\}_{j=1}^m.  \]
By comparing the support, one can conclude
\[ UI^{-1}(\FF')= \{ \FF_1+ \FF_2 \ | \ \FF_1 \in UI^{-1}(\FF_{<A}'),\ \FF_2 \in UI^{-1}(\FF_{\geq A}') \}.\]

Finally, we know any element in $ UI^{-1}(\FF_{\geq A}')$ is of chain shape by Proposition \ref{prop pos chain shape}, and any element in $ UI^{-1}(\FF_{<A}')$ is of chain shape by definition. Therefore, $\FF$ is of chain shape. This completes the proof of (i).

For Part (ii), we first show that Part (a) implies Part (b). Corollary \ref{cor pos tempered} shows that there exists a tempered $\FF_{temp}$ such that $\pi(\EE^{\rho} \cup \EE_{\rho}) \cong \pi(\EE^{\rho} \cup \FF_{temp})$. Then the argument in Part (i) is valid for $\EE_{\rho}$.

If $B_1=-1/2$, then $\EE_{\rho}= dual_1(\FF_{chain})$, and the assertion is clear. Otherwise $B_1< -1/2$, we use the notation in Part (i). Write $\EE_{\rho}=\FF= \FF_{<A}+ \FF_{\geq A}$ where $\FF_{<A}\in UI^{-1}(\FF_{<A}')$ and $\FF_{\geq A} \in UI^{-1}(\FF_{<A}')$. We have 
\[ \FF^{(P'')}= \FF_{<A}^{(P'')}+ \FF_{\geq A}^{(P'')}. \]
We know $ \FF_{\geq A}$ satisfies the non-negative tempered condition in Corollary \ref{cor pos tempered}, and $\FF_{<0}'$ satisfies the cuspidal condition in Corollary \ref{cor cuspidal shape}. The conclusion then follows from the following simple computation: If $A>B>C$ are all in $\Z$ or $\Z+ \half{1}$, denote
\[\mathcal{F}=\{[A_i,B_i]_{\rho}, l_i, \eta_i\}_{i \in (1<2<3))}= \{ ([A,B]_{\rho},0,\eta)\}+ \{([C,C]_{\rho},0, (-1)^{A-B} \eta)^2\}.\]
Then we have 
\[ ui_{1,3}^{-1}(\FF)= \{([C,B]_{\rho},0,\eta)\}+ \{([C,C]_{\rho},0,(-1)^{C-B}\eta)\}+ \{([A,C]_{\rho},0, (-1)^{C-B}\eta) \}.\]
This completes the proof of this direction.

Finally, we show Part (b) implies Part (a). Again denote $\FF= \EE_{\rho}$. Since $\supp(\FF)$ is of chain shape, we let $S_1=\{[A_i,B_i]_{\rho}\}_{i=1}^{n}$ be as in Definition \ref{def chain shape}, and we apply induction on $n$.

The case $n=0$ is done in Corollary \ref{cor pos tempered}. When $n=1$, we must have $B_1=-1/2$, and Theorem \ref{thm non-vanishing}(i) forces $dual_1^{-}$ is applicable on $\FF$. Then we take $\FF_{chain}=dual_1^{-}(\FF)$. Assume $n >1$. Write $\FF=\{ ([A_i,B_i]_{\rho},l_i',\eta_i')\}_{i \in (I_{\rho},\gg)}$. We separate into four cases.

Case 1. Assume $ n=2$. Let $i,j \in I_{\rho}$ be such that $[A_i,B_i]_{\rho}= [A_1,B_1]_\rho$ and $[A_j,B_j]_{\rho}= [A_2,B_2]_\rho$. In this case we have $ A_2=-B_{1}-1$. The condition on $\FF^{(P'')}$ implies that $\EE^{\rho} \cup (add^{-1}_i(\FF))$ is not in $\Rep$. Then Theorem \ref{thm Aubert-Zelevinsky dual formula} implies that $dual(\EE^{\rho})\cup sh^{-1}_{i}(dual(\FF))$ is not in $\Rep$, either. However, this implies $ui_{j,i}$ is applicable on $dual(\FF)$, and it is of type 3'. So we may replace $\FF$ by $dual \circ ui_{j,i} \circ dual(\FF)$, which still satisfies the conditions in the statement, and the size of $S_1$ decreases. The claim then follows by induction. Note that this procedure does not change $abs(\supp(\FF))$.
    
Case 2. Suppose that there exists $1<k<n$ such that $B_{k}<0$. Let $i,j \in I_{\rho}$ be such that $[A_i,B_i]_{\rho}= [A_{k},B_{k}]_\rho$ and $[A_j,B_j]_{\rho}= [A_{k+1},B_{k+1}]_\rho$. Then the rest of the argument is identical with Case 1.

Case 3.  Assume that there exists $ 1< k< n-1$ such that $B_k>0$, $[A_{k+1},B_{k+1}]_{\rho}= [B_k,B_k]_{\rho}$ and $ A_{k+2}= B_k$, $B_{k+2}< B_k$. Let $i,j\in I_{\rho}$ be such that $[A_j,B_j]_{\rho}=[A_{k},B_{k}]_\rho$ and $[A_i,B_i]_{\rho}=[A_{k+2},B_{k+2}]_\rho$.
    In this case, $ui_{i,j}$ is applicable on $\FF$ (see the simple computation above), so we replace $\FF$ with $ui_{i,j}(\FF)$. This replacement changes the three consecutive segments $\{[A_k,B_k]_{\rho}, [A_{k+1},B_{k+1}]_{\rho}, [A_{k+2},B_{k+2}]_{\rho}\}$ in $S_1$ into a single $\{[A_k,B_{k+2}]_{\rho}\}$, and adds two copies of $[B_k,B_k]_{\rho}$ in $S \setminus S_1$. Then we decrease the size of $S_1$ and $abs(\supp(\FF))=abs(\supp(ui_{i,j}(\FF)))$. The claim then follows by induction.

Case 4. Suppose $B_2>0$ and $A_3= B_2-1$. Let $i,j \in I_{\rho}$ be such that $[A_i,B_i]_{\rho}= [A_{3},B_{3}]_\rho$ and $[A_j,B_j]_{\rho}= [A_{2},B_{2}]_\rho$. One can check that Theorem \ref{thm non-vanishing}(ii) forces $ui_{i,j}$ is applicable on $\FF$ of type 3'. This decreases the size of $S_1$ and $abs(\supp(\FF))=abs(\supp(ui_{i,j}(\FF)))$. Again, the claim then follows by induction.

 This completes the proof of this theorem.
\end{proof}

Similar as the non-negative case, suppose  $\psi$ is a connected component of some local Arthur parameter of chain shape. There
exist exactly two (up to row exchanges)  $\FF=\{([A_i,B_i]_{\rho},l_i,\eta_i)\}_{i\in (I_{\rho},>)} $ satisfying
\begin{enumerate}
    \item [$\oldbullet$] $\supp(\FF)=\supp(\psi)$ as multi-sets,
    \item [$\oldbullet$] $\FF$ satisfies the tempered condition,
    \item [$\oldbullet$] $\FF$ satisfies Theorem \ref{thm non-vanishing}(ii).
\end{enumerate}
We denote them by $\FF_{\psi,temp,+}$ and $\FF_{\psi,temp,-}$ according to the sign of $(-1)^{A_n-B_n}\eta_n$ where $n$ is the maximal element of $(I_{\rho},>)$. However, if $A_1\in\Z+1/2$, then only one of them satisfies Theorem \ref{thm non-vanishing}(i) after row exchange (see the case of $n=1$ in the induction step of the proof of Part (2) above). In this case, we denote this one by $\FF_{\psi,temp}$.

Let $\psi_{\EE}= \bigoplus_{j \in J} \psi_j$ be the decomposition into connected components. The proof of Theorem \ref{thm tempered general} shows that if $\pi(\EE)$ is tempered, then we may write $\EE= \cup_{j \in J} \FF_{\psi_j,temp,\eta_j} $, and we have  
\[ \pi\left(\cup_{j \in J} \FF_{\psi_j,temp,\eta_j} \right)\cong \pi\left(\cup_{j \in J} \FF_{\abs(\psi_j),temp,\eta_j'} \right),\]
for some $\eta_j' \in \{ +, -\}$. Indeed, one can show by Lemma \ref{lem l_n=0} that $\eta_j'=\eta_j$.



Now we extend Definition \ref{def type of A-par} to classify connected components of a general local Arthur parameter of chain shape (and of good parity).
\begin{defn}
Suppose $\psi=\bigoplus_{i=1}^n \rho\otimes S_{a_i}\otimes S_{b_i}$ is a connected component of a local Arthur parameter of chain shape such that $a_i < b_i$ for some $1 \leq i \leq n$.
\begin{enumerate}
    \item [(i)] If $a_1 \equiv b_1 \mod 2$, then we define $\psi$ is of type (I) (resp. (II), (III)) if it satisfies the same condition in Definition \ref{def type of A-par} of type (I) (resp. (II), (III)).
    \item [(ii)] If $a_1 \not\equiv b_1 \mod 2$, then we define 
    \begin{enumerate}
      \item [$\oldbullet$] $\psi$ is of type $(II,\half{1})$ if the pictograph of $\FF_{\psi,temp}$ contains an even number of $\ominus$.
    \item [$\oldbullet$] $\psi$ is of type $(III,\half{1})$ if the pictograph of $\FF_{\psi,temp}$ contains an odd number of $\ominus$.
    \end{enumerate}
\end{enumerate}
\end{defn}

We demonstrate the definition in the following example.
\begin{exmp}\label{ex general chain shape}
Let $\psi_1$ (resp. $\psi_2$) be the local Arthur parameter associated with extended multi-segments $\EE_1$ (resp. $\EE_2$) in Example \ref{ex chain shape gneral}. Then after row exchange, we get 
\begin{align*}
     \EE_1'&=   \begin{blockarray}{cccccccc}
\frac{-7}{2}&\frac{-5}{2}&\frac{-3}{2}&\frac{-1}{2}&\frac{1}{2}&\frac{3}{2}&\frac{5}{2}&\frac{7}{2}\\
\begin{block}{(cccccccc)}
&&&&&\oplus&&\\
&&&&&\oplus&&\\
&&&\oplus&\ominus&\oplus&\ominus&\\
\ominus&\oplus&\ominus&\oplus&\ominus&\oplus&\ominus&\oplus\\
\end{block}
\end{blockarray} \ , \\
\EE_2'&=   \begin{blockarray}{cccccccc}
\frac{-7}{2}&\frac{-5}{2}&\frac{-3}{2}&\frac{-1}{2}&\frac{1}{2}&\frac{3}{2}&\frac{5}{2}&\frac{7}{2}\\
\begin{block}{(cccccccc)}
&&&\oplus&\ominus&\oplus&&\\
&&&&&\oplus&&\\
&&&&&\oplus&\ominus&\\
\ominus&\oplus&\ominus&\oplus&\ominus&\oplus&\ominus&\oplus\\
\end{block}
\end{blockarray} \ .
\end{align*}
Then $\EE_i'= \FF_{\psi_i,temp,+}=\FF_{\psi_i,temp}$ for $i=1,2$. We have $\abs(\psi_1)=\abs(\psi_2)$ and 
\[\FF_{abs(\psi_1),temp,+}=\begin{blockarray}{cccc}
\frac{1}{2}&\frac{3}{2}&\frac{5}{2}&\frac{7}{2}\\
\begin{block}{(cccc)}
&\oplus&&\\
&\oplus&&\\
\ominus&\oplus&\ominus&\oplus\\
\end{block}
\end{blockarray}\]
and
\[ \pi(\EE_1')=\pi(\EE_2')=\pi(\FF_{\abs(\psi_1),temp,+})= \pi( (1/2)^{-},(3/2)^{+},(3/2)^{+},(3/2)^{+},(5/2)^{-},(7/2)^{+}).\]
Since there are 6 $\ominus$'s in the pictographs of both $\FF_{\psi_1,temp}$ and $\FF_{\psi_2,temp}$, $\psi_1,\psi_2$ are both of type $(II,\half{1})$.
\end{exmp}

Here is our main theorem in the general case.
\begin{thm}\label{thm tempered rep general}
Let $\psi$ be a local Arthur parameter.
\begin{enumerate}
    \item [1.] $ \Pi_{\psi}$ contains a tempered representation only if $\psi$ is of chain shape.
    \item [2.] Suppose $\psi$ is of chain shape. Let
\[\psi= \bigoplus_{j \in J} \psi_j\]
denote the decomposition of $\psi$ into connected components. Let $J_{I}$ (resp. $J_{II}$, $J_{III}$, $J_{II,\half{1}}$, $J_{III,\half{1}}$) denote the subset of $J$ consisting of those indices $j$ such that $\psi_j$ is of type (I) (resp. (II), (II,$\half{1}$), (III), (III,$\half{1}$)), and denote its cardinality by $r_{I}$ (resp. $r_{II}$, $r_{II,\half{1}}$, $r_{III}$, $r_{III,\half{1}}$). 
Let $r= r_I+r_{II}+r_{III}$. Then the number of tempered representations inside $\Pi_{\psi}$ is given by
\[ |\Pi_\psi^{temp}|=\begin{cases}
2^{r-1} & \text{ if }r_{I} >0,\\
2^{r} &\text{ if } r_{I}=0 \text{ and } r_{III}+r_{III,\half{1}} \text{ is even},\\
0 &\text{ if } r_{I}=0 \text{ and } r_{III}+r_{III,\half{1}} \text{ is odd},\\
\end{cases}\]
 and these representations are given by
 \[ \pi(\cup_{j \in J} \FF_{\psi_j, temp, \eta_j} ), \]
 where $\eta_j \in \{ +,-\}$ are chosen such that $\cup_{j \in J} \FF_{\psi_j, temp, \eta_j}$ satisfies the sign condition (\ref{eq sign condition}), and $\FF_{\psi_j,temp,\eta_j}=\FF_{\psi_j,temp}$ if $j \in J_{II,\half{1}} \cup J_{III,\half{1}}$. 
 
 Moreover, we have 
 \[ \pi(\cup_{j \in J} \FF_{\psi_j, temp, \eta_j} ) \cong \pi(\cup_{j \in J} \FF_{\abs(\psi_j), temp, \eta_j} ), \]
 and the $L$-data of right hand side are described in Corollary \ref{cor pos tempered}.
\end{enumerate}
\end{thm}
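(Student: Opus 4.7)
The plan is to break the theorem into its three parts and handle them in order, reducing each to results already established in the paper.

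For part 1, the statement is essentially a restatement of Theorem \ref{thm tempered general}(i) applied $\rho$ by $\rho$: if $\pi \in \Pi_\psi$ is tempered, then choosing any $\EE \in \Rep^{(P')}$ with $\pi(\EE)=\pi$ and $\psi_\EE = \psi$ forces each $\EE_\rho$ to have support of chain shape, and hence $\psi$ is of chain shape in the sense of Definition \ref{def chain shape}. Nothing combinatorial is new here.

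For part 2, the first task is to parametrize the tempered members of $\Pi_\psi$. Given a chain shape $\psi$ with connected decomposition $\psi=\bigoplus_{j\in J}\psi_j$, I would argue that any tempered $\pi \in \Pi_\psi$ is of the form $\pi(\EE)$ with $\EE_\rho$ a union, over the connected components of $\psi$ supported at $\rho$, of pieces of the form $\FF_{\psi_j,temp,\eta_j}$ (or $\FF_{\psi_j,temp}$ in the cases of type $(II,\tfrac12)$ and $(III,\tfrac12)$). This is where Theorem \ref{thm tempered general}(ii) does the main work: it shows $\EE^{(P'')}$ has all $l_i=0$, and then the connected decomposition of the support together with the non-vanishing constraints of Proposition \ref{prop positive non-vanishing}(i) force the pieces to have exactly the shape of $\FF_{\psi_j,temp,\pm}$. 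The uniqueness of $\FF_{\psi,temp}$ in the half-integer case is read off from Theorem \ref{thm non-vanishing}(i) (condition $(\ast)$), which rules out one of the two sign choices. Lemma \ref{lem l_n=0} guarantees that the isomorphism $\pi(\cup_j \FF_{\psi_j,temp,\eta_j}) \cong \pi(\cup_j \FF_{\abs(\psi_j),temp,\eta_j})$ preserves the signs $\eta_j$, so the $L$-data description reduces to Corollary \ref{cor pos tempered} applied to $\abs(\psi)$.

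The counting is then a sign-condition computation: each choice of $\eta_j \in \{+,-\}$ for $j \in J_I \cup J_{II} \cup J_{III}$ gives a tempered candidate, and the sign condition \eqref{eq sign condition} either cuts the number of valid tuples in half or kills everything. The key computation is that flipping $\eta_j$ on a component of type $(I)$ or $(III)$ or $(III,\tfrac12)$ changes the value of $\prod_i (-1)^{[b_i/2]} \eta_i^{b_i}$ while flipping on a component of type $(II)$ or $(II,\tfrac12)$ leaves it unchanged; this is a direct parity count on the pictograph. Thus if $r_I>0$, at least one type-$(I)$ flip is free and yields exactly $2^{r-1}$ solutions; if $r_I=0$, then only type-$(III)$ and type-$(III,\tfrac12)$ flips toggle the sign, and whether $0$ or $2^r$ solutions exist depends only on the parity of $r_{III}+r_{III,\tfrac12}$. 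I expect this parity bookkeeping to be the main technical step.

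For part 3, one direction is immediate: if $\Pi_\psi \cap \Pi_\phi \neq \emptyset$ and $\phi$ is tempered, the common member is tempered, so $\Pi_\psi$ contains a tempered representation; and by Theorem \ref{thm GGP20} we have $\psi^\Delta = \phi^\Delta = \phi$. Conversely, assume $\Pi_\psi$ contains a tempered representation $\pi$. By Theorem \ref{thm Arthur tempered}, $\pi$ lies in $\Pi_{\phi'}$ for a unique tempered local Arthur parameter $\phi'$, and applying Theorem \ref{thm GGP20} once more gives $\phi'=\phi'^\Delta = \psi^\Delta$. So if $\phi = \psi^\Delta$, then $\pi \in \Pi_\psi \cap \Pi_\phi$. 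The hypothesis $\phi=\phi^\Delta$ for tempered $\phi$ is automatic from the definition of the diagonal restriction. The main obstacle across the whole proof is the parity bookkeeping in part 2, particularly keeping track of how the half-integer connected components interact with the integer ones in the sign condition; once that combinatorics is correctly organized the rest is essentially assembly of previously established results.
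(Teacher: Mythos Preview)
Your overall plan matches the paper's approach: the paper does not give a self-contained proof of this theorem but treats it as a consequence of Theorem \ref{thm tempered general}, the discussion of $\FF_{\psi_j,temp,\pm}$, and the definitions of types, just as you outline. Parts 1 and 3 are fine as you describe them.

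However, your description of the sign-condition computation in part 2 contains a concrete error. You claim that flipping $\eta_j$ on a component of type $(III)$ (or $(III,\tfrac12)$) changes the value of $\prod_i (-1)^{[b_i/2]}\eta_i^{b_i}$. It does not. Passing from $\FF_{\psi_j,temp,+}$ to $\FF_{\psi_j,temp,-}$ replaces each $\eta_i$ by $-\eta_i$, which multiplies the product by $(-1)^{\sum_i b_i}=(-1)^{\#\Omega(\FF_{\psi_j,temp})}$. For types $(II)$ and $(III)$, the total $\#\Omega$ is even (the $\oplus$-count and $\ominus$-count have the same parity), so the flip is sign-neutral; only type $(I)$ flips toggle the sign. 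The half-integer types have no flip at all.

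The correct mechanism is this: since $l_i=0$, one checks row-by-row that $(-1)^{[b_i/2]}\eta_i^{b_i}$ equals $(-1)$ to the number of $\ominus$ symbols in that row, so the sign condition \eqref{eq sign condition} becomes ``the total number of $\ominus$ is even.'' Each type-$(II)$ or $(II,\tfrac12)$ component contributes an even $\ominus$-count regardless of $\eta_j$; each type-$(III)$ or $(III,\tfrac12)$ component contributes an odd $\ominus$-count regardless of $\eta_j$; each type-$(I)$ component contributes an $\ominus$-count whose parity depends on $\eta_j$. Hence if $r_I>0$ one adjusts the type-$(I)$ choices to hit even parity, giving $2^{r-1}$ solutions; if $r_I=0$ the parity is frozen at $r_{III}+r_{III,1/2}\pmod 2$, giving $2^r$ or $0$ solutions accordingly. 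Once you replace your flipping claim with this fixed-contribution argument, your proof goes through.
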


\begin{remark}
Let $\phi$ be a tempered local Arthur parameter. By Theorems \ref{thm GGP20} and \ref{thm Arthur tempered} we have that $\Pi_{\psi} \cap \Pi_{\phi} \neq \emptyset$ if and only if $\psi^{\Delta}= \phi$ and $\Pi_{\psi}$ contains a tempered representation. On the other hand, $\Pi_{\psi}$ contains a tempered representation if and only if $|\Pi_\psi^{temp}|$ is nonzero (see Theorem \ref{thm tempered rep general}(2)). Therefore, Theorem \ref{thm tempered rep general} gives a complete characterization of the set
\[ \{ \psi \ | \ \Pi_{\psi} \cap \Pi_{\phi} \neq \emptyset\}.\]
\end{remark}

Here is an example of the theorem.
\begin{exmp}
Let $\rho_1$ be the trivial representation and $\rho_2$ be symplectic of dimension $d$. Let 
\begin{align*}
    \psi_1&=\rho_1 \otimes S_{4} \otimes S_{7} + \rho_1\otimes S_{1} \otimes S_{2},\\
    \psi_2&= \rho_2 \otimes S_4 \otimes S_4,\\
    \psi_2'&= \rho_2 \otimes S_1 \otimes S_1,\\
    \psi_3'&= \rho_2 \otimes S_5 \otimes S_3.
\end{align*}
We consider $ \psi= \psi_1+ \psi_2$ and $\psi'= \psi_1+\psi_2'+\psi_3'$ which are both local Arthur parameters of $\SO_{30+16d+1}$. One can check that every $\psi_i, \psi_j'$ is connected. For $\psi_1$, we have 
\begin{align*}
    \FF_{\psi_1,temp,+}^{(P'')} &=\bordermatrix{
    &\half{-3}&\half{-1}&\half{1}&\half{3}&\half{5}&\half{7}&\half{9} \cr
    &&\ominus&\oplus&&&&\cr
    & \oplus &\ominus &\oplus &\ominus& \oplus & \ominus &\oplus \cr 
    }_{\rho_1}, \\ \FF_{\psi_1,temp,-}^{(P'')} &=\bordermatrix{
    &\half{-3}&\half{-1}&\half{1}&\half{3}&\half{5}&\half{7}&\half{9} \cr
    &&\oplus&\ominus&&&&\cr
    & \ominus &\oplus &\ominus &\oplus& \ominus & \oplus & \ominus \cr
    }_{\rho_1},\\
    \FF_{\psi_1,temp,+} &=\bordermatrix{
    &\half{-3}&\half{-1}&\half{1}&\half{3}&\half{5}&\half{7}&\half{9} \cr
    & \lhd &\lhd &\oplus &\ominus& \oplus & \rhd &\rhd \cr 
    &&\ominus&\oplus&&&&\cr
    }_{\rho_1}, \\ \FF_{\psi_1,temp,-} &=\bordermatrix{
    &\half{-3}&\half{-1}&\half{1}&\half{3}&\half{5}&\half{7}&\half{9} \cr
    & \lhd &\lhd &\ominus &\oplus& \ominus & \rhd &\rhd \cr 
    &&\oplus&\ominus&&&&\cr
    }_{\rho_1}.
\end{align*}
One can check that $ \FF_{\psi_1,temp, +}$ does not satisfy Theorem \ref{thm non-vanishing}(i), and hence we have $\FF_{\psi_1,temp}=\FF_{\psi_1,temp,-}$ and $\psi_1$ if of type $(III, \half{1})$. We have $\abs(\psi_1)= \rho_1 \otimes S_{6} \otimes S_5$ and 
\[ \FF_{\abs(\psi_1),temp,-}= \bordermatrix{
&\half{1}&\half{3}&\half{5}&\half{7}& \half{9} \cr
& \ominus&\oplus&\ominus&\oplus &\ominus \cr
}_{\rho_1}. \]
For $\psi_2$, we have 
\[ \FF_{\psi_2,temp,+}= \bordermatrix{
&0&1&2&3\cr
&\ominus&\oplus&\ominus &\oplus \cr
}_{\rho_2}, \ \FF_{\psi_2,temp,-}= \bordermatrix{
&0&1&2&3\cr
&\oplus&\ominus &\oplus&\ominus \cr
}_{\rho_2},  \]
and hence $\psi_2$ is of type (II). Then the formula in Theorem \ref{thm tempered rep general}(2) shows that there is no tempered representation in $\Pi_{\psi}$.

On the other hand, both $\psi_2'$ and $\psi_3'$ are of type (I) and we have
\begin{align*}
\FF_{\psi_2',temp,+}= \bordermatrix{
&0&1&2&3\cr
&\oplus& && \cr
}_{\rho_2}&,\  \FF_{\psi_2',temp,-}= \bordermatrix{
&0&1&2&3\cr
&\ominus& && \cr
}_{\rho_2},  \\
\FF_{\psi_3',temp,+}= \bordermatrix{
&0&1&2&3\cr
&&\oplus&\ominus&\oplus\cr
}_{\rho_2}&, \ \FF_{\psi_3',temp,-}= \bordermatrix{
&0&1&2&3\cr
&&\ominus &\oplus&\ominus \cr
}_{\rho_2}.
\end{align*}
So the formula in Theorem \ref{thm tempered rep general}(2) shows that there are $2$ tempered representations in $\Pi_{\psi'}$, and they are 
\begin{align*}
    \pi_1&=\pi( \FF_{\psi_1,temp,-} \cup \FF_{\psi_2',temp ,+} \cup \FF_{\psi_3',temp, +}),\\
    \pi_2&=\pi( \FF_{\psi_1,temp,-} \cup \FF_{\psi_2',temp ,-} \cup \FF_{\psi_3',temp, -}).
\end{align*}
Their $L$-data are the same if we replace $ \FF_{\psi_1,temp,-}$ by $\FF_{\abs(\psi_1),temp,-}$, and we have  
\begin{align*}
    \pi_1&= \pi\left( \left( \half{1}\right)_{\rho_1}^{-},  \left( \half{3}\right)_{\rho_1}^{+}, \left( \half{5}\right)_{\rho_1}^{-}, \left( \half{7}\right)_{\rho_1}^{+}, \left( \half{9}\right)_{\rho_1}^{-}, \left( 0\right)_{\rho_2}^{+},\left( 1\right)_{\rho_2}^{+},\left( 2\right)_{\rho_2}^{-},\left( 3\right)_{\rho_2}^{+}\right),\\
    \pi_2&= \pi\left( \left( \half{1}\right)_{\rho_1}^{-},  \left( \half{3}\right)_{\rho_1}^{+}, \left( \half{5}\right)_{\rho_1}^{-}, \left( \half{7}\right)_{\rho_1}^{+}, \left( \half{9}\right)_{\rho_1}^{-}, \left( 0\right)_{\rho_2}^{-},\left( 1\right)_{\rho_2}^{-},\left( 2\right)_{\rho_2}^{+},\left( 3\right)_{\rho_2}^{-}\right),
\end{align*}
by Corollary \ref{cor pos tempered}.
\end{exmp}

Given a tempered representation $\pi$, one can apply the converse of Theorem \ref{thm tempered rep general} to classify all local Arthur parameter $\psi$ such that $\Pi_{\psi}$ contains $\pi$, which provides a lot of examples for the non-tempered GGP conjecture considered in \cite{GGP20}. We explain a simple case in the following example, for notations, we refer to \cite{GGP20}. 

\begin{exmp}\label{exmp GGP}
Let $\phi_1\times \phi_2$ be a discrete tempered $L$-parameter of good parity of $\SO_{2n+1}\times \SO_{2m}$, and let $(\pi_1,\pi_2) \in \Pi_{\phi_1} \times \Pi_{\phi_2}$ (Vogan local $L$-packets) be the unique pair such that $d(\pi_1,\pi_2) =1$. We further assume that $\pi_1$ is a representation of the split $\SO_{2n+1}(F)$, and write $\pi_1= \pi(\phi_1, \varepsilon)$. Now we take any local Arthur parameter $\psi_1= \bigoplus_{i \in I} \rho_i \otimes S_{a_i} \otimes S_{b_i}$ such that $\pi_1 \in \Pi_{\psi_1}$ and $a_i \geq b_i$ for all $i$. By definition, we have
\[d(\psi_1, \phi_2)\geq d(\pi_1,\pi_2)= 1.\]
We claim that the pair $(\psi_1, \phi_2)$ is relevant if and only if $b_i \leq 2$ for all $i$. In this case, \cite[Conjecture 6.1]{GGP20} implies that
\[ d(\psi_1, \phi_2)\geq d(\phi_{\psi_1}, \phi_2)+ d(\pi_1,\pi_2) = 2. \]

Indeed, the necessity follows from the definition of relevance, so it suffices to show the sufficiency. Let $\EE, \EE'$ be the extended multi-segments such that $ \pi(\EE)=\pi(\EE')=\pi_1$ and $ \supp(\EE)= \supp(\phi_1)$, $\supp(\EE')= \supp(\psi_1)$. The assumption then implies $\EE'$ is obtained from $\EE$ by a sequence of $ui$ of type 3' (see also the proof of Proposition \ref{prop pos chain shape}). In each stage, the $ui$ changes the local Arthur parameter by
\[ \rho \otimes S_{a} \otimes S_1 + \rho \otimes S_{a+2} \otimes S_1 \mapsto \rho \otimes S_{a+1} \otimes S_2,\]
and the applicability of this $ui$ is equivalent to the equation
\begin{align*}
     \varepsilon(\rho \otimes S_{a} ) \varepsilon(\rho \otimes S_{a+2})=-1,
\end{align*}
which is also equivalent to the condition that $\phi_2$ contains $\rho \otimes S_{a+1}$ with odd multiplicity by \cite[Proposition 7.6]{GGP20}. Therefore, we can decompose
\begin{align*}
    \psi_1&=  \sum_{i \not\in I_1} \rho_i \otimes S_{a_i} \otimes S_1+\sum_{i \in I_1} \rho_{i} \otimes S_{a_i} \otimes S_2 ,\\
    \phi_2&=  \left(\phi_2- \sum_{i \in I_1} \rho_{i} \otimes S_{a_i} \otimes S_1\right)+\sum_{i \in I_1} \rho_{i} \otimes S_{a_i} \otimes S_1,
\end{align*}
where $I_1= \{i \in I \ | \ b_i=2\}$. This verifies the relevance of the pair $ (\psi_1,\phi_2)$.
\end{exmp}

\section{\texorpdfstring{The local $L$-packets of Arthur type}{}}\label{section L-packet}
Let $\psi$ be a local Arthur parameter of good parity, $\EE$ be an extended multi-segment in the packet of $\psi$, and $\phi_{\psi}$ be the $L$-parameter associated with $\psi$. To be more specific, write $$
\psi=\bigoplus_{i=1}^r (\rho_i\otimes S_{a_i} \otimes S_{b_i})^{\oplus n_i},
$$
where $n_i$ is the multiplicity. Then we have the following decomposition
\begin{align}\label{eq associated L-parameter}
\phi_{\psi}=\bigoplus_{i=1}^r  \left(\bigoplus_{j=0}^{b_i-1} \rho_i |\cdot|^{\frac{b_i-1}{2}-j}\otimes S_{a_i} \right)^{\oplus n_i}.    
\end{align}

The goal of this section is to give a necessary and sufficient condition on $\EE$ such that $\pi(\EE)$ is in the local $L$-packet $\Pi_{\phi_{\psi}}$. The results can be applied to general local Arthur parameters $\psi \in \Psi^{+}(G_n)$ not necessarily of good parity. To be explicit, decompose $\psi=\psi_{nu,>0}\oplus \psi_{np}\oplus \psi_{gp}\oplus \psi_{np}^{\vee} \oplus \psi_{nu,>0}^{\vee}$ as in Theorem \ref{thm red from nu to gp}. Then
\[ \Pi_{\psi}= \{ \tau_{\psi_{nu,>0}} \times \tau_{\psi_{np}} \rtimes \pi(\EE) \ | \ \pi(\EE) \in \Pi_{\psi_{gp}}\}, \]
and $\tau_{\psi_{nu,>0}} \times \tau_{\psi_{np}} \rtimes \pi(\EE) \in \Pi_{\phi_{\psi}}$ if and only if $ \pi(\EE) \in \Pi_{\phi_{\psi_{gp}}}$.

We give the following definition.

\begin{defn}\label{def (L)}
We say an extended multi-segment $\EE=\cup_{\rho} \{([A_i,B_i]_{\rho},l_i,\eta_i)\}_{i \in (I_{\rho},>)}$ satisfies (L) if after row exchanges, it satisfies the following conditions:
For all $\rho$ and $i<j \in I_{\rho}$,
\begin{enumerate}
    \item [(i)] $ A_i +B_i \leq A_{j}+B_{j}.$
    \item [(ii)] $(A_i-B_i+1)-2l_i \leq 1$.
    \item [(iii)] If $A_i+B_i=A_{j}+B_{j}$ and $A_i-B_i+1$ is odd, then $\eta_i=\eta_{j}$.
\end{enumerate}
\end{defn}

\begin{remark}\label{rmk (L)}\ 
\begin{enumerate}
    \item [(1)] The conditions on an extended multi-segment satisfying (L) is equivalent to the following conditions on the associated pictograph. Identify $(I_{\rho},>)$ with $\{1,\dots,n\}$ where $1<\cdots <n$.
\begin{enumerate}
    \item [(i)]The middle point of each row is non-decreasing.
    \item [(ii)] Every row contains at most one circle. Equivalently, every row has the maximal pairs of $\lhd,\rhd$.
    \item [(iii)] Any circles in the same column have the same sign.
\end{enumerate}
\item [(2)] Let $\EE= \cup_{\rho}\{([A_i,B_i]_{\rho},l_i,\eta_i)\}_{i \in (I_{\rho},>)} \in \Rep$ be an extended multi-segment with $A_i+B_i$ non-decreasing in $(I_{\rho},>)$ for any $\rho$. We construct another extended multi-segment
\[ \EE_0:=\left(\sum_{\rho} \sum_{i \in I_{\rho}} add_i^{-l_i}\right)(\EE). \]
Here if $A_i-B_i+1= 2l_i$, then the $add_{i}^{-l_i}$ is understood as removing the extended segment $([A_i,B_i]_{\rho},l_i,\eta_i)$ from $\EE$. Then $\EE$ satisfies (L) if and only if $\psi_{\EE_0}$ is tempered and $\pi(\EE_0)\neq 0$.
\end{enumerate}

\end{remark}

The following proposition describes a sufficient condition on $\EE$ such that $\pi(\EE) \in \Pi_{\phi_{\psi_{\EE}}}$ in the special case that $\EE$ is positive.
\begin{prop}\label{prop $L$-data L-packet}
If $\EE=\cup_{\rho} \{([A_i,B_i]_{\rho},l_i,\eta_i)\}_{i \in (I_{\rho},>)}$ is a positive extended multi-segment satisfying (L), then $\pi(\EE) \neq 0$. Moreover, its $L$-data can be described as follows. Write
\[ \pi(\EE)= L\left( \Delta_{\rho_1}[x_1,-y_1],\dots,\Delta_{\rho_t}[x_t,-y_t]; \pi\left(\sum_{i=t+1}^{m} \rho_i \otimes  S_{2z_i+1},\varepsilon \right) \right). \]
Then
\begin{enumerate}
    \item [(a)] We have an equality of multi-sets
    \begin{align*}
        \{ [x_i,-y_i]_{\rho_i} \}_{i=1}^t&= \bigcup_{\rho}\sum_{i\in I_{\rho}} \{ [B_i,-A_i]_{\rho},\dots,[B_i+l_i-1,-A_i+l_i-1]_{\rho} \}.
    \end{align*}
    \item [(b)] Let $I_{\rho}^{odd}= \{j \in I_{\rho}\ | \ A_j-B_j+1 \text{ is odd}\}$. Then we have equality of multi-sets
    \[\\
        \{(\rho_i, z_i)\}_{i=1}^m =\bigcup_{\rho} \left\{ \left(\rho,\frac{A_j+B_j}{2}\right)\ \middle| \ j \in I_{\rho}^{odd} \right\}. \]
    If $z_i= \half{A_j+B_j}$, the character $\varepsilon$ takes value $\eta_j$ on the summand $\rho\otimes S_{2z_i+1}$.
\end{enumerate}
In particular, $\pi(\EE) \in \Pi_{\phi_{\psi_{\EE}}}$.
\end{prop}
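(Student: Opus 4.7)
The strategy is to apply Definition \ref{def rep of segment} directly and identify the resulting socle with the claimed Langlands subrepresentation. The argument splits naturally into three pieces.

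First, under our hypotheses the tempered parameter $\phi$ and the Speh factors simplify drastically. Condition (ii) forces $l_i=\lfloor b_i/2\rfloor$, so in the defining formula
$\phi=\bigoplus_\rho\bigoplus_{i\in I_\rho}\rho\otimes(S_{2(B_i+l_i)+1}\oplus\cdots\oplus S_{2(A_i-l_i)+1}),$
each row $i$ contributes nothing when $b_i$ is even and a single summand $\rho\otimes S_{A_i+B_i+1}$ with $\varepsilon$-value $\eta_i$ when $b_i$ is odd. Condition (iv) is exactly what makes $\varepsilon$ well-defined whenever two odd rows share the same $A_i+B_i$. This already yields part (b). Simultaneously, the $i$-th Speh factor is $L(\Delta_\rho[B_i,-A_i],\Delta_\rho[B_i+1,-A_i+1],\ldots,\Delta_\rho[B_i+l_i-1,-A_i+l_i-1])$, whose constituent segments are precisely those listed in part (a).

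Second, I would verify $\pi(\EE)\neq 0$ by checking the hypotheses of Theorem \ref{thm non-vanishing}(ii); since $\EE$ is non-negative (condition (i)), this reduces to verifying Proposition \ref{prop positive non-vanishing}(i) on every adjacent pair in every admissible order. Under condition (iii) any adjacent pair falls into case (1), (2), or (3) of Proposition \ref{prop positive non-vanishing}(i). A direct calculation using $B_i+l_i=\lceil(A_i+B_i)/2\rceil$ and $A_i-l_i=\lfloor(A_i+B_i)/2\rfloor$ shows that the numerical inequalities on these quantities hold \emph{regardless} of the sign $\epsilon=(-1)^{A_i-B_i}\eta_i\eta_{i+1}$, in each of the three cases. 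For admissible orders $\gg$ different from the given one, I would apply row exchanges (Theorem \ref{thm row exchange}) and rerun the same case analysis, noting that any admissible permutation preserves (P) and hence the qualitative splitting into the three cases.

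Third, and most importantly, I need to identify
$$\pi(\EE) \;=\; \mathrm{soc}\bigl(\tau_{\EE}\rtimes \pi(\phi,\varepsilon)\bigr) \;\cong\; L\bigl(\{\Delta_\rho[B_i+k,-A_i+k]\};\pi(\phi,\varepsilon)\bigr).$$
I would order the full collection of segments by nondecreasing $x+(-y)=-(A_i-B_i-2k)$; within a single row this is strictly increasing in $k$ and was already absorbed into the Speh construction. Across different rows, segments from $i\neq j$ are pairwise unlinked whenever $A_i+B_i\neq A_j+B_j$, so Lemma \ref{lem commutativity} rearranges $\tau_\EE$ into exactly the standard module indexed by the sorted list of segments. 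The Langlands classification then pins down $\mathrm{soc}$ as the unique irreducible subrepresentation, yielding the claimed form; the tempered character is pulled off unchanged, giving $\varepsilon(\rho\otimes S_{A_j+B_j+1})=\eta_j$ as required.

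The main obstacle is the degenerate situation $A_i+B_i=A_j+B_j$ with both $b_i,b_j$ odd, since then the segments from rows $i$ and $j$ share the same Langlands sum and the relevant pairs need not be unlinked. Here condition (iv)—which forces $\eta_i=\eta_j$—is essential: it ensures that the contribution of these two rows to the tempered summand $\rho\otimes S_{A_i+B_i+1}$ appears with a consistent character, so that the parabolic induction has a well-defined Langlands subrepresentation matching the socle. The bookkeeping at this point, carried out via Theorem \ref{thm derivative-socle} and Theorem \ref{thm Arthur tempered}, is the technical heart of the proof.
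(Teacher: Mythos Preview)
Your approach has a genuine gap at the very first move of Step~3. The socle formula
\[
\pi(\EE)=\mathrm{soc}\bigl(\tau_{\EE}\rtimes\pi(\phi,\varepsilon)\bigr)
\]
from Definition~\ref{def rep of segment} applies only under the separation hypothesis $B_j>A_i$ for all $j>i\in I_\rho$. Condition~(iii) orders the rows by $A_i+B_i$, which does \emph{not} force separation; it permits $[A_j,B_j]\subsetneq[A_i,B_i]$ for $i<j$. Concretely, take $[A_1,B_1]=[5,0]$ with $l_1=3$ and $[A_2,B_2]=[4,3]$ with $l_2=1$: all of (i)--(iv) hold, yet $B_2=3<5=A_1$. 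In such cases $\pi(\EE)$ is by definition obtained from a shifted $\EE'$ via a chain of derivatives, and controlling those derivatives is exactly what the proposition asks you to do.

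The second ingredient of Step~3 also fails: your claim that segments from rows $i\neq j$ are pairwise unlinked whenever $A_i+B_i\neq A_j+B_j$ is false. In the same example the segment $[0,-5]$ from row~1 and the segment $[3,-4]$ from row~2 are linked (their union $[3,-5]$ is a segment and neither contains the other), so Lemma~\ref{lem commutativity} does not let you reorder $\tau_\EE$ into a standard module.

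The paper avoids both issues by induction on $\sum_\rho(|I_\rho|-1)$. One shifts the last row $([A_n,B_n]_\rho,l_n,\eta_n)$ by $d\gg0$ so that the inductive hypothesis (combined with Lemma~\ref{lemma far away}) gives the $L$-data of $\pi(\EE_d)$, and then shows that the derivative chain $D_{\rho|\cdot|^{A_n+1}}\circ\cdots\circ D_{\rho|\cdot|^{B_n+1}}$ carries this $L$-data to the claimed $L$-data for $\pi(\EE)$. The paper explicitly notes that because the order need not satisfy $(P')$, these derivatives are not highest in general; one must work at the level of the Grothendieck group and isolate the correct irreducible subquotient at each step, using a commutation argument with $\rho|\cdot|^{B_n+i}$ for the first $l_n$ derivatives and then \cite[Theorem 7.1]{AM20} for the remaining ones. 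Non-vanishing of $\pi(\EE)$ falls out as a by-product, so your Step~2 becomes unnecessary.
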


\begin{proof}
If $|I_{\rho}|=1$ for all $\rho$ such that $\EE_{\rho}$ is non-empty, the conclusions are clear by definition. We proceed by induction on $\sum_{\rho, \EE_{\rho}\neq \emptyset} (|I_{\rho}|-1)$.

Fix a $\rho$ such that $|I_{\rho}|>1$ and denote $\FF:=\EE_{\rho}= \{([A_i,B_i]_{\rho},l_i,\eta_i)\}_{i=1}^n$. Decompose $\FF=\FF_1+\FF_2$ where $\FF_2$ consists of the $n$-th row. We denote $\EE_{d}:=\EE^{\rho} \cup (sh_n^d(\FF))$. Clearly, $\EE_d$ also satisfies (L). When $d$ is large enough, the computation of derivative in \cite{AM20} shows that the conclusions hold for $\EE_{d} $ if and only if the same hold for $\EE^{\rho} \cup \FF_1 \cup (sh^d(\FF_2))_{\rho^{\ast}}$, where $\rho^{\ast}$ is chosen as in Definition \ref{def modification}. Therefore, the induction hypothesis implies $\EE_d $ satisfies all of the conclusions. Thus, it suffices to show the conclusions for $\EE_d$ implies that for  $\EE_{d-1}$. By replacing $\FF$ with $sh^{d-1}_n(\FF)$, we assume $d=1$ without loss of generality. 

Let $\pi$ be the representation whose $L$-data satisfies Conditions (a) and (b) in the statement (with respect to $\EE$). Our goal is to show
\begin{align} \label{eq L-data}
     D_{\rho|\cdot|^{A_n+1}}\circ \cdots\circ D_{\rho|\cdot|^{B_{n}+1}}(\pi(\EE_1) )\geq \pi.
\end{align}
By definition, the left hand side is exactly $\pi(\EE)$. Therefore, if \eqref{eq L-data} is proved, then $\pi(\EE)=\pi \neq 0$.

We remark that since the admissible order on $I_{\rho}$ does not necessarily satisfy (P'), some of the derivatives in \eqref{eq L-data} may not be highest, and hence those derivatives may not give irreducible representations. However, based on the form of $\pi(\EE_1)$, we can pinpoint the correct irreducible subquotient of these derivatives that contribute to $\pi$ eventually.

We define an intermediate irreducible representation $\pi'$ by replacing the segments $\{\Delta_{\rho}[B_n+1,-(A_n+1)],\dots,\Delta_{\rho}[(B_{n}+1)+l_n-1, -(A_n+1)+l_n-1] \}$ in the $L$-data of $\pi(\EE_1)$ with $\{\Delta_{\rho}[B_n,-(A_n+1)],\dots,\Delta_{\rho}[B_{n}+l_n-1, -(A_n+1)+l_n-1] \}$ (possibly up to a reordering). Then it suffices to show the following claims.
\begin{enumerate}
    \item [1.] $ D_{\rho|\cdot|^{B_n+l_n}}\circ \cdots \circ D_{\rho|\cdot|^{B_{n}+1}}(\pi(\EE_1)) \geq \pi'$.
    \item [2.] $ D_{\rho|\cdot|^{A_{n}+1}}\circ \cdots \circ D_{\rho|\cdot|^{B_n+l_n+1}}(\pi') =\pi$.
\end{enumerate}


Now we start the proof of Claim 1. Set $\pi_0':=\pi(\EE_1)$. For $i=1,\dots,l_n$, We define $\pi_i'$ recursively by replacing the segment $ \Delta_{\rho}[(B_n+1)+i-1,-(A_n+1)+i-1]$ in the $L$-data of $\pi_{i-1}'$ with $ \Delta_{\rho}[B_n+i-1,-(A_n+1)+i-1]$. Note that $\pi_{l_n}'=\pi'$. We claim that $\pi_i' \leq D_{\rho|\cdot|^{B_n+i}} (\pi_{i-1}')$. Indeed, write
\begin{align*}
    \pi_{i-1}' \hookrightarrow & \left(\bigtimes_{j=1}^{r} \Delta_{\rho_j}[\alpha_j,\beta_j]\right)\times \Delta_{\rho}[(B_n+1)+i-1,-(A_n+1)+i-1]\\
    &\times \left(\bigtimes_{j=r+2}^{t} \Delta_{\rho_j}[ \alpha_{j},\beta_{j}] \right)\rtimes \pi(\phi,\varepsilon)
\end{align*}
as the unique irreducible subrepresentation from Langlands classification. We may assume
\begin{align}\label{eq 10.2}
    \alpha_j+ \beta_j < ((B_n+1)+i-1) + (-(A_n+1)+i-1)=B_n-A_n+2i-2
\end{align}
for all $1 \leq j \leq r$. Moreover, using Condition (ii) of $\EE_1$ and Conclusion (a) for $\pi(\EE_1)$, one can see that for $1 \leq j \leq r$, if $\rho_j \cong \rho$, then 
\begin{align}\label{eq 10.3}
    \alpha_j- \beta_j  < (B_n+1)+ (A_n+1).
\end{align}
Therefore, from \eqref{eq 10.2} and \eqref{eq 10.3}, we have (note that $A_n \pm B_n \in \Z$)
\[ \alpha_j \leq  B_n+i -2. \]
As a consequence, the segments $[B_n+i,B_n+i]_{\rho}$ and $[\alpha_j,\beta_j]_{\rho_j}$ are not linked for all $1 \leq j\leq r$, and hence
\begin{align*}
    \pi_{i-1}' \hookrightarrow & \left(\bigtimes_{j=1}^{r} \Delta_{\rho_j}[\alpha_j,\beta_j]\right)\times \rho|\cdot|^{B_n+i} \times \Delta_{\rho}[B_n+i-1,-(A_n+1)+i-1]\\
    &\times \left(\bigtimes_{j=r+2}^{t} \Delta_{\rho_j}[ \alpha_{j},\beta_{j}]\right) \rtimes \pi(\phi,\varepsilon)\\
    = &\rho|\cdot|^{B_n+i} \times \left(\bigtimes_{j=1}^{r} \Delta_{\rho_j}[\alpha_j,\beta_j] \right) \\ &\times  \Delta_{\rho}[B_n+i-1,-(A_n+1)+i-1] \times \left(\bigtimes_{j=r+2}^{t} \Delta_{\rho_j}[ \alpha_{j},\beta_{j}] \right)\rtimes \pi(\phi,\varepsilon)\\
    =& \rho|\cdot|^{B_n+i} \rtimes \sigma,
\end{align*}
where
\[ \sigma= \left(\bigtimes_{j=1}^{r} \Delta_{\rho_j}[\alpha_j,\beta_j]\right)\times  \Delta_{\rho}[B_n+i-1,-(A_n+1)+i-1]\times \left(\bigtimes_{j=r+2}^{t} \Delta_{\rho_j}[ \alpha_{j},\beta_{j}] \right)\rtimes \pi(\phi,\varepsilon).  \]
Note that $\pi_i' $ is the unique irreducible subrepresentation of $\sigma$ from Langlands classification. Then Lemma \ref{lem Frobenius} implies $D_{\rho|\cdot|^{B_n+i}}(\pi_{i-1}') \geq \pi_i'$. This completes the proof of the first claim.

Claim 2 follows directly from the formula in \cite[Theorem 7.1]{AM20} and we omit the detail. We remark that each derivative in Claim 2 is highest due to Conditions (ii) and (iii) for $\EE_1$ in the statement. 
This completes the proof of the proposition.
\end{proof}

Fix a general extended multi-segment $\EE$. For $d \in \Z_{\geq 0}$, let $sh^{d}(\psi)$ be the local Arthur parameter associated to $sh^d(\EE)$, and let $sh^d(\phi_{\psi}):= \phi_{sh^d(\psi)}$. Note that if $\phi_{\psi}$ is given in \eqref{eq associated L-parameter}, then 
\[sh^d(\phi_{\psi})=\bigoplus_{i=1}^r  \left(\bigoplus_{j=0}^{b_i-1} \rho_i |\cdot|^{\frac{b_i-1}{2}-j}\otimes S_{a_i+2d} \right)^{\oplus n_i}.\]
The next proposition helps us to generalize Proposition \ref{prop $L$-data L-packet}.
\begin{prop} \label{prop L-packet shift}

Suppose $\pi(\EE)$ is in the L-packet $\Pi_{\phi_{\psi_{\EE}}}$. Then
\begin{enumerate}
    \item [(a)]$  \Omega(\EE)= \Omega(\pi(\EE)),$
    \item [(b)] $\pi(sh^d(\EE)) \in \Pi_{sh^d(\phi_{\psi_{\EE}})}$ for any $d \in \N$.
\end{enumerate}
\end{prop}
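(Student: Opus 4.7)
The plan is to reduce both parts to a direct reading of the Langlands data of any representation in the $L$-packet $\Pi_{\phi_\psi}$, which is forced by $\phi_\psi$ alone up to the character on the component group. The key observation is that the formula
\[
\phi_\psi = \bigoplus_\rho \bigoplus_{i \in I_\rho} \bigoplus_{k=0}^{b_i-1} \rho \otimes S_{a_i}\,|\cdot|^{\frac{b_i-1}{2}-k}
\]
naturally splits: summands with $k > \frac{b_i-1}{2}$ have negative shift and correspond to Langlands data $\Delta_\rho[A_i-k,\,-(B_i+k)]$; the summand $k=\frac{b_i-1}{2}$ (present only when $b_i$ is odd, i.e.\ $i\in I_\rho^{odd}$) is self-dual and contributes a tempered summand $\rho\otimes S_{A_i+B_i+1}$; summands with $k<\frac{b_i-1}{2}$ are the duals of the first class and are absorbed by the $\rtimes$-induction on the classical group side.

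For (a), I would then directly compute $\Omega(\pi(\EE))$ from this Langlands data exactly as was done in the verification immediately following Proposition \ref{prop $L$-data L-packet}. From each row $i$, the $x$-endpoints $\{A_i-k\}$ and $y$-endpoints $\{B_i+k\}$ in the range $\lceil \frac{A_i-B_i+1}{2}\rceil \leq k \leq b_i-1$, together with the possible tempered $z$-value $\frac{A_i+B_i}{2}$ when $A_i-B_i$ is even, assemble into the full multi-set $[A_i,B_i]_\rho$. Summing over $i$ and $\rho$ yields $\Omega(\pi(\EE)) = \Omega(\EE)$.

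For (b), first note $sh^d(\EE)\in\Rep$ with $\pi(sh^d(\EE))\neq 0$: since $\pi(\EE)\neq 0$, shifting right only weakens the non-vanishing constraints, and Theorem \ref{thm non-vanishing}(i) (together with Corollary \ref{cor shift in single rho}) provides an explicit $L$-data for $\pi(sh^d(\EE))$ obtained from that of $\pi(\EE)$ by shifting every Langlands endpoint $x_i,y_i$ and every tempered exponent $z_j$ upward by $d$. Under this shift, the $L$-parameter summand $\rho\otimes S_{x_i+y_i+1}|\cdot|^{(x_i-y_i)/2}$ coming from $\Delta_{\rho}[x_i,-y_i]$ becomes $\rho\otimes S_{x_i+y_i+1+2d}|\cdot|^{(x_i-y_i)/2}$, and each tempered $\rho_j\otimes S_{2z_j+1}$ becomes $\rho_j\otimes S_{2z_j+1+2d}$. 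Combined with part (a), which identifies these summands with those of $\phi_\psi$, the shifted $L$-parameter matches precisely $\phi_{sh^d(\psi)}= sh^d(\phi_\psi)$, because $sh^d$ sends $a_i\mapsto a_i+2d$ while fixing $b_i$ and the exponent $j=\frac{b_i-1}{2}-k$.

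The main obstacle is the structural claim underlying (a): that for $\pi\in\Pi_{\phi_\psi}$ the Langlands data are forced to be exactly the non-tempered segments of $\phi_\psi$ with the prescribed tempered residue. This is standard for $\GL_n$, but for symplectic/odd-orthogonal groups requires care in the $\rtimes$-induction to ensure that paired dual segments $\Delta_\rho[x,-y]$ and $\Delta_\rho[y,-x]$ appear only once on the Langlands side; I would handle this by using the injectivity $\psi\mapsto \phi_\psi$ of Arthur and matching dimension/shift pairs term by term, rather than appealing to Theorem \ref{thm in L-packet intro} (which this proposition will be used to prove).
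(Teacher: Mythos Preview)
Your proposal is correct, and for part (b) it is essentially the paper's argument. For part (a), however, the paper takes a much shorter route than you do. Rather than explicitly reconstructing the Langlands data of $\pi(\EE)$ from $\phi_\psi$ and matching endpoints row by row, the paper simply observes that the number of irreducible summands of the $L$-parameter $\phi'$ computed from the $L$-data of $\pi(\EE)$ equals $|\Omega(\pi(\EE))|$, while the number of summands of $\phi_\psi$ equals $|\Omega(\EE)|$; since $\phi'=\phi_\psi$ by hypothesis, these cardinalities agree, and then the standing containment $\Omega(\EE_\rho)\supseteq\Omega(\pi(\EE))_\rho$ from Lemma~\ref{lem $L$-data} forces equality. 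Your explicit computation certainly works, but the cardinality-plus-containment argument avoids it entirely.

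Your ``main obstacle'' is not an obstacle. The claim that the Langlands data of any $\pi\in\Pi_{\phi_\psi}$ are forced (up to the character on the component group) is just the Langlands classification itself: the $L$-parameter of $L(\Delta_{\rho_1}[x_1,-y_1],\ldots;\pi(\phi,\varepsilon))$ is $\bigoplus_s(\rho_s|\cdot|^{(x_s-y_s)/2}\oplus\rho_s|\cdot|^{-(x_s-y_s)/2})\otimes S_{x_s+y_s+1}\oplus\phi$, and recovering the segments and the tempered piece from this is immediate by reading off the summands with nonzero and zero exponent respectively. No special care with the $\rtimes$-induction or pairing of dual segments is needed beyond this.
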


\begin{proof}
Write \[ \pi(\EE)= L\left( \Delta_{\rho_1}[x_1,-y_1],\dots,\Delta_{\rho_t}[x_t,-y_t]; \pi\left(\sum_{j \in J} \rho_j\otimes  S_{2z_j+1},\varepsilon \right) \right). \]

For Part (a), write  $\psi= \bigoplus_{i\in I} \rho_i \otimes S_{a_i}\otimes S_{b_i}$. The $L$-data of $\pi(\EE)$ shows that the $L$-parameter of $\pi(\EE)$ is
\[ \phi'=\bigoplus_{s=1}^t \left(\rho_s |\cdot|^{\frac{x_s-y_s}{2}} \otimes S_{x_s+y_s+1 }\oplus \rho_s^{\vee} |\cdot|^{-\frac{x_s-y_s}{2}} \otimes S_{x_s+y_s+1 } \right) \oplus \bigoplus_{j \in J} \rho_j \otimes S_{2z_j+1}, \]
which is the same as 
\[ \phi_{\psi}= \bigoplus_{ i\in I} \bigoplus_{k=0}^{b_i-1} \rho_i |\cdot|^{\frac{b_i-1}{2}-k} \otimes S_{a_i}\]
by assumption. Observe that the cardinality of $\Omega(\EE)$ (resp. $\Omega(\pi(\EE))$) are exactly the number of summands of $\phi_{\psi}$ (resp. $\phi'$). As $\phi_{\psi}=\phi'$, the containment $\Omega(\EE_{\rho}) \supseteq \Omega(\pi(\EE)_{\rho})$ in Lemma \ref{lem $L$-data} is indeed an equality for all $\rho$. Therefore, $\Omega(\EE)= \Omega(\pi(\EE))$. This completes the proof of Part (a). 

As a consequence of Part (a), Theorem \ref{thm non-vanishing}(i) shows that 
\[ \pi(sh^1(\EE))= L\left( \Delta_{\rho_1}[x_1+1,-y_1-1],\dots,\Delta_{\rho_t}[x_t+1,-y_t-1]; \pi\left(\sum_{j \in J} S_{2(z_j+1)+1},\varepsilon \right) \right).\]
The corresponding $L$-parameter is therefore just changing each $S_{a}$ in $\phi'$ by $S_{a+2}$, which is the same as how we derive $sh^{1}(\phi_{\psi})$ from $\phi_{\psi}$. This shows the case $t=1$ of Part (b). The general case follows from induction. This completes the proof of the proposition.
\end{proof}

The following is the main theorem of this section and describes those extended multi-segments which parameterize the local $L$-packet of the local Arthur packet.

\begin{thm}\label{thm in L-packet}
For any $\EE \in \Rep$, the representation $\pi(\EE)$ is in the local $L$-packet associated with $\psi_{\EE}$ if and only if $\EE$ satisfies (L). Moreover, Proposition \ref{prop $L$-data L-packet} holds without assuming $\EE$ is positive.
\end{thm}

\begin{proof}
Suppose $\EE$ is positive and write $\pi= \pi(\EE)$. If $\EE$ satisfies (L), then $\pi(\EE)\in \Pi_{\phi_{\psi_{\EE}}}$ by Proposition \ref{prop $L$-data L-packet}. Conversely, suppose $\pi \in \Pi_{\phi_{\psi_{\EE}}}$. Reversing the process of computing the $L$-parameter from the $L$-data of a representation, we see that the $L$-data of $\pi$ is described by Conditions (a) and (b) in Proposition \ref{prop $L$-data L-packet} for some $\EE'$ satisfying (L) and $\psi_{\EE'}=\psi_{\EE}$. Thus $\pi(\EE')=\pi=\pi(\EE)$, and hence $\EE=\EE'$ up to row exchanges by Lemma \ref{lem Moeglin}. Therefore, $\EE$ must satisfy (L).

In general, suppose $\pi(\EE) \in \Pi_{\phi_{\psi_{\EE}}}$. Take $d \in \Z_{\geq 0}$ such that $sh^d(\EE)$ is positive. Then Proposition \ref{prop L-packet shift}(b) shows that $\pi(sh^d(\EE)) \in \Pi_{sh^d(\phi_{\psi_{\EE}})}$, and hence $sh^d(\EE)$ must satisfy (L). It follows that $\EE$ also satisfies (L).

Conversely, suppose $\EE$ satisfies (L) and take $d \in \Z_{\geq 0}$ such that $sh^d(\EE)$ is positive. Proposition \ref{prop $L$-data L-packet} implies that $\pi(sh^d(\EE)) \in \Pi_{sh^d(\phi_{\psi_{\EE}})}$. Then we compute the $L$-data of $\pi(\EE)$ from $\pi(sh^d(\EE))$ by applying Theorem \ref{thm non-vanishing}(i), from which we conclude that $\pi(\EE)$ is nonzero and $\pi(\EE) \in \Pi_{\phi_{\psi_{\EE}}}$. Moreover, the $L$-data of $\pi(\EE)$ is given by the same description in Proposition \ref{prop $L$-data L-packet}. This completes the proof of the theorem.
\end{proof}

We demonstrate the above theorem on Example \ref{ex exhaustion}. The extended multi-segment $\EE_7$ satisfies (L), and hence $\pi(\EE_7) \in \Pi_{\phi_{\psi_7}}$. Note that $\EE_7$ is the only member in $\Psi(\pi(\EE_7))$ that satisfies (L). This reflects the fact that a representation lies in at most one $L$-packet of Arthur type.

Let $\pi$ be a tempered generic representation of the split groups $\Sp_{2n}(F)$, $\SO_{2n+1}(F)$. In the following proposition, applying Theorem \ref{thm in L-packet}, we show that the Aubert-Zelevinsky involution of $\pi$ lies in the $L$-packet associated with an anti-tempered local Arthur parameter, i.e., a local Arthur parameter of not good parity, whose restriction to the Deligne-$\SL_2(\BC)$ is trivial. In \cite[Lemma 6.13]{HLLZ22}, we gave another proof of this proposition using the results in \cite[Section 7]{Art13}, which works for quasi-split classical groups.

\begin{prop}\label{prop dual of generic}
    Suppose $\pi$ is a tempered generic representation of $G_n= \Sp_{2n}(F)$ or split $\SO_{2n+1}(F)$ that lies in a tempered local Arthur packet $\Pi_{\psi}$. Then $\widehat{\pi}$ lies in the $L$-packet associated with the anti-tempered local Arthur parameter $\widehat{\psi}$.
\end{prop}

\begin{proof}
    We assume $\pi$ is of good parity by Theorem \ref{thm reduction to gp}. Write 
    \[ \psi= \bigoplus_{\rho} \bigoplus_{i \in I_{\rho}} \rho \otimes S_{2\alpha_i+1} \otimes S_1.\]
    Then since generic representation corresponds to the trivial character (see the discussion before \cite[Theorem 4.1]{HLL24}), we have $\pi= \pi(\EE)$, where
    \[ \EE= \bigcup_{\rho} \{ ([\alpha_i,\alpha_i]_{\rho},0, 1) \}_{i \in (I_{\rho},>)}.\]
    Applying Theorem \ref{thm Aubert-Zelevinsky dual formula}, we obtain $\widehat{\pi}= \pi(dual(\EE))$, where
    \[ dual(\EE)= \bigcup_{\rho} \{( [\alpha_i,-\alpha_i]_{\rho},\lceil \alpha_i\rceil, \varepsilon_{\rho})\}_{i \in (I_{\rho},>')},\]
    and 
    \[\varepsilon_{\rho}=\begin{cases}
        1 & \text{ if }a_i \in \half{1}+\Z \text{ for any }i \in I_{\rho},\\
         (-1)^{|I_{\rho}|} & \text{ otherwise.}
    \end{cases}\]
    (Note that in the first case, the length of $[\alpha_i,\alpha_i]_{\rho}$ is equal to $2 \lceil \alpha_i\rceil$, and hence we can take the sign to be 1.)
    Since $dual(\EE)$ satisfies (L), Theorem \ref{thm in L-packet} implies that $\widehat{\pi}$ is in the $L$-packet associated with $\psi_{dual(\EE)}= \widehat{\psi}$. This completes the proof of the lemma.
\end{proof}


Finally, we remark that it is possible that $\Pi_{\psi}= \Pi_{\phi_{\psi}}$, where $\psi$ is non-tempered and $\Pi_{\psi}$ is not a singleton. We give an example below.

\begin{exmp}
    Let $\rho$ be the trivial representation and consider 
    \[ \psi= \rho \otimes S_{1} \otimes S_{3}+ \rho \otimes S_{3} \otimes S_1 + \rho \otimes S_{5} \otimes S_1,\]
    a local Arthur parameter of $\Sp_{10}(F)$. We have $\Pi_{\psi}= \{\pi(\EE_1),\pi(\EE_2),\pi(\EE_3),\pi(\EE_4)\}$, where
    \begin{align*}
        \EE_1= \bordermatrix{
        &-1&0&1&2\cr
        &\lhd & \oplus & \rhd& \cr
        &&&\oplus &\cr
        &&&&\oplus\cr         
        },&\ \  \EE_2= \bordermatrix{
        &-1&0&1&2\cr
        &\lhd & \ominus & \rhd& \cr
        &&&\ominus &\cr
        &&&&\oplus\cr         
        },\\
        \EE_3= \bordermatrix{
        &-1&0&1&2\cr
        &\lhd & \oplus & \rhd& \cr
        &&&\ominus &\cr
        &&&&\ominus\cr         
        },&\ \  \EE_4= \bordermatrix{
        &-1&0&1&2\cr
        &\lhd & \ominus & \rhd& \cr
        &&&\oplus &\cr
        &&&&\ominus\cr         
        }.
    \end{align*}
    These four extended multi-segments all satisfy (L). Thus  by Theorem \ref{thm in L-packet}
    \[\Pi_{\phi_{\psi}}=\{\pi(\EE_1),\pi(\EE_2),\pi(\EE_3),\pi(\EE_4)\}= \Pi_{\psi}. \]   
\end{exmp}

\section{``The" local Arthur parameter of a representation of Arthur type}\label{section the Arthur parameter}

In this section, for each representation $\pi=\pi(\EE)$ of Arthur type and of good parity, we construct an extended multi-segment $\EE^{|max|}$ such that $\pi(\EE^{|max|})= \pi$, that is, 
$\pi \in \Pi_{\psi_{\EE^{|max|}}}$,
and it satisfies the property that if $\pi \in \Pi_{\phi_{\psi}}$, then $\psi_{\EE^{|max|}}=\psi$ (see Theorem \ref{thm can max} below). Then, we give a way to characterize the property that $\pi \in \Pi_{\phi_{\psi}}$ in terms of the $L$-data of $\pi$ (see Theorem \ref{thm pi minus L packet}). This specifies a distinguished member $\psi^{max}(\pi):=\psi_{\EE^{|max|}}$ in the set \[ \Psi(\pi):= \{ \psi \ |\ \pi \in \Pi_{\psi}  \}.\]
We also define another distinguished member $\psi^{min}(\pi) \in \Psi(\pi)$ as an opposite of $\psi^{max}(\pi)$. We give two representation theoretic characterizations of these two distinguished members in the next section. Finally, we generalize these results for any $\pi \in \Pi_{\psi}$ where $\psi\in \Psi^+(G_n)$.

We have defined $\EE_{can}$ in Definition \ref{def can form}, which carries the most derivative information of $\pi(\EE)$ (see Theorem \ref{thm canonical form and derivatives}). However, if $\pi(\EE) \in \Pi_{\phi_{\psi}}$ for some $\psi$, $\psi_{\EE_{can}}$ is usually not equal to $\psi$. Let us explain this phenomenon based on Example \ref{ex Atobe}. Let
\[ \EE_1= \bordermatrix{ & 0&1&2 \cr 
& \ominus & \oplus & \ominus \cr }_{\rho}, \ \EE_4= \bordermatrix{ & 0&1&2 \cr 
& \ominus & &\cr 
& &\oplus & \cr 
&&& \ominus \cr }_{\rho}.  \]
Let $\psi_4$ be the local Arthur parameter associated with $\EE_4$. Then Theorem \ref{thm in L-packet} shows $\pi(\EE_1) \in \Pi_{\phi_{\psi_4}}$. On the other hand, we have computed that $\EE_1= \EE_{can}$. Therefore, $\psi_{\EE_{can}} \neq \psi_4$ in this example.

We may explain this phenomenon further under the assumption that $sh^{-1}(\EE)$ is non-negative. In this case, $\EE_{can}= \EE^{min}$ by construction, and it is the unique element in $\Psi(\EE)$ that is minimal, i.e., no $ui_{i,j}$ is applicable. (See Corollary \ref{cor abs min}(ii) below for the detail of this statement.)
A simple computation shows that if $\EE'\in \Psi(\EE)$ satisfies (L) (see Definition \ref{def (L)}), then $\EE'$ is necessarily maximal, i.e., no $ui^{-1}$ is applicable (see the proof of Proposition \ref{prop max elm in L-packet} below). Thus $\EE_{can}=\EE'$ if and only if $\EE_{can}$ is both minimal and maximal, which holds only if $\Psi(\EE)$ is a singleton.

Following the discussion above, it is natural to ask the following question. Is there a unique maximal element in the set $\Psi(\EE)$? The answer is negative without extra assumptions on $\EE$. Indeed, in Example \ref{ex Atobe}, $\EE_4,\EE_{7}, \EE_{8},\EE_9$ in $\Psi(\EE)$ are all maximal. 
In order to uniquely identify $\EE_4$, we give the following definition.

\begin{defn}\label{def abs max}
For an extended multi-segment $\EE=\cup_{\rho} \EE_{\rho}$, we say $\EE_{\rho}$ is absolutely maximal if the following holds.
\begin{enumerate}
    \item [(i)]$\EE_{\rho}$ is maximal, i.e., no $ui^{-1}$ is applicable on $\EE_{\rho}$.
    \item [(ii)] $dual(\EE_{\rho})$ is minimal, i.e., no $ ui_{i,j}$ is applicable on $dual(\EE_{\rho})$.
    \item [(iii)] For any $k \in I_{\rho}$ and admissible order $\gg$ of $I_{\rho}$ satisfying (P'), $dual_{k}^{-}$ is not applicable on $\EE_{\rho, \gg}$.
\end{enumerate}

We say $\EE$ is absolutely maximal if $\EE_{\rho}$ are absolutely maximal for all $\rho$.
\end{defn}

\begin{exmp}\ 
\begin{enumerate}
    \item [1.]Let us show that the extended multi-segments $\EE_7,\EE_8, \EE_9$ in Example \ref{ex Atobe} are not absolutely maximal. Indeed, we have \[  dual(\EE_7)=\EE_4, \ dual(\EE_8)=\EE_9, \ dual(\EE_9)=\EE_8. \]
None of above extended multi-segments are minimal, and hence $\EE_7,\EE_8, \EE_9$ do not satisfy Condition (ii) in the definition above. On the other hand, $dual(\EE_4)= \EE_{7}$, which is minimal. As it is in integer case, no partial dual is applicable. We conclude that $\EE_4$ is absolutely maximal.
\item [2.] Similarly, one can check that in Example \ref{ex exhaustion}, both $\EE_7$ and $\EE_8$ are maximal, but only $\EE_7$ is absolutely maximal. Furthermore, $\EE_7$ satisfies (L), so $\pi(\EE_7)$ is in the local $L$-packet associated with $\psi_{\EE_7}$.
\item [3.]To see why Condition (iii) in Definition \ref{def abs max} is necessary, we consider the following two extended multi-segments of $ \SO_{13}(F)$
\[ \EE_1= \bordermatrix{
& \half{-1} & \half{1} & \half{3}& \half{5} \cr 
&\oplus & \ominus && \cr 
&&& \oplus & \cr 
&&&& \ominus \cr
}_{\rho}, \ \EE_2= \bordermatrix{
 & \half{1} & \half{3}& \half{5} \cr 
 & \ominus && \cr 
&& \oplus & \cr 
&&& \ominus \cr
}_{\rho}, \]
where $\rho$ is the trivial representation. We have 
\[\pi(\EE_1) \cong \pi(\EE_2)= \pi( (1/2)^{-}, (3/2)^{+}, (5/2)^{-}).\]
$\EE_2$ satisfies (L), and hence $\pi(\EE_2)$ is in the local $L$-packet associated with $\psi_{\EE_2}$. On the other hand, both of them satisfy Conditions (i) and (ii) in Definition \ref{def abs max}, but $\EE_1$ does not satisfy the Condition (iii) since $ \EE_2= dual_1^{-}(\EE_1)$.
\end{enumerate}
\end{exmp}

In the next section, we give measurements of the ``temperedness" of local Arthur parameters, and show that if $\EE$ is absolutely maximal, then $\psi_{\EE}$ is the ``most tempered" parameter in the set $\{\psi \ | \ \pi(\EE) \in \Pi_{\psi}\}$ (see Theorems \ref{thm Jiang's partition}, \ref{thm Moeglin}).

Now we show that if $\pi(\EE)$ is in the local $L$-packet associated with $\psi_{\EE}$, then $\EE$ is absolutely maximal. 
\begin{prop}\label{prop max elm in L-packet}
If $\EE$ satisfies (L), then $\EE$ is absolutely maximal.
\end{prop}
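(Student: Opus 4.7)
The plan is to verify absolute maximality by showing separately that $\EE$ admits no $ui_{i,j}^{-1}$ and that $dual(\EE)$ admits no $ui_{i,j}$. By Corollary \ref{cor ui inverse}, for non-type-3' operators these two conditions coincide, so it suffices to rule out (a) type-3' splittings of $\EE$, (b) type-3' merges on $dual(\EE)$, and (c) Case 1, 2, or 3 merges on $dual(\EE)$.

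For (a), a splitting via Corollary \ref{cor split} requires a row with $l_i = 0$ and $A_i > B_i$, but the L-packet bound $b_i - 2l_i \leq 1$ forces $b_i = 1$ whenever $l_i = 0$, so $A_i = B_i$ and no nontrivial split is possible.

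For (c), I will parameterize a hypothetical applicable pair. Indexing the candidate rows by $p < q$ in the $a$-increasing order on $\EE$ (so $p >' q$ in $dual(\EE)$'s reversed order), and after an innocuous row exchange to make $p, q$ adjacent, the requirements $A_p^{dual} > A_q^{dual}$ and $B_p^{dual} > B_q^{dual}$ become $A_p > A_q$ and $B_p < B_q$. Setting $d = A_p - A_q \geq 1$ and $e = B_q - B_p \geq 1$, the L-packet ordering yields $d \leq e$, and $b_p = b_q + d + e$. Each case of Definition \ref{def ui}, when translated through the formulas of Definition \ref{dual segment}, produces a linear equation in $(l_p, l_q)$; combined with the L-packet pinch $l_i \in [(b_i - 1)/2,\, b_i/2]$, Cases 1 and 2 collapse to $d + e \leq 1$, contradicting $d, e \geq 1$. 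Case 3 forces $d = e = 1$ with $b_q$ odd, $l_p = (b_q + 1)/2$, $l_q = (b_q - 1)/2$, and in particular $a_p = a_q$ with both rows saturating $b - 2l = 1$.

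The main obstacle is eliminating this last surviving configuration, which also subsumes type 3' and thereby handles (b). Under the forced structure, condition (iii) of Theorem \ref{thm in L-packet} gives $\eta_p = \eta_q$, and the sign condition for Case 3 becomes $\epsilon^{dual} = (-1)^{a_q - 1}\eta_p^{dual}\eta_q^{dual} = -1$. Unpacking Definition \ref{dual segment} for adjacent $p, q$, the product $\eta_p^{dual}\eta_q^{dual}$ simplifies to $(-1)^{2B_q}$, which equals $+1$ in the integer case and $-1$ in the half-integer case. A complementary parity check using $a_q + b_q = 2A_q + 2$ forces $a_q$ odd in the integer case and even in the half-integer case, so in both regimes $\epsilon^{dual} = +1$, contradicting Case 3. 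This completes the verification.
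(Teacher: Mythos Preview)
Your approach is genuinely different from the paper's. The paper argues conceptually: after replacing $\FF$ by $add^1(\FF)$ (so $b_k\geq 3$ for all $k$), Proposition~\ref{prop $L$-data L-packet} shows that $add_k^{-1}(\FF)$ still satisfies the L-packet hypotheses and hence gives a nonzero representation; dualizing via Lemma~\ref{lem identities}(ii) and Theorem~\ref{thm Aubert-Zelevinsky dual formula} yields $\pi(sh_k^{-1}(dual(\FF)))\neq 0$ for every $k$, and since each $ui$ case is an extremal non-vanishing configuration, this rules out all of them at once. This is order-independent because the quantity being tracked is $\pi\neq 0$, which is preserved by row exchange.

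Your direct case analysis, by contrast, has a genuine gap at the ``innocuous row exchange'' step. The pinch $b_i-2l_i\leq 1$ is only asserted for the $a$-increasing order of Theorem~\ref{thm in L-packet}, whereas Definition~\ref{dual segment} (which you need to write down $l_i^{dual},\eta_i^{dual}$) requires the $(P')$ order. These orders can differ, and the row exchange passing between them does \emph{not} preserve the pinch. Concretely, take adjacent rows $([A_k,B_k],l_k,\eta_k)$ and $([A_{k+1},B_{k+1}],l_{k+1},\eta_{k+1})$ with $[A_k,B_k]\supsetneq[A_{k+1},B_{k+1}]$, $b_k$ even with $l_k=b_k/2$, $b_{k+1}$ odd with $l_{k+1}=(b_{k+1}-1)/2$, and $\epsilon=1$: the exchange formula (Definition~\ref{def row exchange}, Case~1(a)) gives $l_k'=b_k/2-1$, so $b_k-2l_k'=2$. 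Thus in the $(P')$ order the bound you rely on for the collapse ``$d+e\leq 1$'' in Cases~1--2, and the forced $d=e=1$ in Case~3, need not hold. A secondary issue: in the half-integer regime the extra $\pm\tfrac12$ in the formula for $l_i'$ (Definition~\ref{dual segment}) further perturbs your linear equations, and ruling out the resulting slack requires bringing in the sign $\epsilon^{dual}$ already in Cases~1--2, which you do not do. Your Case~3 parity endgame is correct when $p,q$ are genuinely adjacent in a $(P')$ order and the pinch holds there, but the reduction to that situation is missing.
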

\begin{proof}
It suffices to prove $\EE_{\rho}$ is absolutely maximal for each $\rho$. For simplicity, we write $\EE_{\rho}=\FF=\{([A_i,B_i]_{\rho},l_i,\eta_i)\}_{i \in (I_{\rho,>})}$.

First, we show that $\FF$ is maximal if $dual(\FF)$ is minimal. A simple computation shows that Corollary \ref{cor split} is never applicable on $\FF$, so there is no $ui\inv$ of type 3' applicable. Therefore, Corollary \ref{cor ui inverse} indicates that $\FF$ is maximal if $dual(\FF)$ is minimal. 

Next, we show that $ dual(\FF)$ is minimal. Since $dual(\FF)$ is minimal if and only if $sh^1(dual(\FF))$ is minimal, to check the minimality of $dual(\FF)$, we replace $\FF$ with $add^1(\FF)$ by Lemma \ref{lem identities}(ii), and hence assume $ A_k-B_k \geq 2$ for all $k \in I_{\rho}$.

A key observation from the non-vanishing result in Proposition \ref{prop $L$-data L-packet} is that for any $k \in I_{\rho}$, the extended multi-segment $\EE^{\rho} \cup add^{-1}_k(\FF)$ also satisfies (L), and hence $\pi( \EE^{\rho} \cup add^{-1}_k(\FF))$ is nonzero. Here, we use the assumption $ A_k -B_k \geq 2$. Then, by Lemma \ref{lem identities}(ii) and Theorem \ref{thm Aubert-Zelevinsky dual formula}, $\pi(dual(\EE^{\rho}) \cup sh^{-1}_k(dual(\FF))) \neq 0$ for all $k \in I_{\rho}$. Therefore, no $ui_{i,j}$ is applicable on $dual(\FF)$ by Lemma \ref{lem ui shift}, and hence $dual(\FF)$ is minimal.

Finally, we show that $dual_k^{-}$ is not applicable on $\FF_{\gg}$ for any admissible order $\gg$. By definition of $dual_k^{-}$, we may assume $B_k=-1/2$. We separate into two cases. 

Case (1): Suppose $A_k=1/2$. Then $0=A_k + B_k \leq A_i+B_i$ for any $i \in I_{\rho}$. The condition (L) implies that under any admissible order $\gg$ of $I_{\rho}$ satisfying (P'), the $k$-th row of $\FF_{\gg}$ is $([1/2,-1/2]_{\rho},1,1)$. Therefore, $dual_k^{-}$ is never applicable on $\FF_{\gg}$.

Case (2): Suppose $A_k>1/2$. Then the non-vanishing results in Proposition \ref{prop $L$-data L-packet} imply that $\EE^{\rho} \cup add_k^{-1}(\FF) \in \Rep$, and hence for any admissible order $\gg$ of $I_{\rho}$, the $k$-th row of $\FF_{\gg}$ is of the form $([A_k,-1/2]_{\rho},l,\eta)$ with $l\geq 1$. Therefore, $dual_k^{-}$ is never applicable on $\FF_{\gg}$.
This completes the proof of the proposition.
\end{proof}

The proof above is based on the argument that if $ui^{-1}$ of type 3' (see Definition \ref{def ui inverse}) is not applicable, then $\EE$ is absolutely maximal if and only if $ dual(\EE)$ is minimal and $dual_k^{+}$ is not applicable on $dual(\EE)$ under any admissible order. As explained in Remark \ref{rmk partial dual}, $dual_k^{-}$ is applicable on $\EE$ if and only if $ui_{0,k}$ is applicable on
\[ \{([-1/2,-1/2]_{\rho},0,1)\}+ dual(\EE_{\rho}), \]
where the index $0$ corresponds to the extra row $([-1/2,-1/2]_{\rho},0,1)$ added by the phantom operator of Atobe (\cite[Definition 3.4]{Ato22}). In the next lemma, we show that the applicability of $ui^{-1}$ of type 3' can also be detected by adding an extra phantom row of the form 
\[ ([x,-x-1]_{\rho}, \lfloor x+1 \rfloor ,1),\]
where $x > -1/2$. To give a motivation, recall that in Definition \ref{def ui} (also Definition \ref{ui def}), we delete the $j$-th row, which is of the form $([x,x+1]_{\rho},0,\eta)$, when the $ui_{i,j}$ is of type 3'. If we formally keep this row and then take dual, one may expect from Corollary \ref{cor ui inverse} that $ui_{j,i}$ is applicable on $dual(ui_{i,j}(\EE))$. A calculation indicates that if we raise the $j$-th row of $dual(ui_{i,j}(\EE))$ to the top of it by row exchanges, it becomes the phantom row described above.

\begin{lemma}\label{lem phantom and ui inverse}
Suppose $\EE= \in \Rep^{(P')}$ and 
\[ \FF:= \EE_{\rho}=\{([A_i,B_i]_{\rho},l_i,\eta_i)\}_{i \in (I_{\rho}, >)}.\]
Then the followings are equivalent:
\begin{enumerate}
    \item [(i)] We may apply $ui^{-1}$ of type 3' to split 
    the $k$-th row of $\FF$ into two rows with supports $ [x,B_k]_{\rho}, [A_k,x+1]_{\rho}$ (see Corollary \ref{cor split}).
    \item [(ii)] $ui_{0,k}$ is applicable on
    \[ \widetilde{\FF}= sh^{1}\left(\{([x,-x-1]_{\rho}, \lfloor x+1 \rfloor ,1) \}+ dual(\FF)\right),\]
    where we identify the total ordered set of index of $\widetilde{\FF}$ with $\{0\} \cup (I_{\rho},>)$ where $0$ is the minimal element.
\end{enumerate}
\end{lemma}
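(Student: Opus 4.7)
The plan is to prove the equivalence by a direct combinatorial computation, transporting the $ui^{-1}$ operation across the $dual$ map and the phantom-row construction. Morally, the lemma is the combinatorial content of Atobe's realization of partial dual recalled in Remark \ref{rmk partial dual}: a $ui^{-1}$ of type $3'$, viewed on the $\FF$-side, corresponds to a genuine $ui$ of type $3'$ on the dual side once the row that would have been deleted by the forward $ui$ is restored as the phantom row $\{([x,-x-1]_{\rho}, \lfloor x \rfloor, 1)\}$.

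For the direction $(i) \Rightarrow (ii)$: I would first note that the support conditions for $ui_{1,k}$ on $\widetilde{\FF}$ reduce to $B_k \le x < A_k$, which matches exactly the range $0 \le r \le A_k - B_k -1$ (with $r = x - B_k$) in Corollary \ref{cor split}. Let $\FF^{\mathrm{sp}}$ denote the extended multi-segment obtained from $\FF$ by splitting its $k$-th row into two rows with supports $[x,B_k]_\rho$ and $[A_k,x+1]_\rho$ and the signs prescribed by Corollary \ref{cor split}, so that $\FF = ui_{i,j}(\FF^{\mathrm{sp}})$ with $ui_{i,j}$ of type $3'$. Using Definition \ref{dual segment} to compute $dual(\FF^{\mathrm{sp}})$ explicitly, together with Lemma \ref{lem identities}(ii) to commute $dual$ with $sh$, I would check that the two split rows dualize to rows whose supports are $[x,-x-1]_\rho$ and $[A_k,-B_k]_\rho$. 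After a compatible row exchange and the shift $sh^1$, the first of these becomes precisely the phantom row of $\widetilde{\FF}$, while the second becomes the $k$-th row of $sh^1(dual(\FF))$. Recombining these via a $ui_{1,k}$ of type $3'$ on $\widetilde{\FF}$ undoes the split on the $\FF$-side, which yields (ii).

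For the direction $(ii) \Rightarrow (i)$: here I would reverse the above. The phantom row $([x+1,-x]_\rho, \lfloor x \rfloor, 1)$ in $\widetilde{\FF}$ has $l$-value equal to the maximal admissible value for its support. A short case analysis on the three cases in Definition \ref{def ui} — ruling out Case $1$ because $A_1 - l_1$ is forced to be negative, and Case $2$ by the analogous $B_1 + l_1$ constraint — shows that $ui_{1,k}$ must be of type $3$, and in fact of type $3'$ since both $l$-values in the relevant ``swapped'' rows vanish. Reading off the forward direction in reverse, applying $sh^{-1}$ and $dual$ to the output of $ui_{1,k}(\widetilde{\FF})$ produces an extended multi-segment on the $\FF$-side with two extra rows of supports $[x,B_k]_\rho$ and $[A_k,x+1]_\rho$, which is precisely the split of the $k$-th row of $\FF$. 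That this split satisfies the admissibility conditions ($P$) (and ($P'$) plus $2B_k + r \ge 0$ in the non-positive case) of Corollary \ref{cor split} follows from $\widetilde{\FF}$ being an extended multi-segment together with the non-vanishing conditions of Proposition \ref{prop positive non-vanishing}(i).

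The main obstacle is the bookkeeping of $l$-values and $\eta$-signs across the $dual$ formula in Definition \ref{dual segment}, which branches on $B_i \in \Z$ versus $B_i \in \Z + \tfrac{1}{2}$, combined with the branching on $\epsilon = (-1)^{A-B}\eta\eta'$ in Definition \ref{def ui}. In particular, verifying that the sign $(-1)^{(x-B_k)+1}\eta_k$ appearing on the second split row of $\FF^{\mathrm{sp}}$ dualizes to precisely the $\eta$-value of the $k$-th row of $sh^1(dual(\FF))$ required for $\epsilon = -1$ in Case $3$ is the technical heart of the argument. The general (not necessarily non-negative) case is reduced to the positive case by a sufficiently large uniform shift via Theorem \ref{thm non-vanishing}(i), so that all $B_i$'s become positive before carrying out the explicit computations above.
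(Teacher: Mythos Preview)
Your proposal contains a concrete error that propagates through both directions. You assert that $ui_{1,k}$ on $\widetilde{\FF}$ is of type $3'$, but this is false: the phantom row of $\widetilde{\FF}$ is $([x+1,-x]_{\rho},\lfloor x\rfloor,1)$, and type $3'$ requires $l_1=l_k=0$, whereas $l_1=\lfloor x\rfloor$ vanishes only when $x\in\{0,1/2\}$. Relatedly, your claim that this $l$-value is ``maximal'' is also wrong (the maximal value is $\lfloor x+1\rfloor$), and your ruling-out of Case~1 via ``$A_1-l_1$ forced to be negative'' fails since $A_1-l_1=x+1-\lfloor x\rfloor\in\{1,3/2\}$. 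In the forward direction you also miscompute the supports: the two split rows $[x,B_k]_\rho$ and $[A_k,x+1]_\rho$ dualize to $[x,-B_k]_\rho$ and $[A_k,-x-1]_\rho$, not to $[x,-x-1]_\rho$ and $[A_k,-B_k]_\rho$ as you claim; so the phantom row does \emph{not} arise as the dual of one of the split rows.

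The paper's argument avoids all of this by a single trick you are missing: it first applies $add^1$ to $\FF'$ (the split object), which converts the type $3'$ operation $ui_{k,j}$ into a genuine type $3$ operation. The row that type $3'$ would have deleted now survives as the ``small'' row $([x+1,x]_\rho,1,1)$ inside $ui_{k,j}(add^1(\FF'))$, and under $dual$ (using $dual\circ add^1=sh^1\circ dual$ from Lemma~\ref{lem identities}(ii)) this is precisely what becomes the phantom row $([x+1,-x]_\rho,\lfloor x\rfloor,1)$ in $\widetilde{\FF}$. Because the $ui$ is now not of type $3'$, Corollary~\ref{cor ui inverse} applies directly and furnishes the required $ui_{1,k}$ (respectively $ui_{k,1}$) on the dual side; the paper explicitly notes that $ui_{1,k}(\widetilde{\FF})$ is \emph{never} of type $3'$, by the non-vanishing condition (Theorem~\ref{thm non-vanishing}(i)) on $dual(\FF)$. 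For the converse, one undoes this: dualize $ui_{1,k}(\widetilde{\FF})$, strip off the inserted row, apply $add^{-1}$, and the resulting $ui_{k,1}$ on the $\FF$-side is then forced to be of type $3'$. The sign and $l$-value bookkeeping you worried about is absorbed entirely into Corollary~\ref{cor ui inverse}, so no direct case analysis on Definition~\ref{dual segment} is needed.
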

\begin{proof}
The idea of the proof is to perform the argument above rigorously by replacing $\FF$ with $add^1(\FF)$.

We first show Part (i) implies Part (ii). Let $\FF'$ be such that $ui_{k,j}(\FF')=\FF$ as described in Part (i). Then $ui_{k,j}$ is applicable on $add^1(\FF')$, which is of type 3 but not 3' (see Definition \ref{ui def}). Comparing with $ add^1(ui_{k,j}(\FF))$, $ ui_{k,j}(add^{1}(\FF'))$ has an extra row of the form $([x+1,x]_{\rho},1,1)$. Therefore, comparing with $dual(add^1(\FF))=sh^1(dual(\FF))$,  $dual\circ ui_{k,j} \circ add^1(\FF')$ contains an extra row of the form $([x+1,-x]_{\rho},\lfloor x+1 \rfloor,\ast)$. We may define a new admissible $\gg$ order on the index set of $dual\circ ui_{k,j} \circ add^1(\FF')$ to make the index of this extra row minimal. Then it can be deduced from the definition that  
\[ (dual\circ ui_{k,j} \circ add^1(\FF'))_{\gg}= \{([x+1,-x]_{\rho}, \lfloor x+1 \rfloor ,1)\} +sh^1(dual(\FF)).\]
Therefore, Part (ii) is a consequence of Corollary \ref{cor ui inverse}.

Conversely, suppose Part (ii) holds. From the non-vanishing condition in Theorem \ref{thm non-vanishing}(i) on $dual(\FF)$, $ui_{0,k}(\widetilde{\FF})$ is never of type 3'. Take any admissible order $\gg$ of $ \{0\}\cup I_{\rho}$ that satisfies ($P'$) and preserves the order of $I_{\rho}$. It can be deduced from the definition that $dual(\widetilde{\FF}_{\gg})$ can be obtained from $add^1(\FF)$ by inserting an extra row of the form $([x+1,x]_{\rho},1,1)$. Now Corollary \ref{cor ui inverse} implies 
\[ ui_{k,0} \circ dual\circ ui_{0,k}(\widetilde{\FF}_{\gg})= dual (\widetilde{ \FF}_{\gg}),  \]
where the $ui_{k,0}$ on the left hand side is necessarily of type 3. Then we conclude that
\[ ui_{k,0} ( add^{-1}\circ dual \circ ui_{0,k}(\widetilde{\FF}_{\gg}))= \FF  \]
up to row exchanges, and the $ui_{k,0}$ on left hand side is of type 3'. This shows Part (i). The proof of the lemma is now complete.
\end{proof}

\begin{remark}\label{rmk dual ui dual}
From the lemma above, we see $dual \circ ui_{i,j} \circ dual$ of type 3' is the same as an inverse of a composition of Atobe's phantom operator $P$ and $ui_{i,j}$ $($\cite[Definition 3.4]{Ato22}$)$. We give an example of this below. On the other hand, Corollary \ref{cor ui inverse} shows that $dual \circ ui_{i,j} \circ dual$ not of type 3' is indeed the same as inverse of $ui_{j,i}$. Therefore, all of the basic operators can be written as a composition of Atobe's $R_k, ui_k$, $P$ and their inverses. 

Remark \ref{rmk partial dual} also shows $dual_k$ is a composition of $R_k, ui_k$, P and their inverses. Therefore, Theorems \ref{thm integer} and \ref{thm half integer} imply Theorem \ref{thm Atobe's main thm} of \cite{Ato22}.
\end{remark}

\begin{exmp}\label{exmp phantom}
Consider 
\[ \EE_1= \bordermatrix{
& 0&1&2 \cr
& \ominus & \oplus & \ominus \cr
}_{\rho}\ ,\ \EE_2= \bordermatrix{
&-1& 0&1&2 \cr
& \lhd&\ominus & \oplus & \rhd \cr
&& \ominus &&\cr
}_{\rho}. \]
We have 
\[ dual (\EE_1)= \bordermatrix{
& 0&1&2 \cr
& \ominus & \oplus & \ominus \cr
}_{\rho}\ , \ dual (\EE_2)= \bordermatrix{
& 0&1&2 \cr
& \ominus &  & \cr
&&\oplus& \ominus \cr
}_{\rho},  \]
and hence $ dual \circ ui_{1,2} \circ dual(\EE_2)= \EE_1$. On the other hand, if we consider 
\[ P(\EE_1)= \bordermatrix{
&-1& 0&1&2 \cr
& \lhd&\rhd &  &  \cr
&& \ominus & \oplus &\ominus \cr
}_{\rho}.\]
Then $\EE_2= ui_{1,2} \circ P(\EE_1)$.


\end{exmp}

As a corollary, we give a characterization of absolute maximality, and a way to construct an absolutely maximal element $\EE^{|max|}$ from $\EE$. Then we show that there is a unique absolutely maximal element in the set $\Psi(\EE)$.

\begin{cor}\label{cor max}
Let $\EE \in \Rep^{(P')}$ and denote $\FF= \EE_{\rho}$. 
\begin{enumerate}
    \item [(i)] Write $\FF=\{ ([A_i,B_i]_{\rho}, l_i ,\eta_i) \}_{i \in (I_{\rho} ,>)}$ and $A= \max\{A_i\ | \ i \in I_{\rho}\}$. Then $\FF$ is absolutely maximal if and only if 
    \[\widetilde{\FF}:= sh^{\lceil A\rceil +1 } \left( \sum_{r=0}^{\lceil A\rceil } \{([A-r,-(A-r)-1]_{\rho},\left\lfloor A \right\rfloor-r+1,1)\}   + dual(\FF)  \right)\]
    is minimal.
    \item [(ii)] Let $M$ be a sufficiently large integer (e.g. the dimension of $\psi_{\EE}$). Denote
    \begin{align*}
        \widetilde{\FF}:=& \left( \sum_{r=0}^{\lceil A\rceil } \{([A-r,-(A-r)-1]_{\rho},\left\lfloor A \right\rfloor-r+1,1)^{M}\}+ dual(\FF)  \right)^{min}\\
        =& \{ ([A_i,B_i]_{\rho}, l_i, \eta_i)\}_{i \in (I_{\rho},>)}.
    \end{align*}
      Consider
      \[ I_{\rho,1}= \{i \in I_{\rho} \ | \ A_i +B_i <0 \}, \]
      and decompose $ I_{\rho}= I_{\rho,1} \sqcup I_{\rho,2}$. Define an admissible order $\gg$ on $I_{\rho}$ by the following: 
      \begin{enumerate}
          \item [$\oldbullet$] for $i \in I_{\rho,1}, j \in I_{\rho,2}$, we require $i \ll j,$
          \item [$\oldbullet$] for $i\neq j \in I_{\rho,1}$, we require $i \ll j$ if and only if $i < j$,
           \item [$\oldbullet$] for $i\neq j \in I_{\rho,2}$, we require $i \ll j$ if and only if $i < j$.
      \end{enumerate}
      Then we decompose $\widetilde{\FF}_{\gg}= \widetilde{\FF_1}+\widetilde{\FF_2}$ where $\widetilde{\FF_i}$ corresponds to the index set $I_{\rho,i}$. Finally we define $\FF^{|max|}:= dual(\widetilde{\FF_2})$.
     Then $\FF^{|max|}$ is absolutely maximal, and $\pi(\EE^{\rho} \cup \FF^{|max|})= \pi(\EE)$. Moreover, if $\FF$ is already absolutely maximal, then $\FF^{|max|}= \FF$ up to row exchanges.
     \item [(iii)] Suppose $\pi(\EE^{\rho} \cup \FF) \cong \pi(\EE^{\rho} \cup \FF')$. Then $\FF^{|max|}=(\FF')^{|max|}$. In particular, if $\EE \in \Rep$, there is a unique absolutely maximal element in the set $\Psi(\EE)$, which is equal to $ \EE^{|max|}:= \cup_{\rho} \EE_{\rho}^{|max|}$ up to row exchanges. We call $\EE^{|max|}$ the max form of $\EE$. 
\end{enumerate}
\end{cor}

\begin{proof}
Part (i) follows directly from the previous lemma, Corollary \ref{cor ui inverse} and Remark \ref{rmk partial dual}.

For Part (ii), we first show that $\widetilde{\FF}$ is well-defined. More explicitly, we claim that
\[\FF':=sh^{\lceil A\rceil+1} \left(\sum_{r=0}^{\lceil A\rceil } \{([A-r,-(A-r)-1]_{\rho},\left\lfloor A \right\rfloor-r+1,1)^{M}\}+ dual(\FF) \right)  \]
satisfies the non-vanishing conditions in Theorem \ref{thm non-vanishing}. As a consequence, there exists an $\EE' \in \Rep$ with $\EE'_{\rho}= \FF'$, which implies $(\FF')^{min}$ is well-defined. Then it is immediate from Part (i) that $\FF^{|max|}$ is absolutely maximal, and $\FF^{|max|}= \FF$ up to row exchanges if $\FF$ is already absolutely maximal.

Now we prove the claim. It suffices to show that for any $\FF$ satisfying Theorem \ref{thm non-vanishing} and any admissible order $\gg$ of $\FF_x:=\{([x+1,-x]_{\rho},\lfloor x+1 \rfloor ,1) \} + sh^1(\FF)$ satisfying ($P'$), $(\FF_x)_{\gg}$ also satisfies Theorem \ref{thm non-vanishing}. ($x$ is chosen such that the parity condition is correct.)

It is not hard to see that $(\FF_x)_{\gg}$ satisfies Theorem \ref{thm non-vanishing}(i), so it remains to check Part (ii), which is equivalent to check Part (ii) on $ dual((\FF_x)_{\gg})$ by Theorem \ref{thm Aubert-Zelevinsky dual formula}. Since $dual((\FF_x)_{\gg})$ can be obtained from $add^1(dual(\FF))$ by inserting a row of the form $([x+1,x]_{\rho},1,1)$, one can see that $dual((\FF_x)_{\gg})$ satisfies Theorem \ref{thm non-vanishing}(ii). This completes the proof of the claim and Part (ii).

To show Part (iii), we may assume $\FF'$ is one of $ui_{i,j}(\FF)$, $dual\circ ui_{i,j} \circ dual (\FF)$ or $ dual_k^{+}(\FF)$ by Theorems \ref{thm integer}, \ref{thm half integer}. If $\FF'= ui_{i,j}(\FF)$ is not of type 3', then Corollary \ref{cor ui inverse} shows $dual(\FF) \leq dual(\FF')$. If $\FF'=ui_{i,j}(\FF)$ is of type 3', then the proof of Lemma \ref{lem phantom and ui inverse} shows that 
\[dual(\FF) \leq \{([x,-x-1]_{\rho}, \lfloor x+1\rfloor,1)\} + \FF'\]
for some $x>-1/2$. If $\FF'=dual\circ ui_{i,j} \circ dual (\FF')$, then $dual(\FF')\leq dual(\FF')$. Finally, if $\FF'= dual_k^{+}(\FF)$, then
\[ \FF' \leq \{([-1/2,-1/2]_{\rho},0,1)\}+ \FF. \]
In any case, we have $\FF^{|max|}= (\FF')^{|max|}$ by the uniqueness of minimal element (Corollary \ref{cor minimal}). This completes the proof of the corollary.
\end{proof}

The following is the first main theorem of this section.

\begin{thm}\label{thm can max}
Suppose $\pi$ is an irreducible representation of good parity and of Arthur type, and $\EE$ is an extended multi-segment such that $\pi(\EE)=\pi$. Then $\pi$ is in the local $L$-packet associated with a local Arthur parameter $\psi$ if and only if $\EE^{|max|}$ satisfies (L), and in this case, $\psi_{\EE^{|max|}}=\psi$.
\end{thm}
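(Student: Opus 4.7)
The sufficiency direction is immediate from Theorem \ref{thm in L-packet}. Since Corollaries \ref{cor canonical form} and \ref{cor max}(ii) give $\pi((\EE_{can})^{max}) = \pi(\EE_{can}) = \pi(\EE) = \pi$, if $(\EE_{can})^{max}$ satisfies the combinatorial conditions listed in Theorem \ref{thm in L-packet}, then that theorem places $\pi$ in the $L$-packet $\Pi_{\phi_{\psi_{(\EE_{can})^{max}}}}$. So we may take $\psi = \psi_{(\EE_{can})^{max}}$, which also confirms the second assertion.

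For the necessity direction, suppose $\pi \in \Pi_{\phi_\psi}$. By Theorem \ref{thm Atobe's reformulation}, choose $\EE^{\ast}$ with $\psi_{\EE^{\ast}} = \psi$ and $\pi(\EE^{\ast}) = \pi$, and row-exchange it so that $A_i+B_i$ is non-decreasing on each $I_\rho$, producing $\EE^{\ast\ast}$. Theorem \ref{thm in L-packet} then tells us $\EE^{\ast\ast}$ satisfies the stated combinatorial conditions, while Proposition \ref{prop max elm in L-packet} shows that $\EE^{\ast\ast}$ is absolutely maximal. The whole theorem reduces to establishing that $(\EE_{can})^{max} = \EE^{\ast\ast}$ up to row exchanges, for then $\psi = \psi_{\EE^{\ast\ast}} = \psi_{(\EE_{can})^{max}}$ and $(\EE_{can})^{max}$ inherits the combinatorial conditions from $\EE^{\ast\ast}$.

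To establish this equality, observe that $(\EE_{can})^{max}$ and $\EE^{\ast\ast}$ are both absolutely maximal extended multi-segments representing $\pi$, and by Corollary \ref{cor canonical form} they share a common canonical form, namely $\EE_{can}$. Theorems \ref{thm integer} and \ref{thm half integer} then assert that the two are related by a chain of basic operators and \emph{at most one} partial dual $dual_k$. When only basic operators are needed, Corollary \ref{cor max}(iii) applies directly: $(\EE_{can})^{max} = (\EE^{\ast\ast})^{max}$, and the right-hand side equals $\EE^{\ast\ast}$ up to row exchanges because an absolutely maximal element coincides with its own $(\cdot)^{max}$.

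The main obstacle is the half-integer setting, in which the set $\{\EE' : \pi(\EE') = \pi\}$ may split into two basic-operator orbits interchanged by one partial dual, each containing its own absolutely maximal representative. To rule out that $\EE_{can}$ and $\EE^{\ast\ast}$ belong to different orbits, I would argue as follows: by Theorem \ref{thm applicability of dual_k} a partial dual $dual_k$ swaps the summand $\rho \otimes S_{a_k}\otimes S_{b_k}$ with $\rho\otimes S_{b_k}\otimes S_{a_k}$ in the associated local Arthur parameter, and thereby alters the $L$-parameter $\phi_\psi$; since $L$-packets of distinct $L$-parameters are disjoint (via Arthur's injectivity $\psi \mapsto \phi_\psi$), the combinatorial conditions of Theorem \ref{thm in L-packet} can hold for at most one of the two orbits. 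The orbit where they hold necessarily contains $\EE^{\ast\ast}$; a careful inspection of Definition \ref{def can form} — specifically the case division (i)/(ii) governing whether $dual_k^+$ is absorbed into the canonical form — shows that this case division is engineered precisely so that $\EE_{can}$ is placed in that same orbit whenever $\pi$ lies in any $L$-packet. Verifying this compatibility between Definition \ref{def can form} and the $L$-packet criterion of Theorem \ref{thm in L-packet} is the technical heart of the argument; once checked, $(\EE_{can})^{max} = \EE^{\ast\ast}$ follows, completing the proof.
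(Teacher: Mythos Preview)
Your overall architecture matches the paper's: both directions reduce to showing that an $L$-packet representative $\EE^{\ast\ast}$ (absolutely maximal by Proposition \ref{prop max elm in L-packet}) must coincide with $(\EE_{can})^{max}$ up to row exchanges. Where you diverge is in the half-integer obstacle, and there your argument has a genuine gap.

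You correctly observe that the two absolutely maximal representatives could \emph{a priori} lie in different basic-operator orbits, and you argue via disjointness of $L$-packets that the combinatorial conditions of Theorem \ref{thm in L-packet} hold for at most one orbit. That is fine, but it only tells you which orbit $\EE^{\ast\ast}$ sits in; it does \emph{not} tell you $\EE_{can}$ sits there too. You then defer this to ``a careful inspection of Definition \ref{def can form}'' and declare it ``the technical heart of the argument; once checked\ldots'' --- but you never check it. That is the missing step, and it is not obvious from Definition \ref{def can form} alone.

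The paper closes this gap much more directly, bypassing any inspection of Definition \ref{def can form}. The key remark is that the combinatorial conditions of Theorem \ref{thm in L-packet} force $l_i \geq \tfrac{A_i-B_i}{2}$, so any row with $B_i = -\tfrac{1}{2}$ has $l_i \geq 1 > 0$; hence $dual_k^{-}$ is \emph{never} applicable on $\EE^{\ast\ast}$. On the other hand, if $\EE^{\ast\ast}$ were in the orbit of $dual_k^{+}(\EE_{can})$ rather than that of $\EE_{can}$, then $dual_k^{-}$ is applicable on $dual_k^{+}(\EE_{can})$, and (the dualized form of) Corollary \ref{cor partial dual is always applicable} transports this applicability along basic operators to $\EE^{\ast\ast}$ --- contradiction. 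This forces $\EE^{\ast\ast}$ into the orbit of $\EE_{can}$, after which Corollary \ref{cor max}(iii) finishes the proof exactly as you outlined.
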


\begin{proof}
The sufficient direction is done by Theorem \ref{thm in L-packet}. Now we show the necessary direction.

Suppose $\pi$ is in the local $L$-packet associated with $\psi$, then there exists an $\EE'=\cup_{\rho} \EE_{\rho}'$ such that $\supp(\EE')=\supp(\psi)$ and it satisfies (L) by Theorem \ref{thm in L-packet}. By Proposition \ref{prop max elm in L-packet}, $\EE'$ is absolutely maximal. Then $\EE'= \EE^{|max|}$ up to row exchanges by Corollary \ref{cor max}(iii), and hence $\EE^{|max|}$ also satisfies (L). 
This completes the proof of the theorem.
\end{proof}

Now we give the definition for the distinguished member $\psi^{max}(\pi)$ of $\Psi(\pi)$.

\begin{defn}\label{def psi max}
    Let $\pi$ be a representation of Arthur type of $G_n$.  As in Theorem \ref{thm red from nu to gp},  we write $\pi= \tau_{\psi_{nu,>0}} \times \tau_{np}  \rtimes \pi_{gp}$ where $\pi_{gp}$ is of good parity and write $\pi_{gp}= \pi(\EE)$ for some extended multi-segment $\EE$. Then we define
    \[ \psi^{max}(\pi):= \psi_{nu,>0} \oplus \psi_{np} \oplus \psi_{\EE^{|max|}} \oplus \psi_{np}^{\vee} \oplus \psi_{nu,>0}^{\vee}.\]
\end{defn}

In view of Theorem \ref{thm can max}, we may call $\psi^{max}(\pi)$ ``the" local Arthur parameter of $\pi$. Also, $\psi^{max}(\pi)$ gives a partition of representations of Arthur type.

\begin{cor}\label{cor psi max partition}
    Let $\psi \in \Psi^{+}(G_n)$. We define $\Pi_{\psi}^{max}:=\{\pi \in \Pi_{\psi}\ | \ \psi^{max}(\pi)=\psi\}$. Then we have 
    \[ \Pi_{\phi_{\psi}} \subseteq \Pi_{\psi}^{max} \subseteq \Pi_{\psi}.\]
    Moreover, the set of representations of Arthur type are partitioned by the sets $\Pi_{\psi}^{max}.$ That is,
    \[ \bigcup_{\psi \in \Psi^{+}(G_n)} \Pi_{\psi}= \bigsqcup_{\psi \in \Psi^{+}(G_n)} \Pi_{\psi}^{max}.\]
\end{cor}
\begin{proof}
    This follows directly from Theorem \ref{thm can max} and the well-definedness of $\psi^{max}(\pi)$ (see Corollary \ref{cor max}(iii)).
\end{proof}

Our next goal is to give a necessary and sufficient condition for $\pi \in \Pi_{\phi_{\psi^{max}(\pi)}}$ without involving extended multi-segments. We recall a property of $\psi^{max}(\pi)$ explored in a joint work with Zhang (\cite{HLLZ22}).  

Let $\pi$ be an irreducible representation of $G_n$. We write its $L$-data as
\[ \pi= L( \Delta_{\rho_1}[x_1,y_1],\dots, \Delta_{\rho_f}[x_f,y_f]; \pi(\phi,\varepsilon)).\]
Recall that we have already assumed $x_i+y_i$ is non-decreasing negative real numbers when writing down an $L$-data. Now we further assume that if  $i<j$, $\rho_i \cong \rho_j$, and $x_i+y_i=x_j+y_j$, then $x_i\leq x_j$. If $\pi$ is not tempered, i.e. $f\geq 1$, we define $\pi^{-}$ by removing the first Steinberg representation $\Delta_{\rho_1}[x_1,y_1]$ in the $L$-data of $\pi$. That is, we define
\[  \pi^{-}:= L( \Delta_{\rho_2}[x_2,y_2],\dots, \Delta_{\rho_f}[x_f,y_f]; \pi(\phi,\varepsilon)).\]
We may write $\pi^{-}=\pi^{\rho_1,-}$ to keep track of the $\rho_1$ in the Steinberg representation $\Delta_{\rho_1}[x_1,y_1]$ we removed.

\begin{prop}[{\cite[Proposition 5.3]{HLLZ22}}]\label{prop pi minus}
    If $\pi$ is of Arthur type, of good parity and non-tempered, then $\pi^{-}$ is also of Arthur type. 
    
    Moreover, let $\EE^{|max|}$ be the unique absolutely maximal extended multi-segment such that $\pi= \pi(\EE^{|max|})$. Write 
    \[\EE^{|max|}=\cup_{\rho'} \{([A_i,B_i]_{\rho'},l_i,\eta_i)\}_{i \in (I_{\rho'},>)},\]
    where we assume the admissible order $>$ on each $I_{\rho'}$ satisfies (P'), and further that $B_{i_1}=B_{i_2}$ and $A_{i_2}>A_{i_1}$ implies $i_1<i_2$. Suppose $\pi^{-}$ is obtained from $\pi$ by removing $\Delta_{\rho}[x,y]$. Then there exists an index $j \in I_{\rho}$ such that $[A_j,B_j]_{\rho}= [-y,x]_{\rho}$ and $\pi^{-}= \pi( add_{j}^{-1}(\EE^{|max|}))$.  If $A_j-B_j+1=2$, then $add_{j}^{-1}(\EE^{|max|})$ is understood as removing $([A_j,B_j]_{\rho},1,1)$ from $\EE^{|max|}$.  
\end{prop}

We remark that the proof of this proposition is based on the observation that the combinatorial definition of absolute maximality of extended multi-segments (Definition \ref{def abs max}) implies the non-vanishing of $ \pi( add_{j}^{-1}(\EE^{|max|}))$.

Now we state the second main theorem of this section. The theorem gives a way to check when $\pi\in\Pi_{\phi_{\psi^{max}(\pi)}}$ that depends only on the $L$-data of $\pi.$ In contrast, Theorem \ref{thm can max} depends on an extended multi-segment for $\pi.$

\begin{thm}\label{thm pi minus L packet}
    Suppose $\pi$ is a representation of $G_n$ of Arthur type and of good parity. Write its $L$-data as
    \[ \pi= L( \Delta_{\rho_1}[x_1,y_1],\dots, \Delta_{\rho_f}[x_f,y_f]; \pi(\phi,\varepsilon)),\]
where we assume that if  $i<j$, $\rho_i \cong \rho_j$, and $x_i+y_i=x_j+y_j$, then $x_i\leq x_j$. For $0 \leq i \leq f$, define
\[  \pi_i:= L( \Delta_{\rho_{i+1}}[x_{i+1},y_{i+1}],\dots, \Delta_{\rho_f}[x_f,y_f]; \pi(\phi,\varepsilon)).\]
Then the followings are equivalent.
\begin{enumerate}
    \item [(a)] $\pi$ is in the $L$-packet associated with $\psi^{max}(\pi)$.
    \item [(b)] For any $1 \leq  i  \leq f,$ $\psi^{max}(\pi_{i-1})$ contains $\rho_i\otimes S_{x_i-y_i+1}\otimes S_{-x_i-y_i+1}$, and
\begin{align}\label{eq psi minus}
    \psi^{max}(\pi_{i})=\psi^{max}(\pi_{i-1})-  \rho_i\otimes S_{x_i-y_i+1}\otimes S_{-x_i-y_i+1} +\rho_i\otimes S_{x_i-y_i+1}\otimes S_{-x_i-y_i-1}.
\end{align}
\end{enumerate}
\end{thm}

\begin{proof}
For $0 \leq i \leq f$, let $\EE_i$ denote the absolutely maximal extended multi-segment such that $\pi_i= \pi(\EE_i)$. Proposition \ref{prop pi minus} implies that for $1 \leq i \leq f$, we have $\EE_i= (add_{j_{(i-1)}}^{-1}(\EE_{i-1}))^{|max|}$, where $j_{(i-1)}$ is an index of the extended multi-segment $\EE_{i-1}$ such that $\psi_{ add_{j_{(i-1)}}^{-1}(\EE_{i-1})}$ is exactly the right hand side of \eqref{eq psi minus}. Therefore, Condition (b) is equivalent to 
\begin{enumerate}
    \item [(b')] $\EE_i=add_{j_{(i-1)}}^{-1}(\EE_{i-1})$ for any $1 \leq i \leq f$.
\end{enumerate}

Suppose $\pi$ is in the $L$-packet of $\psi^{max}(\pi)$, which implies that $\EE_0$ satisfies (L) by Theorem \ref{thm can max}. It is not hard to see from Definition \ref{def (L)} that if $\EE$ satisfies (L), then so does $add_j^{-1}(\EE)$ for any index $j$ of $\EE$ with $A_j-B_j+1\geq 2$. Therefore, we obtain that for any $1\leq i \leq f $, $add_{j_{(i-1)}}^{-1}(\EE_{i-1})$ satisfies (L), which implies that $add_{j_{(i-1)}}^{-1}(\EE_{i-1})$ is absolutely maximal by Proposition \ref{prop max elm in L-packet} and hence equal to $\EE_i$. This shows that Condition (a) implies Condition (b'), and hence implies Condition (b).

Conversely, suppose Condition (b) holds, and hence Condition (b') holds. We may regard the indices $j_{(i)}$ as indices of $\EE_0$ for $0 \leq i \leq f-1$, and
\[ \left(\sum_{i=0}^{f-1} add_{j_{(i)}}^{-1}\right)(\EE_0)= \EE_{f},\]
where $\psi_{\EE_{f}}= \psi^{max}(\pi_f)$ is tempered. This shows that $\EE_{0}$ satisfies (L) (see Remark \ref{rmk (L)}(2)). We conclude that Condition (b) implies Condition (a), which completes the proof of the theorem.
\end{proof}

Now we give examples for computing $\EE^{|max|}$ and $\psi^{max}(\pi)$.

\begin{exmp}\label{exam 11.8}
Consider 
\[ \EE= \bordermatrix{
&\half{-3} &\half{-1}&\half{1}&\half{3} \cr
& \lhd &\lhd&\rhd&\rhd \cr
&& \ominus & \oplus &\cr
&&&\oplus & \ominus \cr
}_{\rho},\]
where $\rho$ is symplectic of dimension $d$, and
\[ \pi= \pi(\EE)= L(\Delta_{\rho}[1/2,-3/2]; \pi( (1/2)^{-},(3/2)^{-}))\]
is an irreducible representation for $\SO_{12d+1}(F)$. We first compute $\EE^{|max|}$, and then give $\psi^{max}(\pi)$.

We consider
\[ dual(\EE)=  \bordermatrix{
 &\half{-1}&\half{1}&\half{3} \cr
&  \lhd&\ominus&\rhd \cr
& &\ominus  & \cr
&& & \oplus \cr
}_{\rho}\ ,\ \widetilde{\EE}:=  \bordermatrix{
&\half{-3} &\half{-1}&\half{1}&\half{3} \cr
& \lhd &\oplus & \rhd&\cr
&  &\oplus & &\cr
&&  \lhd&\ominus&\rhd \cr
&& &\ominus  & \cr
&&& & \oplus \cr
}_{\rho}. \]
Then we have 
\[(\widetilde{\EE})^{min}= \bordermatrix{
 &\half{-3}&\half{-1}&\half{1}&\half{3} \cr
& \lhd& \lhd& \rhd& \rhd \cr
&&  \ominus&\oplus& \cr
&& \oplus&   \ominus& \oplus \cr
}_{\rho}, \ dual((\widetilde{\EE})^{min})= \bordermatrix{
 &\half{1}&\half{3} \cr
 &\ominus& \oplus \cr
 &\oplus  & \cr
& & \ominus \cr
}_{\rho}. \]
One can check that $dual((\widetilde{\EE})^{min})$ satisfies Corollary \ref{cor max}(i), and hence we have $\EE^{|max|}=dual((\widetilde{\EE})^{min})$. Thus
\[\psi^{max}(\pi)= \rho \otimes S_{2} \otimes S_{1} + \rho \otimes S_{3} \otimes S_{2} + \rho \otimes S_{4} \otimes S_1. \]
After a row exchange,
\[ R_1(\EE^{|max|})=  \bordermatrix{
 &\half{1}&\half{3} \cr
  &\ominus& \cr
 &\lhd  & \rhd\cr
& & \ominus \cr
}_{\rho},  \]
so $\EE^{|max|}$ satisfies (L), and $\pi$ is in the local $L$-packet associated with $\psi^{max}(\pi)$.

In this example, one can check that 
\[ui_{2,3}(\EE^{|max|})= \EE_{can}= \bordermatrix{
&\half{1} & \half{3} \cr
& \ominus & \oplus \cr 
& \oplus & \ominus \cr
}_{\rho},\]
and hence $\EE_{can} \neq \EE^{|max|}$.

\end{exmp}

\begin{exmp}\label{exmp sp10 three packets}
 Let $\rho$ be the trivial representation. Consider the following three local Arthur parameters of $\Sp_{10}(F)$,
\begin{align*}
    \psi_1&= \rho \otimes S_1 \otimes S_7 + \rho \otimes S_2 \otimes S_2,\\
    \psi_2&= \rho \otimes S_1 \otimes S_7 + \rho \otimes S_1 \otimes S_1+ \rho \otimes S_3\otimes S_1,\\
    \psi_3&= \rho \otimes S_1 \otimes S_7 + \rho \otimes S_1 \otimes S_3 +\rho\otimes S_1\otimes S_1.
\end{align*} 
There are seven representations in $\Pi_{\psi_1} \cup \Pi_{\psi_2} \cup \Pi_{\psi_3}$. We denote them by $\pi_1,\dots ,\pi_7$. In this example, we list all extended multi-segments for each $\pi_i$, and specify $\psi^{max}(\pi_i)$.
 \begin{center}
\begin{tabular}{|c|c|c|}
\hline
     & $\Pi_{\psi}$ & $\Pi_{\phi_{\psi}}$  \\
     \hline
$\psi_1$  & $\{\pi_1,\pi_2,\pi_3\}$ & $\{\pi_3\}$\\
\hline 
$\psi_2$ & $\{ \pi_1, \pi_2, \pi_4,\pi_5\}$ & $\{ \pi_2, \pi_5 \}$\\
\hline 
$\psi_3$& $\{ \pi_1,\pi_2, \pi_6,\pi_7\}$ & $ \{\pi_7\}$\\
\hline
\end{tabular}     
 \end{center}
 The following computation also gives the set $ \{\psi \ | \ \pi_i \in \Pi_{\psi}\}$ for $i=1,\dots ,7$, and one can see that $ \Pi_{\psi_{1}},\Pi_{\psi_{2}},\Pi_{\psi_{3}}$ do not intersect with other local Arthur packets.
\begin{enumerate}
    \item [1.] Let $\pi_1= L( \Delta_{\rho}[-3,-3]; \pi( 0^{-},1^{+}, 2^{-}))$. We have $\pi_1= \pi(\EE_{1,i})$ for $i =1,2,3$, where
\begin{align*}
    \EE_{1,1}&= \bordermatrix{
    &-3&-2&-1&0&1&2&3 \cr
    &\lhd &\lhd & \lhd & \ominus& \rhd & \rhd &\rhd \cr
    & &&&\oplus&\ominus &&\cr
    }_{\rho},\\
    \EE_{1,2}&= \bordermatrix{
    &-3&-2&-1&0&1&2&3 \cr
    &\lhd &\lhd & \lhd & \ominus& \rhd & \rhd &\rhd \cr
    & &&&\oplus& &&\cr
    & &&&& \ominus&&\cr
    }_{\rho},\\
    \EE_{1,3}&= \bordermatrix{
    &-3&-2&-1&0&1&2&3 \cr
    &\lhd &\lhd & \lhd & \ominus& \rhd & \rhd &\rhd \cr
    & &&\lhd&\oplus&\rhd &&\cr
      & &&&\ominus& &&\cr
    }_{\rho}.
\end{align*}
Since $\EE_{1,2}$ is absolutely maximal, $\psi^{max}(\pi_1)= \psi_2$. However, $\EE_{1,2}$ does not satisfy (L), so $\pi_1$ is not in any local $L$-packet associated with a local Arthur parameter.
\item [2.] Let $\pi_2=L(\Delta_{\rho}[-3,-3],\Delta_{\rho}[-2,-2],\Delta_{\rho}[-1,-1]; \pi(0^{-},0^{-},1^{+}))$. We have $\pi_2=\pi(\EE_{2,i})$ for $i=1,2,3$, where
\begin{align*}
    \EE_{2,1}&= \bordermatrix{
    &-3&-2&-1&0&1&2&3 \cr
    &\lhd &\lhd & \lhd & \ominus& \rhd & \rhd &\rhd \cr
    & &&&\ominus&\oplus &&\cr
    }_{\rho},\\
    \EE_{2,2}&= \bordermatrix{
    &-3&-2&-1&0&1&2&3 \cr
    &\lhd &\lhd & \lhd & \ominus& \rhd & \rhd &\rhd \cr
    & &&&\ominus& &&\cr
    & &&&& \oplus&&\cr
    }_{\rho},\\
    \EE_{2,3}&= \bordermatrix{
    &-3&-2&-1&0&1&2&3 \cr
    &\lhd &\lhd & \lhd & \ominus& \rhd & \rhd &\rhd \cr
    & &&\lhd&\ominus&\rhd &&\cr
      & &&&\oplus& &&\cr
    }_{\rho}.
\end{align*}
Since $\EE_{2,2}$ is absolutely maximal, $ \psi^{max}(\pi_2)= \psi_2$. Also, $\pi_2\in \Pi_{\phi_{\psi_2}}$.
\item [3.] Let $\pi_3=L(\Delta_{\rho}[-3,-3],\Delta_{\rho}[-2,-2],\Delta_{\rho}[-1,-1], \Delta_{\rho}[0,-1]; \pi(0^{+}))$. We have $\pi_3=\pi(\EE_3)$, where
\[ \EE_{3}= \bordermatrix{
    &-3&-2&-1&0&1&2&3 \cr
    &\lhd &\lhd & \lhd & \oplus& \rhd & \rhd &\rhd \cr
    & &&&\lhd & \rhd && \cr }_{\rho}=\EE_3^{|max|}.\]
    Thus $\psi^{max}(\pi_3)=\psi_1$ and $\pi_3\in \Pi_{\phi_{\psi_1}}$.
\item [4.] Let $\pi_4=L(\Delta_{\rho}[-3,-3],\Delta_{\rho}[-2,-2]; \pi(0^{+},1^-,1^-))$. We have $\pi_4=\pi(\EE_4)$, where
\[ \EE_{4}= \bordermatrix{
    &-3&-2&-1&0&1&2&3 \cr
    &\lhd &\lhd & \lhd & \oplus& \rhd & \rhd &\rhd \cr
    & &&&\ominus & && \cr
        & &&& &\ominus && \cr}_{\rho}=\EE_4^{|max|}.\]
    Thus $\psi^{max}(\pi_4)=\psi_2$, but $\pi_4 \not\in \Pi_{\phi_{\psi_2}}$.
\item [5.] Let $\pi_5=L(\Delta_{\rho}[-3,-3],\Delta_{\rho}[-2,-2], \Delta_{\rho}[-1,-1]; \pi(0^{+},0^+,1^+))$. We have $\pi_5=\pi(\EE_5)$, where
\[ \EE_{5}= \bordermatrix{
    &-3&-2&-1&0&1&2&3 \cr
    &\lhd &\lhd & \lhd & \oplus& \rhd & \rhd &\rhd \cr
    & &&&\oplus & && \cr
        & &&& &\oplus && \cr}_{\rho}=\EE_5^{|max|}.\]
    Thus $\psi^{max}(\pi_5)=\psi_2$, and $\pi_5 \in \Pi_{\phi_{\psi_2}}$.
\item [6.] Let $\pi_6=L(\Delta_{\rho}[-3,-3],\Delta_{\rho}[-1,-2], \Delta_{\rho}[0,-1]; \pi(0^{+}))$. We have $\pi_6=\pi(\EE_6)$, where
\[ \EE_{6}= \bordermatrix{
    &-3&-2&-1&0&1&2&3 \cr
    &\lhd &\lhd & \lhd & \oplus& \rhd & \rhd &\rhd \cr
    & &&\lhd&\ominus &\rhd && \cr
        & &&&\ominus &&& \cr}_{\rho}=\EE_6^{|max|}.\]
    Thus $\psi^{max}(\pi_6)=\psi_3$, but $\pi_6 \not\in \Pi_{\phi_{\psi_3}}$.
\item [7.] Let $\pi_7=L(\Delta_{\rho}[-3,-3],\Delta_{\rho}[-2,-2], \Delta_{\rho}[-1,-1],\Delta_{\rho}[-1,-1]; \pi(0^{+},0^+,0^+))$. We have $\pi_7=\pi(\EE_7)$, where
\[ \EE_{7}= \bordermatrix{
    &-3&-2&-1&0&1&2&3 \cr
    &\lhd &\lhd & \lhd & \oplus& \rhd & \rhd &\rhd \cr
    & &&\lhd&\oplus &\rhd && \cr
        & &&&\oplus &&& \cr}_{\rho}=\EE_7^{|max|}.\]
    Thus $\psi^{max}(\pi_7)=\psi_3$, and $\pi_7 \in \Pi_{\phi_{\psi_3}}$.
\end{enumerate}
Finally, let $\phi_1, \phi_4,\phi_6$ be the $L$-parameters of $\pi_1,\pi_4,\pi_6$, which are not of Arthur type. The associated local $L$-packets are
\begin{align*}
    \Pi_{\phi_1}= \{& \pi_1, \pi_8=L(\Delta_\rho[-3,-3];\pi(0^{+},1^-,2^-)),\pi_9=L(\Delta_\rho[-3,-3];\pi(0^-,1^-,2^+)),\\ &\ \pi_{10}=L(\Delta_\rho[-3,-3];\pi(0^+,1^+,2^+))\},\\
    \Pi_{\phi_4}= \{& \pi_4, \pi_{11}=L(\Delta_\rho[-3,-3],\Delta_{\rho}[-2,-2];\pi(0^{+},1^+,1^+))\},\\
    \Pi_{\phi_6}= \{& \pi_6\}.
\end{align*}
One can check by Algorithm \ref{alg Arthur type} that $\pi_8,\pi_{9},\pi_{10}, \pi_{11}$ are not of Arthur type. 

We visualize the representations, local Arthur packets and local $L$-packets in this example in the following picture.
\begin{center}
\begin{tikzpicture}
\draw[rotate=330][red][thick] (0,48 pt) ellipse (40 pt and 92pt);
\draw[rotate=90][blue][thick] (0,48 pt) ellipse (40 pt and 92pt);
\draw[rotate=210][violet][thick] (0,48 pt) ellipse (40 pt and 92pt);
\draw[thick] (5 pt,-15pt) rectangle ( 135 pt, 15 pt);
\draw[blue][thick] (-5 pt,-15pt) rectangle ( -65 pt, 15 pt);
\draw[thick] (-110 pt,15 pt) rectangle ( -80 pt, -65 pt);
\draw[red][thick] (20 pt, 40 pt) rectangle ( 50 pt, 70 pt);
\draw [thick] (20 pt, -40 pt) rectangle ( 50 pt, -70 pt);
\draw[violet][thick] (40 pt, -80 pt) rectangle ( 70 pt, -110 pt);
\draw[blue] node at (20pt ,0) {$\pi_1$};
\draw node at (60pt ,0) {$\pi_8$};
\draw node at (90pt ,0) {$\pi_9$};
\draw node at (120pt ,0) {$\pi_{10}$};
\draw[blue] node at (-20pt ,0) {$\pi_2$};
\draw[blue] node at (-50pt ,0) {$\pi_5$};
\draw[blue] node at (-95pt ,0) {$\pi_4$};
\draw node at (-95pt ,-45pt) {$\pi_{11}$};
\draw[red] node at (35 pt ,55pt) {$\pi_{3}$};
\draw[violet] node at (35 pt ,-55pt) {$\pi_{6}$};
\draw[violet] node at (55 pt ,-95pt) {$\pi_{7}$};
\draw[red] node at (100 pt ,90pt) {$\Pi_{\psi_1}$};
\draw[blue] node at (-100 pt ,50pt) {$\Pi_{\psi_2}$};
\draw[violet] node at (100 pt ,-90pt) {$\Pi_{\psi_3}$};
\end{tikzpicture}    
\end{center}
 Each rectangle represents an local $L$-packet and each ellipse represents a local Arthur packet. For each local Arthur parameter $\psi$, we draw $\Pi_{\psi}$, $\Pi_{\phi_{\psi}}$ and $\pi_i$ such that $\psi^{max}(\pi_i)=\psi$ in the same color (not black), and draw those representations and local $L$-packets not of Arthur type in black.
\end{exmp}

We end this section by introducing an opposite idea, the absolute minimality.

\begin{defn}\label{def abs min}
For an extended multi-segment $\EE=\cup_{\rho} \EE_{\rho}$, we say $\EE_{\rho}$ is absolutely minimal if the following holds.
\begin{enumerate}
    \item [$\oldbullet$]$\EE_{\rho}$ is minimal, i.e., no $ui$ is applicable on $\EE_{\rho}$.
    \item [$\oldbullet$] $dual(\EE_{\rho})$ is maximal, i.e., no $ ui_{i,j}^{-1}$ is applicable on $dual(\EE_{\rho})$.
    \item [$\oldbullet$] For any $k \in I_{\rho}$ and admissible order $\gg$ of $I_{\rho}$ satisfying (P'), $dual_{k}^{+}$ is not applicable on $\EE_{\rho, \gg}$.
\end{enumerate}

We say $\EE$ is absolutely minimal if $\EE_{\rho}$ is absolutely minimal for any $\rho$.
\end{defn}

Absolute maximality and minimality are dual to each other in the following sense.

\begin{lemma}
Suppose $\EE\in \Rep^{(P')}$. Then $\EE$ is absolutely minimal if and only if $dual(\EE)$ is absolutely maximal.
\end{lemma}
\begin{proof}
It follows from the definition and Theorem \ref{thm applicability of dual_k}.
\end{proof}

In the following corollary, we show the existence and uniqueness of absolutely minimal member $\EE^{|min|}$ in $\Psi(\EE)$, and give a case that $\EE^{|min|}= \EE_{can}$.

\begin{cor}\label{cor abs min}
Suppose $\EE \in \Rep^{(P')}$. 
\begin{enumerate}
    \item[(i)] There exists a unique absolutely minimal element in $ \Psi(\EE)$, which is equal to $dual((dual(\EE))^{|max|})$ up to row exchanges. We denote this absolutely minimal element by $\EE^{|min|}$.
    \item[(ii)] Suppose $sh^{-1}(\EE)$ is non-negative. Then $\EE_{can}$ is the unique minimal element in $\Psi(\EE)$. In particular, we have $\EE^{|min|}= \EE_{can}$.
\end{enumerate}
\end{cor}
\begin{proof}
Part (i) follows directly from the previous lemma and Corollary \ref{cor max}(iii). 

For Part (ii), we first observe that the assumption implies $\EE_{\rho}= (\EE_{\rho})_{>1/2}$, and hence $\EE^{min}= \EE_{can}$ by Definition \ref{def can form}. Then it suffices to check that $ (\EE')^{min}= \EE^{min}$ for any $\EE' \in \Psi(\EE)$. We may assume both $\EE$ and $\EE'$ are already minimal.

By Corollary \ref{cor minimal}(i), $ (\EE_{\rho}')_{>1/2}= (\EE_{\rho})_{>1/2}= \EE_{\rho}$ up to row exchanges for any $\rho$. By comparing the dimension of the associated local Arthur parameters, we see that $\EE_{\rho}'= (\EE_{\rho}')_{>1/2} $. This completes the proof of Part (ii).
\end{proof}
We remark that $\EE^{|min|} \neq \EE_{can}$ in general. Indeed, for $\pi_1$ in Example \ref{exmp sp10 three packets}, we have $\EE_{1,1}=\EE_{can}$, $\EE_{1,2}=\EE^{|max|}$ and $\EE_{1,3}=\EE^{|min|}$. 

Analogously to absolutely maximal case, we have the following definition and corollary.

\begin{defn}\label{def psi min}
    Let $\pi$ be a representation of Arthur type of $G_n$.  As in Theorem \ref{thm red from nu to gp},  we write $\pi= \tau_{\psi_{nu,>0}} \times \tau_{\psi_{np}} \rtimes \pi_{gp}$ where $\pi_{gp}$ is of good parity and write $\pi_{gp}= \pi(\EE)$ for some extended multi-segment $\EE$. Then we define
    \[ \psi^{min}(\pi):= \psi_{nu,>0} \oplus \psi_{np} \oplus \psi_{\EE^{|min|}} \oplus \psi_{np}^{\vee} \oplus \psi_{nu,>0}^{\vee}.\]
\end{defn}

\begin{cor}
    Let $\psi \in \Psi^{+}(G_n)$. We define $\Pi_{\psi}^{min}:=\{\pi \in \Pi_{\psi}\ | \ \psi^{min}(\pi)=\psi\}$. Then the set of representations of Arthur type are partitioned by the sets $\Pi_{\psi}^{min}.$ That is,
    \[ \bigcup_{\psi \in \Psi^{+}(G_n)} \Pi_{\psi}= \bigsqcup_{\psi \in \Psi^{+}(G_n)} \Pi_{\psi}^{min}.\]
\end{cor}

\section{Characterization of \texorpdfstring{$\psi^{max}(\pi)$}{psi{max}(pi)} and \texorpdfstring{$\psi^{min}(\pi)$}{psi{min}(pi)}}\label{characterizations}

In this section, for each representation $\pi$ of Arthur type, we give three orderings $\geq_{O}$, $\geq_{A}$ and $\geq_{N}$ on the set \[\Psi(\pi):= \{ \psi \in \Psi^+(G_n) \ |\ \pi \in \Pi_{\psi}  \},\] 
and show that $\psi^{max}(\pi)$ and $\psi^{min}(\pi)$ are exactly the unique maximal and minimal elements under these orderings. The orderings $\geq_{A}, \geq_{N}$ give the representation theoretic characterizations of  $\psi^{max}(\pi)$ and $\psi^{min}(\pi)$ (see Theorems \ref{thm Jiang's partition}, \ref{thm Moeglin} below). Two more orderings and characterizations of $\psi^{max}(\pi)$ and $\psi^{min}(\pi)$ are introduced in \cite{HLLZ22}.

First, actions of the operators on extended multi-segments in previous sections naturally induce actions on local Arthur parameters as follows. 

\begin{defn}\label{def operators on parameters}
Suppose $\psi$ is a local Arthur parameter of $G_n$. Decompose $\psi= \psi_{nu,>0} \oplus \psi_{np} \oplus \psi_{gp} \oplus \psi_{np}^{\vee} \oplus \psi_{nu,>0}^{\vee}$ as in Theorem \ref{thm red from nu to gp} and write
\[ \psi_{gp}=\bigoplus_{\rho} \bigoplus_{i \in I_{\rho}} \rho \otimes S_{a_i} \otimes S_{b_i}.  \]
Then for $i,j,k \in I_{\rho}$, we define the actions of the operators $dual$, $ui_{i,j}$ and $dual_k^{-}$ on $\psi$ as follows. 
\begin{enumerate}
    \item $dual(\psi):=\widehat{\psi}$. We identify the index set $I_{\rho}(\psi_{gp})$ with $I_{\rho}(\widehat{\psi}_{gp})$ in the obvious way.
    \item For $r \in I_{\rho}$, let $A_r= \half{a_r+b_r}-1$ and $B_r= \half{a_r-b_r}$. Then we may rewrite the decomposition of $\psi_{gp}$ as 
    \[ \psi_{gp}= \bigoplus_{\rho} \bigoplus_{i \in I_\rho} \rho \otimes S_{A_i+B_i+1} \otimes S_{A_i-B_i+1}.\]
    The operator $ui_{i,j}$ is applicable on $\psi$ if the following conditions hold.
    \begin{enumerate}
    \item [$\oldbullet$] $A_j \geq A_i+1 \geq B_j >B_i.$
        \item  [$\oldbullet$] For any $r \in I_{\rho}$, if $B_i<B_r<B_j$, then $A_r \leq A_i $ or $A_r \geq A_j$.
    \end{enumerate}
    In this case, we define $ui_{i,j}(\psi_{gp})$ by replacing the summands 
    \[ \rho\otimes S_{A_i+B_i+1}\otimes S_{A_i-B_i+1} +\rho\otimes S_{A_j+B_j+1}\otimes S_{A_j-B_j+1} \]
of $\psi_{gp}$ with
    \[\rho\otimes S_{A_j+B_i+1}\otimes S_{A_j-B_i+1} +\rho\otimes S_{A_i+B_j+1}\otimes S_{A_i-B_j+1}. \]
    If $A_i+1-B_j=0$, then we omit the last summand, and say this $ui_{i,j}$ is of type 3'. Finally, we define $ui_{i,j}(\psi):= \psi_{nu,>0} \oplus \psi_{np} \oplus  ui_{i,j}(\psi_{gp}) \oplus \psi_{np}^{\vee} \oplus \psi_{nu,>0}^{\vee}$.
    \item The operator $dual_k^{-}$ is applicable on $\psi$ if $b_k=a_k+1$. In this case, we define $dual_k^{-}(\psi_{gp})$ by replacing the summand
    \[ \rho \otimes S_{a_k}\otimes S_{a_{k}+1}\]
    of $\psi_{gp}$ with 
    \[ \rho \otimes S_{a_k+1}\otimes S_{a_{k}},\]
    and we define $dual_k^{-}(\psi)= \psi_{nu,>0} \oplus \psi_{np} \oplus  dual_k^{-}(\psi_{gp}) \oplus \psi_{np}^{\vee} \oplus \psi_{nu,>0}^{\vee}$.
    \item The operator $dual_k^{+}$ is applicable on $\psi$ if $a_k=b_k+1$. In this case, we define $dual_k^{+}= dual \circ dual_{k}^{-} \circ dual(\psi)$.
    \item 
    Let $T$ be any of the operators above or their inverses. If $T$ is not applicable on $\psi$, we define $T(\psi)=\psi$.
\end{enumerate}
\end{defn}

We observe that among all the operators (including the inverses), the operators $ ui_{i,j}^{-1}$, $dual \circ ui_{j,i} \circ dual $ and $dual_k^{-}$ raise the ``temperedness" of local Arthur parameters under a certain measurement of temperedness (see Theorem \ref{thm Jiang's partition}(1) below). This idea leads us to the following definition. 

\begin{defn}\label{def raising operator}
We say that $T$ is a \emph{raising} operator if it is of the form  
$ ui_{i,j}^{-1}$, $dual \circ ui_{j,i} \circ dual,$ or $dual_k^{-}$.
\end{defn}

For a representation $\pi$ of Arthur type, the raising operators induce a partial order on $\Psi(\pi).$ 

\begin{defn}\label{def operator ordering}
We define a partial order $\geq_{O}$ on $\Psi^+(G_n)$ by $\psi_1 \geq_{O} \psi_2$ if $\psi_1=\psi_2$ or there exists a sequence of raising operators $\{T_l\}_{l=1}^m$ such that
\[ \psi_1= T_1 \circ \cdots \circ T_m(\psi_2).\]
\end{defn}
See Remark \ref{rmk O ordering partial order} below that $\geq_{O}$ is indeed a partial order. Then, we may rephrase the results in previous sections as follows.

\begin{thm}\label{thm structure of Psi(pi) intro}
Let $\mathrm{G}_n$ be $\Sp_{2n}$ or split $\SO_{2n+1}$ and $\pi$ is a representation of $G_n$ of Arthur type.
\begin{enumerate}
    \item If $\psi_1, \psi_2\in \Psi(\pi),$ then there exists a sequence of operators $\{T_l\}_{l=1}^m$ such that
\[ \psi_1= T_1 \circ \cdots \circ T_m (\psi_2),\]
where each $T_l$ is one of the operators $ui_{i,j}$,  $ dual \circ ui_{j,i}\circ dual$, $dual_k^{-}$, or their inverses.
\item  The distinguished members $\psi^{max}(\pi)$ and $ \psi^{min}(\pi)$ are the unique elements in $\Psi(\pi)$ satisfying the following inequality
    \[ \psi^{max}(\pi) \geq_O \psi \geq_O \psi^{min}(\pi),\]
    for any $\psi \in \Psi(\pi).$  
\end{enumerate}
\end{thm}

\begin{proof}
    Part (i) follows from Theorem \ref{main thm intro}. Part (ii) follows from Corollaries \ref{cor max}(iii), \ref{cor abs min}(i).
\end{proof}


\subsection{Partitions associated with local Arthur parameters}\label{sec: A ordering}
 In this subsection, we define an ordering $\geq_{A}$ on the set $\Psi(\pi)$ and show that $\psi^{max}(\pi)$ and $\psi^{min}(\pi)$ are exactly the unique maximal and minimal elements under this ordering.

Let $\widehat{G}_n(\BC) \hookrightarrow \GL_N(\BC)$ be the standard embedding. Recall that there is a one-to-one correspondence between the nilpotent orbits and a subset of partitions of $N$ (see \cite[\S 5.1]{CM93}). For each local Arthur parameter 
$$\psi:W_F \times \SL_2^{D}(\mathbb{C}) \times \SL_2^A(\mathbb{C}) \to \widehat{G}_n(\BC),$$
we consider the nilpotent orbit of $\widehat{G}_n(\BC)$ containing the following element 
\[d(\psi|_{\SL_2^{A}(\mathbb{C})}) \left( \begin{pmatrix}
0 &1 \\0 &0
\end{pmatrix} \right),\]
and denote $\underline{p}^A(\psi)$ the corresponding partition. To be more explicit, write 
\[ \psi= \bigoplus_{i=1}^l \rho_i|\cdot|^{x_i} \otimes S_{a_i} \otimes S_{b_i}, \]
and let $d_i= \dim (\rho_i \otimes S_{a_i})$. Then $\underline{p}^A(\psi)$ is given by $[ b_1^{d_1},\dots , b_n^{d_n} ]$. We remark that $\underline{p}^A(\psi)$ is a key ingredient considered in Jiang's Conjecture (\cite[Conjecture 4.2]{Jia14} and \cite[Conjecture 1.6]{LS22}).

If $\psi$ is tempered, then $\underline{p}^A(\psi)=[1^{N}]$, which gives the minimal partition of $N$ under the dominance order. Therefore, we consider the following definition.

\begin{defn}\label{A ordering}
We define a preorder $\geq_{A}$ on $\Psi(G_n)$ by $\psi_1 \geq_A \psi_2$ if $ \underline{p}^A(\psi_1) \leq \underline{p}^A(\psi_2)$ under the dominance order.
\end{defn}

We remark that $\geq_{A}$ is only a preorder but not a partial order since it is possible that $\underline{p}^A(\psi_1)=\underline{p}^A(\psi_2)$ but $\psi_1 \neq \psi_2$.
The following is the main theorem of this subsection.

\begin{thm}\label{thm Jiang's partition}
Let $\mathrm{G}_n$ be $\Sp_{2n}$ or split $\SO_{2n+1}$.
\begin{enumerate}
    \item If $T$ is a raising operator applicable on $\psi \in \Psi^{+}(G_n)$, then 
    \[T(\psi) \gneq_A \psi.\]
    In particular, if $\psi \geq_{O} \psi'$, then $\psi \geq_A \psi'$.
    \item Let $\pi$ be a representation of $G_n$ of Arthur type. The distinguished members $\psi^{max}(\pi)$ and $ \psi^{min}(\pi)$ are the unique elements in $\Psi(\pi)$ satisfying the following inequality
    \[ \psi^{max}(\pi) \geq_A \psi \geq_A \psi^{min}(\pi),\]
    for any $\psi \in \Psi(\pi).$
\end{enumerate}
\end{thm}
\begin{proof}
Part (2) follows immediately from Part (1) and Theorem \ref{thm structure of Psi(pi) intro}. Now we show Part (1) case by case. Note that given partitions $\underline{p}_1, \underline{p}_2$ and $\underline{p}_3$, if $\underline{p}_1 \geq \underline{p}_2$, then $\underline{p}_1 \sqcup \underline{p}_3 \geq \underline{p}_{2} \sqcup \underline{p}_3$. Therefore, we assume $\psi$ is of good parity.

\textbf{Case (i):} Assume that $T= ui_{i,j}^{-1}$. 

We first deal with the situation that $ui_{i,j}^{-1}$ is not of type 3'. Write 
\begin{align*}
     \psi&= \psi' +\rho \otimes S_{A_j+B_i+1} \otimes S_{A_j-B_i+1}+ \rho \otimes S_{A_i+B_j+1} \otimes S_{A_i-B_j+1}\\
     T(\psi)&= \psi'+ \rho \otimes S_{A_i+B_i+1} \otimes S_{A_i-B_i+1} +\rho \otimes S_{A_j+B_j+1} \otimes S_{A_j-B_j+1}.
\end{align*}  
where $ A_j > A_i \geq B_j>B_i$. Let $d= \dim(\rho)$. Then $T(\psi) \geq_A \psi$ if and only if 
\begin{align*}
     &[  (A_j-B_i+1)^{d (A_j+B_i+1)},(A_i-B_j+1)^{d (A_i+B_j+1)}]\\
     \geq & [ (A_i-B_i+1)^{d (A_i+B_i+1)}, (A_j-B_j+1)^{d (A_j+B_j+1)}],
\end{align*} 
which follows from $ A_j-B_i+1 > A_i-B_i+1$ and $ A_j-B_i+1 > A_j-B_j+1$.

Indeed, the same argument works for $ui_{i,j}^{-1}$ of type 3', except $A_i-B_j+1=0$, and we can ignore $(A_i-B_j+1)^{d (A_i+B_j+1)}$ in the partition.

\textbf{Case (ii).} Suppose that $T= dual \circ ui_{i,j} \circ dual$.

From Corollary \ref{cor ui inverse}, we know $dual\circ ui_{i,j} \circ dual=ui_{j,i}^{-1}$ if the $ui_{i,j}$ is not of type 3', and hence it is done in Case (i). Therefore, we may assume the $ui_{i,j}$ is of type 3'. Write
\begin{align*}
     \psi&= \psi'+ \rho \otimes S_{A_i+B_i+1} \otimes S_{A_i-B_i+1} +\rho \otimes S_{A_j+B_j+1} \otimes S_{A_j-B_j+1},\\
     T(\psi)&= \psi'+ \rho \otimes S_{A_j+B_i+1} \otimes S_{A_j-B_i+1},
\end{align*} 
where $ A_j> -B_j= A_i+1>A_i >-B_i$.
Let $d= \dim(\rho)$. Then $T(\psi) \geq_A \psi$ if and only if
\begin{align*}
     [ (A_i-B_i+1)^{d (A_i+B_i+1)}, (A_j-B_j+1)^{d (A_j+B_j+1)}]
     \geq [ (A_j-B_i+1)^{d(A_j+B_i+1)}] ,
\end{align*} 
which follows from $A_j-B_j+1> A_j-B_i+1$.

\textbf{Case (iii).} Assume that $T=dual_k^{-}$.

In this case, we write
\begin{align*}
     \psi&= \psi' +\rho \otimes S_{a} \otimes S_{a+1}\\
     T(\psi)&= \psi'+ \rho \otimes S_{a+1} \otimes S_{a}.
\end{align*}  
Let $d= \dim(\rho)$. Then $T(\psi) \geq_A \psi$ if and only if $[(a+1)^{a}]\geq [a^{a+1}]$, which is clear.

The proof of the theorem is now complete.
\end{proof}
\begin{remark}\label{rmk O ordering partial order}
Part (1) of above theorem shows that $\geq_{O}$ is indeed a partial order, i.e., $\psi_1 \geq_{O} \psi_2$ and $\psi_2 \geq_{O} \psi_1$ implies that $\psi_1=\psi_2$.
\end{remark}

We explain the theorem on Example \ref{ex Atobe}.
\begin{exmp}
 We redraw the relations among $\EE_1,\dots,\EE_9$ as follows.

$$
\begin{tikzcd}
 & & \EE_4& &\\
 &\EE_2\ar[ru, "ui^{-1}"] & &\EE_3 \ar[lu,"ui^{-1}"'] &\\
\EE_9\ar[ru,"dual \circ ui \circ dual"] & &\EE_1 \ar[lu,"ui^{-1}"'] \ar[ru,"ui^{-1}"] & &\EE_8\ar[lu,"dual \circ ui \circ dual"']  \\
 &\EE_6 \ar[ru,"dual \circ ui \circ dual"] \ar[lu,"ui^{-1}"] & &\EE_5 \ar[ru,"ui^{-1}"'] \ar[lu,"dual \circ ui \circ dual"'] & \\
 & & \EE_7\ar[ru,"dual \circ ui \circ dual"']\ar[lu,"dual \circ ui \circ dual"] & &
\end{tikzcd}
$$
The partitions of the associated local Arthur parameters are given by
$$
\begin{tikzcd}
 & & \left[1^9\right]& &\\
 &\left[2^4,1^1\right]\ar[ru, "ui^{-1}"] & &\left[2^2, 1^5\right] \ar[lu,"ui^{-1}"'] &\\
\left[5^1,1^4\right]\ar[ru,"dual \circ ui \circ dual"] & &\left[3^3\right] \ar[lu,"ui^{-1}"'] \ar[ru,"ui^{-1}"] & &\left[3^1,1^6\right]\ar[lu,"dual \circ ui \circ dual"']  \\
 &\left[5^1,2^2\right] \ar[ru,"dual \circ ui \circ dual"] \ar[lu,"ui^{-1}"] & &\left[4^2,1^1\right] \ar[ru,"ui^{-1}"'] \ar[lu,"dual \circ ui \circ dual"'] & \\
 & & \left[5^1,3^1,1^1\right]\ar[ru,"dual \circ ui \circ dual"']\ar[lu,"dual \circ ui \circ dual"] & &
\end{tikzcd}
$$
Note that $\EE_4=\EE^{|max|}$ and $\EE_7= \EE^{|min|}$. Also, $\underline{p}^{A}(\psi_{\EE_1}) \geq \underline{p}^A(\psi_{\EE_{8}})$, but $\psi_{\EE_8}$ and $\psi_{\EE_1}$ are not comparable under $\geq_{O}$.
\end{exmp}

\subsection{Order of zeros of Arthur normalized intertwining operators}
\label{sec: M ordering}

Let $\pi$ be a representation of $G_n$ of Arthur type. In this subsection, we consider another ordering $\geq_{N}$ on the set $\Psi(\pi)$ and show that $\psi^{max}(\pi)$ and $\psi^{min}(\pi)$ are also the unique maximal and minimal elements under this ordering. 

Denote
$$\sigma:=\St(\rho',a_0)=\Delta_{\rho'}[(a_0-1)/2,-(a_0-1)/2].$$
We consider the usual (non-normalized) intertwining operator
$$
M(s,\pi,\sigma): \St(\rho',a_0) | \cdot | ^s \rtimes\pi \rightarrow\St(\rho',a_0) | \cdot | ^{-s} \rtimes\pi,
$$
and the Langlands-Shahidi normalized intertwining operator given by
\[N^{LS}(s,\pi,\sigma):= M(s,\pi,\sigma) r(s,\pi,\sigma)^{-1},\]
where
\begin{align}\label{eq LS normalization}
    r(s,\pi, \sigma):=\frac{L(s, \sigma \times \pi)}{L(s+1,\sigma \times \pi)} \times \frac{L(\sigma,2s,r_{G_n})}{L(\sigma, 2s+1, r_{G_n})},
\end{align}
and $r_{G_n}=\mathrm{Sym}^2$  if $G_n=\SO_{2n+1}$ and $r_{G_n}=\bigwedge^2$ if $G_n=\Sp_{2n}$. For each $L$-parameter $\phi$ of $G_n$, we also denote $r(s,\phi, \sigma):=r(s,\pi,\sigma)$, where $\pi\in \Pi_{\phi}$. There is a more general definition of the Langlands-Shahidi normalization (\cite[p. 150]{Sha10}) in terms of the $L$- and $\varepsilon$-factors. Since the $\varepsilon$-factors do not provide any poles or zeros, for simplicity, we omit them in the above definition as in \cite{Moe10}.

Langlands-Shahidi normalized intertwining operators have been used to study automorphic $L$-functions. For global applications, it is advantageous to understand the holomorphicity and vanishing of these normalized intertwining operators. In general, it is expected that the Langlands-Shahidi normalized intertwining operators are holomorphic and nonzero in certain right half plane.

\begin{lemma}[{\cite[Lemma 4.2]{Kim05}}]\label{lem LS norm nonzero}
Suppose that $\sigma\otimes\pi$ is an irreducible tempered generic representation of a maximal Levi subgroup of $G_n$. Then $N^{LS}(s,\pi,\sigma)$ is holomorphic and nonzero on $\mathrm{Re}(s)\geq 0.$
\end{lemma}

\cite[Lemma 4.2]{Kim05} holds for groups beyond $G_n.$ A similar statement is contained in \cite[Theorem 4.11]{Kim05} which can be extended even further using the standard module conjecture (\cite[Corollary 1.2]{HO13}).

We recall another normalization associated with local Arthur parameters considered in \cite{Moe08, Moe10, Moe11b, Moe12, Art13}, called the Arthur normalization. Arthur used these normalized intertwining operators crucially in his proof of the existence of local Arthur packets (see  \cite[\S 2.3 and \S 2.4]{Art13}). Again, since the $\varepsilon$-factors do not provide any poles or zeros, for simplicity, we also omit them in the definition below following \cite{Moe10}.  Recall that $\sigma=\St(\rho',a_0).$
For each local Arthur parameter $\psi$, we define $r(s,\psi,\sigma):= r(s, \phi_{\psi},\sigma)$. More precisely, for
\[ \psi= \bigoplus_{i=1}^l \rho_i \otimes S_{a_i} \otimes S_{b_i}, \]
we have
\[ r(s, \psi,\sigma )= \prod_{i=1}^l \frac{L(s-\frac{b_i-1}{2},\sigma\times \St(\rho_i,a_i))}{L(s+\frac{b_i+1}{2},\sigma\times \St(\rho_i,a_i))}\times \frac{L(\sigma,2s,r_{G_n})}{L(\sigma,2s+1,r_{G_n})}. \]
For each $\psi \in \Psi(\pi)$, the Arthur normalized intertwining operator
is defined as
\[ N_{\psi}(s,\pi,\sigma):=  M(s, \pi, \sigma)r(s, \psi, \sigma)^{-1}.\]
Note that if $\pi \in \Pi_{\phi_{\psi}}$, then the Arthur normalized intertwining operator is the same as the Langlands-Shahidi normalized intertwining operator.

M{\oe}glin showed that the Arthur normalized intertwining operators are holomorphic in the right half plane.
\begin{thm}[{\cite[Theorem 3.2]{Moe10}}]\label{thm Moeglin holo}
$N_\psi(s,\pi,\sigma)$ is holomorphic for any real $s\geq \half{1}.$
\end{thm}
M{\oe}glin also showed that the image is either an irreducible representation or zero and calculated its image (\cite[Theorem A]{Moe11b}).

Let $f$ denote a meromorphic function. We write $\ord_{s=s_0} f(s)$ to denote the order of vanishing at $s_0$ of $f.$ That is, $\ord_{s=s_0} f(s)=m$ where $m$ is an integer such that $(s-s_0)^{-m} f(s)\neq 0$ in a neighborhood of $s_0.$ Note that $\ord_{s=s_0}f(s)<0$ if $f$ has a pole at $s_0$ and $\ord_{s=s_0}f(s)>0$ if $f$ has a zero at $s_0.$ Let $\pi$ be a representation of Arthur type. Theorem \ref{thm Moeglin holo} states that for any $\psi'\in\Psi(\pi)$ and $s_0\geq \half{1}$, $\ord_{s=s_0} N_{\psi'}(s,\pi, \sigma)\geq 0.$

Suppose that $\pi\in\Pi_{\phi_\psi}\cap \Pi_{\psi'}$ where $\psi$ and $\psi'$ are two local Arthur parameters and $\phi_\psi$ is the local Langlands parameter of Arthur type associated to $\psi.$ Then $N_\psi(s,\pi,\sigma)$ is the Langlands-Shahidi normalization and by \cite[Proposition 4.1]{Moe12}, we have
$$
 \ord_{s=s_0} N_\psi(s,\pi, \sigma)
 \leq \ord_{s=s_0} N_{\psi'}(s,\pi, \sigma)
$$
for any $s_0\in\mathbb{R}_{\geq \half{1}}.$ Note that $\psi^{max}(\pi)=\psi$ in this case by Theorem \ref{thm can max}. 

In general, it is natural to ask which member $\psi \in \Psi(\pi)$ gives the least vanishing for Arthur normalized intertwining operators. In this subsection, we verify that $\psi^{max}(\pi)$ is this member (Theorem \ref{thm Moeglin}).
Note that we do not show that $N_{\psi^{max}(\pi)}(s,\pi, \sigma)$ is non-vanishing. Indeed, if $\pi$ does not lie in any $L$-packet of Arthur type and its Langlands-Shahidi normalized intertwining operator is holomorphic,
then for some $s_0$ and $\sigma$, we have $\ord_{s=s_0}N_{\psi^{max}(\pi)}(s_0,\pi, \sigma)>0$ (by Corollary  \ref{cor van N ordering} below). 

In order to compare Arthur's normalization with Langlands-Shahidi's normalization we define the following ordering on $L$-parameters. This also induces an ordering on local Arthur parameters.

\begin{defn} \label{M ordering}\ 
\begin{enumerate}
    \item [(1)] We define an ordering $\geq_{N}$ on the set of $L$-parameters of $G_n$ by $\phi_1 \geq_{N} \phi_2$ if for any $\sigma= \St(\rho',a_0)$ and $s_0 \in \R_{\geq_{\half{1}}}$, the following inequality holds
$$
 \ord_{s=s_0} r(s,\phi_1, \sigma)
 \geq 
 \ord_{s=s_0} r(s,\phi_2, \sigma).
$$
In particular, $\phi_1 >_{N} \phi_2$ if $N(s,\phi_1,\sigma)$ has less vanishing than $N(s,\phi_2,\sigma)$ for $s \geq \half{1}$.

\item [(2)] Let $\pi$ be a representation of $G_n$ Arthur type. We define an ordering $\geq_{N}$ on $\Psi(\pi)= \{ \psi \ | \ \pi \in \Pi_{\psi} \}$ by $\psi_1 \geq_{N} \psi_2$ if $\phi_{\psi_1} \geq_{N} \phi_{\psi_{2}}$. This ordering is a partial order (see Proposition \ref{prop N partial order} below).
\end{enumerate}
\end{defn}

One can compute the order of poles and zeros of $r(s,\psi,\sigma)$ explicitly by \cite{JPSS83}, which was already done in \cite{Moe10}. The normalizing factor $r(s,\psi,\sigma)$ has no zeros for $s \geq \half{1}$. To describe the order of poles, we recall the following notation and computation.

Write $ \psi= \bigoplus_{i=1}^l \rho_i \otimes S_{a_i} \otimes S_{b_i}.$ Let $A_i=\frac{a_i+b_i}{2}-1$, $B_i=\frac{|a_i-b_i|}{2},$ $\zeta_i$ be the sign of $a_i-b_i$ if $a_i-b_i\neq 0$ and set $\zeta_i=+$ if $a_i-b_i=0.$ Recall that $\sigma=\St(\rho',a_0)$. Fix $b_0\in\mathbb{Z}$ and define $A_0, B_0, \zeta_0$ similarly. We denote the multi-sets 
\begin{align*}
    \Jord(\psi)&:=\{ (\rho_i,A_i,B_i,\zeta_i) \}_{i=1}^l,\\ \Jord(\psi,\rho)&:=\{ (\rho_i,A_i,B_i,\zeta_i) \ | \ 1 \leq i \leq l, \rho_i \cong \rho\},
\end{align*}
and call elements in $\Jord(\psi)$ Jordan blocks. The table below describes the cases where $(\rho_i, A_i,B_i,\zeta_i) \in \Jord(\psi,\rho')$ contributes a simple pole to $ r(s,\psi,\sigma)$ at $s_0:=\frac{b_0-1}{2}$.

\begin{center}
\begin{tabular}{|c|c|c|}
\hline
  $\zeta_i\backslash \zeta_0$ & + & -  \\
  \hline
   +  & $B_i\leq B_0 \leq A_0 \leq A_i$ & No contribution \\
   \hline
   - & $B_i\leq A_0 \leq A_i$ & $B_0\leq B_i\leq A_0 \leq A_i$ \\
   \hline
\end{tabular}
\end{center}

From the table above, we prove that $\geq_{N}$ is a partial order on $\Psi(\pi)$.
\begin{prop}\label{prop N partial order}
For any $\pi\in \Pi_{A}(G_n)$, $\geq_{N}$ is a partial order on $\Psi(\pi)$.
\end{prop}
\begin{proof}
Let $m(\rho\otimes S_{a}\otimes S_b, \sigma, s_0)$ denote the order of vanishing described by the table above. Here we set $\sigma= \St(\rho,a_0)$ and $s_0=\half{b_0-1}$.

First, we show that if $b>1$, then we can recover $\rho \otimes S_a \otimes S_b$ from the collection $(m(\rho\otimes S_{a}\otimes S_b, \sigma,s_0))_{ s_0 \in \R_{\geq \half{1}}} $. Note that when $b=1$, $m(\rho\otimes S_{a}\otimes S_b, \sigma, s_0)=0$ for any $(\sigma, s_0) $ with $s_0 \geq \half{1}$.

Let $A=\frac{a+b}{2}-1$, $B=\frac{|a-b|}{2},$ $\zeta$ be the sign of $a-b$ if $a-b\neq 0$ and set $\zeta=+$ if $a-b=0.$ Let $A_0,B_0, \zeta_0$ be defined similarly for the pair $(a_0,b_0)$ from $(\sigma, s_0)$. We also denote $ m(\rho\otimes S_{a}\otimes S_b, A_0, B_0, \zeta_0):= m(\rho\otimes S_{a}\otimes S_b, \sigma, s_0)$.

First, the sign $\zeta$ can be computed as follows. The set
\[\{ A_0 \in \half{1}\Z \ | \ m(\rho\otimes S_{a}\otimes S_b, A_0, B_0, -) \neq 0 \text{ for some }B_0 \in \half{1} \Z \} \]
is empty if and only if $\zeta=+$. 

Next, if $\zeta=+$, then (note that we assume $b>1$) 
\begin{align*}
    B&= \min\{B_0 \in \half{1}\Z \ | \ m(\rho\otimes S_{a}\otimes S_b, A_0, B_0, +) \neq 0 \text{ for some }A_0\in \half{1} \Z \},\\
    A&= \max\{A_0 \in \half{1}\Z \ | \ m(\rho\otimes S_{a}\otimes S_b, A_0, B, +) \neq 0  \}.
\end{align*}
If $\zeta=-$, then
\begin{align*}
    B&= \max\{B_0 \in \half{1}\Z \ | \ m(\rho\otimes S_{a}\otimes S_b, A_0, B_0, -) \neq 0 \text{ for some }A_0\in \half{1} \Z \},\\
    A&= \max\{A_0 \in \half{1}\Z \ | \ m(\rho\otimes S_{a}\otimes S_b, A_0, B, -) \neq 0  \}.
\end{align*}
This uniquely determines $(A,B)$. Together with $\zeta$, we recover $\rho \otimes S_a \otimes S_b$.

Now given $\psi \in \Psi(\pi)$, we may assume $\psi$ and $\pi$ are of good parity and write
\[\psi= \bigoplus_{\rho} \left(\bigoplus_{i \in I_{\rho,1}} \rho \otimes S_{a_i}\otimes S_{b_i}  \oplus \bigoplus_{i \in I_{\rho,2}} \rho \otimes S_{a_i}\otimes S_{1} \right),  \]
where $ b_i>1$ for $i \in I_{\rho,1}$. Again assign the triple $(A_i,B_i, \zeta_i)$ to each $(a_i,b_i)$ for $i \in I_{\rho,1}$ by the same recipe. Let $B^{-}:=\max\{B_j \ | \ j \in I_{\rho,1},\ \zeta_j=-\}$. By similar argument, we recover the multi-sets
\begin{align*}
    \{(B_j, A_j)\ | \ j \in I_{\rho,1},\ B_j=B^{-},\  \zeta_j=-\}.    
\end{align*}
Repeating this procedure, we determine the multi-sets
\begin{align*}
    \{(B_j, A_j)\ | \ j \in I_{\rho,1},\ \zeta_j=-\}.    
\end{align*}
Next, we set $B^{+}:=\min\{B_j \ | \ j \in I_{\rho,1},\ \zeta_j=+\}$. By similar argument, we recover the multi-sets
\begin{align*}
    \{(B_j, A_j)\ | \ j \in I_{\rho,1},\ B_j=B^{+},\  \zeta_j=+\}.    
\end{align*}
Repeating this procedure, we determine the multi-sets
\begin{align*}
    \{(B_j, A_j)\ | \ j \in I_{\rho,1},\ \zeta_j=+\}.    
\end{align*}
In summary, we have determined the subrepresentation
\[\bigoplus_{\rho} \bigoplus_{i \in I_{\rho,1}} \rho \otimes S_{a_i}\otimes S_{b_i}  \]
of $\psi$. Then comparing $\supp(\psi)$ and $\Omega(\pi)$, we determine $\psi$ completely by Lemma \ref{lem $L$-data}. 

In conclusion, given $\pi$, the map 
\[ \psi \mapsto \{(\ord_{s=s_0} r(s,\psi,\sigma),s_0,\sigma) \}_{\sigma, s_0 \in \R_{\geq \half{1}}}\]
is injective for $\psi \in \Psi(\pi)$. This completes the proof of the proposition.
\end{proof}

Remark that the last step of above proof used the information $\Omega(\pi)$ of $\pi$. In general, the map
\[ \psi \mapsto \{(\ord_{s=s_0} r(s,\psi,\sigma),s_0,\sigma) \}_{\sigma, s_0 \in \R_{\geq \half{1}}}\]
is not injective on $\Psi(G_n)$, but its restriction to $\Psi(\pi)$ is injective for any $\pi$ of Arthur type. For example, $r(s_0,\psi, \sigma)$ has no pole for any $s_0 \geq \half{1}$ for any tempered $\psi$.

The following is the main theorem of this subsection, which states that
$N_{\psi^{max}(\pi)}(s,\pi, \sigma)$ gives the least vanishing among all of the local Arthur parameters in $\Psi(\pi)$. 

\begin{thm}\label{thm Moeglin}
Let $\mathrm{G}_n$ be $\Sp_{2n}(F)$ or split $\SO_{2n+1}(F)$.
\begin{enumerate}
    \item If $T$ is a raising operator applicable on $\psi \in \Psi^{+}(G_n)$, then 
    \[T(\psi) \gneq_{N} \psi.\]
    In particular, if $\psi \geq_{O} \psi'$, then $\psi \geq_{N} \psi'$.
    \item Let $\pi$ be a representation of $G_n$ of Arthur type. The distinguished members $\psi^{max}(\pi)$ and $ \psi^{min}(\pi)$ are the unique elements in $\Psi(\pi)$ satisfying the following inequality
    \[ \psi^{max}(\pi) \geq_{N} \psi \geq_{N} \psi^{min}(\pi),\]
    for any $\psi \in \Psi(\pi).$
\end{enumerate}
\end{thm}
\begin{proof}
Part (b) follows immediately from Part (a) and Theorem \ref{thm structure of Psi(pi) intro}. Now we show Part (a) case by case. It suffices to show for any pair $(\sigma,s_0)$, we have 
\[ \ord_{s=s_0} r(s,\psi,\sigma) \leq \ord_{s=s_0} r(s, T(\psi), \sigma), \]
and there is a choice of $(\sigma,s_0)$ such that the inequality is strict. 

Say the indices involved in $T$ is in $I_{\rho}$. The inequality is indeed an equality if $\rho'$ is not isomorphic to $\rho$. Therefore, we assume $\rho\cong \rho'$, and then the choice of $\sigma$ is equivalent to the choice of $a_0$. We also omit $\rho$ in the quadruple $(\rho,A_i,B_i,\zeta_i)\in \Jord(\psi,\rho)$ in the rest of the proof. 

\textbf{Case (i).} Suppose that $T=ui_{i,j}^{-1}$.

We deal with the situation that the $ui_{i,j}^{-1}$ is not of type 3' explicitly here. The situation that $ui_{i,j}^{-1}$ is of type 3' follows from a similar argument, which we omit. In this situation, we have 
\begin{align*}
    \Jord(T(\psi)) \setminus (\Jord(T(\psi)) \cap \Jord(\psi))&= \{ (A_i,B_i,\zeta_i), (A_j,B_j,\zeta_j)\},\\
    \Jord(\psi) \setminus (\Jord(T(\psi)) \cap \Jord(\psi))&= \{ (A_i,B_j,\zeta_j), (A_j,B_i,\zeta_i)\},
\end{align*}
where $A_j>A_i \geq B_j \zeta_j >B_i\zeta_i$ as shown in the following picture.
\begin{center}
   \begin{tikzpicture}
    \draw (-80 pt , 17pt) rectangle (70pt, 2 pt);
    \draw (-20 pt , -2pt) rectangle (140 pt,-17 pt);
    \draw node at (-65pt, 9.5pt){$B_i\zeta_i$ };
    \draw node at (60pt, 9.5pt){$A_i$ };
    \draw node at (-5pt, -9.5pt){$B_j\zeta_j$ };
    \draw node at (130pt, -9.5pt){$A_j$ };
    \end{tikzpicture}
    \begin{tikzpicture}
    \draw (-80 pt , 17pt) rectangle (140 pt,2 pt);
    \draw (-20 pt , -2pt) rectangle (70pt, -17 pt) ;
    \draw node at (-65pt, 9.5pt){$B_i\zeta_i$ };
    \draw node at (130pt, 9.5pt){$A_j$ };
    \draw node at (-5pt, -9.5pt){$B_j\zeta_j$ };
    \draw node at (60pt, -9.5pt){$A_i$ };
    \end{tikzpicture}
\end{center}

It suffices to compare the contribution of these two pairs of Jordan blocks. Let $ \alpha_i$ (resp. $\alpha_j$, $\beta_i$, $\beta_j$) denote the order of the poles contributed from the Jordan block $(A_i,B_i,\zeta_i)$ (resp. $(A_j,B_j,\zeta_j)$, $(A_j,B_i,\zeta_i)$, $(A_i,B_j,\zeta_j)$). We are going to verify $\alpha_i+ \alpha_j \leq \beta_i + \beta_j$ in each case and show that there is a choice of $(a_0,s_0)$ which makes the inequality strict.

Case(i-1): Suppose $\zeta_i=\zeta_j=+$. Then $(\alpha_i,\alpha_j, \beta_i,\beta_j) \neq (0,0,0,0)$ only if $\zeta_0=+$. The Jordan block $(A,B,\zeta)$ contributes a pole if and only if $[A, B\zeta]_{\rho} \supseteq [A_0,B_0]_{\rho}$. Thus, the pair $(\alpha_i,\alpha_j, \beta_i,\beta_j)$ is given by the following table, where the slots marked with x violate the condition $A_0>B_0$.
\begin{center}
    \begin{tabular}{|c|c|c|c|c|c|}
    \hline
         &  $A_0 \leq B_i$ &$ B_i < A_0  \leq B_j$ & $B_j < A_0 \leq A_i$ & $A_i< A_0 \leq A_j$ & $A_j <A_0$\\
         \hline
        $B_0<B_i$ &$(0,0,0,0)$&$(0,0,0,0)$&$(0,0,0,0)$&$(0,0,0,0)$&$(0,0,0,0)$\\
         \hline
        $B_i \leq B_0< B_j$  &x&$(1,0,1,0)$&$(1,0,1,0)$&$(0,0,1,0)$&$(0,0,0,0)$\\
         \hline
        $B_j \leq B_0<A_i$ &x&x&$(1,1,1,1)$&$(0,1,1,0)$&$(0,0,0,0)$\\
         \hline
        $A_i \leq B_0< A_j$ &x&x&x&$(0,1,1,0)$&$(0,0,0,0)$\\
         \hline
        $A_j \leq B_0$ &x&x&x&x&$(0,0,0,0)$\\
        \hline
    \end{tabular}
\end{center}
In any case, we have $\alpha_i+\alpha_j \leq \beta_i+\beta_j$, and there is a case such that the inequality is strict. This completes the verification of this case.

Case (i-2): Suppose $\zeta_i=-, \zeta_j=+$. If $\zeta_0=-$, then $ \alpha_j=\beta_j=0$. In this case,  the Jordan block $ (A,B,-)$ contributes a pole if and only if $[A,-B]_{\rho} \supseteq [A_0,-B_0]_{\rho}$. Since $ [A_j,-B_i]_{\rho} \supsetneq [A_i,-B_i]_{\rho}$, we have $\alpha_i \leq \beta_i$, and choosing $(a_0,s_0)$ such that 
\[[A_j,-B_i]_{\rho} \supseteq [A_0,-B_0]_{\rho} \supsetneq [A_i,-B_i]_{\rho}\]
makes the inequality strict.

It remains to show if $\zeta_0=+$, the inequality holds. In this case, $\alpha_i \leq \alpha_j$. On the other hand, $(\alpha_j, \beta_j)=(1,0)$ if and only if 
\[ [A_j, B_j]_{\rho} \supseteq [A_0,B_0]_{\rho} \supsetneq [A_i,B_j]_{\rho},\]
and in this case, we have $ (\alpha_i,\beta_i)=(0,1)$. Therefore, $\alpha_i+\alpha_j \leq \beta_i+\beta_j$ in any case.

Case (i-3): Suppose $\zeta_i=\zeta_j=-$. If $ \zeta_0=-$, then the Jordan block $(A,B,-)$ contributes a pole if and only if $ [A, -B]_{\rho} \supseteq [A_0,-B_0]_{\rho}$, and the verification is identical with Case (i-1), which we omit. Therefore, it remains to check the inequality when $\zeta_0=+$. Indeed, we have
\[(\alpha_i,\alpha_j,\beta_i,\beta_j )= \begin{cases} (0,0,0,0) & \text{ if }A_0< B_j,\\
(0,1,0,1) & \text{ if }B_j \leq A_0< B_i,\\
(1,1,1,1) & \text{ if }B_j \leq A_0 \leq  A_i,\\
(0,1,1,0) & \text{ if }A_i<A_0\leq  A_j,\\
(0,0,0,0) & \text{ if }A_j<A_0.\\
\end{cases}\]
Thus $\alpha_i+\alpha_j \leq \beta_i+\beta_j$ in any case. This completes the verification of Case (i).

\textbf{Case (ii).} Assume that $T= dual \circ ui_{i,j} \circ dual$. 

If the $ui_{i,j}$ in the composition is not of type 3', then $dual \circ ui_{i,j} \circ dual=ui_{j,i}^{-1}$, and the conclusion follows from Case (i). Therefore, we assume $ui_{i,j}$ in the composition is of type 3'. In this case, we have 
\begin{align*}
   \Jord(T(\psi)) \setminus (\Jord(T(\psi)) \cap \Jord(\psi))&=\{  (A_j,B_i,\zeta_i)\},\\
    \Jord(\psi) \setminus (\Jord(T(\psi)) \cap \Jord(\psi))&=\{ (A_i,B_i,\zeta_i), (A_j,B_j,-)\},
\end{align*}
where $B_j=A_i+1$ as shown in the following picture.
\begin{center}
   \begin{tikzpicture}
    \draw [dashed] (-100 pt, 20pt)--( -100 pt ,-43 pt);
    \draw (-130 pt , -21pt) rectangle (0pt, -36 pt);
    \draw (-200 pt , -2pt) rectangle (140 pt,-17 pt);
    \draw node at (-70pt, 9.5pt){$B_i$ };
    \draw node at (-10pt,- 28.5pt){ $A_i$ };
    \draw node at (10pt, -9.5pt){$B_j$ };
    \draw node at (130pt, 9.5pt){$A_j$ };
    \draw node at (-115pt, 9.5pt){$B_i\zeta_i$ };
    \draw node at (-185pt, -9.5pt){$-B_j$ };
    \draw (-130 pt , 2 pt) rectangle (140 pt,17 pt);
    \draw node at (-70pt, -28.5pt){$B_i$ };
    \draw node at (-115pt, -28.5pt){$B_i\zeta_i$ };
    \draw node at (130pt, -9.5pt){$A_j$ };
    \draw node at (-100pt, 27pt) {$0$};
    \end{tikzpicture}
\end{center}
Let $\alpha_i$ (resp. $\beta_i, \beta_j$) be the order of poles contributed from the Jordan block $ (A_j,B_i,\zeta_i)$ (resp. $(A_i,B_i,\zeta_i)$, $(A_j,B_j,-)$). We are going to verify $\alpha_i \leq \beta_i+ \beta_j$ in any case, and there is a pair $(a_0,s_0)$ such that the inequality is strict.

Suppose $\zeta_i=+$. Then $\alpha_i=1$ if and only if $\zeta_0=+$ and $[A_j, B_i]_{\rho} \supseteq [A_0,B_0]_{\rho}$. We have 
\[ (\alpha_i,\beta_i,\beta_j)= \begin{cases} (1,1,0) & \text{ if } B_i \leq A_0 \leq A_i, \\
(1,0,1) & \text{ if } A_i < A_0 \leq A_j, \end{cases}\]
so $\alpha_i \leq \beta_i+\beta_j$ in any case. On the other hand, taking $ (a_0,s_0)$ such that $\zeta_0=-$ and $B_j \leq A_0 \leq A_j$, we have $(\alpha_i,\beta_i,\beta_j)=(0,0,1)$, and the inequality is strict. 

Suppose $\zeta_i=-$. Then if $\zeta_0=-$, we have $\alpha_i \leq \beta_j$, and the inequality is strict if we take $ (a_0,s_0)$ such that $[A_j,-B_j]_{\rho} \supseteq [A_0,-B_0]_{\rho} \supsetneq [A_j, -B_i]_{\rho}$. It remains to check $\alpha_i \leq \beta_i+\beta_j$ when $\zeta_0=+$. We have $\alpha_i=1$ if and only if $ B_i \leq A_0 \leq A_j$. In this case, we have 
\[ (\alpha_i,\beta_i,\beta_j)= \begin{cases} (1,1,0) & \text{ if } B_i \leq A_0 \leq A_i ,\\
(1,0,1) & \text{ if } A_i < A_0 \leq A_j, \end{cases}\]
so $\alpha_i \leq \beta_i+\beta_j$ in any case. This completes the verification of Case (ii).

\textbf{Case (iii).} Assume that $T=dual_k^{-}$.

In this case, we have  
\begin{align*}
    \Jord(T(\psi)) \setminus (\Jord(T(\psi)) \cap \Jord(\psi))&= \{ (A_k,1/2,+)\},\\
    \Jord(\psi) \setminus (\Jord(T(\psi)) \cap \Jord(\psi))&= \{  (A_k,1/2,-)\},
\end{align*}
Let $\alpha_k$ (resp. $\beta_k$) denote the order of pole contributed from the Jordan block $(A_k,1/2,+)$ (resp. $(A_k,1/2,-)$). We are going to verify $ \alpha_k \leq \beta_k$ in any case, and there is a choice of $(a_0,s_0)$ such that the inequality is strict.

Suppose $\zeta_0=-$. Then $\alpha_k$ is always zero, and the inequality follows. If we choose $ (a_0,s_0)$ such that $\zeta_0=-$, $B_0=1/2$ and $A_0 \leq A_k$, then we have $(\alpha_k, \beta_k)=(0,1)$, so the inequality is strict.

It remains to check the inequality when $\zeta_0=+$. One can check from the table that in this case, $\alpha_k=1$ implies $ 1/2 \leq B_0 < A_0 \leq A_k$, and hence $1/2 \leq A_0 \leq A_k$, which implies $\beta_k=1$. Therefore, $\alpha_k \leq \beta_k$ in any case. 

This completes the verification of Case (iii) and the proof of the theorem.
\end{proof}

The proof above admits the following corollary.

\begin{cor}\label{cor raising operator zeros}
If $T$ is a raising operator applicable on $\psi$, then for any $\sigma=\St(\rho',a_0)$, the quotient
\[ \frac{r(s,\phi_{T(\psi)},\sigma)}{r(s,\phi_{\psi},\sigma)}\]
has finitely many zeros on $\R_{\geq \half{1}}$.
\end{cor}

M{\oe}glin classified when the Langlands-Shahidi normalization and Arthur normalization differ at a fixed $s_0$ and $\sigma$ in terms of the vanishing or subquotient properties of the Arthur normalized intertwining operators (\cite[Proposition 4.1(2)]{Moe12}). The following theorem gives a sufficient condition which guarantees that there exists $s_0$ and $\sigma$ for which the normalizations differ.

\begin{thm}\label{thm M vanishing}
If $\pi$ be a representation of $G_n$ of Arthur type, then for any $\psi \in \Psi(\pi)$, we have $\phi_{\pi} \geq_{N} \phi_{\psi}$, where $\phi_{\pi}$ is the $L$-parameter of $\pi$. Moreover, if $ \pi \not\in \Pi_{\phi_{\psi}}$, then $\phi_{\pi} \gneq_{N} \phi_{\psi}$.
\end{thm}
\begin{proof}
 Write
\[\pi= L(\Delta_{\rho_1}[x_1,-y_1],\dots,\Delta_{\rho_f}[x_f,-y_f]; \pi_{temp}).\]
We apply induction on $f$ such that $\phi_{\pi} \geq_{N} \phi_{\psi}$, and $\phi_{\psi} \geq_{N} \phi_{\pi}$ only if $\pi \in \Pi_{\phi_{\psi}}$.

If $f=0$, then $\pi$ is tempered, and hence 
\[ \phi_{\pi}= \phi_{\psi^{max}(\pi)} \geq_{N} \phi_\psi \]
by Theorem \ref{thm Moeglin}. Moreover, $\phi_{\pi}=\phi_{\psi^{max}(\pi)}$ in this case, and hence $\phi_{\psi} \geq_N \phi_{\pi}$ only if $\psi=\psi^{max}(\pi)$ by Proposition \ref{prop N partial order}, which is equivalent to $\pi \in \Pi_{\phi_{\psi}}$.

If $f \geq 1$, then we assume $x_i-y_i> x_1-y_1$ or $x_i \geq  x_1$ for all $i >1$ and $\rho_i \cong \rho_1$. We define
\begin{align*}
    \pi^{-}&:=L(\Delta_{\rho_2}[x_2,-y_2],\dots,\Delta_{\rho_f}[x_f,-y_f]; \pi_{temp}),\\
    \psi^{-}&:= \psi^{max}(\pi)-\rho_1\otimes S_{y_1+x_1+1} \otimes S_{y_1-x_1+1}+\rho_1\otimes S_{y_1+x_1+1} \otimes S_{y_1-x_1-1}.
\end{align*}
By Proposition \ref{prop pi minus}, $\pi^{-}$ is of Arthur type and $\pi^{-}\in \Pi_{\psi^{-}}$. Note that
\begin{align}\label{eq pi minus}
    \phi_{\pi}&= \left(\rho_1|\cdot|^{\half{y_1-x_1}} \otimes S_{y_1+x_1+1}\oplus \rho_1|\cdot|^{-\half{y_1-x_1}} \otimes S_{y_1+x_1+1}\right)\oplus  \phi_{\pi^{-}},\\
    \nonumber
    \phi_{\psi^{max}(\pi)}&= \left(\rho_1|\cdot|^{\half{y_1-x_1}} \otimes S_{y_1+x_1+1}\oplus \rho_1|\cdot|^{-\half{y_1-x_1}} \otimes S_{y_1+x_1+1}\right)\oplus  \phi_{\psi^{-}}.
\end{align}
Then for any $s_0 \in \R_{\geq \half{1}}$, we have
\begin{align}\label{eq ineq Moeglin}
&\,\,\ord_{s=s_0} r(s,\phi_{\pi},\sigma )- \ord_{s=s_0} r(s,\phi_{\psi^{max}(\pi)},\sigma )\\
\nonumber =&\,\,\ord_{s=s_0} r(s,\phi_{\pi^{-}},\sigma )- \ord_{s=s_0} r(s,\phi_{\psi^{-}},\sigma )\\    \geq &\,\, \nonumber \ord_{s=s_0} r(s,\phi_{\pi^{-}},\sigma )- \ord_{s=s_0} r(s,\phi_{\psi^{max}(\pi^{-})},\sigma)\\
\nonumber \geq& \,\,0,
\end{align}
where the first equality follows from the Definition \eqref{eq LS normalization} (and \cite{JPSS83}), the first inequality follows from Theorem \ref{thm Moeglin}(2), and the last inequality follows from the induction hypothesis. This shows that
\[\phi_{\pi} \geq_{N} \phi_{\psi^{max}(\pi)} \geq_{N} \phi_{\psi}.\]
Moreover, if $\phi_{\psi} \geq_{N} \phi_{\pi}$, then \eqref{eq ineq Moeglin} implies that $ \psi=\psi^{max}(\pi)$, $\psi^{-}=\psi^{max}(\pi^{-})$, and $ \phi_{\psi^{max}(\pi^{-})} \geq_{N}  \phi_{\pi^{-}}$. Then by the induction hypothesis, we have $ \pi^{-} \in \Pi_{\phi_{\psi^{-}}}$, which implies $\pi \in \Pi_{\phi_{\psi}}$. This completes the proof of the theorem.
\end{proof}

As an immediate consequence, we have the following corollary.

\begin{cor}\label{cor van N ordering}
If $\pi$ is a representation of $G_n$ of Arthur type, but not in any $L$-packet of Arthur type, then for any $\psi \in \Psi(\pi)$, there exists a $\sigma$ and $s_0 \in \R_{\geq_{\half{1}}}$ such that $\ord_{s=s_0} N_{\psi}(s_0,\pi,\sigma)> \ord_{s=s_0} N^{LS}(s,\pi,\sigma)$. 
\end{cor}

\end{document}